\pgfplotsset{compat=1.10}
\titleformat{\subsection}{\large\bfseries}{\thesubsection}{1em}{}
\tikzset{
  curve/.style={
    settings={#1},
    to path={
      (\tikztostart)
      .. controls ($(\tikztostart)!\pv{pos}!(\tikztotarget)!\pv{height}!270:(\tikztotarget)$)
      and ($(\tikztostart)!1-\pv{pos}!(\tikztotarget)!\pv{height}!270:(\tikztotarget)$)
      .. (\tikztotarget)\tikztonodes
    },
  },
  settings/.code={%
    \tikzset{quiver/.cd,#1}%
    \def\pv##1{\pgfkeysvalueof{/tikz/quiver/##1}}%
  },
  quiver/.cd,
  pos/.initial=0.35,
  height/.initial=0,
}
\tikzset{double line with arrow/.style args={#1,#2}{decorate,decoration={markings,%
mark=at position 0 with {\coordinate (ta-base-1) at (0,1pt);
\coordinate (ta-base-2) at (0,-1pt);},
mark=at position 1 with {\draw[#1] (ta-base-1) -- (0,1pt);
\draw[#2] (ta-base-2) -- (0,-1pt);
}}}}
\tikzset{Equals/.style={-,double line with arrow={-,-}}}
\newsavebox{\@brx}
\newcommand{\llangle}[1][]{\savebox{\@brx}{\(\m@th{#1\langle}\)}%
  \mathopen{\copy\@brx\kern-0.5\wd\@brx\usebox{\@brx}}}
\newcommand{\rrangle}[1][]{\savebox{\@brx}{\(\m@th{#1\rangle}\)}%
  \mathclose{\copy\@brx\kern-0.5\wd\@brx\usebox{\@brx}}}
\newcommand{\plus}{\scalebox{0.6}{$\bm{+}$}}
\def\blfootnote{\xdef\@thefnmark{}\@footnotetext}
\setlist[enumerate]{leftmargin=25pt, label={(\roman*)}}
\DeclarePairedDelimiterX\set[1]\lbrace\rbrace{\def\given{\;\delimsize\vert\;}#1}
\renewcommand{\d}[1]{\ensuremath{\operatorname{d}\!{#1}}}
\newcommand{\D}[1]{\ensuremath{\operatorname{D}\!{#1}}}
\newcommand{\rra}{\rightrightarrows}
\newcommand{\Ra}{\Rightarrow}
\newcommand{\ra}{\rightarrow}
\newcommand{\R}{\mathbb{R}}
\newcommand{\T}{\mathbb{T}}
\newcommand{\Z}{\mathbb{Z}}
\renewcommand{\AA}{\mathbb{A}}
\newcommand{\GG}{\mathbb{G}}
\newcommand{\MM}{\mathbb{M}}
\newcommand{\rk}{\mathcal{R}}
\newcommand{\C}{\mathcal{C}}
\newcommand{\G}{\mathcal{G}}
\newcommand{\DD}{\mathscr{D}}
\newcommand{\vb}{\mathcal{VB}}
\newcommand{\A}{\mathscr{A}}
\newcommand{\F}{\mathcal{F}}
\newcommand{\ve}{\mathscr{V}\hspace{-0.25em}\mathscr{E}}
\newcommand{\wedgedot}{\:\dot\wedge\:}
\newcommand{\ul}[1]{\underline{\smash{#1}}}
\renewcommand{\O}{\mathcal{O}}
\renewcommand{\S}{\mathscr{S}}
\renewcommand{\sec}{§}
\renewcommand{\L}{\mathscr{L}}
\renewcommand{\frak}{\mathfrak}
\renewcommand{\Im}{\operatorname{Im}}
\newcommand{\End}{\operatorname{End}}
\newcommand{\Der}{\operatorname{Der}}
\newcommand{\Hom}{\operatorname{Hom}}
\newcommand{\Int}{\operatorname{Int}}
\newcommand{\GL}{\operatorname{GL}}
\newcommand{\rank}{\operatorname{rank}}
\newcommand{\rankl}{\operatorname{rank}^L}
\newcommand{\rankr}{\operatorname{rank}^R}
\newcommand{\codim}{\operatorname{codim}}
\newcommand{\sgn}{\operatorname{sgn}}
\newcommand{\ind}{\operatorname{ind}}
\newcommand{\pr}{\mathrm{pr}}
\newcommand{\vol}{\mathrm{vol}}
\newcommand{\Ad}{\mathrm{Ad}}
\newcommand{\Bis}{\mathrm{Bis}}
\newcommand{\ad}{\operatorname{ad}}
\newcommand{\Hor}{\mathrm{Hor}}
\newcommand{\id}{\mathrm{id}}
\newcommand{\lin}{\mathrm{lin}}
\newcommand{\obs}{\mathrm{obs}}
\newcommand{\im}{\operatorname{im}}
\newcommand{\inv}{\mathrm{inv}}
\newcommand{\ext}{\mathrm{ext}}
\newcommand{\ev}{\mathrm{ev}}
\newcommand{\vf}{\mathfrak X}
\newcommand{\spl}[1]{\mathrm{Split}({#1})}
\newcommand{\splc}[1]{\mathrm{Split}_c({#1})}
\newcommand{\deriv}[2]{\frac{d}{d{#1}}\Big|_{{#1}={#2}}}
\newcommand{\smallderiv}[2]{\left.\tfrac{d}{d{#1}}\right|_{{#1}={#2}}}
\newcommand{\arc}{\mathscr{C}}
\newcommand{\cev}[1]{{#1}^L}
\newcommand{\inner}[2]{\left\langle{#1},{#2}\right\rangle}
\newcommand{\innersmall}[2]{\langle{#1},{#2}\rangle}
\newcommand{\innerr}[2]{\llangle #1, #2\rrangle}
\newcommand{\comp}[1]{{#1}^{(2)}}
\renewcommand{\|}{\,|\,}
\DeclareRobustCommand{\gobblefive}[5]{}
\numberwithin{equation}{chapter}
\theoremstyle{plain} 
\newtheorem{thm}{Theorem}
\newtheorem{prop}[thm]{Proposition}
\newtheorem*{cor*}{Corollary}
\newtheorem{theorem}{Theorem}
\numberwithin{theorem}{chapter}
\newtheorem{corollary}[theorem]{Corollary}
\newtheorem{lemma}[theorem]{Lemma}
\newtheorem{proposition}[theorem]{Proposition}
\theoremstyle{definition}
\newtheorem{definition}[theorem]{Definition}
\theoremstyle{remark}
\newtheorem{remark}[theorem]{Remark}
\newtheorem{intermezzo}[theorem]{Intermezzo}
\newtheorem*{remark*}{Remark}
\newtheorem{example}[theorem]{Example}
\newcommand{\authorname}
{Žan Grad}
\begin{document}

\frontmatter

\newgeometry{a4paper,hmargin={2.4cm,2.4cm},vmargin={2.1cm,2.1cm},headheight=14pt}  

\thispagestyle{empty}
\begin{center}
  \begin{figure}[h]
    \includegraphics[width=4cm]{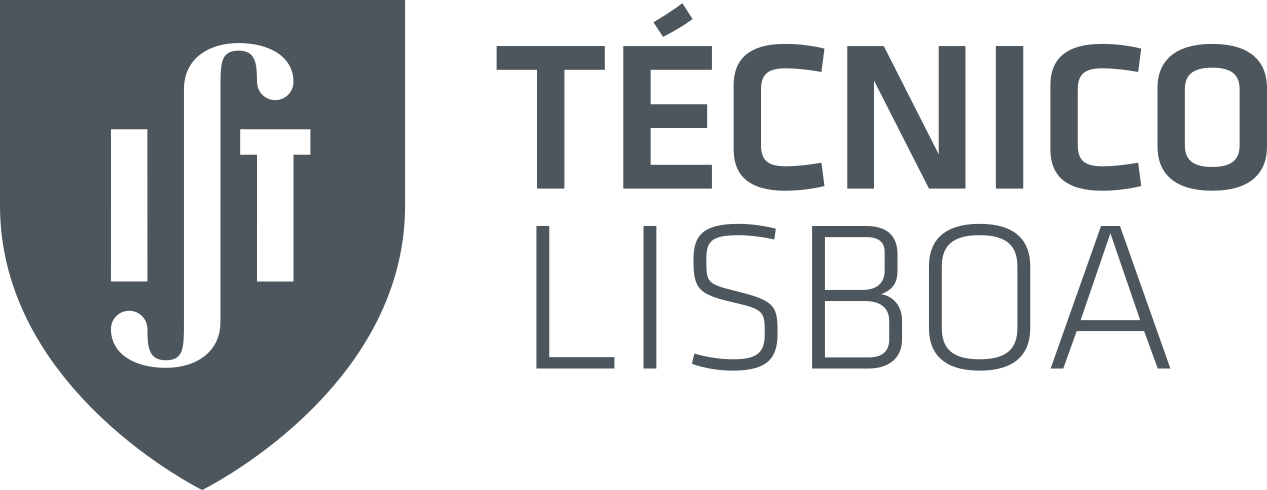}
    \centering
  \end{figure}
\vspace*{1em}

\begin{minipage}{\textwidth}
  \begin{center}
    \large\bfseries
    UNIVERSIDADE DE LISBOA\\[0.3em]
    INSTITUTO SUPERIOR TÉCNICO
\end{center}
\end{minipage}

\vspace{5em}

\begin{minipage}{\textwidth}
\begin{center}
  \renewcommand{\baselinestretch}{1.2}
  \bfseries\LARGE{Fundamentals of Lie categories 
and Yang--Mills theory for multiplicative Ehresmann connections}
\end{center}
\end{minipage}

\vspace{3em}

\begin{minipage}{\textwidth}
\begin{center}
  \renewcommand{\baselinestretch}{1.2}
  \Large\authorname
\end{center}
\end{minipage}

\vspace{3em}

\begin{minipage}{\textwidth}
\begin{center}
  \renewcommand{\baselinestretch}{1.2}
  \Large\begin{tabular}{ r l }
    \textbf{Supervisor:} & Doctor\ Pedro Manuel Agostinho Resende\\
    \textbf{Co-Supervisor:} & Doctor\ Ioan Mărcuț
  \end{tabular}
\end{center}
\end{minipage}

\vspace{3em}

\large{Thesis approved in public session to obtain the PhD Degree in\\[1em] \Large\textit{Mathematics}}

\vspace{1em}
\textbf{Jury final classification:} Pass with Distinction and Honour

\vspace{3em}
\large{\textbf{Jury}}

\vspace{1em}
\begin{minipage}{1\textwidth}
\renewcommand{\baselinestretch}{1.2}
\footnotesize
\textbf{Chairperson:}

\vspace{0.3em}
Doctor João Luís Pimentel Nunes, Instituto Superior Técnico, Universidade de Lisboa

\vspace{1em}
\textbf{Members of the Committee:}

\vspace{0.3em}
Doctor Rui António Loja Fernandes, Department of Mathematics, University of Illinois Urbana-Champaign, EUA \\
Doctor Alejandro Cabrera, Instituto de Matemática, Universidade Federal do Rio de Janeiro, Brasil \\
Doctor Ioan Mărcuț, Faculty of Mathematics and Natural Sciences, University of Cologne, Alemanha\\
Doctor Gonçalo Marques Fernandes de Oliveira, Instituto Superior Técnico, Universidade de Lisboa \\
Doctor Rosa Isabel Sena Neves Gomes Durão Dias, Instituto Superior Técnico, Universidade de Lisboa \\
Doctor João Nuno Mestre Fernandes da Silva, Faculdade de Ciências e Tecnologia, Universidade de Coimbra
\end{minipage}

\vspace{3em}
\textbf{Funding Institution:}

\vspace{0.3em}
Fundação para a Ciência e a Tecnologia (FCT)

\vfill
\large
\textbf{2025}
\end{center}

\clearpage

\thispagestyle{empty}
\vfill

\cleardoublepage

\restoregeometry





\thispagestyle{empty}

\vspace*{4cm}
\begin{center}
 \large\textit{Mojemu dedku Frančku}
\end{center}

\clearpage

\thispagestyle{empty}

\cleardoublepage

\pagestyle{plain}
\chapter*{Acknowledgements}
\fancyhead{}
{
\baselineskip=15pt
First and foremost, I would like to wholeheartedly thank my supervisors, professors Pedro Resende and Ioan Mărcuț. 

\vspace{0.8em}
\noindent Pedro---thank you for supporting my research endeavors and allowing me the freedom to follow my ideas. You kept your faith in me at all times, and my stubbornness could not have been met with greater kindness. 

\vspace{0.8em}
\noindent Ioan---you helped me when I needed you most. Our discussions have always breathed new life into my thinking, and if I found myself out of ideas, you  knew which directions to send me looking for new ones. I admire the force of your mathematical insights, which so often cut straight to the heart of a problem. 

\vspace{0.8em}
\noindent I would like to thank the friendly mathematicians from the field of Poisson geometry who have made me a part of their community---certainly, this beautiful field of mathematics would not have been as exciting without you. I am indebted to Rui Loja Ferndandes, João Nuno Mestre, Marius Crainic, and Alejandro Cabrera for some very insightful conversations that helped this thesis greatly. Moreover, João, I thank you and Raquel Caseiro for organizing the groupoid weeks in Coimbra---to this day, it remains my favorite workshop. I thank Henrique Bursztyn and Daniel Álvarez for the warm reception in Rio, and both Francesco Cattafi and Madeleine Jotz for inviting me to Würzburg.  I am thankful for the inspiring conversations that took place in Manaus with Marco Zambon, Pedro Frejlich, Eckhard Meinrenken, Ivan Struchiner, Marco Gualtieri, and Olivier Brahic. I am grateful for the talks during the beautiful climb of the Vesuvius crater with Karandeep Singh, Luca Accornero, Aldo Witte, and Wilmer Smilde. Without the high-spirited moments with Camilo Angulo and Sylvain Lavau, I would have taken academia too seriously. 

\vspace{0.8em}
\noindent I sincerely thank the professors and colleagues at the Faculty of Mathematics and Physics, University of Ljubljana, who guided and supported me throughout my undergraduate and master's studies. I am deeply grateful to Sašo Strle for being an unwavering pillar of support in my career, and for supervising my master's thesis on Yang--Mills theory, which actually paved the way for this work. Additionally, I would like to thank Janez Mrčun and Jure Kališnik for various discussions, and Matjaž Konvalinka for the warm and motivating conversations. I thank Aleksey Kostenko for some important advice regarding my postdoctoral endeavors.

\vspace{0.8em}
\noindent I gratefully acknowledge the financial support provided by the CAMGSD (Centre for Mathematical Analysis, Geometry, and Dynamical Systems) of Instituto Superior Técnico, which enabled me to attend various conferences and conduct research visits abroad.

\pagebreak

\vspace{0.6em}
\noindent To my friends in Coimbra---Lennart, Sebastian, Sarah, Leo, and Axelle. Lennart and Sebastian, I absolutely adore discussing math with you. I hope that wherever your postdocs take you, you will have the best of times. I would like to thank my friends in Lisbon who have made my time there wonderful, including Pacheco, Candeias, Manel, Xabi, Dino, and Mel. 

\vspace{0.6em}
\noindent
To my dearest, amazing Katarina---thank you for the lemon artichoke pastas, the Amalfi rides, the shark smiles, the sun, and the love. Among the greatest luxuries in my life are also my friends back home---without you, my life would be utterly empty and pointless: Žaži, Ivana, Lari, Soban, Vrabo (with Lisa and Crni), Primož, Rok, Matic, Benjamin, Jakob, Ela, Blaž, Jurij, Žiga, Teja; the list goes on. Thank you for always having my back and keeping my sanity in check.

} 

{
\baselineskip=15pt
\vspace{0.6em}
\noindent
Most importantly, I would like to thank my family. Mami in ati, vselej sta mi stala ob strani in me podpirala, zlasti takrat, ko je moja pot bila vijugasta in neuhojena---za to sem vama hvaležen iz dna srca. Dragi dedi Franček, to delo posvečam tebi, saj si mi v življenju vedno bil v vzgled. Hvala tudi babici Hildi, stricu Tomažu,  Nini, Manji, in Izi, za podporo in ljubezen. Čeprav sem pogosto daleč od vas, se mi zdi, da ste vedno blizu.

}
\clearpage 

\pagestyle{plain}
\cleardoublepage

\pagestyle{fancy}

\fancyhead{}
\fancyhead[CE]{Contents} 
\fancyhead[CO]{Contents}

\tableofcontents

\clearpage \pagestyle{plain}


\mainmatter

\fancyhead{} \setcounter{tocdepth}{1}
\chapter*{Introduction}
\addcontentsline{toc}{chapter}{Introduction} \pagestyle{fancy} 
\fancyhead[CE]{Introduction} 
\fancyhead[CO]{Introduction}

This dissertation consists of two research projects, as already suggested by the title: \emph{Fundamentals of Lie categories} and \emph{Yang--Mills theory for multiplicative Ehresmann connections}. Both projects belong to the field of differential geometry---more precisely, they both concern the foundations and applications of theory of Lie groupoids and algebroids. 
\vspace{0.8em}

\noindent \textit{Fundamentals of Lie categories.} The first and shorter part of this thesis revisits one of the most basic structural assumptions in the theory of Lie groupoids: invertibility. That is, we study Lie categories, which are small categories (objects and arrows form sets) endowed with a compatible smooth structure. 
The motivation for this work comes from theoretical physics, where the states of a physical system and the processes between them are often modelled with smooth manifolds, and physical systems which exhibit irreversible dynamics are ubiquitous. For instance, the second law of thermodynamics states that for a process between two states in an isolated system to be feasible, the entropy $S$ of the initial state must not be greater than that of the final state, that is, $\Delta S\geq 0$. In turn, the invertible processes of a system are those for which the entropy remains unchanged. As the motivating picture of thermodynamics perhaps already suggests, one difference between the theory of Lie categories and Lie groupoids is that in a Lie category, the space of arrows is allowed to possess a boundary, as the reversible arrows are precisely those with $\Delta S=0$. This cannot happen for Lie groupoids, provided the space of objects is boundaryless.  We introduce new examples of Lie categories, explore their structural properties, inspect their differences and similarities with Lie groupoids, and research the notions emerging naturally from the lack of the invertibility assumption.
\vspace{0.8em}

\noindent \textit{Yang--Mills theory for multiplicative Ehresmann connections.} The second and principal part of the thesis is concerned with generalizing Yang--Mills theory---a cornerstone of modern mathematical physics---from principal bundles to possibly \emph{non-integrable} and \emph{non-transitive} Lie algebroids. To provide a motivation for this, let us first revisit the classical Yang--Mills theory on principal bundles. The central mathematical notion which appears at its foundation is that of the curvature of a connection on a principal bundle. Roughly speaking, Yang--Mills theory entails an application of the variational principle to principal bundles. More precisely, the action functional is defined on the space of all principal bundle connections: it inputs a connection and outputs the $L^2$-norm of its curvature. Finding critical points of this action functional provides the means of finding its (local) minima, and morally, minimizing the $L^2$-norm of the curvature may be viewed as the next best scenario to having a flat connection. To provide more background, we now discuss the Euler--Lagrange equation for this action (called the \textit{Yang--Mills equation}), while  simultaneously showing how the theory relates to its origins in physics. Consider the trivial $\mathrm U(1)$-bundle on Minkowski space as the underlying principal bundle; in this case, the Yang--Mills equation corresponds  precisely to two of the four Maxwell equations from the theory of electromagnetism, and the remaining two are captured by the Bianchi identity. They respectively read 
\begin{align*}
\d{}\star F=0\quad\text{and}\quad\d{F}=0,
\end{align*}
where $\star$ denotes the Hodge star operator on Minkowski space $M$, and the curvature $F\in\Omega^2(M)$ is interpreted as the electromagnetic field strength. The simplicity of these equations has to do with the fact that the structure Lie group is abelian---more generally, when $G$ is potentially nonabelian and $P\ra M$ is any principal $G$-bundle, these equations similarly have the form 
\begin{align*}
  \d{}^\nabla\!\star F=0\quad\text{and}\quad\d{}^\nabla{F}=0,
  \end{align*}
where the curvature is now identified with the form $F\in\Omega^2(M;\ad (P))$ with values in the adjoint vector bundle $\ad (P)=(P\times\frak g)/G$, and $\nabla$ is the induced linear connection on $\ad (P)$. Hence, classical Yang--Mills theory on principal bundles can be seen as a far-reaching generalization of the theory of electromagnetism, providing a coordinate-invariant way of writing the differential equations that govern the dynamics of force carriers in physics (called \textit{gauge bosons}), on an arbitrary fixed spacetime. 

On the other hand, the theory of principal bundles has a special place in the theory of Lie groupoids, namely, any principal bundle gives rise to its \textit{gauge groupoid} $\G(P)$, whose Hom-sets are defined as $G$-equivariant maps between the fibres of $P\ra M$; conversely, a given principal bundle can be completely recovered from its gauge groupoid. 
Moreover, principal bundle connections can be viewed as certain connections on $\G(P)$. Specifically, as observed in \cite{gerbes} and \cite{mec}, there is a bijective correspondence:

\noindent\begin{minipage}{\linewidth}
  \begin{align*}
  \left\{
  \parbox{4.5cm}{\centering connections on a\\ principal bundle $P\ra M$}
  \right\}
  \longleftrightarrow
  \left\{
  \parbox{7.2cm}{\centering multiplicative Ehresmann connections\\ for the anchor map $\Phi\colon\G(P)\ra M\times M$}
  \right\}
  \end{align*}
\end{minipage}
\vspace{0.5em}

\noindent The anchor map $\Phi$ simply takes a $G$-equivariant map between two fibres of $P$ and outputs the base points of the two fibres, and a \textit{multiplicative Ehresmann connection} is a usual Ehresmann connection $E$ for $\Phi$, namely, a distribution $E$ on $\G(P)$ satisfying $T(\G(P))=\ker\d\Phi\oplus E$, with the additional requirement that $E\subset T(\G(P))$ is a subgroupoid of the tangent groupoid of $\G(P)$. Since the notion of a multiplicative Ehresmann connection makes sense more generally than for transitive groupoids, the fundamental question of this thesis is: is it possible to construct a Yang--Mills theory for multiplicative Ehresmann connections? 

The main result of this thesis is a positive answer to this question, hence relaxing the transitivity condition. Furthermore, it is already clear in the work by Atiyah and Bott \cite{atiyah-bott} that the formulation of Yang--Mills theory is infinitesimal in nature: the formalism does not depend on the gauge groupoid, but rather on its Lie algebroid---the \textit{Atiyah algebroid} of the principal bundle. This insight enables us to further relax the integrability condition, resulting in a generalized framework that is suitable for general Lie algebroids. The central geometric notion, previously given by principal connections, is  now replaced with \textit{infinitesimal multiplicative Ehresmann connections}. The obtained framework provides a flexible and conceptually clear formulation of Yang--Mills theory, opens a pathway for new interactions between gauge theory and higher geometry, and makes advances to the theory of (infinitesimal) multiplicative Ehresmann connections. To temporarily appease the reader, let us hereby write the obtained Yang--Mills equations, together with the Bianchi identities. Instead of a pair of equations as in the classical theory, we now have two pairs: 
\begin{align*}
\begin{aligned}
    \d{}^\nabla\!\star F&=0,&\qquad \d{}^\nabla F&=G,\\
    \d{}^\nabla\!\star G&=\tfrac 1\mu \star F,& \d{}^\nabla G&=0,
\end{aligned}
\end{align*}
where $F$ is a 2-form on the base that models the curvature of a multiplicative connection, $G$ is the so-called curvature 3-form, and $\mu\in\R$ is a constant. We now take a deeper look at the individual chapters of the thesis and provide a summary of the main results obtained in each of them.

\subsection*{Chapter \ref*{chapter:lie_cats}: Fundamentals of Lie categories}
We begin  by introducing the notion of a Lie category---roughly speaking, it is a small category, internal to the category of smooth manifolds. In our treatment, we allow the space of arrows to possess a boundary, which introduces a few subtleties since the space of composable arrows may be a manifold with corners. Examples include:
\begin{itemize}
  \item \textit{Lie monoids} (with possible boundary) and \textit{Lie groupoids} (necessarily without boundary).
  \item The \textit{endomorphism category} of a vector bundle $E\ra X$: 
  \[\End(E)=\set{\xi\colon E_x\rightarrow E_y\given \xi \text{ is a linear map between fibres of $E$}}.\]
  \item \textit{Bundles of Lie monoids}, such as the exterior bundle $\Lambda(E)=\bigoplus_{i}\Lambda^i(E)$, with wedge product as composition. This is an example of a bundle of associative algebras, i.e., it is a bundle of Lie monoids that is enriched over the category of vector spaces.
  \item The \textit{action category} of an action $\phi\colon M\times X\ra X$ of a Lie monoid $M$ on a manifold $X$:
  \[M\ltimes X = \set{(g,x)\in M\times X\given (g,x) \text{ is a regular point of the action }\phi}.\]
  \item The \textit{order category} $C=\set{(y,x)\in\R\times\R\given x\leq y}$ of $\R$.
  \item The \textit{fat category} of a $\vb$-groupoid.
\end{itemize}

\noindent The first natural question about Lie categories concerns invertible arrows. Namely, we ask whether the invertible arrows $\G(C)\subset C$ form a Lie groupoid. We obtain the following result, generalizing and clarifying the result of Charles Ehresmann from his pioneering paper \cite{ehr1959}.
\begin{thm}
If $C$ has a well-behaved boundary, its invertible arrows $\G(C)\subset C$ form an embedded Lie subgroupoid. More precisely, if $u(X)\subset \Int C$, then $\G(C)$ is open in $\Int C$, and if $\partial C$ is a wide subcategory of $C$, then $\G(C)$ is open in $\partial C$.
\end{thm}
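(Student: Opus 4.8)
The plan is to establish the statement first under the hypothesis $u(X)\subset\Int C$ and then deduce the other case from it. Throughout, $s,t\colon C\to X$ denote source and target (both submersions), $u$ the unit section; for $g_0\in\G(C)$ I fix $h_0=g_0^{-1}$, $x_0=s(g_0)$, $y_0=t(g_0)$.

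First I would record the translation lemma: for $g_0\in\G(C)$, right translation $R_{g_0}\colon s^{-1}(y_0)\to s^{-1}(x_0)$, $b\mapsto b\circ g_0$, is a diffeomorphism of source fibres, with inverse $R_{h_0}$---both are smooth as restrictions of composition, and $R_{h_0}R_{g_0}$, $R_{g_0}R_{h_0}$ collapse to identities via associativity and the unit law (similarly for left translation between target fibres). From this and $R_{g_0}\bigl(u(y_0)\bigr)=u(y_0)\circ g_0=g_0$, together with the fact that a diffeomorphism of manifolds with boundary preserves the interior--boundary splitting (using $\partial(s^{-1}(p))=s^{-1}(p)\cap\partial C$, valid since $s$ and $s|_{\partial C}$ are submersions), I get that $g_0$ lies in whichever stratum of $C$ contains the units: so $\G(C)\subset\Int C$ if $u(X)\subset\Int C$, and $\G(C)\subset\partial C$ if $u(X)\subset\partial C$. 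In the latter case $\partial C$ is a boundaryless Lie category (a wide subcategory by hypothesis, with $s|_{\partial C},t|_{\partial C}$ submersions) in which every arrow invertible in $C$ is invertible, so $\G(C)=\G(\partial C)$; hence that case follows by applying the $u(X)\subset\Int C$ case to $\partial C$. It remains to treat $u(X)\subset\Int C$.

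So suppose $u(X)\subset\Int C$ and let $g_0\in\G(C)$; by the previous paragraph $g_0,h_0,u(x_0),u(y_0)$ are all interior, so all spaces below are boundary-free near the relevant points. I would apply the inverse function theorem to the smooth map $\Phi\colon C^{(2)}=C\times_{s,t}C\to C\times_{s,s}C$, $\Phi(b,a)=(b\circ a,\,a)$, which sends $(h_0,g_0)\mapsto(u(x_0),g_0)$. Its differential there is an isomorphism: both sides are manifolds of dimension $2\dim C-\dim X$, and a kernel element $(\beta,\alpha)$ has $\alpha=0$, hence $\beta\in\ker ds_{h_0}=T_{h_0}s^{-1}(y_0)$, and then the first component of $d\Phi(\beta,0)$ is $d(R_{g_0})_{h_0}(\beta)$, so $\beta=0$ since $R_{g_0}$ is a diffeomorphism. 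Thus $\Phi$ is a local diffeomorphism near $(h_0,g_0)$; composing its local inverse with $a\mapsto(u(s(a)),a)$ and projecting gives a smooth map $j$ on a neighbourhood $U$ of $g_0$ with $j(g_0)=h_0$ and $j(a)\circ a=u(s(a))$, i.e.\ a smooth family of left inverses. A symmetric argument with target fibres gives a smooth family of right inverses, which associativity and the unit laws identify with $j$; so every $a\in U$ is invertible with inverse $j(a)$. Therefore $U\subset\G(C)$---proving $\G(C)$ open in $\Int C$---and $j|_U$ shows inversion is smooth; being open in $\Int C$ (resp.\ $\partial C$), $\G(C)$ is embedded, and as $s,t,u$ and composition restrict from $C$ while inversion is the smooth $j$, $\G(C)\rightrightarrows X$ is the desired embedded (wide) Lie subgroupoid.

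The main difficulty will be the differential computation in the last paragraph---more precisely, setting up the right fibre products so that $d\Phi$ is square and then reading off that its fibrewise part is exactly $dR_{g_0}$; this is the only place where the translation lemma does substantial work, and also where the submersion hypotheses on $s$ and $t$ are essential (for $C^{(2)}$, $C\times_{s,s}C$, and their target-fibre analogues to be manifolds of the right dimension). The remaining care is bookkeeping: upgrading a one-sided smooth inverse to a two-sided one from the category axioms alone, and checking the interior/boundary claim for source fibres. The boundary, which might look like the obstruction, is actually disposed of cheaply at the start: invertible arrows cannot leave the stratum of the units, which is precisely why \enquote{well-behaved boundary}---units interior, or the boundary a wide subcategory---is the natural hypothesis and why all the analysis stays in the boundary-free world.
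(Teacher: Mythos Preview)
Your proof is correct and follows essentially the same approach as the paper: the map $\Phi(b,a)=(ba,a)$ you use is exactly the paper's $\tilde\vartheta$, and your differential computation (reducing to injectivity of $dR_{g_0}$ on the source fibre) matches the paper's use of $dL_k$ in its mirror map $\vartheta(g,h)=(g,gh)$. The only cosmetic differences are that you fold the lemma about invertibles inheriting the stratum of the units into the proof itself, and that you make smoothness of the inversion map explicit; both are fine.
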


\noindent Next, we recognize that the construction of Lie algebroids carries through to any Lie category, by working with either left-invariant or right-invariant vector fields. Of course, since there is no inversion, the two algebroids may fail to be isomorphic. However, if the units are contained within the interior, $u(X)\subset \Int C$, then both algebroids agree with the algebroid of $\G(C)$.  Furthermore, the absence of inverses leads to a natural generalization of rank from linear algebra: every arrow $g\in C$ has a \textit{left} and a \textit{right rank}, 
\begin{align*}
\rankl(g)=\rank\d(L_g)_{1_{s(g)}}\quad\text{and}\quad \rankr(g)=\rank\d(R_g)_{1_{t(g)}}.
\end{align*}
We inspect several examples and the general properties of this notion of rank. Notably, nice properties hold for arrows which have maximal left and right rank (\textit{regular} arrows).
\begin{prop}
The subset of regular arrows in any Lie category $C$ is open. 
\end{prop}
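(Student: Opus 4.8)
The plan is to realise $\rankl$ and $\rankr$ as lower semicontinuous functions on $C$ — as the fibrewise ranks of two smooth vector bundle morphisms over $C$ — and then to observe that the regular arrows form a finite intersection of the (open) superlevel sets of these functions.

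First I would record that $A^L:=u^{*}\ker\d t$ and $A^R:=u^{*}\ker\d s$ are genuine vector bundles over $X$: since $t\circ u=\id_X$ and $s\circ u=\id_X$, the differentials $\d t$ and $\d s$ are surjective at every point of $u(X)$, hence — by openness of the submersion condition — on a neighbourhood of $u(X)$, so $\ker\d t$ and $\ker\d s$ restrict to vector bundles there and pull back along $u$ to vector bundles over $X$. (This is also exactly what makes the left and right algebroid constructions invoked in the preceding discussion work.) The key claim is then: the assignment $g\mapsto\d(L_g)_{1_{s(g)}}$ is a morphism of vector bundles $\mathcal L\colon s^{*}A^L\to TC$ covering $\id_C$, and $g\mapsto\d(R_g)_{1_{t(g)}}$ is a morphism $\mathcal R\colon t^{*}A^R\to TC$ covering $\id_C$. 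Smoothness is inherited from the structure maps: writing $m$ for the composition $C^{(2)}\to C$, one has $\mathcal L_g(v)=\d m_{(g,\,1_{s(g)})}(0_g,v)$ for $v\in A^L_{s(g)}=\ker\d t_{1_{s(g)}}$, where $g\mapsto(g,1_{s(g)})=(g,u(s(g)))$ is a smooth map $C\to C^{(2)}$, the pair $(0_g,v)$ is a tangent vector to $C^{(2)}$ at that point (admissible because $\d t(v)=0$), and $\d m$ is smooth; hence $\mathcal L$ is a composite of smooth maps, and likewise $\mathcal R_g(w)=\d m_{(1_{t(g)},\,g)}(w,0_g)$. By construction the fibrewise rank of $\mathcal L$ at $g$ is $\rankl(g)$ and that of $\mathcal R$ at $g$ is $\rankr(g)$.

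Next I would invoke the standard fact that the fibrewise rank of a smooth vector bundle morphism is lower semicontinuous on the base: if $\rank\mathcal L_g=k$, then in local trivialisations some $k\times k$ minor of the matrix of $\mathcal L$ is nonzero at $g$, hence on a neighbourhood of $g$. Therefore $\{g\in C:\rankl(g)\ge k\}$ and $\{g\in C:\rankr(g)\ge k\}$ are open for every $k$. Since $\rankl$ is integer-valued and bounded above (by the rank of $A^L$), it attains a maximum value $r_L$, and similarly $\rankr$ attains $r_R$; the regular arrows are by definition exactly the $g$ with $\rankl(g)\ge r_L$ and $\rankr(g)\ge r_R$, i.e.\ the intersection of two open sets, hence open. (If one reads ``maximal'' componentwise, apply the same argument on each connected component; openness is unaffected.)

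The only point that needs care — more a matter of bookkeeping than a genuine obstacle — is the behaviour near the units when $\partial C\neq\emptyset$: one must check that $\ker\d t$ and $\ker\d s$ really are vector bundles along $u(X)$, and that $C^{(2)}$ is a manifold with corners on which $m$ is smooth and the vectors $(0_g,v)$, $(w,0_g)$ are honestly tangent. Since all of this is already established when the Lie algebroid(s) of a Lie category are constructed, the argument above goes through verbatim.
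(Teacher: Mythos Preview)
Your proof is correct and follows essentially the same approach as the paper: both realise $g\mapsto\d(L_g)_{1_{s(g)}}$ as a smooth bundle morphism over $C$ (the paper packages it as a section of $E^L=(s^*u^*\ker\d t)^*\otimes\ker\d t$, you as a map $s^*A^L\to TC$) and then invoke lower semicontinuity of the fibrewise rank. One small correction: in the paper, ``regular'' means rank equal to $\delta=\dim C-\dim X$, not ``maximal attained rank''---but these coincide because $L_{1_x}=\id_{C^x}$ shows every unit has full rank, so your parenthetical about reading maximal componentwise is unnecessary.
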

\noindent The rank of arrows also dictates certain structural properties of a Lie category. For instance, we show that if all arrows have full and constant rank, the composition map is submersive (Corollary \ref{cor:composition_submersion}).
Related to the notion of rank, a Lie category $C\rra X$ comes equipped with natural singular distributions determined by differentials of left and right translations. Focusing on left translations, $D\subset TC$ is defined as
\[D_g=\Im\d(L_g)_{1_{s(g)}}\subset \ker\d t_g\subset T_gC.\]

\begin{prop}
In a Lie category without a boundary, the singular distribution $D\subset TC$ is integrable, and its integral manifold through $g\in C$ has dimension $\rankl(g)$.
\end{prop}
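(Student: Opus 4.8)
The plan is to realize $D$ as the singular distribution spanned by the left-invariant vector fields on $C$, and then to apply the standard integrability theorem for singular distributions; the dimension statement will then be immediate.

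Recall from the construction of the left-invariant Lie algebroid $A^L$ of $C$ that its fibre at $x$ is $A^L_x=\ker\d t_{1_x}=T_{1_x}(t^{-1}(x))$, and that a section $\alpha\in\Gamma(A^L)$ determines a vector field $\alpha^L\in\vf(C)$ with $\alpha^L_g=\d(L_g)_{1_{s(g)}}(\alpha_{s(g)})$; this $\alpha^L$ is smooth (by smoothness of the multiplication) and, since $t\circ L_g$ is constant, tangent to the $t$-fibres. Because the global sections of $A^L\to X$ span every fibre,
\[
D_g=\Im\d(L_g)_{1_{s(g)}}=\d(L_g)_{1_{s(g)}}\bigl(A^L_{s(g)}\bigr)=\set{\alpha^L_g\given\alpha\in\Gamma(A^L)},
\]
so $D$ is precisely the distribution spanned pointwise by the left-invariant vector fields, and $\dim D_g=\rank\d(L_g)_{1_{s(g)}}=\rankl(g)$. (In a Lie groupoid $L_g$ is a diffeomorphism, so $D=\ker\d t$ and this reduces to the foliation by $t$-fibres.)

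To invoke the Stefan--Sussmann (equivalently Hermann) theorem on integrability of singular distributions, I would verify that $D$ is locally finitely generated and involutive. For the first: around $g_0\in C$ choose an open $U\ni s(g_0)$ over which $A^L$ has a frame $\alpha_1,\dots,\alpha_r$; then $\alpha_1^L,\dots,\alpha_r^L$ are vector fields on the open set $s^{-1}(U)\ni g_0$, and writing an arbitrary $\alpha\in\Gamma(A^L)$ as $\sum_i f_i\alpha_i$ on $U$ gives $\alpha^L=\sum_i(f_i\circ s)\,\alpha_i^L$ on $s^{-1}(U)$, so the $\alpha_i^L$ generate $D$ as a $C^\infty$-module there. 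For involutivity, the identity $L_g\circ L_h=L_{gh}$, valid for composable $g,h$, shows that for each such $g$ the restriction of $\alpha^L$ to $t^{-1}(s(g))$ is $L_g$-related to the restriction of $\alpha^L$ to $t^{-1}(t(g))$; by functoriality of the Lie bracket under $\phi$-related vector fields --- which holds for an arbitrary smooth map $\phi$, so the non-invertibility of $L_g$ in a Lie category is not an obstruction --- $[\alpha^L,\beta^L]$ is again $L_g$-related to itself for every composable $g$. Since $[\alpha^L,\beta^L]$ is tangent to the $t$-fibres, its restriction $\gamma$ to the units is a section of $A^L$, and evaluating the relation at $1_{s(g)}$ gives $[\alpha^L,\beta^L]_g=\d(L_g)_{1_{s(g)}}(\gamma_{s(g)})=\gamma^L_g$, so $[\alpha^L,\beta^L]=\gamma^L$ is left-invariant; with the Leibniz rule this yields involutivity of $D$.

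The Stefan--Sussmann theorem then gives, through each $g\in C$, a maximal integral manifold (leaf) $\O_g\subseteq C$: an immersed submanifold with $T_g\O_g=D_g$, realized concretely as the orbit of $g$ under the pseudogroup generated by the flows of left-invariant vector fields; these leaves partition $C$ into a singular foliation. As $T_g\O_g=D_g$ and $\dim D_g=\rankl(g)$ by the computation above, the integral manifold through $g$ has dimension $\rankl(g)$. The step I expect to need genuine care is the involutivity argument: one must run the ``left-invariant vector fields are bracket-closed'' computation via $\phi$-relatedness rather than push-forwards, precisely because $L_g$ is not a diffeomorphism in general; the rest is a routine adaptation of the Lie groupoid case together with the black-box integrability theorem for locally finitely generated involutive singular distributions.
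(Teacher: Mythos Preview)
Your proof is correct and matches the paper's approach for integrability: both pick a local frame of $A^L$, extend to left-invariant vector fields that locally generate $D$, use closedness of left-invariant fields under the bracket (the paper cites this as a lemma, you spell out the $\phi$-relatedness argument), and invoke Sussmann's theorem. The only difference is in the dimension claim: you read it off directly from $T_g\O_g=D_g$, while the paper goes further and explicitly identifies the leaf through $g$ as $L_g\bigl(\G(C)^{s(g)}\bigr)$, using that $L_g$ restricted to the core has constant rank $\rankl(g)$ (their Corollary on ranks at invertibles); your shortcut suffices for the stated proposition, but the paper's route buys an explicit description of the leaves as orbits of the right action of the core groupoid.
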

\noindent We then focus on Lie categories which are extendable to Lie groupoids, that is, those categories $C$ for which there exists an injective immersive functor $C\hookrightarrow G$ into a Lie groupoid $G$. The existence of such an extension affects the ranks and algebroids as follows:
\begin{itemize}
  \item All arrows in an extendable category $C$ have full and constant rank, and all left and right translations are injective. In particular, the composition map of $C$ is submersive. 
\item If the extension is such that $\dim C=\dim G$, then both Lie algebroids of $C$ are isomorphic to the Lie algebroid of $G$.
\item The Hom-sets of $C$ are closed embedded submanifolds. In particular, for any $x\in X$, the set $\Hom(x,x)\subset C$ is a Lie monoid.
\item If a Lie monoid $M$ is extendable to a Lie group, it is parallelizable. Whether this holds for arbitrary Lie monoids remains an open question. 
\end{itemize}
\noindent Moving further, we generalize completeness results for invariant vector fields on Lie categories. For instance, it is well-known that on a Lie group, any left-invariant vector field is complete. The analogous result for Lie monoids is slightly more subtle: since we are allowing a nonempty boundary, a vector field may be only half-complete, that is, any integral path can be indefinitely extended in one direction (Theorem \ref{thm:complete}), whereas the empty boundary case is analogous to the case of Lie groups. On the other hand, on Lie groupoids, it is well known that a left-invariant vector field $\alpha^L$ is complete if and only if the vector field $\rho(\alpha)$ on the base is complete. A similar result on completeness of left-invariant vector fields is also obtained for Lie categories with well-behaved boundaries, see Proposition \ref{prop:completeness_characterization}.

Finally, we interpret classical statistical thermodynamics within the framework of Lie categories. In statistical physics, one often studies an isolated physical system consisting of an unknown number of particles, each of which can exist in a superposition of finitely many a priori given \textit{microstates}. Mathematically, such a system is described by tuples of probabilities  $(p_i)_{i=0}^n$ which satisfy $\sum_i p_i=1$. The set of these tuples forms the standard $n$-simplex $\Delta^n$. The transitions between these configurations are represented as arrows, which are constrained by the second law of thermodynamics. Specifically, the set of allowable arrows is limited to
\[D=\set*{(p_i)_i\rightarrow (q_i)_i\given S(q_i)_i- S(p_i)_i\geq 0},\]
where $S(p_i)_i=-\sum_i p_i\log p_i$ is the entropy of a given configuration. In the differentiable setting, however, certain adjustments are necessary for the category  $D\rra \Delta^{n}$, since it is not a Lie category. The key issue lies in the behavior of the entropy function $S\colon \Delta^n\ra \R$ at points of $\Delta^n$ where the differential of $S$ becomes problematic. More precisely, these problematic points include the boundary points of the $n$-simplex, $\partial\Delta^n$, where the derivative of $S$ diverges, and the so-called microcanonical configuration $p^\mu=(\tfrac 1{n+1},\dots, \tfrac 1{n+1})\in\Delta^n$, where $S$ is critical. One then obtains a Lie category by restricting to $X=\Int\Delta^n - p^\mu$. We conclude the discussion by noting that the elementary framework of statistical thermodynamics, as understood by physicists, implicitly involves a Lie category in the background.

\subsection*{Chapter \ref*{chapter:background}: Background for the second part}
This chapter highlights the necessary background for the remainder of the thesis. We start with some well-known elementary properties and examples of Lie groupoids and algebroids and their representations. The chapter continues with a series of the following technical prerequisites.
\begin{itemize}
    \item A precise definition of two important cochain complexes appearing in our research: the \textit{Bott--Shulman--Stasheff complex} of a Lie groupoid $G$ and the \textit{Weil complex} of a Lie algebroid $A$ \cites{diff_cohomology, weil, spencer, homogeneous}. These are certain complexes of representation-valued differential forms on groupoids and algebroids. To be precise, in the case of groupoids, the underlying spaces of the complex are
    \[
    \Omega^q(G^{(p)};V),
    \]
    where $V$ is a representation of the groupoid $G$, and $G^{(p)}$ is the $p$-th level of the nerve of $G$. On the other hand, the Weil complex, denoted $W^{p,q}(A;V)$, represents its infinitesimal analogue; it has a rather involved definition, so we postpone it to Chapter \ref*{chapter:background}. 
    The following two classes of differential forms contained in these complexes are important to our research.
    \begin{enumerate}[label={(\roman*)}]
      \item The cocycles at $p=0$. These are the so-called \textit{invariant} forms on $M$, which are differential forms “constant” along the leaves of the orbit foliation.
      \item The cocycles at $p=1$. In the global picture, these are \textit{multiplicative} forms on a Lie groupoid $G$, that is, forms compatible with the groupoid multiplication. In the infinitesimal picture, these are the so-called \textit{infinitesimal multiplicative} (IM) forms on an algebroid $A$, whose  defining conditions are highly nontrivial (equations \eqref{eq:c1}--\eqref{eq:c3}).
    \end{enumerate}
    \item Next, we recall the definition of the \textit{van Est map} from \cite{homogeneous}, which relates the global and infinitesimal cochain complexes from the previous item, 
    \[
    \ve\colon\Omega^q(G^{(p)};V)\ra W^{p,q}(A;V),
    \]
    where $A$ is the Lie algebroid of $G$. We also recall the \textit{van Est theorem} from \cite{homogeneous}, relating their cohomologies (Theorem \ref{thm:homogeneous}). A simple application thereof is given in Corollary \ref{corollary:van_est_multiplicative}, where we recover the bijective correspondence between multiplicative and infinitesimal multiplicative forms, provided the given groupoid has simply connected $s$-fibres. We note that this result was first obtained in \cite{spencer}, by utilizing jets of bisections of a groupoid.
    \item The remainder of the chapter consists of introducing the notions of \textit{$\vb$-groupoids} and \textit{$\vb$-algebroids}, i.e., groupoids and algebroids in the category of vector bundles \cites{dvb, gracia-saz, vb-algebroid-morphisms}. They are of vital importance for establishing the horizontal exterior covariant derivative, induced by an IM connection on a Lie algebroid (this is obtained in Chapter \ref{chapter:mec}). At last, the idea of proof of the van Est theorem for representation-valued forms is presented.
\end{itemize}

\subsection*{Chapter \ref*{chapter:invariant}: Invariant linear connections on representations}
The motivation for the research in this chapter is provided by real-valued multiplicative forms on Lie groupoids, that is, $V=\R_M$ is the trivial representation. Such forms were first used to study the global counterparts of Poisson and Dirac structures \cites{symplectic_groupoids,twisted_dirac}. Their infinitesimal counterparts, IM forms, received further treatment in \cites{im_forms, linear_mult, local, meinrenken_pike, weil, ve_mein}. In the global case, the underlying sets of the cochain complex are simply $\Omega^q(G^{(p)}).$
Since we are considering the case $V=\R_M$, this is a double complex, endowed with the simplicial differential $\delta$ and de Rham differential $\d{}$, satisfying $\delta^2=0,$ $\d{}^2=0$ and $\delta{\d{}}=\d{}\delta$.
\begin{align*}
  \begin{tikzcd}[ampersand replacement=\&, column sep=large, row sep=large]
    {\Omega^{q+1}(M)} \& {\Omega^{q+1}(G)} \& {\Omega^{q+1}(G^{(2)})} \& \cdots \\
    {\Omega^q(M)} \& {\Omega^{q}(G)} \& {\Omega^q(G^{(2)} )} \& \cdots
    \arrow["{\delta}", from=1-1, to=1-2]
    \arrow["{\delta}", from=1-2, to=1-3]
    \arrow["{\delta}", from=1-3, to=1-4]
    \arrow["{\d{}}", from=2-1, to=1-1]
    \arrow["{\delta}", from=2-1, to=2-2]
    \arrow["{\d{}}", from=2-2, to=1-2]
    \arrow["{\delta}", from=2-2, to=2-3]
    \arrow["{\d{}}", from=2-3, to=1-3]
    \arrow["{\delta}", from=2-3, to=2-4]
  \end{tikzcd}
  \end{align*}
As already mentioned, the 1-cocycles of the simplicial differential correspond precisely to multiplicative forms; that $\d{}$ and $\delta$ commute thus has an important consequence: the de Rham differential preserves multiplicativity. 

A natural question is whether it is possible to obtain a double complex in the case when $V$ is an arbitrary representation. Let us suppose that a linear connection $\nabla$ on $V$ is given, and let $\d{}^\nabla$ denote the exterior covariant derivative on $\Omega^q(G^{(p)};V)$ induced by the pullback connection. The following result establishes the necessary and sufficient condition on $\nabla$ for the exterior covariant derivative $\d{}^\nabla$ to commute with $\delta$.
\begin{thm}
The map $\d{}^\nabla$ commutes with $\delta$ if and only if $\nabla$ is \textit{$G$-invariant}, that is, if there holds:
\begin{align*}
    s^*\nabla=t^*\nabla,
\end{align*}
under the identification $s^*V\cong t^*V$ given by the representation $G\curvearrowright V$.  In particular, if $\nabla$ is $G$-invariant, $\d{}^\nabla$ maps multiplicative forms to multiplicative forms.
\end{thm}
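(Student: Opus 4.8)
The strategy is to split the simplicial differential into its face contributions and to pinpoint the single one that can obstruct commutativity. Recall that the coefficients of $\Omega^q(G^{(p)};V)$ are the pullback of $V$ along the ``outermost target'' map $\pi_p\colon G^{(p)}\ra M$, $(g_1,\dots,g_p)\mapsto t(g_1)$, and that $\delta=\sum_i(-1)^i\,\tilde d_i^{\,*}$ with $d_i$ the usual face maps of the nerve. For each $i\geq 1$ one has $\pi_p\circ d_i=\pi_{p+1}$, so $\tilde d_i^{\,*}$ is a genuine pullback along a map intertwining the reference projections; only the zeroth face $d_0$ (which forgets the first arrow) moves the reference point from $t(g_1)$ to $s(g_1)$, and there $\tilde d_0^{\,*}$ is the composite of the pullback $d_0^{\,*}$ with the bundle isomorphism $\mathcal A\colon s^*V\ra t^*V$ encoding the representation $G\acts V$ (pulled back to the appropriate nerve level). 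Thus the claim reduces to a statement about $\mathcal A$ alone.

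Two elementary facts drive the argument. First, \emph{naturality of the exterior covariant derivative under pullback}: for a smooth map $f$ and a connection $\nabla$ one has $\d{}^{f^*\nabla}\circ f^*=f^*\circ\d{}^{\nabla}$, checked on decomposable forms $\beta\otimes\sigma$ using $f^*(\nabla\sigma)=(f^*\nabla)(f^*\sigma)$ and compatibility of $\d{}$ with pullback. Second, a \emph{parallel} bundle morphism $\Psi$ commutes with the exterior covariant derivative, $\d{}^{\nabla}\circ\Psi=\Psi\circ\d{}^{\nabla}$, and the pullback of a parallel morphism is again parallel. By the first fact, $\d{}^{\nabla}$ commutes with $\tilde d_i^{\,*}$ for every $i\geq 1$ and with the $d_0^{\,*}$-part of $\tilde d_0^{\,*}$, since in each case the pullback connection on the target equals the pullback of the pullback connection on the source. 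Hence $\delta\circ\d{}^{\nabla}-\d{}^{\nabla}\circ\delta$ collapses to the contribution of the zeroth face, and by the second fact this vanishes for all $p,q$ precisely when $\mathcal A$ is parallel with respect to $s^*\nabla$ and $t^*\nabla$ — that is, precisely when $\mathcal A$ identifies $(s^*V,s^*\nabla)$ with $(t^*V,t^*\nabla)$, which is the definition of $\nabla$ being $G$-invariant. This gives the ``if'' direction.

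For the converse it suffices to test commutativity at the corner $p=q=0$, where $\delta\colon\Gamma(V)\ra\Gamma(t^*V)$ is $\sigma\mapsto\mathcal A(s^*\sigma)-t^*\sigma$ and $\d{}^{\nabla}$ is $\nabla$, respectively $t^*\nabla$. Expanding $\delta(\nabla\sigma)=\d{}^{\nabla}(\delta\sigma)$ and using $s^*(\nabla\sigma)=(s^*\nabla)(s^*\sigma)$ and $t^*(\nabla\sigma)=(t^*\nabla)(t^*\sigma)$, the relation becomes $\mathcal A\bigl((s^*\nabla)(s^*\sigma)\bigr)=(t^*\nabla)\bigl(\mathcal A(s^*\sigma)\bigr)$ for all $\sigma\in\Gamma(V)$. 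Since $s$ is a submersion, a local frame of $V$ pulls back to a local frame of $s^*V$, so combined with the Leibniz rule this forces $\mathcal A$ to be parallel, hence $\nabla$ to be $G$-invariant. The last assertion is then immediate: multiplicative $V$-valued $q$-forms are exactly the $1$-cocycles $\omega\in\Omega^q(G;V)$ with $\delta\omega=0$, and if $\nabla$ is $G$-invariant then $\delta(\d{}^{\nabla}\omega)=\d{}^{\nabla}(\delta\omega)=0$, so $\d{}^{\nabla}\omega$ is again multiplicative.

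The only genuine care needed — more bookkeeping than difficulty — lies in fixing the sign and coefficient conventions, identifying which face map carries the isomorphism $\mathcal A$, and tracking which pullback connection each symbol $\d{}^{\nabla}$ refers to along the nerve. Once $\delta=\sum_i(-1)^i\tilde d_i^{\,*}$ is organized so that all but the zeroth face are manifestly compatible with the reference projections, the equivalence follows from the two routine lemmas above.
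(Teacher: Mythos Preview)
Your argument is correct and follows essentially the same strategy as the paper's proof: isolate the one face map that does not intertwine the reference projections to $M$, and observe that the commutator $[\d{}^\nabla,\delta]$ collapses to the failure of the action isomorphism to be parallel. Two points of comparison are worth noting. First, you work with the convention that coefficients live in $(t\circ\pr_1)^*V$, so the ``twisted'' face is $d_0$; the paper uses $(s\circ\pr_p)^*V$, so the twisted face is the last one $f_{p+1}$, and the bundle map carrying the action is $\phi=\mathcal A^{-1}$ --- the two setups are mirror images and your reasoning transfers verbatim. Second, the paper goes further than you need to: rather than merely showing the commutator vanishes iff $\mathcal A$ is parallel, it records the explicit formula $[\d{}^\nabla,\delta^p]\omega=(-1)^p(\pr_{p+1})^*\Theta\wedge(f_{p+1}^{(p+1)})^*\omega$, where $\Theta(X)\xi=\phi(\nabla^t_X\xi)-\nabla^s_X\phi(\xi)$ is the $G$-invariance form. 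This explicit expression is not needed for the theorem as stated, but it is essential later in Chapter~\ref*{chapter:mec}, where the horizontal projection $h^*$ kills $\Theta$ without $\nabla$ being invariant.
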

This result was already obtained for the level $p=1$ in \cite{mec}*{Appendix A}. Importantly, our method of proof differs from the one in \cite{mec} in that we have produced an explicit formula for the commutator $[\delta,\d{}^\nabla]$ for the general level $p\geq 0$ (see Lemma \ref{lem:G_invariant}), which is crucial in the next sections. In the infinitesimal realm, the task of obtaining the appropriate formula for $\d{}^\nabla$ on the Weil complex $W^{p,q}(A;V)$ is more difficult (Definition \ref{def:d_nabla_weil}). By computing the commutator $[\delta,\d{}^\nabla]$ in the infinitesimal realm (Lemma \ref{lem:A_invariant}), the analogous result as for groupoids follows:
\begin{thm}
The map $\d{}^\nabla$ commutes with $\delta$ if and only if $\nabla$ is $A$-invariant, that is, if there holds
\[\nabla^A_\alpha=\nabla_{\rho(\alpha)}\quad\text{and}\quad\iota_{\rho(\alpha)}R^\nabla=0,\]
for any $\alpha\in A$. In particular, if $\nabla$ is $A$-invariant, the operator $\d{}^\nabla$ maps IM forms to IM forms. 
\end{thm}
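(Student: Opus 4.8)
The plan is to mirror the groupoid argument in the infinitesimal setting. Deducing the statement from its global counterpart through the van Est map $\ve$ would only cover integrable $A$, so instead I would compute the commutator $[\delta,\d{}^\nabla]$ directly on the Weil complex and read the equivalence off an explicit formula. First I would fix the two operators in play: the simplicial differential $\delta\colon W^{p,q}(A;V)\to W^{p+1,q}(A;V)$, which raises the algebroid degree, and the connection-twisted exterior derivative $\d{}^\nabla\colon W^{p,q}(A;V)\to W^{p,q+1}(A;V)$ of Definition \ref{def:d_nabla_weil}, which raises the form degree. Since the underlying Weil bicomplex already satisfies $\delta\d{}=\d{}\delta$ for trivial coefficients, any failure of commutativity must be localized in the way $\d{}^\nabla$ records the connection, i.e. in terms built from $\nabla^A$ and from $R^\nabla$.

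The technical heart of the argument is the infinitesimal analogue of Lemma \ref{lem:G_invariant}, namely an explicit formula for $[\delta,\d{}^\nabla]$ on a general element $c\in W^{p,q}(A;V)$ (this will be Lemma \ref{lem:A_invariant}). Unwinding the definitions of $\delta$ and $\d{}^\nabla$ and organizing terms according to the slot into which $\delta$ inserts the extra $\Gamma(A)$-argument, I expect the interior contributions to cancel in pairs and the commutator to reduce to a sum of two boundary terms: one assembled from the discrepancy tensor $\alpha\mapsto\nabla^A_\alpha-\nabla_{\rho(\alpha)}\in\Gamma(A^*\otimes\End(V))$ and one assembled from $\alpha\mapsto\iota_{\rho(\alpha)}R^\nabla\in\Gamma(A^*\otimes T^*M\otimes\End(V))$, each contracted in the natural way against the arguments of $c$. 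Schematically,
\[
[\delta,\d{}^\nabla]\,c \;=\; P\bigl(\nabla^A_{\bullet}-\nabla_{\rho(\bullet)}\bigr)(c)\;+\;Q\bigl(\iota_{\rho(\bullet)}R^\nabla\bigr)(c),
\]
with $P$ and $Q$ universal, connection-independent multilinear operators. Producing this identity is the step I expect to be genuinely laborious: the Weil complex and its twisted differential have a far more intricate description than the groupoid cochain complex---the anchor $\rho$ and the flat $A$-connection $\nabla^A$ already enter the very definition of $\d{}^\nabla$ on $W^{p,q}$---so careful bookkeeping of Leibniz rules and Koszul signs across both degrees is unavoidable.

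Granting the formula, the theorem follows quickly. If $\nabla$ is $A$-invariant then $\nabla^A_\alpha-\nabla_{\rho(\alpha)}=0$ and $\iota_{\rho(\alpha)}R^\nabla=0$ for all $\alpha$, so $[\delta,\d{}^\nabla]$ vanishes identically. Conversely, if $[\delta,\d{}^\nabla]=0$ on the whole complex, then evaluating on suitable low-degree test elements---for instance on $W^{0,0}(A;V)=\Gamma(V)$ and on $W^{0,1}(A;V)$---isolates the two contributions $P$ and $Q$ and forces each defect tensor to vanish, which is exactly the $A$-invariance conditions $\nabla^A_\alpha=\nabla_{\rho(\alpha)}$ and $\iota_{\rho(\alpha)}R^\nabla=0$. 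Finally, the ``in particular'' assertion is formal: IM forms are precisely the $\delta$-cocycles at $p=1$ in the Weil complex, so if $\d{}^\nabla$ commutes with $\delta$ and $\omega$ is IM, then $\delta(\d{}^\nabla\omega)=\d{}^\nabla(\delta\omega)=0$, whence $\d{}^\nabla\omega$ is again IM.

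\textbf{The main obstacle} is concentrated in establishing Lemma \ref{lem:A_invariant}, the explicit commutator computation on the Weil complex; everything else is either formal or a direct consequence. A useful sanity check in the integrable case is that the van Est map $\ve$ should intertwine the respective $\delta$'s and $\d{}^\nabla$'s by naturality, so the infinitesimal formula must be compatible with applying $\ve$ to the already established global formula of Lemma \ref{lem:G_invariant}; but for arbitrary, possibly non-integrable $A$ the direct infinitesimal computation cannot be bypassed.
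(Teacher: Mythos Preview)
Your approach is correct and matches the paper's: the theorem is deduced from an explicit commutator formula (Lemma~\ref{lem:A_invariant}), and the ``in particular'' clause is formal. Two refinements are worth noting. First, the paper packages the defect not as two independent tensors but as a single Weil cochain $(T,\theta)\in W^{1,1}(A;\End V)$ with $\theta(\alpha)=\nabla^A_\alpha-\nabla_{\rho(\alpha)}$ and $T(\alpha)=\d{}^\nabla\theta(\alpha)-\iota_{\rho(\alpha)}R^\nabla$; the commutator is then $c\mapsto(T,\theta)\wedge c$, and the $A$-invariance conditions are exactly $\theta=0$ and $T=0$ (the latter becoming $\iota_{\rho(\alpha)}R^\nabla=0$ once $\theta=0$). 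Second, and more importantly, the paper sidesteps the laborious bookkeeping you anticipate: it verifies the commutator formula only at the level of \emph{leading terms} (where the computation is a short exercise with $\L^A_\alpha$ and Cartan's formula), and then invokes the action-algebroid trick of Remark~\ref{rem:action_trick} to handle all correction terms at once---pulling back along $p\colon A\ltimes P\to A$ for $P$ of sufficiently high dimension turns any correction term into a leading term on the larger algebroid, and naturality of both $\d{}^\nabla$ and the wedge operation under $p^*$ (together with injectivity of $p^*$) closes the argument. This avoids the direct sign-chasing across all $k$ that you flagged as the main obstacle.
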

Here, $\nabla^A$ denotes the given representation $A\curvearrowright V$. As in the case of groupoids, the $A$-invariance condition above has already been briefly studied in \cite{mec}, and the discussion there was limited to the level $p=1$; again, we stress that our improvements are crucial for our results in Chapter \ref*{chapter:mec}. We see that the condition of $A$-invariance is very restrictive: it implies the connection $\nabla$ actually induces the representation $\nabla^A$ on $V$ of the Lie algebroid $A$. Globally, $G$-invariance is restrictive as well: it implies that the action by the flow $\phi^{\smash{\alpha^L}}_\lambda(1_x)$ of a left-invariant vector field $\alpha^L$ must equal the parallel transport along  $\lambda\mapsto \phi^{\smash{\alpha^L}}_\lambda(1_x)$ with respect to $s^*\nabla$.

As opposed to the case of real coefficients $V=\R_M$, on a general vector bundle $V$ there is no canonical choice of $\nabla$, and a flat connection may not exist at all. Of course, for the map $\d{}^\nabla$ to induce a double complex, we need both $\delta \d{}^\nabla=\d{}^\nabla\delta$ and $(\d{}^\nabla)^2=0$. As we see from the last theorem above, invariance of $\nabla$ only implies the curvature $R^\nabla$ vanishes along the orbits in $M$, allowing it to be nonvanishing in the transversal directions. For this reason, it is natural for us to instead have in mind the relaxed notion of a \textit{curved double complex}, by which we mean a bigraded vector space $C=(C^{p,q})_{p,q\geq 0}$ equipped with: 
\begin{enumerate}[label={(\roman*)}]
  \item A \textit{differential} $\delta\colon C^{p,q}\ra C^{p+1,q}$, making $(C^{\bullet,q},\delta)$ into a cochain complex at any $q\geq 0$.
  \item A cochain map $C^{\bullet,q}\ra C^{\bullet,q+1}$, called the \textit{vertical map}.
\end{enumerate}
When the vertical map squares to zero, we recover the usual notion of a double complex. For instance, in this language, the penultimate theorem above says that a $G$-invariant connection gives rise to a curved double complex $\Omega^\bullet(G^{(\bullet)};V)$, with the differential $\delta$ and the vertical map $\d{}^\nabla$.
\begin{align*}
    \begin{tikzcd}[ampersand replacement=\&, column sep=large, row sep=large]
        {\Omega^{q+1}(M;V)} \& {\Omega^{q+1}(G;V)} \& {\Omega^{q+1}(G^{(2)};V)} \& \cdots \\
        {\Omega^q(M;V)} \& {\Omega^{q}(G;V)} \& {\Omega^q(G^{(2)};V )} \& \cdots
        \arrow["{\delta}", from=1-1, to=1-2]
        \arrow["{\delta}", from=1-2, to=1-3]
        \arrow["{\delta}", from=1-3, to=1-4]
        \arrow["{\d{}^\nabla}", from=2-1, to=1-1]
        \arrow["{\delta}", from=2-1, to=2-2]
        \arrow["{\d{}^\nabla}", from=2-2, to=1-2]
        \arrow["{\delta}", from=2-2, to=2-3]
        \arrow["{\d{}^\nabla}", from=2-3, to=1-3]
        \arrow["{\delta}", from=2-3, to=2-4]
    \end{tikzcd}
    \end{align*}
We next inspect how the van Est map relates the two exterior covariant derivatives in the global and the infinitesimal realm. Preliminarily, we note that $G$-invariance implies $A$-invariance; the converse also holds  if we assume $G$ has connected $s$-fibres.
\begin{thm}
If $\nabla$ is $G$-invariant, the van Est map commutes with exterior covariant derivatives:
\begin{align*}
    \ve \d{}_G^\nabla=\d{}_A^\nabla \ve.
\end{align*}
Moreover, this equality holds on all normalized forms regardless of $G$-invariance of the connection $\nabla$, so in particular, it holds for multiplicative forms.
\end{thm}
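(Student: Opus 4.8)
The plan is to reduce the commutation $\ve\,\d^\nabla_G = \d^\nabla_A\,\ve$ to the explicit commutator formulas already established, namely the formula for $[\delta,\d^\nabla_G]$ in Lemma~\ref{lem:G_invariant} and its infinitesimal counterpart $[\delta,\d^\nabla_A]$ in Lemma~\ref{lem:A_invariant}, together with the known fact that the van Est map $\ve$ is a cochain map for the simplicial differentials, $\ve\,\delta = \delta\,\ve$. The key structural observation is that the van Est map is built (as recalled in Chapter~\ref*{chapter:background}, following \cite{homogeneous}) out of a "perturbation operator" $h$ and the simplicial structure, so that $\ve$ is a composite of operators each of which is expressible via $\delta$, contractions by left-invariant vector fields, and Lie derivatives along their flows. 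The strategy is therefore to feed the $G$-invariance hypothesis — which by the penultimate theorem says precisely that $\d^\nabla_G$ commutes with $\delta$ and, infinitesimally, that the curvature $R^\nabla$ is killed by $\iota_{\rho(\alpha)}$ and $\nabla^A_\alpha = \nabla_{\rho(\alpha)}$ — into the inductive/recursive definition of $\ve$ and check that the correction terms cancel at each stage.

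The steps, in order, are as follows. First I would recall the precise definition of $\ve\colon\Omega^q(G^{(p)};V)\to W^{p,q}(A;V)$ from Chapter~\ref*{chapter:background} and isolate its building blocks; in the van Est theory these are an operator $R$ (restriction/pullback along the unit embedding composed with contractions by left-invariant vector fields) and the simplicial homotopy-type operators assembled from the face maps. Second, I would verify that $\d^\nabla_G$ commutes with the pullback along the nerve face maps up to terms controlled by $R^\nabla$ along orbit directions — which is exactly what $G$-invariance controls — and that it commutes with contraction by a left-invariant vector field $\alpha^L$ up to a Lie-derivative term governed by the representation $\nabla^A$. Third, I would assemble these local commutation facts along the recursive definition of $\ve$, using $\ve\delta = \delta\ve$ to move $\delta$'s past, and invoke Lemma~\ref{lem:G_invariant} and Lemma~\ref{lem:A_invariant} to see that the $\delta$-correction terms on the two sides match under $\ve$. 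Finally, for the last sentence of the theorem, I would observe that on \emph{normalized} forms the degenerate face contributions drop out, so the only place $G$-invariance entered — the treatment of the extra curvature terms attached to degeneracies/units — becomes vacuous, giving the identity unconditionally on normalized forms, and in particular on multiplicative forms (which are normalized $1$-cocycles).

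The main obstacle I expect is bookkeeping the interaction between $\d^\nabla$ and the left-invariant vector fields used to define $\ve$: contracting by $\alpha^L$ and then applying $\d^\nabla$ differs from the reverse order by a Lie-derivative term $\mathcal{L}^\nabla_{\alpha^L}$, and pushing this through the iterated construction of $\ve$ produces a proliferation of terms that must be shown to telescope. The $G$-invariance hypothesis is precisely the input that makes these extra terms align with the representation $\nabla^A$ appearing in the definition of $\d^\nabla_A$ on $W^{p,q}(A;V)$ (Definition~\ref{def:d_nabla_weil}); without it, as the previous theorems make clear, one is off by curvature terms supported transverse to the orbits. The cleanest route is likely to prove the commutation first at the level of the perturbation operator $h$ (a single-step statement, where the $[\delta,\d^\nabla]$ formulas apply directly), and then let the general statement follow formally from the expression of $\ve$ as an alternating sum of iterates of $h$ — so that the only genuinely new computation is a one-step commutator, and everything else is induction on $p$.
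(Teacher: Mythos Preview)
Your proposal has a structural gap. The van Est map in this paper is \emph{not} built out of the simplicial differential $\delta$; it is given explicitly (see \eqref{eq:ve_leading} and the formulas for $J_\alpha$) as an alternating sum of iterated applications of the operators $R_\alpha$ and $J_\alpha$, which involve pullback along the last degeneracy $j_p$, the flow of $\alpha^{(p)}$, and contraction by $\alpha^{(p)}$---no face maps appear. Consequently, knowing $[\delta,\d{}^\nabla_G]$, $[\delta,\d{}^\nabla_A]$, and $\ve\delta=\delta\ve$ gives you nothing toward $\ve\d{}^\nabla=\d{}^\nabla\ve$: there are no ``$\delta$-correction terms'' in the definition of $\ve$ to match, and commuting with $\delta$ is logically independent of commuting with $\ve$. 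Your reference to a ``perturbation operator $h$'' suggests a different model for $\ve$ than the one set up in Chapter~\ref{chapter:background}.

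The paper's actual argument proceeds directly through $R_\alpha$. A preliminary lemma shows $R_\alpha\omega = j_p^*\L^{\nabla^{s\circ\pr_p}}_{\alpha^{(p)}}\omega + \theta(\alpha)\cdot j_p^*\omega$, where $\theta$ is the symbol of the $A$-invariance form; the correction term vanishes precisely when $\nabla$ is $G$-invariant or when $j_p^*\omega=0$. With this description, the commutation $R_\alpha\d{}^\nabla = \d{}^\nabla R_\alpha$ follows from the Cartan-type identity $[\L^\nabla_X,\d{}^\nabla]=\iota_X R^\nabla\wedge\cdot$ (applied to the pullback connection, so the curvature term disappears after $j_p^*$) together with $\d{}^\nabla j_p^* = j_p^*\d{}^\nabla$. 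Iterating over $R_{\alpha_1}\cdots R_{\alpha_p}$ yields the equality of leading terms. For normalized forms one uses that $R_\alpha$ preserves the vanishing of all degeneracy pullbacks, so the $\theta$-correction drops at each stage of the iteration without any invariance hypothesis. Finally---and this step is absent from your plan---the passage from the leading term to the full Weil cochain uses the action-algebroid trick of Remark~\ref{rem:action_trick}: one pulls back along $A\ltimes P\to A$ for $P$ of sufficiently high dimension, where the leading term determines the whole cochain, and invokes naturality of $\ve$ and $\d{}^\nabla$ under this pullback.
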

We hereby recall that normalized forms on the nerve of $G$ are those which vanish when pulled back along any degeneracy map. 
To conclude Chapter \ref*{chapter:invariant}, we obtain obstruction classes for the existence of $G$-invariant and $A$-invariant connections on a given representation $V$, see Theorem \ref{thm:obstruction_invariance_G} and Theorem \ref{thm:obstruction_invariance}. Without going into too much detail, we just note that these are cohomological classes which live in the (simplicial) cohomology groups
\[
    H^{1,1}(G;\End V)\quad \text{and}\quad  H^{1,1}(A;\End V).
\]
Here, the representation on $\End V$ is the induced one by $V$; we direct to equalities \eqref{eq:bss_cohomology} and \eqref{eq:weil_cohomology} for the definition of cohomology groups. At last, we show that the van Est map relates these two obstruction classes (Proposition \ref{prop:g_inv_a_inv}).

\subsection*{Chapter \ref*{chapter:mec}: Multiplicative Ehresmann connections}
We begin the discussion of Chapter \ref*{chapter:mec} by first recalling the definition of a \textit{bundle of ideals} of a Lie algebroid---it is a subbundle $\frak k\subset \ker\rho$ of $A$ satisfying
\[
  [\Gamma(A),\Gamma(\frak k)]\subset\Gamma(\frak k).
\]
In other words, $\ad\colon A\curvearrowright\ker\rho$ restricts to a representation of $\frak k$.
Similarly, in the global setting, a bundle of ideals is defined by the requirement that the differential of conjugation, $\Ad\colon G\curvearrowright \ker\rho$, restricts to a representation on $\frak k$. The main class of examples comes from surjective submersive groupoid morphisms $\Phi\colon G\ra H$ covering the identity, where one takes  $\frak k=\ker\d\Phi|_M$; such bundles were first studied in \cites{gerbes, mec, poisson_submanifolds}. At the heart of our research is the notion of a \textit{multiplicative Ehresmann connection} for a bundle of ideals $\frak k\subset \ker\rho$, which can be defined in the following two equivalent ways: 
\begin{itemize}
  \item As a multiplicative distribution $E\subset TG$ which complements the smearing $K\subset TG$ of $\frak k$. 
  \item As a multiplicative 1-form  
  $\omega\in\Omega^1_m(G;\frak k),$
  which additionally satisfies $\omega|_{\frak k}=\id_{\frak k}$.
\end{itemize}
As known from \cite{mec}, such a connection on $G$ induces a linear connection $\nabla$ on $\frak k\ra M$. Importantly, in Proposition \ref{prop:conn}, we find an easier way of constructing this connection which does not require an abstract integration of $\frak k$. Note that $\nabla$ is not an invariant connection (unless $\frak k$ is abelian); instead, on account of our construction of $\nabla$, we obtain the central theme of the chapter:
\begin{thm}
Let $G$ be a Lie groupoid with a multiplicative Ehresmann connection $\omega\in\A(G;\frak k)$. The horizontal exterior covariant derivative $\D{}^\omega=h^*\d{}^\nabla$ is a cochain map:
\begin{align*}
    \delta\D{}^\omega=\D{}^\omega\delta.
\end{align*}
In particular, $\D{}^\omega$ maps multiplicative forms to multiplicative forms.
\end{thm}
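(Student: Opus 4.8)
The plan is to factor the statement through two facts: that the horizontal projection is compatible with the simplicial face maps, and that the induced connection $\nabla$ is invariant \emph{along horizontal directions}.

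\textbf{Reduction to the commutator.} First I would write $h\colon TG\to TG$ for the projection onto the Ehresmann distribution $E=\ker\omega$ along the smearing $K\subset TG$ of $\frak k$, and extend it levelwise to a bundle endomorphism of each $TG^{(p)}$ via the decomposition $TG^{(p)}=E^{(p)}\oplus K^{(p)}$ (available since $K$ is $s$- and $t$-vertical). By definition $\D{}^\omega=h^*\d{}^\nabla$, where $h^*$ precomposes the arguments of a form with $h$ while leaving its $\frak k$-values untouched. Now $E$ is multiplicative by hypothesis, and $K$ is multiplicative as well, being the smearing of a bundle of ideals (hence a Lie subgroupoid of $TG$); consequently every face map $d_i\colon G^{(p)}\to G^{(p-1)}$ satisfies $\d d_i(E^{(p)})\subset E^{(p-1)}$ and $\d d_i(K^{(p)})\subset K^{(p-1)}$, so $\d d_i$ commutes with $h$. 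Since the $\Ad$-twist appearing in $\delta$ acts only on $\frak k$-values while $h^*$ acts only on arguments, this gives $h^*\delta=\delta h^*$ on all of $\Omega^\bullet(G^{(\bullet)};\frak k)$. Hence $\delta\D{}^\omega=\delta h^*\d{}^\nabla=h^*\delta\d{}^\nabla$ and $\D{}^\omega\delta=h^*\d{}^\nabla\delta$, so the theorem reduces to showing that $h^*$ annihilates the commutator $[\delta,\d{}^\nabla]=\delta\d{}^\nabla-\d{}^\nabla\delta$.

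\textbf{The commutator is wedging with a horizontal tensor.} Next I would invoke Lemma \ref{lem:G_invariant}: on $\Omega^q(G^{(p)};\frak k)$ the commutator $[\delta,\d{}^\nabla]$ is given by an explicit formula that wedges a pullback --- along face-type projections $G^{(p+1)}\to G$ and $G^{(p+1)}\to G^{(p)}$ --- of the difference tensor $\theta:=s^*\nabla-t^*\nabla\in\Omega^1(G;\End\frak k)$ (formed under the identification $s^*\frak k\cong t^*\frak k$ given by $\Ad$) with a pullback of the form; in particular it vanishes precisely when $\nabla$ is $G$-invariant. The projections occurring here are compositions of face maps, hence commute with $h$ exactly as above, and $h^*$ is an algebra morphism on forms; so $h^*\bigl([\delta,\d{}^\nabla]\eta\bigr)$ is the same expression with $\theta$ replaced by $h^*\theta$. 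As $h^*\theta=0$ means precisely $\theta|_E=0$, it suffices to prove that $\theta$ vanishes on horizontal vectors.

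\textbf{Horizontal invariance of $\nabla$.} Finally, $\theta|_E=0$ should be exactly the property built into $\nabla$ by Proposition \ref{prop:conn}: there $\nabla$ is constructed so that $\Ad$ intertwines $s^*\nabla$ with $t^*\nabla$ along $E$ --- equivalently, $\Ad$ carries $E$-horizontal lifts to $E$-horizontal lifts, so that $\nabla$-parallel transport is compatible with conjugation along horizontal paths --- and differentiating this gives $s^*\nabla=t^*\nabla$ on $E$. Combining the three steps yields $\delta\D{}^\omega=\D{}^\omega\delta$, and specializing to level $p=1$ and a $\delta$-closed form shows that $\D{}^\omega$ preserves multiplicativity. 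The hard part will be the middle step: extracting from the explicit formula of Lemma \ref{lem:G_invariant} that $[\delta,\d{}^\nabla]$ is genuinely nothing more than wedging with $\theta$ along $h$-compatible maps, so that $h^*$ kills it as soon as $\theta|_E=0$; and confirming that the construction of $\nabla$ in Proposition \ref{prop:conn} is characterized precisely by this horizontal-invariance condition --- this is where the explicit, integration-free construction of $\nabla$ earns its keep.
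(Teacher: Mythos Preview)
Your proposal is correct and follows essentially the same approach as the paper: the paper's proof likewise reduces to $h^*[\d{}^\nabla,\delta]=0$ via $h^*\delta=\delta h^*$, invokes Lemma \ref{lem:G_invariant} to express the commutator as wedging with the invariance tensor $\Theta$, and then uses Corollary \ref{cor:diff_nabla_ts} (a consequence of the explicit construction in Proposition \ref{prop:conn}) to conclude $\Theta(h(X))=0$. Your identification of the ``hard part'' is exactly right---establishing $\Theta|_E=0$ is precisely the content of Corollary \ref{cor:diff_nabla_ts}, which is where the integration-free construction of $\nabla$ pays off.
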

\noindent Here, the map $h^*$ denotes the projection of differential forms onto the \textit{horizontal subcomplex},
\[h^*\colon\Omega^{q}(G^{(p)};\frak k)\ra \Omega^{q}(G^{(p)};\frak k)^\Hor.\]
The horizontal subcomplex is a certain intrinsic subcomplex (that is, independent of the choice of a multiplicative connection) of $\Omega^q(G^{(\bullet)};\frak k)$, introduced in Definition \ref{defn:horizontal}. In other words, a multiplicative Ehresmann connection turns forms on the nerve into a curved double complex. As the name suggests, the horizontal exterior covariant derivative above provides a far-reaching generalization of the horizontal exterior covariant derivative from the theory of principal bundles. This result immediately implies that the curvature \[\Omega^\omega=\D{}^\omega\omega\] of a multiplicative Ehresmann connection $\omega$ is a multiplicative horizontal 2-form on $G$. Together with the obtained expression for the connection $\nabla$, this enables us to provide a simpler proof of the properties of $\Omega^\omega$ (Proposition \ref{prop:multiplicative_curvature}) established in \cite{mec}. 

Infinitesimally, one aims to find an analogous operator $\D{}^{(\C,v)}$ on the Weil complex, now induced by an \textit{infinitesimally multiplicative (IM) connection} $(\C,v)\in\Omega^1_{im}(A;\frak k)$ for a bundle of ideals $\frak k$ on a Lie algebroid, defined as an IM 1-form on $A$ with $v|_{\frak k}=\id_{\frak k}$. In other words, $v\colon A\ra \frak k$ is a splitting of the short exact sequence
\begin{align*}
    \begin{tikzcd}[ampersand replacement=\&, column sep=large]
    0 \& {\frak k} \& A \& {A/\frak k} \& 0.
    \arrow[from=1-1, to=1-2]
    \arrow[from=1-2, to=1-3]
    \arrow["v", bend left=30, from=1-3, to=1-2]
    \arrow[from=1-3, to=1-4]
    \arrow[from=1-4, to=1-5]
  \end{tikzcd}
  \end{align*}
However, in striking contrast with the groupoid case, there is now no straightforward and intuitive way of defining $h^*$ for Weil cochains, hence the very definition of the horizontal exterior covariant derivative is evasive in the infinitesimal setting. The discovery of this operator for Weil cochains is thus also considered one of the main achievements of the chapter---see Definition \ref{def:hor_proj} and the defining equation \eqref{eq:D_inf} of $\D{}^{(\C,v)}$ for the level $p=1$. The operator $h^*$ on Weil cochains is obtained by employing the viewpoint of $\vb$-algebroids. Namely, we view an IM connection as a $\vb$-subalgebroid of the tangent algebroid $TA$, and identify Weil cochains with special forms on certain larger $\vb$-algebroid, called \textit{exterior cochains} (Definition \ref{def:exterior_cochains}). This is a somewhat technical procedure, resulting in the derivation of the wanted horizontal projection $h^*$, which is the content of Theorem \ref{thm:derivation_hor_proj}.

The procedure above ultimately yields the wanted horizontal exterior covariant derivative in the infinitesimal setting, whose main property is the following infinitesimal analogue of the last theorem.
\begin{thm}
Let $A$ be a Lie algebroid with an IM connection $(\C,v)\in\A(A;\frak k)$. The horizontal exterior covariant derivative $\D{}^{(\C,v)}=h^*\d{}^\nabla$ is a cochain map, that is,
\begin{align*}
\delta\D{}^{(\C,v)}=\D{}^{(\C,v)}\delta.
\end{align*}
In particular, $\D{}^{(\C,v)}$ maps IM forms to IM forms. 
\end{thm}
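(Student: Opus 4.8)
The plan is to run the argument in close parallel with the groupoid statement above, with the explicit commutator formula of Lemma~\ref{lem:G_invariant} replaced by its infinitesimal counterpart, Lemma~\ref{lem:A_invariant}. Write $\iota\colon W^{\bullet,\bullet}(A;\frak k)^{\Hor}\hookrightarrow W^{\bullet,\bullet}(A;\frak k)$ for the inclusion of the horizontal subcomplex produced by the $\vb$-algebroid construction of Definition~\ref{def:hor_proj} and Theorem~\ref{thm:derivation_hor_proj}, so that $h^{*}\iota=\id$. By construction the horizontal subcomplex is intrinsic and $\delta$-invariant, and, as part of its construction, $h^{*}$ commutes with the simplicial differential $\delta$; in particular $\iota\delta=\delta\iota$ on horizontal cochains. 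Since $\D{}^{(\C,v)}=h^{*}\,\d{}^{\nabla}\,\iota$ as an operator on the horizontal subcomplex, for a horizontal cochain one computes
\[
\D{}^{(\C,v)}\delta \;=\; h^{*}\d{}^{\nabla}\iota\,\delta \;=\; h^{*}\d{}^{\nabla}\delta\,\iota \;=\; h^{*}\bigl(\delta\d{}^{\nabla}-[\delta,\d{}^{\nabla}]\bigr)\iota \;=\; \delta\D{}^{(\C,v)}-h^{*}[\delta,\d{}^{\nabla}]\,\iota .
\]
Thus the whole statement reduces to the single identity $h^{*}\circ[\delta,\d{}^{\nabla}]\circ\iota=0$ on the horizontal subcomplex.

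For this I would invoke Lemma~\ref{lem:A_invariant}, which writes $[\delta,\d{}^{\nabla}]$ explicitly in terms of the two obstructions to $A$-invariance of $\nabla$: the difference between the algebroid action $\nabla^{A}_{\alpha}$ and $\nabla_{\rho(\alpha)}$, and the contraction $\iota_{\rho(\alpha)}R^{\nabla}$ of its curvature. The decisive point is that the $\nabla$ entering $\D{}^{(\C,v)}=h^{*}\d{}^{\nabla}$ is not arbitrary but the linear connection on $\frak k$ canonically determined by the IM connection $(\C,v)$ — the infinitesimal analogue of the construction in Proposition~\ref{prop:conn}, not requiring any integration of $\frak k$. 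Feeding that concrete description into the two families of error terms, and using the defining IM equations \eqref{eq:c1}--\eqref{eq:c3} together with $v|_{\frak k}=\id$, I would check that on horizontal arguments, after projecting back along $h^{*}$, each summand either carries a factor that annihilates horizontal inputs or takes values transverse to the horizontal directions, hence lands in $\ker h^{*}$. Morally the failure of $\nabla$ to be invariant is a purely \emph{vertical} phenomenon, matching the groupoid picture where the corresponding error terms live along the smearing of $\frak k$.

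The main obstacle is precisely this last verification. In the groupoid case $h^{*}$ is geometrically transparent — it simply discards the components along the smearing $K\subset TG$ of $\frak k$ — so that $h^{*}[\delta,\d{}^{\nabla}]\iota=0$ is almost immediate from Lemma~\ref{lem:G_invariant}. Infinitesimally, $h^{*}$ on Weil cochains is available only after the passage to exterior cochains on the enveloping $\vb$-algebroid (Definition~\ref{def:exterior_cochains}, Theorem~\ref{thm:derivation_hor_proj}), so the error terms of Lemma~\ref{lem:A_invariant} must be transported through that identification and the vanishing argued there; this bookkeeping is where the bulk of the work lies. It is convenient to organize the computation by simplicial degree, the essential case being $p=1$, where $\D{}^{(\C,v)}$ is given by the explicit formula \eqref{eq:D_inf} and the higher levels follow from compatibility with the simplicial structure. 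Finally, as a consistency check, in the integrable case one expects $\ve\circ\D{}^{\omega}=\D{}^{(\C,v)}\circ\ve$ — the van Est map intertwines the two horizontal projections and, by the theorem above, the exterior covariant derivatives $\d{}^{\nabla}$ — so that applying $\ve$ to the groupoid identity $\delta\D{}^{\omega}=\D{}^{\omega}\delta$ reproduces the present one on the image of $\ve$; this does not by itself settle the cochain-level statement, but it confirms the formula and guides the error-term computation.
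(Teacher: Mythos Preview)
Your overall strategy matches the paper's: reduce to $h^*[\d{}^\nabla,\delta]=0$ via Proposition~\ref{prop:h_cochain}, then invoke Lemma~\ref{lem:A_invariant}. What you are missing is the concrete step that collapses the verification to two lines, and as a result you substantially overestimate the remaining work.

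The point is to compute the $A$-invariance form $(T,\theta)$ of $\nabla=\C|_{\frak k}$ \emph{explicitly}. Equation~\eqref{eq:diff_a_conn} gives $\theta(\alpha)=[v\alpha,\cdot]$ directly; then a short calculation using the structural identities \eqref{eq:s1} and \eqref{eq:s2} (which is the form in which the IM conditions are actually needed here, rather than \eqref{eq:c1}--\eqref{eq:c3}) yields $T(\alpha)=[\C\alpha,\cdot]$. With this in hand, $h^*(T,\theta)=0$ is immediate from the explicit $p=1$ formula~\eqref{eq:h_p=1}: the symbol is $\theta(h\beta)=[v(h\beta),\cdot]=0$, and the leading term is $T(\alpha)(X)-\theta(\C\alpha(X))=[\C\alpha(X),\cdot]-[\C\alpha(X),\cdot]=0$. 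No transport to the exterior-cochain model is required: that model was used to \emph{derive} Definition~\ref{def:hor_proj}, but once the Weil-side formula \eqref{eq:h_p=1} is available the check is entirely direct. Your intuition that ``the failure of $\nabla$ to be invariant is a purely vertical phenomenon'' is exactly right; its precise content is $h^*(T,\theta)=0$.

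Two small corrections. First, the inclusion $\iota$ is unnecessary and slightly weakens your conclusion: $\D{}^{(\C,v)}=h^*\d{}^\nabla$ acts on all of $W^{p,q}(A;\frak k)$, and $[\D{}^{(\C,v)},\delta]=h^*[\d{}^\nabla,\delta]$ holds there without restricting to the horizontal subcomplex. Second, your van~Est consistency check is more delicate than suggested: by Lemma~\ref{lem:cochain_homotopy} and Remark~\ref{rem:cochain_homotopy}, $\ve$ intertwines the horizontal projections only up to a cochain homotopy, so $\ve\circ\D{}^\omega=\D{}^{(\C,v)}\circ\ve$ holds on multiplicative forms (Theorem~\ref{thm:van_est_D}) but not on the full complex.
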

\noindent As with groupoids, its importance is central to understanding the curvature of an IM connection, 
\[
    \Omega^{(\C,v)}=\D{}^{(\C,v)}(\C,v).
\]
As discussed in \sec\ref{sec:im_curvature}, for instance, it enables us to establish its important structural properties, such as the \textit{infinitesimal Bianchi identity}:
\[
    \D{}^{(\C,v)}\Omega^{(\C,v)}=0.
\]
It is now natural to ask whether the van Est map commutes with the horizontal exterior covariant derivatives developed in the global and the  infinitesimal realm. As shown in the proof of the following result, this is false in general, however, we still obtain a positive result at the level of multiplicative forms.
\begin{thm}
    The van Est map commutes with the horizontal exterior covariant derivatives at the level of multiplicative forms, that is, the following diagram commutes.
  \begin{align*}
      \begin{tikzcd}[row sep=large,column sep=large,ampersand replacement=\&]
      {\Omega^\bullet_m(G;\frak k)} \& {\Omega^{\bullet+1}_m(G;\frak k)} \\
      {\Omega^\bullet_{im}(A;\frak k)} \& {\Omega^{\bullet+1}_{im}(A;\frak k)}
      \arrow["{\D{}^\omega}", from=1-1, to=1-2]
      \arrow["{\ve}"', from=1-1, to=2-1]
      \arrow["{\ve}", from=1-2, to=2-2]
      \arrow["{\D{}^{(\C,v)}}"', from=2-1, to=2-2]
    \end{tikzcd}
    \end{align*}
  \end{thm}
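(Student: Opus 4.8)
The plan is to reduce the statement to results already established in the excerpt rather than to compute van Est cochains directly. First I would use the theorem of Chapter~\ref*{chapter:invariant}: since the horizontal exterior covariant derivatives are by definition $\D{}^\omega=h^*\d{}^\nabla$ and $\D{}^{(\C,v)}=h^*\d{}^\nabla$, and since the van Est map commutes with the (non-horizontal) exterior covariant derivatives $\d{}^\nabla$ on \emph{normalized} forms regardless of invariance of $\nabla$, the content of the theorem is really the statement that $\ve$ intertwines the two horizontal projections $h^*$ on the relevant forms. So the first step is to set up precisely the diagram
\begin{align*}
\begin{tikzcd}[ampersand replacement=\&, column sep=huge, row sep=large]
\Omega^q(G^{(p)};\frak k) \& \Omega^{q+1}(G^{(p)};\frak k) \& \Omega^{q+1}(G^{(p)};\frak k)^\Hor \\
W^{p,q}(A;\frak k) \& W^{p,q+1}(A;\frak k) \& W^{p,q+1}(A;\frak k)^\Hor
\arrow["{\d{}^\nabla}", from=1-1, to=1-2]
\arrow["{h^*}", from=1-2, to=1-3]
\arrow["{\ve}"', from=1-1, to=2-1]
\arrow["{\ve}", from=1-2, to=2-2]
\arrow["{\ve}", from=1-3, to=2-3]
\arrow["{\d{}^\nabla}", from=2-1, to=2-2]
\arrow["{h^*}", from=2-2, to=2-3]
\end{tikzcd}
\end{align*}
and to observe that the left square commutes on normalized forms by the earlier theorem, so it suffices to make the right square commute on the image of $\d{}^\nabla$ applied to a multiplicative form, i.e.\ to show $\ve\circ h^* = h^*\circ\ve$ there.

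The second step is to recall why this fails in general but holds for multiplicative forms, which the excerpt explicitly flags. The horizontal projection $h^*$ on the groupoid side is geometrically transparent (kill the $K$-directions using the splitting $E$), whereas on the Weil side $h^*$ is defined through the $\vb$-algebroid detour of Theorem~\ref{thm:derivation_hor_proj}. The key point is that for a \emph{multiplicative} form $\eta$, the form $\d{}^\nabla\eta$ is again multiplicative (by the Chapter~\ref*{chapter:mec} cochain-map theorem, since $\delta\D{}^\omega=\D{}^\omega\delta$ and multiplicativity is being a $\delta$-cocycle at $p=1$), and on the multiplicative locus the horizontal projection is \emph{determined by its restriction to the unit manifold together with multiplicativity}: a multiplicative form is controlled by its $1$-jet along $M$, which is exactly the data the van Est map sees. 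Concretely, at $p=1$ a multiplicative form $\eta\in\Omega^{q+1}_m(G;\frak k)$ and its van Est image $\ve\eta\in W^{1,q+1}(A;\frak k)$ are two encodings of the same infinitesimal datum, and $h^*$ on each side is built from the \emph{same} decomposition $A=\frak k\oplus\ker\omega|_M$ induced by the connection on the units — on the groupoid side via the distribution $E$, on the Weil side via the $\vb$-subalgebroid picture of $(\C,v)\subset TA$. So I would prove $\ve h^*=h^*\ve$ by unwinding both horizontal projections to this common linear-algebra datum on $A$ and matching them term by term, using normalization to discard the degenerate contributions.

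The third step is the induction/bookkeeping on $p$: the statement is phrased for all levels of the nerve, so after settling $p=0$ (invariant forms, where $h^*$ is just restriction along $\rho$ and the claim is immediate) and $p=1$ I would propagate to higher $p$ using the simplicial structure — a multiplicative form at level $p$ is built from the level-$1$ data via the face maps, $\ve$ is a morphism of simplicial-type complexes, and $h^*$ is defined compatibly with the simplicial structure (this is what makes $\D{}^\omega$, $\D{}^{(\C,v)}$ cochain maps in the first place), so commutation at $p=1$ plus naturality forces it at all $p$ on the multiplicative (normalized, $\delta$-cocycle) sublocus.

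The main obstacle I expect is precisely the matching in the second step: the two definitions of $h^*$ are genuinely different in appearance — one is an honest geometric projection along a distribution on the arrow manifold $G$, the other is extracted from the identification of Weil cochains with exterior cochains on an auxiliary $\vb$-algebroid (Definition~\ref{def:exterior_cochains}, Theorem~\ref{thm:derivation_hor_proj}), and there is no reason for them to agree off the multiplicative locus. The crux is to show that \emph{when restricted to (derivatives of) multiplicative forms} both projections factor through the same splitting of $0\to\frak k\to A\to A/\frak k\to 0$ provided by $(\C,v)$ (equivalently by $\omega|_M$), so that $\ve$ — which by construction records exactly the behaviour of a multiplicative form along $M$ to first order — automatically commutes with them. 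I would isolate this as a lemma: \emph{for $\eta\in\Omega^\bullet_m(G;\frak k)$ normalized, $\ve(h^*\eta)=h^*(\ve\eta)$}, prove it at $p=1$ by the explicit van Est formula together with the definition of $h^*$ on Weil cochains, and then conclude the theorem by combining this lemma with the already-established commutation $\ve\,\d{}^\nabla=\d{}^\nabla\,\ve$ on normalized forms.
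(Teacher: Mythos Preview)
Your overall shape is right: factor $\D{}^\omega=h^*\d{}^\nabla$, use $\ve\,\d{}^\nabla=\d{}^\nabla\,\ve$ on normalized (hence multiplicative) forms from Theorem~\ref{thm:van_est_G_A}, and reduce to a statement about $\ve$ versus $h^*$. But the argument breaks at a specific point.

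You claim that for multiplicative $\eta$ the form $\d{}^\nabla\eta$ is again multiplicative, citing $\delta\D{}^\omega=\D{}^\omega\delta$. That theorem tells you $h^*\d{}^\nabla\eta$ is multiplicative, not $\d{}^\nabla\eta$ itself. In fact $\d{}^\nabla$ commutes with $\delta$ only when $\nabla$ is $G$-invariant, which by Proposition~\ref{prop:invariant_abelian} forces $\frak k$ abelian. So in general $\d{}^\nabla\eta$ is \emph{not} multiplicative, and your plan to prove a lemma ``$\ve h^*=h^*\ve$ on the multiplicative/normalized locus'' and then apply it to $\d{}^\nabla\eta$ does not go through. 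Worse, the hoped-for lemma is false even for normalized forms: the paper's Lemma~\ref{lem:cochain_homotopy} computes $[\ve,h^*]$ explicitly at $p=1$ and finds a nonzero obstruction proportional to $v^*\delta\eta$; see also Remark~\ref{rem:cochain_homotopy}, which says $\ve$ and $h^*$ commute only up to a cochain homotopy.

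What the paper does instead is accept that $[\ve,h^*]\neq 0$ and compute it. Applied to $\d{}^\nabla\eta$ with $\eta$ multiplicative, the error term becomes $v^*\delta\d{}^\nabla\eta=-v^*[\d{}^\nabla,\delta]\eta$ (since $\delta\eta=0$). Now the explicit commutator formula of Lemma~\ref{lem:G_invariant} writes $[\d{}^\nabla,\delta]$ in terms of the invariance form $\Theta$, and the shape of the error term in Lemma~\ref{lem:cochain_homotopy} feeds $\Theta$ only \emph{horizontal} vectors in its second component; since $h^*\Theta=0$ (Corollary~\ref{cor:diff_nabla_ts}), the whole thing vanishes. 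This is exactly the ``explicit formula for the commutator is crucial later'' remark made when those lemmas were proved.

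A minor point: your third step about propagating to higher $p$ is unnecessary. Multiplicative forms live at $p=1$; the bullet in $\Omega^\bullet_m(G;\frak k)$ is the form degree $q$, and the argument is carried out entirely at $p=1$.
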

\noindent The conclusion of Chapter \ref*{chapter:mec} consists of constructing obstruction classes for the existence of (infinitesimal) multiplicative Ehresmann connections for a given bundle of ideals on a Lie groupoid or algebroid. Without going into detail, we note they live in the cohomology groups 
\[
    H^{2,1}(G;\frak k)^\Hor\quad \text{and}\quad  H^{2,1}(A;\frak k)^\Hor,
\]
where the superscript $\Hor$ pertains to the fact that the cohomology is that of the horizontal subcomplexes from Definitions \ref{defn:horizontal} and \ref{defn:horizontal_inf}, respectively. We show that the van Est map relates the two obstruction classes (Proposition \ref{prop:existence_im}).

\subsection*{Chapter \ref*{chapter:ym}: Foliated and multiplicative Yang--Mills theory}
In the last chapter, we present two ways of obtaining the desired generalization of Yang--Mills theory. We start with some motivation. As already mentioned, the formulation of classical Yang--Mills theory for a principal $G$-bundle $P\ra M$ is infinitesimal in nature---instead of defining the action functional on the affine space of connections on a principal $G$-bundle $P\ra M$, one can instead define it on the affine space of splittings of the \textit{Atiyah sequence}, 
\[\begin{tikzcd}
	0 & {\ad(P)} & {\frac{TP}G} & TM & 0.
	\arrow[from=1-1, to=1-2]
	\arrow[from=1-2, to=1-3]
	\arrow[from=1-3, to=1-4]
	\arrow[from=1-4, to=1-5]
\end{tikzcd}\]
Indeed, since $G$-equivariance is built into this sequence, its splittings are in a bijective correspondence with principal bundle connections. This important  viewpoint gives rise to a first idea for obtaining a generalization of Yang--Mills theory from principal bundles to Lie algebroids: simply replace the Atiyah algebroid with any regular Lie algebroid and consider the short exact sequence defined by its anchor. In the integrable case, a splitting of such a sequence is just a smoothly varying family of principal bundle connections, hence we view this as a \textit{foliated Yang--Mills theory}.
\subsubsection{Foliated Yang--Mills theory}
The importance of foliated theory is that it serves as a stepping stone for multiplicative theory. As motivated above, let us consider the splittings of the short exact sequence
\begin{align*}
\begin{tikzcd}[ampersand replacement=\&]
    0 \& \ker\rho \& A \& T\F \& 0,
    \arrow[from=1-1, to=1-2]
    \arrow[from=1-2, to=1-3]
    \arrow["\rho", from=1-3, to=1-4]
    \arrow["\sigma"{pos=0.48}, bend left=30, from=1-4, to=1-3]
    \arrow[from=1-4, to=1-5]
\end{tikzcd}
\end{align*}
One of the main results regarding such splittings is the following.
\begin{thm}
Let $A\Ra M$ be a regular Lie algebroid (with some extra assumptions). A splitting $\sigma$ of the short exact sequence above is a critical point of the action functional
\[
\sigma\mapsto \int_M\inner{F^\sigma}{F^\sigma}_{\frak k}\vol_M
\]
if and only if its curvature $F^{\sigma}$ is a solution to the foliated Yang--Mills equation,
\begin{align*}
\d{}^{\nabla^\sigma}\!\star_\F F^{\sigma}=0.
\end{align*}
\end{thm}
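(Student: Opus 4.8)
The plan is to carry out a standard variational computation adapted to the foliated setting, mimicking the classical derivation of the Yang–Mills equation for principal bundles but working with the Atiyah-type sequence $0\to\ker\rho\to A\to T\F\to 0$ in place of the ordinary Atiyah sequence. First I would fix a splitting $\sigma$ and parametrize a variation by $\sigma_\lambda=\sigma+\lambda\,a$, where $a\in\Omega^1(\F;\ker\rho)=\Gamma(T^*\F\otimes\ker\rho)$ is an arbitrary compactly supported foliated $1$-form with values in $\frak k=\ker\rho$ (the difference of two splittings is exactly such an object, since both split the same anchor). The curvature $F^{\sigma}$ is the $\frak k$-valued foliated $2$-form measuring the failure of $\sigma$ to be a morphism of Lie algebroids, i.e.\ $F^\sigma(X,Y)=[\sigma X,\sigma Y]-\sigma[X,Y]$ for $X,Y\in\Gamma(T\F)$; I would first record the first-variation formula
\begin{align*}
\left.\tfrac{d}{d\lambda}\right|_{\lambda=0} F^{\sigma_\lambda}=\d{}^{\nabla^\sigma}a,
\end{align*}
where $\nabla^\sigma$ is the $\sigma$-dependent connection on $\frak k$ (the one induced by $\ad$ composed with $\sigma$, as in the constructions of Chapter~\ref{chapter:mec}) and $\d{}^{\nabla^\sigma}$ its exterior covariant derivative along the leaves. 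This is the foliated analogue of the well-known identity $\delta F=\d{}^\nabla a$ from gauge theory, and it follows by a direct bracket computation together with the Leibniz rule for $\nabla^\sigma$.

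Next I would differentiate the action. Writing $S(\sigma)=\int_M\inner{F^\sigma}{F^\sigma}_{\frak k}\vol_M$, where $\inner{\cdot}{\cdot}_{\frak k}$ combines a leafwise metric on $T^*\F$ with a fibre metric on $\frak k$ (one of the "extra assumptions"—I would assume an $\Ad$-invariant, or at least $\nabla^\sigma$-compatible, fibre metric so that $\nabla^\sigma$ is metric), the first variation becomes
\begin{align*}
\left.\tfrac{d}{d\lambda}\right|_{\lambda=0}S(\sigma_\lambda)=2\int_M\inner{\d{}^{\nabla^\sigma}a}{F^\sigma}_{\frak k}\vol_M.
\end{align*}
Then I would integrate by parts along the leaves: using the leafwise Hodge star $\star_\F$ and the compatibility of $\nabla^\sigma$ with the metric, the adjoint of $\d{}^{\nabla^\sigma}$ with respect to $\inner{\cdot}{\cdot}_{\frak k}$ is (up to sign) $\pm\star_\F\d{}^{\nabla^\sigma}\star_\F$, so the variation equals $\pm2\int_M\inner{a}{\star_\F\d{}^{\nabla^\sigma}\star_\F F^\sigma}_{\frak k}\vol_M$ plus a boundary term that vanishes by compact support (and the "extra assumptions", e.g.\ that $M$ is closed or that things decay). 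Since $a$ is arbitrary, the fundamental lemma of the calculus of variations forces $\star_\F\d{}^{\nabla^\sigma}\star_\F F^\sigma=0$, equivalently $\d{}^{\nabla^\sigma}\!\star_\F F^\sigma=0$, which is the claimed foliated Yang–Mills equation.

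The main obstacle I anticipate is not the variational formalism itself but making the leafwise integration by parts rigorous and intrinsic. Unlike the classical case, here $\d{}^{\nabla^\sigma}$ is a leafwise operator on a (possibly non-transitive, non-trivial) regular foliation $\F$, so one must check that the leafwise divergence theorem applies fibrewise and that the resulting adjoint identity globalizes against $\vol_M$—this requires controlling the interplay between the leafwise volume, the transverse volume, and the mean curvature of the foliation (which is presumably where an assumption such as $\F$ being Riemannian/taut, or unimodular, enters). A secondary subtlety is that $\nabla^\sigma$ genuinely depends on $\sigma$, so one must verify that the $\sigma$-derivative of the metric pairing contributes nothing extra—this is exactly where metric-compatibility of $\nabla^\sigma$ is used, and I would isolate that as a lemma. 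Once these points are pinned down, the argument is the formal gauge-theory computation transported to the foliated Atiyah sequence.
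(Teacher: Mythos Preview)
Your proposal is correct and follows essentially the same approach as the paper: compute the first variation of the curvature under $\sigma\mapsto\sigma+\lambda a$, differentiate the action, and use the formal adjoint of $\d{}^{\nabla^\sigma}$ (expressed via $\star_\F$) to move the derivative onto $F^\sigma$. The paper handles your anticipated obstacles exactly as you expect: metric-compatibility of $\nabla^\sigma$ is isolated as a lemma (a consequence of $\ad$-invariance of $\inner\cdot\cdot_{\frak k}$), and the leafwise integration-by-parts is secured by assuming the foliation is simple (fibres of a submersion $\pi\colon M\to N$) with a $\pi$-invariant metric and transverse orientability, so that $\vol_M$ splits as a product of leafwise and transverse volumes---a somewhat more restrictive hypothesis than the Riemannian/taut conditions you guessed, but serving the same purpose.
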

\noindent Here, $F^\sigma\in\Omega^2(T\F;\ker\rho)$ is the curvature of the splitting, $\nabla^\sigma$ denotes the leafwise connection induced by the splitting $\sigma$, and $\star_\F$ is the \textit{foliated Hodge star operator}, defined leafwise. In the theorem, we are assuming an $\ad$-invariant metric $\inner\cdot\cdot_{\frak k}$ on the isotropy $\frak k=\ker\rho$ can be chosen, as well as an orientation on $M$; we also choose a Riemannian metric on $M$. However, the foliated theory requires additional assumptions on the orbit foliation which we postpone until \sec\ref{sec:foliated_ym}---we hereby only note that those assumptions will not be needed in the multiplicative case. Although limited to describing the dynamics along the orbits of $A$, this generalization already gives rise to several interesting examples, discussed in \sec\ref{sec:fym_examples}. The following properties hold.
\begin{itemize}
    \item The foliated Yang--Mills equation is \textit{underdetermined}---we develop a precise criterion on when an affinely deformed critical splitting is again critical (Proposition \ref{prop:underdetermined_fym}). The proof of this statement also enables us to formally compute the tangent space of critical points of the foliated Yang--Mills action.
    \item The foliated Yang--Mills theory is \textit{gauge invariant}, that is, the action functional is invariant under the pullbacks by inner automorphisms of an integrating Lie groupoid of $A$ (Theorem \ref{thm:fym_gauge_invariance}). This can also be made sense of infinitesimally, yielding \textit{infinitesimal gauge invariance}.
\end{itemize}

\subsubsection{Multiplicative Yang--Mills theory}
Equipped with the knowledge of the behavior of foliated Yang--Mills theory, we begin developing the multiplicative Yang--Mills theory in \sec\ref{sec:multiplicative_ym}. As with the foliated case, a crucial part of obtaining such a theory will be in varying the action functional, so we begin there. Namely, we begin by inspecting how the curvature of a multiplicative Ehresmann connection changes under an affine deformation
\[
\omega\rightarrow \omega+\lambda\alpha
\]
for any $\lambda\in \R$ and $\alpha\in\Omega^1_m(G;\frak k)^\Hor$. The relevant result is the following.
\begin{thm}
    On a Lie groupoid $G$, the curvature of a multiplicative Ehresmann connection changes with an affine deformation as
    \begin{align*}
        \Omega^{\omega+\lambda\alpha}=\Omega^\omega+\lambda \D{}^\omega\alpha+\lambda^2\mathbf c_2(\alpha),
    \end{align*}
    where the second-order coefficient is homogeneous of degree two and independent of  $\omega$.
\end{thm}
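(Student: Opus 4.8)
The plan is to expand $\Omega^{\omega+\lambda\alpha}$ directly as a polynomial in $\lambda$, isolating the two ways the horizontal exterior covariant derivative $\D{}^\omega=h^*\d{}^{\nabla}$ depends on $\omega$: through the horizontal projection $h^*$, which is intrinsic and so carries no $\omega$-dependence, and through the induced linear connection $\nabla=\nabla^\omega$ on $\frak k$. First I would record the preliminary that $\omega+\lambda\alpha$ is again a multiplicative Ehresmann connection for every $\lambda\in\R$: multiplicativity is a linear condition, and $(\omega+\lambda\alpha)|_{\frak k}=\id_{\frak k}+\lambda\,\alpha|_{\frak k}=\id_{\frak k}$ since $\alpha$ is horizontal. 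Hence $\Omega^{\omega+\lambda\alpha}=h^*\d{}^{\nabla^{\omega+\lambda\alpha}}(\omega+\lambda\alpha)$ is well defined, and the problem reduces to expanding this.

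The decisive step is that $\omega\mapsto\nabla^\omega$ is \emph{affine}, with $\omega$-independent linear part: concretely $\nabla^{\omega+\lambda\alpha}=\nabla^\omega+\lambda\,L(\alpha)$, where $L\colon\Omega^1_m(G;\frak k)^\Hor\ra\Omega^1(M;\End\frak k)$ is a fixed linear map not involving $\omega$, to be identified with the fibrewise adjoint representation of the bundle of Lie algebras $\frak k$ applied to $\alpha$. This is exactly where the explicit construction of $\nabla^\omega$ in Proposition~\ref{prop:conn} is indispensable: the original definition, via an abstract integration of $\frak k$, makes the $\omega$-dependence of $\nabla^\omega$ opaque, whereas the formula of Proposition~\ref{prop:conn} is visibly affine in $\omega$. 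This is the infinitesimal counterpart of the classical fact that deforming a principal connection by a tensorial $1$-form $\alpha$ changes the induced connection on $\ad(P)$ by $\ad_\alpha$.

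Granting this, the expansion is mechanical. Using $\d{}^{\nabla+\beta}=\d{}^{\nabla}+\beta\wedgedot(-)$ for $\beta\in\Omega^1(M;\End\frak k)$ and bilinearity,
\[
\d{}^{\nabla^{\omega+\lambda\alpha}}(\omega+\lambda\alpha)=\d{}^{\nabla^\omega}\omega+\lambda\bigl(\d{}^{\nabla^\omega}\alpha+L(\alpha)\wedgedot\omega\bigr)+\lambda^2\,L(\alpha)\wedgedot\alpha,
\]
and then $h^*$, being linear, distributes over the three terms. The degree-zero term is $\Omega^\omega$ by definition. The degree-one term needs $h^*\bigl(L(\alpha)\wedgedot\omega\bigr)=0$, which holds because $h^*$ only records the restriction of a form to the intrinsic horizontal directions, on which $\omega$ vanishes — so any summand carrying $\omega$ as an undifferentiated wedge factor is horizontally trivial (at the relevant nerve level $p=1$ this is simply $\iota_E\omega=0$ for the horizontal distribution $E=\ker\omega$); hence this term is $h^*\d{}^{\nabla^\omega}\alpha=\D{}^\omega\alpha$. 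Finally, setting $\mathbf c_2(\alpha):=h^*\bigl(L(\alpha)\wedgedot\alpha\bigr)$, this is homogeneous of degree two in $\alpha$ (as $L$ is linear and $\wedgedot$ bilinear) and independent of $\omega$ (as neither $L$ nor $h^*$ is), and it lies in $\Omega^2_m(G;\frak k)^\Hor$ since $\alpha$ is multiplicative and every operation used preserves multiplicativity and maps into the horizontal subcomplex. This is the asserted identity.

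The main obstacle I anticipate is the affineness statement for $\omega\mapsto\nabla^\omega$ and the identification of $L$, on which both remaining points depend; a secondary, clerical obstacle is verifying $h^*\bigl(L(\alpha)\wedgedot\omega\bigr)=0$ rigorously from Definition~\ref{defn:horizontal} and fixing the wedge and sign conventions. As an $\omega$-free cross-check one can instead expand the Ehresmann curvature pointwise: write $\Omega^\omega(X,Y)$ via $v[hX,hY]$ with $v,h$ the $\omega$-induced vertical and horizontal projections, substitute $v\mapsto v+\lambda\widehat\alpha$ and $h\mapsto h-\lambda\widehat\alpha$ with $\widehat\alpha\colon TG\ra K$ the $\omega$-independent transport of $\alpha$ (which annihilates $K$), and use that $K$ is involutive (as $\frak k$ is a subalgebroid); the cubic $\lambda$-term then vanishes and all $\omega$-dependent and non-tensorial pieces cancel from the quadratic coefficient, so that $\mathbf c_2(\alpha)$ emerges explicitly and is manifestly independent of $\omega$.
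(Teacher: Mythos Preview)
Your main argument rests on a false premise: the horizontal projection $h^*$ is \emph{not} intrinsic. What is intrinsic is the horizontal \emph{subcomplex} $\Omega^{\bullet,q}(G;\frak k)^\Hor$ of Definition~\ref{defn:horizontal}; the projection onto it, however, is precomposition with $h\colon TG\to E=\ker\omega$ and manifestly depends on $\omega$. Hence your starting identity $\Omega^{\omega+\lambda\alpha}=h^*\d{}^{\nabla^{\omega+\lambda\alpha}}(\omega+\lambda\alpha)$, with a \emph{fixed} $h^*$, is wrong: the left-hand side is $h^*_{\omega+\lambda\alpha}\d{}^{\nabla^{\omega+\lambda\alpha}}(\omega+\lambda\alpha)$. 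One can quantify the discrepancy via the structure equation: writing $\tilde\omega=\omega+\lambda\alpha$, since $\Omega^{\tilde\omega}$ is already horizontal and $h^*_\omega[\tilde\omega,\tilde\omega]=\lambda^2[\alpha,\alpha]$, one finds $h^*_\omega\d{}^{\tilde\nabla^s}\tilde\omega=\Omega^{\tilde\omega}-\tfrac{\lambda^2}{2}[\alpha,\alpha]$. So your expansion, which computes the left side of this, misses a $\tfrac12[\alpha,\alpha]$ in $\mathbf c_2(\alpha)$, and your claim of $\omega$-independence then rests on nothing. (A smaller inaccuracy: $L(\alpha)$ is not ``$\ad$ applied to $\alpha$'', since $\alpha$ lives on $G$, not $M$; in fact $s^*L(\alpha)=\tau^\alpha-\ad\alpha$ with $\tau^\alpha$ the tensor of Lemma~\ref{lem:tau_alpha}, and your affineness claim for $\omega\mapsto\nabla^\omega$, while correct, needs the argument that $\bar\alpha[h_\omega Y,\xi^L]=\bar\alpha[Y,\xi^L]$ by involutivity of $K$.)

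The paper avoids $h^*$ entirely by working from the structure equation $\Omega^\omega=\d{}^{\nabla^s}\omega+\tfrac12[\omega,\omega]$ (Proposition~\ref{prop:multiplicative_curvature}), which contains no horizontal projection. The expansion then only needs the difference $\d{}^{\omega+\alpha}-\d{}^\omega$ (Lemma~\ref{lem:diff_ext_cov}) and the identity $\D{}^\omega\alpha=\d{}^\omega\alpha+\tau^\alpha\wedge\omega$ (Remark~\ref{rem:diff_D_d}) to recognise the linear term, yielding the explicit $\omega$-free formula $\mathbf c_2(\alpha)=\tau^\alpha\wedge\alpha-\tfrac12[\alpha,\alpha]$. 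Your pointwise cross-check---expanding $-v[hX,hY]$ under $v\mapsto v+\lambda\bar\alpha$, $h\mapsto h-\lambda\bar\alpha$---is in fact a viable route to the $\omega$-independence of the quadratic coefficient (involutivity of $K$ and $\bar\alpha|_K=0$ kill both the cubic term and the $\omega$-dependent pieces, exactly as you say), but you only sketch it, and identifying the linear coefficient with $\D{}^\omega\alpha$ still requires the content of Proposition~\ref{prop:conn} and Lemma~\ref{lem:nabla_st}.
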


We see that the horizontal exterior covariant derivative plays an important role in the first-order coefficient of the above expansion. Furthermore, the infinitesimal analogue of this result is made possible by the discovery of $\D{}^{(\C,v)}$. More precisely, we can now inspect how the curvature of an IM connection $(\C,v)$ changes under an affine deformation
\begin{align*}
  (\C,v)\rightarrow (\C,v)+\lambda (L,l)
\end{align*}
where $\lambda\in\R$ and $(L,l)\in\Omega^1_{im}(A,\frak k)^\Hor$ is any horizontal IM form. We similarly obtain:
\begin{thm}
On a Lie algebroid $A$, the curvature of an IM connection changes with an affine deformation as
\begin{align*}
  \Omega^{(\C,v)+\lambda(L,l)}=\Omega^{(\C,v)}+\lambda \D{}^{(\C,v)}(L,l)+\lambda^2\mathbf c_2(L,l),
\end{align*}
where the second-order coefficient is homogeneous of degree two and independent of $(\C,v)$.
\end{thm}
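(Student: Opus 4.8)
The plan is to run the infinitesimal analogue of the proof of the preceding (groupoid) theorem, so I would first isolate the two structural facts about the building blocks of $\D{}^{(\C,v)}=h^*\d{}^\nabla$ on which everything rests. A one-line preliminary comes first: since the IM conditions \eqref{eq:c1}--\eqref{eq:c3} are $\R$-linear and a horizontal IM form $(L,l)$ has $l|_{\frak k}=0$, the affine deformation $(\C,v)+\lambda(L,l)$ is again an IM connection for $\frak k$, so $\Omega^{(\C,v)+\lambda(L,l)}$ is defined. The first structural fact is that the induced linear connection depends \emph{affinely} on the connection datum: $\nabla^{(\C,v)+\lambda(L,l)}=\nabla^{(\C,v)}+\lambda\,\theta(L,l)$ for an $\End(\frak k)$-valued $1$-form $\theta(L,l)$ on $M$ that depends $\R$-linearly on $(L,l)$ and \emph{not} on $(\C,v)$. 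I would read this off the explicit construction of $\nabla$ in Proposition~\ref{prop:conn}, which expresses $\nabla$ algebraically through $v$ and the algebroid bracket (no abstract integration); since $v\mapsto v+\lambda l$ is affine and the bracket is bilinear, $\nabla$ is affine in the datum with linear part determined by $(L,l)$ alone. The second structural fact concerns the horizontal projection $h^*$ on Weil cochains from Definition~\ref{def:hor_proj} and Theorem~\ref{thm:derivation_hor_proj}: via the $\vb$-algebroid picture it is assembled from the vertical projection cut out by $v$, which again deforms affinely in $v$, so $h^*$ depends polynomially on the IM connection; the point I actually need is that, after substitution, the $\lambda$-expansion of $h^*_\lambda\,\d{}^{\nabla_\lambda}\bigl((\C,v)+\lambda(L,l)\bigr)$ truncates at order two.

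With these in hand I would substitute $\omega_\lambda:=(\C,v)+\lambda(L,l)$, $\nabla_\lambda=\nabla^{(\C,v)}+\lambda\theta(L,l)$ and the expansion of $h^*_\lambda$ into $\Omega^{\omega_\lambda}=h^*_\lambda\,\d{}^{\nabla_\lambda}\omega_\lambda$ and collect powers of $\lambda$. The $\lambda^0$ term is $h^*_0\d{}^{\nabla^{(\C,v)}}(\C,v)=\Omega^{(\C,v)}$ by definition of the curvature. The $\lambda^1$ term is, a priori, $h^*_0\d{}^{\nabla^{(\C,v)}}(L,l)$ together with a contribution from differentiating $\d{}^{\nabla_\lambda}$ in the direction $(L,l)$ (a wedge of $\theta(L,l)$ with $(\C,v)$) and one from differentiating $h^*_\lambda$ (the first variation of $h^*$ applied to $\d{}^{\nabla^{(\C,v)}}(\C,v)$); the claim to establish is that the latter two cancel, so that the coefficient is exactly $\D{}^{(\C,v)}(L,l)=h^*\d{}^\nabla(L,l)$. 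This is the infinitesimal counterpart of the identity used silently in the groupoid proof—that along horizontal IM directions the naive covariant derivative of the curvature agrees with $\D{}^\omega$—and it is where I expect to feed in both the IM equations and the horizontality of $(L,l)$. The $\lambda^2$ term $\mathbf c_2(L,l)$ is then built only from the pieces carrying two factors of $(L,l)$ and none of $(\C,v)$ (every $(\C,v)$-dependent contribution having been absorbed at orders $0$ and $1$), which makes it manifestly homogeneous of degree two and independent of $(\C,v)$; concretely one expects $\mathbf c_2(L,l)$ to be the horizontal projection of a quadratic bracket expression $\tfrac12[(L,l),(L,l)]$ built from the isotropy bracket, paralleling $\mathbf c_2(\alpha)=\tfrac12[\alpha,\alpha]$ on the groupoid side. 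Finally I would verify that the $\lambda^{\ge 3}$ coefficients vanish; by the second structural fact this reduces to an identity among the higher variations of $h^*$ applied to $\d{}^{\nabla^{(\C,v)}}(L,l)$ and to the wedge of $\theta(L,l)$ with $(L,l)$, which should follow from the very defining relations of $h^*$ that force the groupoid formula to be quadratic.

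The hard part will be the second structural fact: controlling the $\lambda$-dependence of the Weil-cochain horizontal projection. In the groupoid case $h^*$ is transparently ``precompose the tangent slots with the Ehresmann horizontal projection,'' which is visibly affine in $\omega$, so the truncation at order two is essentially immediate. On the Weil complex, however, $h^*$ is only available through the technical $\vb$-algebroid construction of Theorem~\ref{thm:derivation_hor_proj} (exterior cochains on an enlarging $\vb$-algebroid, the identification of Weil cochains therein, and the induced projection), so both its dependence on $(\C,v)$ and the vanishing of the order-$\ge 3$ terms must be extracted from that construction rather than from a pointwise formula. A secondary, purely organizational obstacle is assembling the first-order coefficient into precisely $\D{}^{(\C,v)}(L,l)$: this requires invoking the IM and horizontality constraints on $(L,l)$ at several places and matching signs and contraction orders against Definition~\ref{def:hor_proj} and the defining equation \eqref{eq:D_inf} of $\D{}^{(\C,v)}$ at level $p=1$.

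As a consistency check—and an independent but only partial route—I would note that the van Est map commutes with the horizontal exterior covariant derivatives at the level of multiplicative forms; combined with naturality of $\ve$ and the already-established groupoid expansion $\Omega^{\omega+\lambda\alpha}=\Omega^\omega+\lambda\D{}^\omega\alpha+\lambda^2\mathbf c_2(\alpha)$, applying $\ve$ yields the asserted algebroid expansion on the image of $\ve$ and pins down the shape of $\D{}^{(\C,v)}(L,l)$ and $\mathbf c_2(L,l)$ there. Since $\ve$ need not be surjective this does not by itself give the general statement, but it validates the computation and serves as a useful sanity check on the final formulas before committing to the direct argument above.
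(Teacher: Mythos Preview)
Your strategy is defensible but needlessly complicated compared to the paper, and it rests on two inaccuracies worth correcting.

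The paper's proof is a direct computation using only the explicit level-$p=1$ formula \eqref{eq:D_inf} for $\D{}^{(\C,v)}$. Setting $\lambda=1$ (without loss of generality) and writing $(\tilde\C,\tilde v)=(\C,v)+(L,l)$, one splits
\[
\Omega^{(\tilde\C,\tilde v)}=\D{}^{(\tilde\C,\tilde v)}(\C,v)+\D{}^{(\tilde\C,\tilde v)}(L,l)
\]
and expands each term by plugging $\tilde\nabla=\nabla+L|_{\frak k}$, $\tilde h=h-l$, $\tilde\C=\C+L$ into \eqref{eq:D_inf}. After a few lines of bookkeeping the result falls out, together with the explicit formula \eqref{eq:secondorder_inf}:
\[
\mathbf c_2(L,l)(\alpha)=-(L|_{\frak k}\wedgedot L\alpha,\; L|_{\frak k}\cdot l\alpha).
\]
No structural analysis of how $h^*$ depends on the connection through the $\vb$-algebroid machinery is required, and no $\lambda^{\ge 3}$ cancellations need to be arranged: at level $p=1$ the formula \eqref{eq:h_p=1} for $h^*$ is written down explicitly and is visibly affine in $(\C,v)$, so your ``hard part'' evaporates once you work with the concrete level-$1$ expressions rather than the abstract Definition~\ref{def:hor_proj}/Theorem~\ref{thm:derivation_hor_proj}. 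Put differently, since $(\d{}^{\nabla_\lambda}\omega_\lambda)_1|_{\frak k}=\nabla_\lambda-\nabla_\lambda=0$ for \emph{every} IM connection, the $\wedgedot$-term in \eqref{eq:D_inf} vanishes on the curvature, and one is left with the manifestly quadratic $(\d{}^{\nabla_\lambda}\C_\lambda\alpha,\,\C_\lambda h_\lambda\alpha)$; this is exactly the simplified expression \eqref{eq:im_curv}.

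Two corrections. First, in the algebroid setting the induced linear connection is simply $\nabla=\C|_{\frak k}$ by \eqref{eq:nabla2}; Proposition~\ref{prop:conn} is the groupoid construction and is irrelevant here. The affine dependence $\tilde\nabla=\nabla+L|_{\frak k}$ is then immediate. Second, your guess for $\mathbf c_2$ is wrong on both sides: on the groupoid it is $\mathbf c_2(\alpha)=\tau^\alpha\wedge\alpha-\tfrac12[\alpha,\alpha]_{s^*\frak k}$ (equation~\eqref{eq:secondorder}), not $\tfrac12[\alpha,\alpha]$, and on the algebroid it is the formula above, which is \emph{not} a horizontal projection of a Lie bracket---the expression $L|_{\frak k}\wedgedot L\alpha$ uses the pairing \eqref{eq:pairing}, not $[\cdot,\cdot]_{\frak k}$.
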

\noindent Since our aim is to formulate the multiplicative Yang--Mills theory without integrability assumptions on $A$ (just as in the foliated case), we henceforth focus on the infinitesimal multiplicative connections.

What follows next is a crucial insight for the process of formulating the desired multiplicative Yang--Mills theory: that the class of IM connections, on which we plan to define an action functional, should be restricted to a certain well-behaved class of IM connections, namely, those with cohomologically trivial curvature. Such IM connections are called \textit{primitive}. More precisely, these are IM connections $(\C,v)$ whose curvature can be expressed as \[\Omega^{(\C,v)}=\delta^0 F,\] for some form $F\in\Omega^2(M;\frak k)$, called the \textit{curving} of $(\C,v)$. 
In other words, the curvature IM form is modelled by a form on the base. Due to the importance of this class of IM connections for our theory,  we research them thoroughly in \sec\ref{sec:primitive}. We gather only a few of the basic results here, needed for this introduction:
\begin{enumerate}[label={(\roman*)}]
  \item In general, the curving of an IM connection is not unique---it is only unique up to an invariant 2-form $\beta\in\Omega^2_\inv(M;\frak k)=\ker\delta^0$. Such forms must necessarily be transversal, that is, $\iota_X\beta=0$ whenever $X\in T\F$, and centre-valued, so $\beta\in\Gamma(\Lambda^2(T\F)^\circ\otimes z(\frak k))$.
  \item The Bianchi identity for $\Omega^{(\C,v)}$ translates to two identities which are satisfied by the so-called \textit{curvature 3-form} $G\coloneq\d{}^\nabla F$, namely, $\d{}^\nabla G=0$ and  $\delta^0 G=0$ ($G$ is an invariant 3-form).
  \item A primitive IM connection $(\C,v)$ with curving $F$ may be deformed by a cohomologically trivial 1-form $\lambda\delta^0\gamma$, where $\gamma\in\Omega^1(M;\frak k)$ and $\lambda\in\R$, and the deformed IM connection $(\C,v)+\lambda\delta^0\gamma$ will again be primitive, with curving
  \[
  \smash{F^{\lambda\gamma}=F+\lambda\d{}^\nabla\gamma-\frac {\lambda^2}2[\gamma,\gamma].}
  \]
  \item On a transitive algebroid, every IM connection is primitive with vanishing 3-curvature. The same is true if the typical fibre of $\frak k$ is semisimple (Proposition \ref{prop:primitive_semisimple}). 
\end{enumerate}

The theory we have developed comes together in the next theorem, which forms the central result of the thesis. Preliminarily, we define the \textit{multiplicative Yang--Mills action functional} as
\[
\S\colon \DD(A;\frak k)\ra \R,\quad \S((\C,v),F)=\int_M\inner FF_{\frak k}+\mu\int_M \inner GG_{\frak k},
\]
where $\mu\in\R$ is the \textit{structure constant} of the theory which can be chosen arbitrarily, and the domain of the action consists of IM connections and their curvings, that is,
\[\DD(A;\frak k)=\set{((\C,v),F)\given \delta^0F =\Omega^{(\C,v)}}.\] 
The second term in the action functional, involving the 3-curvature, will stand out as a source of novel phenomena in the theory.  As in the foliated case, the definition of the action functional assumes an ad-invariant metric $\inner\cdot\cdot_{\frak k}$ on $\frak k$ can be chosen, together with a Riemannian metric and an orientation on $M$.

\begin{samepage}
  \noindent When defining the critical points of the multiplicative Yang--Mills action functional, we run into the following subtlety: there are two natural decompositions of $\DD(A;\frak k)$ into affine spaces (see Remark \ref{rem:decompositions_affine}), giving rise to two distinct notions of criticality of a triple $((\C,v),F)$:
  \begin{itemize}
    \item \emph{Longitudinal criticality}: $\smallderiv\lambda0\S((\C,v)+\lambda \delta^0\gamma, F^{\lambda\gamma})=0$, for all $\gamma\in\Omega^1(M;\frak k)$.
    \item \emph{Transversal criticality}:
      $\smallderiv\lambda0\S((\C,v), F+\lambda\beta)=0,$ 
      for all $\beta\in\Omega_{\inv}^2(M;\frak k)$.
  \end{itemize}
\end{samepage}
Note that these two notions are related precisely to the points (iii) and (i) above, respectively. Importantly, these are precisely the directions in which we vary our action.
\begin{thm}
Let $\frak k$ be a bundle of ideals on a Lie algebroid $A\Ra M$ and let $(\C,v)$ be a primitive IM connection with a curving $F$. Assuming the induced linear connection $\nabla$ is compatible with the metric $\inner\cdot\cdot_{\frak k}$, we have the following equivalences.
\begin{align*}
  \text{$((\C,v),F)$ is longitudinally critical}\quad&\iff\quad \d{}^\nabla\!\star F=0,\\[0.3em]
  \text{$((\C,v),F)$ is transversally critical and adapted}\quad&\iff\quad \d{}^\nabla\! \star G=\tfrac 1\mu\star F.\qquad\qquad
\end{align*}
\end{thm}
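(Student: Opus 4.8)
The plan is to run the classical variational scheme: along each of the two admissible one-parameter families I would differentiate $\S$ at $\lambda=0$, integrate by parts, and invoke the fundamental lemma of the calculus of variations. The hypothesis that $\nabla$ be compatible with $\inner\cdot\cdot_{\frak k}$ (together with the standing compactness and orientation assumptions on $M$) enters only to make $\d{}^\nabla$ and its formal adjoint $(\d{}^\nabla)^{*}=\pm\star\d{}^\nabla\star$ into $L^2$-adjoints for the pairing $(\alpha,\beta)\mapsto\int_M\inner{\alpha}{\beta}_{\frak k}\,\vol_M$. I would first check that both deformation paths stay inside $\DD(A;\frak k)$: for $\bigl((\C,v)+\lambda\delta^0\gamma,\,F^{\lambda\gamma}\bigr)$ this is point (iii) above, and for $\bigl((\C,v),\,F+\lambda\beta\bigr)$ with $\beta\in\Omega^2_\inv(M;\frak k)$ it is immediate from $\delta^0\beta=0$, which keeps $\delta^0(F+\lambda\beta)=\delta^0F=\Omega^{(\C,v)}$.

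\emph{Longitudinal criticality.} By point (iii), $\smallderiv{\lambda}{0}F^{\lambda\gamma}=\d{}^\nabla\gamma$, the induced connection $\nabla$ being left unchanged by an exact deformation (as the formula of (iii) already presupposes). Hence the first term of the action contributes $2\int_M\inner{\d{}^\nabla\gamma}{F}_{\frak k}=2\int_M\inner{\gamma}{(\d{}^\nabla)^{*}F}_{\frak k}$, while for the $3$-curvature term $\smallderiv{\lambda}{0}\d{}^\nabla F^{\lambda\gamma}=\d{}^\nabla\d{}^\nabla\gamma$; since $\d{}^\nabla\d{}^\nabla$ acts on $\frak k$-valued forms through the curvature of $\nabla$, whose relevant components are inner (acting by $\ad$ of a $\frak k$-valued form, as for the adjoint bundle of a principal bundle), this variation pairs to zero against the centre-valued $3$-form $G$ of point (ii) by $\ad$-invariance of $\inner\cdot\cdot_{\frak k}$. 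Thus $\smallderiv{\lambda}{0}\S=2\int_M\inner{\gamma}{(\d{}^\nabla)^{*}F}_{\frak k}$, which vanishes for all $\gamma\in\Omega^1(M;\frak k)$ precisely when $(\d{}^\nabla)^{*}F=0$, that is, $\d{}^\nabla\!\star F=0$.

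\emph{Transversal criticality.} With $(\C,v)$, hence $\nabla$, held fixed and $F$ deformed to $F+\lambda\beta$, $\beta\in\Omega^2_\inv(M;\frak k)$, the first term contributes $2\int_M\inner{\beta}{F}_{\frak k}$ and, using $\smallderiv{\lambda}{0}\d{}^\nabla(F+\lambda\beta)=\d{}^\nabla\beta$, the $3$-curvature term contributes $2\mu\int_M\inner{\d{}^\nabla\beta}{G}_{\frak k}=2\mu\int_M\inner{\beta}{(\d{}^\nabla)^{*}G}_{\frak k}$. So $\smallderiv{\lambda}{0}\S=2\int_M\inner{\beta}{F+\mu\,(\d{}^\nabla)^{*}G}_{\frak k}$, and this vanishes for every invariant $\beta$ exactly when $F+\mu\,(\d{}^\nabla)^{*}G$ is $L^2$-orthogonal to $\Omega^2_\inv(M;\frak k)$. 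The extra hypothesis that $((\C,v),F)$ be \emph{adapted} should be exactly what promotes this weak condition to the pointwise one: it forces $F+\mu\,(\d{}^\nabla)^{*}G$ to lie in $\Omega^2_\inv(M;\frak k)$ (equivalently, to be killed by $\delta^0$), and a non-zero invariant form cannot be $L^2$-orthogonal to itself, whence $F+\mu\,(\d{}^\nabla)^{*}G=0$. Rewriting $(\d{}^\nabla)^{*}=\pm\star\d{}^\nabla\star$ and $\star\star=\pm\id$ turns this into $\d{}^\nabla\!\star G=\tfrac1\mu\star F$. The two reverse implications are trivial, since $F+\mu\,(\d{}^\nabla)^{*}G=0$ both lies in $\Omega^2_\inv(M;\frak k)$ and is orthogonal to all of it.

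\emph{Where the difficulty lies.} Two steps will need genuine care rather than bookkeeping. The first is the vanishing of the $3$-curvature contribution to the longitudinal variation: this rests on the curvature of the connection $\nabla$ constructed in Proposition \ref{prop:conn} being inner in the directions that matter, together with the centrality of $G$ from point (ii), and one must check that these interact with the wedge product and the $L^2$-pairing as asserted — this is precisely where primitivity of $(\C,v)$ should be used in an essential way. The second is making the hypothesis \emph{adapted} earn its keep: I would need to show that it places $F+\mu\,(\d{}^\nabla)^{*}G$ in $\ker\delta^0$, so that $L^2$-orthogonality to $\Omega^2_\inv(M;\frak k)$ becomes nondegenerate there, and then keep careful track of the Hodge-star signs in order to land on exactly $\d{}^\nabla\!\star G=\tfrac1\mu\star F$ rather than a sign variant of it.
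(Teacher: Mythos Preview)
Your overall strategy matches the paper's, and your treatment of transversal criticality and adaptedness is essentially identical to what appears there. However, there is one factual error in the longitudinal part that you should fix. The claim that ``the induced connection $\nabla$ [is] left unchanged by an exact deformation'' is false: by equation \eqref{eq:nabla_gamma} one has $\nabla^{\lambda\gamma}\xi=\nabla\xi+\lambda[\xi,\gamma]$. The formula $F^{\lambda\gamma}=F+\lambda\d{}^\nabla\gamma-\tfrac{\lambda^2}{2}[\gamma,\gamma]$ is merely an expression for the \emph{new} curving written in terms of the \emph{old} connection; it does not say $\nabla$ is fixed. Consequently the $3$-curvature along the path is $G^{\lambda\gamma}=\d{}^{\nabla^{\lambda\gamma}}F^{\lambda\gamma}$, and its first variation picks up an extra term $[F,\gamma]$ from varying $\nabla$, which your computation omits.

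The paper handles this step more cleanly: by the computation in Remark \ref{rem:deformation_curving} (equation \eqref{eq:3_curvature_invariant}), the $3$-curvature is \emph{exactly} invariant under longitudinal deformations, $G^{\lambda\gamma}=G$ for all $\lambda$, so the term $\mu\innerr{G}{G}_{\frak k}$ contributes nothing whatsoever to the longitudinal variation. Your route can be salvaged, since the omitted term $[F,\gamma]$ also pairs to zero against the centre-valued $G$ by the same $\ad$-invariance reasoning you already invoke for $R^\nabla\wedge\gamma$---indeed these two terms are negatives of one another by $R^\nabla=-\ad F$, which is exactly why $G^{\lambda\gamma}=G$. But the exact invariance of $G$ is both simpler and more conceptual, and it is what makes the second term of the action a genuinely transversal phenomenon.
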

\noindent The notion of adaptedness appearing in the theorem is rather strong. For instance, in the case $\mu=-1$, the condition reads
\[
\delta^0(\delta^\nabla G)=\Omega^{(\C,v)}.
\]
In the transitive case, we see that since $G=0$, this condition is actually equivalent to flatness of the given connection, $F=0$. In fact, it is our view that one should carefully consider which notions of criticality to employ in a given context:
\begin{enumerate}[label={(\roman*)}]
  \item Low codimension or trivial centre: $\codim \F\leq 2$ or $z(\frak k)=0$. In this case, only longitudinal criticality is of substance, and one focuses only on solving the first Yang--Mills equation.
  \item High codimension and non-trivial centre: $\codim \F\geq 3$ and $z(\frak k)\neq 0$. In this case, both notions of criticality should be considered, and one focuses on solving both equations. In the abelian case, $z(\frak k)=\frak k$, the second equation implies the first.
\end{enumerate}
Gathering both Yang--Mills equations together with the Bianchi identities and the defining equalities for $F$ and $G$, we find the theory has a certain intricate symmetry:
\begin{align*}
\begin{aligned}
    \delta^\nabla F&=0,&\qquad \d{}^\nabla F&=G,&\qquad \delta^0 F&=\Omega^{(\C,v)},\\
    \delta^\nabla G&=-\tfrac 1\mu F,& \d{}^\nabla G&=0,&\qquad \delta^0 G&=0,
\end{aligned}
\end{align*}
The obtained theory has the following notable features.
\begin{itemize}
    \item Multiplicative Yang--Mills theory should be viewed as a \textit{constrained variational problem}, the constraint being that upon varying a given triple $((\C,v),F)$, the cohomological class \[[(\C,v)]\in H^{1,1}(A;\frak k),\] is kept constant. Within this interpretation, the two notions of criticality are merged, and the notion of adaptedness is viewed as an additional constraint. This interpretation is established in \sec\ref{sec:constrained_variational_problem}; as a consequence, we obtain the formal tangent space to the space of solutions of the Yang--Mills equations (Proposition \ref{prop:formal_tangent_space}).
    \item Combining the two Yang--Mills equations yields
    \[
    \Delta F=(\d{}^\nabla\delta^\nabla+\delta^\nabla\d{}^\nabla)F=-\tfrac 1\mu F,
    \]
    that is, the curving $F$ must be an eigenvector of the Laplacian defined by the connection $\nabla$, with eigenvalue $-\frac 1\mu$. This is in contrast with the classical theory on principal bundles, where satisfying the Yang--Mills equation amounts to harmonicity of $F$, that is, $\Delta F=0$.
    \item As already mentioned, in the transitive case, the 3-curvature automatically vanishes, so only the first term $\innerr FF_{\frak k}$ of the action functional survives. In this case, only longitudinal criticality is considered, and it recovers the classical Yang--Mills theory. 
    \item If working with a pseudo-Riemannian manifold $M$, the Yang--Mills equations allow for \textit{self-dual} and \textit{anti self-dual} solutions. Namely, if $M$ is 5-dimensional, and taking for instance $\mu=1$, the (anti) self-dual solutions read
    \begin{align*}
      G=\pm \star F,\quad F=\pm (-1)^s \star G,
    \end{align*}
    where $s$ denotes the index of the pseudo-Riemannian manifold $M$, i.e., the number of negative components in its signature. Note that this just defines a class of solutions---establishing the criteria for their existence is beyond the scope of this thesis.
    \item Just as with the foliated Yang--Mills theory, the multiplicative Yang--Mills action functional is \textit{gauge invariant}, that is, it is invariant under the pullbacks by inner automorphisms of an integrating Lie groupoid of the algebroid $A$ (Theorem \ref{thm:multiplicative_ym_gauge_invariance}). As in the foliated case, this phenomenon  can also be made sense of infinitesimally, by working with flows of inner derivations instead of inner automorphisms of an integrating groupoid, yielding \textit{infinitesimal gauge invariance}.
\end{itemize}
At last, we discuss an important example of the developed framework in \sec\ref{sec:central_extensions_ym}: $S^1$-bundle gerbes. The example is done in the setting of Lie groupoids rather than that of Lie algebroids, and it reveals that it is sometimes preferential to work in the global instead of the infinitesimal setting, the underlying reason being that $S^1$ is not simply connected (and compact). In turn, this motivates a formulation of multiplicative Yang--Mills theory for multiplicative Ehresmann connections on Lie groupoids in \sec\ref{sec:multiplicative_ym_groupoids}---its relationship with the developed theory on Lie algebroids is also obtained in Proposition \ref{prop:relation_multiplicative_ym_groupoids_algebroids}. We conclude the thesis with a relaxed discussion on the relationship of the obtained framework with other existent generalizations of the classical Yang--Mills theory.

\clearpage \pagestyle{plain}
\chapter{Fundamentals of Lie categories}\label{chapter:lie_cats}
\pagestyle{fancy}
\fancyhead[CE]{Chapter \ref*{chapter:lie_cats}} 
\fancyhead[CO]{Fundamentals of Lie categories} 

The contents of this chapter, presenting the framework of Lie categories, were published in \cite{lie_cats}.

\section{Introduction}
\label{sec:intro}
Since its conception, category theory has proven to provide a unified framework for the language of mathematics, by making use of the observation that objects and morphisms thereof arise regardless of the mathematical field one considers. This paradigm has also been adopted by physicists (see e.g., \cite{catsquantum}), namely that one can interpret physical states (contained in a certain phase space) as objects, and physical processes as morphisms between them, thus forming a category which corresponds to the physical system at hand. In order to provide such a realization of a given physical system, the corresponding category should ideally have the capacity to straightforwardly describe the phenomena which pertain to the given physical system, and also capture the means for describing and calculating relevant physical quantities. For a physicist, the latter is generally done using the basic tools of calculus, in terms of a preferred set of coordinates. 

There is a natural way of obtaining such a unified framework for describing physical processes, by intertwining category theory with the theory of smooth manifolds. That is, we require the set of objects (states) and the set of morphisms (processes) of an abstract category to be a pair of smooth manifolds, and that the composition law $(f,g)\mapsto fg$ is a smooth map (concatenation of two processes); the mathematical structure obtained is that of a category internal to the category $\mathbf{Diff}$ of smooth manifolds. Historically, this kind of mathematical structure, which we will define in more precise terms as a Lie category, was first introduced and briefly studied by Charles Ehresmann in his seminal paper \cite{ehr1959}; in the same paper, Ehresmann further focused on the notion of a Lie groupoid, which additionally imposes that all morphisms are invertible. This latter notion has nowadays been thoroughly researched and continues to have a status of an active field of research, whereas the same cannot be said for Lie categories; apart from the work of Ehresmann, to the best of our knowledge, this is their first systematic treatment. The main reason behind the fact that Lie categories have gained negligible attention compared to Lie groupoids lies in the fact that the assumption on morphisms being invertible implies that all left and right translations are diffeomorphisms, which accounts for certain preferable properties of the structure of a Lie groupoid, that will also be highlighted in our work.

Our motivation for studying Lie categories, and not merely groupoids, stems from the physical interpretation that invertible morphisms correspond to reversible processes, and in physics not all processes in a given system are reversible; this is well known from the theory of thermodynamics, where reversible processes are precisely those where the change of entropy when transiting between two states is zero; we will describe this in more precise terms in \sec \ref{sec:std}. Another, perhaps more notorious example, is given by irreversibility of wave-function collapse in quantum theory.

As we will see, it will turn out to be desirable to allow the space of morphisms of a Lie category to possess a boundary---we will encounter both, mathematical and physical examples where the boundary of the space of morphisms will play a distinguished role. To temporarily appease and motivate the reader to this regard, let us briefly note that Lie monoids are examples of Lie categories with the set of objects a singleton, and that the monoid $[0,\infty)$, either for multiplication or addition, provides a first example of a Lie category with boundary. This simple example already shows certain intriguing qualities: for instance, considering $[0,\infty)$ for multiplication, all its invertible elements are contained in its interior, and considering $[0,\infty)$ for addition, its only invertible element is in its boundary. This phenomenon, as may be expected, is one of the features of Lie categories, namely that the units dictate the behavior of invertibles.

Overall, we aim to convince the reader that the interplay of geometrical and categorical structures alone (without the invertibility assumption) provides new exciting questions which were so far overlooked within the scope of Lie theory. Let us summarize our main objectives.
\begin{enumerate}[label={(\roman*)}]
\item To demonstrate that Lie categories allow for an abundance of interesting examples which have so far been missed in the theory of Lie groupoids.
\item To expose some ideas and constructions which carry through to Lie categories from the theory of Lie groupoids, e.g., the Lie algebroid construction, and to detect which results actually depend on the existence of inverses.
\item To inspect the relation between Lie groupoids and Lie categories, and show that novel notions can be obtained when invertibility is dropped.
\item Last, but not least, to provide a heuristic algorithm for constructing Lie categories that describe physical systems, and to reveal that the mathematical structure implicitly present in the foundations of statistical thermodynamics is that of a Lie category.
\end{enumerate}

\section*{Notation}

All our categories are small, i.e., the classes of objects and morphisms are sets. We will denote a category by $C\rra X$, where $C$ is the set of morphisms, $ X$ is the set of objects, 
and the two arrows indicate the \emph{source map} $s\colon C\rightarrow X$ and \emph{target map} $t\colon C\rightarrow X$, which are defined by 
\[
s(x\xrightarrow g y)=x,\quad t(x\xrightarrow g y)=y,
\]
for any morphism $g\colon x\rightarrow y$ in $C$. Alongside these two maps, a category $C$ comes equipped with the \emph{composition map}
\[
m\colon \comp C\rightarrow C,\quad (g,h)\mapsto gh,
\]
where $\comp C=\set{(g,h)\in C\times C\given s(g)=t(h)}$ is the set of all pairs of \emph{composable morphisms}. Moreover, $C$ comes equipped with the \emph{unit map}
\[
u\colon X\rightarrow C,\quad x\mapsto 1_x.
\]
We also define
\[
 C_x=s^{-1}(x),\quad C^y=t^{-1}(y),\quad C_x^y= C_x\cap C^y,
\]
and call $C_x$ the \emph{source fibre} at $x$, and $C^y$ the \emph{target fibre} at $y$. Note that any morphism $g\in C$ determines the maps
\begin{equation}
\label{eq:trans}
\begin{aligned}
&L_g\colon C^{s(g)}\rightarrow C^{t(g)},\quad L_g(h)=gh,\\
&R_g\colon C_{t(g)}\rightarrow C_{s(g)},\quad R_g(h)=hg,
\end{aligned}
\end{equation}
called the \emph{left translation} and \emph{right translation} by $g$, which are just the pre-composition and post-composition by $g$.

\section{Basic definitions and examples}
\label{sec:basics}

\begin{definition}
A \emph{Lie category} is a small category $C \rra X$, where $C$ is a smooth manifold with or without boundary, the base space $X$ is a smooth manifold without boundary, and there holds:
\begin{enumerate}[label={(\roman*)}]
\item The source and target maps $s,t\colon C\rightarrow X$ are smooth submersions.
\item The unit map $u\colon X\rightarrow C$ and the composition map $m\colon \comp C\rightarrow C$ are smooth.
\end{enumerate}
If $C$ has a boundary, we also assume that $C\rra X$ has a \emph{regular boundary}, that is:
\begin{enumerate}
\item[(iii)] The restrictions $\partial s,\partial t\colon\partial C\rightarrow X$ of $s$ and $t$ are smooth submersions.
\end{enumerate}
\end{definition}
\begin{remark}
Given any $x\in X$, assumptions (i) and (iii) ensure that $C_x$ and $C^x$ are neat submanifolds of $C$ (see \cite{difftop}*{p.\ 60} or \cite{corners}*{Proposition 4.2.9}), that is:
\begin{align}
\label{eq:fibrebdr}
\partial( C_x)= C_x\cap \partial C \quad\text{and}\quad \partial( C^x)= C^x\cap \partial C.
\end{align}
Moreover, assumptions (i) and (iii) ensure that the set
\[
\comp C=(s\times t)^{-1}(\Delta_ X)
\]
of composable morphisms is a neat submanifold of $C\times C$, that is:
\begin{align}
\label{eq:regbdr_c2}
\partial(\comp C)=\comp C\cap \partial( C\times C)=\comp C\cap ( C\times \partial C \cup \partial C\times C),
\end{align}
by transversality theorem, see Corollary \ref{cor:fibred_prod}. This corollary also ensures that if $C$ has a boundary, the corner points of $\comp C$ are precisely the composable pairs in $\partial C\times\partial C$; moreover, the tangent space of $\comp C$ at a composable pair $(g,h)$ equals
\begin{align}
T_{(g,h)} C^{(2)}=\set{(v,w)\in T_g C\oplus T_h C\given \d s(v)=\d t(w)}.
\end{align}
Smoothness of $\comp C$ implies that the requirement of smoothness of the composition map $m\colon \comp C\rightarrow C$ makes sense, and furthermore that left and right translations $L_g,R_g$ are smooth maps between appropriate fibres, as defined by equations \eqref{eq:trans}; we thus obtain a covariant functor $C\rightarrow \mathbf{Diff}$, given on objects as $x\mapsto C^x$ and on morphisms as $g\mapsto L_g$, and a contravariant one given by $x\mapsto C_x$, $g\mapsto R_g$. 
\end{remark}
\begin{remark}
\label{rem:units_embedded}
The unit map $u\colon X\rightarrow C$ of a Lie category $C\rra X$ is an embedding, which is a consequence of the fact that it is a smooth section of the source (and target) map, hence an injective immersion which is a homeomorphism onto its image, whose continuous inverse is given by $s|_{u( X)}$. 
\end{remark}
\begin{definition}
A \emph{morphism} of Lie categories is a smooth functor $F\colon C\rightarrow D$. A Lie category $C$ is said to be a \emph{Lie subcategory} of $D$, if there is an injective immersive morphism $F\colon C\rightarrow D$ of Lie categories.
\end{definition}

By the remark before the definition, any morphism of Lie categories induces a smooth map between the respective object manifolds of $C$ and $D$. We now turn to examples of Lie categories.

\begin{example}\
In the case when the object space $ X$ is a singleton, a Lie category $C\rra \set{*}$ will be called a \emph{Lie monoid}. Simply put, a Lie monoid is a monoid $M$ together with a structure of a smooth manifold with or without boundary, such that the multiplication $m\colon M\times M\rightarrow M$ is smooth.

\pagebreak

Concrete examples of Lie monoids frequently arise as embedded submonoids of Lie groups. For instance, we may consider the closed ray $[0,\infty)\subset \R$ for addition, or more generally, the $n$-dimensional half-space $\mathbb H^n=\R^{n-1}\times [0,\infty)\subset \R^n$, where $n\in\mathbb N$, which is a commutative Lie monoid for the usual addition. On the other hand, the closed ray $[0,\infty)$ for multiplication is not\footnote{This is easily seen since it has a non-cancellative (absorbing) element.} a submonoid of any Lie group. Further examples of Lie monoids that do not arise as submonoids of Lie groups are: 
\begin{enumerate}[label={(\roman*)}]
\item The set $\R^{n\times n}$ of square $n$-dimensional matrices is a Lie monoid for matrix multiplication; more generally, we may consider the Lie monoid $\End(V)$ of endomorphisms of a finite dimensional vector space $V$, under composition. Even more generally, any finite-dimensional unital algebra is a Lie monoid that is enriched over the category $\mathbf{Vect}$ of vector spaces, since the multiplication map is bilinear.
In particular, this includes the real line, the complex plane, and quaternions for multiplication.
\item The closed unit disk $\overline{\mathbb D}\subset \mathbb C$, an abelian Lie monoid for complex multiplication, and the closed unit 4-ball $\bar{\mathbb B}^4$, a non-abelian Lie monoid for quaternionic multiplication.
\end{enumerate}
\end{example}

The following example generalizes the similar notion of triviality from the theory of Lie groupoids.
\begin{example}
Let $X$ be a smooth manifold without boundary and $M$ a Lie monoid. A \emph{trivial Lie category} is defined by $C=X\times M\times X$ and $ X=X$, with $s=\pr_3, t=\pr_1$ and composition as
\[
(z,g,y)(y,h,x)=(z,gh,x).
\]
That composition is smooth follows from the smoothness of multiplication in $M$. In the case when $M=\set e$ is a trivial monoid, we obtain the well-known \emph{pair groupoid}.
\end{example}

The next example reveals the spirit of the notion of a Lie category---that is, it can be thought of as a smooth family of endomorphisms of an abstract structure, parametrized by the base manifold $ X$. This is aligned with the philosophy that a Lie groupoid can be thought of as a smooth collection of automorphisms (symmetries) of a structure parametrized by $ X$.
\begin{example}
\label{ex:endomorphism_cat}
Let $\pi\colon V\rightarrow X$ be a vector bundle over a smooth manifold $X$ without boundary, whose typical fibre is a fixed vector space $W$. The \emph{endomorphism category} of $V\rightarrow X$ is the category $\End(V)\rra X$, where the set of morphisms is defined as the set
\[
\End(V)=\set{\xi\colon V_x\rightarrow V_y\given \xi \text{ is linear},x,y\in X}
\]
of linear homomorphisms between the fibres of $V\rightarrow X$, and the structure maps $s,t,m,u$ are defined in the obvious way. To show that $\End(V)$ admits a structure of a Lie category without boundary, we must define a smooth structure on $\End(V)$; it is induced by local trivializations on $V$ in the following way. Denote by 
\[
\set{U_i\times W\xrightarrow{\psi_i} \pi^{-1}(U_i)\given {i\in I}}
\]
an atlas of local trivializations of $V$ over an open cover $(U_i)_{i\in I}$ of $X$, and denote by $\tau_{ij}\colon U_i\cap U_j\rightarrow \GL(W)$ the respective transition maps, i.e., $\psi_i^{-1}\psi_j(x,w)=(x,\tau_{ij}(x)w)$. For any two indices $i,j\in I$, we define the map
\begin{align*}
&\Psi_i^j\colon U_j\times \End(V)\times U_i\rightarrow \End(V)^{U_j}_{U_i}=s^{-1}(U_i)\cap t^{-1}(U_j)\\
&\Psi_i^j(y,A,x)(v|_x)=\psi_j(y,A\pr_W \psi^{-1}_i(v|_x)),
\end{align*}
whose inverse is 
\[
(\Psi_i^j)^{-1}(\xi\colon V_x\rightarrow V_y)=(y,w\mapsto \pr_W\psi_j^{-1}\xi\psi_i(x,w),x).
\]
The smoothness of transition maps in this atlas is easily checked by computing \begin{align*}
(\Psi_k^l)^{-1}\Psi_i^j(y,A,x)=(y,\tau_{jl}(y)^{-1}A\tau_{ik}(x),x),
\end{align*}
where $x\in U_i\cap U_k$ and $y\in U_j\cap U_l$. From the local charts, it is clear that $s$ and $t$ are submersions, and the smoothness of composition map $m$ follows from smoothness of multiplication in $\End(W)$; finally, this composition is bilinear when restricted to $\End(V)^z_y\times \End(V)^y_x\subset \End(V)\comp{}$, so $\End(V)$ is moreover enriched over the category $\mathbf{Vect}$ of vector spaces.
\end{example}

\begin{example}[Bundles of Lie monoids]
\label{ex:monoid_bundles}
A Lie category with coinciding source and target map $s=t=:p$ is called a \emph{bundle of Lie monoids}. In this case, two morphisms are composable if, and only if, they are in the same fibre of $p$.

A concrete example of such a Lie category is the \emph{endomorphism bundle} $V\otimes V^*\rightarrow X$ of a vector bundle $V\rightarrow X$, with the composition given on simple tensors as $(v_2\otimes \varphi_2)(v_1\otimes\varphi_1)=\varphi_2(v_1)\,\varphi_1\otimes v_2$, and extended by bilinearity; this can easily be identified with the composition of linear maps $V_x\rightarrow V_x$, so $V\otimes V^*\subset \End(V)$ is a subcategory of $\End(V)$. Using Lemma \ref{lem:lie_subcat}, it is not hard to see that it is actually an embedded Lie subcategory of $\End(V)$.
Another concrete example of a bundle of Lie monoids is the \emph{exterior bundle}, 
\[\Lambda(V)=\bigoplus_{k=0}^{\rank V}\Lambda^k(V),\] of an $\mathbb F$-vector bundle $V\rightarrow X$, i.e., $\Lambda(V)$ consists of all multivectors in $V$, and composition of $\alpha\in \Lambda^k(V_x)$ and $\beta\in \Lambda^l(V_x)$ is given as $\alpha\wedge\beta$. The units are given by $1_x=1\in \mathbb F=\Lambda^0(V_x)$, for any $x\in X$. Again, the given composition map is smooth, which follows easily from bilinearity of wedge product. Moreover, $\Lambda(V)$ is again enriched over $\mathbf{Vect}$. In general, bundles of Lie monoids enriched over $\textbf{Vect}$ would rightfully be called \emph{smooth bundles of unital associative algebras}.
\end{example}
\begin{example}[Action categories]
\label{ex:action_cats}
An \emph{action} of a Lie monoid $M$ on a smooth manifold $X$ is a smooth map $\phi\colon M\times X\rightarrow X$, denoted by $\phi(g,x)=gx$, which satisfies $ex=x$ and $g(hx)=(gh)x$, for any $x\in X$ and $g,h\in M$. We observe that contrary to the case of Lie group actions, the map $\phi$ may not be submersive since the action is no longer by automorphisms of $X$, thus the target map in the naïve generalization of the action groupoid would not be a submersion.

	To remedy this, we construct the \emph{action category} of a given Lie monoid action $\phi\colon M\times X\rightarrow X$ as follows. Denote by
	$$
	M\ltimes X = \set{(g,x)\in M\times X\given \phi \text{ is a submersion at }(g,x)}
	$$
	the set of regular points of the action $\phi$.\footnote{Intuitively, the action category accounts only	 for the non-critical dynamics pertaining to the given action.} Defining the structure maps as usual, i.e., $s(g,x)=x$, $t(g,x)=gx$, the units as $u(x)=(e,x)$, and the composition as
	\[
	(g,hx)(h,x)=(gh,x),
	\] 
	we obtain a Lie category $M\ltimes X\rra X$. Indeed, since $M\ltimes X\subset M\times X$ is an open subset, we only need to check that if $\phi$ is a submersion at the points $(h,x)$ and $(g,hx)$, it is also a submersion at $(gh,x)$. To that end, we first notice that we may write the condition $g(hx)=(gh)x$ as an equality of maps $M\times M\times X\rightarrow X$,
	\[
	\phi\circ(\pr_1,\phi\circ(\pr_2,\pr_3))=\phi\circ(m\circ(\pr_1,\pr_2),\pr_3),
	\]
	where $m\colon M\times M\rightarrow M$ denotes the multiplication in the Lie monoid $M$. Differentiating this equality at $(g,h,x)$, we obtain
	\[
	\d\phi_{(g,hx)}\circ (\pr_1,\d\phi_{(g,x)}\circ(\pr_2,\pr_3))=\d\phi_{(gh,x)}\circ (\d m_{(g,h)}\circ (\pr_1,\pr_2),\pr_3).
	\]
	By assumption, the left-hand side is surjective, thus the same holds for the first map on the right-hand side. Hence, $M\ltimes X\rra X$ is a category and thus a Lie category.
\end{example}

\begin{example}
\label{ex:order_cat}
A simple, yet important example of a Lie category is the \emph{order category} of $\R$, which is defined as the wide subcategory of the pair groupoid $G=\R\times\R\rra \R$, given by
\[
 C=\set{(y,x)\in\R\times\R\given x\leq y}.
\]
The space of morphisms is thus the half-space below the diagonal in $\R^2$, and the source and target maps are the projections to the vertical and the horizontal axis, respectively, implying that the boundary of $C$ is regular. Moreover, since the inversion in the pair groupoid $G$ is given by the reflection over the diagonal, the units in $C$ are precisely the elements of the diagonal, and these are the only invertibles. 

\begin{figure}[h]
\centering
\stepcounter{equation}
\includegraphics{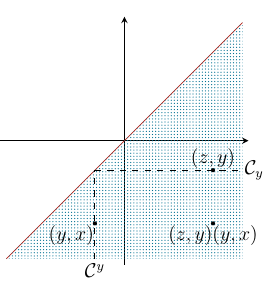}
\caption{The order category of $\R$.}
\end{figure}
\noindent Although simple, this example has an important property: $C$ can be seen as the preimage of the set $[0,\infty)$ under the functor 
\[
f\colon \R\times \R\rightarrow (\R,+), \quad f(y,x)=y-x.
\]
The following example is a generalization of this; we will make use of it when considering applications to statistical thermodynamics in \sec \ref{sec:std}.
\end{example}
\begin{example}
\label{ex:preimage_subcat}
Suppose a Lie category $D\rra X$ without boundary is given, together with a smooth functor $f\colon D\rightarrow (\R,+)$ such that:
\begin{samepage}
\begin{enumerate}[label={(\roman*)}]
\item $0\in\R$ is a regular value of $f$,
\item $s|_{f^{-1}(0)}$ and $t|_{f^{-1}(0)}$ are submersions.
\end{enumerate}
\end{samepage}
Then the preimage $C=f^{-1}\big([0,\infty)\big)$ is a Lie category. Indeed, since $0$ is a regular value of $f$, $C$ is a smooth embedded submanifold in $D$ with boundary $\partial C=f^{-1}(0)$, see e.g., \cite{difftop}*{p.\ 62}. Moreover, since $f^{-1}\big((0,\infty)\big)=\Int C$ is open in $D$, the restrictions $s|_ C,t|_ C$ of the source and target maps to $C$ are submersions. Functoriality of $f$ implies that $C$ is a wide subcategory of $D$ with all invertibles contained within $\partial C$, and moreover it also implies that $\partial C$ is a wide subcategory of $C$. The assumption (ii) enables us to use Lemma \ref{lem:lie_subcat} below to conclude that $C$ is an embedded Lie subcategory of $D$.
\end{example}

\pagebreak

\begin{lemma}
\label{lem:lie_subcat}
Let $C$ be a wide subcategory of a Lie category $D\rra X$. Suppose $C$ is also an embedded submanifold of $D$, such that $s|_ C, t|_ C$ are submersions and either of the following holds:
\begin{enumerate}[label={(\roman*)}]
\item $C$ has no boundary.
\item $C$ has a boundary, and $s|_{\partial C}, t|_{\partial C}$ are submersions.
\end{enumerate}
Then $C$ is an embedded Lie subcategory of $D$.
\end{lemma}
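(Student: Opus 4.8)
The plan is to verify two things in turn: first, that $C$, equipped with the structure maps obtained by restricting those of $D$, satisfies the axioms of a Lie category; and second, that the inclusion $\iota\colon C\hookrightarrow D$ is an injective immersive smooth functor, so that it exhibits $C$ as an embedded Lie subcategory. I expect almost all of this to be soft — a direct consequence of the hypotheses and of the fact that $C$ is an \emph{embedded} submanifold of $D$ — and that the one step deserving care is the smoothness of the composition map, which first forces us to recognize the domain $\comp C$ as a smooth manifold.

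\textbf{Step 1: $C$ is a Lie category.} I would start by noting that the object space is $X$, which is boundaryless because $D$ is a Lie category, while $C$ is a manifold with or without boundary by assumption. Since $C$ is a \emph{wide} subcategory, $u(X)\subseteq C$, and since it is a subcategory, the composite taken in $D$ of any $C$-composable pair lies in $C$; hence $s,t,u,m$ restrict to $C$, with the restricted composition coinciding with composition in $C$. The maps $s|_C,t|_C\colon C\to X$ are submersions by hypothesis, and in case (ii) so are $s|_{\partial C},t|_{\partial C}$; thus $C\rra X$ has a regular boundary whenever $\partial C\neq\varnothing$. I would then invoke Corollary \ref{cor:fibred_prod}, exactly as in the remark following the definition of a Lie category: these submersivity conditions ensure that
\[
\comp C=(s|_C\times t|_C)^{-1}(\Delta_X)
\]
is a neat submanifold of $C\times C$, and it coincides with the subset of $\comp D$ both of whose entries lie in $C$. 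Finally, for smoothness of the structure maps: $u\colon X\to D$ is smooth with image in the embedded submanifold $C$, hence smooth as a map $X\to C$; and $\comp C$ is embedded in $\comp D$ (being embedded in $C\times C\subseteq D\times D$, while $\comp D$ is embedded in $D\times D$), so $m_D$ restricts along the smooth inclusion $\comp C\hookrightarrow\comp D$ to a smooth map $\comp C\to D$ whose image lies in the embedded submanifold $C$, whence $m\colon\comp C\to C$ is smooth. This establishes that $C\rra X$ is a Lie category.

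\textbf{Step 2: the inclusion.} Here I would simply observe that $\iota\colon C\to D$ is a functor because $C$ is a subcategory, that it induces the identity on objects (a smooth map $X\to X$), that it is smooth and an immersion because $C$ is an embedded submanifold of $D$, and that it is visibly injective; since $C$ is moreover embedded, $\iota$ is a topological embedding. Hence $\iota$ is an injective immersive morphism of Lie categories with embedded image, and $C$ is an embedded Lie subcategory of $D$.

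\textbf{Main obstacle.} The only non-routine ingredient is Step 1's identification of $\comp C$ as a smooth (neat) submanifold together with the ensuing smoothness of $m\colon\comp C\to C$; this is precisely where the submersivity hypotheses — including their boundary versions in case (ii) — and the assumption that $C$ is a genuine subcategory (closed under composition and containing all units) get used. Once $\comp C$ is in hand, everything else reduces to the standard fact that a smooth map into an ambient manifold whose image lies in an embedded submanifold is smooth into that submanifold.
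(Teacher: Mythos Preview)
Your proof is correct and takes a more direct route than the paper's. The paper shows that $\comp C$ is a submanifold of $\comp D$ by proving transversality of the inclusion $C\times C\hookrightarrow D\times D$ with $\comp D$: given $(v,w)\in T_gD\oplus T_hD$, it explicitly uses the submersivity of $s|_C$ and $t|_C$ to decompose $(v,w)$ into a vector in $T_{(g,h)}\comp D$ and one in $T_gC\oplus T_hC$, and then invokes the corners transversality machinery (Propositions~\ref{prop:transversality_char} and~\ref{prop:transversality}) to handle the boundary cases. You instead apply Corollary~\ref{cor:fibred_prod} directly to $C$ itself---the hypotheses (i) and (ii) are exactly what that corollary needs---to produce $\comp C$ as a neat submanifold of $C\times C$, and then use only embeddedness (of $\comp C$ and $\comp D$ in $D\times D$, and of $C$ in $D$) to factor $m_D$ smoothly through $C$. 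Your approach is cleaner in that it reuses the very fibred-product result already invoked for $D$ and avoids the explicit transversality computation; the paper's approach, on the other hand, makes the role of the ambient $\comp D$ and the corners transversality framework more explicit, which fits the paper's emphasis on that appendix material.
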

\begin{proof}
The only thing needed to be proven is that the restriction $m|_{\comp C}\colon \comp C\rightarrow C$ of the composition map $m\colon \comp D\rightarrow D$, is smooth. To that end, it is enough to check that $\comp C$ is a submanifold of $\comp D$; notice that $\comp C=\comp D\cap ( C\times C)$, so we will make use of the transversality theorem. 

Suppose first that $C$ and $D$ have no boundary, so that by the usual transversality theorem it is enough to check $\comp D$ and $C\times C$ are transversal in $D\times D$, i.e.,
\[
T_{(g,h)}\comp D+ T_g C\oplus T_h C=T_g D\oplus T_hD,\quad\text{for all }(g,h)\in \comp C.
\]
To show this equality, let $(v,w)\in T_g D\oplus T_h D$. Since $s|_ C$ is a submersion, there is a vector $v'\in T_g C$ with $\d s_g(v')=\d t_h(w)$. Define $v''=v-v'$, and now since $t|_ C$ is a submersion, there is a vector $w'\in T_h C$ such that
\[
\d t_h(w')=\d t_h(w)-\d s_g(v'').
\]
Now define $w''=w-w'$. Clearly, $(v',w')\in T_g C\oplus T_h C$, and on the other hand, the definition of $w'$ and $w''$ imply
\[
\d s_g(v'')=\d t_h(w)- \d t_h(w')=\d t_h(w''),
\]
so that $(v'',w'')\in T_{(g,h)} \comp D$, which concludes our proof for the boundaryless case. If $D$ has a boundary, then the above proof together with Proposition \ref{prop:transversality_char} used on the inclusion \[C\times C\xhookrightarrow{\iota} D\times D,\] ensures that $\iota\pitchfork\comp D$, and since $\comp D\subset D\times D$ is a neat submanifold, $\comp C\subset C\times C\subset D\times D$ is a submanifold by Proposition \ref{prop:transversality}. Hence, $\comp C$ is a submanifold of $\comp D$.

If $C$ has a boundary, then the assumption (ii) ensures that all the strata 
\[
\Int C\times\Int C,\quad (\Int C\times\partial C)\cup(\partial C\times\Int C),\quad \partial C\times\partial C
\]
of $C\times C$ are transversal to $\comp D$ by a similar proof as above, so by Proposition \ref{prop:transversality_char} we again conclude $\iota\pitchfork \comp D$.
\end{proof}
\begin{remark}
In particular, an embedded submonoid of a Lie monoid is its embedded Lie submonoid, however, a direct proof of this is much easier.
\end{remark}

\begin{example}
	Later, we will encounter yet another Lie category (without boundary)---the fat category associated to a $\vb$-groupoid, see Remark \ref{rem:fat_cat}.
\end{example}



\section{Invertibility of morphisms}
Invertible morphisms form an important subclass of morphisms in any category $C$---recall that $g\in C$ is said to be \emph{invertible}, if there is a unique morphism $g^{-1}\in C$ such that $g^{-1}g=1_{s(g)}$ and $gg^{-1}=1_{t(g)}$. We observe that the set
\[
\G (C)=\set{g\in C\given g\text{ is invertible}}
\]
is a groupoid over the same base as $C$. We call $\G (C)$ the \emph{core} of $C$, the study of which begins with the following simple observation.


\begin{proposition}
\label{prop:inv}
For any morphism $g$ in a category $C$, the following are equivalent.
\begin{enumerate}[label={(\roman*)}]
\item $g$ is invertible.
\item Left and right translations by $g$ are bijections.
\item Left and right translations by $g$ are surjections.
\item $g$ has a left and a right inverse, i.e., there exist $g'\in C^{s(g)}, g''\in C_{t(g)}$ with $gg'=1_{t(g)}$ and $g''g=1_{s(g)}$. 
\end{enumerate}
\end{proposition}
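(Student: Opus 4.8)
The plan is to prove the cycle of implications $(i)\Rightarrow(ii)\Rightarrow(iii)\Rightarrow(iv)\Rightarrow(i)$, which is purely a statement about abstract categories with no smooth structure involved, so all the work is elementary diagram-chasing. None of it requires the Lie category hypotheses, so I would state it as a lemma about plain categories.

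First, $(i)\Rightarrow(ii)$: if $g\colon x\to y$ is invertible with inverse $g^{-1}\colon y\to x$, then precomposition by $g^{-1}$ provides a two-sided inverse to $L_g\colon C^x\to C^y$, since $L_{g^{-1}}L_g(h)=g^{-1}(gh)=(g^{-1}g)h=1_x h=h$ and symmetrically $L_gL_{g^{-1}}=\id$ on $C^y$; the identical argument with postcomposition handles $R_g\colon C_y\to C_x$ via $R_{g^{-1}}$. Hence both translations are bijections. The implication $(ii)\Rightarrow(iii)$ is trivial, as bijections are in particular surjections.

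Next, $(iii)\Rightarrow(iv)$: if $L_g\colon C^{s(g)}\to C^{t(g)}$ is surjective, then since $1_{t(g)}\in C^{t(g)}$ there exists $g'\in C^{s(g)}$ with $L_g(g')=gg'=1_{t(g)}$, so $g$ has a right inverse; likewise surjectivity of $R_g\colon C_{t(g)}\to C_{s(g)}$ applied to $1_{s(g)}\in C_{s(g)}$ yields $g''\in C_{t(g)}$ with $R_g(g'')=g''g=1_{s(g)}$, a left inverse. Finally $(iv)\Rightarrow(i)$ is the classical one-sided-inverse argument: from $gg'=1_{t(g)}$ and $g''g=1_{s(g)}$ one computes $g''=g''1_{t(g)}=g''(gg')=(g''g)g'=1_{s(g)}g'=g'$, so the left and right inverses coincide into a two-sided inverse, and uniqueness follows in the usual way (any other inverse $\tilde g$ satisfies $\tilde g=\tilde g(gg')=(\tilde g g)g'=g'$). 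I would also remark that $g',g''$ automatically lie in the claimed fibres by composability: $gg'$ being defined forces $s(g)=t(g')$, and $gg'=1_{t(g)}$ forces $t(g')=s(g')=t(g)$... — more carefully, $s(g')=s(gg')=s(1_{t(g)})=t(g)$, so $g'\in C^{s(g)}_{t(g)}$, and symmetrically for $g''$.

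I do not anticipate a genuine obstacle here; the only mild care needed is bookkeeping of source/target fibres so that every composite written down is actually composable, and making sure the domains $C^{s(g)}$, $C_{t(g)}$ of the translations are the correct ones as fixed by \eqref{eq:trans}. The argument is a routine exercise in category theory and I would keep the write-up short.
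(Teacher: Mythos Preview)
Your proof is correct and follows exactly the same cycle of implications as the paper's own argument, with the same key computation $g''=g''1_{t(g)}=g''(gg')=(g''g)g'=1_{s(g)}g'=g'$ for $(iv)\Rightarrow(i)$. The paper is simply terser, dispensing with the fibre bookkeeping and the explicit inverses for the translations.
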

\begin{proof}
Implications $ (i) \Rightarrow (ii) \Rightarrow (iii)$ are clear, and $(iii)\Rightarrow (iv)$ follows by observing that $(iv)$ means $1_{t(g)}\in \Im(L_g)$, $1_{s(g)}\in \Im(R_g)$. Lastly, implication $(iv) \Rightarrow (i)$ follows from elementary abstract algebra: $g''=g''1_{t(g)}=g''gg'=1_{s(g)}g'=g'$.
\end{proof}
\begin{remark}
Note that injectivity of left and right translations corresponds to cancellative properties. For example, injectivity of $L_g$ is equivalent to stating that for any two $h,k\in C^{s(g)}$, $gh=gk$ implies $h=k$.
\end{remark}

The regular boundary assumption on a Lie category $C$ has the important consequence that the units dictate where invertible elements can be:
\begin{lemma} 
\label{lem:regbdr}
Let $C$ be a Lie category. For any invertible morphism $g\in \G (C)$, the morphisms $g,g^{-1}, 1_{s(g)},1_{t(g)}$ must either all be contained in the interior $\Int C$, or in the boundary $\partial C$.
\end{lemma}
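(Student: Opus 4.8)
The plan is to use the fact that the source and target maps are submersions on both $C$ and (by the regular boundary assumption) on $\partial C$, together with the fact that $g \mapsto g^{-1}$ is continuous on $\G(C)$ and that $s,t$ restricted to the image of $u$ are inverse to $u$, hence $u$ is an embedding whose image is either entirely interior or entirely boundary-meeting at each connected component — but more to the point, we argue pointwise.

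First I would observe that for an invertible $g$, left translation $L_g \colon C^{s(g)} \to C^{t(g)}$ is a diffeomorphism by Proposition \ref{prop:inv}, and it sends $1_{s(g)}$ to $g$ and $g^{-1}$ to $1_{t(g)}$. A diffeomorphism between manifolds-with-boundary maps boundary to boundary and interior to interior; hence $g \in \partial C$ iff $1_{s(g)} \in \partial C^{s(g)} = C^{s(g)} \cap \partial C$ (using \eqref{eq:fibrebdr}), iff $1_{s(g)} \in \partial C$. Likewise, applying $L_g$ to $g^{-1}$ and $1_{t(g)}$ gives that $g^{-1} \in \partial C$ iff $1_{t(g)} \in \partial C$. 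Symmetrically, right translation $R_g \colon C_{t(g)} \to C_{s(g)}$ is a diffeomorphism sending $1_{t(g)} \mapsto g$ and $g^{-1} \mapsto 1_{s(g)}$, so $g \in \partial C$ iff $1_{t(g)} \in \partial C$, and $g^{-1} \in \partial C$ iff $1_{s(g)} \in \partial C$. Chaining these four equivalences: $g \in \partial C \iff 1_{s(g)} \in \partial C$, $g \in \partial C \iff 1_{t(g)} \in \partial C$, $g^{-1} \in \partial C \iff 1_{t(g)} \in \partial C$, and $g^{-1} \in \partial C \iff 1_{s(g)} \in \partial C$; therefore all four of $g, g^{-1}, 1_{s(g)}, 1_{t(g)}$ lie in $\partial C$, or else none does (equivalently, all lie in $\Int C$).

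The one technical point to nail down is that $L_g$ and $R_g$ really are diffeomorphisms \emph{between manifolds with boundary in the neat sense}, so that the boundary-to-boundary statement applies: by \eqref{eq:fibrebdr} the fibres $C^x, C_x$ are neat submanifolds with $\partial(C^x) = C^x \cap \partial C$, and a smooth bijection of such with smooth inverse restricts to a bijection of boundaries. This is where the regular boundary hypothesis (iii) is essential — without it the fibres need not be manifolds with boundary at all. I expect this step — checking that the translations are genuine diffeomorphisms of neat submanifolds and invoking the standard fact that such maps preserve the boundary/interior decomposition — to be the only real content; everything else is the purely algebraic bookkeeping of the four equivalences above. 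No serious obstacle is anticipated.
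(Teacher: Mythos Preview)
Your proof is correct and essentially identical to the paper's: both use that $L_g$ and $R_g$ are diffeomorphisms between the target/source fibres, that diffeomorphisms of manifolds with boundary preserve the boundary/interior decomposition, and that the regular boundary assumption gives $\partial(C^x) = C^x \cap \partial C$ via \eqref{eq:fibrebdr}. Your bookkeeping is marginally tidier---you extract two equivalences from each of $L_g$ and $R_g$ (tracking both $1_{s(g)} \mapsto g$ and $g^{-1} \mapsto 1_{t(g)}$ under $L_g$), whereas the paper invokes $L_{g^{-1}}, R_{g^{-1}}$ separately---but this is cosmetic. The stray remarks in your first paragraph about continuity of inversion and connected components are not needed and can be dropped.
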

\begin{proof}[Proof]
If $C$ has no boundary then the lemma holds trivially, so suppose $\partial C\neq \emptyset$. Invertibility of $g$ means $L_g\colon C^{s(g)}\rightarrow C^{t(g)}$ is a diffeomorphism, which maps $1_{s(g)}\mapsto g$, so we must have either $1_{s(g)}\in \partial( C^{s(g)})$ and $g\in \partial( C^{t(g)})$, or $1_{s(g)}\in \Int( C^{s(g)})$ and $g\in \Int( C^{t(g)})$. Regularity of boundary implies $\partial( C^x)= C^x\cap \partial C$ for any $x\in X$, so we must have either $1_{s(g)}\in \partial C$ and $g\in \partial C$, or $1_{s(g)}\in \Int C$ and $g\in \Int C$. A similar result is obtained for $1_{t(g)}$ and $g$ using the right translation $R_g$, and similarly for $g^{-1}$ and the units using $L_{g^{-1}}, R_{g^{-1}}$ since $s(g^{-1})=t(g)$.
\end{proof}

The last lemma implies that any Lie groupoid (a Lie category with all morphisms invertible) must have an empty boundary, provided the base manifold is boundaryless. We also obtain the following two immediate corollaries.

\begin{corollary}
\label{cor:regbdr_inv}
For any Lie category $C\rra X$, there holds:
\begin{align}
u( X)\subset \Int C\ &\text{ implies }\ \G (C)\subset \Int C,\\
u( X)\subset \partial C\ &\text{ implies }\ \G (C)\subset \partial C.
\end{align}
\end{corollary}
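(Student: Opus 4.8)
The plan is to obtain both implications as immediate consequences of Lemma \ref{lem:regbdr}, which already contains all the substance; the corollary merely specializes that lemma to the case where the location of the units is known globally. First I would record the trivial observation that every unit $1_x = u(x)$ is invertible (it is its own inverse), so that $u(X) \subset \G(C)$; in particular, for any invertible morphism $g \in \G(C)$ the source unit $1_{s(g)}$ belongs to $u(X)$.

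For the first implication, fix $g \in \G(C)$ and assume $u(X) \subset \Int C$. Then $1_{s(g)} \in \Int C$, and Lemma \ref{lem:regbdr} asserts that $g$, $g^{-1}$, $1_{s(g)}$, $1_{t(g)}$ are either all contained in $\Int C$ or all contained in $\partial C$; since $1_{s(g)}$ is interior, $g$ must be interior as well. As $g$ was an arbitrary invertible morphism, this gives $\G(C) \subset \Int C$. For the second implication, the same argument with the hypothesis $u(X) \subset \partial C$ yields $1_{s(g)} \in \partial C$, whence Lemma \ref{lem:regbdr} forces $g \in \partial C$, so $\G(C) \subset \partial C$. (If $\partial C = \emptyset$ both statements are vacuous, matching the trivial case handled at the start of the lemma's proof.)

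There is essentially no obstacle here: the only points that need care are the remark that units are invertible, so that the hypothesis on $u(X)$ can be fed into Lemma \ref{lem:regbdr} through an arbitrary $g \in \G(C)$, and the implicit use of the regular-boundary assumption built into the definition of a Lie category, which is what makes Lemma \ref{lem:regbdr} available in the first place. Both are automatic, so the proof is a two-line deduction.
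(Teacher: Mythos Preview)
Your proposal is correct and matches the paper's approach exactly: the corollary is stated without proof as an immediate consequence of Lemma \ref{lem:regbdr}, and your argument is precisely the obvious deduction the reader is meant to supply.
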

\begin{corollary}
The invertible elements of any Lie monoid are either contained in its interior, or in its boundary.
\end{corollary}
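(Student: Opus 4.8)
The plan is to obtain this as an immediate specialization of Corollary \ref{cor:regbdr_inv} to the case where the object space is a single point. Recall that a Lie monoid $M$ is by definition a Lie category $M\rra\{*\}$; in particular all the hypotheses of the preceding results (regular boundary, etc.) are part of the data. First I would observe that the unit map $u\colon\{*\}\rightarrow M$ has image $u(\{*\})=\{1_*\}$, a single point, namely the identity element $e$ of the monoid.

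Next, since $M$ is a smooth manifold with (possibly empty) boundary, the point $e$ lies either in the interior $\Int M$ or in the boundary $\partial M$, and these alternatives are mutually exclusive and exhaustive for a single point. Hence exactly one of $u(\{*\})\subset\Int M$ or $u(\{*\})\subset\partial M$ holds. Applying the two implications of Corollary \ref{cor:regbdr_inv} respectively, we get $\G(M)\subset\Int M$ in the first case and $\G(M)\subset\partial M$ in the second. Since the core $\G(M)$ of the one-object category $M\rra\{*\}$ is precisely the set of invertible elements of the monoid $M$, this is exactly the claimed dichotomy.

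There is essentially no obstacle here: the only thing being used beyond Corollary \ref{cor:regbdr_inv} is the trivial fact that a single point of a manifold with boundary cannot be split between interior and boundary, which forces the hypothesis of one of the two implications to hold. I would present the argument in two or three lines and, if desired, remark that this recovers the elementary observations made earlier for $[0,\infty)$ under addition (where the unit $0$, hence the only invertible, lies in $\partial[0,\infty)$) and under multiplication (where the unit $1$, hence all invertibles, lies in $\Int[0,\infty)$).
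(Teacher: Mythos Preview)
Your proposal is correct and is exactly the intended argument: the paper states this corollary immediately after Corollary \ref{cor:regbdr_inv} with no proof, precisely because it is the specialization to $X=\{*\}$ that you describe.
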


An impending question is whether the core $\G (C)$ of $C$ is a Lie groupoid. Without additional assumptions this is false, as shown by the following counterexample.
\begin{example}
A simple example of a Lie category $C$ with a regular boundary, whose core $\G (C)$ is not a manifold, is the disjoint union of the order category on $\R$ and the pair groupoid on $\R$. More concretely, we define 
\begin{align*}
 C&=(-\infty,0)^2\cup \set{(y,x)\in (0,\infty)^2\given x\leq y},\\
 X&=\R\setminus \set 0,
\end{align*}
with the categorical structure inherited by the pair groupoid structure on $\R$. In this case, \[\G (C)=(-\infty,0)^2\cup \set{(x,x)\given x>0},\] which has two components of different dimensions. We have depicted this in Figure \ref{fig:disjoint_cat}, with invertible elements of $C$ in red, and non-invertible in blue; the units in $\Int C$ are depicted with a dashed line.

\begin{figure}[H]
\centering
\stepcounter{equation}
\includegraphics{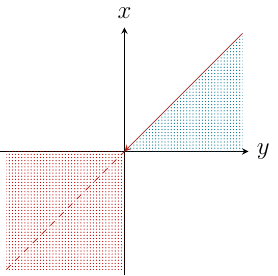}
\caption{The disjoint union of the order category with the pair groupoid.}
\label{fig:disjoint_cat}
\end{figure}
\noindent The culprit in the example above is that the units were allowed to be both in the interior and in the boundary.
\end{example}

\begin{samepage}
\begin{definition}
\label{def:normal_bdry}
A Lie category $C\rra X$ is said to have a \emph{normal boundary}, if either of the following holds:\
\begin{enumerate}[label={(\roman*)}]
\item $u( X)\subset \Int C$.
\item $\partial C$ is a wide subcategory of $C$, i.e., $\partial C$ is a subcategory of $C$ and $u( X)\subset\partial C$.
\end{enumerate}
\end{definition}
\end{samepage}
\begin{remark}
A Lie category without a boundary vacuously has a normal boundary. 
\end{remark}

The following result was proved by Charles Ehresmann in his pioneering paper \cite{ehr1959}, where he introduced Lie categories. Ehresmann proved it by implicitly assuming $\partial C=\emptyset$, and with the method of local coordinates; the proof we present is conceptually clearer since it is coordinate-free, and it holds for categories with a normal boundary.
\begin{theorem}
\label{thm:open_inv}
If $C$ is a Lie category with a normal boundary, then its core $\G (C)$ is an embedded Lie subcategory of $C$. More precisely, if $u( X)\subset \Int C$, then $\G (C)$ is open in $\Int C$, and if $u( X)\subset \partial C$, then $\G (C)$ is open in $\partial C$.
\end{theorem}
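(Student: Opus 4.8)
The plan is to show that invertibility is an open condition within the appropriate ambient manifold, and then to invoke Lemma~\ref{lem:lie_subcat} to upgrade openness to the statement that $\G(C)$ is an embedded Lie subcategory. First I would reduce to the two cases dictated by the normal boundary hypothesis. If $u(X)\subset\Int C$, then by Corollary~\ref{cor:regbdr_inv} we have $\G(C)\subset\Int C$, and $\Int C$ is a boundaryless Lie category in its own right (its source and target maps are still submersions, being restrictions of $s,t$ to an open set). If instead $\partial C$ is a wide subcategory, then again by Corollary~\ref{cor:regbdr_inv} we have $\G(C)\subset\partial C$, and the regular boundary assumption (iii) says exactly that $\partial C\rra X$ is itself a Lie category without boundary. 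So in both cases it suffices to prove: \emph{for a Lie category $D\rra X$ without boundary, $\G(D)$ is open in $D$}; applying this to $D=\Int C$ or $D=\partial C$ and then using Lemma~\ref{lem:lie_subcat} (case (i), no boundary) gives that $\G(C)$ is an embedded Lie subcategory of $D$, hence of $C$.

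The heart of the matter is therefore the openness of $\G(D)$ in a boundaryless Lie category $D$. By Proposition~\ref{prop:inv}, $g$ is invertible if and only if both translations $L_g$ and $R_g$ are bijections. The natural approach is to realize the invertible arrows as the locus where a suitable linear map is invertible, which is an open condition. Concretely, fix $g\in\G(D)$ with $x=s(g)$, $y=t(g)$; since $L_g\colon D^x\to D^y$ is a diffeomorphism and $D^x$, $D^y$ are submanifolds of the same dimension (as $t$ is a submersion), the differential $\d(L_g)_h$ is a linear isomorphism $T_h D^x\to T_{gh}D^y$ at every $h\in D^x$, in particular at $h=1_x$. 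Now $\d(L_\bullet)$ varies smoothly in the arrow parameter, so the set of arrows $g'$ near $g$ for which $\d(L_{g'})_{1_{s(g')}}\colon T_{1_{s(g')}}D^{s(g')}\to T_{g'}D^{t(g')}$ is an isomorphism is open. To make this precise one works in local charts for the source fibration: pick a chart near $g$ in which $s$ looks like a projection, trivializing the family of source fibres, so that multiplication becomes a smooth map between open subsets of Euclidean space and ``$\d(L_{g'})_{1}$ is an isomorphism'' becomes ``a certain smoothly-varying square matrix is invertible,'' manifestly open. The same argument applied to $R_\bullet$ gives an open neighborhood on which $\d(R_{g'})_1$ is an isomorphism.

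It remains to pass from ``both differentials at the unit are isomorphisms'' back to ``both translations are bijections,'' i.e.\ to genuine invertibility of $g'$. This is the step I expect to be the main obstacle, since injectivity/surjectivity of $L_{g'}$, $R_{g'}$ as global maps of fibres is not a purely infinitesimal condition. The way around it: shrink the neighborhood further using continuity of the categorical structure to produce an \emph{approximate inverse}. Since inversion is continuous on the groupoid $\G(D)$ — or, more elementarily, since $(g',h')\mapsto$ (the candidate inverse data) can be controlled near $(g,g^{-1})$ — for $g'$ close to $g$ one finds $h'$ close to $g^{-1}$ with $g'h'$ close to $1_{t(g')}$ and $h'g'$ close to $1_{s(g')}$; combined with the local diffeomorphism property established above (which guarantees $L_{g'}$ is a \emph{local} diffeomorphism near $1_{s(g')}$, hence its image contains a neighborhood of $g'$, in particular eventually the units $1_{t(g')}$), one concludes $1_{t(g')}\in\Im L_{g'}$ and $1_{s(g')}\in\Im R_{g'}$, which by Proposition~\ref{prop:inv}(iv) forces $g'\in\G(D)$. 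An alternative and cleaner route, if available from the earlier development, is: the differential computation shows $L_{g'}$ is an \emph{immersion} between equidimensional manifolds hence a local diffeomorphism, and likewise $R_{g'}$; then one shows the set of $g'$ for which $L_{g'}$ and $R_{g'}$ are \emph{global} diffeomorphisms onto their (open) image-fibres is open, and a connectedness/properness argument on the fibres (using that $s,t$ restricted to $\G(D)$ will be submersions once we know the manifold structure) closes the fibres up. Either way, once $\G(D)$ is shown open in $D$, it is a wide subcategory that is an embedded submanifold with $s,t$ submersive and no boundary, so Lemma~\ref{lem:lie_subcat}(i) applies and finishes the proof; translating back, $\G(C)$ is open in $\Int C$ when $u(X)\subset\Int C$ and open in $\partial C$ when $u(X)\subset\partial C$, as claimed.
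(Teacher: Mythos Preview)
Your overall architecture is right and matches the paper: reduce to a boundaryless Lie category $D$ (either $\Int C$ or $\partial C$) via Corollary~\ref{cor:regbdr_inv}, prove $\G(D)$ is open there, and conclude with Lemma~\ref{lem:lie_subcat}. The problem is entirely in the openness step, where route~(a) contains a genuine error and route~(b) has no content.

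In route~(a) you write that $L_{g'}$ being a local diffeomorphism near $1_{s(g')}$ means ``its image contains a neighborhood of $g'$, in particular eventually the units $1_{t(g')}$.'' The first clause is correct: the image is a neighborhood of $L_{g'}(1_{s(g')})=g'$ inside the fibre $D^{t(g')}$. The second clause is false: $1_{t(g')}$ is close to $1_{t(g)}$, not to $g'\approx g$, and for a generic invertible $g$ these are far apart in $D^{t(g)}$. So you never hit $1_{t(g')}$ this way. Your approximate inverse $h'$ near $g^{-1}$ is the right object to look at, but then you need $L_{g'}$ to be a local diffeomorphism near $h'$ (not near $1_{s(g')}$), with image large enough to contain $1_{t(g')}$ exactly; closing the gap between ``$g'h'$ close to $1_{t(g')}$'' and ``$g'h''=1_{t(g')}$ for some $h''$'' requires uniform control of inverse-function-theorem neighborhoods as $(g',h')$ varies, which you have not supplied.

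The paper packages exactly this ``inverse function theorem with parameters'' into a single clean application. Define $\vartheta\colon D^{(2)}\to D\tensor[_t]{\times}{_t} D$, $\vartheta(g,h)=(g,gh)$. At $(k,k^{-1})$ the differential is injective: $\d\vartheta(v,w)=(v,\d m(v,w))=0$ forces $v=0$, hence $\d m_{(k,k^{-1})}(0,w)=\d(L_k)_{k^{-1}}(w)=0$, hence $w=0$ since $L_k$ is a diffeomorphism. By equality of dimensions $\vartheta$ is a local diffeomorphism at $(k,k^{-1})$, so there are open neighborhoods $U\ni(k,k^{-1})$ and $V\ni(k,1_{t(k)})$ with $\vartheta\colon U\to V$ a diffeomorphism. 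Pulling back the embedded submanifold $\lambda(D)=\{(g,1_{t(g)})\}$ through $\vartheta^{-1}$ gives, for every $g'$ in the open set $\lambda^{-1}(V)$, a point $(g',h')\in U$ with $g'h'=1_{t(g')}$: an \emph{exact} right inverse. The analogous map $\tilde\vartheta(g,h)=(gh,h)$ produces left inverses on a neighborhood of $k$, and Proposition~\ref{prop:inv}(iv) finishes. This is precisely the rigorous version of what your sketch was reaching for.
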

\begin{proof}
Consider first the case when $u( X)\subset \Int C$. 
We need to show that any $k\in \G (C)$ has an invertible neighborhood in $C$; we will do so by showing that $k$ admits a left-invertible and a right-invertible neighborhood. With this motive, define the map
\begin{align}
\vartheta\colon \comp C\rightarrow { C\tensor[_t]{\times}{_t}}\, C,\quad \vartheta(g,h)=(g,gh),
\end{align}
so $\vartheta=(\pr_1,m)$. We claim that this is a local diffeomorphism at the point $(k,k^{-1})$. Note that by equation \ref{eq:regbdr_c2} and a similar result for $\partial({ C\tensor[_t]{\times}{_t}}\, C)$, we have
\[
(k,k^{-1})\in\Int(\comp C)\quad\text{ and }\quad(k,1_{t(k)})=\vartheta(k,k^{-1})\in \Int({ C\tensor[_t]{\times}{_t}}\, C),
\]
so that it is enough to show $\d\vartheta_{(k,k^{-1})}$ is an isomorphism, by virtue of the usual inverse map theorem for manifolds without boundary. Due to dimensional reasons, it is enough to check $\vartheta$ is an immersion at $(k,k^{-1})$.

Suppose $\d\vartheta_{(k,k^{-1})}(v,w)=0$ for some $(v,w)\in T_{(k,k^{-1})}(\comp C)$. The identity 
$
\d \vartheta(v,w)=(v,\d m(v,w))
$ implies $v=0$, so we obtain
\begin{align*}
\d m_{(k,k^{-1})}(v,w)=\d (L_k)_{k^{-1}}(w).
\end{align*}
Since $L_k$ is a diffeomorphism, we conclude $w=0$, hence $\vartheta$ is a local diffeomorphism at $(k,k^{-1})$, i.e., there is a neighborhood $U$ of $(k,k^{-1})$ which is mapped diffeomorphically onto a neighborhood $V$ of $(k,1_{t(k)})$. Now note that $C$ embeds into ${ C\tensor[_t]{\times}{_t}}\, C$, by the map
$
\lambda(g)=(g,1_{t(g)}),
$
so the set $\vartheta^{-1}(V\cap\lambda (C))$ is diffeomorphic to $V\cap \lambda (C)$, and consists of pairs $(g,g')\in U$ such that $gg'=1_{t(g)}$. In other words, $\lambda^{-1}(V)$ is a neighborhood of $k$, whose elements all have right inverses. 

Similarly, we can show that the map $\tilde\vartheta\colon \comp C\rightarrow { C\tensor[_s]{\times}{_s}}\, C, (g,h)\mapsto(gh,h)$ is a local diffeomorphism at $(k^{-1},k)$, and use the embedding $\tilde\lambda\colon C\rightarrow { C\tensor[_s]{\times}{_s}}\, C$, $\tilde\lambda(g) = (1_{s(g)},g)$ to obtain a neighborhood $\tilde\lambda^{-1}(\tilde V)$ of $k$, whose elements all have left inverses. To conclude, note that Proposition \ref{prop:inv} guarantees $\lambda^{-1}(V)\cap \tilde\lambda^{-1}(\tilde V)$ is an invertible neighborhood of $k$.

Finally, for the case $u( X)\subset \partial C$, just note that normality of the boundary can be used with Lemma \ref{lem:lie_subcat} to conclude that $\partial C$ is an embedded Lie subcategory of $C$ without boundary, so we can apply the previous case to $\partial C$.
\end{proof}

Given a morphism $F\colon C\rightarrow D$ between Lie categories with normal boundaries, we have $F(\G (C))\subset \G(D)$ by functoriality, so we may define $\G(F)=F|_{\G (C)}$. The map 
\[\G\colon \mathbf{LieCat_\partial}\rightarrow\mathbf{LieGrpd}, \]
defines a functor from the category of Lie categories with a normal boundary to the category of Lie groupoids without boundary, which is easily seen to be right adjoint to the inclusion functor $\mathbf{LieGrpd}\hookrightarrow \mathbf{LieCat_\partial}$. 

Using the last theorem we can also show that the universal property of the core extends to the differentiable setting. We note that more consequences of the Theorem \ref{thm:open_inv} will be explored in upcoming sections.
\begin{corollary}
Let $C$ be a Lie category with a normal boundary, and let $H\subset C$ be a groupoid that is also a Lie subcategory of $C$. If any smooth morphism $F\colon G\rightarrow C$, defined on a Lie groupoid $G$, factors uniquely through $H$, then $H=\G (C)$. 
\end{corollary}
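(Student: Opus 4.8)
The statement asserts that $\G(C)$ is characterized up to \emph{equality} (not merely up to isomorphism) by the stated factorization property, so the plan is to prove the two inclusions $H\subseteq\G(C)$ and $\G(C)\subseteq H$ separately, using only the definitions together with Theorem~\ref{thm:open_inv}.

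First I would establish $H\subseteq\G(C)$, which is immediate and uses none of the hypotheses about factorizations. Since $H$ is a subcategory of $C$, composition and units in $H$ agree with those in $C$; and since $H$ is a groupoid, every morphism $h\in H$ admits a two-sided inverse $h^{-1}\in H\subseteq C$, which is then a two-sided inverse of $h$ in $C$ as well. Hence $h\in\G(C)$, so $H\subseteq\G(C)$ as subsets of the arrow manifold (and, since $\G(C)$ is wide, over all of $X$).

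For the reverse inclusion I would invoke Theorem~\ref{thm:open_inv}: as $C$ has a normal boundary, $\G(C)$ is an embedded Lie subcategory of $C$ which is a groupoid, i.e.\ a Lie groupoid, and the inclusion $\iota\colon\G(C)\hookrightarrow C$ is a smooth functor, hence a smooth morphism of Lie categories defined on a Lie groupoid. Applying the hypothesis to $F=\iota$, there is a (unique) smooth morphism $\psi\colon\G(C)\to H$ with $j\circ\psi=\iota$, where $j\colon H\hookrightarrow C$ denotes the inclusion. Because $j$ is injective on objects and on morphisms (being a Lie subcategory inclusion), the identity $j(\psi(g))=\iota(g)=g$ for $g\in\G(C)$ forces $\psi(g)=g$ as an element of $C$; in particular $g\in H$. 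Thus $\G(C)\subseteq H$, and combining the two inclusions yields $H=\G(C)$.

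The argument is essentially formal and presents no analytic difficulty; the two points to handle with care are (i) that $\G(C)$ genuinely qualifies as a Lie groupoid, so that the factorization hypothesis may legitimately be applied to its inclusion into $C$ --- this is exactly what Theorem~\ref{thm:open_inv} supplies --- and (ii) that \enquote{factoring through $H$} together with the injectivity of the subcategory inclusion $j$ produces literal containment $\G(C)\subseteq H$ rather than merely an isomorphism onto a Lie subcategory. As a byproduct, since $\G(C)$ is wide over $X$, one sees a posteriori that $H$ must have been wide as well.
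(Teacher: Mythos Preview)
Your proof is correct and follows essentially the same approach as the paper: both show $H\subseteq\G(C)$ by noting that inverses in $H$ are inverses in $C$, and then obtain $\G(C)\subseteq H$ by applying the factorization hypothesis to the inclusion $\G(C)\hookrightarrow C$, using injectivity of $H\hookrightarrow C$ to deduce literal containment. Your version is slightly more explicit about the role of Theorem~\ref{thm:open_inv} in ensuring that $\G(C)$ is a legitimate Lie groupoid to which the hypothesis applies, which is a good point to make.
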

\begin{proof}
The inclusion $H\subset \G (C)$ is an easy consequence of functoriality of $H\hookrightarrow C$, since functors map isomorphisms to isomorphisms. For the converse inclusion, we pick the smooth map $F\colon \G (C)\hookrightarrow C$ and now the existence of $\bar F\colon \G (C)\rightarrow H$ such that $F=\iota\circ \bar F$, ensures that if $g\in \G (C)$, then $g=\bar F(g)\in H$.
\end{proof}

\section{The two Lie algebroids of a Lie category}
We begin this section by stating the following well-known definition.
\begin{definition}
	\label{def:algebroid}
	A \emph{Lie algebroid} is a vector bundle $A\ra X$, equipped with a Lie bracket on its space of sections $\Gamma(A)$ and a vector bundle map $\rho\colon A\ra TX$, such that the \emph{Leibniz rule} is satisfied for any $\alpha,\beta\in \Gamma(A)$:
	\[
		[\alpha,f\beta]=f[\alpha,\beta]+\rho(\alpha)(f)\beta.
	\]
	This axiom implies that $\rho\colon \Gamma(A)\ra\vf(X)$ is a Lie algebra homomorphism: $\rho[\alpha,\beta]=[\rho(\alpha),\rho(\beta)]$.
\end{definition}
The construction of a Lie algebroid using left-invariant (or right-invariant) vector fields on a Lie groupoid readily generalizes to Lie categories; however, we no longer have a canonical isomorphism between the two algebroids, previously given by inversion map. Below, we present the main idea of the construction (\ref{defn:left_inv} through \ref{prop:lie_alg}), mainly to fix notation; we will omit some details which can be found in standard Lie groupoid references.

\begin{definition}
\label{defn:left_inv}
A \emph{left-invariant} vector field on a Lie category $C\rra X$ is a vector field $X\in\vf (C)$, which is tangent to $t$-fibres, i.e., $X\in\Gamma(\ker \d t)$, and left-invariant, i.e., $\d(L_g)_h(X_h)=X_{gh}$ for all $(g,h)\in \comp C$. Denote by $\vf^L (C)$ the vector space of left-invariant vector fields on $C$.
\end{definition}

\begin{lemma}
\label{lem:closed_lie_bracket}
Let $C$ be a Lie category. The vector space $\vf^L (C)$ is closed under the Lie bracket of $TC$, and canonically isomorphic to the vector space $\Gamma(A^L (C))$ of sections of the vector bundle $A^L (C)=u^*(\ker\d t)$ over $ X$.
\end{lemma}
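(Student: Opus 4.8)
The plan is to mimic the standard Lie groupoid argument, taking care that left-invariance is defined via the partially-defined composition but the structure maps involved ($t$ and $L_g$) behave well on the relevant fibres. I would prove the two assertions — closure under the Lie bracket, and the canonical isomorphism with $\Gamma(A^L(C))$ — in that order, since the isomorphism gives a clean way to see that $\vf^L(C)$ is a finite-dimensional-over-$C^\infty(X)$ object even though a priori it is just a subspace of $\vf(C)$.

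First I would establish the isomorphism. Given $X\in\vf^L(C)$, restrict it along the unit embedding $u\colon X\hookrightarrow C$ (Remark \ref{rem:units_embedded}); since $X$ is tangent to $t$-fibres and $t\circ u=\id_X$, we get a section $u^*X\in\Gamma(u^*\ker\d t)=\Gamma(A^L(C))$. Conversely, given $\alpha\in\Gamma(A^L(C))$, define a vector field $\alpha^L$ on $C$ by $\alpha^L_g=\d(L_g)_{1_{s(g)}}(\alpha_{s(g)})$; this makes sense because $(g,1_{s(g)})\in\comp C$ and $L_g$ maps $C^{s(g)}$ to $C^{t(g)}$, so $\alpha^L_g\in\ker\d t_g$. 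Smoothness of $\alpha^L$ follows from smoothness of $m$: write $\alpha^L$ as the composite of $g\mapsto(g,u(s(g)))$ (smooth, valued in $\comp C$, using smoothness of $u,s$ and the submanifold structure of $\comp C$ from the earlier remark) with the derivative of $m$ applied to the section $(0,\alpha\circ s)$ of the tangent bundle along $\comp C$. Left-invariance of $\alpha^L$ is the functoriality identity $L_g\circ L_h=L_{gh}$ differentiated at $1_{s(h)}$: $\d(L_g)_h(\alpha^L_h)=\d(L_g)_h\d(L_h)_{1_{s(h)}}(\alpha_{s(h)})=\d(L_{gh})_{1_{s(h)}}(\alpha_{s(gh)})=\alpha^L_{gh}$. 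One checks $u^*(\alpha^L)=\alpha$ (since $L_{1_x}=\id$ on $C^x$) and $(u^*X)^L=X$ (by left-invariance of $X$ applied to the pair $(g,1_{s(g)})$), so the two assignments are mutually inverse $C^\infty(X)$-linear maps.

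For closure under the Lie bracket: if $X,Y\in\vf^L(C)$ are both tangent to $t$-fibres, so is $[X,Y]$, because $\Gamma(\ker\d t)$ is a Lie subalgebra of $\vf(C)$ (the $t$-fibres are an honest — possibly neat — foliation by submanifolds, by assumptions (i) and (iii) and equation \eqref{eq:fibrebdr}, and the bracket of two vector fields tangent to a submanifold is again tangent to it). For left-invariance of $[X,Y]$, the key is that for each composable pair, $L_g\colon C^{s(g)}\to C^{t(g)}$ is a diffeomorphism onto its image within the appropriate $t$-fibres — more precisely, the restrictions $X|_{C^x}$, $Y|_{C^x}$ are $L_g$-related to $X|_{C^{t(g)}}$, $Y|_{C^{t(g)}}$ (this is exactly the left-invariance condition read fibrewise), hence their brackets are $L_g$-related, which unwinds to $\d(L_g)_h([X,Y]_h)=[X,Y]_{gh}$ for $(g,h)\in\comp C$. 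Since these hold for all composable $(g,h)$, we conclude $[X,Y]\in\vf^L(C)$.

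The main obstacle, and the point that genuinely differs from the Lie groupoid case, is smoothness of $\alpha^L$ and the well-definedness of all these maps near the boundary: $\comp C$ may be a manifold with corners and $L_g$ is only a map between $t$-fibres, not of $C$ itself, so one must be careful that differentiating $m$ along the section $(0,\alpha\circ s)$ genuinely produces a smooth global vector field on $C$ (including at boundary points of $C$) rather than merely on $\Int C$. I would handle this by working locally: using submersion normal forms for $s$ and $t$ (valid including at the boundary thanks to the regular boundary assumption (iii), which gives the neat-submanifold structure in \eqref{eq:fibrebdr} and \eqref{eq:regbdr_c2}), the composition $m$ and the assignment $g\mapsto(g,u(s(g)))$ are smooth maps of manifolds with corners, so their composite derivative is smooth; everything else is then the purely algebraic functoriality bookkeeping above, which needs no invertibility.
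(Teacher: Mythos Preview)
Your approach is essentially the same as the paper's: restriction to units gives the map $\vf^L(C)\to\Gamma(A^L(C))$, the inverse is $\alpha\mapsto\alpha^L$ with $\alpha^L_g=\d(L_g)_{1_{s(g)}}(\alpha_{s(g)})$, smoothness comes from factoring through $\d m$, and closure under the bracket follows from $L_g$-relatedness.

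One correction: you write that ``$L_g\colon C^{s(g)}\to C^{t(g)}$ is a diffeomorphism onto its image.'' This is false in general for Lie categories --- indeed, the paper later introduces the left rank of $g$ precisely to measure the failure of $\d(L_g)_{1_{s(g)}}$ to be injective, and Example~\ref{ex:ranks}(iii) shows that non-invertible matrices in $\R^{n\times n}$ have deficient rank. Fortunately your argument does not actually use this claim: the ``more precisely'' clause that follows invokes only $L_g$-relatedness, and the standard fact that the bracket of $f$-related vector fields is $f$-related holds for any smooth map $f$, not just diffeomorphisms. So drop the diffeomorphism assertion and the proof stands.
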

\begin{proof}
Closedness under the Lie bracket is a consequence of the fact that if $X,Y$ are $L_g$-related to themselves when restricted to appropriate fibres, so is $[X,Y]$. The canonical isomorphism $\mathrm{ev}\colon\vf^L (C)\rightarrow \Gamma(A^L (C))$ is given by restriction to the units, and its inverse $\mathrm{ev}^{-1}$ maps any section $\alpha\in \Gamma(A^L (C))$ to its left-invariant extension $\alpha^L$, given as
$
\alpha^L(g)=\d(L_g)_{1_{s(g)}}(\alpha_{s(g)}).
$
This is a smooth section $C\rightarrow T C$ since it can be realized as the composition $\alpha^L=\d m\circ \tau^\alpha$, where $\tau^\alpha\colon C\rightarrow T(\comp C)$ is given by $g\mapsto (0_g,\alpha_{1_{s(g)}}).$
\end{proof}
\begin{definition}
The \emph{left Lie algebroid} of a Lie category $C\rra X$ is defined as the vector bundle $A^L (C)\rightarrow X$, endowed with the Lie bracket $[\cdot,\cdot]$ on its sections as induced by the isomorphism $\mathrm{ev}$, together with the \emph{anchor map} $\rho^L\colon A^L (C)\rightarrow T X$, $\rho^L=\d s|_{A^L (C)}$.
\end{definition}
Similarly, the \emph{right Lie algebroid} of a Lie category $C\rra X$ is defined as the vector bundle $A^R(C)=u^*(\ker \d s)$. Its Lie bracket is induced by the identification of its sections with the space $\vf^R (C)$ of right-invariant vector fields, and its anchor map is defined as $\rho^R=\d t|_{A^R (C)}$. The proof of the next proposition is again the same as with the Lie groupoid case, so we omit it. 
\begin{proposition}
\label{prop:lie_alg}
The left and right Lie algebroids $A^L(C)$ and $A^R(C)$ of a Lie category $C\rra X$ are indeed Lie algebroids.
\end{proposition}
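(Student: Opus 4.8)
The plan is to mirror verbatim the classical Lie groupoid argument, transporting all structure along the linear isomorphism $\mathrm{ev}\colon\vf^L(C)\xrightarrow{\sim}\Gamma(A^L(C))$ of Lemma \ref{lem:closed_lie_bracket}, and to observe that no step uses the invertibility of morphisms or the absence of a boundary. Since $\vf^L(C)$ is, by Lemma \ref{lem:closed_lie_bracket}, a subspace of $\vf(C)$ closed under the Lie bracket of vector fields, it is a Lie algebra for the restricted bracket (antisymmetry, $\R$-bilinearity and Jacobi are inherited pointwise from $\vf(C)$, which remains a Lie algebra when $C$ has a boundary). Transporting this bracket through the linear bijection $\mathrm{ev}$ makes $\Gamma(A^L(C))$ a Lie algebra as well. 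Hence the only genuine content left to check is the Leibniz rule of Definition \ref{def:algebroid} for $\rho^L=\d s|_{A^L(C)}$ (from which the compatibility $\rho^L[\alpha,\beta]=[\rho^L(\alpha),\rho^L(\beta)]$ then follows formally), together with the fact that $\rho^L$ is a vector bundle map over $\id_X$ — the latter is immediate since $\d s$ is a smooth bundle map $TC\to TX$ and $s\circ u=\id_X$.

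The key auxiliary fact I would isolate first is that each left-invariant field is $s$-related to its anchor: for $\alpha\in\Gamma(A^L(C))$ the vector field $\alpha^L$ is $s$-related to $\rho^L(\alpha)\in\vf(X)$. Indeed, $s\circ L_g=s$ on $C^{s(g)}$ because $s(gh)=s(h)$, and differentiating this identity at $1_{s(g)}$ gives $\d s_g\circ\d(L_g)_{1_{s(g)}}=\d s_{1_{s(g)}}$ on $\ker\d t_{1_{s(g)}}$, whence
\[
\d s_g\bigl(\alpha^L(g)\bigr)=\d s_{1_{s(g)}}\bigl(\alpha_{s(g)}\bigr)=\rho^L(\alpha)_{s(g)} .
\]
Next I would record the scaling identity $(f\beta)^L=(f\circ s)\,\beta^L$ for $f\in C^\infty(X)$, which is clear from $\alpha^L(g)=\d(L_g)_{1_{s(g)}}(\alpha_{s(g)})$ and fibrewise linearity of $\d(L_g)_{1_{s(g)}}$. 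Combining the Leibniz rule for the bracket of vector fields on $C$ with these two facts (so in particular $\alpha^L(f\circ s)=(\rho^L(\alpha)(f))\circ s$) yields
\[
[\alpha^L,(f\beta)^L]=(f\circ s)[\alpha^L,\beta^L]+\bigl((\rho^L(\alpha)(f))\circ s\bigr)\,\beta^L=\bigl(f[\alpha,\beta]+\rho^L(\alpha)(f)\,\beta\bigr)^L,
\]
using $[\alpha^L,\beta^L]=[\alpha,\beta]^L$. Applying the injective map $\mathrm{ev}$ to both ends gives the Leibniz rule $[\alpha,f\beta]=f[\alpha,\beta]+\rho^L(\alpha)(f)\beta$, and then $\rho^L[\alpha,\beta]=[\rho^L(\alpha),\rho^L(\beta)]$ follows either by evaluating the $s$-relatedness of $[\alpha,\beta]^L$ at the units or by the standard algebraic consequence of the Leibniz axiom. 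The right algebroid $A^R(C)=u^*(\ker\d s)$ is treated by the symmetric argument: replace $t,s,L_g$ by $s,t,R_g$, use $t\circ R_g=t$ (from $t(hg)=t(h)$), and the identification of $\Gamma(A^R(C))$ with $\vf^R(C)$.

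There is no real obstacle here; the only point deserving a word of care is that $C$ may have a boundary, and one should confirm this is harmless. It is: the base $X$ is boundaryless, $A^L(C)$ and $A^R(C)$ are genuine vector bundles over $X$, and the bracket lives on $\vf^L(C)$ (resp.\ $\vf^R(C)$), already shown in Lemma \ref{lem:closed_lie_bracket} to be a well-defined Lie subalgebra of $\vf(C)$ — left-invariant fields are tangent to the neat submanifolds $C^x$, so their flows and brackets behave exactly as in the boundaryless setting. Thus the entire verification goes through unchanged, and the only structural feature lost relative to the groupoid case is the canonical isomorphism $A^L(C)\cong A^R(C)$, which would have come from the inversion map.
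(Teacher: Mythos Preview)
Your proposal is correct and matches the paper's approach exactly: the paper omits the proof entirely, stating only that it ``is again the same as with the Lie groupoid case,'' and what you have written is precisely that standard argument spelled out in detail. Your additional remarks on why the boundary causes no trouble are a welcome clarification beyond what the paper provides.
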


Any morphism $F\colon C\rightarrow D$ over $\id_ X$ of Lie categories over the same object manifold $ X$, induces morphisms between their left and right Lie algebroids (respectively), denoted by
\begin{align*}
F_*^L\colon A^L (C)\rightarrow A^L(D)\quad\text{and}\quad F_*^R\colon A^R (C)\rightarrow A^R(D),
\end{align*}
and defined on respective sections of $A^L (C)$ and $A^R (C)$ in the obvious way:
\begin{align*}
\alpha \mapsto \d F\circ \alpha.
\end{align*}
In the case when the object manifolds of $C$ and $D$ are not equal and the morphism $F$ does not restrict to a diffeomorphism between the units, we encounter the same complications as in the Lie groupoid case; this is resolved in the same manner as for Lie groupoids, and since we will not be needing this more general result, we point the reader to \cite{mackenzie}*{Chapter 4.3} for details. The upshot is that $A^L$ and $A^R$ are functors from the category $\mathbf{LieCat}$ of Lie categories to the category $\mathbf{LieAlgd}$ of Lie algebroids.

As mentioned, we do not have a canonical isomorphism between the left and right Lie algebroid of a Lie category, which is given in a Lie groupoid by the inversion map. However, we have the following result, and we will later encounter a related one when studying extensions of categories to groupoids (see \sec \ref{sec:ext}).
\begin{proposition}
Let $C\rra X$ be a Lie category. If the units of $C$ are contained in the interior of $C$, i.e., $u( X)\subset \Int C$, then the left and right Lie algebroids of $C$ are isomorphic to the Lie algebroid of its core $\G (C)$.
\end{proposition}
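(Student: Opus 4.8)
The plan is to reduce everything to Theorem~\ref{thm:open_inv}. Since $u(X)\subset\Int C$, that theorem tells us the core $\G(C)$ is open in $\Int C$, hence an open, full-dimensional Lie subcategory of $C$ containing every unit; as it is boundaryless and all its arrows are invertible, it is a Lie groupoid. The key observation is that the Lie-algebroid construction recalled above is built entirely from data over $u(X)$ --- the bundles $u^*\ker\d t$ and $u^*\ker\d s$ together with the Lie bracket of invariant vector fields --- and this data is unchanged if $C$ is shrunk to an open neighbourhood of the units. I would therefore first prove $A^L(C)=A^L(\G(C))$ and $A^R(C)=A^R(\G(C))$ as Lie algebroids, and then invoke the classical fact that for the groupoid $\G(C)$ the inversion map supplies a canonical isomorphism $A^L(\G(C))\cong A^R(\G(C))$, both being by definition $\Lie(\G(C))$; composing yields $A^L(C)\cong\Lie(\G(C))\cong A^R(C)$.

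For the identification of the underlying vector bundles: as $\G(C)$ is open in $\Int C\subset C$ it has the same dimension as $C$, so $T_{1_x}\G(C)=T_{1_x}C$ for all $x\in X$, and since the source and target maps of $\G(C)$ are the restrictions of $s,t$, we get $\ker\d t^{\G(C)}|_{u(X)}=\ker\d t|_{u(X)}$ and likewise for $s$. Hence $A^L(\G(C))=u^*\ker\d t^{\G(C)}=u^*\ker\d t=A^L(C)$, the anchors agreeing because $\rho^L=\d s|_{A^L}$ and $s^{\G(C)}=s|_{\G(C)}$; the same reasoning applies to $A^R$.

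For the brackets: restriction of vector fields to the open subset $\G(C)\subset C$ is a Lie algebra homomorphism, and it maps $\vf^L(C)$ into $\vf^L(\G(C))$ --- a left-invariant field on $C$ is tangent to the $t$-fibres of $C$, each $t$-fibre $\G(C)^x=\G(C)\cap C^x$ of $\G(C)$ is open in $C^x$, and the composable pairs $\comp{\G(C)}$ sit inside $\comp C$, so the restricted field remains tangent to $t$-fibres and still satisfies $\d(L_g)_h(X_h)=X_{gh}$. This restriction map is compatible with the evaluation-at-units isomorphisms $\ev$ of Lemma~\ref{lem:closed_lie_bracket} for $C$ and for $\G(C)$, because both are simply ``restrict to $u(X)$'' and $u(X)\subset\G(C)$; under the bundle identification of the previous paragraph it is therefore the identity of $\Gamma(A^L)$, and being an isomorphism of Lie algebras it forces the two brackets to coincide. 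Thus $A^L(C)=A^L(\G(C))$ as Lie algebroids, and symmetrically for $A^R$. Finally the inversion map $\inv\colon\G(C)\to\G(C)$, interchanging source and target fibrations, induces the standard isomorphism $A^L(\G(C))\xrightarrow{\sim}A^R(\G(C))$, and chaining the identifications gives the claim.

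The argument is short and I do not expect a genuine obstacle; the one point that needs care is the bracket comparison --- specifically the remark that left-invariant vector fields on $C$ really do restrict to left-invariant vector fields on the open subcategory $\G(C)$, which rests on $\G(C)^x$ being open in $C^x$ and on the Lie bracket of vector fields being a local operation. Everything else is bookkeeping with the openness of $\G(C)$ in $C$ furnished by Theorem~\ref{thm:open_inv}.
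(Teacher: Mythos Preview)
Your proof is correct and follows exactly the same strategy as the paper: use Theorem~\ref{thm:open_inv} to get $\G(C)$ open in $C$, argue that the inclusion $\G(C)\hookrightarrow C$ induces isomorphisms $A^L(C)\cong A^L(\G(C))$ and $A^R(C)\cong A^R(\G(C))$, and then use inversion on the groupoid $\G(C)$ for the middle identification. The paper's proof is in fact just the one-line chain $A^L(C)\cong A^L(\G(C))\cong A^R(\G(C))\cong A^R(C)$ with the remark that the outer maps come from the inclusion and the inner one from inversion; you have simply spelled out the details (equality of tangent spaces at units, restriction of invariant fields, compatibility with $\ev$) that the paper leaves implicit.
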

\begin{proof}
By Theorem \ref{thm:open_inv}, $\G (C)$ is open in $C$, so we get the following chain of isomorphisms of Lie algebroids:
\begin{align}
\label{eq:chain_iso}
A^L (C)\cong A^L(\G (C))\cong A^R(\G (C))\cong A^R (C),
\end{align}
where the first and last isomorphism are induced by the inclusion $\G (C)\hookrightarrow C$, and the isomorphism in the middle is induced by inversion in the groupoid $\G (C)$.
\end{proof}

\begin{remark}
Note that if $C$ has a normal boundary and $u( X)\subset \partial C$, the Lie algebroid $A(\G (C))$ of the core will always fail to be isomorphic to the two Lie algebroids of $C$, since the rank of the vector bundle $A(\G (C))$ is one less than the rank of $A^L (C)$ and $A^R (C)$. This is demonstrated by the following two examples: 
\begin{enumerate}[label={(\roman*)}]
\item The two Lie algebras of the Lie monoid $M=\mathbb H^n$ are isomorphic (as vector spaces) to $A_L(M)\cong \R^n\cong A_R(M)$, whereas $A(\G(M))\cong\R^{n-1}$ since the core of $M$ is $\G(M)=\R^{n-1}$.
\item Consider the order category $C=\set{(y,x)\in \R^2\given x\leq y}\rra \R$ from Example \ref{ex:order_cat}. Notice that its core $\G (C)$ is just the base groupoid over $\R$, hence its Lie algebroid is the zero bundle $A(\G (C))=\R\times \set 0$. On the other hand, the left and right Lie algebroid of $C$ are both isomorphic to $T\R$.
\end{enumerate}
\end{remark}

\section{Ranks of morphisms}
We now observe that the differentiable structure on a Lie category enables us to generalize the notion of rank from linear algebra.
\begin{definition}
\label{defn:rank}
Let $C\rra X$ be a Lie category and let $g\in C$. The \emph{left rank} and \emph{right rank} of $g$ are defined as
\begin{align*}
\rankl(g)&=\rank\d(L_g)_{1_{s(g)}},\\
\rankr(g)&=\rank\d(R_g)_{1_{t(g)}}.
\end{align*}
If the left and right rank of a morphism $g$ are equal, we just write $\rank(g)=\rankl(g)=\rankr(g)$ and call this integer the \emph{rank} of $g$.
Moreover, we say $g$ has \emph{full rank}, if its left and right ranks are full, that is, if
\[
\rank(g)=\codim_ C( X)=\dim C-\dim X=:\delta.
\]
In this case, we will sometimes say that $g$ is a \emph{regular} morphism. If $g$ is not regular, we will call it \emph{singular}. Finally, we say that $g$ has \emph{constant} left rank, if $\rankl(g)=\rank\d(L_g)_h$ for all $h\in C^{s(g)}$, and similarly that $g$ has \emph{constant} right rank, if $\rankr(g)=\rank\d(R_g)_h$ for all $h\in C_{t(g)}$. To avoid ambiguity, we will sometimes write $\rank_ C$ instead of $\mathrm{rank}.$
\end{definition} 
\begin{example}\
\begin{enumerate}[label={(\roman*)}]
\label{ex:ranks}
\item All invertible morphisms in any Lie category have full and constant rank.
\item In a Lie monoid $M$, the ranks of an element $g\in M$ are just the ranks of $\d(L_g)_e$ and $\d(R_g)_e$. If $M$ is an abelian Lie monoid, then clearly any element of $M$ has equal left and right rank. The example $M=\mathbb H^n$ shows that regularity does not imply invertibility. 

We will later see that regularity and constancy of ranks of all morphisms in a Lie category is ensured whenever dealing with a Lie subcategory of a Lie groupoid, as is with the simple example $\mathbb H^n\hookrightarrow \R^n$ for addition.
\item If $M=\R^{n\times n}$, let us show how the above notion of rank relates with the usual one from linear algebra. If $A\in\R^{n\times n}$, then the usual notion reads $\rank A=\dim \Im A$, when $A$ is seen as the map $\R^n\rightarrow \R^n$. On the other hand, the left rank in Definition \ref{defn:rank} equals:
\[\rankl_M(A)=\dim \Im \d(L_A)_I=\dim \Im L_A,\]
where $L_A\colon \R^{n\times n}\rightarrow \R^{n\times n}$ is the left translation by $A$, and the last equality follows by linearity. Similarly, $\rankr_M(A)=\dim\Im R_A$. Denoting by $E_{ij}$ the matrix with 1 in place $(i,j)$ and zero elsewhere, we have that $AE_{ij}$ has $i$-th column of $A$ in $j$-th column and is zero elsewhere, and $E_{ij}A$ has $j$-th row of $A$ in $i$-th row and is zero elsewhere, so:
\begin{align*}
\Im(L_A)&=\operatorname{Lin}(AE_{ij})_{i,j=1}^n=\set[\Big]{\begin{bmatrix}v_1\dots v_n\end{bmatrix}\given v_i\in \Im(A)},\\
\Im(R_A)&=\operatorname{Lin}(E_{ij}A)_{i,j=1}^n=\set*{\begin{bmatrix}v_1\dots v_n\end{bmatrix}^{\mathsf T}\given v_i\in \Im\left(A^{\mathsf T}\right)}.
\end{align*}
Since $\rank A=\rank A^{\mathsf T}$, it follows that 
\[
\rankl_M(A)=\rankr_M(A)=n\rank(A),
\]
and we see that $A$ is regular if, and only if, $A$ is invertible. 

The above result readily generalizes to arbitrary finite-dimensional vector spaces: if $W$ is a vector space, then
$\rank_{\End(W)}(A)=\dim (W)\rank (A).$ 
Moreover, for any vector bundle $V$, the above result clearly also generalizes to the endomorphism bundle $V\otimes V^*\ra M$, so that for any $A\colon V_x\rightarrow V_x$,
\[
\rank_{V\otimes V^*}A=\rank (V)\rank (A).
\]
\item Examples of Lie monoids that do not have coinciding left and right ranks may be found in the context of finite-dimensional unital algebras. As a concrete example, take the algebra $A\subset \R^{2\times 2}$ of upper-diagonal real $2\times 2$ matrices, the canonical basis of which is given by $a = \bigl[ \begin{smallmatrix}1 & 0\\ 0 & 0\end{smallmatrix}\bigr]$, $b = \bigl[ \begin{smallmatrix}0 & 0\\ 0 & 1\end{smallmatrix}\bigr]$, $c = \bigl[ \begin{smallmatrix}0 & 1\\ 0 & 0\end{smallmatrix}\bigr]$. Identifying $T_e A$ with $A$ and noting that left and right translations are linear maps, we can identify $\d(L_g)_e = L_g$ for any $g\in A$, and similarly for the right translations. It is easy to see $\Im(R_a)=\operatorname{Lin}(a)$ and $\Im(L_a)=\operatorname{Lin}(a,c)$ by computing all the products of $a$ with the canonical basis, so we conclude that $2=\rankl_A(a)\neq\rankr_A(a)=1$.
\end{enumerate}
\end{example}

\begin{proposition}[Properties of ranks]
In a Lie category $C\rra X$, there holds:
\begin{enumerate}[label={(\roman*)}]
\item The ranks of a composition of composable morphisms $g,h\in C$ are bounded from above:
\begin{align}
\rankl(gh)&\leq \rankl(h),\\
\rankr(gh)&\leq \rankr(g).
\end{align}
\item The ranks of $g\in C$ are bounded from below by the ranks of anchors $\rho^L\colon A^L (C)\rightarrow T X$ and $\rho^R\colon A^R (C)\rightarrow T X$ of Lie algebroids of $C$:
\begin{align}
\rankl(g)\geq \rank\rho_{s(g)}^L,\\
\rankr(g)\geq \rank\rho_{t(g)}^R.
\end{align}
\end{enumerate}
\end{proposition}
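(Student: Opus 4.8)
The plan is to deduce both bounds from the chain rule, using that left and right translations compose functorially and that the source map $s$ (resp.\ the target map $t$) is constant along left (resp.\ right) translations.

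For part (i), I would start from the identity $L_{gh}=L_g\circ L_h$, valid for any composable pair $(g,h)$: since $s(g)=t(h)$, the map $L_h\colon C^{s(h)}\to C^{t(h)}=C^{s(g)}$ can be post-composed with $L_g\colon C^{s(g)}\to C^{t(g)}$, and $(L_g\circ L_h)(k)=g(hk)=(gh)k=L_{gh}(k)$, the domains and codomains matching because $s(gh)=s(h)$ and $t(gh)=t(g)$. Differentiating at $1_{s(gh)}=1_{s(h)}$ and using $L_h(1_{s(h)})=h$ gives $\d(L_{gh})_{1_{s(h)}}=\d(L_g)_h\circ\d(L_h)_{1_{s(h)}}$, so the image of $\d(L_{gh})_{1_{s(h)}}$ lies in that of the factor applied first, yielding $\rankl(gh)\leq\rank\d(L_h)_{1_{s(h)}}=\rankl(h)$. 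The inequality $\rankr(gh)\leq\rankr(g)$ is the mirror image: from $R_{gh}=R_h\circ R_g$ (as maps between source fibres) and $R_g(1_{t(g)})=g$ one gets $\d(R_{gh})_{1_{t(g)}}=\d(R_h)_g\circ\d(R_g)_{1_{t(g)}}$, whence $\rankr(gh)\leq\rank\d(R_g)_{1_{t(g)}}=\rankr(g)$.

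For part (ii), I would exploit the invariance $s\circ L_g=s|_{C^{s(g)}}$, which holds because $s(gh)=s(h)$. Under the canonical identification $A^L(C)_{s(g)}=\ker\d t_{1_{s(g)}}=T_{1_{s(g)}}\big(C^{s(g)}\big)$, the differential $\d(L_g)_{1_{s(g)}}$ is a linear map out of $A^L(C)_{s(g)}$; differentiating the invariance identity at $1_{s(g)}$ and using $L_g(1_{s(g)})=g$ gives $\rho^L_{s(g)}=\d s_{1_{s(g)}}|_{A^L(C)_{s(g)}}=\d s_g\circ\d(L_g)_{1_{s(g)}}$, so $\rank\rho^L_{s(g)}\leq\rankl(g)$. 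Symmetrically, from $t\circ R_g=t|_{C_{t(g)}}$ one obtains $\rho^R_{t(g)}=\d t_g\circ\d(R_g)_{1_{t(g)}}$ and hence $\rank\rho^R_{t(g)}\leq\rankr(g)$.

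The main obstacle is purely bookkeeping: one has to keep straight which target/source fibres each translation runs between, so that the chain rule genuinely applies and so that the tangent space to a fibre at a unit is the corresponding algebroid fibre. Once the domains and codomains are matched, the whole argument collapses to the elementary fact that $\rank(\alpha\circ\beta)\leq\min\{\rank\alpha,\rank\beta\}$ for composable linear maps; no boundary considerations intervene, since all assertions are pointwise and the translations are smooth maps between (neat) submanifolds.
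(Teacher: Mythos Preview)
Your proof is correct and follows exactly the same chain-rule approach as the paper. One small slip in phrasing: the image of $\d(L_{gh})_{1_{s(h)}}$ does not ``lie in that of the factor applied first'' (the two images live in different tangent spaces); the correct justification is the one you state at the end, namely $\rank(\alpha\circ\beta)\leq\rank\beta$ for composable linear maps.
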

\begin{proof}
The proof is an application of the chain rule. More precisely, since $L_{gh}=L_g\circ L_h$, we have
\[
\rankl(gh)=\rank(\d(L_g)_h\circ\d(L_h)_{1_{s(h)}})\leq \rankl(h),
\]
and similarly for the right translation by using $R_{gh}=R_h\circ R_g$, so $(i)$ follows. For the property $(ii)$ note that the diagram
\[
\begin{tikzcd}[column sep=small]
 C^{s(g)} \arrow[rr, "L_g"] \arrow[rd, "s|_{ C^{s(g)}}"'] & & C^{t(g)} \arrow[ld, "s|_{ C^{t(g)}}"] \\
& X &
\end{tikzcd}
\]
implies $\rankl(g)\geq \rank \d(s|_{ C^{s(g)}})_{1_{s(g)}}=\rank \rho^L_{s(g)}$.
\end{proof}

In the context of Lie groupoids, it is well known that the composition map is a submersion. This is no longer true for Lie categories in general; a counterexample is provided by the Lie monoid $\R^{n\times n}$ for matrix multiplication, where $\d m_{(0,0)}$ is easily seen to be the zero map. However, the following results ensure that $m$ is a submersion in case all morphisms have full and constant rank; as mentioned, this is the case for Lie categories extendable to Lie groupoids, as we will see later in Lemma \ref{lem:extensions}.

\begin{lemma}
\label{lem:theta}
Let $C \rra X$ be a Lie category and $g\in C$. The map $L_g$ has full rank at $h\in C^{s(g)}$ if, and only if, the map 
$\vartheta\colon \comp C\rightarrow { C\tensor[_t]{\times}{_t}}\, C, (g,h)\mapsto(g,gh)$
has full rank at $(g,h)$. In particular, $g$ has full left rank if, and only if, $\vartheta$ has full rank at $(g,1_{s(g)})$.
\end{lemma}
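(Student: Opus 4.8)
The plan is to compare the differentials $\d(L_g)_h$ and $\d\vartheta_{(g,h)}$ directly, exploiting the fact that $\vartheta = (\pr_1, m)$ and that the tangent space to $\comp C$ at $(g,h)$ splits compatibly with the vertical tangent directions along the $t$-fibres. First I would record the ambient dimensions. Using the formula $T_{(g,h)}\comp C = \{(v,w)\in T_gC\oplus T_hC \mid \d s(v) = \d t(w)\}$ from the excerpt, together with $s$ and $t$ being submersions, one computes $\dim\comp C = \dim C + \delta$ where $\delta = \dim C - \dim X$; an entirely analogous computation gives $\dim({C\tensor[_t]{\times}{_t}}\, C) = \dim C + \delta$ as well. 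So $\vartheta$ is a map between manifolds of equal dimension, and "full rank at $(g,h)$" means $\d\vartheta_{(g,h)}$ is an isomorphism, equivalently injective, equivalently surjective. Meanwhile $L_g\colon C^{s(g)}\to C^{t(g)}$ is a map between manifolds of dimension $\delta$ each, so "$L_g$ has full rank at $h$" likewise means $\d(L_g)_h$ is an isomorphism.

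The key step is to show that $\ker\d\vartheta_{(g,h)}$ is canonically isomorphic to $\ker\d(L_g)_h$. Suppose $(v,w)\in T_{(g,h)}\comp C$ lies in $\ker\d\vartheta_{(g,h)}$. Since $\d\vartheta(v,w) = (v, \d m_{(g,h)}(v,w))$, the first component forces $v=0$. Then the constraint $\d s(v) = \d t(w)$ becomes $\d t_h(w) = 0$, i.e. $w\in T_h C^{t(g)} = \ker\d t_h$; moreover $\d m_{(g,h)}(0,w) = \d(L_g)_h(w)$ by definition of left translation, and this must also vanish. Hence $(v,w)\mapsto w$ identifies $\ker\d\vartheta_{(g,h)}$ with $\ker\d(L_g)_h \subset T_h C^{s(g)}$. (Conversely any $w\in\ker\d(L_g)_h$ gives $(0,w)\in\ker\d\vartheta_{(g,h)}$, so this is a bijection.) Therefore $\d\vartheta_{(g,h)}$ is injective if and only if $\d(L_g)_h$ is injective, and since both are self-maps-of-equal-dimension situations, injectivity is equivalent to being an isomorphism, i.e. to full rank. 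The final sentence of the lemma is the special case $h = 1_{s(g)}$, noting that $\d(L_g)_{1_{s(g)}}$ has full rank precisely means $\rankl(g) = \delta$, i.e. $g$ has full left rank.

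I expect the only mild subtlety — not really an obstacle — to be the bookkeeping of boundary/corner points when $C$ has a boundary: one should check that the dimension counts and the tangent-space description still apply at such points. This is handled by the neatness of $\comp C$ and ${C\tensor[_t]{\times}{_t}}\, C$ inside $C\times C$ (equation \eqref{eq:regbdr_c2} and the analogous statement), which the excerpt has already established, so the tangent-space formula used above is valid verbatim regardless of where $(g,h)$ sits. Everything else is the chain rule and linear algebra, so there is no serious difficulty.
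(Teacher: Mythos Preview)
Your proposal is correct and takes essentially the same approach as the paper: both arguments identify $\ker\d\vartheta_{(g,h)}$ with $\ker\d(L_g)_h$ via the observation that $\d\vartheta(v,w)=(v,\d m(v,w))$ forces $v=0$ first, after which $\d m_{(g,h)}(0,w)=\d(L_g)_h(w)$. (One small slip: you wrote $w\in T_h C^{t(g)}$ where you meant $T_h C^{s(g)}$, since $t(h)=s(g)$; you correct this yourself two lines later.)
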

Recall that we have already encountered the map $\vartheta$ in the proof of Theorem \ref{thm:open_inv}. In a Lie groupoid, this map is a bijection with inverse $(g,h)\mapsto (g,g^{-1}h)$, so above lemma guarantees it is a diffeomorphism, which can be used e.g., to show that the inversion map is automatically smooth, by realizing it as the composition \[g\mapsto (g,1_{t(g)})\xmapsto{\vartheta^{-1}} (g,g^{-1})\mapsto g^{-1}.\]

An analogous result to Lemma \ref{lem:theta} of course holds for ranks of right translations, using the map $\tilde\vartheta\colon \comp C\rightarrow { C\tensor[_s]{\times}{_s}}\, C$, given by $(g,h)\mapsto (gh,h)$. For instance, the second part of above lemma would then read: $g$ has full right rank if, and only if, $\tilde\vartheta$ has full rank at $(1_{t(g)},g)$. 
\begin{proof}
For the forward implication, note that $\d\vartheta_{(g,h)}(v,w)=(v,\d m_{(g,h)}(v,w))$ holds for all $(v,w)\in T_{(g,h)}\comp C$, hence $\d\vartheta_{(g,h)}(v,w)=(0,0)$ first implies $v=0$, and since $\d m_{(g,h)}(0,w)=\d (L_g)_h(w)$, we get $w=0$ since $L_g$ has full rank at $h$. For the other direction, note that $\d(L_g)_h(w)=0$ implies $\d\vartheta_{(g,h)}(0,w)=(0,0)$, so the assumption that $\vartheta$ has full rank at $(g,h)$ yields $w=0$.
\end{proof}
\begin{corollary}
\label{cor:composition_submersion}
Let $C\rra X$ be a Lie category and $g\in C$. If the map $L_g$ has full rank at $h\in C^{s(g)}$, then the composition map $m\colon\comp C\rightarrow C$ is a submersion at $(g,h)$. Hence, if all morphisms have full and constant rank, the composition is submersive.
\end{corollary}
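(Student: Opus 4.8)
The plan is to exploit Lemma \ref{lem:theta}, which realizes the composition map as one component of the map $\vartheta\colon\comp C\ra { C\tensor[_t]{\times}{_t}}\, C$, $(g,h)\mapsto(g,gh)$ already used in the proof of Theorem \ref{thm:open_inv}. Writing $\vartheta=(\pr_1,m)$, one has $m=\pr_2\circ\vartheta$ with $\pr_2\colon { C\tensor[_t]{\times}{_t}}\, C\ra C$ the second projection, so it suffices to understand these two factors. The first observation is that $\dim\comp C=2\dim C-\dim X=\dim({ C\tensor[_t]{\times}{_t}}\, C)$ — both fibred products being neat submanifolds of $C\times C$ of this dimension, by the transversality argument already applied to $\comp C$ above — so that \enquote{full rank of $\vartheta$ at $(g,h)$}, which Lemma \ref{lem:theta} grants from the hypothesis on $L_g$, means precisely that $\d\vartheta_{(g,h)}$ is a linear isomorphism onto $T_{(g,gh)}({ C\tensor[_t]{\times}{_t}}\, C)$.

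Granting that, surjectivity of $\d m_{(g,h)}$ falls out by linear algebra. Given $w\in T_{gh} C$, I would use that $t$ is a submersion to pick $a\in T_g C$ with $\d t_g(a)=\d t_{gh}(w)$; then $(a,w)$ lies in $T_{(g,gh)}({ C\tensor[_t]{\times}{_t}}\, C)=\set{(a',b')\in T_g C\oplus T_{gh} C\given \d t_g(a')=\d t_{gh}(b')}$, so by the previous step there is $(v',w')\in T_{(g,h)}\comp C$ with $\d\vartheta_{(g,h)}(v',w')=(a,w)$, and reading off the second component gives $\d m_{(g,h)}(v',w')=w$. Hence $m$ is submersive at $(g,h)$. (Equivalently: since $\d\vartheta_{(g,h)}$ is an isomorphism, $\vartheta$ is a local diffeomorphism near $(g,h)$ — the inverse function theorem applies, with the corner strata of $\comp C$ and ${ C\tensor[_t]{\times}{_t}}\, C$ matched exactly as in the proof of Theorem \ref{thm:open_inv} — and $\pr_2$ is a submersion because $t$ is, whence $m=\pr_2\circ\vartheta$ is submersive at $(g,h)$.)

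For the final sentence, I would argue pointwise: if every morphism of $C$ has full and constant rank, then for any composable pair $(g,h)\in\comp C$ we have $h\in C^{s(g)}$, so constancy of the left rank of $g$ gives $\rank\d(L_g)_h=\rankl(g)=\delta$, i.e.\ $L_g$ has full rank at $h$; applying the first part at every such point shows $m$ is a submersion.

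I do not expect a serious obstacle here — the argument is short once Lemma \ref{lem:theta} is available. The only point that needs a little care is the bookkeeping around ${ C\tensor[_t]{\times}{_t}}\, C$: that it is a neat submanifold of $C\times C$ of the expected dimension with the tangent space described above, and (for the parenthetical variant) that $\vartheta$ respects the corner stratification. Both are the same transversality/corners arguments already carried out for $\comp C$, and the regular-boundary hypothesis is precisely what makes them valid at boundary points, just as in Theorem \ref{thm:open_inv}.
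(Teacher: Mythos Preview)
Your proposal is correct and follows essentially the same approach as the paper: factor $m=\pr_2\circ\vartheta$, use Lemma \ref{lem:theta} to get that $\d\vartheta_{(g,h)}$ has full rank, and conclude since $\pr_2$ is a submersion. The paper's proof is just your parenthetical variant, stated in one line as a composition of surjective linear maps.
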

\begin{proof}
Differentiating the identity $m=\pr_2\circ\vartheta$ yields 
$
\d m_{(g,h)}=\d(\pr_2)_{(g,gh)}\circ\d\vartheta_{(g,h)},
$
which is a composition of surjective maps by previous lemma and the fact that $\pr_2\colon { C\tensor[_t]{\times}{_t}}\, C\rightarrow C$ is a submersion.
\end{proof}
\noindent We now direct our attention to the subsets of $C$ of morphisms with full rank. Let us denote by
\begin{align*}
\rk_r^L (C)&=\set{g\in C\given \rankl(g)=r},\\
\rk_r^R (C)&=\set{g\in C\given \rankr(g)=r},
\end{align*}
the sets of morphisms with left and right rank equal to $r$, respectively, and furthermore by $\rk_r (C)=\rk_r^R (C)\cap \rk_r^L (C)$ morphisms whose both ranks equal $r$.


\begin{proposition}
\label{prop:regular_morphisms_open}
In any Lie category $C$, the subset  of regular morphisms $\rk_\delta (C)$ is open. 
\end{proposition}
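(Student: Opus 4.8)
The plan is to realize $\rankl$ and $\rankr$ as the pointwise ranks of honest smooth vector bundle morphisms over $C$, and then invoke lower semicontinuity of the rank. Over the manifold $C$ (possibly with boundary) consider the two rank-$\delta$ vector bundles $s^*A^L(C)$ and $\ker\d t\subset TC$ — both have rank $\delta=\dim C-\dim X$, since $A^L(C)=u^*(\ker\d t)$ has rank $\delta$ and $t$ is a submersion. Define the bundle map
\[
\ell\colon s^*A^L(C)\longrightarrow \ker\d t,\qquad \ell(g,v)=\d m_{(g,\,1_{s(g)})}(0_g,v),
\]
which makes sense because $v\in A^L(C)_{s(g)}=\ker\d t_{1_{s(g)}}$ forces $(0_g,v)\in T_{(g,1_{s(g)})}\comp C$, and which equals $\d(L_g)_{1_{s(g)}}(v)$; its image lies in $\ker\d t_g$ because $t\circ L_g$ is constant. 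Smoothness of $\ell$ follows from the factorization $\ell=\d m\circ\tau$ with $\tau(g,v)=(0_g,v)$ a smooth section, exactly as in the proof that $\alpha^L=\d m\circ\tau^\alpha$ is smooth (Lemma~\ref{lem:closed_lie_bracket}). By construction $\rankl(g)=\rank(\ell|_g)$.

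Now $\ell$ is a smooth morphism between vector bundles of equal rank $\delta$, so $g\mapsto\rank(\ell|_g)$ is lower semicontinuous: in local trivializations the condition $\rank(\ell|_g)\geq k$ is the non-vanishing of some $k\times k$ minor, which is open. Since $\delta$ is the maximal possible value of this rank, $\rk_\delta^L(C)=\set{g\given\rank(\ell|_g)=\delta}=\set{g\given\rank(\ell|_g)\geq\delta}$ is open in $C$. The identical argument applied to the right-translation bundle map $r\colon t^*A^R(C)\to\ker\d s$, $r(g,v)=\d m_{(1_{t(g)},\,g)}(v,0_g)$, gives $\rankr(g)=\rank(r|_g)$ and hence shows $\rk_\delta^R(C)$ is open. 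Therefore $\rk_\delta(C)=\rk_\delta^R(C)\cap\rk_\delta^L(C)$ is open, being an intersection of two open sets.

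The only point needing any care — the mild \enquote{main obstacle} — is verifying that $\ell$ and $r$ are genuinely smooth as bundle maps, including at boundary points of $C$; this is taken care of by the factorization through $\d m$ already used in Lemma~\ref{lem:closed_lie_bracket}, and the semicontinuity step is purely local, so boundary points of $C$ present no difficulty.
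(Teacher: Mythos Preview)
Your proof is correct and follows essentially the same approach as the paper: both construct the smooth vector bundle morphism $s^*A^L(C)=s^*u^*\ker\d t\to\ker\d t$ given fibrewise by $\d(L_g)_{1_{s(g)}}=\d m_{(g,1_{s(g)})}(0_g,-)$, verify its smoothness via the factorization through $\d m$, and then use that the locus of maximal rank (equivalently, the preimage of $\mathrm{GL}(\delta,\R)$ in local trivializations) is open. The paper phrases this as a smooth section $\phi_L$ of the Hom bundle $E^L=(s^*u^*\ker\d t)^*\otimes\ker\d t$, but this is just a repackaging of your bundle map $\ell$.
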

\begin{proof}
Let us first show that differentials of left translations define a certain morphism of vector bundles. We consider the following vector bundle over $C$:
\[E^L=\coprod_{g\in C}(\ker\d t_{1_{s(g)}})^*\otimes\ker\d t_g=(s^*u^*\ker \d t)^*\otimes \ker \d t.\]
Denote the projection map by $p_L\colon E^L\rightarrow C$, so the fibre $p_L^{-1}(g)=E^L_g$ of $E^L$ consists of all linear maps $\ker \d t_{1_{s(g)}}\rightarrow \ker\d t_g$. Note that the map $\phi_L\colon C\rightarrow E^L$, given as 
\[g\mapsto \d(L_g)_{1_{s(g)}}\colon \ker \d t_{1_{s(g)}}\rightarrow \ker\d t_g,\] is a section of $E^L$, which is smooth since 
\[
\d (L_g)_{1_{s(g)}}=\d m_{(g,1_{s(g)})}(0_g,-).
\]
In other words, $\phi_L$ defines a morphism $s^*u^*\ker\d t\rightarrow \ker \d t$ of vector bundles. Now take an atlas of local trivializations $\psi_i\colon p_L^{-1}(U_i)\rightarrow U_i\times \R^{\delta\times\delta}$ of $E^L$. The set $U_i\cap \rk^L_\delta(C)=\phi_L^{-1}\circ\psi_i^{-1}(U_i\times \mathrm{GL}(\delta,\R))$ is open in $U_i$, hence
\[
\rk_\delta^L (C)=\bigcup_i \big(U_i\cap \rk_\delta^L (C)\big)
\]
is open in $C$. A similar proof works for $\rk^R_\delta (C)$, so the result for $\rk_\delta (C)$ follows.
\end{proof}

\begin{remark}
The smooth map $\phi_L$ (respectively, $\phi_R$) which was used in the last proposition, can also easily be used to show that $\rankl$ (respectively, $\rankr$) is a lower semi-continuous function, i.e., that any morphism $g\in C$ admits a neighborhood on which the left (respectively, right) rank is non-decreasing. Note that this should not be confused with lower semi-continuity of the map $C^{s(g)}\rightarrow \R$, given as $h\mapsto \rank \d(L_g)_h$, where $g\in C$ is a fixed morphism.
\end{remark}

Properties of ranks of morphisms in a Lie category $C$ reflect on its algebraic structure, as illustrated by the following simple observation.

\begin{corollary}
Let $C$ be a Lie category. If the regular morphisms have constant rank, then $\rk_\delta (C)$ is an open Lie subcategory of $C$. Hence, in this case, the left and right algebroids of $\rk_\delta (C)$ are isomorphic to those of $C$, respectively.
\end{corollary}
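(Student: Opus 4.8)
The plan is to check that $\rk_\delta(C)$ meets the hypotheses of Lemma~\ref{lem:lie_subcat} (applied with ambient category $D=C$), and then to extract the statement about algebroids from the openness of $\rk_\delta(C)$ together with the fact that it contains the units. Openness is already available: $\rk_\delta(C)=\rk_\delta^L(C)\cap\rk_\delta^R(C)$ is open in $C$ by Proposition~\ref{prop:regular_morphisms_open}, hence an embedded submanifold whose boundary, when nonempty, equals $\rk_\delta(C)\cap\partial C$. Restricting $s,t$ to this open set, and $\partial s,\partial t$ to the open set $\rk_\delta(C)\cap\partial C$, keeps them submersive. Every unit is invertible, hence regular (Example~\ref{ex:ranks}(i)), so $u(X)\subset\rk_\delta(C)$. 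The only nontrivial point is that $\rk_\delta(C)$ is closed under composition, and this is precisely where the constant-rank hypothesis enters.

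For closure, let $g,h\in\rk_\delta(C)$ with $s(g)=t(h)$. Since $t$ is a submersion, each space $\ker\d t_p$ has dimension $\delta$. Regularity of $h$ says $\d(L_h)_{1_{s(h)}}\colon\ker\d t_{1_{s(h)}}\to\ker\d t_h$ has rank $\delta$, hence is an isomorphism; since $g$ is regular with constant left rank and $h\in C^{s(g)}$, also $\d(L_g)_h\colon\ker\d t_h\to\ker\d t_{gh}$ has rank $\delta$, hence is an isomorphism. From $L_{gh}=L_g\circ L_h$ and the chain rule, $\d(L_{gh})_{1_{s(h)}}=\d(L_g)_h\circ\d(L_h)_{1_{s(h)}}$ is an isomorphism, so $\rankl(gh)=\delta$. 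The mirror computation with $R_{gh}=R_h\circ R_g$, using that $g$ is regular and $h$ is regular with constant right rank, gives $\rankr(gh)=\delta$. Thus $gh\in\rk_\delta(C)$, so $\rk_\delta(C)$ is a wide subcategory, and Lemma~\ref{lem:lie_subcat} makes it an embedded Lie subcategory, open in $C$ by construction.

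For the algebroids, openness of $\rk_\delta(C)$ together with $u(X)\subset\rk_\delta(C)$ gives $A^L(\rk_\delta(C))=u^*(\ker\d t|_{\rk_\delta(C)})=u^*(\ker\d t)=A^L(C)$ as vector bundles, with the same anchor $\rho^L=\d s|_{A^L}$; moreover the left-invariant extension over $C$ of any section of $A^L(C)$ restricts to its left-invariant extension over $\rk_\delta(C)$, so the evaluation isomorphisms of Lemma~\ref{lem:closed_lie_bracket} intertwine the two Lie brackets. Hence the inclusion induces a Lie algebroid isomorphism $A^L(\rk_\delta(C))\cong A^L(C)$, and the identical argument handles $A^R$. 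The main obstacle is the closure step: without the constant-rank hypothesis one would only know that $\d(L_g)$ has full rank at the unit $1_{s(g)}$, not at $h$, and then $\d(L_g)_h\circ\d(L_h)_{1_{s(h)}}$ could drop rank — so the hypothesis is exactly what turns the intermediate differentials into isomorphisms.
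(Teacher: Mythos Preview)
Your proof is correct and follows the same approach as the paper: openness from Proposition~\ref{prop:regular_morphisms_open}, closure under composition via the chain rule $\d(L_{gh})_{1_{s(h)}}=\d(L_g)_h\circ\d(L_h)_{1_{s(h)}}$ together with the constant-rank hypothesis, and the algebroid statement from the inclusion of an open subcategory containing the units. The paper's version is terser and does not invoke Lemma~\ref{lem:lie_subcat} explicitly (for an \emph{open} subset that is a subcategory, smoothness of the restricted composition is immediate without the transversality argument), but your use of it is not wrong, just slightly more machinery than needed.
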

\begin{proof}
We only have to check that the composition of two regular morphisms is a regular morphism. To that end, just notice that
$
\d(L_{gh})_{1_{s(h)}}=\d(L_g)_h\circ\d(L_h)_{1_{s(h)}}
$
is a composition of maps with full rank, and similar holds for right translations.
\end{proof}


\subsubsection{Action of the core $\G (C)$ on a Lie category $C$}

The natural left and right actions of the core $\G (C)$ on a Lie category $C$ can be used to find properties of Lie categories.\footnote{A reference for actions of Lie groupoids on smooth maps is \cite{mackenzie}*{Chapter 1.6}.} We will only focus on describing the left action of $\G (C)$ on $t\colon C\rightarrow X$; the right action of $\G (C)$ on $s\colon C\rightarrow X$ follows a similar construction. 

Denote by $\G (C) \tensor[_s]{\times}{_t} C=\set{(g,c)\in \G (C)\times C\given s(g)=t(c)}$ the fibred product of the maps $s|_{\G (C)}$ and $t$. This set has a natural structure of a groupoid over $C$. Indeed, the source and target maps are given as $\underline s(g,c)=c$ and $\underline t(g,c)=gc$, the composition is given as $(g,hc)(h,c)=(gh,c)$, the unit map is $\underline u(c)=(1_{t(c)},c)$ and the inverses are given by $(g,c)^{-1}=(g^{-1},gc)$. We leave it to the reader to check that this defines a groupoid over $C$. Let us deal with its smooth structure.

\begin{proposition}
Let $C\rra X$ be a Lie category without boundary. The left action of the core $\G (C)$ on the target map $t\colon C\rightarrow X$ yields a Lie groupoid
\[
\G (C)\tensor[_s]{\times}{_t} C\rra C.
\]
\end{proposition}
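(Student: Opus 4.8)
The plan is to show that $\G(C)\tensor[_s]{\times}{_t}C$ is a smooth manifold and that all structure maps of the groupoid are smooth, the only nontrivial point being smoothness of the composition map, since everything else follows formally.

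First I would establish that $\G(C)\tensor[_s]{\times}{_t}C$ is an embedded submanifold of $\G(C)\times C$. Since $C$ has no boundary, Theorem \ref{thm:open_inv} tells us that $\G(C)$ is an open submanifold of $C$ (the case $u(X)\subset\Int C$ is automatic here as $\partial C=\emptyset$), hence a Lie groupoid. The set $\G(C)\tensor[_s]{\times}{_t}C=(s|_{\G(C)}\times t)^{-1}(\Delta_X)$ is the fibred product of the maps $s|_{\G(C)}\colon\G(C)\to X$ and $t\colon C\to X$, both of which are submersions (the former because $\G(C)$ is open in $C$ and $s$ is a submersion). Hence the fibred product is a smooth embedded submanifold of $\G(C)\times C$ with $T_{(g,c)}(\G(C)\tensor[_s]{\times}{_t}C)=\set{(v,w)\in T_g\G(C)\oplus T_cC\given \d s(v)=\d t(w)}$; this is the same transversality argument used for $\comp C$ in the opening remarks (Corollary \ref{cor:fibred_prod}), now applied to maps with no boundary in play.

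Next I would check smoothness of the structure maps. The source map $\underline s(g,c)=c$ is the restriction of the projection $\pr_2$, hence smooth; the target map $\underline t(g,c)=gc$ is the restriction of $m\circ(\id\times\id)$ (the composition map $m\colon\comp C\to C$ precomposed with the inclusion $\G(C)\tensor[_s]{\times}{_t}C\hookrightarrow\comp C$, which is well-defined since $s(g)=t(c)$ makes $(g,c)$ a composable pair), hence smooth. The unit map $\underline u(c)=(1_{t(c)},c)=(u(t(c)),c)$ is smooth since $u$ and $t$ are. The inversion $(g,c)^{-1}=(g^{-1},gc)$ is smooth since inversion in the Lie groupoid $\G(C)$ is smooth and $gc=m(g,c)$ is smooth; note one must check $(g^{-1},gc)$ again lies in the fibred product, which holds as $s(g^{-1})=t(g)=t(gc)$. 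Finally, the composition $(g,hc)(h,c)=(gh,c)$ is the map $((g,hc),(h,c))\mapsto(gh,c)$, whose first component is multiplication in $\G(C)$ (smooth) and second component is $\pr_2$ of the second factor (smooth); the only subtlety is that the domain of this composition is the appropriate fibred product of two copies of $\G(C)\tensor[_s]{\times}{_t}C$ over $C$, which is again an embedded submanifold by the same transversality reasoning, using that $\underline s$ and $\underline t$ are submersions.

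The one point deserving care — and the main (mild) obstacle — is verifying that $\underline s$ and $\underline t$ are submersions, which is needed both to conclude the iterated fibred product (domain of composition) is smooth and to conclude we genuinely have a Lie groupoid. That $\underline s=\pr_2$ is a submersion: given $w\in T_cC$, pick $v\in T_g\G(C)$ with $\d s(v)=\d t(w)$ (possible since $s|_{\G(C)}$ is a submersion), so $(v,w)$ is tangent to the fibred product and maps to $w$. For $\underline t(g,c)=gc$: since $g\in\G(C)$, the left translation $L_g$ is a diffeomorphism $C^{s(g)}\to C^{t(g)}$, and $c\mapsto gc$ restricted to the fibre $C^{t(c)}=C^{s(g)}$ is precisely $L_g$; composing with the submersion $t\colon C\to X$ appropriately, or more directly observing that already the partial map in the $c$-direction is a diffeomorphism onto a $t$-fibre while the $X$-directions are covered by moving $g$, shows $\d\underline t_{(g,c)}$ is surjective. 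With these submersion statements in hand, the iterated fibred products needed in the groupoid axioms are all smooth embedded submanifolds, and all the structure maps are smooth, so $\G(C)\tensor[_s]{\times}{_t}C\rra C$ is a Lie groupoid.
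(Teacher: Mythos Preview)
Your proof is correct and follows essentially the same route as the paper's. The only cosmetic difference is that the paper invokes Corollary~\ref{cor:composition_submersion} to conclude that $\underline t = m|_{\G(C)\tensor[_s]{\times}{_t}C}$ is a submersion (since $L_g$ has full rank at every $h\in C^{s(g)}$ when $g$ is invertible), which packages your direct argument about $L_g$ being a diffeomorphism more cleanly.
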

\begin{proof}
The core $\G (C)$ is an open Lie subgroupoid of $C$ by Theorem \ref{thm:open_inv}, and so the usual transversality theorem for manifolds without boundary implies that 
\[\underline C=\G (C)\tensor[_s]{\times}{_t} C=(s|_{\G (C)}\times t)^{-1}(\Delta_ X)\]
is a smooth submanifold of $\G (C)\times C$ without boundary. Furthermore, $\underline s=\mathrm{pr}_2$ is a submersion, and Corollary \ref{cor:composition_submersion} tells us that the target map \[\underline t=m|_{\G (C)\tensor[_s]{\times}{_t} C}\] is also a submersion, thus $\underline C{}^{(2)}\subset \underline C\times\underline C$ is also a submanifold by transversality. The smoothness of composition $\underline m$ and unit map $\underline u$ then follows from the smoothness of respective maps in $C$.
\end{proof}
\begin{remark}
\label{rem:base_boundary}
Notice that in the case when $C$ has a normal boundary, the maps $\underline s$ and $\underline t$ are still submersions, but their restrictions to the boundary $\partial\underline C=\G (C)\times \partial C$ are not, which shows the need for amending the definition of a Lie category when the object manifold has a boundary (or corners). 

In this case, $\underline C\subset \G (C)\times C$ is still a submanifold (by Proposition \ref{prop:transversality}) and the maps $\underline s$ and $\underline t$ are topological submersions (see Definition \ref{defn:top_sub}) that also satisfy $\underline s(\partial\underline C)\subset \partial C$ and $\underline t(\partial\underline C)\subset \partial C$, so \cite{corners}*{Proposition 4.2.1} implies that $\underline s$-fibres and $\underline t$-fibres are submanifolds of $\underline C$. This suggests a definition of a Lie category for the case when the object manifold has corners, but we will not pursue this further here.
\end{remark}
We can now use the action Lie groupoids above to prove that it does not matter at which invertible morphism we measure the ranks of a given morphism $g\in C$.
\begin{corollary}
\label{cor:ranks_invertibles}
Let $C\rra X$ be a Lie category with a normal boundary. For any $g\in C$,
\begin{align*}
\rankl(g)&=\rank \d(L_g)_h\ \text{ for any }h\in \G (C)^{s(g)},\\
\rankr(g)&=\rank \d(R_g)_h\ \text{ for any }h\in \G (C)_{t(g)}.
\end{align*}
\end{corollary}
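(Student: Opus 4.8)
The plan is, for a fixed $g$, to compare $\rank\d(R_g)$ at an arbitrary invertible $h\in\G(C)_{t(g)}$ with its value at the unit $1_{t(g)}$ (the statement for $L_g$ is entirely symmetric, working with the right action of $\G(C)$ on $s\colon C\ra X$ instead of the left action on $t$), by exhibiting local diffeomorphisms that intertwine $R_g$ with itself. When $u(X)\subset\Int C$ there is a shortcut: by Theorem \ref{thm:open_inv} the core is then open in $C$, so $\G(C)_{t(g)}$ is open in the source fibre $C_{t(g)}$, and the map $k\mapsto kg$ is simply $R_g$ restricted to this open subset; being the orbit map of $g$ for the $\G(C)$-action of the preceding proposition, it has locally constant rank (orbit maps of Lie groupoid actions are submersions onto their orbits), so $k\mapsto\rank\d(R_g)_k$ is constant on $\G(C)_{t(g)}$, and one evaluates at $1_{t(g)}$ and at $h$. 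In the general normal-boundary case $\G(C)$ need not be open in $C$, so I would argue as follows instead.

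Let $h\in\G(C)_{t(g)}$, so $h$ is invertible and $s(h)=t(g)$, and both $h$ and $1_{t(g)}$ lie in $C_{t(g)}$, the domain of $R_g$. In the action groupoid $\underline C=\G(C)\tensor[_s]{\times}{_t}C\rra C$ (a Lie groupoid by the preceding proposition, with the normal-boundary case governed by Remark \ref{rem:base_boundary}) the pair $(h,g)$ is an arrow from $g$ to $hg$. Choosing a local bisection of $\underline C$ through it yields a smooth map $c'\mapsto\bar\kappa(c')\in\G(C)$ defined near $g$ on $C_{s(g)}$, with $\bar\kappa(g)=h$, $s(\bar\kappa(c'))=t(c')$, and such that $\beta:=\bigl(c'\mapsto\bar\kappa(c')c'=L_{\bar\kappa(c')}(c')\bigr)$ is a local diffeomorphism of $C_{s(g)}$ near $g$ with $\beta(g)=hg$. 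Now define $\alpha(c):=\bar\kappa(cg)\,c=L_{\bar\kappa(cg)}(c)$ for $c$ near $1_{t(g)}$ in $C_{t(g)}$; this is well defined, $\alpha(1_{t(g)})=\bar\kappa(g)\,1_{t(g)}=h$, and associativity gives $R_g\circ\alpha=\beta\circ R_g$, since $R_g(\alpha(c))=(\bar\kappa(cg)\,c)\,g=\bar\kappa(cg)\,(cg)=\beta(R_g(c))$.

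The crux, and the main obstacle, is that $\alpha$ is a local diffeomorphism of $C_{t(g)}$. Injectivity is immediate: $\alpha(c_1)=\alpha(c_2)$ forces $\beta(c_1g)=\beta(c_2g)$, hence $c_1g=c_2g$ and $\bar\kappa(c_1g)=\bar\kappa(c_2g)$, and left-cancellation by this invertible element gives $c_1=c_2$. Surjectivity onto a neighbourhood of $h$, together with smoothness of the inverse, follows by writing $\alpha^{-1}(c')=\bar\kappa\bigl(\beta^{-1}(c'g)\bigr)^{-1}c'$ and verifying, via the groupoid identities, that this is a two-sided inverse (using that $\beta^{-1}$, $R_g$, $\bar\kappa$, inversion in $\G(C)$ and composition are smooth). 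Since $\alpha$ and $\beta$ are "variable translations by invertible morphisms", they preserve $\partial C$, so this argument is insensitive to whether $u(X)\subset\Int C$ or $u(X)\subset\partial C$. Finally, differentiating $R_g\circ\alpha=\beta\circ R_g$ at $1_{t(g)}$ gives $\d(R_g)_h\circ\d\alpha_{1_{t(g)}}=\d\beta_g\circ\d(R_g)_{1_{t(g)}}$, and since $\d\alpha_{1_{t(g)}}$ and $\d\beta_g$ are isomorphisms we conclude $\rank\d(R_g)_h=\rank\d(R_g)_{1_{t(g)}}=\rankr(g)$. I expect the diffeomorphism claim for $\alpha$ to be the only real difficulty: because $R_g$ itself may fail to be injective, the intertwining identity alone does not force $\alpha$ to be one, so the explicit inverse is genuinely needed.
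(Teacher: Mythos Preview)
Your argument is correct and rests on the same underlying idea as the paper's---the left action of $\G(C)$ on $C$---but you carry it out by building intertwining diffeomorphisms explicitly rather than invoking the constant-rank property of $\underline t$ on a source fibre of the action groupoid. This buys you something: when $u(X)\subset\partial C$, the source fibre $\underline s^{-1}(g)\cong\G(C)_{t(g)}$ is open only in $\partial C_{t(g)}$, a codimension-one submanifold of $C_{t(g)}$, so the paper's argument as written controls only $\d(R_g)_h|_{T_h\partial C_{t(g)}}$, not the full $\d(R_g)_h$; your $\alpha$, being a local diffeomorphism of all of $C_{t(g)}$, sidesteps this. That said, your construction is more elaborate than necessary. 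If you choose the bisection of $\underline C$ so that $\bar\kappa$ factors through $t$---take $\bar\kappa=\tau\circ t$ for a local source bisection $\tau$ of the Lie groupoid $\G(C)$ with $\tau(t(g))=h$---then since $t(cg)=t(c)$ you get $\alpha(c)=\tau(t(c))c=L_\tau(c)$ and likewise $\beta=L_\tau$; both are the global left translation by $\tau$, evidently a local diffeomorphism of $C$ (with inverse $L_{\tau^{-1}}$) preserving source fibres and $\partial C$, and $R_g\circ L_\tau=L_\tau\circ R_g$ is pure associativity. Your ``crux'' then disappears.
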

\begin{proof}
Consider first the case when $\partial C=\emptyset$. The restriction of the target map to any source fibre has constant rank, in any Lie groupoid. In our case, $\underline s^{-1}(g)=\G (C)_{t(g)}$ for any $g\in C$, and the map
\[\underline t|_{\underline s^{-1}(g)}=R_g|_{\G (C)_{t(g)}}\]
has constant rank which must thus be equal to $\rankr(g)$. A similar proof works for left translations, using the right action of $\G (C)$ on $s\colon C\rightarrow X$.

Now suppose $\partial C\neq \emptyset$. We may consider the left action groupoids 
\[
\G (C)\tensor[_s]{\times}{_t}\Int C\rra\Int C,\quad \G (C)\tensor[_s]{\times}{_t}\partial C\rra\partial C
\]
of $\G (C)$ on $t|_{\Int C}$ and $t|_{\partial C}$, respectively; note that these are in fact actions of $\G (C)$ since $L_g(\Int C^{s(g)})\subset \Int C^{t(g)}$ and $L_g(\partial C^{s(g)})\subset\partial C^{t(g)}$ for any $g\in\G (C)$. The same technique as above now shows the wanted conclusion.
\end{proof}

\subsubsection{Singular distributions associated to translations}

A Lie category $C\rra X$ (assume it is boundaryless for simplicity) comes equipped with singular distributions determined by differentials of left and right translations. Focusing only on left translations, define the singular distribution $D\subset T C$ as
\[D_g=\Im\d(L_g)_{1_{s(g)}}\leq \ker\d t_g\leq T_g C.\]
In what follows, we show that $D$ is \emph{integrable}, i.e., there is a decomposition $\F(D)$ of $C$ into maximally connected weakly embedded submanifolds,\footnote{Recall that an injective immersion $\varphi\colon M \rightarrow N$ is said to be a \emph{weak embedding}, if any smooth map $f\colon P\rightarrow N$ with the property $f(P)\subset \varphi(M)$, factors through $\varphi$.} called the \emph{leaves} of $\F(D)$, whose tangent spaces coincide with the fibres of $D$. 

Denote the $D$-valued vector fields on $C$ by
\[\Gamma(D)=\set{X\in \vf (C)\given X_g\in D_g\text{ for all }g\in C}.\]
It is not hard to see that $D$ is \emph{locally of finite type}, i.e., for any $g\in C$ there exist finitely many $X_i\in \Gamma(D)$ such that:
\begin{enumerate}[label={(\roman*)}]
\item $D_g=\mathrm{Span}(X_i|_g)_i$,
\item For any $X\in \Gamma(D)$ there exists a neighborhood $U$ of $g$ in $C$ and functions $f_i{}^j\in C^\infty(U)$, such that $[X,X_i]_h=\sum_j f_i{}^j(h)X_j|_h$ for all $h\in U$.
\end{enumerate}
Indeed, we may pick sections $(\alpha_i)_i$ of $A^L (C)$, such that they constitute a local frame on some neighborhood $V$ of $s(g)$ in $ X$, and extend them to left-invariant vector fields $(X_i)_i$. The point (i) is clearly satisfied; to show (ii), denote $U=s^{-1}(V)$, and note that that the set of left-invariant vector fields is closed under the Lie bracket by Lemma \ref{lem:closed_lie_bracket}, so on $U$ there holds
\[
[X_i,X_j]=\textstyle\sum_k f_{ij}{}^k X_k,
\]
for some functions $f_{ij}{}^k\in C^\infty(U)$. Since any $X\in\Gamma(D|_U)$ can be written as a $C^\infty(U)$-linear combination of the tuple $(X_i|_U)_i$, the rest follows by using the Leibniz rule for the Lie bracket. 

Since $D$ is locally of finite type, it is integrable by \cite{sussman}. The following proposition says that the leaves are precisely the connected components of the orbits \[\set*{\underline s(\underline t^{-1}(g))\given g\in C}\] of the groupoid $C \tensor[_s]{\times}{_t} \G (C)\rra C$ corresponding to the right action of $\G (C)$ on $s\colon C\rightarrow X$.

\begin{proposition}
Let $C\rra X$ be a Lie category without boundary. The integral manifold of the singular distribution $D\subset T C$ through $g\in C$ is $L_g(\G (C)^{s(g)})$.
\end{proposition}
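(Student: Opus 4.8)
The plan is to present $L_g(\G(C)^{s(g)})$ as an orbit of a Lie groupoid acting on $C$, and then invoke the standard description of the orbit foliation of a Lie groupoid. As recalled just above the statement, the right action of the core $\G(C)$ on $s\colon C\ra X$ gives rise to the action groupoid $\underline C=C\tensor[_s]{\times}{_t}\G(C)\rra C$ with $\underline s(c,h)=c$ and $\underline t(c,h)=ch$; running the argument of the preceding proposition with this right action in place of the left one, $\underline C\rra C$ is a Lie groupoid (here the hypothesis $\partial C=\emptyset$ enters: by Theorem \ref{thm:open_inv} it makes $\G(C)$ open in $C$, so that $t|_{\G(C)}$ is a submersion and $\underline C$ is a smooth manifold). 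Since $\underline s^{-1}(g)=\{g\}\times\G(C)^{s(g)}$, the orbit of $g$ is
\[
\O_g\coloneq\underline t\bigl(\underline s^{-1}(g)\bigr)=\set{gh\given h\in\G(C)^{s(g)}}=L_g\bigl(\G(C)^{s(g)}\bigr).
\]

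The argument then has two steps. First I would quote the two standard facts about the orbit of a Lie groupoid $\underline C\rra C$ (see e.g.\ \cite{mackenzie}): $\O_g$ is a weakly embedded submanifold of $C$, and at each of its points $c$ its tangent space equals $\underline\rho_c(\underline A_c)$, the image of the anchor of the Lie algebroid $\underline A\ra C$ of $\underline C$. Second---this is the only genuine computation---I would identify $\underline\rho_c(\underline A_c)$ with $D_c$. Because $\underline s$ is the first projection and $\G(C)$ is open in $C$, the fibre of $\underline A$ over $c$ is $\underline A_c=\ker\d{\underline s}_{(c,1_{s(c)})}=\{0\}\oplus\ker\d t_{1_{s(c)}}$, which the canonical isomorphism identifies with $A^L(C)_{s(c)}$; and since $\underline t$ is merely the composition $m$ restricted to $\underline C$, under this identification the anchor $\underline\rho_c=\d{\underline t}_{(c,1_{s(c)})}|_{\underline A_c}$ is exactly $\d(L_c)_{1_{s(c)}}$. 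Hence $T_c\O_g=\Im\d(L_c)_{1_{s(c)}}=D_c$ for every $c\in\O_g$, so $\O_g=L_g(\G(C)^{s(g)})$ is a weakly embedded submanifold of $C$ everywhere tangent to $D$, i.e.\ an integral manifold of $D$ through $g$; in view of the integrability of $D$ established above via \cite{sussman}, this is the integral manifold through $g$, its connected components being the leaves of $\F(D)$ (consistently with the remark preceding the statement).

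The step I expect to be the crux is the anchor computation $\underline\rho_c(\underline A_c)=D_c$: one must keep careful track of the canonical identification $\underline A_c\cong A^L(C)_{s(c)}$ and verify that the anchor of this particular action algebroid is left translation evaluated at the unit---everything else is the general theory of orbits of Lie groupoids applied verbatim. I would also flag one subtlety for the write-up: $L_g(\G(C)^{s(g)})$ need not be connected (for instance, for $C=\R^{n\times n}$ and $g$ the identity matrix it is all of $\GL(n,\R)$), so it is the \emph{maximal} integral manifold through $g$ that is being described, the genuine leaf of $\F(D)$ being its connected component containing $g$.
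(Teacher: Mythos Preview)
Your proof is correct and follows a route closely related to, but packaged differently from, the paper's. The paper invokes Corollary~\ref{cor:ranks_invertibles} to conclude that $L_g|_{\G(C)^{s(g)}}$ has constant rank, and then applies the rank theorem directly to deduce that the image is an immersed submanifold with tangent space $\Im\d(L_g)_h$ at $gh$. You instead recognize $L_g(\G(C)^{s(g)})$ as the orbit of $g$ under the right action groupoid $C\tensor[_s]{\times}{_t}\G(C)\rra C$ and invoke the general structure theory of Lie groupoid orbits, together with an anchor computation. The two are morally the same argument---Corollary~\ref{cor:ranks_invertibles} was itself proved via that very action groupoid, and the orbit theory is ultimately a packaged form of the constant-rank argument---but your framing buys you slightly more: the orbit theory gives weakly embedded for free, whereas the paper's direct application of the rank theorem only asserts immersed (and strictly speaking, getting the global submanifold structure from constant rank alone requires an additional word about the fibres). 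Your remark on connectedness is also apt and matches the paper's own disclaimer just before the statement.
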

\begin{proof}
Corollary \ref{cor:ranks_invertibles} states $\smash{L_g|_{\G (C)^{s(g)}}}$ has constant rank, so rank theorem can be applied to deduce $L_g(\G (C)^{s(g)})$ is an immersed submanifold of $C^{t(g)}$, the tangent space at $gh$ of which is the image of differential $\d(L_g)_h$, for any $h\in \G (C)^{s(g)}$.
\end{proof}

\begin{remark}
	In the case when $C$ has a boundary, the last proposition is in general not true, as is easily seen by considering the order category on $\R$. In general, a natural candidate for the integral manifold of $D$ through $g$ is $L_g(U)$ for an appropriate open neighborhood of $1_{s(g)}$ in $C^{s(g)}$, but proving such a general result is harder since rank theorem is not true for manifolds with boundaries (without additional assumptions on $L_g$).
\end{remark}

\section{Extensions of Lie categories to Lie groupoids}
\label{sec:ext}
We have witnessed interesting examples of Lie categories appear by restricting to an embedded subcategory of a Lie groupoid, in spirit of Lemma \ref{lem:lie_subcat}. As we will see next, in order to ascertain the main properties of such Lie categories, the assumption of being embedded may be weakened to being immersed. 

\begin{definition}
An \emph{extension to a Lie groupoid} of a Lie category $C\rightrightarrows X$ is a Lie groupoid $G\rightrightarrows X$, together with an injective, immersive functor $F\colon C\rightarrow G$ over the identity. In other words, $G$ is a Lie groupoid such that $C$ is its wide Lie subcategory. If such an extension exists, we say that $C$ is \emph{extendable to a Lie groupoid}. Furthermore, we say that an extension to a Lie groupoid is \emph{weakly étale}, if $\dim G=\dim C$.
\end{definition}
\begin{remark}
Note that if $C$ does not have a boundary, then an extension is weakly étale if, and only if, the map $F$ is étale (i.e., a local diffeomorphism), by virtue of inverse map theorem. If $\dim G=\dim C$ and $\partial C\neq \emptyset$, then $F$ cannot be a local diffeomorphism at points from $\partial C$ since $\partial G=\emptyset$ (see e.g., the order category from Example \ref{ex:order_cat}).
\end{remark}

An obvious necessary condition for an arbitrary category $C$ to admit an injective functor $F$ into a groupoid $ G$, is \emph{cancellativity} of all elements in $C$, i.e., all left and right translations in $C$ must be injective. For example, if $gh=gk$ holds for some morphisms in $C$, then $F(g)F(h)=F(g)F(k)$ holds in $F (C)\subset  G$, implying $F(h)=F(k)$, and hence $h=k$ by injectivity of $F$, so $L_g$ is injective.

In the differentiable setting, extensions to groupoids reflect on ranks and algebroids; the following lemma yields necessary conditions on a Lie category $C$ to be extendable to a Lie groupoid.
\begin{lemma}
\label{lem:extensions}
If a Lie category $C$ is extendable to a Lie groupoid $ G$, all its morphisms have full and constant rank, and all left and right translations are injective. Moreover, if the extension is weakly étale, then $A^L (C)\cong A( G)\cong A^R (C)$.
\end{lemma}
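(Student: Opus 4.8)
The plan is to prove the three claims separately, each following from a short diagram-chase through the immersive functor $F\colon C\hookrightarrow G$.

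First I would dispose of injectivity of translations. As already observed in the paragraph preceding the lemma, cancellativity is forced purely algebraically: if $gh=gk$ for composable morphisms in $C$, then applying $F$ and using that $F(g)$ is invertible in $G$ gives $F(h)=F(k)$, whence $h=k$ by injectivity of $F$; thus every $L_g$ is injective, and symmetrically every $R_g$. So each $L_g\colon C^{s(g)}\to C^{t(g)}$ is an injective immersion between manifolds, but I also want full rank of $\d(L_g)_{1_{s(g)}}$, which is a pointwise statement, not merely injectivity.

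For full and constant rank, the key is that translations in $C$ are restrictions of translations in $G$: functoriality gives $F\circ L_g^C = L_{F(g)}^G\circ F$ (with the appropriate identification of fibres), and similarly for right translations. Differentiating at $1_{s(g)}$, and using that $F$ restricts to immersions $C^{s(g)}\hookrightarrow G^{s(g)}$ and $C^{t(g)}\hookrightarrow G^{t(g)}$, we get that $\d(L_g^C)_{1_{s(g)}}$ is (up to the injective differentials of $F$ on source/target fibres) the restriction of $\d(L_{F(g)}^G)_{1_{s(g)}}$, which is an isomorphism since $L_{F(g)}^G$ is a diffeomorphism in the groupoid $G$. An injective linear map that is the restriction of an isomorphism to a subspace is itself full rank onto its image; more precisely $\rank \d(L_g^C)_{1_{s(g)}} = \dim \ker\d t_{1_{s(g)}}^C = \dim C - \dim X = \delta$, using $\dim\ker\d t^C = \dim C-\dim X$ from the submersion hypothesis on $t$. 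The same argument at an arbitrary $h\in C^{s(g)}$ (again $\d(L_g^C)_h$ is a restriction of the isomorphism $\d(L_{F(g)}^G)_{F(h)}$, hence injective on the $\delta$-dimensional space $\ker\d t_h^C$, hence of rank $\delta$) gives constancy of the left rank, and symmetrically for the right rank. Hence every morphism is regular with constant rank.

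Finally, for the weakly étale case: when $\dim G=\dim C$, the immersive functor $F$ is a submersion for dimension reasons, hence étale on $\Int C$ (on $\partial C$ there is nothing to integrate into since translations there land in the boundary, but the algebroid only sees the units, which lie in $C$ and map to $G$ via the étale-on-a-neighborhood map — I should be slightly careful and instead argue directly on the level of vector bundles). Concretely, $F$ induces $F^L_*\colon A^L(C)\to A(G)$ sending $\alpha\mapsto \d F\circ\alpha$; this is a Lie algebroid morphism over $\id_X$ by the functoriality discussed after Proposition~\ref{prop:lie_alg}. It is fibrewise injective because $\d F$ is injective ($F$ immersive) and $A^L(C)=u^*\ker\d t^C \hookrightarrow u^*\ker\d t^G = A(G)$; and it is fibrewise surjective because $\rank A^L(C)=\dim\ker\d t^C_{1_x}=\dim C-\dim X=\dim G-\dim X=\rank A(G)$. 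A fibrewise bijective vector bundle morphism over the identity is an isomorphism, so $A^L(C)\cong A(G)$, and identically $A^R(C)\cong A(G)$ via $F^R_*$.

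The main obstacle, as I see it, is being careful with the boundary: one must phrase the full-rank argument so that it works at units lying in $\partial C$ (where $F$ is not a local diffeomorphism), which is why I would route everything through the pointwise identity "differential of a translation in $C$ = restriction of the differential of the corresponding translation in $G$" rather than through any global statement about $F$ being étale.
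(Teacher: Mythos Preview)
Your proposal is correct and follows essentially the same route as the paper: the same commutative square $F\circ L_g^C = L_{F(g)}^G\circ F$ differentiated pointwise, injectivity of $\d(L_g^C)_h$ deduced from $F$ being an immersion and $L_{F(g)}^G$ a diffeomorphism, and for the weakly \'etale case the same rank count showing $F_*^L\colon A^L(C)\to A(G)$ is a fibrewise bijective Lie algebroid morphism over $\id_X$. Your caution about the boundary is harmless but unnecessary here, since the argument is purely pointwise and linear-algebraic.
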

\begin{proof}
Let $F\colon C\rightarrow  G$ be the groupoid extension, and let $g,h\in C$ be composable. To prove that $g$ has full and constant rank, denote the left translation in $C$ by $g$ as $L_g^ C$, and the left translation in $ G$ by $F(g)$ as $L^ G_{F(g)}$. By functoriality, the diagram
\[\begin{tikzcd}[column sep=2.5em]
	{ C^{s(g)}} & { C^{t(g)}} \\
	{ G^{s(g)}} & { G^{t(g)}}
	\arrow["{L_g^ C}", from=1-1, to=1-2]
	\arrow["{L_{F(g)}^ G}"', from=2-1, to=2-2]
	\arrow["F"', from=1-1, to=2-1]
	\arrow["F", from=1-2, to=2-2]
\end{tikzcd}\]
commutes, so if $v\in T_h( C^{s(g)})$ is such that $\d(L_g^ C)_h(v)=0$, it implies $\d F_h(v)=0$, and so $v=0$ since $F$ is an immersion. This proves that $\d(L_g^ C)_h$ has full rank, and a similar proof shows an analogous result for the right translation. Hence, $g$ has full and constant rank.

To prove the second part, denote the source and target maps in $C$ and $ G$ as $s^ C, t^ C$ and $s^ G,t^ G$, respectively. Commutativity of the following diagram,
\[\begin{tikzcd}[column sep=1.25em]
	 C &&  G \\
	& X
	\arrow["F", from=1-1, to=1-3]
	\arrow["{t^ C}"', from=1-1, to=2-2]
	\arrow["{t^ G}", from=1-3, to=2-2]
\end{tikzcd}\]
together with the assumption that our extension is weakly étale, implies that for any $x\in X$, $\d F_{1_x}$ maps $\ker \d t^ C_{1_x}$ isomorphically onto $\ker \d t^ G_{1_x}$, so $F$ induces an isomorphism \[F_*^L\colon A^L (C)\rightarrow A^L( G)\] of vector bundles, and similarly between $A^R (C)$ and $A^R( G)$. Since $F$ is a morphism of Lie categories, these are in fact isomorphisms of Lie algebroids, so we yield the wanted chain of isomorphisms.
\end{proof}

\begin{corollary}
If a Lie category is extendable to a Lie groupoid, then the composition map $m\colon\comp C\rightarrow C$ is a submersion.
\end{corollary}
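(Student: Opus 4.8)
The plan is to obtain this immediately by chaining two results already at hand. First I would invoke Lemma~\ref{lem:extensions}: since $C$ is assumed extendable to a Lie groupoid, every morphism $g\in C$ has full and constant left and right rank. Unwinding Definition~\ref{defn:rank}, "full and constant left rank of $g$" means precisely that $\rank\d(L_g)_h=\delta$ for \emph{every} $h\in C^{s(g)}$, not merely at the unit $1_{s(g)}$; similarly for right translations.

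Second, I would feed this into Corollary~\ref{cor:composition_submersion}. Its first assertion says that whenever $L_g$ has full rank at a point $h\in C^{s(g)}$, the composition map $m\colon\comp C\to C$ is a submersion at $(g,h)$. Combined with the previous paragraph, this holds at every composable pair $(g,h)\in\comp C$, so $m$ is a submersion. (Equivalently, one simply quotes the "Hence\dots" clause of Corollary~\ref{cor:composition_submersion}, which packages exactly this deduction; the forward pointer from the paragraph preceding Lemma~\ref{lem:theta} already flagged this corollary as the intended application.)

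There is essentially no obstacle: the argument is a two-line bookkeeping step, the only thing to verify being that "$g$ has full and constant rank" upgrades to "$L_g$ has full rank at every point of $C^{s(g)}$", which is immediate from the definitions. If a self-contained proof were wanted instead, one could reprove the relevant special case of Corollary~\ref{cor:composition_submersion} directly: by Lemma~\ref{lem:theta}, full rank of $L_g$ at $h$ forces $\vartheta\colon\comp C\to C\tensor[_t]{\times}{_t}C$, $(g,h)\mapsto(g,gh)$, to have full rank at $(g,h)$, whence differentiating $m=\pr_2\circ\vartheta$ exhibits $\d m_{(g,h)}$ as a composite of surjections; but reusing Corollary~\ref{cor:composition_submersion} is cleaner and is what I would write.
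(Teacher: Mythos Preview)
Your proposal is correct and takes exactly the same approach as the paper, which simply states that the result follows directly from Corollary~\ref{cor:composition_submersion} and Lemma~\ref{lem:extensions}. Your additional unpacking of why ``full and constant rank'' yields the hypothesis of Corollary~\ref{cor:composition_submersion} is accurate and merely makes explicit what the paper leaves implicit.
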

\begin{proof}
Follows directly from Corollary \ref{cor:composition_submersion} and Lemma \ref{lem:extensions}.
\end{proof}

\begin{remark}
To conclude $A^L (C)\cong A^R (C)$, it is enough to replace the assumption on the extension $F\colon C\rightarrow  G$ being weakly étale with the following weaker condition: 
\begin{align}
\label{eq:nice_extension}
\d(\inv)_{1_x}(\d F_{1_x}(\ker \d t_{1_x}^ C))\subset\d F_{1_x}(\ker \d s^ C_{1_x}),
\end{align}
but notice that if $\dim  G\neq \dim C$, then the left and right algebroids of $C$ will not be isomorphic to the algebroid $A( G)$, since their ranks will differ. For simpler notation, assume that $C\subset  G$ and $F\colon C\hookrightarrow G$ is an injective immersion.

To show that $\d(\inv)_{1_x}(\ker \d t_{1_x}^ C)\subset\ker \d s^ C_{1_x}$ implies $A^L (C)\cong A^R (C)$, first observe that this is an equality instead of just an inclusion. Hence, the inversion map on the extension induces a vector bundle isomorphism \[\inv_*\colon A^L (C)\rightarrow A^R (C),\]
and now it is not difficult to see that for any $\alpha\in  \Gamma(A^L (C))$, its left-invariant extension $\alpha^L$ to whole $ G$ is $\inv$-related to the right-invariant extension $(\inv_*\alpha)^R$. Hence, if $\beta\in \Gamma(A^L (C))$ is another section, we obtain that for any $g\in G$,
\[
\d(\inv)_g([\alpha^L,\beta^L]_g)=[(\inv_*\alpha)^R,(\inv_*\beta)^R]_{g^{-1}}.
\]
Taking $g=1_x$ shows that $\inv_*$ preserves the brackets of $A^L (C)$ and $A^R (C)$, so it is an isomorphism of Lie algebroids.
\end{remark}

\begin{remark}
\label{rem:questions}
The importance of Lemma \ref{lem:extensions} is in the fact that it enables us to easily provide positive answers to the following questions, for the case of Lie categories extendable to groupoids:
\begin{enumerate}[label={(\roman*)}]
\item Are Lie monoids parallelizable?
\item For fixed objects $x,y\in X$ of a Lie category $C\rra X$, is the set $C_x^y$ of morphisms from $x$ to $y$ a submanifold of $C$? Equivalently, do Hom-functors map into the category $\mathbf{Diff}$ instead of just $\mathbf{Set}$?
\end{enumerate}
The proofs in the extendable case are the same as the proofs for Lie groupoids (or Lie groups). In fact, these results rely only on the fact that arrows have full rank; the questions above remain open for Lie categories whose arrows do not have full rank.
\end{remark}


\begin{corollary}
If a Lie monoid $M$ is extendable to a Lie group, it is paralellizable.
\end{corollary}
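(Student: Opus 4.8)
The statement to prove is that a Lie monoid $M$ extendable to a Lie group $G$ is parallelizable. The plan is to reduce everything to the corresponding well-known fact for Lie groups, using the extension functor $F\colon M\hookrightarrow G$ together with Lemma~\ref{lem:extensions}.

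First I would recall that a Lie monoid is a Lie category over a point, so it has a single unit $1=u(*)$, and both of its Lie algebroids degenerate to Lie algebras over a point: $A^L(M)=\ker\d t_1=T_1M$ and $A^R(M)=\ker\d s_1=T_1M$, each of which is simply the tangent space $T_1M$ (the target and source maps being constant). A left-invariant vector field on $M$ in the sense of Definition~\ref{defn:left_inv} is then just a vector field $X$ with $\d(L_g)_1(X_1)=X_g$ for all $g\in M$; by Lemma~\ref{lem:closed_lie_bracket} the evaluation map $X\mapsto X_1$ identifies $\vf^L(M)$ with $T_1M$. So it suffices to show that the left-invariant extension map $v\mapsto v^L$, $v^L(g)=\d(L_g)_1(v)$, produces a global frame: i.e. that for each $g\in M$ the linear map $\d(L_g)_1\colon T_1M\to T_gM$ is an isomorphism, and that $v^L$ depends smoothly on $g$.

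The second point, smoothness of $g\mapsto v^L(g)$, is already established inside the proof of Lemma~\ref{lem:closed_lie_bracket} (the realization $\alpha^L=\d m\circ\tau^\alpha$), so the only substantive thing to check is that $\d(L_g)_1$ is a bijection for every $g$. This is exactly where the extendability hypothesis enters: by Lemma~\ref{lem:extensions}, since $M$ is extendable to a Lie group $G$, every morphism of $M$ has full and constant rank, and all left and right translations are injective. Full rank for $g$ means $\rank\d(L_g)_1=\delta=\dim M-\dim\{*\}=\dim M$, and since $\d(L_g)_1$ is a linear endomorphism-type map between spaces of the same dimension $\dim M=\dim T_1M=\dim T_gM$, full rank forces it to be an isomorphism. (Alternatively one can invoke injectivity of $L_g$ plus the immersion property of $F$ directly, as in the proof of Lemma~\ref{lem:extensions}, but citing full rank is cleanest.) Therefore $\{v_i^L\}$ is a smooth global frame of $TM$ whenever $\{v_i\}$ is a basis of $T_1M$, which is the definition of parallelizability.

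The main (and only) obstacle here is conceptual packaging rather than any real difficulty: one must be careful that "full rank" in Definition~\ref{defn:rank} is relative to the codimension $\delta$ of the object manifold, which for a monoid over a point is the full dimension of $M$, so full rank of left translations genuinely upgrades to invertibility of $\d(L_g)_1$. Everything else is a direct transcription of the Lie group argument, exactly as anticipated in Remark~\ref{rem:questions}. I would therefore keep the proof to one or two sentences: pick a basis of $T_1M$, left-translate it using the (fibre-wise invertible, by Lemma~\ref{lem:extensions}) maps $\d(L_g)_1$, and invoke the smoothness of left-invariant extensions from Lemma~\ref{lem:closed_lie_bracket}.
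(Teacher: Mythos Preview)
Your proposal is correct and follows exactly the same approach as the paper's proof: extend a basis of $T_eM$ to left-invariant vector fields, and use Lemma~\ref{lem:extensions} (full rank of all morphisms) to conclude that $\d(L_g)_{1}$ is an isomorphism for every $g$, so this is a global frame. The paper's proof is a single sentence, whereas you have carefully unpacked the smoothness of $v^L$ and the reason $\delta=\dim M$ for a monoid, but the argument is identical.
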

\begin{proof}
Extending a basis of $T_eM$ to a tuple of left-invariant vector fields yields a global frame for $TM$ since all elements of $M$ have full rank by Lemma \ref{lem:extensions}.
\end{proof}

\begin{corollary}
\label{cor:hom_sets_embedded}
If a Lie category $C\rra X$ is extendable to a Lie groupoid, then for any $x,y\in X$ the set $C_x^y$ is a closed embedded submanifold of $C$. Hence, $C_x^x$ is a Lie monoid for any $x\in X$.
\end{corollary}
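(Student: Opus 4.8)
The plan is to reduce the statement to the corresponding fact for Lie groupoids, which is standard: in any Lie groupoid $G$, for fixed $x,y\in X$ the set $G_x^y = s^{-1}(x)\cap t^{-1}(y)$ is a closed embedded submanifold (it is an $s$-fibre of the $t$-fibre, or equivalently a single orbit of the isotropy action, and all these maps have constant rank). So let $F\colon C\hookrightarrow G$ be a groupoid extension, which we may assume is an inclusion of a wide Lie subcategory. First I would observe that, set-theoretically, $C_x^y = C \cap G_x^y$, since $F$ covers the identity on $X$ and hence preserves source and target. The task is therefore to show that the intersection of the immersed submanifold $C\subset G$ with the closed embedded submanifold $G_x^y\subset G$ is again a closed embedded submanifold of $C$, and moreover cut out cleanly.

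The key point is that this intersection is \emph{transversal} inside $G$, or more precisely that $C_x^y$ is a fibre of a submersion defined on $C$. Here is where I would use the content of Lemma \ref{lem:extensions}: since $C$ is extendable, all its morphisms have full rank, so by Corollary \ref{cor:composition_submersion} the source and target maps $s,t\colon C\to X$ are submersions (this is already part of the definition of a Lie category, so in fact I only need that $(s,t)\colon C\to X\times X$ is a submersion). The latter follows because $s$ is a submersion and, restricted to each $s$-fibre $C_x$, the map $t|_{C_x} = \underline t|_{\underline s^{-1}} $ has full rank equal to $\rankr$ of the morphism — by full rank this is $\dim X$ minus nothing obstructing, i.e. $t|_{C_x}$ is itself a submersion onto its image; combined with $s$ being a submersion, $(s,t)$ is a submersion. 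Concretely: given $(v_1,v_2)\in T_xX\oplus T_yX$ and $g\in C_x^y$, lift $v_1$ through $\d s_g$ to some $w\in T_gC$; then $v_2-\d t_g(w)\in T_yX$, and since $t|_{C_x}$ has full rank we can find $w'\in T_g C_x = \ker \d s_g$ with $\d t_g(w') = v_2 - \d t_g(w)$; then $w+w'$ maps to $(v_1,v_2)$. Hence $(s,t)\colon C\to X\times X$ is a submersion, and $C_x^y = (s,t)^{-1}(x,y)$ is a closed embedded submanifold of $C$ of codimension $2\dim X$.

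The only subtlety is the boundary: $C$ may have a boundary, so I should check $C_x^y$ is a neat submanifold, i.e. that $(s,t)$ restricts to a submersion on $\partial C$ as well. But this is exactly the regular boundary hypothesis (iii) in the definition of a Lie category, applied to $\partial s,\partial t$ together with the same fibrewise argument as above (using that morphisms in $\partial C$, being morphisms of $C$, also have full rank); so $(s,t)|_{\partial C}$ is a submersion and $\partial(C_x^y) = C_x^y\cap\partial C$. Closedness in $C$ is automatic since $\{(x,y)\}$ is closed in $X\times X$ and $(s,t)$ is continuous. Finally, for the last sentence: taking $x=y$, the set $C_x^x$ is a closed embedded submanifold which is closed under composition (if $s(g)=t(g)=x$ and likewise for $h$, then $s(gh)=t(gh)=x$), contains the unit $1_x$, and composition restricted to it is smooth as the restriction of $m$ to a submanifold; so $C_x^x$ is a Lie monoid (a one-object Lie category, possibly with boundary — note $(s,t)$ restricted to $C_x^x$ is constant, so the regular boundary condition degenerates, but $C_x^x$ being a manifold with boundary and having smooth multiplication is all that is required). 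The main obstacle, if any, is the careful handling of the boundary case, but it is dispatched by hypothesis (iii); the essential content is the submersivity of $(s,t)$, which is where full rank of all morphisms — guaranteed by extendability via Lemma \ref{lem:extensions} — does the work.
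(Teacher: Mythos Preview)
Your argument contains a genuine gap: the claim that $(s,t)\colon C\to X\times X$ is a submersion is false in general. It fails already for any non-transitive Lie groupoid (which trivially extends to itself via the identity). For instance, if $C=G$ is a bundle of Lie groups, then $s=t$ and the image of $(s,t)$ is only the diagonal $\Delta_X$; more generally, the image of $(s,t)$ is the orbit equivalence relation, and $\d(t|_{C_x})_g$ is surjective onto $T_yX$ only when the orbit through $x$ is open. You have conflated two different notions of ``full rank'': the full rank of a morphism $g$ in Definition~\ref{defn:rank} means that $\d(L_g)_{1_{s(g)}}\colon \ker\d t_{1_{s(g)}}\to \ker\d t_g$ is an isomorphism, which says nothing about surjectivity of $\d(t|_{C_x})_g\colon \ker\d s_g\to T_yX$. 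The step ``since $t|_{C_x}$ has full rank we can find $w'\in\ker\d s_g$ with $\d t_g(w')=v_2-\d t_g(w)$'' is precisely where the argument breaks.

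The paper's proof repairs this by observing that full rank of left translations gives something weaker but sufficient: since $\d(L_g)_{1_x}$ is an isomorphism $\ker\d t_{1_x}\to\ker\d t_g$ and intertwines the source maps (because $s\circ L_g=s$), it carries $D_{1_x}=\ker\d s_{1_x}\cap\ker\d t_{1_x}$ isomorphically onto $D_g=\ker\d s_g\cap\ker\d t_g$. Hence the distribution $D=\ker\d(t|_{C_x})$ on $C_x$ has constant rank, i.e.\ $t|_{C_x}$ is a map of constant rank (equal to the dimension of the orbit through $x$, not $\dim X$). One then applies Frobenius, or equivalently the constant rank theorem, to conclude that the level sets $C_x^y$ are closed embedded submanifolds of $C_x$, hence of $C$. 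Your approach can be salvaged by replacing ``submersion'' with ``constant rank'' throughout, but then the verification of constant rank is exactly the paper's argument.
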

\begin{proof}
We realize $C_x^y$ as an integral manifold of a certain distribution on $C_x$, namely $D=\ker \d(t|_{ C_x})$, or more instructively, $D_g=\ker \d t_g\cap \ker \d s_g$ for all $g\in C_x$. This is a regular distribution on $C_x$, since $D_g=\d(L_g)_{1_x}(D_{1_x})$ holds---the latter is a consequence of the equality $s|_{ C^{t(g)}}\circ L_g=s|_{ C^x}$ and $g$ having full left rank by Lemma \ref{lem:extensions}, which moreover implies that $D$ is a trivial vector subbundle of $T C_x$. Summarizing, we have shown the map $t|_{C_x}$ has constant rank.

Since $D$ is the kernel of a differential of a smooth map, it is involutive, so by Frobenius' theorem integrable. The leaves of the corresponding foliation are the connected components of subspaces $\set{ C_x^y\given y\in X}$ of $C_x$, so they are initial submanifolds. Since the subspaces $C_x^y$ are also closed in $C_x$, they are embedded.
\end{proof}

\section{Completeness of invariant vector fields}
It is well-known that on any Lie group $G$, left-invariant vector fields are complete. In this section, we generalize this result to Lie monoids with normal boundaries. Furthermore, we generalize the characterization of completeness of left-invariant vector fields on Lie groupoids to Lie categories with normal boundaries. At last, the exponential map for Lie monoids will be discussed.

\begin{definition}
Let $X$ be a vector field on a smooth manifold $M$ with or without boundary, and denote by $J^X_g$ the maximal interval on which the integral path $\gamma^X_g$ of $X$, starting at $g\in M$, is defined. We say that $X$ is \emph{half-complete} if for any $g\in M$ there either holds $[0,\infty)\subset J^X_g$ or $(-\infty,0]\subset J^X_g$.
\end{definition}

\begin{remark}
\label{rem:inward}
In what follows, we will assume the reader is familiar with the notion of inward-pointing and outward-pointing tangent vectors on the boundary; we direct to \cite{lee}*{p.\ 118} for a basic reference. A particularly useful observation is that a vector field $X\in\vf(M)$ which is inward-pointing (or outward-pointing) at a certain point $g\in \partial M$, must remain inward-pointing (or outward pointing) on a neighborhood of $g$ in $\partial M$, from which it follows that $J_g^X$ cannot contain an open neighborhood of zero, but may only contain a half-closed interval $[0,\varepsilon)$, for some $\varepsilon>0$ (or $(-\varepsilon,0]$ in the outward-pointing case).
\end{remark}

Recall from Definition \ref{def:normal_bdry} that a Lie monoid $M$ has a normal boundary, if either $e\in\Int M$, or we have both that $e\in\partial M$ and $\partial M$ is a submonoid of $M$.

\begin{theorem}
\label{thm:complete}
Let $M$ be a Lie monoid with a normal boundary and let $X$ be a left-invariant vector field on $M$. The following holds:
\begin{enumerate}[label={(\roman*)}]
\item Suppose either that $e\in\Int M$, or that $e\in\partial M$ and $X_e$ is tangent to $\partial M$. Then $X|_{\partial M}$ is tangent to $\partial M$, and $X$ is complete.
\item Suppose $e\in\partial M$ and $X_e$ is either inward-pointing or outward-pointing. Then either $J_e^X=[0,\infty)$ or $J_e^X=(-\infty,0]$, respectively, and $X$ is half-complete.
\end{enumerate}
Moreover, the flow of $X$ is given for all $t\in J_e^X$ by $\phi_t^X=R_{\phi_t^X(e)}$.
\end{theorem}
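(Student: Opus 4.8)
The plan is to reduce the theorem to two elementary facts about the maximal existence interval $J^X_e$ of the integral curve $\gamma^X_e$ of $X$ through the unit $e$. \emph{Fact (a):} for every $g\in M$ the curve $t\mapsto R_{\gamma^X_e(t)}(g)=g\cdot\gamma^X_e(t)$ is an integral curve of $X$ starting at $g$ and defined on all of $J^X_e$. This is just the chain rule: for a Lie monoid the condition ``$X$ tangent to $t$-fibres'' is vacuous, so left-invariance of $X$ reads $\d(L_g)_h(X_h)=X_{gh}$ for all $h\in M$, i.e. $L_g$ carries integral curves of $X$ to integral curves of $X$; applied to $\gamma^X_e$ (with $L_g(\gamma^X_e(0))=g$) this gives Fact (a), hence $J^X_e\subseteq J^X_g$ and $\phi^X_t(g)=R_{\phi^X_t(e)}(g)$ for all $t\in J^X_e$, which is already the ``Moreover'' assertion. \emph{Fact (b):} $J^X_e$ is an interval containing $0$ and closed under addition. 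Indeed, if $s,t\in J^X_e$ then $t\in J^X_e\subseteq J^X_{\gamma^X_e(s)}$, so the integral curve obtained by concatenating $\gamma^X_e$ on $J^X_e$ with $r\mapsto\phi^X_{r-s}(\gamma^X_e(s))$ (they agree at $r=s$, hence on the overlap, by uniqueness) is an integral curve through $e$ defined on $J^X_e\cup\big(s+J^X_{\gamma^X_e(s)}\big)\ni s+t$; maximality of $J^X_e$ then forces $s+t\in J^X_e$.

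Next I would pin down the boundary behaviour, i.e. prove that in case (i) the field $X$ is tangent to $\partial M$ at \emph{every} point of $\partial M$. The elementary input is that a smooth curve $c\colon(-\delta,\delta)\to M$ with $c(0)\in\partial M$ automatically satisfies $\dot c(0)\in T_{c(0)}\partial M$, since in a boundary chart its last coordinate is a nonnegative smooth function attaining an interior minimum at $0$. If $e\in\partial M$, the normal-boundary hypothesis gives that $\partial M$ is a submonoid, so $L_g$ restricts to $\partial M\to\partial M$ for $g\in\partial M$; as $X_g=\d(L_g)_e(X_e)$ with $X_e\in T_e\partial M$, this yields $X_g\in T_g\partial M$. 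If instead $e\in\Int M$, then for any $g\in\partial M$ and any curve $c$ through $e$ remaining in $\Int M$, the curve $L_g\circ c$ passes through the boundary point $g$ at the interior time $0$, so by the observation $\d(L_g)_e(\dot c(0))\in T_g\partial M$; since $\dot c(0)$ sweeps out all of $T_eM$, again $X_g=\d(L_g)_e(X_e)\in T_g\partial M$. Consequently $X$ restricts to a vector field on the boundaryless manifold $\partial M$, and a standard argument shows its flow cannot cross $\partial M$: if $\gamma^X_e$ first met $\partial M$ at a point $h$, the integral curve through $h$ would stay in $\partial M$ near $h$, contradicting that $\gamma^X_e$ was in $\Int M$ (resp. in $\partial M$) just before.

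With (a), (b) and the boundary analysis in hand the three completeness statements follow quickly. In case (i): if $e\in\Int M$ then $\gamma^X_e$ stays in $\Int M$, so $J^X_e$ is open and contains a two-sided neighbourhood of $0$; being an interval closed under addition it must equal $\mathbb R$. If $e\in\partial M$ with $X_e$ tangent to $\partial M$, the same conclusion holds with $\gamma^X_e$ running inside the boundaryless manifold $\partial M$. Either way $J^X_e=\mathbb R\subseteq J^X_g$ for all $g$, so $X$ is complete. In case (ii), if $X_e$ is inward-pointing then by Remark \ref{rem:inward} $J^X_e$ contains $[0,\varepsilon)$ but no negative number, so being an interval closed under addition it equals $[0,\infty)$, whence $[0,\infty)\subseteq J^X_g$ for all $g$ and $X$ is half-complete; the outward-pointing case is symmetric. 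The main obstacle is the soft but delicate handling of the boundary in the second step — establishing tangency of $X$ along all of $\partial M$ in case (i) and ruling out boundary crossings — since the classical Lie-group proof has nothing to say there; by contrast the algebraic bookkeeping behind Facts (a) and (b) is routine.
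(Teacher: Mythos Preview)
Your proof is correct and follows essentially the same skeleton as the paper's: both establish your Fact (a) (left translation of $\gamma_e^X$ gives integral curves through any $g$, hence $J_e^X\subseteq J_g^X$ and the flow formula) and your Fact (b) ($J_e^X$ is closed under addition, via the time-shifted curve), and then read off the completeness statements from the shape of $J_e^X$.

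The one genuine difference is your tangency argument when $e\in\Int M$. You argue directly: any smooth curve $(-\delta,\delta)\to M$ hitting $\partial M$ at an interior time has derivative tangent to $\partial M$, so $\d(L_g)_e(T_eM)\subset T_g\partial M$ for every $g\in\partial M$. The paper instead deduces tangency from Fact (a) together with Remark~\ref{rem:inward}: since $J_e^X\subseteq J_g^X$ already contains a two-sided interval, $X_g$ cannot be inward- or outward-pointing. Your route is more elementary and in fact proves the stronger statement that boundary elements fail to have full left rank---exactly the content of the corollary immediately following the theorem in the paper. Conversely, note that your boundary-crossing argument and the conclusion that $J_e^X$ is open are correct but superfluous for completeness: once $J_e^X$ contains a two-sided neighbourhood of $0$ and is closed under addition, it equals $\mathbb R$ regardless.
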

\begin{proof}
We first inspect the assumptions from (i): notice that if $e\in\Int M$, then $J_e^X$ clearly contains an open interval around zero; on the other hand, if $e\in\partial M$ and $X_e$ is tangent to $\partial M$, then $X|_{\partial M}$ must be everywhere tangent to $\partial M$ by left-invariance of $X$ and the fact that $\partial M$ is a Lie submonoid of $M$, so $J_e^X$ must again contain an open interval around zero. Furthermore, the assumption from (ii) that $X_e$ is either inward-pointing or outward-pointing implies that $J_e^X$ contains a half-open interval of the form $[0,\varepsilon)$ or $(-\varepsilon,0]$, respectively.

We next observe that for any $g\in M$, the composition $L_g\circ \gamma_e^X$ is an integral path of $X$ starting at $g$, so maximality of $J_{g}^X$ implies
\begin{align}
\label{eq:eunderg}
J_e^X\subset J_g^X,
\end{align} 
and also $\phi_t^X(g)=g\gamma_e^X(t)=R_{\gamma_e^X(t)}(g)$ for all $t\in J_e^X$. Notice that \eqref{eq:eunderg} now implies that in the case $e\in\Int M$, $J_g^X$ contains an open interval around zero for all $g\in M$, and this holds for any $g\in \partial M$, so we conclude that $X|_{\partial M}$ must be tangent to $\partial M$, by virtue of Remark \ref{rem:inward}. Let $\tau\in J_e^X$ and consider the affinely translated path 
\[\zeta^\tau\colon (J_e^X-\tau)\rightarrow M, \quad \zeta^\tau (t)=\gamma_e^X(t+\tau).\]
Since the maximal domain of an affinely translated integral path is just the affinely translated maximal domain, we get $J_e^X-\tau=J_{\zeta^\tau(0)}^X$.\footnote{We do not need the Lie monoid structure for this fact.} 
Together with \eqref{eq:eunderg}, this implies
\[
J_e^X+\tau\subset J_e^X,\ \text{for all }\tau\in J_e^X.
\]
This implies that if $J_e^X$ contains an open interval around zero, it must equal $\R$, and if $[0,\varepsilon)\subset J_e^X$ for some $\varepsilon>0$, then $[0,\infty)\subset J_e^X$; similarly for the outward-pointing case. Together with equation \eqref{eq:eunderg}, this proves our claims regarding completeness and half-completeness.
\end{proof}

\begin{corollary}
If $M$ is a Lie monoid and $e\in \Int M$, then elements in $\partial M$ do not have full rank. Hence, if also $\partial M\neq \emptyset$, then $M$ is not extendable to a Lie group.
\end{corollary}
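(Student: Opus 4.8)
The plan is to prove the first assertion by contradiction, and then read off the second from Lemma~\ref{lem:extensions}. So suppose $M$ is a Lie monoid with $e\in\Int M$ and that some $g\in\partial M$ has full rank. Note first that $M$ automatically has a normal boundary by Definition~\ref{def:normal_bdry}(i), so Theorem~\ref{thm:complete}(i) applies to \emph{every} left-invariant vector field on $M$. Since the base is a point, $\delta=\dim M$ in Definition~\ref{defn:rank}, and $T_gM$ has that same dimension even though $g\in\partial M$; hence "$g$ has full left rank" means precisely that $\d(L_g)_e\colon T_eM\to T_gM$ is a surjective, and therefore bijective, linear map.

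The key step is then to produce a vector that $\d(L_g)_e$ cannot hit. I would choose a tangent vector $w\in T_gM$ which is not tangent to $\partial M$ (equivalently, inward- or outward-pointing); such $w$ exists because $g\in\partial M$. Using surjectivity of $\d(L_g)_e$, pick $v\in T_eM$ with $\d(L_g)_e(v)=w$ and let $X=v^L\in\vf^L(M)$ be its left-invariant extension, so $X_e=v$ and $X_g=\d(L_g)_e(v)=w$. Since $e\in\Int M$, Theorem~\ref{thm:complete}(i) forces $X|_{\partial M}$ to be tangent to $\partial M$; in particular $X_g=w$ is tangent to $\partial M$, contradicting the choice of $w$. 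Hence $\rankl(g)<\delta$, so $g$ does not have full rank. (An equivalent local argument: if $\d(L_g)_e$ were an isomorphism, then $L_g$ restricted to a small boundaryless neighbourhood $U\subset\Int M$ of $e$ would be a diffeomorphism onto a neighbourhood of $g$ in $M$, which is impossible, the source being boundaryless and the target containing the boundary point $g$.)

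For the second assertion, a Lie group is in particular a Lie groupoid over a point, so if $M$ admitted an extension to a Lie group, Lemma~\ref{lem:extensions} would give that every morphism of $M$ has full (and constant) rank. Since $\partial M\neq\emptyset$, this contradicts the first part, and therefore $M$ is not extendable to a Lie group.

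There is no substantial obstacle here; the only care needed is bookkeeping. One must note that in a monoid "full left rank" is literally invertibility of $\d(L_g)_e$ (because $\delta=\dim M=\dim T_gM$), that the hypothesis $e\in\Int M$ is exactly what makes Theorem~\ref{thm:complete}(i) apply to \emph{all} left-invariant fields and thus yield the tangency obstruction, and that it already suffices to rule out full left rank, since "full rank" in Definition~\ref{defn:rank} requires both the left and the right rank to be full.
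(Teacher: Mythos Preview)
Your proof is correct and follows essentially the same approach as the paper: argue by contradiction, use full rank of $g$ to realize a non-tangent vector at $g\in\partial M$ as the value of a left-invariant field, and invoke Theorem~\ref{thm:complete}(i) to force tangency at $\partial M$; then deduce the second claim from Lemma~\ref{lem:extensions}. Your write-up is just more explicit about why full left rank means $\d(L_g)_e$ is bijective, and you add the alternative inverse-function-theorem observation, but the underlying argument is the same.
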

\begin{proof}
For a proof by contradiction, assume that there is a $g\in\partial M$ with full rank, and pick any inward-pointing vector $v\in T_g M$. Since $g$ has full rank, the vector $v$ is extendable to a unique left-invariant vector field $X$ on $M$, but now $X|_{\partial M}$ must be tangent to $\partial M$ by Theorem \ref{thm:complete}, contradicting our assumption that $v$ is inward-pointing. The second part of the corollary follows from Lemma \ref{lem:extensions}.
\end{proof}

An easy example of the last corollary in play is the Lie monoid $[0,\infty)$ for multiplication. This result may easily be generalized; in any Lie category $C\rra X$, a similar inclusion as \eqref{eq:eunderg} holds:
\begin{align}
\label{eq:unitunderg}
J_{1_{s(g)}}^X\subset J_g^X,
\end{align}
for any $g\in C$ and any left-invariant vector field $X\in\vf^L (C)$. The same argument as before shows that if $u( X)\subset \Int C$, then $X|_{\partial C}$ is tangent to $\partial C$, so we similarly obtain: 

\begin{corollary}
If $C\rra X$ is a Lie category with $u( X)\subset \Int C$, then morphisms in $\partial C$ do not have full rank. Hence, if $u( X)\subset\Int C$ and $\partial C\neq \emptyset$, then $C$ is not extendable to a Lie groupoid.
\end{corollary}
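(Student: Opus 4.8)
The plan is to reproduce the proof of the preceding corollary (the Lie monoid statement) almost verbatim, with the single unit $e$ replaced by the unit $1_{s(g)}$ over the source of an arrow $g$, and to argue by contradiction using the completeness machinery of Theorem~\ref{thm:complete} together with the generalized inclusion \eqref{eq:unitunderg}.

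First I would assume for contradiction that some $g\in\partial C$ has full rank. The regular boundary hypothesis (equation \eqref{eq:fibrebdr}) makes the target fibre $C^{t(g)}$ a neat submanifold of $C$ whose boundary is $C^{t(g)}\cap\partial C$, and since $g\in\partial C$ this boundary is nonempty at $g$; neatness means that a vector in $\ker\d t_g=T_g(C^{t(g)})$ is inward-pointing relative to $C^{t(g)}$ if and only if it is inward-pointing relative to $C$. Choose such an inward-pointing $v\in\ker\d t_g$. Because $g$ has full left rank, the map $\d(L_g)_{1_{s(g)}}\colon\ker\d t_{1_{s(g)}}\to\ker\d t_g$ is a linear isomorphism (both spaces have dimension $\delta$), so $v$ arises as the value at $g$ of the left-invariant extension $X=\alpha^L\in\vf^L(C)$ of the section $\alpha\in\Gamma(A^L(C))$ extending $(\d(L_g)_{1_{s(g)}})^{-1}(v)$.

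Next I would invoke (or quickly re-derive) the inclusion \eqref{eq:unitunderg}: the path $L_g\circ\gamma^X_{1_{s(g)}}$ is an integral path of $X$ issuing from $g$—it stays inside $C^{s(g)}$ since $X$ is tangent to $t$-fibres, and $L_g$ carries it into $C^{t(g)}$ with the correct velocity by left-invariance—so maximality forces $J^X_{1_{s(g)}}\subset J^X_g$. Since $u(X)\subset\Int C$, the point $1_{s(g)}$ lies in the interior, hence $J^X_{1_{s(g)}}$, and therefore $J^X_g$, contains an open interval about $0$. But $X_g=v$ is inward-pointing at $g\in\partial C$, which by Remark~\ref{rem:inward} prevents $J^X_g$ from containing any open neighbourhood of $0$. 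This contradiction proves the first assertion; the second then follows immediately, for if $C$ extended to a Lie groupoid, Lemma~\ref{lem:extensions} would force every arrow, in particular any arrow in the nonempty set $\partial C$, to have full rank.

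I expect the only genuinely delicate point to be the compatibility exploited in the second paragraph: the vector $v$ must simultaneously lie in $\ker\d t_g$ (so that it is realizable as the value of a left-invariant vector field, which is tangent to $t$-fibres by construction) and be inward-pointing for the ambient manifold $C$. It is precisely the neatness of $C^{t(g)}$—guaranteed by the regular boundary assumption—that reconciles these two requirements; once this is in place, the rest is a direct transcription of the monoid argument with $1_{s(g)}$ in place of $e$.
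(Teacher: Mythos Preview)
Your argument is correct and follows essentially the same route as the paper: both assume a full-rank $g\in\partial C$, pick an inward-pointing $v\in\ker\d t_g$ using neatness of the target fibre, pull it back through the isomorphism $\d(L_g)_{1_{s(g)}}$ to build a left-invariant field $X$, and then use the inclusion \eqref{eq:unitunderg} together with $u(X)\subset\Int C$ to force $J^X_g$ to contain an open interval about $0$, contradicting Remark~\ref{rem:inward}. The only cosmetic difference is that the paper packages the penultimate step as the general fact ``$X|_{\partial C}$ is tangent to $\partial C$'' (stated just before the corollary), whereas you derive the contradiction directly at the single point $g$.
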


\begin{proof}
As before, suppose there is a morphism $g\in\partial C$ with full rank. Regularity of the boundary implies $\partial ( C^{t(g)})=\partial C\cap C^{t(g)}$, so we may pick an inward-pointing vector $v\in \ker \d t_g$. Since $g$ has full rank, we can form the vector $\d(L_g)_{1_{s(g)}}^{-1}(v)\in \ker\d t_{1_{s(g)}}$, extend it to a section of $u^*\ker\d t$ using partitions of unity, and finally extend it to a left-invariant vector field $X$ on $C$. Since $X_g=v$, we arrive to a contradiction to the fact that $X|_{\partial C}$ is tangent to $\partial C$. 
\end{proof}



\begin{remark}
\label{rem:gpd_empty_bdry}
In the case when $g$ is an invertible morphism of a Lie category, the inclusion \eqref{eq:unitunderg} is actually an equality, which follows from the fact that $L_{g^{-1}}\circ\gamma^X_g$ is an integral path of $X$ starting at $1_{s(g)}$. 
\end{remark}

We now present the promised characterization of completeness of invariant vector fields on a Lie category. The proof is a small adaptation of the one from the theory of Lie groupoids.

\begin{proposition}
\label{prop:completeness_characterization}
Let $C\rra X$ be a Lie category with a normal boundary, and let $\alpha\in \Gamma(A^L (C))$ be a section of its left Lie algebroid. Suppose either $u( X)\subset \Int C$, or that $u( X)\subset\partial C$ and $\alpha^L|_{u( X)}$ is tangent to $\partial C$. Then $\alpha^L\in\vf^L (C)$ is complete if, and only if, $\rho^L(\alpha)\in\vf( X)$ is complete.
\end{proposition}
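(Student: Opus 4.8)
The plan is to mimic the Lie groupoid proof, using the normal-boundary hypotheses to control the boundary. The backbone is the observation that $\alpha^L$ is $s$-related to $\rho^L(\alpha)$: since $s\circ L_g=s$ on $C^{s(g)}$ and $\rho^L=\d s|_{A^L(C)}$, differentiating $\alpha^L(g)=\d(L_g)_{1_{s(g)}}(\alpha_{s(g)})$ gives $\d s_g(\alpha^L(g))=\rho^L(\alpha)_{s(g)}$ for every $g\in C$. The easy direction then follows: if $\alpha^L$ is complete, then for each $x\in X$ the curve $\lambda\mapsto s(\phi^{\alpha^L}_\lambda(1_x))$ is an integral curve of $\rho^L(\alpha)$ through $x=s(1_x)$, defined for all $\lambda\in\R$, so $\rho^L(\alpha)$ is complete.

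For the converse I would first treat the case $u(X)\subset\Int C$. By \eqref{eq:unitunderg} we have $J^{\alpha^L}_{1_{s(g)}}\subset J^{\alpha^L}_g$, so it suffices to show $J^{\alpha^L}_{1_x}=\R$ for every $x\in X$. As noted after Theorem \ref{thm:complete}, $u(X)\subset\Int C$ forces $\alpha^L|_{\partial C}$ to be tangent to $\partial C$, hence the flow of $\alpha^L$ preserves $\Int C$ and the maximal integral curve $\gamma_x$ through $1_x$ stays in the boundaryless manifold $\Int C$, where ODE uniqueness is available. Write $J^{\alpha^L}_{1_x}=(a,b)$; by $s$-relatedness $s(\gamma_x(\lambda))=c_x(\lambda):=\phi^{\rho^L(\alpha)}_\lambda(x)$, which exists for all $\lambda$ since $\rho^L(\alpha)$ is complete. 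Suppose $b<\infty$. Openness of the flow domain of $\alpha^L$ gives $\delta>0$ and a neighborhood $U$ of $1_{c_x(b)}$ with $(-\delta,\delta)\times U$ contained in the domain; since $c_x(\lambda)\to c_x(b)$ as $\lambda\to b^-$, one can pick $\tau\in(b-\delta,b)$ with $(-\delta,\delta)\subset J^{\alpha^L}_{1_{c_x(\tau)}}$. Set $g=\gamma_x(\tau)$, $y=s(g)=c_x(\tau)$, and let $\gamma_y$ be the integral curve of $\alpha^L$ through $1_y$. Since $\gamma_y$ is tangent to $t$-fibres, it lies in $t^{-1}(y)$, so $g\cdot\gamma_y(\mu)$ is defined; by left-invariance $\tfrac{d}{d\mu}\bigl(g\cdot\gamma_y(\mu)\bigr)=\d(L_g)_{\gamma_y(\mu)}\bigl(\alpha^L(\gamma_y(\mu))\bigr)=\alpha^L(g\cdot\gamma_y(\mu))$, and at $\mu=0$ this curve equals $g\cdot 1_y=g=\gamma_x(\tau)$. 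So $\mu\mapsto g\cdot\gamma_y(\mu)$ and $\lambda\mapsto\gamma_x(\tau+\lambda)$ are integral curves of $\alpha^L$ agreeing at $0$, hence equal on the overlap; concatenating extends $\gamma_x$ to $(a,\tau+\delta)$ with $\tau+\delta>b$, contradicting maximality. Thus $b=\infty$, and symmetrically $a=-\infty$, so $J^{\alpha^L}_{1_x}=\R$ and $\alpha^L$ is complete by \eqref{eq:unitunderg}.

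Finally, the case $u(X)\subset\partial C$ I would reduce to the previous one applied to $\partial C$. The tangency hypothesis says $\alpha$ is a section of $A^L(\partial C)$; $\partial C$ is a boundaryless wide Lie subcategory; $\alpha^L$ on $C$ is tangent to $\partial C$, so its integral curves through units coincide with those of $(\alpha)^L$ computed in $\partial C$; and $\rho^L$ restricts to the anchor of $A^L(\partial C)$. Then \eqref{eq:unitunderg} (used once in $C$, once in $\partial C$) together with the case already proved yields: $\alpha^L$ complete on $C$ $\iff$ $J^{\alpha^L}_{1_x}=\R$ for all $x$ $\iff$ $(\alpha)^L$ complete on $\partial C$ $\iff$ $\rho^L(\alpha)$ complete.

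The hard part is the "restart" step in the converse: checking that the concatenated curve is genuinely an integral curve of $\alpha^L$ beyond time $b$. This rests on the left-invariance identity together with ODE uniqueness (which is why one needs the integral curves confined to the boundaryless $\Int C$), and on extracting a uniform existence time $\delta$ near the limit point $c_x(b)$ — this last point is exactly where completeness of $\rho^L(\alpha)$ enters, since it guarantees $c_x(b)$ exists so that the open flow domain of $\alpha^L$ can be used there.
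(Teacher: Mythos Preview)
Your proof is correct but takes a different route from the paper's. For the converse, the paper works with an arbitrary integral curve $\gamma:(a,b)\to C$ and builds the extension by taking a short integral curve $\delta$ through the unit $1_{(s\circ\gamma)(b)}$, shrinking $\varepsilon$ so that $\delta$ lands in the core $\G(C)$ (open in $\Int C$ or in $\partial C$ by Theorem~\ref{thm:open_inv}), and then using the \emph{inverse} $\delta(b-\tfrac\varepsilon2)^{-1}$ in the explicit splice $\bar\gamma(t)=\gamma(b-\tfrac\varepsilon2)\,\delta(b-\tfrac\varepsilon2)^{-1}\,\delta(t)$. This one formula handles both normal-boundary cases at once. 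You instead first reduce via \eqref{eq:unitunderg} to integral curves starting at units; then the restart needs only the left translation $L_{\gamma_x(\tau)}$ applied to a fresh curve through $1_{c_x(\tau)}$, with no inverses and hence no appeal to Theorem~\ref{thm:open_inv}. The price is a separate treatment of the case $u(X)\subset\partial C$ (by passing to the boundaryless subcategory $\partial C$); the gain is a more elementary argument that does not depend on the structure of the invertibles.
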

\begin{proof}
First note we have already argued that under given assumptions, the restriction $\alpha^L|_{\partial C}$ is tangent to $\partial C$, implying that the maximal domain of any integral path of $\alpha^L$ is an open interval.

For the forward implication, note that $\alpha^L$ and $\rho^L(\alpha)$ are $s$-related, so that if $\phi^{\alpha^L}_t$ is defined for some $t\in\R$, then so is $\phi^{\rho^L(\alpha)}_t$, by the fact that $s$ is a surjective submersion.
\[\begin{tikzcd}
	 C & C \\
	 X & X
	\arrow["s"', from=1-1, to=2-1]
	\arrow["{\phi^{\alpha^L}_t}", from=1-1, to=1-2]
	\arrow["{\phi^{\rho^L(\alpha)}_t}"', from=2-1, to=2-2]
	\arrow["s", from=1-2, to=2-2]
\end{tikzcd}\]
Indeed, take an open cover $(U_i)_i$ of $ X$ by domains of local sections $\sigma_i\colon U_i\rightarrow C$ of $s$, and define $\vartheta_i\colon \R\times U_i\rightarrow X$ as $\vartheta_i(t,x)=(s\circ\phi_t^{\alpha^L}\circ\sigma)(x)$. It is straightforward to check that for any $x\in U_i$, the map $t\mapsto \vartheta_i(t,x)$ is the integral path of $\rho^L(\alpha)$ starting at $x$, so the maps $\vartheta_i$ collate to the global flow $\phi^{\rho^L(\alpha)}\colon\R\times X\rightarrow X$ of $\rho^L(\alpha)$, by uniqueness of integral paths.

For the converse implication, suppose $\rho^L(\alpha)$ is complete, and let $\gamma\colon (a,b)\rightarrow C$ be an integral path of $\alpha^L$; then $s\circ\gamma$ is an integral path of $\rho^L(\alpha)$, thus $s\circ\gamma$ admits a unique extension to an integral path of $\rho^L(\alpha)$ defined on whole $\R$, which we again denote by $s\circ\gamma$, so the expression $(s\circ\gamma)(b)$ is defined. There now exists a path $\delta\colon (b-\varepsilon,b+\varepsilon)\rightarrow C$, which is an integral path of $\alpha^L$, such that $\delta(b)=1_{(s\circ\gamma)(b)}$; importantly, we may assume that $\varepsilon$ is small enough that $\delta$ maps into $\G (C)$, since the latter is either open in $\Int C$ or in $\partial C$ by Theorem \ref{thm:open_inv}. We define our wanted extension $\bar\gamma\colon (a,b+\varepsilon)\rightarrow 
 C$ of $\gamma$ as
\[
\bar\gamma(t)=
\begin{cases}
\gamma(t)&\text{if }t\in(a,b),\\
\gamma(b-\frac{\varepsilon}2)\delta(b-\frac\varepsilon 2)^{-1}\delta(t)&\text{if }t\in(b-\varepsilon,b+\varepsilon).
\end{cases}
\]
Since both $s\circ\gamma$ and $s\circ\delta$ are integral paths of $\rho^L(\alpha)$ valued $(s\circ\gamma)(b)$ at $b$, they coincide on their common domain, so that $s(\gamma(b-\frac\varepsilon 2))=s(\delta(b-\frac\varepsilon 2))$, and moreover $\delta$ must lie in $C^{(s\circ\gamma)(b)}$ since $\alpha^L$ is tangent to $t$-fibres, which altogether implies that the multiplication in the definition of $\bar \gamma$ is well-defined. Finally, $\bar \gamma$ is in fact an integral path of $\alpha^L$ since for any $t\in (b-\varepsilon,b+\varepsilon)$ there holds:
\[
\bar\gamma'(t)=\d{\big(L_{\gamma(b-\frac\varepsilon 2)\delta(b-\frac\varepsilon 2)^{-1}}\big)}_{\delta(t)}(\delta'(t))=\alpha^L_{\gamma(b-\frac\varepsilon 2)\delta(b-\frac\varepsilon 2)^{-1}\delta(t)}=\alpha^L_{\bar\gamma(t)}.
\]
By uniqueness of integral paths, the two partial definitions of $\bar\gamma$ coincide on their common domain, from which we conclude that $\bar\gamma$ indeed extends $\gamma$. A similar trick can be used for the other endpoint, showing that the integral path $\gamma$ can be extended to the whole $\R$.
\end{proof}

Let us now turn back to Lie monoids. As a corollary of Theorem \ref{thm:complete}, we can generalize the exponential map from the theory of Lie groups to Lie monoids. The following corollary says that when $e\in\Int M$, the map $T_eM\rightarrow M$, defined as $v\mapsto \phi^{v^L}_1(e)$ is just the usual exponential map on $\G(M)$.

\begin{corollary}
\label{cor:exp_grp}
Let $M$ be a Lie monoid with $e\in \Int M$. The image of the integral path $\gamma_e^X$ of any left-invariant vector field $X$ on $M$ is contained in the core $\G(M)$.
\end{corollary}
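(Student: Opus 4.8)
The plan is to obtain the statement as a near-immediate consequence of Theorem \ref{thm:complete}, the key observation being that the integral path through $e$ and its time-reversal are mutually inverse.

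First I would note that $M$ automatically has a normal boundary: since $e\in\Int M$, condition (i) of Definition \ref{def:normal_bdry} holds. Hence part (i) of Theorem \ref{thm:complete} applies to the left-invariant vector field $X$, giving that $X$ is complete and that its flow is $\phi_t^X=R_{\gamma_e^X(t)}$ for all $t\in\R$, where $\gamma_e^X$ is the integral path of $X$ through $e$ (so $\gamma_e^X(t)=\phi_t^X(e)$). Completeness makes $(\phi_t^X)_{t\in\R}$ a one-parameter group of diffeomorphisms of $M$; in particular $\phi_t^X\circ\phi_{-t}^X=\phi_0^X=\id_M=\phi_{-t}^X\circ\phi_t^X$ for every $t$.

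Next I would fix $t\in\R$ and evaluate these two identities at $e$. Since $\phi_s^X=R_{\gamma_e^X(s)}$ with $R_g(h)=hg$, and in particular $R_g(e)=g$, the identity $\phi_t^X(\phi_{-t}^X(e))=e$ reads $\gamma_e^X(-t)\,\gamma_e^X(t)=e$, and $\phi_{-t}^X(\phi_t^X(e))=e$ reads $\gamma_e^X(t)\,\gamma_e^X(-t)=e$. Therefore $\gamma_e^X(-t)$ is a two-sided inverse of $\gamma_e^X(t)$, so $\gamma_e^X(t)\in\G(M)$ by the definition of an invertible morphism; as $t$ was arbitrary, $\gamma_e^X(\R)\subset\G(M)$. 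As a byproduct this identifies $\gamma_e^X$ as a one-parameter subgroup of the Lie group $\G(M)$, so that $v\mapsto\gamma_e^{v^L}(1)$ is indeed the usual exponential map of $\G(M)$.

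There is no substantial obstacle here: all the real work is in Theorem \ref{thm:complete}, which supplies both the global existence of the flow and the explicit formula $\phi_t^X=R_{\gamma_e^X(t)}$ that converts the group law of the flow into a statement about multiplication in $M$. The only point needing slight care is that invertibility in a monoid requires a genuine two-sided inverse, so one must use both compositions $\phi_t^X\circ\phi_{-t}^X$ and $\phi_{-t}^X\circ\phi_t^X$ — but both are available for free once $X$ is known to be complete.
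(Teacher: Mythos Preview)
Your proof is correct and follows essentially the same approach as the paper: both apply Theorem \ref{thm:complete} to obtain completeness and the flow formula $\phi_t^X=R_{\gamma_e^X(t)}$, then use the group law $\phi_t^X\circ\phi_{-t}^X=\id_M$ evaluated at $e$ to exhibit $\gamma_e^X(-t)$ as a two-sided inverse of $\gamma_e^X(t)$. Your version is slightly more explicit in checking both compositions, but the argument is the same.
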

\begin{proof}
By Theorem \ref{thm:complete}, there holds $\phi_{-t}^X(e)\phi_t^X(e)=\phi_t^X(\phi_{-t}^X(e))=e$ for any $t\in \R$, so $\phi_{-t}^X(e)$ is the inverse of $\phi_t^X(e)$.
\end{proof}

On the other hand, Theorem \ref{thm:complete} also enables us to define the exponential map for Lie monoids with boundaries:
\begin{definition}
Let $M$ be a Lie monoid such that $\partial M$ is its submonoid, $e\in\partial M$. Let $T_e^{\plus}M\subset T_eM$ denote the subset consisting of inward-pointing vectors in $T_e M$ and the vectors in $T_e(\partial M)$. The \emph{exponential map} on $M$ is then defined as
\begin{align*}
\exp\colon T_e^{\plus} M\rightarrow M,\quad \exp(v)=\phi^{v^L}_1(e).
\end{align*}
\end{definition}
\begin{remark}
In any boundary chart centered at $e$, $T_e^{\plus}M$ is identified with the closed upper half-space $\mathbb H^{\dim M}$. We observe that $T_e^{\plus}M$ possesses an algebraic structure of a semimodule over a semiring $[0,\infty)$.\footnote{A semiring $R$ satisfies all the axioms of a ring, except the existence of additive inverses. Because of this, we must additionally impose $0\cdot a=0=a\cdot 0$ for all $a\in R$. We amend modules to obtain semimodules in precisely the same way. } 

As before, Corollary \ref{cor:exp_grp} ensures that the restriction of $\exp$ to the vectors tangent to $\partial M$, is precisely the usual exponential map of the Lie group $\G(M)$.
\end{remark}

Similar results to those from the theory of Lie groups can be obtained for the exponential map as defined above, by using an identical approach to the respective proofs, but working with one-sided derivatives:
\begin{enumerate}[label={(\roman*)}]
\item Let $v\in T^{\plus}_eM$ and $t\geq 0$. By rescaling lemma, $\phi^{v^L}_{tr}(e)=\phi_r^{tv^L}(e)$ for all $r\geq 0$, and setting $r=1$ we obtain
\[
\exp(tv)=\phi_t^{v^L}(e).
\]
\item The map $\exp$ is a smooth, and there holds $\d{(\exp)}_e=\id_{T_eM}$. Since $\exp$ maps $\exp(T_e(\partial M))\subset \partial M$ by Theorem \ref{thm:complete} (i), one can apply the inverse map theorem for maps between manifolds with boundaries to conclude that $\exp$ is a local diffeomorphism at the point $0\in T_e^{\plus} M$. 
\item Considering the Lie monoid $[0,\infty)$ for addition, any smooth homomorphism $\alpha\colon [0,\infty)\rightarrow M$ of Lie monoids is called a \emph{one-parametric submonoid} of $M$. For any $v\in T_e^{\plus}M$, the map $t\mapsto \exp(tv)$ is a one-parametric submonoid of $M$; conversely, any one-parametric submonoid $\alpha$ can be written as $\alpha(t)=\exp(t\dot\alpha(0))$, where $\dot\alpha(0)$ denotes the one-sided derivative of $\alpha$ at zero. 
\end{enumerate} 

A consequence of point (iii) above is \emph{naturality} of $\exp$, i.e., if $\phi\colon M\rightarrow N$ is a morphism between Lie monoids with normal boundaries, such that the units of $M$ and $N$ are contained in the respective boundaries, then for any $v\in T_e^{\plus}M$, the map $\alpha(t)= \phi(\exp_M(tv))$ defines a one-parametric submonoid of $N$ with $\dot\alpha(0)=\d\phi(v)$, so we obtain that the following diagram commutes.
\[\begin{tikzcd}
	M & N \\
	{T_e^{\plus}M} & {T_e^{\plus}N}
	\arrow["\phi", from=1-1, to=1-2]
	\arrow["\d\phi", from=2-1, to=2-2]
	\arrow["{\exp_M}", from=2-1, to=1-1]
	\arrow["{\exp_N}"', from=2-2, to=1-2]
\end{tikzcd}\]

\section{Application to physics: Statistical Thermodynamics}
\label{sec:std}
The interpretation that morphisms correspond to physical processes, and objects to physical states, can be applied to yield a rigorous approach to statistical physics, which we will now demonstrate. We will first focus purely on categorical aspects, and then consider differentiability.

Suppose we are given an isolated physical system consisting of an unknown number of particles, each of which can be in one of the $n+1$ a priori given \emph{microstates}, which we will index by
\[
i\in\set{0,\dots,n}.
\]
Since the number of particles in our system is unknown and often large, we need to work with tuples of probabilities $(p_0,\dots,p_n)$, where each $p_i$ is the probability that a particle, chosen at random, is in the $i$-th microstate. Any tuple of probabilities $(p_i)_{i=0}^n$ is subjected to the constraint
\[
\textstyle\sum_i p_i=1,
\]
and we will refer to any such tuple $p=(p_i)_i$ as a \emph{configuration} of the system, which is just a probability distribution on the finite set of microstates above. The set of all configurations of our system is thus the standard $n$-simplex,
\[
\Delta^{n}=\set[\big]{(p_0,\dots,p_n)\in [0,1]^{n+1}\given \textstyle\sum_i p_i=1}.
\]
We associate to any configuration $(p_i)_i$ of our system its expected surprise,
\[
S(p_i)_i=-\sum_i p_i\log p_i,
\]
which is called the \emph{entropy} of the configuration $(p_i)_i$. Letting $f(x)=x\log x$, we find $\lim_{x\rightarrow 0^{\plus}}f(x)=0$, so $f$ may be extended to $[0,\infty)$ by defining $f(0)=0$, implying that $S\colon \Delta^{n}\rightarrow \R$ is defined on whole $\Delta^n$ and continuous. 

The construction of the space of morphisms between different configurations of our system is now an application of the \emph{second law of thermodynamics}: 
\begin{quote}
\vspace{0.5em}
\emph{A process in an isolated physical system is feasible if, and only if, the change of entropy pertaining to the process is non-negative.}
\vspace{0.5em}
\end{quote}
In accord with the second law, we define
\[
\D=\set*{(p_i)_i\rightarrow (q_i)_i\given S(q_i)_i- S(p_i)_i\geq 0}.
\]
In other words, $D$ consists of pairs $(q,p)\in \Delta^{n}\times \Delta^{n}$ of configurations, such that the entropy of the target configuration $q$ is no less than that of the source $p$. That $D\rra \Delta^{n}$ is a category follows from the fact that the map
\[
\delta S\colon \Delta^{n}\times\Delta^{n}\rightarrow \R,\quad \delta S(q,p)=S(q)-S(p),
\]
is a functor\footnote{This is aligned with the moral that entropy should be perceived as a categorical concept, which was first adopted by Baez et al.; $\delta S$ is in fact the only map (up to a multiplicative scalar) which is functorial, convex-linear and continuous, see \cite{baez2011} for details.} from the pair groupoid of $\Delta^{n}$ to the group $\R$ for addition. Notice that the invertible morphisms in $D$ are precisely $\delta S^{-1}(0)$, which are just the processes with zero entropy change.

In the differentiable setting, we need to make certain adjustments to our category $D\rra \Delta^{n}$, since it is not a Lie category. First, we note that $\Delta^n$ is a manifold with corners, and $S$ is not smooth at its boundary $\partial\Delta^{n}$ since $\lim_{x\rightarrow 0^{\plus}}(x\log x)'=-\infty$, which is why we first restrict our attention to the interior $\Int\Delta^{n}.$ 
Secondly, we notice that the category $D\rra\Delta^{n}$ has a terminal object, as shown by the following.

\begin{lemma}
The only critical point of entropy $S|_{\Int\Delta^{n}}$ is given by the so-called microcanonical configuration, i.e., $p_i^{\mu}=\frac 1{n+1}$ for all $i$. This is a maximum of $S$.
\end{lemma}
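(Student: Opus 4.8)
The plan is to find the critical points of $S$ on the open simplex $\Int\Delta^n$ by the method of Lagrange multipliers, and then to verify that the unique critical point found this way is a maximum by a concavity (second-derivative) argument. First I would parametrize $\Int\Delta^n$ by the constraint $g(p_0,\dots,p_n)=\sum_i p_i-1=0$, noting that on the interior every $p_i>0$, so the function $f(x)=x\log x$ is genuinely smooth there and $S$ is smooth on $\Int\Delta^n$. A point $p$ is critical for $S|_{\Int\Delta^n}$ iff there is a scalar $\lambda\in\R$ with $\nabla S(p)=\lambda\,\nabla g(p)$, i.e.
\[
-(\log p_i+1)=\lambda\quad\text{for all }i.
\]
This forces $\log p_i=-1-\lambda$ to be independent of $i$, hence $p_0=p_1=\dots=p_n$, and combined with $\sum_i p_i=1$ this yields $p_i=\tfrac1{n+1}$ for every $i$, i.e. $p=p^\mu$. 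So $p^\mu$ is the only critical point.

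Next I would argue it is a maximum. The cleanest route is concavity: on $\Int\Delta^n$ the Hessian of $S$ as a function of the free coordinates (after eliminating, say, $p_0=1-\sum_{i\ge 1}p_i$) is negative definite, since $f''(x)=1/x>0$ makes $-\sum_i p_i\log p_i$ strictly concave on the (convex) simplex. Therefore $S$ is strictly concave on $\Int\Delta^n$, so any critical point is automatically the unique global maximum; since we have just shown $p^\mu$ is the only critical point, it is the maximum. Alternatively, one can invoke Gibbs' inequality / nonnegativity of the Kullback–Leibler divergence directly: for any $p\in\Int\Delta^n$,
\[
S(p^\mu)-S(p)=\log(n+1)-S(p)=\sum_i p_i\log\frac{p_i}{1/(n+1)}=D_{\mathrm{KL}}(p\,\|\,p^\mu)\ge 0,
\]
with equality iff $p=p^\mu$, which simultaneously identifies $p^\mu$ as the strict global maximum on all of $\Int\Delta^n$ without even invoking criticality.

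The step that requires the most care is not the computation itself but being precise about the domain: $S$ extends continuously to the closed simplex but is not differentiable on $\partial\Delta^n$ (the derivative of $x\log x$ blows up as $x\to0^+$), so the Lagrange-multiplier computation is legitimate only on the open simplex, which is exactly the setting of the statement. I expect no real obstacle beyond bookkeeping; the main point to state clearly is that strict concavity (or Gibbs' inequality) upgrades "unique critical point" to "unique global maximum", so that there is genuinely nothing else in $\Int\Delta^n$ competing with $p^\mu$.
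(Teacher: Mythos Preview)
Your proposal is correct and essentially the same as the paper's: the paper finds the critical point by eliminating $p_0$ via the constraint and computing $\d S=-\sum_{i\ge 1}(\log p_i-\log p_0)\,\d p_i$, which is just the Lagrange-multiplier condition you wrote in eliminated form. For the maximum, the paper actually leaves the second-derivative check as an exercise and appeals to a boundary comparison, so your concavity argument (or the KL-divergence inequality) is in fact more complete than what appears there.
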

\begin{proof}
Constraint $\sum_i p_i=1$ implies $\sum_i \d p_i=0$ and $\d p_0=-\sum_{i=1}^{n}\d p_i$, so we have
\[
\d S_{(p_i)_i}=-\sum_i(1+\log p_i)\d p_i=-\sum_i\log p_i \d p_i=-\sum_{i=1}^{n}(\log p_i-\log p_0)\d p_i,
\]
which vanishes if, and only if, $p_i=p_0$ for all $i$, i.e., $p_i=\frac 1{n+1}$. That this is a local maximum is left as an exercise, and it is not hard to see that the value of $S$ at $(p_i^\mu)_i$ is greater than the value of $S$ on $S|_{\partial\Delta^{n}}$.
\end{proof}

The configuration $p^\mu$ has little physical importance---for example, we will see below that it can be interpreted as the configuration that is attained at thermodynamical equilibrium at infinite temperature. We will thus remove it from the interior of our $n$-simplex of objects, and define our space of objects as
\[
 X=\Int\Delta^n- \set{p^\mu},
\]
which is a smooth manifold without boundary. Moreover, we define the space of morphisms over $ X$ as the set 
\[
 C=(\delta S|_{ X\times X})^{-1}([0,\infty))=\set*{(q,p)\in X\times X\given S(q)- S(p)\geq 0}.
\] 
As before, $C\rra X$ is a subcategory of the pair groupoid on $ X$. By virtue of Example \ref{ex:preimage_subcat}, to prove that $C\rra X$ is a Lie category, we first need to check $\delta S|_{ X\times X}$ has a regular value 0. By above claim, the only critical point of the map $\delta S$ is the identity morphism $(p^\mu,p^\mu)\in D$, which is not in $C$.

Secondly, we have to show that $C\rra X$ has a regular boundary, i.e., that $s|_{\partial C},t|_{\partial C}$ are submersions. To this end, first note that for any $(q,p)\in\partial C$, 
\begin{align*}
T_{(q,p)}\partial C&=\ker\d(\delta S)_{(q,p)}\\
&=\set*{(v,w)\in T_q X\oplus T_p X\given \textstyle\sum_{i=1}^{n}(\log q_i-\log q_0)v_i=\sum_{i=1}^{n}(\log p_i-\log p_0)w_i}.
\end{align*}
Let $v\in T_q X$. Since $p_i\neq p_0$ for some $i$, we define
\[
w_i=\frac{\textstyle\sum_{i=1}^{n}(\log q_i-\log q_0)v_i}{\log p_i-\log p_0},
\]
and now we let $w=(w_0,0,\dots,0,w_i,0,\dots,0)$ where $w_0=-w_i$ is set to ensure $w\in T_p X$, thus we obtain the wanted pair $(v,w)\in T_{(q,p)}\partial C$ with $\d t(v,w)=v$; we similarly show that $s|_{\partial C}$ is a submersion. Furthermore, $C\rra X$ satisfies all the conclusions from Lemma \ref{lem:extensions}, since the pair groupoid $ X\times X$ is its weakly étale extension; in particular, the left and right Lie algebroids of $C$ are isomorphic to $T X\approx X\times \R^n$.

The question interesting for physics is: what is the configuration $p^{\epsilon}\in X$ at which the system attains a thermodynamical equilibrium? The answer to this question is obtained using the so-called \emph{Gibbs algorithm}, which we provide here for completeness. To this end, we need additional a priori given data, namely each microstate $i$ of our system has an a priori assigned quantity $E_i$, called the \emph{energy} of the $i$-th microstate. To derive the wanted configuration $p^\epsilon$ we utilize the so-called \emph{principle of maximum entropy}:

\begin{quote}
\vspace{0.5em}
\emph{A state is at thermodynamical equilibrium if, and only if, it maximizes the entropy with respect to the systemic constraints.}
\vspace{0.5em}
\end{quote}
That is, we must find the constrained extrema of $S$ with respect to constraints $\sum_i p_i=1$ and $\sum_i p_i E_i=E(p_i)_i$. Here, $E\colon X\rightarrow \R$ is a function on the object space, determined by the first law of thermodynamics up to an additive constant, as we will see in equation \eqref{eq:first_law}. This will impose thermodynamical considerations onto the statistical description of our system.

To apply the method of Lagrange multipliers, consider the function $\hat S\colon X\rightarrow \R$,
\[
\hat S(p_i)_i=S(p_i)_i-\lambda_1\left(E(p)-\textstyle\sum_i p_i E_i\right)-\lambda_2(1-\textstyle\sum_i p_i).
\]
Requiring $\frac{\partial \hat S}{\partial p_i}=0$ yields 
$
p_i=\frac 1 Ze^{\lambda_1 E_i},
$
where we have defined $Z=e^{-(1+\lambda_2)}$. The constraint 
$
\textstyle\sum_i p_i=1$ then reads 
\[
Z=e^{-(1+\lambda_2)}=\textstyle\sum_i e^{\lambda_1 E_i}.
\]
On the other hand, the first law of thermodynamics for systems where no work is exerted reads
\begin{align}
	\label{eq:first_law}
	\d E=kT\d S
\end{align}
where $T$ is the temperature (an external, macroscopic constant) at which the system is held, and $k$ denotes the Boltzmann constant. Writing out the total differentials $\d E$ and $\d S$ gives $\lambda_1=-\frac 1 {kT}$, and so we finally obtain the \emph{equilibrium configuration}:
\[
p_i^\epsilon=\frac 1 {Z} e^{-\frac{E_i}{kT}},\quad Z=\sum_i e^{-\frac{E_i}{kT}}.
\]
We observe that in this categorical framework, the configurations from which it is possible to attain the equilibrium configuration $p^\epsilon$, are now expressible as 
$
s(t^{-1}({p^\epsilon}))=\set*{p\in X\given S(p^\epsilon)\geq S(p)}.
$
We also observe that $p^\epsilon\ra p^\mu$ as $T\ra \infty$, which provides a physical justification for removing the microcanonical configuration from the space of objects.

\begin{subappendices}

\section{Further research}
Although we hope to have succeeded in portraying the richness of Lie categories and their potential in physics, we have not exhausted all research options regarding them. We state some of them here, and note that they provide possibilities for future research.

\begin{enumerate}[label={(\roman*)}]
\item As stated in Remark \ref{rem:questions}, the question remains whether all Lie monoids are parallelizable, and whether $C_x^y$ is a smooth manifold for given objects $x,y\in X$ of a Lie category $C\rra X$.
\item Does there exist a class of Lie algebroids that cannot be integrated to a Lie groupoid, but can be integrated to a Lie category? 
\item Remark \ref{rem:base_boundary} shows the need for considering Lie categories whose object manifold has a boundary, or more generally, corners. Another example of such a Lie category should be the flow of a smooth vector field on a manifold with boundary, generalizing the flow Lie groupoid of a vector field on a boundaryless manifold. 
\item Infinite-dimensional Lie categories. The exterior bundle $\Lambda(E)$ of a vector bundle $E$ is an example of a bundle of Lie monoids, and it is moreover a subcategory of the tensor bundle $\oplus_{k=0}^\infty\otimes^k E$, which is a bundle of monoids that fails to have finite-dimensional fibres. Moreover, in statistical mechanics, physicists often work with an infinite number of microstates, and in quantum mechanics with infinite\hyp{}dimensional Hilbert spaces. These examples show the need for introducing infinite\hyp{}dimensional Lie categories. Since the theory of infinite\hyp{}dimensional (Banach) manifolds with corners is already well-developed in \cite{corners}, a theory of infinite\hyp{}dimensional Lie categories with corners seems realizable.
\item Multiplicative differential forms on Lie categories. On Lie groupoids, such structures can be used to describe integrated counterparts of Poisson structures \cite{poisson}, and more recently they have been used to provide a natural generalization of connections on principle bundles in \cite{mec}. We suspect that interesting geometric structures can be described with multiplicative differential forms on Lie categories.
\item Haar systems on Lie categories. It is well known that Haar systems on Lie groupoids provide a generalization of the notion of Haar measures on Lie groups, and provide a connection of the theory of Lie groupoids to noncommutative geometry. A possible further generalization to Lie categories should likewise provide a means of equipping the space $C_c (C)$ of compactly supported functions on the space of arrows of a given category with the convolution product. In this fashion, one expects to generalize the construction of a groupoid C*-algebra, but due to the lack of existence of inverses of arrows, there is a priori no natural way of obtaining the involution, hence the construction potentially generalizes only to a \emph{category Banach algebra}. We strongly suspect that a sensible notion of a Haar system on a Lie category will rather be defined in terms of a measure $\mu$ on the space $\comp C$ of composable pairs of arrows, satisfying certain invariance and continuity conditions, instead of defining it in terms of a left-invariant $t$-fibre supported measure on $C$. Roughly speaking, the convolution on $C_c (C)$ would then be defined by 
\[(f_1*f_2)(g)=\int_{m^{-1}(g)}f_1(g_1)f_2(g_2) \d \mu(g_1,g_2),\]
where the integration is done over the set $m^{-1}(g)=\set{(g_1,g_2)\in \comp C\given g_1g_2=g}$ of composable arrows which compose to $g\in C$. We note that for groupoids, $m^{-1}(g)$ is diffeomorphic to the $t$-fibre over $t(g)$, so that Haar systems on groupoids can indeed be equivalently described with a family of $m$-fibre supported measures.
	
\item Smooth sieves on Lie categories. In the context of Lie groupoids, the notion of a sieve is vacuous since all morphisms are invertible---any sieve on an object must equal the whole $t$-fibre over that object. However, a notion of a smooth sieve seems possible for Lie categories, and their properties might be interesting, together with the properties of Grothendieck sites of such sieves.
\item Generalization of Morita equivalence from the context of Lie groupoids to Lie categories.
\end{enumerate}

\section{Transversality on manifolds with corners}
\label{sec:transversality_corners}

In this appendix, we state some basic definitions and results regarding finite-dimensional manifolds with corners that we need, mostly drawing from \cite{corners}; in what follows, $X$ is assumed to be a second-countable topological space. Let $V$ be a real $n$-dimensional Banach space\footnote{Keep in mind that picking a basis of $V$ gives a homeomorphism $V\rightarrow\R^n$.}; denote by $\Lambda=(\lambda_i)_i$ a (possibly empty) set of linearly independent covectors $\lambda_i\in V^*$ and let \[V_\Lambda=\set{v\in V\given \lambda(v)\geq 0\text{ for all }\lambda\in\Lambda}.\]
Vacuously, there holds $V_\Lambda=V$ when $\Lambda = \emptyset$. We will call $\Lambda$ a \emph{corner-defining system} on vector space $V$, and also denote \[V_\Lambda^0=\set{v\in V\given \lambda(v)= 0\text{ for all }\lambda\in\Lambda}=\cap_{\lambda\in\Lambda}\ker\lambda.\]
Given such a corner-defining system, a \emph{chart with corners} on $X$ at $p\in X$ is a map $\varphi\colon U\rightarrow V_\Lambda$ where $U\subset X$ is an open neighborhood of $p$, $\varphi(U)\subset V_\Lambda$ is an open neighborhood of $0$, and $\varphi$ is a homeomorphism onto its image with $\varphi(p)=0$. Such a chart is said to be $n$\emph{-dimensional}, and the point $p$ is said to have \emph{index} $|\Lambda|$, for which we will write $\ind_X(p)=|\Lambda|$. 

We say that $X$ is an $n$-dimensional \emph{manifold with corners} if there is an $n$-dimensional chart with corners around every point, and any two such charts $\varphi$ and $\varphi'$ are \emph{compatible}, i.e., $\varphi(U\cap U')\subset V_\Lambda$ and $\varphi'(U\cap U')\subset V_{\Lambda'}$ are open subsets, and $\varphi'\circ\varphi^{-1}\colon \varphi(U\cap U')\rightarrow\varphi'(U\cap U')$ is a diffeomorphism.\footnote{Denote $\R^n_k=\R^{n-k}\times[0,\infty)^k$. A map $ U\rightarrow V$ between open subsets of $\R^n_k$ and $\R^m_l$ is \emph{smooth} at $p\in U$, if it admits a smooth extension to an open neighborhood of $p$ in $\R^n$.} Such a collection of charts with corners is called an \emph{atlas with corners}. On a manifold with corners, the map $\ind_X\colon X\rightarrow \mathbb{N}_0$ is well-defined by boundary invariance theorem found in \cite{corners}*{Theorem 1.2.12}. 

We will use the following notation regarding the elementary terms for manifolds with corners: the $k$\emph{-boundary} of $X$ is denoted by $\partial^kX=\set{p\in X\given \ind_X(p)\geq k}$, and we say that $X$ is a \emph{manifold with boundary} if $\partial X:=\partial^1 X\neq \emptyset$ and $\partial^2X=\emptyset$. The $k$\emph{-stratum} of $X$ is its subspace $S^k(X)=\set{p\in X\given \ind_X(p)=k}$. The set of connected components of the $k$-stratum is denoted by $\mathcal S^k(X)$, and the set of $k$\emph{-faces} of $X$ is the family 
\[\F^k(X)=\set*{\bar S\given S\in \mathcal S^k(X)}\]
of topological closures in $X$ of the components of the $k$-stratum. We note that the $k$-stratum $S^k(X)$ can be given the following canonical differential structure induced by $X$: if $p\in S^k(X)$ and $\varphi\colon U\rightarrow V_\Lambda$ is a corner chart at $p$, then the restriction 
\[
\varphi|_{U\cap S^k(X)}\colon U\cap S^k(X)\rightarrow V_\Lambda^0
\] is a corner chart at $p$ on $S^k(X)$ since there holds $\varphi(U\cap S^k(X))=\varphi(U)\cap V_\Lambda^0$. With this structure, $S^k(X)$ becomes a manifold without boundary of dimension $n-k$.

A subspace $X'\subset X$ is said to be a \emph{submanifold} of $X$, if for any $p\in X'$ there is a chart with corners $\varphi\colon U\rightarrow V_\Lambda$ at $p$ on $X$, and a linear subspace $V'$ together with a corner-defining system $\Lambda'$ on $V'$, such that $\varphi(U\cap X')=\varphi(U)\cap V'_{\Lambda'}$ and this is an open subset of $V'_{\Lambda'}$. Such a chart is said to be \emph{adapted} to $X''$ by means of $(V',{\Lambda'})$, and gives us a way of defining an intrinsic atlas with corners on $X'$. If there holds $\partial X'=\partial X\cap X'$, we say that $X'\subset X$ is a \emph{neat} submanifold.

In the context of manifolds with corners, it is more useful to use non-infinitesimal notions of submersivity and transversality. 
\begin{definition}
\label{defn:top_sub}
A smooth map $f\colon X\rightarrow X'$ between manifolds with corners is a \emph{topological submersion} at $p\in X$, if there is an open neighborhood $U$ of $f(p)$ in $X'$ and a smooth map $\sigma\colon U\rightarrow X$ such that $\sigma(f(p))=p$ and $f\circ\sigma=\id_{U}$. If this holds for all $p\in X$, we just say that $f\colon X\rightarrow X'$ is a topological submersion.
\end{definition}
The notion of a topological submersion is stronger than the usual infinitesimal one which can be seen by differentiating the equality $f\circ\sigma=\id_U$. Moreover, in the context of boundaryless manifolds they are equivalent, which is an easy consequence of the usual rank theorem.

\begin{definition}
Suppose that $f\colon X\rightarrow X'$ is a smooth map between manifolds with corners, and $X''\subset X'$ is a submanifold. The map $f$ is \emph{topologically transversal} to $X''$ at $p\in X$, written symbolically as \[f\pitchfork_p X'',\] if either $f(p)\notin X''$, or there is a chart $\varphi'\colon U'\rightarrow V'_{\Lambda'}$ on $X'$ at $p$ adapted to $X''$ by means of $(V'',{\Lambda''})$ and an open neighborhood $U$ of $p$ in $X$, such that $f(U)\subset U'$ and
\[\begin{tikzcd}
	\tau\colon U & {U'} & {\varphi'(U')} & {(V''\oplus W'')_{L^*\Lambda'}} & {W''}
	\arrow["{f|_U}", from=1-1, to=1-2]
	\arrow["{\varphi'}", "\approx"', from=1-2, to=1-3]
	\arrow["{L^{-1}}", from=1-3, to=1-4]
	\arrow["{\mathrm{pr}_2}", from=1-4, to=1-5]
\end{tikzcd}\]
is a topological submersion at $p$, where $W''$ is any complementary subspace to $V''$ in $V'$, and $L\colon V''\oplus W''\rightarrow V'$ is the linear isomorphism given by $L(v,w)=v+w$. If for all $p\in X$ we have $f\pitchfork_p X$, we just write $f\pitchfork X''$ and say $f$ is topologically transversal to $X''$. 
\end{definition}
\noindent Note that $f\pitchfork_p X''$ implies the usual infinitesimal transversality condition
\begin{align}
\label{eq:inf_trans}
\d f_p(T_p X)+ T_{f(p)}X''=T_{f(p)}X'.
\end{align}
Indeed, if $v'\in T_{f(p)}X'$, there holds
\[
v'=\d f_x(v)+\d(\varphi')_p^{-1}L\big(\pr_1 L^{-1}\d(\varphi')_p(v'-\d f_x(v)),0\big)
\]
where $v\in T_p X$ with $\d \tau_p (v)=\mathrm{pr}_2 L^{-1}\d(\varphi')_p(v')$ exists since $\tau$ is a submersion at $p$. We will show in Proposition \ref{prop:transversality_char} that \eqref{eq:inf_trans} implies $f\pitchfork_p X''$ in case $X$ has no boundary.

We now state the main transversality theorem for manifolds with corners, which is a generalization of the same result for boundaryless manifolds.
\begin{proposition}
\label{prop:transversality}
Let $f\colon X\rightarrow X'$ be a smooth map between manifolds with corners and let $X''\subset X'$ be a neat submanifold. If $f\pitchfork X''$, then $f^{-1}(X'')\subset X$ is a submanifold with $\codim_X f^{-1}(X)=\codim_{X'}X''$, whose tangent bundle is
\[T(f^{-1}(X''))=(\d f)^{-1}(TX'').\]
Moreover, $f^{-1}(X'')\subset X$ is totally neat, i.e., $S^k(f^{-1}(X))=f^{-1}(X'')\cap S^k(X)$ for any $k\geq 0$. In particular, $f^{-1}(X)$ is a neat submanifold of $X$.
\end{proposition}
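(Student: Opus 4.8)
The plan is to reduce the global statement to a purely local one, then dispatch the local model with the chart-adapted picture provided by the definition of topological transversality. First I would fix a point $p\in f^{-1}(X'')$; by definition of $f\pitchfork X''$ at $p$, there is a corner chart $\varphi'\colon U'\to V'_{\Lambda'}$ on $X'$ adapted to $X''$ by means of $(V'',\Lambda'')$, together with a neighborhood $U$ of $p$ with $f(U)\subset U'$, such that the composite $\tau\colon U\to W''$ (projection onto a complement $W''$ of $V''$ after passing through $\varphi'$ and $L^{-1}$) is a topological submersion at $p$. The key observation is that, near $p$, the preimage $f^{-1}(X'')$ coincides with $\tau^{-1}(0)$: indeed $f(q)\in X''$ is equivalent, under the adapted chart, to $\varphi'(f(q))\in \varphi'(U')\cap V''_{\Lambda''}$, which says precisely that the $W''$-component $\tau(q)$ vanishes and the remaining $V''$-component lies in $V''_{\Lambda''}$ — but the latter is an open condition that is automatically satisfied in a small enough neighborhood since $\tau(p)=0$ already forces $\varphi'(f(p))\in V''$ and hence into the open set $\varphi'(U')\cap V''_{\Lambda''}$.

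So the problem becomes: the zero set of a topological submersion $\tau\colon U\to W''$ (between manifolds with corners, with $U$ possibly having corners but $W''$ boundaryless) is a submanifold, with the expected codimension, tangent bundle, and neatness. Here I would invoke the local section $\sigma\colon \Omega\to U$ with $\tau\circ\sigma=\id_\Omega$ and $\sigma(0)=p$ guaranteed by the topological-submersion hypothesis, and use it to straighten $\tau$: the map $q\mapsto (\tau(q),\,\text{local coordinate of }q)$ together with $\sigma$ exhibits $U$, near $p$, as diffeomorphic to a product $W''\times (\text{corner neighborhood in } V''_{\Lambda''})$ in which $\tau$ is the first projection. This is the standard "submersions are locally projections" argument; the point of working with topological submersions rather than infinitesimal ones is precisely that this product normal form survives the presence of corners in the domain (a genuine rank theorem fails there, cf.\ the remark after Definition \ref{defn:top_sub}). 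In this normal form $\tau^{-1}(0)$ is visibly the slice $\{0\}\times V''_{\Lambda''}$, which is a submanifold with corners of codimension $\dim W''=\codim_{X'}X''$, whose tangent space at each point is $\ker\d\tau = (\d f)^{-1}(TX'')$ — the last identification using that $\varphi'$ is a diffeomorphism carrying $TX''$ to $V''$ and the definition of $\tau$. The total neatness $S^k(\tau^{-1}(0))=\tau^{-1}(0)\cap S^k(U)$ is also read off from the product normal form, since the corner structure of the product sits entirely in the $V''_{\Lambda''}$ factor; globalizing over all $p$ and assembling these local pieces (they are compatible because the tangent-space formula is chart-independent) gives that $f^{-1}(X'')$ is a submanifold with the asserted tangent bundle, and neat because $\partial(f^{-1}(X''))= f^{-1}(X'')\cap\partial X$ follows from the $k=1$ case of total neatness together with the neatness of $X''\subset X'$.

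I expect the main obstacle to be the careful verification that the chart-adapted composite $\tau$ really does cut out $f^{-1}(X'')$ locally and nothing more — in particular handling the inequality constraints $\Lambda''$ correctly, i.e.\ checking that shrinking $U$ removes any spurious points where the $W''$-component vanishes but the $V''$-component escapes $V''_{\Lambda''}$, and conversely that no genuine preimage points are lost. Once that bookkeeping with the corner-defining systems is done honestly, the rest is the classical submersion-slice argument transported through the definitions, and the neatness statements come essentially for free from the product normal form. A secondary subtlety worth a line is that $W''$ is boundaryless by construction (it is a linear complement, not a cornered object), which is what makes "topological submersion onto $W''$" behave like an honest submersion and licenses the product decomposition.
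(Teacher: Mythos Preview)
The paper does not give its own proof of this statement; it simply cites \cite{corners}*{Proposition 7.1.14}. Your outline follows the standard route found there and is essentially correct, with two points that deserve sharpening.

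First, the local identification $\tau^{-1}(0)=f^{-1}(X'')$ near $p$ is \emph{not} because the constraint $\varphi'(f(q))\in V''_{\Lambda''}$ is ``an open condition''---the quadrant $V''_{\Lambda''}$ is not open in $V''$ unless $\Lambda''=\emptyset$. What resolves your ``main obstacle'' is precisely the neatness hypothesis on $X''\subset X'$: for a neat submanifold one may choose the adapted chart so that $V''_{\Lambda''}=V''\cap V'_{\Lambda'}$. Then $\varphi'(f(q))\in V'_{\Lambda'}$ holds automatically since $f(q)\in U'\subset X'$, and $\tau(q)=0$ forces $\varphi'(f(q))\in V''$, hence into $V''_{\Lambda''}$; no shrinking is needed and no spurious points arise. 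Second, in your product normal form the second factor should be a corner model coming from a chart on $X$ at $p$---say $K_{\tilde\Lambda}$ with $K=\ker\d\tau_p$ and $\tilde\Lambda$ inherited from that $X$-chart---rather than $V''_{\Lambda''}$, which is the local model for $X''$ and has nothing a priori to do with the corner structure of $X$. The slice $\tau^{-1}(0)$ is then $\{0\}\times K_{\tilde\Lambda}$, and total neatness follows because the corners live entirely in the second factor, $W''$ being boundaryless.
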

\begin{proof}
\cite{corners}*{Proposition 7.1.14}.
\end{proof}
In the context of Lie categories with nonempty regular boundaries, this result enables us to show that the set of composable morphisms has a structure of a smooth manifold. Let us show how.
\begin{lemma}
\label{lem:bdry_topsub}
Let $f\colon X\rightarrow X'$ be a smooth map from a manifold $X$ with boundary to a boundaryless manifold $X'$, such that both $f$ and $\partial f$ are submersions. Then $f$ is a topological submersion.
\end{lemma}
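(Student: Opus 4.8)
The plan is to verify the defining property of a topological submersion pointwise: around each $p\in X$ we construct a smooth local section $\sigma$ of $f$ with $\sigma(f(p))=p$ and $f\circ\sigma=\id$ on its domain. For $p\in\Int X$ there is nothing new to do — $X$ is locally boundaryless there, $f$ is a submersion at $p$, and the classical implicit function theorem (equivalently, the equivalence of infinitesimal and topological submersivity on boundaryless manifolds noted right after Definition \ref{defn:top_sub}) produces the desired $\sigma$. So the whole content lies in the case $p\in\partial X$.

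For $p\in\partial X$ I would pass to a boundary chart identifying a neighborhood of $p$ in $X$ with an open subset of $\mathbb H^n=\R^{n-1}\times[0,\infty)$, with $p\mapsto 0$, and a chart on $X'$ identifying a neighborhood of $f(p)$ with an open subset of $\R^m$, with $f(p)\mapsto 0$. By the definition of smoothness on manifolds with corners, in these coordinates $f$ is the restriction to $\mathbb H^n$ of a smooth map $\tilde f\colon V\to\R^m$ defined on an open neighborhood $V$ of $0$ in $\R^n$, with $\tilde f(0)=0$. Writing coordinates as $(x,r)$ with $x\in\R^{n-1}$ and $r\in\R$, the boundary corresponds to $\{r=0\}$, and the hypothesis that $\partial f$ is a submersion at $p$ says precisely that $\mathrm d\tilde f_0$ restricted to $\R^{n-1}\times\{0\}$ is surjective onto $\R^m$.

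Then I would choose an $m$-dimensional subspace $W\subseteq\R^{n-1}$ with $\mathrm d\tilde f_0|_W\colon W\to\R^m$ a linear isomorphism, and set $g(w)=\tilde f(w,0)$ for $w$ near $0$ in $W$. Since $\mathrm dg_0$ is invertible, the inverse function theorem yields, after shrinking, a smooth inverse $g^{-1}\colon U'\to W$ on a neighborhood $U'$ of $0$ in $\R^m$ with $g^{-1}(0)=0$. Define $\sigma(y)=(g^{-1}(y),0)$. This takes values in $\R^{n-1}\times\{0\}\subseteq\mathbb H^n$, hence (shrinking $U'$ so as to remain inside both chart domains) defines a smooth map into $X$ — smoothness into the manifold-with-boundary $X$ being automatic, as $\sigma$ visibly extends to the smooth map $U'\to\R^n$ just written. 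Finally $\sigma(f(p))=\sigma(0)=0=p$, and $f(\sigma(y))=\tilde f(g^{-1}(y),0)=g(g^{-1}(y))=y$, so $f\circ\sigma=\id_{U'}$; transporting back through the charts shows $f$ is a topological submersion at $p$, and since $p$ was arbitrary, we are done.

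The only genuinely delicate point — and the reason the hypothesis on $\partial f$, and not merely on $f$, is needed — is that at a boundary point the section must take values in the closed half-space and cannot be steered out through the boundary. Building $\sigma$ inside the boundary hyperplane $\{r=0\}$, which is exactly what surjectivity of $\partial f$ permits, resolves this; without it, $f$ could be submersive at a boundary point while $f|_{\partial X}$ fails to be, and then no local section through $p$ need exist.
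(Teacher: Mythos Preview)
Your proof is correct and follows essentially the same approach as the paper: for $p\in\partial X$, construct the local section to land in $\partial X$ using that $f|_{\partial X}$ is a submersion between boundaryless manifolds. The paper's proof simply invokes ``the usual rank theorem used on $f|_{\partial X}$'' in one line, whereas you unpack this in coordinates via the inverse function theorem; the content is the same.
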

\begin{proof}
We want to show $f$ is a topological submersion at any $p\in X$. If $p\in \Int X$ this follows from the usual rank theorem used on $f|_{\Int X}$, and if $p\in \partial X$ it follows from the usual rank theorem used on $f|_{\partial X}$.
\end{proof}
\begin{lemma}
\label{lem:topsub_transversal}
If $f\colon X\rightarrow X'$ is a smooth topological submersion between manifolds with corners and $\partial X'=\emptyset$, then $f\pitchfork X''$ holds for any submanifold $X''\subset X'$.
\end{lemma}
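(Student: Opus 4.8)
The plan is to unwind the definition of topological transversality and reduce the claim, at each point $p\in X$, to exhibiting a local smooth section of the auxiliary map $\tau$ occurring in that definition; such a section can be built directly from the local section of $f$ provided by the hypothesis that $f$ is a topological submersion. The key observation that makes everything go through is that $\partial X'=\emptyset$ forces every chart of $X'$ to have empty corner-defining system, so the corner-twisted target of $\tau$ collapses to an ordinary vector space and no positivity constraints ever enter.

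First I would fix $p\in X$; if $f(p)\notin X''$ there is nothing to prove, so assume $f(p)\in X''$. Since $X''\subset X'$ is a submanifold, choose a chart with corners $\varphi'\colon U'\to V'_{\Lambda'}$ on $X'$ at $f(p)$ adapted to $X''$ by means of $(V'',\Lambda'')$; as $\partial X'=\emptyset$ we have $\Lambda'=\emptyset$, hence $V'_{\Lambda'}=V'$ and $L^*\Lambda'=\emptyset$, so the target $(V''\oplus W'')_{L^*\Lambda'}=V''\oplus W''$ of $\tau$, and its projection $W''$, are honest boundaryless vector spaces. Using that $f$ is a topological submersion at $p$, pick an open neighborhood $U'_0$ of $f(p)$ in $X'$ and a smooth map $\sigma\colon U'_0\to X$ with $\sigma(f(p))=p$ and $f\circ\sigma=\id_{U'_0}$; shrinking, assume $U'\subset U'_0$, put $U=f^{-1}(U')$ (an open neighborhood of $p$ with $f(U)\subset U'$), and let $\tau=\mathrm{pr}_2\circ L^{-1}\circ\varphi'\circ(f|_U)\colon U\to W''$ be the associated composite, noting $\tau(p)=0$.

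Next I would produce a local section of $\tau$ through $p$. Let $\iota\colon W''\to V''\oplus W''$ be $w\mapsto(0,w)$, choose a neighborhood $W_0$ of $0$ in $W''$ small enough that $L(0,w)\in\varphi'(U')$ for $w\in W_0$ (possible since $\varphi'(U')$ is open, contains $0$, and $w\mapsto L(0,w)$ is continuous with $L(0,0)=0$), and define
\[
\rho\colon W_0\to U,\qquad \rho=\sigma\circ(\varphi')^{-1}\circ L\circ\iota .
\]
Then $\rho$ is smooth, $\rho(0)=\sigma((\varphi')^{-1}(0))=\sigma(f(p))=p$, and for $w\in W_0$, using $f\circ\sigma=\id$ and $\mathrm{pr}_2\circ L^{-1}\circ L=\mathrm{pr}_2$,
\[
\tau(\rho(w))=\mathrm{pr}_2\big(L^{-1}(\varphi'(f(\sigma((\varphi')^{-1}(L(0,w))))))\big)=\mathrm{pr}_2(0,w)=w,
\]
while $f(\rho(w))=(\varphi')^{-1}(L(0,w))\in U'$ confirms $\rho$ lands in $U$; hence $\tau$ is a topological submersion at $p$, i.e.\ $f\pitchfork_p X''$, and since $p$ was arbitrary, $f\pitchfork X''$. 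I do not expect a genuine obstacle: the argument is entirely formal, and the only care needed is in keeping the chain of neighborhood shrinkings ($U'_0\supset U'$, then $W_0$) consistent and checking $\rho$ stays inside the relevant domains.
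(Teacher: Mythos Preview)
Your proof is correct and follows essentially the same approach as the paper. The paper's proof is more terse: it simply observes that $\tau$ is a composition of topological submersions, namely $f|_U$ followed by $\pr_2\circ L^{-1}\circ\varphi'$ (the latter being a submersion between boundaryless manifolds since $\partial X'=\emptyset$, hence a topological submersion), and compositions of topological submersions are topological submersions. Your explicit construction of the section $\rho=\sigma\circ(\varphi')^{-1}\circ L\circ\iota$ is precisely what unrolling that composition-of-sections argument produces.
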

\begin{proof}
Suppose $f(p)\in X''$ and let $\varphi'\colon U'\rightarrow V'$ be a chart on $X'$ at $f(p)$ adapted to $X''$ by means of $(V'',{\Lambda''})$. Let $W''$ be any complementary subspace to $V''$ in $V$; continuity of $f$ ensures there is a neighborhood $U\subset X$ of $p$ such that $f(U)\subset U'$, and now the composition
\[\begin{tikzcd}
	U & {U'} & {\varphi'(U')} & {V''\oplus W''} & {W''}
	\arrow["{f|_U}", from=1-1, to=1-2]
	\arrow["{\varphi'}", "\approx"', from=1-2, to=1-3]
	\arrow["{L^{-1}}", from=1-3, to=1-4]
	\arrow["{\mathrm{pr}_2}", from=1-4, to=1-5]
\end{tikzcd}\]
is a topological submersion as a composition of topological submersions $f|_U$ and $\pr_2 L^{-1}\varphi'$.
\end{proof}
\begin{corollary}
\label{cor:fibred_prod}
Let $X$ and $Y$ be manifolds with boundaries and $Z$ a manifold without boundary. If $f\colon X\rightarrow Z$ and $g\colon Y\rightarrow Z$ are smooth maps such that $f,f|_{\partial X}$ and g,$g|_{\partial Y}$ are submersions, then $X{\tensor*[_f]{\times}{_g}}Y=(f\times g)^{-1}(\Delta_{Z})\subset X\times Y$ is a submanifold with tangent space at $(p,q)$ equal to
\[
T_{(p,q)}(X{\tensor*[_f]{\times}{_g}}Y)=\set{(v,w)\in T_p X\oplus T_q Y\given \d f(v)=\d g(w)},
\]
and its boundary is $\partial(X{\tensor*[_f]{\times}{_g}}Y)=(X{\tensor*[_f]{\times}{_g}}Y )\cap (\partial X\times Y\cup X\times\partial Y)$.
\end{corollary}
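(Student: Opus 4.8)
The plan is to realise the fibred product as a transversal preimage and then read off all three conclusions from Proposition~\ref{prop:transversality}. Concretely, I would consider the product map $f\times g\colon X\times Y\to Z\times Z$ together with the diagonal $\Delta_Z\subset Z\times Z$; since $(f\times g)^{-1}(\Delta_Z)=X{\tensor*[_f]{\times}{_g}}Y$, it suffices to show $f\times g\pitchfork\Delta_Z$ and then invoke the transversality theorem, whose totally-neat conclusion will take care of the boundary formula automatically.

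First I would check that $f\times g$ is a topological submersion. By Lemma~\ref{lem:bdry_topsub}, the hypotheses that $f$ and $\partial f$ are submersions (with $Z$ boundaryless) make $f$ a topological submersion, and likewise $g$. Hence near any $(p,q)\in X\times Y$ there are smooth local sections $\sigma_f$ of $f$ with $\sigma_f(f(p))=p$ and $\sigma_g$ of $g$ with $\sigma_g(g(q))=q$; their product $\sigma_f\times\sigma_g$ is a smooth local section of $f\times g$ through $(p,q)$, so $f\times g$ is a topological submersion. Note that $X\times Y$ is a manifold with corners — its strata are $\Int X\times\Int Y$, $(\partial X\times\Int Y)\cup(\Int X\times\partial Y)$ and $\partial X\times\partial Y$ — but this section argument does not need to see the corner structure.

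Next, since $Z$ is boundaryless, $\Delta_Z\subset Z\times Z$ is a closed embedded boundaryless submanifold, hence a neat submanifold of the boundaryless manifold $Z\times Z$; Lemma~\ref{lem:topsub_transversal} then yields $f\times g\pitchfork\Delta_Z$. Applying Proposition~\ref{prop:transversality} gives at once that $X{\tensor*[_f]{\times}{_g}}Y=(f\times g)^{-1}(\Delta_Z)$ is a submanifold of $X\times Y$, that it is totally neat, and that its tangent bundle is $\big(\d(f\times g)\big)^{-1}(T\Delta_Z)$. Since $\d(f\times g)_{(p,q)}(v,w)=(\d f_p(v),\d g_q(w))$ and $T_{(z,z)}\Delta_Z=\{(u,u):u\in T_zZ\}$ for $z=f(p)=g(q)$, membership of $(v,w)$ in this preimage is exactly the condition $\d f_p(v)=\d g_q(w)$, which is the claimed description of $T_{(p,q)}(X{\tensor*[_f]{\times}{_g}}Y)$. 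Finally, total neatness gives $\partial\big((f\times g)^{-1}(\Delta_Z)\big)=(f\times g)^{-1}(\Delta_Z)\cap\partial(X\times Y)$, and for manifolds with boundary $X,Y$ one has $\partial(X\times Y)=\partial X\times Y\cup X\times\partial Y$ (a point $(x,y)$ has index $\ind_X(x)+\ind_Y(y)\ge 1$ iff $x\in\partial X$ or $y\in\partial Y$), which is precisely the stated boundary formula.

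The argument is essentially an assembly of the appendix lemmas, so there is no deep obstacle; the one point requiring care is that one genuinely needs $f\times g$ to be a \emph{topological} submersion rather than merely an infinitesimally transversal map, which is exactly why the hypotheses are phrased in terms of submersivity of $f$ and $g$ together with their boundary restrictions, and it is what Lemma~\ref{lem:bdry_topsub} is there to supply.
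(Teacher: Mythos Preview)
Your proof is correct and follows essentially the same approach as the paper's own proof: use Lemma~\ref{lem:bdry_topsub} to get that $f$ and $g$ are topological submersions, check that $f\times g$ is then also a topological submersion, apply Lemma~\ref{lem:topsub_transversal} and Proposition~\ref{prop:transversality}. You have simply spelled out in more detail what the paper compresses into phrases like ``it is easy to check'' and ``finishes the proof''---in particular the explicit product-of-sections argument, the tangent space computation, and the derivation of the boundary formula from total neatness.
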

\begin{proof}
Lemma \ref{lem:bdry_topsub} ensures $f$ and $g$ are topological submersions, and it is easy to check that $f\times g\colon X\times Y\rightarrow Z\times Z$ is also a smooth topological submersion. Now Lemma \ref{lem:topsub_transversal} implies $(f\times g)\pitchfork\Delta_Z$ since $Z$ is boundaryless, and Proposition \ref{prop:transversality} finishes the proof since $\Delta_Z\subset Z\times Z$ is trivially a neat submanifold.
\end{proof}
The following proposition shows in particular that in the case of boundaryless manifolds, the usual infinitesimal notion of transversality is equivalent to the one above.
\begin{proposition}
\label{prop:transversality_char}
Let $f\colon X\rightarrow X'$ be a smooth map between manifolds with corners and $X''\subset X'$ a submanifold. For any $p\in f^{-1}(X'')$ with $\ind_X(p)=k$, the following statements are equivalent.
\begin{enumerate}[label={(\roman*)}]
\item $\Im\d (f|_{S^k(X)})_p+ T_{f(p)}X''=T_{f(p)}X'.$
\item $f|_{S^k(X)}\pitchfork_p X''$.
\item $f\pitchfork_p X''$.
\end{enumerate}
\end{proposition}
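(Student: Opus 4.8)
The plan is to isolate a single local fact — the \emph{stratum lemma} — and deduce all three equivalences from it. The stratum lemma reads: if $Z$ is a manifold with corners, $W$ a finite-dimensional vector space, $\tau\colon Z\to W$ a smooth map and $q\in Z$ with $\ind_Z(q)=j$, then $\tau$ is a topological submersion at $q$ if and only if $\d(\tau|_{S^j(Z)})_q\colon T_qS^j(Z)\to W$ is surjective; since $S^j(Z)$ has no boundary, the remark after Definition \ref{defn:top_sub} makes this further equivalent to $\tau|_{S^j(Z)}$ being a topological submersion at $q$. Granting it, I would fix once and for all a chart $\varphi'\colon U'\to V'_{\Lambda'}$ on $X'$ at $f(p)$ adapted to $X''$ by means of $(V'',\Lambda'')$, a complement $W''$ of $V''$ in $V'$, the isomorphism $L(v,w)=v+w$, and the composite $\tau=\pr_2\circ L^{-1}\circ\varphi'\circ f$ on a small enough neighborhood of $p$, exactly as in the definition of topological transversality; note that the restriction $\tau|_{S^k(X)}$ is precisely the analogous composite built from $f|_{S^k(X)}$. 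The first step is a routine chain-rule computation: by adaptedness $\d\varphi'_{f(p)}$ is a linear isomorphism carrying $T_{f(p)}X''$ onto $V''$, while $L^{-1}$ carries $V''$ onto $\ker\pr_2$, so $\pr_2\circ L^{-1}\circ\d\varphi'_{f(p)}\colon T_{f(p)}X'\to W''$ is onto with kernel exactly $T_{f(p)}X''$. Consequently, for any subspace $S\le T_{f(p)}X'$ one has $\pr_2\circ L^{-1}\circ\d\varphi'_{f(p)}(S)=W''$ precisely when $S+T_{f(p)}X''=T_{f(p)}X'$; taking $S=\Im\d(f|_{S^k(X)})_p$ identifies surjectivity of $\d(\tau|_{S^k(X)})_p$ with condition (i).

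With this in hand the equivalences are short. For (i)$\Leftrightarrow$(ii): the composite built from $f|_{S^k(X)}$, namely $\tau|_{S^k(X)}$, has the boundaryless domain $S^k(X)$, so by the remark after Definition \ref{defn:top_sub} it is a topological submersion at $p$ exactly when $\d(\tau|_{S^k(X)})_p$ is onto, that is, exactly when (i) holds; since (i) does not refer to the chosen adapted chart, the existential over charts in the definition of $f|_{S^k(X)}\pitchfork_p X''$ is harmless, and (i)$\Leftrightarrow$(ii) follows. For (i)$\Leftrightarrow$(iii): applying the stratum lemma with $Z$ the domain of $\tau$ for $f$ itself (so $j=\ind_X(p)=k$), $f\pitchfork_p X''$ holds iff, for some adapted chart, $\tau$ is a topological submersion at $p$, iff $\d(\tau|_{S^k(X)})_p$ is onto, iff (i). This closes the cycle $(\mathrm{ii})\Leftrightarrow(\mathrm i)\Leftrightarrow(\mathrm{iii})$.

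It remains to prove the stratum lemma, where the real work lies. Passing to a corner chart $\psi\colon U_0\to V_\Lambda$ at $q$ with $|\Lambda|=j$ — so that $\psi$ restricts to a chart of $S^j(Z)$ with $\psi(U_0\cap S^j(Z))=\psi(U_0)\cap V_\Lambda^0$, where $V_\Lambda^0=\bigcap_{\lambda\in\Lambda}\ker\lambda$ — and using that being a topological submersion at a point is diffeomorphism-invariant and germ-local, I would reduce to $Z=V_\Lambda$, $q=0$. If $\d(\tau|_{V_\Lambda^0})_0$ is onto, the ordinary rank theorem on the boundaryless manifold $\psi(U_0)\cap V_\Lambda^0$ yields a smooth local section $\sigma_0$ of $\tau|_{V_\Lambda^0}$ through $0$, and since $V_\Lambda^0\subset V_\Lambda$ this same $\sigma_0$ is a smooth local section of $\tau$, so $\tau$ is a topological submersion at $0$. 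For the converse — the main obstacle — I would argue by a minimum argument: if $\sigma\colon U'\to V_\Lambda$ is a smooth local section of $\tau$ with $\sigma(\tau(0))=0$ and $U'$ open in the boundaryless $W$, then for each $\lambda\in\Lambda$ the function $\lambda\circ\sigma$ is nonnegative on $U'$ and vanishes at the interior point $\tau(0)$, hence $\d(\lambda\circ\sigma)_{\tau(0)}=\lambda\circ\d\sigma_{\tau(0)}=0$; therefore $\Im\d\sigma_{\tau(0)}\subset V_\Lambda^0=T_0S^j(V_\Lambda)$, and differentiating $\tau\circ\sigma=\id_{U'}$ gives $\d\tau_0\circ\d\sigma_{\tau(0)}=\id_W$ with $\d\sigma_{\tau(0)}$ landing in $V_\Lambda^0$, so $\d(\tau|_{V_\Lambda^0})_0$ is onto. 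Once this observation — that a smooth section issuing from a boundaryless manifold into $V_\Lambda$ is forced to be tangent to the stratum through a point at its index — is secured, the rest is bookkeeping, and I do not expect further difficulties.
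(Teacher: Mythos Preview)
Your proof is correct and rests on the same core observation as the paper's: a smooth local section $\sigma$ from a boundaryless open set into $V_\Lambda$ with $\sigma(\tau(0))=0$ must have $\Im\d\sigma_{\tau(0)}\subset V_\Lambda^0$. The paper establishes this by composing $\sigma$ with curves and arguing the composite stays in $V_\Lambda^0$ (strictly speaking only the tangent vector at $0$ need lie there, as $t\mapsto t^2$ into $[0,\infty)$ shows, but the conclusion is unaffected); your minimum argument on each $\lambda\circ\sigma$ is more direct. Organizationally, the paper runs a cycle of implications --- (i)$\Leftrightarrow$(ii) from the boundaryless rank theorem, (ii)$\Rightarrow$(iii) from a general fact about compositions of topological submersions, and (iii)$\Rightarrow$(ii) from a separate lemma containing the path argument --- whereas you isolate a single \emph{stratum lemma} (topological submersion at $q$ iff $\d(\tau|_{S^j})_q$ is onto) and derive all three equivalences from it together with the chain-rule identification of (i) with surjectivity of $\d(\tau|_{S^k(X)})_p$. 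Your packaging is tidier and handles the existential over adapted charts uniformly via the chart-independence of (i); the paper's cycle has the minor advantage of recording the composition fact for topological submersions explicitly, which is occasionally useful on its own.
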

\begin{proof}
To show $(i)\Rightarrow (ii)$, take a chart $\varphi'\colon U'\rightarrow V'_{\Lambda'}$ on $X'$ at $f(p)$ adapted to $X''$ by means of $(V'',{\Lambda''})$, take a complementary subspace $W''$ to $V''$ of $V'$, and consider the map
\[\begin{tikzcd}
	\tau^k\colon U & {U'} & {\varphi'(U')} & {(V''\oplus W'')_{L^*\Lambda'}} & {W''}
	\arrow["{f|_U}", from=1-1, to=1-2]
	\arrow["{\varphi'}", "\approx"', from=1-2, to=1-3]
	\arrow["{L^{-1}}", from=1-3, to=1-4]
	\arrow["{\mathrm{pr}_2}", from=1-4, to=1-5]
\end{tikzcd}\]
where $U$ is the domain of a chart neighborhood $\varphi\colon U\rightarrow V$ in $S^k(X)$ of $p$; by continuity we may assume $f(U)\subset U'$. Since $\partial S^k(X)$ is boundaryless, it is enough to show that $\tau^k$ is a submersion. To that end, take any $w\in T_0 W''\cong W''$ and then $u:=\d(\varphi')_p^{-1}L(0,w)\in T_{f(p)}X'$, so by assumption there exist $v\in T_p X$ and $v''\in T_{f(p)}X''$ such that $u=\d f_p(v)+v''$. Since $L^{-1}\d(\varphi')_p(v'')\in V''$, we get
\[
\d h_p(v)=\pr_2L^{-1}\d(\varphi')(u-v'')=w.
\]
Conversely, $(ii)\Rightarrow (i)$ follows from the fact that topological transversality implies transversality.

For the implication $(ii)\Rightarrow (iii)$, note that if $Z\xrightarrow{g} Z'\xrightarrow{h} Z''$ are smooth maps such that $h\circ g$ is a topological submersion at $p\in Z$, then it is easy to see $h$ is a topological submersion at $g(p)$. Use this result for the composition $S^k(X)\hookrightarrow X\xrightarrow \tau W''$ which equals $\tau^k$. The converse implication $(iii)\Rightarrow (ii)$ is a consequence of the following lemma used on the map $\tau$.
\end{proof}

\begin{lemma}
If $f\colon X\rightarrow X'$ is a smooth map between manifolds with corners, which is a topological submersion at $p$ and there holds $f(p)\in \Int (X')$, then $f|_{S^k(X)}$ is a submersion at $p$, where $k=\ind_X(p)$.
\end{lemma}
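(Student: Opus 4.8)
The plan is to exploit the local section produced by the topological submersion hypothesis, and to observe that—because this section takes values in the corner model $V_\Lambda$ and sends $f(p)$ to the maximal-index point $p$—its differential is forced to land inside the tangent space of the stratum $S^k(X)$.

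First I would set up coordinates. Since $f(p)\in\Int(X')$, pick a chart on $X'$ identifying a neighbourhood of $f(p)$ with an open set $U\subseteq V'$, with $\varphi'(f(p))=0$ and $U$ boundaryless; and pick a chart with corners $\varphi\colon U_p\to V_\Lambda$ on $X$ at $p$ with $\varphi(p)=0$ and $\Lambda=(\lambda_1,\dots,\lambda_k)$, where $k=\ind_X(p)$. Topological submersivity of $f$ at $p$ gives a smooth section $\sigma\colon U\to X$ with $\sigma(f(p))=p$ and $f\circ\sigma=\id_U$; after shrinking $U$ by continuity we may assume $\sigma(U)\subseteq U_p$, and put $\tilde\sigma=\varphi\circ\sigma\colon U\to V_\Lambda$.

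The key step is then a minimum-principle argument. For each $i$ the map $g_i=\lambda_i\circ\tilde\sigma\colon U\to\R$ is smooth; it satisfies $g_i\geq 0$ everywhere on $U$ because $\tilde\sigma$ takes values in $V_\Lambda$, and $g_i(f(p))=\lambda_i(\varphi(p))=\lambda_i(0)=0$. Hence $f(p)$ is a global minimum of $g_i$ on the boundaryless manifold $U$, so $\d(g_i)_{f(p)}=0$, i.e. $\lambda_i(\d\tilde\sigma_{f(p)}(w))=0$ for every $w\in T_{f(p)}U$ and every $i$. Therefore $\Im\d\tilde\sigma_{f(p)}\subseteq V_\Lambda^0$, and translating back through $\varphi$—and using that the canonical chart on the stratum is $\varphi|_{U_p\cap S^k(X)}$ with image $\varphi(U_p)\cap V_\Lambda^0$, so that $T_pS^k(X)=\d(\varphi^{-1})_0(V_\Lambda^0)$—we get $\Im\d\sigma_{f(p)}\subseteq T_pS^k(X)$.

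Finally, differentiating $f\circ\sigma=\id_U$ at $f(p)$ gives $\d f_p\circ\d\sigma_{f(p)}=\id_{T_{f(p)}X'}$; since $\d\sigma_{f(p)}$ factors through the subspace $T_pS^k(X)\subseteq T_pX$, this reads $\d(f|_{S^k(X)})_p\circ\d\sigma_{f(p)}=\id_{T_{f(p)}X'}$, so $\d(f|_{S^k(X)})_p$ is surjective, i.e. $f|_{S^k(X)}$ is a submersion at $p$ (note $S^k(X)$ is boundaryless, so the usual infinitesimal notion suffices). I do not expect a serious obstacle here: the only points requiring care are that $\sigma$ may be taken to land in the corner chart $V_\Lambda$ near $p$ and that the stratum's tangent space is precisely $V_\Lambda^0$, both of which are built into the chart-with-corners formalism recalled earlier in this appendix.
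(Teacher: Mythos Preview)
Your proof is correct and takes essentially the same approach as the paper: both use the section $\sigma$ from topological submersivity, compose with the corner chart, and exploit that each $\lambda_i\circ\varphi\circ\sigma$ attains a minimum at an interior point to conclude that $\d\sigma_{f(p)}$ lands in $V_\Lambda^0\cong T_pS^k(X)$. Your direct minimum-principle argument on the whole section is slightly cleaner than the paper's curve-by-curve version, but the key idea is identical.
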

\begin{proof}
Let $\varphi\colon U\rightarrow V_{\Lambda}$ be a chart on $X$ at $p$ and $\varphi'\colon X'\rightarrow V'$ on $X'$ at $f(p)$. Note that $\varphi(U\cap S^k(X))=\varphi(U)\cap V_{\Lambda}^0$.

Let $v'\in T_{f(p)}X'$ be arbitrary. By assumption, there is a neighborhood $U'$ of $f(p)$ and a smooth map $\sigma\colon U'\rightarrow U\subset X$ such that $\sigma(f(p))=p$ and $f\circ \sigma =\id_{U'}$. Since $f(p)\in\Int X'$, there is a smooth path $\gamma\colon(-\varepsilon,\varepsilon)\rightarrow U'$ with $\dot\gamma(0)=v'$, and now consider the path $\delta=\varphi\circ\sigma\circ\gamma\colon(-\varepsilon,\varepsilon)\rightarrow \varphi(U)\subset V_{\Lambda}$. Since $\delta(0)=0$ and $\delta$ is defined on an open interval, we must have that $\delta$ maps into $V_{\Lambda}^0$, so $\dot\delta(0)\in T_0V_{\Lambda}^0\cong V_{\Lambda}^0$. Taking $v=\d(\varphi^{-1})_0(\dot\delta(0))$ hence yields $v\in T_p(S^k(X))$ with $\d f_p(v)=v'$.
\end{proof}

\end{subappendices}

\clearpage \pagestyle{plain}
\chapter{Background for the second part}\label{chapter:background}
\pagestyle{fancy}
\fancyhead[CE]{Chapter \ref*{chapter:background}} 
\fancyhead[CO]{Background for the second part}

In this chapter, we have gathered some prerequisite material that will be used in the remainder of the thesis. From now on, we direct our focus to Lie groupoids instead of Lie categories.

\section{More on Lie groupoids and Lie algebroids}
\subsection*{Lie groupoids}
As already discussed in \sec\ref{chapter:lie_cats}, \emph{Lie groupoids} are Lie categories with all arrows invertible. A Lie groupoid will usually be denoted by $G\rra M$, where $G$ and $M$ denote the manifolds of arrows and objects, respectively. We now recall some of their important structural properties and examples. 

 \begin{proposition}
    \label{prop:properties_lie_groupoids}
    In a Lie groupoid $G\rra M$, the following properties hold, for any $x,y\in M$.
    \begin{enumerate}[label={(\roman*)}]
        \item The Hom-sets $G_x^y$ are closed embedded submanifolds of $G$.
        \item The space $G_x^x$ is a Lie group, called the \emph{isotropy group} at $x$.
        \item The space $\O_x\coloneqq t(G_x)$ is an immersed submanifold of $M$, called the \emph{orbit} through $x$.
        \item The restriction $t\colon G_x\ra \O_x$ is a principal $G_x^x$-bundle over $\O_x$.
    \end{enumerate}
 \end{proposition}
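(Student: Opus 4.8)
The plan is to deduce all four statements from Corollary~\ref{cor:hom_sets_embedded}, applied to the Lie groupoid $G$ regarded as a Lie category which is trivially extendable to itself via $\id_G$. This immediately yields (i): each Hom-set $G_x^y$ is a closed embedded submanifold of $G$. Moreover, the proof of that corollary establishes more than its statement — it shows that $t|_{G_x}\colon G_x\ra M$ has \emph{constant rank}, with $\ker\d(t|_{G_x})_g=\d(L_g)_{1_x}\big(T_{1_x}G_x^x\big)$ for every $g\in G_x$; this is the crucial fact I would carry over, and it underlies both (iii) and (iv). For (ii): the same corollary already gives that $G_x^x$ is a Lie monoid, so I would only need to add that all of its elements are invertible (since every arrow of $G$ is) and that inversion on $G_x^x$ is smooth, being the restriction of the groupoid inversion $G\ra G$ — which is automatically smooth, cf.\ the discussion following Lemma~\ref{lem:theta} — and which preserves $G_x^x$. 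Hence $G_x^x$ is a Lie group.

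For (iii) and (iv) I would first record two elementary algebraic observations. Right multiplication $(g,h)\mapsto gh$ is a well-defined \emph{free} right action of the Lie group $G_x^x$ on the manifold $G_x$ (well-defined because $s(g)=x=t(h)$ and $s(gh)=x$; free by cancellation, which is available since arrows are invertible), and two arrows $g,g'\in G_x$ satisfy $t(g)=t(g')$ precisely when $g'=g\,(g^{-1}g')$ with $g^{-1}g'\in G_x^x$; thus $t|_{G_x}$ descends to a bijection $G_x/G_x^x\to\O_x$. Next I would invoke the rank theorem for the constant-rank map $t|_{G_x}$ to put on $\O_x=t(G_x)$ the smooth structure for which $t|_{G_x}\colon G_x\ra\O_x$ is a surjective submersion and the inclusion $\O_x\hookrightarrow M$ is an injective immersion, giving (iii). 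Finally, for (iv): being a surjective submersion, $t|_{G_x}$ admits a smooth local section $\sigma$ through any prescribed arrow, over a sufficiently small neighbourhood $U$ in $\O_x$; then
\[
U\times G_x^x\ra t^{-1}(U)\cap G_x,\qquad (u,h)\mapsto\sigma(u)\,h,
\]
is a $G_x^x$-equivariant diffeomorphism — it is smooth, its inverse $g\mapsto\big(t(g),\sigma(t(g))^{-1}g\big)$ is smooth, and it is bijective by freeness and the identity $g=\sigma(t(g))\cdot\big(\sigma(t(g))^{-1}g\big)$ with $\sigma(t(g))^{-1}g\in G_x^x$ — so $t\colon G_x\ra\O_x$ is a principal $G_x^x$-bundle.

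I expect the main obstacle to be the middle step of (iii): upgrading ``constant rank'' to a genuinely \emph{global}, well-defined smooth manifold structure on the orbit, i.e.\ checking that the submanifold slices produced pointwise by the rank theorem fit together coherently — here the left translations $L_g$ together with the isotropy action supply the symmetry that does the matching — and that the resulting space is Hausdorff and second countable. Both of the latter follow from the corresponding properties of $G$, once one notes that the orbit relation $R=\{(g,gh):g\in G_x,\; h\in G_x^x\}=G_x\times_{t,t}G_x$ is closed in $G_x\times G_x$ and that $t|_{G_x}$ is an open map; equivalently, this whole step can be packaged as a single application of Godement's criterion to $R$, whose hypotheses ($R$ a closed embedded submanifold, $\pr_1$ a submersion) hold precisely because $t|_{G_x}$ has constant rank. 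Everything else is routine bookkeeping with the groupoid structure maps.
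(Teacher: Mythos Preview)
The paper states Proposition~\ref{prop:properties_lie_groupoids} as background without proof, so there is nothing to compare against directly. Your plan is sound and makes nice internal use of Corollary~\ref{cor:hom_sets_embedded}: points (i)--(ii) follow exactly as you say, and the constant-rank observation from that corollary's proof is indeed the key input for (iii)--(iv).

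One correction to your Godement packaging: that $R=G_x\times_{t,t}G_x$ is a closed embedded submanifold with $\pr_1$ submersive does \emph{not} follow from $t|_{G_x}$ having constant rank---the fibred product would require $t|_{G_x}$ to be transverse to itself over $\Delta_M$, which fails unless $t|_{G_x}$ is actually submersive into $M$. The correct reason is already implicit in your own parametrisation of $R$: the map $G_x\times G_x^x\to G_x\times G_x$, $(g,h)\mapsto(g,gh)$, is a closed embedding onto $R$ with smooth inverse $(g,g')\mapsto(g,g^{-1}g')$ (using smoothness of the groupoid inversion), and under this identification $\pr_1$ is simply the first projection. Constant rank enters one step later, to show that the induced injection $G_x/G_x^x\hookrightarrow M$ is an immersion, its rank being $\rank(t|_{G_x})=\dim G_x-\dim G_x^x=\dim(G_x/G_x^x)$. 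Similarly, ``$t|_{G_x}$ is an open map'' should be read as the quotient map $G_x\to G_x/G_x^x$ being open (automatic for group-action quotients), not as a statement about $t|_{G_x}\colon G_x\to M$. With these adjustments the argument is complete.
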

\noindent Roughly speaking, the point (iv) above says that a Lie groupoid can be seen as a collection of principal bundles, though it says nothing about the transverse structure that glues them together. The connected components of the orbits comprise a singular foliation on $M$, denoted $\F$.

\begin{example}\
\begin{enumerate}[label={(\roman*)}]
    \item \textit{Bundles of Lie groups}. A bundle of Lie groups is a Lie groupoid with coinciding source and target map. This is a specific case of Example \ref{ex:monoid_bundles}. 
    \item \emph{Submersion groupoids}. For any submersion $\pi\colon M\ra N$, we may consider the fibred product $M\times_\pi M$, which is a Lie groupoid with the structure inherited from the pair groupoid. If $\pi$ is the identity map, we recover the unit groupoid, and if $N$ is a singleton, we recover the pair groupoid.
    \item \emph{Action groupoids}. Example \ref{ex:action_cats} for the case when a Lie group $G$ is acting on a manifold $M$ yields a Lie groupoid $G\ltimes M\rra M$. Since Lie group actions are always submersive, the arrow space is just $G\times M$. We note that the isotropy groups of $G\ltimes M$ can be identified with the stabilizer groups of the action, and the orbits of $G\ltimes M$ can be identified with the orbits of the action.
    \item \emph{General linear groupoids}. In Example \ref{ex:endomorphism_cat}, if we restrict to linear isomorphisms between the fibres of a vector bundle $V\ra M$, we obtain a groupoid. This is precisely the core of the endomorphism category, which is an open Lie subcategory by Theorem \ref{thm:open_inv}, denoted
    \[
    \operatorname{GL}(V)= \G(\End V).
    \]
    \item \emph{Gauge groupoids}. Given a principal $G$-bundle $\pi\colon P\ra M$, we can construct a Lie groupoid whose Hom-sets are $G$-equivariant maps between the fibres of $\pi$. We encode this by defining the arrow space as the quotient
    \[
    \G(P)=\frac{P\times P}{G},
    \]
    where $G$ acts on $P\times P$ by the diagonal (right) action. With the obvious structure maps, we obtain a Lie groupoid over $M$ with only one orbit: the whole base manifold $M$. Such Lie groupoids are called \emph{transitive}. One can show that any transitive groupoid is isomorphic to the gauge groupoid of the principal bundle from Proposition \ref{prop:properties_lie_groupoids} (iv), for any $x\in M$.
\end{enumerate}
\end{example}
\subsubsection{The nerve of a Lie groupoid}
We have already observed that we can form the set $G^{(2)}$ of composable arrows in $G$. Importantly, we can go even further, and consider the set $G^{(p)}$ of $p$-tuples of composable arrows in $G$. In this way, any Lie groupoid determines a simplicial manifold, called the \emph{nerve} of a Lie groupoid. Pictorially:
\[\begin{tikzcd}
	\cdots & {G^{(3)}} & {G^{(2)}} & G & M
	\arrow[shift left=4, from=1-1, to=1-2]
	\arrow[shift right=4, from=1-1, to=1-2]
	\arrow[from=1-1, to=1-2]
	\arrow[shift left=2, from=1-1, to=1-2]
	\arrow[shift right=2, from=1-1, to=1-2]
	\arrow[shift left=3, from=1-2, to=1-3]
	\arrow[shift right=3, from=1-2, to=1-3]
	\arrow[shift left, from=1-2, to=1-3]
	\arrow[shift right, from=1-2, to=1-3]
	\arrow["{\pr_2}", shift left=2, from=1-3, to=1-4]
	\arrow["m"{description}, from=1-3, to=1-4]
	\arrow["{\pr_1}"', shift right=2, from=1-3, to=1-4]
	\arrow["s", shift left, from=1-4, to=1-5]
	\arrow["t"', shift right, from=1-4, to=1-5]
\end{tikzcd}\]
Here, the face maps $f_i^{(p+1)}\colon G^{(p+1)}\ra G^{(p)}$ are defined as
\[
f_i^{(p+1)}(g_1,\dots,g_{p+1})=
\begin{cases}
(g_2,\dots,g_{p+1})&i=0,\\
(g_1,\dots,g_{i}g_{i+1},\dots,g_{p+1})&1\leq i\leq p,\\
(g_1,\dots,g_{p})&i=p+1,
\end{cases}
\]
Because we are assuming the source and target maps are smooth submersions, the set $G^{(p)}$ is a smooth manifold, and because the multiplication in a Lie groupoid is submersive, all the face maps are smooth submersions. The degeneracy maps are just the smooth embeddings $G^{(p-1)}\ra G^{(p)}$ that insert a unit at a given factor.

\subsection*{Lie algebroids}
The notion of a Lie algebroid, typically denoted $A\Ra M$, has already been defined in Definition \ref{def:algebroid}. We now recall some properties which will be needed later on.
\begin{remark}
    When we say that $A$ is a Lie algebroid of a Lie groupoid $G$, we will always mean the left Lie algebroid of $G$. Importantly, not every Lie algebroid is integrable to a Lie groupoid. Precise obstructions to integrability of Lie algebroids to Lie groupoids have been obtained in \cite{integrability}.
\end{remark}
\begin{proposition}
    Given a Lie algebroid $A\Ra M$, the following holds for any $x\in M$.
\begin{enumerate}[label={(\roman*)}]
    \item The space $\frak g_x\coloneqq \ker\rho_x$ is a Lie algebra, called the \emph{isotropy Lie algebra} at $x$, with the bracket
    \[
    [\xi,\eta]=[\tilde\xi,\tilde\eta]_x,
    \]
    for any vectors $\xi,\eta\in\frak g_x$, where the sections $\tilde\xi,\tilde\eta\in\Gamma(A)$ are their arbitrary extensions.
    \item The image $\im(\rho)\subset TM$ of the anchor of $A$ is an integrable singular distribution. The leaves of the associated singular foliation, denoted $\F$, are called the \emph{orbits} of $A$.
    \item If $A$ is the algebroid of a Lie groupoid $G$, then $\frak g_x$ is the Lie algebra of the isotropy group $G_x^x$, and the leaves of the two orbit foliations coincide.
\end{enumerate}
\end{proposition}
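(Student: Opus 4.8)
The plan is to dispatch the three items separately, since they are of quite different character.

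\textbf{Part (i)} is a computation with the Leibniz rule, the only genuinely subtle point being that well-definedness of the bracket uses the hypothesis $\xi,\eta\in\ker\rho_x$ in an essential way. I would first fix extensions $\tilde\xi,\tilde\eta\in\Gamma(A)$ and take a second extension $\tilde\eta'$ of $\eta$; then $\tilde\eta'-\tilde\eta$ vanishes at $x$, hence is locally a finite sum $\sum_j g_j\beta_j$ with $g_j(x)=0$ and $\beta_j\in\Gamma(A)$, and the Leibniz rule gives
\[
[\tilde\xi,\tilde\eta'-\tilde\eta]_x=\sum_j\big(g_j(x)[\tilde\xi,\beta_j]_x+(\rho(\tilde\xi)g_j)(x)\,\beta_j(x)\big)=\sum_j \d(g_j)_x(\rho_x(\xi))\,\beta_j(x)=0,
\]
the last equality because $\rho_x(\xi)=0$; changing $\tilde\xi$ is handled symmetrically, using antisymmetry of the bracket and $\rho_x(\eta)=0$. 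Next I would check $[\tilde\xi,\tilde\eta]_x\in\frak g_x$ by applying $\rho_x$: since $\rho$ is a Lie algebra homomorphism, $\rho_x([\tilde\xi,\tilde\eta]_x)=[\rho(\tilde\xi),\rho(\tilde\eta)]_x$, which vanishes because the Lie bracket of two vector fields that both vanish at $x$ vanishes at $x$ (immediate in a chart). Bilinearity, antisymmetry and the Jacobi identity for $\frak g_x$ are then inherited by evaluating the corresponding identities on $\Gamma(A)$ at $x$.

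\textbf{Part (ii)} I would prove exactly as in the singular-distribution discussion of Chapter~\ref{chapter:lie_cats}: the family $\rho(\Gamma(A))\subset\vf(M)$ is a $C^\infty(M)$-submodule that spans $\im(\rho)$ pointwise, is involutive since $[\rho(\alpha),\rho(\beta)]=\rho([\alpha,\beta])$, and is locally of finite type because the image of a local frame of $A$ is a finite generating set; hence $\im(\rho)$ is integrable by \cite{sussman}, and its leaves are, by definition, the orbits of $A$, i.e. the leaves of $\F$.

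\textbf{Part (iii)} uses that $A=A^L(G)$, so $A_x=\ker\d t_{1_x}$ and $\rho_x=\d s_{1_x}|_{A_x}$, whence
\[
\frak g_x=\ker\rho_x=\ker\d t_{1_x}\cap\ker\d s_{1_x}=T_{1_x}(G_x^x)=\Lie(G_x^x),
\]
$G_x^x$ being an embedded Lie subgroup by Proposition~\ref{prop:properties_lie_groupoids}. To match the brackets I would show that, for $\xi\in\frak g_x$ with extension $\tilde\xi$, the left-invariant vector field $\tilde\xi^L$ is tangent to $G_x^x$ along $G_x^x$: for $g\in G_x$ one has $\tilde\xi^L(g)=\d(L_g)_{1_x}(\xi)$, and since $s\circ L_g=s$ there, $\d s_g(\tilde\xi^L(g))=\d s_{1_x}(\xi)=\rho_x(\xi)=0$, while $\tilde\xi^L$ is always tangent to $t$-fibres; thus $\tilde\xi^L|_{G_x^x}$ is the ordinary left-invariant vector field of $G_x^x$ with value $\xi$ at $1_x$, and since the Lie bracket of vector fields tangent to a submanifold restricts, $[\tilde\xi,\tilde\eta]_x=[\tilde\xi^L,\tilde\eta^L]_{1_x}$ is precisely the $\Lie(G_x^x)$-bracket. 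For the orbits, $t\colon G_x\to\O_x$ is a surjective submersion, so $T_x\O_x=\d t_{1_x}(\ker\d s_{1_x})$, whereas $\im(\rho)_x=\d s_{1_x}(\ker\d t_{1_x})$; applying $\d(\inv)_{1_x}$, which exchanges $\ker\d s_{1_x}$ and $\ker\d t_{1_x}$, together with $s\circ\inv=t$, identifies these two subspaces of $T_xM$. Hence $\im(\rho)$ and the distribution tangent to the $G$-orbits agree pointwise, so their leaves coincide.

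\textbf{Expected obstacle.} Everything is routine except one point I would be careful about: in Part~(iii), justifying that an \emph{arbitrary} extension of $\xi\in\frak g_x$ restricts along $G_x^x$ to the Lie-group left-invariant vector field, which is exactly what legitimizes reading off the bracket of $\Lie(G_x^x)$ from the $A$-bracket; once that tangency is in place, the comparison of brackets and of orbits is formal.
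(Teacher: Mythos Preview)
Your proof is correct and follows the standard approach; the paper states this proposition as background material in Chapter~\ref{chapter:background} without proof, so there is nothing to compare against. One small remark on Part~(iii): concluding that the leaves coincide from the pointwise equality $\im(\rho)_x=T_x\O_x$ is not automatic for singular foliations in general, so you might add one sentence observing that the connected $G$-orbit components are integral manifolds of $\im(\rho)$ and hence lie in $A$-leaves, while conversely the flow $\phi^{\rho(\alpha)}_\lambda(x)=s(\phi^{\alpha^L}_\lambda(1_x))$ stays in $s(G^x)=\O_x$, forcing equality.
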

Let us now list some examples of algebroids corresponding to the examples from the world of Lie groupoids, listed above. 
\begin{example}\
\label{ex:algebroid_examples}
\begin{enumerate}[label={(\roman*)}]
    \item \textit{Bundles of Lie algebras}. A bundle of Lie algebras is a Lie algebroid with a vanishing anchor. 
    \item The Lie algebroid of a submersion groupoid for a given submersion $\pi\colon M\ra N$ is the involutive distribution $\ker\d\pi \subset TM$.
    \item \emph{Action algebroids}. An action of a Lie algebra $\frak g$ on a manifold $M$ defines a Lie algebroid $\frak g \ltimes M\Ra M$. As a vector bundle, it is defined as the product $\frak g\times M$; its anchor is given by the Lie algebra action, and the bracket is determined by the Leibniz rule and the condition $[c_\xi,c_\eta]=[\xi,\eta]$ where $c_\xi,c_\eta$ are the constant sections associated to $\xi,\eta\in\frak g$. Every Lie group action induces a Lie algebra action, however, not every Lie algebra action is integrable to a Lie group action.
    \item \emph{General linear algebroids}. 
    Given a vector bundle $V\ra M$, the Lie algebroid of the general linear groupoid $\mathrm{GL}(V)$ is isomorphic to the Lie algebroid $\frak{gl}(V)$ whose sections are the so-called \emph{covariant differential operators} (also called \emph{derivations}) of $V$:
    \[
    \Gamma(\frak{gl}(V))=\set*{\big(D\colon\Gamma(V)\xrightarrow{\text{lin.}} \Gamma(V), X_D\in\vf(M)\big)\given  D(f\sigma)=fD(\sigma)+X_D(f)\sigma}
    \]
    The anchor is given by the assignment $(D,X_D)\mapsto X_D$, and the bracket is just the commutator of differential operators. For details, see \cite{actions}*{Theorem 1.4}. 
    \item \emph{Atiyah algebroid}. Given a principal $G$-bundle $\pi\colon P\ra M$, the Lie algebroid of the gauge groupoid $(P\times P)/G\rra M$ is isomorphic to the quotient vector bundle $TP/G$ where the action is given by the differentials of right translations. It fits into the short exact sequence of Lie algebroids
\[\begin{tikzcd}
	0 & {\ad(P)} & {\frac{TP}G} & TM & 0,
	\arrow[from=1-1, to=1-2]
	\arrow[from=1-2, to=1-3]
	\arrow["\rho",from=1-3, to=1-4]
	\arrow[from=1-4, to=1-5]
\end{tikzcd}\]
called the \emph{Atiyah sequence}, where $\ad(P)$ denotes the adjoint vector bundle $\frac{P\times\frak g}{G}$ with respect to the adjoint representation of $G$ on $\frak g$. The space of sections of $TP/G$ is precisely the Lie algebra of $G$-invariant vector fields on $P$, and the anchor is the map induced by $\d\pi$.

This is an example of a \emph{transitive algebroid}, that is, $\im(\rho)=TM$, but not every transitive algebroid is integrable to a principal bundle. A precise obstruction is given in \cite{integration_transitive}.

As a concrete example, consider the Atiyah algebroid of the frame bundle $\operatorname{Fr}(V)\ra M$ of a vector bundle $V\ra M$. It is isomorphic precisely to the general linear algebroid from the previous example, and the corresponding Atiyah sequence reads
\[\begin{tikzcd}
	0 & {\End(V)} & {\frak{gl}(V)} & TM & 0.
	\arrow[from=1-1, to=1-2]
	\arrow[from=1-2, to=1-3]
	\arrow[from=1-3, to=1-4]
	\arrow[from=1-4, to=1-5]
\end{tikzcd}\]
\end{enumerate}
\end{example}

\subsubsection{Special classes of Lie groupoids and algebroids}\
\begin{itemize}
    \item A Lie groupoid (or algebroid) is said to be \emph{regular}, if the orbit foliation $\F$ is regular, i.e., if all orbits have the same dimension. This is equivalent to saying that the anchor map $\rho\colon A\ra TM$ has constant rank (as a vector bundle map).
    \item A Lie groupoid is said to be \emph{proper}, if $(s,t)\colon G\ra M\times M$ is a proper map. For instance, the gauge groupoid of a principal bundle is proper if and only if the structure group is compact; the action groupoid of a Lie group action is proper if and only if the action map is proper. 
\end{itemize}

\subsubsection{Bisections}
\begin{definition}
  A \emph{bisection} of a Lie groupoid $G\rra M$ is a smooth map $b\colon M\ra G$ such that $s\circ b=\id_M$ and $\varphi\coloneq t\circ b\colon M\ra M$ is a diffeomorphism. A \emph{local} bisection is defined similarly, on an open subset $U\subset M$, with the requirement that $\varphi\colon U\ra \varphi(U)$ is a diffeomorphism between open subsets of $M$. The set of all bisections will be denoted by $\Bis(G)$.
\end{definition}
\begin{remark}
  The asymmetry between the source and target map is only apparent---we can equivalently describe a bisection as a submanifold $B\subset G$ such that both $s|_B$ and $t|_B$ are diffeomorphisms $B\ra M$. Accordingly, one can reverse the role of the source and target map by simply defining 
  \[
    \overline b\colon M\ra G,\quad \overline b=b\circ\varphi^{-1},
  \]
  which now satisfies $t\circ\bar b=\id_M$ and $s\circ \bar b=\varphi^{-1}$ is a diffeomorphism. To avoid confusion, we will sometimes use the term \emph{source bisection} to refer to the notion from the definition above, and \emph{target bisection} to mean the one just discussed.
\end{remark}
\begin{remark}
  The set $\Bis(G)$ is a group, with the multiplication defined as 
  \[
    b_1b_2\colon M\ra G,\quad (b_1b_2)(x)=b_1(t(b_2(x)))b_2(x).
  \]
  The unit element is the unit map $u\colon M\ra G$ and the inverses are given by $b^{-1}=\inv\circ \overline b$.
\end{remark}
\begin{example}
  \label{ex:exp_bisection}
  Given a section $\alpha\in\Gamma(A)$ of the Lie algebroid of $G$, its left-invariant extension $\alpha^L\in\vf(G)$ defines the following (local, target) bisection for any fixed $\lambda\in\R$:
  \begin{align*}
    \exp(\lambda\alpha)\colon U\ra G,\quad \exp(\lambda\alpha)(x)=\phi^{\alpha^L}_\lambda(1_x),
  \end{align*}
  where $U$ is the subset of all $x\in M$ for which the flow of $\alpha^L$ through $1_x$ at time $\lambda$ is defined. If the vector field $\rho(\alpha)\in\vf(M)$ is complete,  $\alpha^L\in\vf(G)$ is also complete by Proposition \ref{prop:completeness_characterization}, hence $\exp(\alpha)$ defines a global (target) bisection.
\end{example}
Any (source) bisection $b$ determines the \emph{global translations},
\begin{align*}
  \begin{aligned}
    &L_b\colon G\ra G,&\quad L_b(g)&=b(t(g))g,\\
    &R_b\colon G\ra G,&\quad R_b(g)&=g \overline b(s(g)).
  \end{aligned}
\end{align*}
When restricted to the source and target fibres (respectively), they read
\begin{align*}
  \begin{aligned}
    &L_b|_{G^x}\colon G^x\ra G^{\varphi(x)},&\quad L_b|_{G^x}&=L_{b(x)},\\
    &R_b|_{G_x}\colon G_x\ra G_{\varphi^{-1}(x)},&\quad R_b|_{G_x}&=R_{\overline b(x)}.
  \end{aligned}
\end{align*}
The \emph{inner automorphism} by a (source) bisection $b$ is defined as the groupoid automorphism
\begin{equation*}
\begin{aligned}
	&I_b\colon G\ra G,\\ 
	&I_b(g)=b(t(g))g b(s(g))^{-1}.
\end{aligned}
\qquad
\vcenter{\hbox{
	\begin{tikzcd}
		G & G \\
		M & M
		\arrow["{I_b}", from=1-1, to=1-2]
		\arrow[shift left, from=1-1, to=2-1]
		\arrow[shift right, from=1-1, to=2-1]
		\arrow[shift left, from=1-2, to=2-2]
		\arrow[shift right, from=1-2, to=2-2]
		\arrow["\varphi", from=2-1, to=2-2]
	\end{tikzcd}
}}
\end{equation*}
We denote the subgroup of all such groupoid automorphisms by $\mathrm{Inn}(G)\subset \mathrm{Aut}(G)$.
\begin{example}
  \label{ex:gauge_transformations_inner_automorphisms}
  On the gauge groupoid $\G(P)$ of a principal $G$-bundle $\pi\colon P\ra M$, bisections are in a bijective correspondence with gauge transformations ($G$-equivariant bundle automorphisms). Indeed, if we are given a gauge transformation $f\colon P\ra P$ covering $\varphi\colon M\ra M$, then the corresponding  (source) bisection reads $b(x)=[f(u),u]$ for an arbitrary $u\in \pi^{-1}(x)$, which is well-defined by $G$-equivariance, and smooth since $\pi$ is a surjective submersion. Conversely, given a source bisection $b\colon M\ra \G(P)$, take any $u\in P$ and observe there holds $b(\pi(u))=[f(u),u]$ for some uniquely determined $f(u)\in P$; it is easy to see that this defines a smooth $G$-equivariant automorphism $f\colon P\ra P$ covering $\varphi=t\circ b$. The inner automorphism corresponding to a gauge transformation $f$ clearly reads $[u_1,u_2]
  \mapsto[f(u_1),f(u_2)]$ for any $u_1,u_2\in P$. Therefore, under this identification, the kernel of the group homomorphism $\Bis(\G(P))\ra \mathrm{Inn}(\G(P))\subset \mathrm{Aut}(\G(P))$, $b\mapsto I_b$, is identified with gauge transformations of the form $f(u)=u\cdot g$, where $g\in Z(G)$ is a fixed element from the centre of the Lie group $G$. Hence, $\mathrm{Inn}(\G(P))$ can be identified with gauge transformations of $P$, modulo transformations by a constant central element of $G$, i.e., \[\mathrm{Inn}(\G(P))\cong \mathrm{Aut}(P)/Z(G).\]
  At last, combining the identification $\Bis(\G(P))\cong \mathrm{Aut}(P)$ with Example \ref{ex:exp_bisection}, we see that any section $\xi\in\Gamma(\ad(P))$ defines a gauge transformation covering the identity, given by
  \[
  f(u)=u\cdot\exp(v_u),
  \]
  for any $u\in P$, where $v_u\in\frak g$ is the unique vector such that $\xi_{\pi(u)}=[u,v_u]\in \ad(P)_{\pi(u)}$.
\end{example}

\section{Representations}
\begin{definition}
    A \textit{representation} of a Lie groupoid $G\rra M$ on a vector bundle $\pi\colon V\ra M$ is a linear action on $V$, that is, smooth map $\Delta\colon G\tensor[_s]{\times}{_\pi}V\ra V$, denoted $(g,v)\mapsto g\cdot v$ satisfying
\begin{align*}
  \pi(g\cdot v)\in V_{t(g)},\quad 1_x\cdot v=v,\quad g\cdot (h\cdot v)=(gh)\cdot v,
\end{align*}
for all $g,h\in G$ and $v\in V$ at which the expressions are defined. Additionally, for any $g\in G$, we require the action map $\Delta_g\colon V_{s(g)}\ra V_{t(g)}$, $v\mapsto g\cdot v$ to be linear. In other words, a representation of $G$ on $V$ is a Lie groupoid morphism $G\ra \mathrm{GL}(V).$ Since any representation is in particular an action of a Lie groupoid, we will sometimes denote a representation by $G\curvearrowright V$.
\end{definition}
Passing to the infinitesimal level, let $A\Ra M$ be any Lie algebroid. Imitating the Lie groupoid case, we may define a representation of $A$ on $V$ as a Lie algebroid morphism $A \ra \frak{gl}(V).$
At the level of sections, this condition just means that we have a map 
\[\Gamma(A)\ra \Gamma(\frak{gl}(V)),\quad \alpha\mapsto \big(\nabla^A_\alpha, \rho(\alpha)\big).\] 
That this is a Lie algebroid morphism then translates to a simple flatness condition,
\[
    \nabla^A_{[\alpha,\beta]}\sigma=\nabla^A_\alpha \nabla^A_\beta\sigma - \nabla^A_\beta \nabla^A_\alpha\sigma.
\]
So, we may state the definition of an algebroid representation in the following way.
\begin{definition}
    A \textit{representation} of a Lie algebroid $A\Ra M$ is a flat $A$-connection on a vector bundle $V\rightarrow M$, that is, a bilinear map
\begin{align*}
  \nabla^A\colon \Gamma(A)\times \Gamma(V)\ra\Gamma(V),\quad (\alpha,\sigma)\mapsto \nabla^A_\alpha \sigma,
\end{align*}
which is $C^\infty(M)$-linear in the first argument, satisfies the Leibniz identity
\begin{align*}
  \nabla^A_\alpha(f\sigma)=f\nabla^A_\alpha \sigma+ \rho(\alpha)(f)\sigma,
\end{align*}
and satisfies the flatness condition $\nabla^A_{[\alpha,\beta]}=[\nabla^A_\alpha,\nabla^A_\beta]$.
\end{definition}
\begin{proposition}
    \label{prop:reps_groupoid_to_algebroid}
    If $A$ is the Lie algebroid of a Lie groupoid $G$, a representation of $G$ on $V$ induces a representation of $A$ on $V$, by setting
\begin{align}
    \label{eq:representation_formula}
    \big(\nabla^A_\alpha \xi\big)(x)=\deriv\lambda 0 \phi^{\alpha^L}_{\lambda}(1_x)\cdot \xi\big(\phi^{\rho(\alpha)}_{\lambda}(x)\big),
\end{align}
where $\alpha^L\in\vf(G)$ denotes the left-invariant extension of $\alpha\in \Gamma(A)$ on $G$, $\phi^{\alpha^L}_{\lambda}$ is its flow at time $\lambda$, and we denoted by $\phi^{\smash{\rho(\alpha)}}_\lambda$ the flow of the vector field $\rho(\alpha)\in\vf(M)$. 

Moreover, if $G$ has simply connected $s$-fibres, this defines a bijective correspondence between representations of $G$ and representations of $A$.
\end{proposition}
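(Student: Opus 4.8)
\emph{Setup and overall strategy.} The plan is to treat the two assertions separately: first that the right–hand side of \eqref{eq:representation_formula} is a well-defined flat $A$-connection, and then that differentiation is a bijection when the $s$-fibres of $G$ are simply connected. A representation $G\curvearrowright V$ is the same thing as a morphism of Lie groupoids $\Delta\colon G\to\mathrm{GL}(V)$ over $\id_M$, where $\mathrm{GL}(V)=\G(\End V)$ is a Lie groupoid by Theorem \ref{thm:open_inv}. With the conventions of Chapter \ref{chapter:lie_cats}, the left-invariant field $\alpha^L\in\Gamma(\ker\d t)$ is tangent to the $t$-fibres and $s$-related to $\rho(\alpha)$, so $\phi^{\alpha^L}_\lambda(1_x)$ is an arrow with target $x$ and source $\phi^{\rho(\alpha)}_\lambda(x)$; hence $\lambda\mapsto\phi^{\alpha^L}_\lambda(1_x)\cdot\xi\!\big(\phi^{\rho(\alpha)}_\lambda(x)\big)$ is a smooth path in the fixed fibre $V_x$, defined for $\lambda$ near $0$, and its $\lambda$-derivative at $0$ lies in $V_x$ and depends smoothly on $x$.

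\emph{First part, elementary axioms.} For $C^\infty(M)$-linearity in $\alpha$ I would use the identity $(f\alpha)^L=(f\circ s)\,\alpha^L$, which yields $\smallderiv\lambda0\phi^{(f\alpha)^L}_\lambda(1_x)=f(x)\,\alpha^L(1_x)$ and $\smallderiv\lambda0\phi^{\rho(f\alpha)}_\lambda(x)=f(x)\,\rho(\alpha)(x)$. Since \eqref{eq:representation_formula} depends on these two paths only through their velocities at $\lambda=0$ (the base points $1_x$ and $x$ being independent of the chosen representative of $\alpha$), it follows by the chain rule that $\nabla^A_{f\alpha}\xi=f\,\nabla^A_\alpha\xi$. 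The Leibniz rule $\nabla^A_\alpha(g\xi)=g\,\nabla^A_\alpha\xi+\rho(\alpha)(g)\,\xi$ comes from pulling the scalar $g$ through the linear map $\Delta_{\phi^{\alpha^L}_\lambda(1_x)}$ and applying the ordinary product rule to $\smallderiv\lambda0 g\!\big(\phi^{\rho(\alpha)}_\lambda(x)\big)\cdot\big[\phi^{\alpha^L}_\lambda(1_x)\cdot\xi(\phi^{\rho(\alpha)}_\lambda(x))\big]$.

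\emph{First part, flatness.} Rather than a direct Jacobi-identity computation, I would observe that \eqref{eq:representation_formula} is exactly the image of $\Delta$ under the functor $A^L$ of Chapter \ref{chapter:lie_cats}. Indeed $A^L(\Delta)$ is the Lie algebroid morphism $A\to A^L(\mathrm{GL}(V))$, $\alpha\mapsto\d\Delta\circ\alpha$; under the identification $A^L(\mathrm{GL}(V))\cong\frak{gl}(V)$ of Example \ref{ex:algebroid_examples}, a tangent vector to $\mathrm{GL}(V)$ at $1_x$ lying in $\ker\d t$ is the velocity of a path of fibrewise isomorphisms $\psi_\lambda\colon V_{c(\lambda)}\to V_x$ with $\psi_0=\id$, and the associated covariant differential operator sends $\xi$ to $\smallderiv\lambda0\psi_\lambda\!\big(\xi(c(\lambda))\big)$; taking $\psi_\lambda=\Delta_{\phi^{\alpha^L}_\lambda(1_x)}$ and $c(\lambda)=\phi^{\rho(\alpha)}_\lambda(x)$ reproduces \eqref{eq:representation_formula}. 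As $A^L$ lands in Lie algebroids, $A^L(\Delta)$ is bracket-preserving, which is precisely $\nabla^A_{[\alpha,\beta]}=[\nabla^A_\alpha,\nabla^A_\beta]$ — and in fact this functorial viewpoint re-proves the two axioms above in one stroke, so one may present the whole first part this way and regard the direct computations as a sanity check.

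\emph{Second part and main obstacle.} Under the above identifications, differentiation is the map $\Delta\mapsto A^L(\Delta)$ on morphisms $G\to\mathrm{GL}(V)$ and $A\to\frak{gl}(V)$ over $\id_M$. Assuming the $s$-fibres of $G$ are simply connected — equivalently, via inversion, its $t$-fibres are — Lie's second theorem for Lie groupoids applies: $\mathrm{GL}(V)$ integrates $\frak{gl}(V)$ (Example \ref{ex:algebroid_examples}), $G$ is source-simply-connected, hence every Lie algebroid morphism $A\to\frak{gl}(V)$ integrates to a \emph{unique} Lie groupoid morphism $G\to\mathrm{GL}(V)$; this is exactly the asserted bijection. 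The elementary part is essentially bookkeeping once the formula is recognised as $A^L$ applied to $\Delta$; the genuine content is the second part, resting on the integrability of Lie algebroid morphisms. If a self-contained argument is preferred, the crux — and the step I expect to be the main obstacle — is to reconstruct the $G$-action from a flat $A$-connection $\nabla^A$ by integrating, along a path $g_\lambda$ from $1_x$ to $g$ inside the $t$-fibre $G^x$, the linear differential equation whose driving term at parameter $\lambda$ is $\nabla^A$ evaluated on the time-dependent section $\alpha_\lambda\in\Gamma(A)$ determined by $\dot g_\lambda=\alpha_\lambda^L(g_\lambda)$, and then to verify that flatness of $\nabla^A$ together with simple connectivity of $G^x$ makes the endpoint independent of the path (with smoothness and functoriality of the resulting action checked afterwards). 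This is precisely the content that the general Lie~II packages away.
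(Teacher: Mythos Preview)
Your proposal is correct and matches the paper's approach: the paper does not give a detailed proof but simply remarks that the second part follows from Lie's second fundamental theorem for groupoids \cite{lie2}, which is exactly your argument, and implicitly treats the first part as the standard differentiation of a groupoid morphism $G\to\mathrm{GL}(V)$ to an algebroid morphism $A\to\frak{gl}(V)$. Your unpacking of why \eqref{eq:representation_formula} is precisely $A^L(\Delta)$ under the identification $A^L(\mathrm{GL}(V))\cong\frak{gl}(V)$ of Example~\ref{ex:algebroid_examples} is more explicit than anything the paper writes down, but it is the same idea.
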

\begin{remark}
    The second part of the last proposition follows by virtue of Lie's second fundamental theorem for groupoids \cite{lie2}.
\end{remark}
 \begin{example}
    Every regular Lie groupoid $G$ comes equipped with two canonical representations: 
    \begin{itemize}
        \item The \emph{adjoint representation}:
        \[\Ad\colon G\curvearrowright \ker\rho,\quad \Ad_g(\xi)=\d(C_g)_{1_{s(g)}}(\xi),\]
        where $C_g=L_{g}\circ R_{g^{-1}}\colon G^{s(g)}_{s(g)}\ra G^{t(g)}_{t(g)}$ is the conjugation map, and $\xi\in \ker\rho_{s(g)}$.
        \item The \emph{normal representation}:
        \[
        \nu\colon G\curvearrowright \operatorname{coker}\rho=TM/T\F,\quad \nu_g[v]=[\d t(\overline v)],
        \]
        for any $v\in T_{s(g)}M$, where $\overline v\in T_g G$ is any vector such that $\d s_g(\overline v)=v$.
    \end{itemize}
In the realm of regular Lie algebroids, the two respective flat $A$-connections read
    \[
    \nabla^A_\alpha\xi=[\alpha,\xi],\quad\text{and}\quad \nabla^A_\alpha[X]=[[\rho(\alpha),X]],
    \]
    for any $\alpha\in\Gamma(A)$ and $\xi\in\Gamma(\ker\rho)$, $X\in\vf(M)$.
 \end{example}

\section[Bott--Shulman--Stasheff and Weil complex with coefficients]{Bott--Shulman--Stasheff and Weil complex with coefficients}
\label{sec:bss_weil}
In this section, we recall the definitions from \cite{homogeneous} of the cochain complexes which are fundamental for our work. On the side of groupoids, the cochain complexes consist of differential forms on the nerve, which, importantly, have values in a representation. An analogous infinitesimal notion to representation-valued forms on the nerve is much more involved---these objects are called Weil cochains. The most important feature of these two complexes is that multiplicative forms appear in them as 1-cocycles. This will be further emphasized in each of the respective complexes.

\subsection*{Representation-valued forms on a Lie groupoid}
Let $V$ be a representation of a Lie groupoid $G\rra M$ and consider the set of  $V$-valued differential forms of degree $q$ on the level $p$ of the nerve of $G$,
\begin{align*}
&\Omega^{0,q}(G;V)\coloneqq \Omega^q(M;V),\\
&\Omega^{p,q}(G;V)\coloneqq \Omega^q(G^{(p)};(s\circ \pr_p)^*V),\quad \text{for }p\geq 1,
\end{align*}
where $\pr_p\colon G^{(p)}\ra G$ denotes the projection to the last element. For a fixed degree $q$, this becomes a cochain complex, with the differential defined as follows. At level $p=0$,
\begin{align}
\begin{split}
\label{eq:delta_0}
&\delta^{0}\colon \Omega^q(M;V)\ra \Omega^q(G;s^*V),\quad(\delta^0\gamma)_g=(s^*\gamma)_g-g^{-1}\cdot (t^*\gamma)_g.
\end{split}
\end{align}
At level $p\geq 1$, the differential is given by the alternating sum of pullbacks along the face maps,
\begin{align}
\begin{split}
\label{eq:delta_l}
&\delta^p\colon \Omega^{p,q}(G;V)\ra \Omega^{p+1,q}(G;V),\quad
\delta^p=\sum_{i=0}^{p}(-1)^i\big(f_i^{(p+1)}\big)^* + (-1)^{p+1}\Phi_*\big(f_{p+1}^{(p+1)}\big)^*
\end{split}
\end{align}
where the map $\Phi_*\colon \Omega^q(G^{(p+1)};(t\circ\pr_{p+1})^*V)\ra \Omega^q(G^{(p+1)};(s\circ\pr_{p+1})^*V)$ changes the coefficients of forms---it is induced by the isomorphism of vector bundles
\begin{align*}
  \phi\colon t^*V\ra s^*V,\quad \phi(g,v)=(g,g^{-1}\cdot v)
\end{align*}
or rather its $(p+1)$-level analogue,
\begin{align*}
&\Phi\colon(t\circ \pr_{p+1})^*V\ra (s\circ \pr_{p+1})^*V,\\
&\Phi(g_1,\dots,g_{p+1},v)= (g_1,\dots,g_{p+1},g_{p+1}^{-1}\cdot v).
\end{align*}
It is well known that $\delta^{p+1}\circ\delta^p=0$ for any fixed $q\geq 0$.
\begin{definition}
  Let $G\rra M$ be a Lie groupoid with a representation $V$. The differential $\delta$ is called the \textit{simplicial differential} of differential forms on the nerve. At any fixed degree $q\geq 0$, it defines a cochain complex called the \textit{Bott--Shulman--Stasheff complex}, 
  \[
  (\Omega^{\bullet,q}(G;V),\delta),
  \]
  whose cohomology is called the \textit{simplicial cohomology} of differential forms on the nerve,
\begin{align}
  \label{eq:bss_cohomology}
      H^{p,q}(G;V)\coloneqq H^p(\Omega^{\bullet,q}(G;V),\delta).
\end{align}
\end{definition}
\begin{example}
At level $p=0$, the cocycles $\ker\delta^0$ are called \textit{invariant forms} on $M$, i.e., forms $\gamma\in\Omega^q(M;V)$ which satisfy the following identity for any $g\in G$:
\[(t^*\gamma)_g=g\cdot (s^*\gamma)_g.\]
At level $p=1$, the cocycles $\ker\delta^1$ are called \textit{multiplicative forms} on $G$. These are differential forms $\omega\in\Omega^q(G;s^*V)$ that satisfy
\begin{align}
\label{eq:multiplicative}
  \omega_{gh}(\d m(X_i,Y_i))_i=\omega_h(Y_i)_i+h^{-1}\cdot \omega_g(X_i)_i
\end{align}
for any vectors $(X_i,Y_i)\in T_{(g,h)}G^{(2)}$, where $s(g)=t(h)$, and we denote them by $\Omega^q_m(G;V)$. The cocycles arising as coboundaries $\im \delta^0\subset \Omega^\bullet_m(G;V)$ are called \textit{cohomologically trivial}.
\end{example}
\begin{remark}
    In the example above, we are using the notation
    \[(X_i)_i= (X_i)_{i=1}^q=(X_1,\dots, X_q),\]
    and similarly $(\d m (X_i,Y_i))_i=(\d m (X_1,Y_1),\dots,\d m (X_q,Y_q))$. Such notation will often be used, and the number $q$ of elements of a given tuple will always be clear from context.
\end{remark}

\subsection*{Representation-valued Weil cochains}
Let $V$ be a representation of a Lie algebroid $A\Ra M$. Following \cite{homogeneous}*{\sec 4}, the infinitesimal analogue of the complex $\Omega^{p,q}(G;V)$ is the Weil complex: it is defined as the family of sets $W^{p,q}(A;V)$ of sequences $c=(c_0,\dots,c_p)$ of alternating $\R$-multilinear maps 
\[
c_k\colon\underbrace{\Gamma(A)\times\dots\times \Gamma(A)}_{p-k \text{ copies}}\ra \Omega^{q-k}(M;S^k(A^*)\otimes V),
\]
whose failure at being $C^\infty(M)$-multilinear is controlled by the \textit{Leibniz identity}
\[
c_k(f\alpha_1,\alpha_2,\dots,\alpha_{p-k}\|\cdot)=fc_k(\alpha_1,\dots,\alpha_{p-k}\|\cdot)+\d f\wedge c_{k+1}(\alpha_2,\dots,\alpha_{p-k}\|\alpha_1,\cdot).
\]
Here, the notation reflects the fact that each $c_k$ in total inputs $p$ sections of $A$, accounting also for the $k$ arguments in which $c_k$ is symmetric. More precisely, we write
\[c_k(\alpha_1,\dots,\alpha_{p-k}\|\beta_1,\dots,\beta_k)=c_k(\ul\alpha\|\ul\beta) \in \Omega^{q-k}(M;V)\]
to mean that $c_k$ is $\R$-multilinear and antisymmetric in the first $p-k$ arguments, and  $C^\infty(M)$-multilinear and symmetric in the last $k$ arguments. The elements of $W^{p,q}(A;V)$ will be called \textit{Weil cochains}. The term $c_0$ will be called the \textit{leading term}, while the higher terms will be called the \textit{correction terms}.

\begin{remark}
There is a general principle that we will often use, as in \cite{weil}. Suppose we want to define a map $F\colon C\ra W^{p,q}(A;V)$ to the Weil complex from some set $C$. To do so, we will often prescribe the leading term $(Fc)_0$ and then infer the correction terms $(Fc)_1,\dots,(Fc)_p$ by consecutively applying the Leibniz identity. The leading term will always have a clear conceptual meaning, whereas the correction terms will usually be more complicated. We will usually omit this heuristic procedure of obtaining the correction terms, and will instead provide the proof of well-definedness of $F$. 
\end{remark}
As in the setting of groupoids, the sets $W^{p,q}(A;V)$ form a cochain complex, with the differential $\delta\colon W^{p,q}(A;V)\ra W^{p+1,q}(A;V)$ defined as follows. First  note that for arbitrary $m,n\geq 0$ the space $\Omega^m(M;S^n(A^*)\otimes V)$ is a module for the Lie algebra $\Gamma(A)$, with the action given  by the Lie derivative induced by the representation, defined by the chain rule:
\begin{align*}
(\L^A_\alpha\gamma)(\ul\beta)(\ul X)&=\nabla^A_\alpha\gamma(\ul\beta)(\ul X)\\
&-\textstyle\sum_i\gamma(\beta_1,\dots,[\alpha,\beta_i],\dots,\beta_n)(\ul X)\\
&-\textstyle\sum_j\gamma(\ul\beta)(X_1,\dots,[\rho(\alpha),X_j],\dots, X_m).
\end{align*}
Then, the leading term $(\delta c)_0$ for a given sequence $c=(c_0,\dots,c_p)\in W^{p,q}(A;V)$ is defined as the \textit{Koszul differential} of the leading term $c_0$, with respect to this module structure:
\begin{align*}
(\delta c)_0 (\alpha_0,\dots,\alpha_p)&=\textstyle\sum_i(-1)^{i}\L^A_{\alpha_i} \big(c_0(\alpha_0,\dots,\widehat{\alpha_i},\dots,\alpha_{p})\big)\\
&+\textstyle\sum_{i<j}(-1)^{i+j} c_0([\alpha_i,\alpha_j],\alpha_0,\dots,\widehat{\alpha_i},\dots,\widehat{\alpha_j},\dots,\alpha_{p}).
\end{align*}
The correction terms can be found by consecutively applying the Leibniz rule; as obtained in \cite{homogeneous}*{equation 4.2}, they read
\begin{align}
\begin{split}
\label{eq:delta_inf}
(-1)^k(\delta c)_k &(\alpha_0,\dots,\alpha_{p-k}\|\beta_1,\dots,\beta_k)\\
&=\textstyle\sum_{i=0}^{p-k}(-1)^{i}\L^A_{\alpha_i} \big(c_k(\alpha_0,\dots,\widehat{\alpha_i},\dots,\alpha_{p-k}\|\cdot)\big)(\ul\beta)\\
&+\textstyle\sum_{i<j}^{p-k}(-1)^{i+j} c_k([\alpha_i,\alpha_j],\alpha_0,\dots,\widehat{\alpha_i},\dots,\widehat{\alpha_j},\dots,\alpha_{p-k}\|\ul\beta)\\
&-\textstyle\sum_{j=1}^k \iota_{\rho(\beta_j)} c_{k-1}(\ul\alpha\|\beta_1,\dots,\widehat{\beta_j},\dots,\beta_k).
\end{split}
\end{align}
Notably, the $(p+1)$-th term simply reads
\[
(\delta c)_{p+1}(\ul\beta)=(-1)^p\textstyle\sum_j \iota_{\rho(\beta_j)} c_p(\beta_1,\dots,\widehat{\beta_j},\dots,\beta_p).
\]
\begin{definition}
  Let $A\Ra M$ be a Lie algebroid with a representation $V$. The differential $\delta$ is called the \textit{simplicial differential} of Weil cochains. At any fixed degree $q\geq 0$, it defines a cochain complex called the \textit{Weil complex}, 
  \[
  (W^{\bullet,q}(A;V),\delta),
  \]
  whose cohomology is called the \textit{simplicial cohomology} of Weil cochains,
\begin{align}
  \label{eq:weil_cohomology}
      H^{p,q}(A;V)\coloneqq H^p(W^{\bullet,q}(A;V),\delta).
\end{align}
\end{definition}
\begin{example}
At $p=0$, we have $W^{0,q}(A;V)=\Omega^q(M;V)$ and $\delta$ is given by
\begin{align}
\label{eq:delta0_im}
(\delta c)_0(\alpha)=\L^A_\alpha c, \qquad(\delta c)_1(\beta)=\iota_{\rho(\beta)}c.
\end{align}
The cocycles at $p=0$ are called \textit{invariant forms}, denoted $\Omega^q_{\inv}(M;V)$. At level $p=1$, the simplicial differential of a Weil cochain $c=(c_0,c_1)$ reads 
\begin{align}
\label{eq:delta_p=1}
\begin{split}
  (\delta c)_0(\alpha_1,\alpha_2)&=\L^A_{\alpha_1} c_0(\alpha_2)-\L^A_{\alpha_2} c_0(\alpha_1)-c_0[\alpha_1,\alpha_2],\\
  (\delta c)_1(\alpha,\beta)&=-(\L^A_{\alpha} c_1)(\beta)+\iota_{\rho(\beta)}c_0(\alpha)=-\L^A_{\alpha} (c_1(\beta))+ c_1[\alpha,\beta]+\iota_{\rho(\beta)}c_0(\alpha),\\
  (\delta c)_2(\beta_1,\beta_2)&=-\iota_{\rho(\beta_1)} c_1(\beta_2)-\iota_{\rho(\beta_2)} c_1(\beta_1).
\end{split}
\end{align}
The cocycles at $p=1$ are called \textit{infinitesimal multiplicative forms} (more briefly, \textit{IM forms}) or \textit{Spencer operators}, and we denote $\Omega^\bullet_{im}(A;V)=\ker\delta^1$. By the equations above, a cocycle $c=(c_0,c_1)$ satisfies
\begin{align}
  c_0[\alpha_1,\alpha_2]&=\L^A_{\alpha_1} c_0(\alpha_2)-\L^A_{\alpha_2} c_0(\alpha_1),\label{eq:c1}\tag{C.1}\\
  c_1[\alpha,\beta]&=\L^A_{\alpha} (c_1(\beta))-\iota_{\rho(\beta)}c_0(\alpha),\label{eq:c2}\tag{C.2}\\
  \iota_{\rho(\beta_1)} c_1(\beta_2)&=-\iota_{\rho(\beta_2)} c_1(\beta_1).\label{eq:c3}\tag{C.3}
  \end{align}
The equations \eqref{eq:c1}--\eqref{eq:c3} will be referred to as the \textit{compatibility conditions} for $(c_0,c_1)$; as noted in \cite{spencer}*{Remark 2.7}, condition \eqref{eq:c2} follows from the Leibniz identity and \eqref{eq:c1} unless $\dim M =q+1$, however, it nonetheless often turns out to be useful to keep it in mind. The cocycles $\im\delta^0\subset \Omega^\bullet_{im}(A;V)$ arising as coboundaries are called \textit{cohomologically trivial} IM forms. 
\end{example}

\section{Van Est theorem for representation--valued forms}
The relationship between the Bott--Shulman--Stasheff and Weil complex is provided by the so-called \emph{van Est map}. The central property of this map is that under certain connectedness assumptions, it is a quasi-isomorphism---this is called the \emph{van Est theorem}. Let us first state it and then provide an explicit defining formula; the idea of proof is given later in \sec\ref{sec:idea_van_est}.
\begin{theorem}[\cite{homogeneous}*{Theorem 4.4}]
  \label{thm:homogeneous}
    Let $G$ be a Lie groupoid with Lie algebroid $A$, and let $V$ be its representation. The van Est map is a cochain map $\ve\colon \Omega^{\bullet,q}(G;V)\ra W^{\bullet,q}(A;V)$ at each fixed degree $q$, that is,
    \begin{align*}
        \ve\circ\delta_G=\delta_A\circ\ve.
    \end{align*}
    Moreover, if the Lie groupoid $G$ has $p_0$-connected $s$-fibres, then the induced map in cohomology,
    \[
        H^{p,q}(G;V)\ra H^{p,q}(A;V),
    \] 
    is an isomorphism for all $p\leq p_0$ and injective for $p=p_0+1$, for any fixed degree $q\geq 0$.
    \end{theorem}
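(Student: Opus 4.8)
The statement to prove is the van Est theorem (Theorem \ref{thm:homogeneous}), which has two parts: that $\ve$ is a cochain map, and that it induces an isomorphism in low degrees and an injection in one degree higher, under a connectedness hypothesis on the $s$-fibres. This is a known result from \cite{homogeneous}, so the proof here is essentially an exposition of that argument, adapted to the representation-valued setting. The plan is to follow the classical strategy of Crainic, and of Weinstein--Xu before him, but organized so that the coefficient bundle $V$ and its representation are carried along correctly.

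\textbf{Step 1: Construct the van Est map explicitly.} First I would give the defining formula. The cleanest route is via the perspective that $\Omega^{p,q}(G;V)$ can be described using the homogeneous (bar-type) complex: pull differential forms on the nerve back along the ``action'' maps, and then differentiate in the fibre directions using right-invariant vector fields. Concretely, for $\eta\in\Omega^{p,q}(G;V)$ one defines the leading term $(\ve\eta)_0(\alpha_0,\dots,\alpha_p)$ by a signed sum over permutations of Lie derivatives $R_{\alpha_{\sigma(i)}}$ applied to $\eta$ and then restricted to the unit manifold $M$, where $R_\alpha$ denotes the right-invariant vector field associated to $\alpha\in\Gamma(A)$ acting on the appropriate factor of $G^{(p)}$. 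The correction terms are then forced by the Leibniz identity (as the paper emphasizes in its ``general principle'' remark about maps into the Weil complex), so one only needs to verify well-definedness — i.e., that the prescribed leading term actually satisfies the Leibniz identity with the stated correction terms, and that it does not depend on the choice of extensions of the $\alpha_i$ to sections. This is a bookkeeping computation using that $[R_\alpha, R_\beta] = -R_{[\alpha,\beta]}$ and the compatibility of the representation with right translations.

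\textbf{Step 2: Prove $\ve$ is a cochain map.} Here I would verify $\ve\circ\delta_G = \delta_A\circ\ve$. It suffices to check this on leading terms, since both sides land in the Weil complex and a Weil cochain is determined by its leading term together with the Leibniz identity (both $\ve\circ\delta_G$ and $\delta_A\circ\ve$ are built from honest maps, so they are genuine Weil cochains whose correction terms are the Leibniz-forced ones). On leading terms, the simplicial differential $\delta_G$ is the alternating sum of face-map pullbacks with the last one twisted by the action (equations \eqref{eq:delta_0}--\eqref{eq:delta_l}), while $\delta_A$ on leading terms is the Chevalley--Eilenberg/Koszul differential for the $\Gamma(A)$-module structure via $\L^A$ (the formula right before Definition of the Weil complex). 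The identity then reduces to a compatibility between the face maps of the nerve and the Lie bracket of right-invariant vector fields — this is the same computation as in the trivial-coefficient van Est theorem, with the extra terms coming from the twist in the last face being exactly matched by the $\nabla^A$-terms in $\L^A$. I would present this as a direct but routine calculation, citing \cite{homogeneous} for the details of the combinatorics.

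\textbf{Step 3: The quasi-isomorphism statement via a double complex and a contracting homotopy.} This is the main obstacle and the heart of the theorem. The strategy is to factor the van Est map through an auxiliary double complex built from the nerve of $G$ with ``one leg'' differentiated infinitesimally — explicitly, consider the complex $C^{k,p,q}$ of $V$-valued $q$-forms on $G^{(p)}$ that are also $k$-fold ``jets along the last $s$-fibre direction'', with the de Rham-type differential along the $s$-fibres in the $k$-direction and the simplicial differential in the $p$-direction. The de Rham complex along the $s$-fibres of each $G^{(p)}$ is acyclic precisely because the $s$-fibres are $p_0$-connected (Poincaré lemma up to degree $p_0$), and the van Est map is, up to the chain homotopy, the edge map of the associated spectral sequence. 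Running the spectral sequence one way collapses (by the Poincaré lemma / connectedness) to give the Weil complex; running it the other way gives the Bott--Shulman--Stasheff complex. A comparison of $E_1$-pages, controlled by how far the $s$-fibre Poincaré lemma holds, yields: isomorphism on $H^{p,q}$ for $p\le p_0$ and injectivity for $p = p_0+1$. The subtle point — and the reason the hypothesis is on $s$-fibres and not, say, on $G$ itself — is that one needs the Poincaré lemma fibrewise and smoothly in the base point, which requires a smooth family of contractions of the $s$-fibres, available up to the connectivity bound $p_0$; I would invoke the relevant lemma from \cite{homogeneous} rather than reconstruct it. The representation $V$ only enters as a coefficient system that is pulled back along the relevant projections and is parallel along the $s$-fibres in the right sense, so it does not affect the acyclicity argument; this is worth a sentence of justification. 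I would conclude by assembling Steps 1--3 and noting that the degree-$q$ index is a silent parameter throughout.
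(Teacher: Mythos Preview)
Your proposal is essentially correct, but takes a genuinely different route from the paper's own argument (following \cite{homogeneous}). In Step~3 you propose to run the classical Crainic-style double complex/spectral sequence argument directly in the representation-valued setting, carrying the coefficient bundle $V$ through the Poincar\'e-lemma-along-$s$-fibres machinery. The paper instead \emph{reduces} to the ordinary van Est theorem with trivial coefficients: it embeds $\Omega^{\bullet,q}(G;V)$ into the groupoid cohomology $C^\infty(\GG_q^{(\bullet)})$ of the auxiliary $\vb$-groupoid $\GG_q=\oplus^q_G TG\oplus_G s^*V^*$ (and similarly $W^{\bullet,q}(A;V)$ into $\Gamma(\MM_q,\Lambda^\bullet\AA_q^*)$), verifies that the representation-valued van Est map factors through the ordinary one for $\GG_q$, and then observes that the $s$-fibres of $\GG_q$ are affine bundles over the $s$-fibres of $G$, hence inherit $p_0$-connectedness. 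The paper's route is cleaner in that it invokes the ordinary van Est theorem as a black box rather than reconstructing the contracting homotopy with coefficients; your route is more self-contained but demands that you be precise about the auxiliary double complex (your description of $C^{k,p,q}$ as ``$k$-fold jets along the last $s$-fibre direction'' is too vague as stated---you would need to spell out that this is the foliated de Rham complex along $s$-fibres with values in the pullback of $V$, and check that the pullback nature of the coefficients makes the fibrewise Poincar\'e lemma go through unchanged). A minor point: the paper uses \emph{left}-invariant vector fields $\alpha^L$ in defining the operators $R_\alpha$, not right-invariant ones.
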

The van Est map is defined as follows. Given a section $\alpha\in\Gamma(A)$, we will denote the flow of the left-invariant vector field $\cev\alpha$ at time $\lambda$ by $\phi^{\smash{\cev\alpha}}_\lambda$. Furthermore, $\cev\alpha$ defines a vector field on the $p$-th level of the nerve $G^{(p)}$, which we will denote by 
\[
\alpha^{(p)}|_{(g_1,\dots,g_p)}\coloneqq (0_{g_1},\dots,0_{g_{p-1}},\cev\alpha_{g_p}),
\]
and whose flow is given by $\phi^{\alpha^{(p)}}_\lambda(g_1,\dots,g_p)=(g_1,\dots,g_{p-1},\phi^{\cev\alpha}_\lambda(g_p))$. We now define the operator, used to define the leading term of $\ve$, which lowers the level of a given form:
\begin{align*}
&R_\alpha\colon \Omega^{p,q}(G;V)\ra \Omega^{p-1,q}(G;V),\\
&R_\alpha \omega|_{(g_1,\dots,g_{p-1})}=j_p^*\left(\deriv\lambda 0\phi^{\cev\alpha}_\lambda \big(1_{s(g_{p-1})}\big)\cdot \big(\phi^{\alpha^{(p)}}_\lambda\big)^*\omega\right),
\end{align*}
where $j_p\colon G^{(p-1)}\ra G^{(p)}$ is the degeneracy map that inserts the identity into the last factor (for an even more explicit expression of $R_\alpha$ see Example \ref{ex:RJ} below). This enables us to define the leading term of the van Est map:
\begin{align}
\label{eq:ve_leading}
\ve (\omega)_0(\alpha_1,\dots,\alpha_p)=\sum_{\sigma\in S_p}(\sgn\sigma) R_{\alpha_{\sigma(1)}}\dots R_{\alpha_{\sigma(p)}}\omega.
\end{align}
The correction terms are obtained from equation \eqref{eq:ve_leading} using the Leibniz rule. To write them, one shows that $R_{f\alpha}=f R_\alpha+\d f\wedge J_\alpha$, where $J_\alpha\colon \Omega^{p,q}(G;V)\ra \Omega^{p-1,q-1}(G;V)$ reads
\begin{align}
\label{eq:J_alpha}
  J_\alpha\omega=j_p^*(\iota_{\alpha^{(p)}}\omega).
\end{align}
As obtained in \cite{homogeneous}*{equation 4.4}, the correction terms then read
\begin{align*}
\ve(\omega)_k(\alpha_1,\dots,\alpha_{p-k}\|\beta_1,\dots,\beta_k)=(-1)^{\frac{k(k+1)}2}\sum_{\smash[b]{\sigma\in S_p}}(\sgn\sigma) (-1)^{\epsilon(\sigma,k)} D_{{\sigma(1)}}\dots D_{{\sigma(p)}}\omega,
\end{align*}
where we are denoting
\begin{align*}
D_j&=\begin{cases}
J_{\beta_j},&\text{if }1\leq j\leq k,\\
R_{\alpha_{k-j}},&\text{if }k+1\leq j\leq p,
\end{cases}\\
\epsilon(\sigma,k)&=\#\set{(i,j)\in\set{1,\dots,k}^2\given i<j\text{ and }\sigma^{-1}(i)>\sigma^{-1}(j)}.
\end{align*}
The sign that $\epsilon$ induces is sometimes also called the \textit{Koszul sign}.
\begin{example}
\label{ex:RJ}
At level $p=1$, the van Est map on a form $\omega\in \Omega^q(G;s^*V)$ reads
\begin{align}
\label{eq:van_est_p=1}
\begin{split}
    \ve(\omega)_0(\alpha)(X_i)_{i=1}^q&=(R_\alpha\omega)_x(X_i)_i=\deriv\lambda 0\phi^{\alpha^L}_\lambda(1_x)\cdot \omega\big({\d(\phi^{\cev\alpha}_\lambda)}_{1_{x}}\d u_{x} X_i\big)_i,\\
    \ve(\omega)_1(\beta)(X_i)_{i=1}^{q-1}&=(J_\beta\omega)_x(X_i)_i=\omega(\beta_x,\d u_x (X_1),\dots, \d u_x (X_{q-1}))
\end{split}
\end{align}
for any $X_i\in T_x M$ and $\alpha,\beta\in\Gamma(A)$. At level $p=2$, the maps $R_\alpha$ and $J_\alpha$ on $\omega\in \Omega^{2,q}(G;V)$ read
\begin{align*}
(R_\alpha\omega)_g(X_i)_{i=1}^q&=\deriv\lambda 0\phi^{\cev\alpha}_\lambda (1_{s(g)})\cdot \omega\big(X_i,\d(\phi^{\cev\alpha}_\lambda)_{1_{s(g)}}\d u_{s(g)} \d s_g X_i\big)_i,\\
(J_\alpha\omega)_g(X_i)_{i=1}^{q-1}&=\omega_{(g,1_{s(g)})}\big((0_g,\alpha_{s(g)}),(X_i,\d u_{s(g)} \d s_g X_i)_i\big)
\end{align*}
for any vectors $X_i\in T_g G$. We hereby provide some intuition regarding the signs appearing in the van Est map. The first correction term of $\ve(\omega)$ for $\omega$ as above reads
\[
\ve(\omega)_1(\alpha\|\beta)=-(J_\beta R_\alpha- R_\alpha J_\beta)\omega.
\]
As another example, the second correction term $\ve(\omega)_2(\alpha\|\beta_1,\beta_2)$ for $\omega\in\Omega^{3,q}(G;V)$ equals
\begin{align*}
  -(J_{\beta_1}J_{\beta_2}R_\alpha+J_{\beta_2}J_{\beta_1}R_\alpha-J_{\beta_1}R_\alpha J_{\beta_2}-J_{\beta_2}R_\alpha J_{\beta_1}+R_\alpha J_{\beta_1}J_{\beta_2}+R_\alpha J_{\beta_2}J_{\beta_1})\omega\in \Omega^{q-2}(M;V).
\end{align*}

\noindent A pictorial way of interpreting the signs in this expression, ignoring the initial factor $(-1)^{k(k+1)/2}$, is provided by the diagram below. Consider all paths $\Omega^{3,q}(G;V)$ to $\Omega^{q-2}(M;V)$ following horizontal and diagonal lines. Starting with the path that first follows the maximal number of horizontal segments, we obtain $JJR$. Take a positive sign there, symmetrize the expression in the indices of $J$'s and antisymmetrize in the indices of $R$'s. For other possible paths, we follow a similar procedure, with the difference that for any change of path $JR\ra RJ$, we obtain an additional minus sign; this uncovers the role of the Koszul sign.
{
\[\begin{tikzcd}
	{\Omega^{q}(M;V)} & {\Omega^{q}(G;s^*V)} & {\Omega^{2,q}(G;V)} & {\Omega^{3,q}(G;V)} \\
	{\Omega^{q-1}(M;V)} & {\Omega^{q-1}(G;s^*V)} & {\Omega^{2,q-1}(G;V)} & {\Omega^{3,q-1}(G;V)} \\
	{\Omega^{q-2}(M;V)} & {\Omega^{q-2}(G;s^*V)} & {\Omega^{2,q-2}(G;V)} & {\Omega^{3,q-2}(G;V)}
	\arrow[no head, from=1-1, to=1-2]
	\arrow[no head, from=1-2, to=1-3]
	\arrow["J"', color={rgb,255:red,32;green,121;blue,63}, from=1-3, to=2-2]
	\arrow["R"', color={rgb,255:red,32;green,121;blue,63}, from=1-4, to=1-3]
	\arrow["J"', color={rgb,255:red,186;green,28;blue,30}, from=1-4, to=2-3]
	\arrow["J", shift left, color={rgb,255:red,14;green,72;blue,139}, from=1-4, to=2-3]
	\arrow[no head, from=2-1, to=1-1]
	\arrow[no head, from=2-1, to=2-2]
	\arrow[no head, from=2-2, to=1-2]
	\arrow["J"', shift right, color={rgb,255:red,32;green,121;blue,63}, from=2-2, to=3-1]
	\arrow["J", color={rgb,255:red,186;green,28;blue,30}, from=2-2, to=3-1]
	\arrow[no head, from=2-3, to=1-3]
	\arrow["R", color={rgb,255:red,186;green,28;blue,30}, from=2-3, to=2-2]
	\arrow[no head, from=2-3, to=2-4]
	\arrow["J", color={rgb,255:red,14;green,72;blue,139}, from=2-3, to=3-2]
	\arrow[no head, from=2-4, to=1-4]
	\arrow[no head, from=3-1, to=2-1]
	\arrow[no head, from=3-2, to=2-2]
	\arrow["R", color={rgb,255:red,14;green,72;blue,139}, from=3-2, to=3-1]
	\arrow[no head, from=3-2, to=3-3]
	\arrow[no head, from=3-3, to=2-3]
	\arrow[no head, from=3-3, to=3-4]
	\arrow[no head, from=3-4, to=2-4]
\end{tikzcd}\]
}
\end{example}
\subsection*{Van Est theorem for multiplicative forms with coefficients}
As already stated, the most important level is $p=1$. It turns out that  the van Est theorem there actually already holds at the level of cocycles, i.e., on multiplicative forms. This first appeared as one of the main results in \cite{spencer}, cf.\ Theorem 1.
\begin{corollary}
  \label{corollary:van_est_multiplicative}
    Let $G$ be a Lie groupoid with Lie algebroid $A$, and suppose $V$ is its representation. The formula \eqref{eq:van_est_p=1} maps  multiplicative forms on $G$ with values in $V$ to IM forms on $A$ with values in $V$, that is, for any $q\geq 0$, \[\ve\colon \Omega_m^q(G;s^*V)\ra \Omega_{im}^q(A;V).\]
    Moreover, if $G$ has simply connected source fibres, this is an isomorphism.
\end{corollary}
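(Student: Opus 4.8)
The plan is to deduce this corollary from the van Est theorem (Theorem \ref{thm:homogeneous}) applied at the level $p=1$, together with a careful tracking of what the cochain-map property does to cocycles. First I would verify the cochain-map statement directly: since $\ve$ is a cochain map at each fixed degree $q$, that is $\ve\circ\delta_G=\delta_A\circ\ve$, it sends $\ker\delta^1_G$ into $\ker\delta^1_A$; but by the examples following the definitions of the two complexes, $\ker\delta^1_G=\Omega^q_m(G;s^*V)$ and $\ker\delta^1_A=\Omega^q_{im}(A;V)$, so $\ve$ restricts to a map $\Omega^q_m(G;s^*V)\ra\Omega^q_{im}(A;V)$. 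Concretely, one should also check that the restriction of the general van Est formula to level $p=1$ is exactly \eqref{eq:van_est_p=1}, which is immediate from the definitions of $R_\alpha$ and $J_\alpha$ at $p=1$ together with the fact that $S_1$ is trivial so there is no symmetrization and no Koszul sign; this identifies the abstract map with the explicit formula.

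Next, for the isomorphism statement under the hypothesis that $G$ has simply connected $s$-fibres, I would argue as follows. Simply connected $s$-fibres means $G$ has $1$-connected $s$-fibres, i.e., $p_0=1$ in the notation of Theorem \ref{thm:homogeneous}. Hence $\ve$ induces an isomorphism $H^{1,q}(G;V)\xrightarrow{\ \sim\ }H^{1,q}(A;V)$ and an injection $H^{2,q}(G;V)\hookrightarrow H^{2,q}(A;V)$, for every fixed $q$. I would then want to upgrade this cohomological statement to a statement about cocycles, i.e., about multiplicative versus IM forms directly (not modulo coboundaries). The key observation is that at level $p=1$ the cocycles themselves — not just their classes — are what we care about, and one can pin down cocycle-level bijectivity by combining surjectivity and injectivity of $\ve$ on $1$-cocycles separately.

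For \emph{injectivity} on $1$-cocycles: suppose $\omega\in\Omega^q_m(G;s^*V)$ with $\ve(\omega)=0$. Then $\omega$ is a $1$-cocycle whose class maps to zero, so by the $H^1$-isomorphism $[\omega]=0$, i.e., $\omega=\delta^0\gamma$ for some $\gamma\in\Omega^q(M;V)$. Now the natural move is to use an independent, lower-level injectivity: $\ve$ at level $p=0$ is just the identity $\Omega^q(M;V)=W^{0,q}(A;V)$ up to canonical identification, and $\ve$ is a cochain map, so $\ve(\delta^0\gamma)=\delta^0_A\ve(\gamma)=\delta^0_A\gamma$. Thus $\delta^0_A\gamma=0$, i.e., $\gamma$ is an invariant form for $A$; and when $G$ has connected $s$-fibres the $A$-invariant forms coincide with the $G$-invariant ones (this is the infinitesimal-to-global equivalence of invariance under connectedness, which follows from the $p_0$-isomorphism with $p_0\geq 0$, or more directly from the fact that invariance is an infinitesimal condition integrated along $s$-fibres). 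Hence $\gamma$ is $G$-invariant, so $\delta^0\gamma=0$, i.e., $\omega=0$. For \emph{surjectivity} on $1$-cocycles: given an IM form $c\in\Omega^q_{im}(A;V)$, its class lies in $H^{1,q}(A;V)$, so by the $H^1$-isomorphism there is a multiplicative form $\omega$ with $\ve(\omega)-c=\delta^0_A\eta$ for some $\eta\in W^{0,q}(A;V)=\Omega^q(M;V)$; but again using connectedness of $s$-fibres, $\eta$ is $G$-invariant if and only if it is $A$-invariant, and in general we may write $\delta^0_A\eta=\ve(\delta^0\eta)$, so $c=\ve(\omega-\delta^0\eta)$ and $\omega-\delta^0\eta$ is again multiplicative. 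Therefore $\ve$ is a bijection $\Omega^q_m(G;s^*V)\ra\Omega^q_{im}(A;V)$.

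\textbf{Main obstacle.} The delicate point is the passage from the cohomological van Est isomorphism back to a statement about cocycles, and in particular the claim that under connectedness of $s$-fibres the coboundary spaces match up correctly — i.e., that a cohomologically trivial IM form is the image of a cohomologically trivial multiplicative form, and that an invariant form for $A$ integrates to an invariant form for $G$. This is essentially a $p=0$ van Est argument (flows of left-invariant vector fields sweep out the $s$-fibres, and a form that is infinitesimally invariant is constant along them when they are connected), and it is exactly where simple-connectivity versus mere connectivity must be handled with care: connectivity suffices for the invariant-form matching, whereas simple-connectivity is what the $H^1$-isomorphism genuinely needs. I would present the invariant-form step as a short lemma (or cite the $p=0$ case of Theorem \ref{thm:homogeneous}) and then assemble injectivity and surjectivity as above; the rest is bookkeeping with the explicit formula \eqref{eq:van_est_p=1}.
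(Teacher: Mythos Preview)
Your proposal is correct and follows essentially the same route as the paper: both deduce the first assertion from $\ve$ being a cochain map, then use connectedness of $s$-fibres to identify $\ker\delta^0_G=\ker\delta^0_A$ (hence $\im\delta^0_G\cong\im\delta^0_A$ via $\ve$), and combine this with the $H^{1,q}$-isomorphism from Theorem \ref{thm:homogeneous} to upgrade the cohomological statement to a bijection on cocycles. The only cosmetic difference is that the paper packages the last step as a complement argument via Zorn's Lemma, whereas you prove injectivity and surjectivity directly by chasing elements; these are equivalent formulations of the same idea.
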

\begin{remark}
    The proof presented in \cite{spencer} utilizes jets of bisections, which is different from the proof of Theorem \ref{thm:homogeneous} established in \cite{homogeneous}, which uses $\vb$-groupoids and $\vb$-algebroids. 
\end{remark}
\begin{proof}
    The first part clearly follows from the fact that $\ve$ is a cochain map. For the second part, fix the degree $q\geq 0$, and first note that the assumption in particular says $G$ has connected $s$-fibres, so it ensures that $G$-invariant forms coincide with $A$-invariant forms on $M$, i.e., $\ker\delta^0_G=\ker\delta^0_A$. Hence, by the first isomorphism theorem and the fact that $\ve\circ\delta^0_G=\delta^0_A$, the van Est map restricts to an isomorphism $\im\delta^0_G\cong \im\delta^0_A$. Using Zorn's Lemma, we can obtain a complementary vector subspace to $\im\delta^0_G\subset \Omega_m^q(G;V)$, the image of which under $\ve$ must be complementary to $\im\delta^0_A\subset \Omega_{im}^q(A;V)$ since $\ve$ induces an isomorphism $H^{1,q}(G;V)\cong H^{1,q}(A;V)$ by Theorem \ref{thm:homogeneous}. Hence, $\ve$ induces an isomorphism between $\Omega_m^q(G;s^*V)$ and $\Omega_{im}^q(A;V)$  as well.
\end{proof}

\section{Groupoids and algebroids in the category of vector bundles}
\label{sec:vb}
In this section, we recall the fundamental notions related to the theory of $\vb$-groupoids and their infinitesimal counterparts, $\vb$-algebroids. We will review some constructions and examples that will turn out to be useful for our work in \sec\ref{sec:weil_boi}. 

\subsection{\texorpdfstring{$\vb$}{VB}-groupoids}
The results presented here follow the references \cite{mackenzie}*{\sec 11.2}, \cite{gracia-saz}*{\sec 3} and \cite{bundles_over_gpds}*{\sec 3.1}.
\begin{definition}
    A \emph{$\vb$-groupoid} consists of two Lie groupoids $\Gamma\rra E$, $G\rra M$, such that $\Gamma\ra G$ and $E\ra M$ are vector bundles, satisfying the following compatibility conditions. 
\[\begin{tikzcd}[row sep=large, column sep=large]
	\Gamma & G \\
	E & M
	\arrow["{\ul q}", from=1-1, to=1-2]
	\arrow["{\ul s}"'{pos=0.6}, shift right, from=1-1, to=2-1]
	\arrow["{\ul t}"{pos=0.6}, shift left, from=1-1, to=2-1]
	\arrow["t"{pos=0.5}, shift left, from=1-2, to=2-2]
	\arrow["s"'{pos=0.52}, shift right, from=1-2, to=2-2]
	\arrow["q", from=2-1, to=2-2]
\end{tikzcd}\]
    \begin{enumerate}[label={(\roman*)}]
      \item The source and target maps $\ul s, \ul t\colon \Gamma\ra E$ are vector bundle maps covering $s,t\colon G\ra M$.
      \item The bundle projection $\ul q\colon \Gamma\ra G$ is a Lie groupoid morphism covering $q\colon E\ra M$.
      \item The interchange law holds for any $(\gamma_1,\gamma_2), (\gamma_3,\gamma_4)\in \Gamma^{(2)}$ with $\ul q(\gamma_1)=\ul q(\gamma_3), \ul q(\gamma_2)=\ul q(\gamma_4)$:
      \[
      (\gamma_1+\gamma_3)(\gamma_2+\gamma_4)=\gamma_1\gamma_2+\gamma_3\gamma_4.
      \]
    \end{enumerate}
\end{definition}
\begin{remark}
  There exist several equivalent definitions of $\vb$-groupoids, see \cite{gracia-saz}*{Proposition 3.5}. Roughly, apart from the one above, we can also view a $\vb$-groupoid as a \emph{Lie groupoid internal to the category of vector bundles}, or as a \emph{vector bundle internal to the category of Lie groupoids}. These three definitions are conveniently terse, however, in practice it helps to keep in mind all the identities regarding the compatibility of the structures on $\Gamma$, found in \cite{mackenzie}*{\sec 11.2, p.\ 415}.
\end{remark}
\begin{definition}
  Given a $\vb$-groupoid as above, its (left) \emph{core} is the vector bundle
  \[
  C=u^* \ker\ul t\longrightarrow M.
  \]
  Denoting the zero section by $\ul 0\colon G\ra \Gamma$, the left multiplication by $\ul 0_g$ gives an isomorphism $C_{s(g)}\cong \ker \ul t_g$ for any $g\in G$, so we have a short exact sequence of vector bundles over $G$,
\begin{align}
  \label{eq:core_ses}
  \begin{tikzcd}[ampersand replacement=\&]
  	0 \& {s^*C} \& \Gamma \& {t^*E} \& 0,
  	\arrow[from=1-1, to=1-2]
  	\arrow[from=1-2, to=1-3]
  	\arrow["{\ul t}", from=1-3, to=1-4]
  	\arrow[from=1-4, to=1-5]
  \end{tikzcd}
\end{align}
called the (left) \emph{core exact sequence}. It is customary to denote $\partial = \ul s|_C\colon C\ra E$.
\end{definition}
\begin{remark}
  \label{rem:fat_cat}
  As an aside, we note that the splittings of the sequence \eqref{eq:core_ses} which additionally satisfy the natural requirement that they coincide with the canonical splitting $\Gamma|_M=E\oplus C$ at the units of $G$, are precisely the so-called \emph{representations up to homotopy} of $G$ on $\partial\colon C\ra E$ \cite{ruth}.
  
There is a relationship of representations up to homotopy with Lie categories from \sec\ref{chapter:lie_cats}. Namely, the splittings of sequence \eqref{eq:core_ses} can be viewed using the so-called \emph{fat category}, 
  \[
    \hat C(\Gamma)=\set{(g,H)\given g\in G,\ H\subset \Gamma_g\text{ is a subspace complementary to}\,\ker\ul t_g}.
  \]
  This is a Lie category $\hat C(\Gamma)\rra M$ with the obvious source and target map, and multiplication given by $(g_1,H_1)(g_2,H_2)=(g_1g_2,H_1H_2)$, where $H_1H_2=\set{v_1v_2\given v_i\in H_i,\ \ul s(v_1)=\ul t(v_2)}$. The differentiable structure is obtained by observing that the fibre at $g\in G$ of the canonical projection $\hat C(\Gamma)\ra G$ is an affine space modelled on $\smash{\Hom(E_{t(g)},C_{s(g)})}$.
  We note that the invertibles of $\hat C(\Gamma)$ constitute an open wide subgroupoid $\hat G(\Gamma)$ by Theorem \ref{thm:open_inv}, called the \textit{fat groupoid}, whose elements are precisely the pairs $(g,H)$ such that $H$ is also complementary to $\ker\ul s_g$. 

  The splittings of the sequence \eqref{eq:core_ses} are then equivalent to the sections of the surjective submersive functor $\hat C(\Gamma)\ra G$. The natural requirement $\Gamma|_M=E\oplus C$ on a splitting, yielding representations up to homotopy, corresponds to the requirement on a section $G\ra \hat C(\Gamma)$ that it restricts at units to $1_x\mapsto (1_x, \ul u(E_x))$. If $\Gamma$ is the tangent groupoid (see Example \ref{example:vb_groupoids} (i) below), the sections $G\ra \hat C(TG)$ which are moreover morphisms of Lie categories are precisely the so-called \emph{Cartan connections} \cite{cartan_connections}.
\end{remark}
\begin{example}\
  \label{example:vb_groupoids}
  \begin{enumerate}[label={(\roman*)}]
    \item Given a Lie groupoid $G\rra M$, differentiating the structure maps yields the \emph{tangent groupoid} of $G$, which is a $\vb$-groupoid. Its core is precisely the Lie algebroid $A\ra M$, with $\partial=\rho$.
    \[\begin{tikzcd}
      TG & G \\
      TM & M
      \arrow[from=1-1, to=1-2]
      \arrow[shift left, from=1-1, to=2-1]
      \arrow[shift right, from=1-1, to=2-1]
      \arrow[shift left, from=1-2, to=2-2]
      \arrow[shift right, from=1-2, to=2-2]
      \arrow[from=2-1, to=2-2]
    \end{tikzcd}\]
    \item Given a representation $p\colon V\ra M$ of $G\rra M$, it defines a \emph{prolongation} $G\tensor[_s]{\times}{_p}V\rra M$ of $G$ by $V$, which is a $\vb$-groupoid, with the structure maps given by $\ul s=s\circ\pr_1$, $\ul t=t\circ\pr_1$, and
      $(g,v)\cdot (h,w)=(gh, h^{-1}\cdot v+w).$
    Its core is the representation $V\ra M$.
    \[\begin{tikzcd}
      {G\tensor[_s]{\times}{_p}V} & G \\
      {0_M} & M
      \arrow[from=1-1, to=1-2]
      \arrow[shift left, from=1-1, to=2-1]
      \arrow[shift right, from=1-1, to=2-1]
      \arrow[shift right, from=1-2, to=2-2]
      \arrow[shift left, from=1-2, to=2-2]
      \arrow[from=2-1, to=2-2]
    \end{tikzcd}\]
  \item Alternatively, since any representation $G\curvearrowright V$ is in particular an action of $G$, one can also consider the \emph{action $\vb$-groupoid} $G\ltimes V$, defined as the groupoid $G\tensor[_s]{\times}{_p}V\rra V$ with the structure maps given by $\ul s(g,v)=v$, $\ul t(g,v)=g\cdot v$, and $(g,h\cdot w)\cdot (h,w)=(gh, w)$. Its core is the zero vector bundle $0_M\ra M$.
  \[\begin{tikzcd}
      {G\tensor[_s]{\times}{_p}V} & G \\
      V & M
      \arrow[from=1-1, to=1-2]
      \arrow[shift left, from=1-1, to=2-1]
      \arrow[shift right, from=1-1, to=2-1]
      \arrow[shift right, from=1-2, to=2-2]
      \arrow[shift left, from=1-2, to=2-2]
      \arrow[from=2-1, to=2-2]
    \end{tikzcd}\]
\end{enumerate}
\end{example}
\noindent We will need the following two natural constructions of $\vb$-groupoids.
\subsubsection{The dual of a $\vb$-groupoid}
The \emph{dual} of a $\vb$-groupoid $\Gamma\rra E$ is the $\vb$-groupoid, defined in such a way that its (right) core short exact sequence is the dual of the (left) core short exact sequence \eqref{eq:core_ses}. More precisely, it is defined as the $\vb$-groupoid
\[\begin{tikzcd}
	{\Gamma^*} & G \\
	{C^*} & M
	\arrow[from=1-1, to=1-2]
	\arrow[shift left, from=1-1, to=2-1]
	\arrow[shift right, from=1-1, to=2-1]
	\arrow[shift left, from=1-2, to=2-2]
	\arrow[shift right, from=1-2, to=2-2]
	\arrow[from=2-1, to=2-2]
\end{tikzcd}\]
with the source and target maps defined on any $\psi\in \Gamma_g^*$  as
\begin{align*}
  \begin{aligned}
    \ul s^*(\psi)&\in C_{s(g)}^*,& \quad\inner{\ul s^*(\psi)}{c}&=\inner{\psi}{0_g\cdot c},\\
    \ul t^*(\psi)&\in C_{t(g)}^*,& \inner{\ul t^*(\psi)}{d}&=-\innersmall{\psi}{d^{-1}\cdot 0_g},
  \end{aligned}
\end{align*}
for all $c\in C_{s(g)}$, $d\in C_{t(g)}$, where the bracket denotes the canonical pairing. We also note that $d^{-1}=\ul 1_{\partial d}-d$ by the compatibility condition. For the composition, let $\psi_1\in \Gamma_g^*$,  $\psi_2\in \Gamma^*_h$ be two covectors such that $\ul s^*(\psi_1)=\ul t^*(\psi_2)$. Any element of $\Gamma_{gh}$ can be written as a product $\gamma_1\cdot\gamma_2$ for some $\gamma_1\in \Gamma_g$, $\gamma_2\in \Gamma_h$, and now the compatibility condition ensures that
\[
\inner{\psi_1\cdot\psi_2}{\gamma_1\cdot\gamma_2}=\inner{\psi_1}{\gamma_1}+\inner{\psi_2}{\gamma_2}
\]
is a well-defined covector $\psi_1\cdot\psi_2\in\Gamma_{gh}^*$. The unit element $\ul 1^*_{\theta}$ for a covector $\theta\in C^*_x$ is defined as the covector $\ul 1_\theta^*\in\Gamma^*_{1_x}$ given by
\[
\inner{\ul 1_\theta^*}{\ul 1_v+c}=\inner{\theta}{c},
\]
for any $v\in E_x$, $c\in C_x$, where we have noted there is a canonical decomposition $\Gamma_{1_x}=E_x\oplus C_x$. The inverses are then given by $\innersmall{\psi^{-1}}{\gamma^{-1}}=-\inner{\psi}{\gamma}$ for any $\gamma\in \Gamma_g$.
\begin{example}\
  \label{ex:vb_groupoid_dual}
  \begin{enumerate}[label={(\roman*)}]
    \item The dual of the tangent groupoid $TG\rra TM$ is the \textit{cotangent groupoid} $T^*G\rra A^*$. This Lie groupoid is ubiquitous in Poisson geometry; one of the reasons for this is the fact that the canonical symplectic form on $T^*G$ is automatically multiplicative---see \cite{multiplicative_tensors} for details.
    \item Given a representation of $G$ on $V$, the dual of the prolongation $\vb$-groupoid from Example \ref{example:vb_groupoids} (ii) is precisely the action $\vb$-groupoid of the induced action of $G$ on $V^*$, defined as
    \[
    \inner{g\cdot\zeta}{\xi}=\innersmall{\zeta}{g^{-1}\cdot \xi},\ \text{for any $\zeta\in V^*_{s(g)}$ and $\xi\in V_{t(g)}$.}
    \]
    
\end{enumerate}
\end{example}

\subsubsection{Whitney sum of $\vb$-groupoids}
Given two $\vb$-groupoids $\Gamma_1\rra E_1$ and $\Gamma_2\rra E_2$ over the same groupoid $G\rra M$, we can take the Whitney sum of the underlying vector bundles. In this way, we again obtain a $\vb$-groupoid, with the groupoid operations defined componentwise. It is called the \emph{Whitney sum} of $\vb$-groupoids.
\[\begin{tikzcd}
	{\Gamma_1\oplus\Gamma_2} & G \\
	{E_1\oplus E_2} & M
	\arrow[from=1-1, to=1-2]
	\arrow[shift left, from=1-1, to=2-1]
	\arrow[shift right, from=1-1, to=2-1]
	\arrow[shift left, from=1-2, to=2-2]
	\arrow[shift right, from=1-2, to=2-2]
	\arrow[from=2-1, to=2-2]
\end{tikzcd}\]

\subsection{Double vector bundles}
Before defining the notion of a $\vb$-algebroid, we preliminarily recall the underlying notion of a double vector bundle, along with some properties and constructions. 
\begin{definition}
A \emph{double vector bundle} (DVB) consists of two vector bundles $D\ra E$ and $D\ra A$ with the same total space $D$, whose base spaces are vector bundles over the same base manifold $M$, such that the two vector bundles with total space $D$ are \emph{compatible}, in the following sense. Let us denote the two sets of structure maps (addition, zero map, homogeneous structure) of the vector bundles $D\ra A$ and $D\ra E$ as follows:
\begin{equation}
  \label{eq:dvb}
  \vcenter{\hbox{
    \begin{tikzcd}
      D & A \\
      E & M
      \arrow["{q_A^D}", from=1-1, to=1-2]
      \arrow["{q_E^D}"', from=1-1, to=2-1]
      \arrow["{q_A}", from=1-2, to=2-2]
      \arrow["{q_E}"', from=2-1, to=2-2]
    \end{tikzcd}
  }}
  \qquad
  \begin{aligned}
    &+_A\colon D\oplus_A D\ra D,\quad 0_A\colon A\ra D,\quad  h_A^\lambda\colon D\ra D,
  \\
  &+_E\colon D\oplus_E D\ra D,\quad 0_E\colon A\ra D,\quad  h_E^\lambda\colon D\ra D.
  \end{aligned}
  \end{equation}
Both vector bundle structures on $A\ra M$, $E\ra M$ will be denoted by the same symbols: $+$, $0$, $h^\lambda$.
Compatibility conditions read: for any $(d_1,d_2),(d_3,d_4)\in D\oplus_A D$ and $(d_1,d_3),(d_2,d_4)\in D\oplus_E D$,
\begin{align}
    q^D_E(d_1+_A d_2)&=q^D_E(d_1)+q^D_E(d_2),\nonumber\\
    q^D_A(d_1+_E d_3)&=q^D_A(d_1)+q^D_A(d_3),\nonumber\\
    (d_1+_A d_2)+_E(d_3+_A d_4)&=(d_1+_E d_3)+_A(d_2+_E d_4).\label{eq:dvb_interchange_law}
\end{align}
Equation \eqref{eq:dvb_interchange_law} is also known as the \emph{interchange law}, and the vector bundles $A\ra M$ and $E\ra M$ are called the \emph{side bundles} of $D$.
\end{definition}
\begin{remark}
  As with $\vb$-groupoids, there are several ways of describing DVB's, see \cite{dvb}*{Proposition 2.1}. In practice, it is also useful to keep in mind all the other interchange laws which follow from \eqref{eq:dvb_interchange_law}, found in \cite{mackenzie_duality}*{Equations (2)--(7)}. For instance, 
  \[
  h^\lambda_A \circ h_E^\mu=h_E^\mu \circ h^\lambda_A,\quad 0_A (\lambda a)=h^\lambda_E (0_A(a)), \quad h^\lambda_E(d_1+_A d_2)=h^\lambda_E (d_1)+_A h^\lambda_E (d_2),
  \]
  for any $a\in A$, $(d_1,d_2)\in D\oplus_A D$ and $\lambda,\mu\in \R$. 
\end{remark}
\begin{definition}
  The \emph{core} of a DVB as above is the bundle \[C=\ker q^D_A\cap \ker q^D_E\longrightarrow M.\]
  Importantly, we note that the interchange law implies that the restrictions to $C$ of the two vector bundle structures on $D$ coincide, so $C\ra M$ is naturally a vector bundle \cite{mackenzie_duality}*{\sec 2}.
  The natural inclusion $C\hookrightarrow D$ yields the \emph{core exact sequence}, which is a short exact sequence of double vector bundles, where $C$ is seen as a DVB with side bundles $0_M\ra M$, and $A\oplus_M E$ is the DVB with side bundles $A\ra M$ and $E\ra M$.
\[\begin{tikzcd}
	0 & C & D & {A\oplus_M E} & 0
	\arrow[from=1-1, to=1-2]
	\arrow[hookrightarrow, from=1-2, to=1-3]
	\arrow[from=1-3, to=1-4]
	\arrow[from=1-4, to=1-5]
\end{tikzcd}\] 
\end{definition}
\begin{remark}
  Apart from the sequence above, there are two other short exact sequences of vector bundles (in the usual sense), sometimes called the \emph{core exact sequences over $A$ and $E$}. For instance, the one over $E$ reads
\begin{align}
  \label{eq:core_ses_over_E}
  \begin{tikzcd}[ampersand replacement=\&]
  	0 \& (q_E)^!C \& {D_E} \& {(q_E^D)^!A} \& 0,
  	\arrow[from=1-1, to=1-2]
  	\arrow["{\tau_E}", from=1-2, to=1-3]
  	\arrow["{(q^D_A)^!}", from=1-3, to=1-4]
  	\arrow[from=1-4, to=1-5]
  \end{tikzcd}
\end{align}
where $D_E$ simply denotes the vector bundle $D\ra E$, the shriek denotes the usual pullback of a vector bundle along a smooth map, and the two maps read 
\[\tau_E(e,c)=0_E(e)+_A c, \quad (q_A^D)^!(d)=(q_E^D(d),q_A^D(d)).\]
\end{remark}
\subsubsection{Linear and core sections}
For a double vector bundle $D$, there are now two distinct spaces of sections of $D$, namely, the sections of the two projections $q_A^D$ and $q_E^D$. We will denote them by $\Gamma(A,D)$ and $\Gamma(E,D)$, respectively. By the symmetry of the definition of a DVB, let us focus solely on $\Gamma(E,D)$. As the title of this subsection suggests, there are two important special types of sections in $\Gamma(E,D)$. 
\begin{definition}
  A \emph{linear section} of $D\ra E$ is a vector bundle morphism $\sigma\colon E\ra D$ from $E\ra M$ to $D\ra A$, covering a section $\alpha\colon M\ra A$. Linear sections are denoted by $\Gamma_\lin(E,D)$.
\[\begin{tikzcd}
	D & A \\
	E & M
	\arrow["{q_A^D}", from=1-1, to=1-2]
	\arrow[from=1-1, to=2-1]
	\arrow[from=1-2, to=2-2]
	\arrow["\sigma", bend left=20, shift left=1, from=2-1, to=1-1]
	\arrow["{q_E}"', from=2-1, to=2-2]
	\arrow["\alpha"', bend right=20, shift right=1, from=2-2, to=1-2]
\end{tikzcd}\]
A \emph{core section} is any section of $D\ra E$ obtained in the following way: suppose $\beta\in \Gamma(C)$ is a section of the core $C\subset D$, and define $\beta_c\colon E\ra D$ as
\[
\beta_c(e)=\beta(q_E(e))+_A 0_E(e),
\]
where the second term is necessary to ensure $\beta_c$ is a section of $D\ra E$. The space of core sections is denoted by $\Gamma_c(E,D)$.
\end{definition}
The importance of these sections is in the following fact: as a $C^\infty(E)$-module, the space of sections $\Gamma(E,D)$ is generated by the linear and core sections. This is proved in \cite{mackenzie_doubles}*{Proposition 2.2} and will often be used without reference.
\subsubsection{The two duals of a double vector bundle}
As the title of this subsection suggests, it is possible to dualize a DVB in two distinct ways, with respect to either $D\ra A$ or $D\ra E$. Let us focus on dualizing with respect to $D\ra A$.

\newcommand{\ds}[1]{{D_{\!{#1}}^*}}
The \emph{dual} of a DVB with respect to $D\ra A$ is the DVB denoted $\ds{A}$. Before writing out the structure maps, we depict it in the following diagram. 
\begin{align}
  \label{eq:dual_dvb}
  \begin{tikzcd}[ampersand replacement=\&]
  	{\ds{A}} \& A \\
  	{C^*} \& M
  	\arrow["{\overline q_{A}}", from=1-1, to=1-2]
  	\arrow["{\overline q_{C^*}}"', from=1-1, to=2-1]
  	\arrow["{q_A}", from=1-2, to=2-2]
  	\arrow["{q_{C^*}}"', from=2-1, to=2-2]
  \end{tikzcd}
\end{align}
Here, all the maps are known but $\overline q_{C^*}$, and we have to define the vector bundle structure $\ds{A} \ra C^*$.
\begin{itemize}
  \item The bundle projection $\overline q_{C^*}$ is defined for any $\psi\in \ds{A}$ over $\overline q_A(\psi)=a$ and $c\in C_{q_A(a)}$ as
  \[
  \inner{\overline q_{C^*}(\psi)}{c}=\inner{\psi}{0_A(a)+_E c}.
  \]
  \item Addition over $C^*$: for any $\psi_1,\psi_2\in\ds{A}$ with $\overline q_{C^*}(\psi_1)=\overline q_{C^*}(\psi_2)$ and $\overline q_A(\psi_i)=a_i\in A$, define
  \[
  \inner{\psi_1+_C\psi_2}{d}=\inner{\psi_1}{d_1}+\inner{\psi_2}{d_2},
  \]
  where $d\in (q_A^D)^{-1}(a_1+a_2)$ is decomposed as $d=d_1+_E d_2$ for some vectors $d_i\in (q_A^D)^{-1}(a_i)$. Well-definedness of this map is straightforwardly shown using $\overline q_{C^*}(\psi_1)=\overline q_{C^*}(\psi_2)$ and the compatibility conditions.
  \item Homogeneous structure over $C^*$: for any $\psi\in \ds{A}$ and $d\in D$ with $\overline q_A(\psi)=q_A^D(d)$, define
  \[
  \innersmall{h^\lambda_{C^*}(\psi)}{h^\lambda_{E}(d)}=\lambda\inner{\psi}{d},\ \text{for all }\lambda\in\R.
  \]
  \item Zero map over $C^*$: it is defined as the map $0_{C^*}\colon C^*\ra \ds{A}$, given as
  \[
  \innersmall{0_{C^*}(\zeta)}{0_E(e)+_A c}=\inner{\zeta}{c},
  \]
  for any $\zeta\in C_x^*$, $e\in E_x$ and $c\in C_x$. By the short exact sequence \eqref{eq:core_ses_over_E} over $E$, it is enough to define $0_{C^*}(\zeta)$ on such vectors.
\end{itemize}
We note the core of $\ds{A}$ can be identified with $E^*\ra M$, using the linear isomorphism
\begin{align*}
  E_x^*\ni\vartheta\mapsto \overline\vartheta\in (\overline q_{C^*})^{-1}(0_x)\cap (\overline q_{A})^{-1}(0_x),\quad \innersmall{\overline\vartheta}{0_E(e)+_A c}=\inner{\vartheta}{e},
\end{align*}
for any $e\in E_x, c\in C_x$. 
\begin{remark}
  The two duals we obtain in this way fit into a triple vector bundle, called the \emph{cotangent cube}---for details see \cite{dvb}*{Remark 3.12} or \cite{bundles_over_gpds}*{Diagram 2.8}. 
\end{remark}
\subsection{\texorpdfstring{$\vb$}{VB}-algebroids}
\begin{definition}
  A \emph{$\vb$-algebroid} is a double vector bundle $D$ as in \eqref{eq:dvb}, equipped with a Lie algebroid structure on $D\ra E$, i.e., an anchor $\rho_D\colon D\ra TE$ and a Lie bracket $[\cdot,\cdot]_D$ on the sections $\Gamma(E,D)$, with the following additional properties.
  \begin{enumerate}[label={(\roman*)}]
    \item The anchor $\rho_D$ is a vector bundle morphism over a bundle map $\rho_A\colon A\ra TM$ covering $\id_M$.
\[\begin{tikzcd}[row sep=tiny]
	D & A \\
	&&[-1em] M \\
	TE & TM
	\arrow["{q_A^D}", from=1-1, to=1-2]
	\arrow["{\rho_D}"', from=1-1, to=3-1]
	\arrow["{q_A}", from=1-2, to=2-3]
	\arrow["{\rho_A}", from=1-2, to=3-2]
	\arrow["{\d{q_E}}"', from=3-1, to=3-2]
	\arrow[from=3-2, to=2-3]
\end{tikzcd}\]
    \item The bracket $[\cdot,\cdot]_D$ satisfies the following properties on the linear and core sections:
    \begin{align}
      \label{eq:vb_algebroid_bracket}
      \begin{split}
        [\Gamma_\lin(E,D),\Gamma_\lin(E,D)]&\subset \Gamma_\lin(E,D),\\
        [\Gamma_\lin(E,D),\Gamma_c(E,D)]&\subset \Gamma_c(E,D),\\
        [\Gamma_c(E,D),\Gamma_c(E,D)]&=0.
      \end{split}
    \end{align} 
  \end{enumerate}
\end{definition}
\begin{remark}
  The vector bundle $A\ra M$ is automatically a Lie algebroid with anchor $\rho_A$, and the bracket induced by the bracket on $D\Ra E$, using the first of the three equalities above.
\end{remark}
\noindent We now consider the infinitesimal analogues of the $\vb$-groupoids from Example \ref{example:vb_groupoids}; for a general integrability result of $\vb$-algebroids to $\vb$-groupoids, we direct the reader to \cite{vb_integrability}.
\begin{example}\
  \label{example:vb_algebroids}
  \begin{enumerate}[label={(\roman*)}]
    \item The \emph{tangent algebroid} of a Lie algebroid $A\Ra M$ is the following $\vb$-algebroid. 
\[\begin{tikzcd}
	TA & A \\
	TM & M
	\arrow[from=1-1, to=1-2]
	\arrow[Rightarrow, from=1-1, to=2-1]
	\arrow[Rightarrow, from=1-2, to=2-2]
	\arrow[from=2-1, to=2-2]
\end{tikzcd}\]
    The structure of $TA\ra A$ is given by the usual tangent bundle construction, whereas the structure of $TA\ra TM$ is obtained by differentiating the structure maps of $A\ra M$. The bracket of $[\cdot,\cdot]_{TA}$ is essentially defined by \eqref{eq:vb_algebroid_bracket}---that is, for any section $\alpha\in\Gamma(A)$, we note that $\d \alpha\colon TM\ra TA$ is a linear section over $\alpha\colon M\ra A$, and we thus define
    \begin{align*}
      [\d\alpha,\d\beta]_{TA}=\d[\alpha,\beta],\quad 
      [\d\alpha,\beta_c]_{TA}=[\alpha,\beta]_c,\quad
      [\alpha_c,\beta_c]_{TA}=0,
    \end{align*}
    where $\beta\in\Gamma(A)$ is another section. Extending by bilinearity and the Leibniz rule, this determines the bracket on all sections. The anchor of $TA$ is defined as $\rho_{TA}=J\circ \d\rho$, where $J\colon T(TM)\ra T(TM)$ is the canonical involution, see \cite{michor}*{\sec 8.13}. With this structure, $TA$ becomes a $\vb$-algebroid with the property that the bundle projection $TA\ra A$ is a Lie algebroid morphism over $TM\ra M$. Moreover, if $A$ is the algebroid of $G$, then $TA\Ra TM$ is the Lie algebroid of the tangent groupoid $TG\rra TM$ from Example \ref{example:vb_groupoids} (i). For more details, we direct the reader to \cite{bialgebroids}*{Theorems 5.1 and 7.1}.
    \item Let $V$ be a representation of the Lie algebroid $A\Ra M$. The \emph{prolongation $\vb$-algebroid} of $A$ by $V$ is the following $\vb$-algebroid. 
\[\begin{tikzcd}
	{A\oplus V} & A \\
	{0_M} & M
	\arrow[from=1-1, to=1-2]
	\arrow[Rightarrow, from=1-1, to=2-1]
	\arrow[Rightarrow, from=1-2, to=2-2]
	\arrow[from=2-1, to=2-2]
\end{tikzcd}\]
The bundle structure $A\oplus V\ra A$ is just the usual pullback structure, whereas the bundle structure over $0_M$ is just the Whitney sum structure. The anchor and the bracket are given for any $\alpha,\beta\in\Gamma(A)$ and $\xi,\eta\in\Gamma(V)$ by
\begin{align*}
  [(\alpha,\xi),(\beta,\eta)]&=([\alpha,\beta],\nabla^A_\alpha\eta-\nabla^A_\beta\xi),\\
  \rho(\alpha,\xi)&=\rho(\alpha).
\end{align*}
Assuming the algebroid and the representation can be integrated, this is the Lie algebroid of the $\vb$-groupoid from example \ref{example:vb_groupoids} (ii).
\item Alternatively, given any representation of $A\Ra M$ on $V$, we can form the \emph{action Lie algebroid} $A\ltimes V$, which is a $\vb$-algebroid on account of linearity of the action $\Gamma(A)\ra \vf(V)$.
\[\begin{tikzcd}
	{A\oplus V} & A \\
	V & M
	\arrow[from=1-1, to=1-2]
	\arrow[Rightarrow, from=1-1, to=2-1]
	\arrow[Rightarrow, from=1-2, to=2-2]
	\arrow[from=2-1, to=2-2]
\end{tikzcd}\]
Here, the anchor is given by the action, and the bracket is induced by the bracket on $A$, see Remark \ref{rem:action_trick} for more details on this construction.
  \end{enumerate}
\end{example}

\subsubsection{The dual of a $\vb$-algebroid}
Given a $\vb$-algebroid $D$, its \emph{dual $\vb$-algebroid} is defined as follows. First off, the underlying DVB is the dual of the DVB $D$ with respect to the fibration over $A$, just as in diagram \eqref{eq:dual_dvb}.
\[\begin{tikzcd}
	{\ds{A}} & A \\
	{C^*} & M
	\arrow[from=1-1, to=1-2]
	\arrow[Rightarrow, from=1-1, to=2-1]
	\arrow[Rightarrow, from=1-2, to=2-2]
	\arrow[from=2-1, to=2-2]
\end{tikzcd}\]
To see how $\ds{A}\ra C^*$ becomes a Lie algebroid, note that since $D\Ra E$ is a Lie algebroid, $\ds{E}\ra E$ carries a (fibrewise) linear Poisson structure. Moreover, this Poisson structure on $D_E^*$ is also linear with respect to the fibration over $C^*$, when $\ds{E}$ is viewed as a DVB (this is called a \emph{double linear Poisson structure})---see \cite{mackenzie_duality}*{Theorem 2.4} or \cite{dvb}*{Theorem 3.10}. Hence, $(\ds{E})_{C^*}^*$ also has a Lie algebroid structure. Now, the canonical pairing $\ds{A}\oplus_{C^*} \ds{E}\ra \R$ induces an isomorphism $(\ds{E})_{C^*}^*\cong \ds{A}$ of DVB's, see \cite{mackenzie}*{Theorems 9.2.2 and 9.2.4}. This yields the wanted $\vb$-algebroid structure on the double vector bundle $\ds{A}$.
\begin{example}\
  \begin{enumerate}[label={(\roman*)}]
    \item The dual of the tangent algebroid $TA\Ra TM$ is the \emph{cotangent algebroid} $T^*A\Ra A^*$.
    \item Analogously as in Example \ref{ex:vb_groupoid_dual}, the dual of the Example \ref{example:vb_algebroids} (ii) is precisely the action $\vb$-algebroid $A\ltimes V^*$ of the induced action of $A$ on $V^*$, defined by the Leibniz rule
    \[
    \rho(\alpha)\inner{\zeta}{\xi}=\inner{\nabla^A_\alpha\zeta}{\xi}+\inner{\zeta}{\nabla^A_\alpha\xi}
    \]
    for any $\alpha\in\Gamma(A)$, $\zeta\in\Gamma(V^*)$ and $\xi\in\Gamma(V)$.
  \end{enumerate}
\end{example}

\subsubsection{Whitney sum of $\vb$-algebroids}
Just as in the case of $\vb$-groupoids, the Whitney sum is a well-behaved operation in the world of $\vb$-algebroids. That is, given two $\vb$-algebroids $D_1\Ra E_1$ and $D_2\Ra E_2$ covering the same Lie algebroid $A\Ra M$, we can define their Whitney sum with respect to the fibration over $A$.
\[\begin{tikzcd}
	{D_1\oplus_AD_2} & A \\
	{\hspace{0.25em} E_1\oplus_M E_2} & M
	\arrow[from=1-1, to=1-2]
	\arrow[shift right=1.3, Rightarrow, from=1-1, to=2-1]
	\arrow[Rightarrow, from=1-2, to=2-2]
	\arrow[from=2-1, to=2-2]
\end{tikzcd}\]
Here, the vector bundle structure fibred over $E_1\oplus_M E_2$, the anchor, and the bracket are all defined componentwise.

\section{Alternative model for the Weil complex}
\label{sec:alternative_model_weil}
In this section, we provide an alternative model for representation-valued Weil cochains we have seen in \sec\ref{sec:bss_weil}. This model was introduced in \cite{homogeneous}, and will be crucial in \sec\ref{sec:derivation_horproj_weil}. Given a representation $V$ of an algebroid $A\Ra M$, the idea is to view the cochain complex $W^{\bullet,q}(A;V)$ using the usual complex underlying the algebroid cohomology, but of a larger algebroid. It turns out the right algebroid to consider is the $\vb$-algebroid, obtained as the Whitney sum of $q$ copies of $TA\Ra TM$ and the action algebroid of the induced dual representation of $A$ on $V^*$:
\begin{equation}
  \label{eq:big_vb_algebroid}
  \vcenter{\hbox{
    \begin{tikzcd}
      {\AA_q} & A \\
      {\MM_q} & M
      \arrow[from=1-1, to=1-2]
      \arrow[Rightarrow, from=1-1, to=2-1]
      \arrow[Rightarrow, from=1-2, to=2-2]
      \arrow[from=2-1, to=2-2]
    \end{tikzcd}
  }}
  \qquad
  \begin{aligned}
  \AA_q &= \oplus^q_A TA\oplus_A \pi^* V^*,\quad (\pi\colon A\ra M)
  \\
  \MM_q &= \oplus^q_M TM\oplus_M V^*.
  \end{aligned}
  \end{equation}
Let us illuminate its $\vb$-algebroid structure.
\begin{itemize}
  \item The vector bundle structure fibred over $A$ is just the usual Whitney sum structure, while the one fibred over $\MM_q$ is induced componentwise by $TA\ra TM$ and $\pi^*V^*\ra V^*$, that is,
  \begin{align*}
    (X_1,\dots,X_q,(a,\zeta))+_{\MM_q}(X_1',\dots,X_q',(a',\zeta))&=(X_1 +_{TM} X_1',\dots,X_q +_{TM} X_q',(a+a',\zeta)),\\
    0_{\MM_q}(w_1,\dots,w_q,\zeta)&=(0_{TM}(w_1),\dots,0_{TM}(w_q),(0_x,\zeta)),
  \end{align*}
  where the vectors $X_i\in T_a A$, $X_i'\in T_{a'}A$ satisfy $\d\pi(X_i)=\d\pi(X_i')$, and $\zeta\in V_{\pi(a)}^*$.
  \item As usual, the space of sections $\Gamma(\MM_q,\AA_q)$ is generated as a $C^\infty(\MM_q)$-module by the linear and core sections. In this case, these are sections of the following form. For $\alpha\in\Gamma(A)$, 
  \begin{align}
    \label{eq:linear_and_core_sections}
  \begin{split}
      \T\alpha{(w_1,\dots,w_q,\zeta)}&=(\d\alpha(w_1),\dots,\d\alpha(w_q),\chi_\alpha(\zeta)),\\
      \Z_i\alpha{(w_1,\dots,w_q,\zeta)}&=\Big(0_{TM}(w_1),\dots,0_{TM}(w_i)+_A\deriv\lambda 0 \lambda \alpha_x,\dots,0_{TM}(w_q),0_\zeta\Big).
  \end{split}
  \end{align}
  Here, $w_j\in T_x M, \zeta\in V^*_x$ and we have introduced the following vectors in $\pi^*V^*$,
  \[
    \chi_\alpha(\zeta)=(\alpha_x,\zeta),\quad\text{and}\quad 0_\zeta=(0_x,\zeta).
  \]
  \item On the linear and core sections, for any $\alpha,\beta\in\Gamma(A)$, the bracket reads
\begin{align}
  \label{eq:bracket_Z_T}
  [\T\alpha,\T\beta]_{\AA_q}=\T[\alpha,\beta],\quad  [\T\alpha,\Z_i\beta]_{\AA_q}=\Z_i[\alpha,\beta], \quad  [\Z_i\alpha,\Z_j\beta]_{\AA_q}=0.
\end{align}
The anchor is defined componentwise, using the anchors of $TA\Ra TM$ and $\pi^*V^*\Ra V^*$.
\end{itemize}
\subsection{Exterior cochains}
With the $\vb$-algebroid $\AA_q\Ra\MM_q$ in mind, we now define the alternative model to Weil cochains. 

\begin{definition}
  \label{def:exterior_cochains}
  Given any section $\omega\in\Gamma(\MM_q,\Lambda^p\AA_q^*)$,\footnote{The dual and the wedge are with respect to the bundle structure fibred over $\MM_q$.} we say that:
  \begin{enumerate}[label={(\roman*)}]
    \item $\omega$ is \textit{skew-symmertic} with respect to $\AA_q\ra A$, if for any $\sigma\in S_q$,
    \[
    (\sigma_A)^*\omega=\sgn(\sigma)\omega,
    \]
    where $\sigma_A\colon\AA_q\ra\AA_q$ permutes the $q$ components in $\oplus^q_A TA$ according to $\sigma$.
    \item $\omega$ is \textit{multilinear} with respect to $\AA_q\ra A$, if it is
    \begin{align*}
      &\textit{$(q+1)$-homogeneous}\text{:}\hspace{-6em}&(h^\lambda_A)^*\omega&=\lambda^{q+1}\omega,\\
      &\text{and }\textit{simple}\text{:}&(0^i_A)^*\omega&=0,\quad (i=1,\dots,q+1),
    \end{align*}
    where $h_\lambda^A\colon \AA_q\ra\AA_q$ is the homogeneous structure of $\AA_q\ra A$, and the maps $0^i_A\colon\AA_{q-1}\ra\AA_q$ and $0^{q+1}_A\colon \oplus^q_A TA\ra \AA_q$ insert a zero at the $i$-th and $(q+1)$-th factor, respectively.
  \end{enumerate}
  A skew-symmetric and multilinear section  with respect to $\AA_q\ra A$ will be called an \textit{exterior cochain}, and the set of all exterior cochains will be denoted by
  \[
  \Gamma_{\ext}(\MM_q,\Lambda^p\AA_q^*).
  \]
\end{definition}
It is shown in \cite{homogeneous}*{Proposition 4.10} that $\Gamma_{\ext}(\MM_q,\Lambda^\bullet\AA_q^*)\subset \Gamma(\MM_q,\Lambda^\bullet\AA_q^*)$ is a subcomplex of the usual cochain complex used to define the algebroid cohomology of $\AA_q\Ra \MM_q$. The differential there is just the standard one: for any sections $X^i\in\Gamma(\MM_q,\AA_q)$,
\begin{align}
  \label{eq:delta_vb}
\begin{split}
    \delta\omega(X^0,\dots, X^p)&=\textstyle\sum_i(-1)^i\L_{\rho_{\AA_q}(X^i)}\omega(X^0,\dots,\widehat{X^i},\dots,X^p)\\
    &+\textstyle\sum_{i<j}\omega([X^i,X^j]_{\AA_q},X^0,\dots,X^i,\dots,X^j,\dots,X^p).
\end{split}
\end{align}
Here, $\L$ denotes the usual directional derivative of a function. 
\begin{remark}
  To demystify the abstract definition of multilinearity with respect to $\AA_q\ra A$, let us try to see it as multilinearity in the usual sense. Denoting the exterior cochains at the level $p=0$ by $C^\infty_\ext(\MM_q)\coloneqq\Gamma_{\ext}(\MM_q,\Lambda^0\AA_q^*)$, they form a subset of $C^\infty(\oplus^q_M TM\oplus_M V^*)$. If $\omega\in C^\infty_\ext(\MM_q)$ is $(q+1)$-homogeneous and simple, it is also linear in each component by a standard argument as in Euler's homogeneous function theorem. Hence, it is multilinear in the usual sense, and we conclude 
  \[C^\infty_\ext(\MM_q)=\Gamma(\Lambda^q(T^*M)\otimes (V^*)^*)\cong\Omega^q(M;V). \]
  Similarly, at level $p=1$, multilinearity of $\omega\in \Gamma_\ext(\MM_q,\Lambda^1\AA_q^*)$ with respect to $\AA_q\ra A$ is just a terse way of expressing
  \[
  \omega(X_1+_A h_A^\lambda X_1',X_2\dots,X_q,(a,\zeta))=\omega(X_1,\dots,X_q,(a,\zeta))+\lambda \omega( X_1',\dots,X_q,(a,\zeta)),
  \]
  for any $\lambda\in \R$ and vectors $X_1',X_i\in T_a A, \zeta\in V^*_{\pi(a)}$, and similarly for the other arguments and for linearity in $(a,\zeta)$. Going one level further, at $p=2$ multilinearity with respect to $\AA_q\ra A$ reads
\begin{align*}
  &\omega\big((X_1+_A h_A^\lambda X_1',X_2\dots,X_q,(a,\zeta)),(Y_1+_A h_A^\lambda Y_1',Y_2\dots,Y_q,(b,\zeta))\big)\\
  &=\omega\big((X_1,\dots,X_q,(a,\zeta)),(Y_1,Y_2\dots,Y_q,(b,\zeta))\big)+\lambda \omega\big( (X_1',\dots,X_q,(a,\zeta)),(Y_1',Y_2\dots,Y_q,(b,\zeta))\big),
\end{align*}
where now the vectors $Y_1',Y_i\in T_b A$ and $b\in\pi^{-1}(a)$ satisfy $\d\pi(Y_1')=\d\pi(X_1')$ and $\d\pi(Y_i)=\d\pi(X_i)$.
\end{remark}
\subsection{The isomorphism of the two models}
We now describe the canonical identification of the model of exterior cochains with the model of Weil cochains, called the \textit{evaluation map} and denoted by
\begin{align}
  \label{eq:evaluation_map}
  \ev\colon\Gamma_{\ext}(\MM_q,\Lambda^p\AA_q^*)\xrightarrow{} W^{p,q}(A;V).
\end{align}
The wanted map $\ev$ is defined simply by evaluating $\omega$ on the generating sections; this is done  via auxiliary maps. More precisely, for any $\omega\in\Gamma_{\ext}(\MM_q,\Lambda^p\AA_q^*)$, we define
\begin{align}
  \label{eq:def_exterior_weil_generators}
\begin{split}
    &\tilde c_k(\omega)\colon{\times}^p\Gamma(A)\ra C^\infty(\MM_q),\\
    &\tilde c_k(\omega)(\alpha_1,\dots,\alpha_{p-k}\|\beta_1,\dots,\beta_k)=\omega(\Z_1\beta_1,\dots,\Z_k\beta_k,\T\alpha_1,\dots,\T\alpha_{p-k}),
\end{split}
\end{align}
if $k\leq q$, and zero for all $k>q$. In turn, these maps define functions
\begin{align}
  \label{eq:def_exterior_weil_ck}
  \begin{split}
    &c_k(\omega)(\ul\alpha\|\ul\beta)\in C^\infty(\oplus^{q-k}_MTM\oplus_M V^*)\\
    &c_k(\omega)(\ul\alpha\|\ul\beta)(w_1,\dots,w_{q-k},\zeta)=\tilde c_k(\omega)(\ul\alpha\|\ul\beta)(0_x,\dots,0_x,w_1,\dots,w_{q-k},\zeta),
  \end{split}
\end{align}
which are shown to be $(q-k+1)$-homogeneous, simple and skew-symmetric, so they correspond to differential forms $c_k(\omega)(\ul\alpha\|\ul\beta)\in\Omega^{q-k}(M;V)$.
The maps $c_k(\omega)$ can thus be stacked in a sequence as terms of the wanted Weil cochain corresponding to $\omega$,
\[\ev(\omega)\coloneqq\big(c_0(\omega),\dots,c_p(\omega)\big)\in W^{p,q}(A;V).\]
For any fixed $q$, this map provides the wanted isomorphism of the complex of exterior cochains with the Weil complex, as shown in \cite{homogeneous}*{Proposition A.3}. We observe that the complexity of the simplicial differential \eqref{eq:delta_inf} of Weil cochains is now encoded entirely in the Lie algebroid $\AA_q\Ra \MM_q$, since the simplicial differential on exterior cochains \eqref{eq:delta_vb} is just the standard one.
\begin{remark}
  \label{rem:any_vectors_ck}
  It is not hard to see that we can actually pick any vectors instead of $0_x$'s in the defining equation \eqref{eq:def_exterior_weil_ck}, by multilinearity of $\omega$ over $\AA_q\ra A$. That is,
  \[
    c_k(\omega)(\ul\alpha\|\ul\beta)(w_1,\dots,w_{q-k},\zeta)=\tilde c_k(\omega)(\ul\alpha\|\ul\beta)(\tilde w_1,\dots,\tilde w_k,w_1,\dots,w_{q-k},\zeta),
  \]
  for any vectors $\tilde w_i\in T_x M$. For example, in the case $p=1$, we have
  \begin{align*}
    \tilde c_1(\omega)(\beta)(\tilde w,w_1,\dots,w_{q-1},\zeta)&=\omega\big(0_{TM}(\tilde w)+_A\smallderiv\lambda 0\lambda \beta_x,0_{TM}(w_1),\dots 0_{TM}(w_{q-1}),0_\zeta\big)\\
    &=\omega\big(0_{\MM_q}(\tilde w,w_1,\dots,w_{q-1},\zeta)\big)+\tilde c_1(\omega)(\beta)(0_x,w_1,\dots,w_{q-1},\zeta),
  \end{align*}
  where multilinearity over $A$ was used in the second equality. The first term then vanishes by multilinearity over $\MM_q$, since $\omega\in\Gamma(\MM_q,\Lambda^p\AA_q^*)$.
\end{remark}
\begin{remark}
  Although isomorphic, the two models are in practice very different to work with---the Weil cochain model offers a more hands-on, computational approach, while the alternative model offers more conceptual clarity.
\end{remark}

\subsection{Idea of proof of van Est theorem for representation-valued forms}
\label{sec:idea_van_est}
Equipped with the model of exterior cochains, we now portray the idea of proof of Theorem \ref{thm:homogeneous}.
\begin{itemize}
  \item Analogously as with algebroids, one views the complex of forms $\Omega^{\bullet,q}(G;V)$ using the complex for the usual differentiable cohomology of the following $\vb$-groupoid:
  \begin{equation}
    \label{eq:big_vb_groupoid}
    \vcenter{\hbox{
      \begin{tikzcd}
        {\GG_q} & G \\
        {\MM_q} & M
        \arrow[from=1-1, to=1-2]
        \arrow[shift left, from=1-1, to=2-1]
        \arrow[shift right, from=1-1, to=2-1]
        \arrow[shift left, from=1-2, to=2-2]
        \arrow[shift right, from=1-2, to=2-2]
        \arrow[from=2-1, to=2-2]
      \end{tikzcd}
    }}
    \qquad
    \begin{aligned}
    \GG_q &= \oplus^q_G TG\oplus_G s^* V^*,
    \\
    \MM_q &= \oplus^q_M TM\oplus_M V^*.
    \end{aligned}
    \end{equation}
    Namely, one considers $(q+1)$-homogeneous, simple, skew-symmetric functions on the nerve of $\GG_q$. As shown in \cite{homogeneous}*{Proposition 4.8}, they form a subcomplex \[C^\infty_\ext(\GG_q^{(\bullet)})\subset C^\infty(\GG_q^{(\bullet)}).\]
    The isomorphism of complexes $\mathfrak F\colon \Omega^{\bullet,q}(G;V)\ra C^\infty_\ext(\smash{\GG_q^{(\bullet)}})$ is given by evaluation.
    \item Next, one shows that the following diagram commutes \cite{homogeneous}*{Appendix A.3}.
\[\begin{tikzcd}[row sep=large]
	{\Omega^{p,q}(G;V)} & {C^\infty_\ext(\GG_q^{(p)})} \\
	{W^{p,q}(G;V)} & {\Gamma_{\ext}(\MM_q,\Lambda^p\AA_q^*)}
	\arrow["{\frak F}", from=1-1, to=1-2]
	\arrow["\ve"', from=1-1, to=2-1]
	\arrow["{\ve_{\ext}}", from=1-2, to=2-2]
	\arrow["\ev", from=2-2, to=2-1]
\end{tikzcd}\]
Here, $\ve_\ext$ denotes the restriction of the usual van Est map 
\begin{align}
  \label{eq:usual_van_est_GG_AA}
  C^\infty(\smash{\GG_q^{(\bullet)}})\ra \Gamma(\MM_q,\Lambda^\bullet\AA_q^*),
\end{align} 
which indeed maps to exterior cochains, since the canonical projections to the “ext” subcomplexes commute with the van Est map (\cite{homogeneous}*{Proposition 4.13}). 
\item One uses the ordinary van Est theorem for the van Est map $C^\infty(\smash{\GG_q^{(\bullet)}})\ra \Omega^\bullet(\AA_q)$, together with the following fact: the source fibre of a $\vb$-groupoid is an affine bundle over the corresponding source fibre of the base groupoid, hence $\GG_q\rra \MM_q$ is source $p_0$-connected if $G\rra M$ is. Hence, \eqref{eq:usual_van_est_GG_AA} is a quasi-isomorphism for all $p\leq p_0$ and quasi-injective for $p=p_0+1$, and thus so is $\ve_\ext$.
\end{itemize}

\clearpage \pagestyle{plain}
\chapter{Invariant linear connections on representations}\label{chapter:invariant}
\pagestyle{fancy}
\fancyhead[CE]{Chapter \ref*{chapter:invariant}} 
\fancyhead[CO]{Invariant linear connections}

In this chapter, we present new results regarding invariant connections on general representations of Lie groupoids and Lie algebroids. Invariant connections provide an important step towards fully answering the question if representation-valued forms on the nerve of a Lie groupoid form a double complex, as is the case with trivial coefficients. They were first considered in \cite{mec}*{Appendix A}, where the invariance condition was obtained and briefly studied. We hereby further develop the theory, and provide certain improvements which will turn out to be crucial in \sec\ref{chapter:mec}. Hence, this chapter also serves as an important stepping stone for advancing the theory of multiplicative Ehresmann connections. The results obtained here can be found in the preprint \cite{covariant_derivatives}.

\section{The global picture}

Let $V$ be a representation of a Lie groupoid $G\rra M$. We now assume that a connection $\nabla$ on the vector bundle $V\ra M$ is given, with no a priori additional assumptions regarding the compatibility of $\nabla$ with the groupoid action $G\curvearrowright V$. 
\begin{definition}
  The \textit{exterior covariant derivative} of $V$-valued forms $\Omega^{p,q}(G;V)$ is given by
  \[
  \d{}^{\nabla^{s\circ\pr_p}}\colon \Omega^{p,q}(G;V)\ra \Omega^{p,q+1}(G;V),
  \]
where $\nabla^{s\circ\pr_p}$ denotes the pullback along $s\circ \pr_p\colon G^{(p)}\ra M$ of a given connection $\nabla$ on $V$. When no confusion arises, we will simply denote it by $\d{}^\nabla$. 
\end{definition}

The following result tells us precisely when $\d{}^\nabla$ commutes with $\delta$. It was already obtained in \cite{mec}*{Proposition A.7} for $p=1$, however, we hereby prove it by computing the explicit formula for the commutator 
\begin{align}
  \label{eq:commutator}
  [\d{}^\nabla,\delta]\colon \Omega^{p,q}(G;V)\ra \Omega^{p+1,q+1}(G;V)
\end{align}
which was not obtained there, and will be needed in \sec\ref{chapter:mec}.
\begin{theorem}
\label{thm:G_invariant}
Let $\nabla$ be a connection on a representation $V$ of a Lie groupoid $G\rra M$. The map $\d{}^\nabla$ is a cochain map if and only if $\nabla$ is $G$-invariant, that is, if the following tensor vanishes:
\begin{align}
  \label{eq:invariance_form_G}
  \Theta\in \Omega^1(G;\Hom(t^*V,s^*V)),\quad \Theta(X)\xi=\phi(\nabla_X^t\xi)-\nabla_X^s \phi (\xi),
\end{align}
for any $X\in \vf(G)$ and $\xi\in\Gamma(t^*V)$.  Hence, if $\nabla$ is $G$-invariant, $\d{}^\nabla$ preserves multiplicativity.
\end{theorem}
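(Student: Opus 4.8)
The plan is to prove the equivalence by deriving an explicit formula for the commutator $[\d{}^\nabla,\delta]$ on $\Omega^{p,q}(G;V)$ in terms of the tensor $\Theta$, and then to observe that this commutator vanishes identically precisely when $\Theta=0$.

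First I would recall that for $p\geq 1$ the simplicial differential is an alternating sum of pullbacks along the face maps, together with the final term involving the coefficient-changing map $\Phi_*$, and that the face maps $f_i^{(p+1)}$ with $0\leq i\leq p$ are all groupoid-theoretic operations not touching the last leg $s\circ\pr_{p+1}$ along which the coefficients are pulled back. For those terms, $(f_i^{(p+1)})^*$ commutes with $\d{}^\nabla$ on the nose, because pullback of forms commutes with pullback of connections and with the exterior covariant derivative — this is the standard naturality of $\d{}^\nabla$ under bundle maps covering the face maps, using that $s\circ\pr_p\circ f_i^{(p+1)}=s\circ\pr_{p+1}$ for $i\leq p$. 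Hence only the very last term, $(-1)^{p+1}\Phi_*(f_{p+1}^{(p+1)})^*$, can fail to commute with $\d{}^\nabla$, and the whole obstruction is concentrated in the interaction between $\d{}^\nabla$ and the bundle isomorphism $\Phi$ that relabels $t^*V$ as $s^*V$ via the representation. Concretely, for $i=0$ the relevant identity is $s\circ\pr_p\circ f_0^{(p+1)} = s\circ\pr_{p+1}$ as well, so that term is also fine; it is genuinely only the $\Phi_*$-twisted top face that matters.

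Next I would compute $\d{}^{\nabla^{s}}\circ\Phi_* - \Phi_*\circ\d{}^{\nabla^{t}}$ on a $V$-valued form, where $\Phi_*$ is induced by $\phi\colon t^*V\to s^*V$, $\phi(g,v)=(g,g^{-1}\cdot v)$. For a section $\xi$ of $t^*V$ and a vector field $X$ we have $(\nabla^s\circ\phi - \phi\circ\nabla^t)(X)\xi = -\Theta(X)\xi$ by the very definition \eqref{eq:invariance_form_G}, so the failure of $\Phi_*$ to intertwine the two exterior covariant derivatives is governed by a single $\Hom(t^*V,s^*V)$-valued $1$-form, namely $\Theta$ pulled back to $G^{(p+1)}$ along $\pr_{p+1}\circ f_{p+1}^{(p+1)}$ (equivalently, the map sending a composable string to the appropriate arrow). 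The Leibniz-type computation here is routine: expanding $\d{}^\nabla(\Phi_*\omega)(X_0,\dots,X_{q+1})$ and comparing with $\Phi_*(\d{}^\nabla\omega)(X_0,\dots,X_{q+1})$, all terms match except those where a connection hits the coefficient $g^{-1}\cdot(-)$, and those collect into a wedge of $\Theta$ with $\omega$. This yields the clean formula
\begin{align*}
[\d{}^\nabla,\delta]\,\omega = (-1)^{p+1}\,\widehat\Theta\wedge (f_{p+1}^{(p+1)})^*\omega,
\end{align*}
where $\widehat\Theta$ denotes the pullback of $\Theta$ to $G^{(p+1)}$ along the arrow extracted by $\pr_{p+1}\circ f_{p+1}^{(p+1)}$, interpreted via $\Phi_*$ so that the result again has values in $(s\circ\pr_{p+1})^*V$. (Recording this explicit commutator is the content of the lemma the paper refers to as Lemma \ref{lem:G_invariant}, and it is exactly what is reused in Chapter \ref{chapter:mec}.)

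From the formula the theorem is immediate in both directions. If $\Theta=0$ then $[\d{}^\nabla,\delta]=0$, so $\d{}^\nabla$ is a cochain map; since $1$-cocycles are the multiplicative forms (the paper's description of $\ker\delta^1$), a cochain map preserves $\ker\delta^1$, hence $\d{}^\nabla$ sends multiplicative forms to multiplicative forms. Conversely, if $\d{}^\nabla$ commutes with $\delta$ then $\widehat\Theta\wedge(f_{p+1}^{(p+1)})^*\omega=0$ for all $\omega$; testing this at $p=0$ on $0$-forms $\omega=\xi\in\Gamma(M;V)=\Omega^{0,0}(G;V)$, the expression $[\d{}^\nabla,\delta^0]\xi$ evaluated on a single tangent vector $X\in T_gG$ returns precisely $\Theta_g(X)\big((t^*\xi)_g\big)$ up to sign, and since $\xi$ and $X$ range over everything and $t$ is a surjective submersion, this forces $\Theta\equiv 0$. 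The main obstacle is the bookkeeping in the Leibniz expansion of step two — keeping track of which legs the connection differentiates and confirming that every term not involving $\Theta$ cancels against a face-map term — but this is a finite, mechanical verification with no conceptual difficulty once the naturality of $\d{}^\nabla$ under the non-twisted faces has been isolated. The last sentence, that $G$-invariance of $\nabla$ implies $\d{}^\nabla$ maps multiplicative forms to multiplicative forms, then follows with no extra work, since multiplicative forms are by definition the kernel of $\delta^1$ and a cochain map preserves kernels of the differential.
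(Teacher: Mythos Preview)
Your approach is essentially identical to the paper's: both isolate the commutator $[\d{}^\nabla,\delta]$ by noting that all non-twisted face pullbacks commute with $\d{}^\nabla$ by naturality of the pullback connection, leaving only the $\Phi_*$-term, and then identify that remaining obstruction with a wedge of (the pullback of) $\Theta$ against $(f_{p+1}^{(p+1)})^*\omega$, after which the converse is read off at $p=0$ using surjectivity and submersivity. The only slips are bookkeeping: the pullback of $\Theta$ is along $\pr_{p+1}\colon G^{(p+1)}\to G$ (not $\pr_{p+1}\circ f_{p+1}^{(p+1)}$, which is ill-typed), and the overall sign in the commutator formula is $(-1)^p$ rather than $(-1)^{p+1}$---neither affects the argument.
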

\begin{lemma}
\label{lem:G_invariant}
Let $\nabla$ be a connection on a representation $V$ of a Lie groupoid $G\rra M$. At level $p=0$, the commutator of $\delta$ and $\d{}^\nabla$ reads
\begin{align}
\label{eq:commutator_zero}
  (\d{}^{\nabla^{s}}\delta^0-\delta^0\d{}^{\nabla})\omega=\Theta\wedge t^*\omega
\end{align}
for any $\omega\in \Omega^q(M;V)$. At any higher level $p\geq 1$, there holds
\begin{align}
\label{eq:commutator_higher}
  (\d{}^{\nabla^{s\circ\pr_{p+1}}}\delta^p-\delta^p\d{}^{\nabla^{s\circ\pr_p}})\omega=(-1)^{p}(\pr_{p+1})^*\Theta\wedge\big (f^{(p+1)}_{p+1}\big)^*\omega
\end{align}
for any $\omega\in\Omega^{p,q}(G;V)$. 
Explicitly, the $(q+1)$-form on $G^{(p+1)}$ on the right-hand side reads
\begin{align*}
  \Big((\pr_{p+1})^*\Theta&\wedge\big (f^{(p+1)}_{p+1}\big)^*\omega\Big)(X_1,\dots,X_{q+1})\\
  &=\sum_{i=1}^{q+1}(-1)^{i+1}\Theta(X_i^{p+1})\cdot\omega((X_1^1,\dots, X_1^p),\dots,\widehat{(X_i^1,\dots,X_i^p)},\dots (X_{q+1}^1,\dots, X_{q+1}^p)),
\end{align*}
for any tangent vectors $X_i=(X_i^1,\dots, X_i^{p+1})\in TG^{(p+1)}$ from the same fibre.
\end{lemma}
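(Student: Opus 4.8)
The plan is to prove Lemma~\ref{lem:G_invariant} by a direct computation, exploiting the explicit formulas \eqref{eq:delta_0}, \eqref{eq:delta_l} for the simplicial differential and the Leibniz rule for the pullback exterior covariant derivative. The key structural observation is that $\d{}^\nabla$ is the exterior covariant derivative associated with the pullback connection $\nabla^{s\circ\pr_p}$, so it commutes with pullback along any smooth map $f$ that intertwines the relevant source projections, i.e.\ $f^*\d{}^\nabla=\d{}^\nabla f^*$ whenever $s\circ\pr\circ f=s\circ\pr$; this is exactly the situation for all the face maps $f_i^{(p+1)}$ with $i\le p$, since they do not alter the last arrow. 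The only face map that changes the last factor is $f_{p+1}^{(p+1)}$, which drops $g_{p+1}$ and hence changes $s\circ\pr_{p+1}=s\circ\pr_p\circ f_{p+1}^{(p+1)}$ into $t\circ\pr_p\circ f_{p+1}^{(p+1)}$ composed with the old last projection — more precisely the coefficient bundle changes from $s^*V$ to $t^*V$ along that factor, which is precisely why the coefficient-changing map $\Phi_*$ appears in $\delta^p$ and why the obstruction tensor $\Theta$ from \eqref{eq:invariance_form_G} measuring the failure of $\phi$ to be parallel shows up.

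Concretely, first I would handle the level $p=0$ case \eqref{eq:commutator_zero}. Here $\delta^0\omega=s^*\omega-\phi\circ t^*\omega$ where I abusively write $\phi\circ t^*\omega$ for the form $g\mapsto g^{-1}\cdot(t^*\omega)_g$. Applying $\d{}^{\nabla^s}$, the term $\d{}^{\nabla^s}s^*\omega=s^*\d{}^\nabla\omega$ since pullback commutes with exterior covariant derivative for the pulled-back connection. For the second term I would use the Leibniz rule for the bundle map $\phi\colon t^*V\to s^*V$: $\d{}^{\nabla^s}(\phi\circ t^*\omega)=\phi\circ\d{}^{\nabla^t}t^*\omega+(\text{term involving }\nabla^s\phi-\phi\nabla^t)\wedge t^*\omega$, and the correction term is by definition $\Theta\wedge t^*\omega$ up to sign bookkeeping; meanwhile $\phi\circ\d{}^{\nabla^t}t^*\omega=\phi\circ t^*\d{}^\nabla\omega$. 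Subtracting $\delta^0\d{}^\nabla\omega=s^*\d{}^\nabla\omega-\phi\circ t^*\d{}^\nabla\omega$ leaves exactly $\Theta\wedge t^*\omega$. This step is essentially a one-line application of the Leibniz rule for exterior covariant derivatives applied to a (not necessarily parallel) vector bundle morphism.

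For the higher levels $p\ge1$, I would expand $\delta^p=\sum_{i=0}^p(-1)^i(f_i^{(p+1)})^*+(-1)^{p+1}\Phi_*(f_{p+1}^{(p+1)})^*$ and commute $\d{}^{\nabla^{s\circ\pr_{p+1}}}$ past each summand. For $i\le p$ the face map preserves the last arrow, so $\d{}^\nabla(f_i^{(p+1)})^*\omega=(f_i^{(p+1)})^*\d{}^\nabla\omega$, and these terms cancel against the corresponding terms of $\delta^p\d{}^\nabla\omega$. For the last summand, $\Phi_*$ is fibrewise the bundle map $\phi$ applied along the last factor, so the same Leibniz computation as in the $p=0$ case produces $\d{}^{\nabla^s}\Phi_*(f_{p+1}^{(p+1)})^*\omega=\Phi_*(f_{p+1}^{(p+1)})^*\d{}^\nabla\omega+(\pr_{p+1})^*\Theta\wedge(f_{p+1}^{(p+1)})^*\omega$ up to sign; combining with the sign $(-1)^{p+1}$ and the fact that the other term cancels the matching piece of $\delta^p\d{}^\nabla\omega$ yields \eqref{eq:commutator_higher} with overall sign $(-1)^p$. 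The final explicit formula for $(\pr_{p+1})^*\Theta\wedge(f_{p+1}^{(p+1)})^*\omega$ evaluated on tangent vectors is then just the standard formula for the wedge of a $\Hom$-valued $1$-form with a $V$-valued $q$-form, using that $\Theta$ applied to the $(p+1)$-st component $X_i^{p+1}$ of the $i$-th vector pairs against $\omega$ evaluated on the remaining $q$ vectors projected via $f_{p+1}^{(p+1)}$.

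The main obstacle I anticipate is careful sign and coefficient-bundle bookkeeping: one must track which copy of $V$ (via $s$, $t$, or the twisted pullback through $\Phi$) each form takes values in at each stage, verify that the Koszul signs from the wedge with a $1$-form interact correctly with the alternating signs $(-1)^i$ in $\delta^p$, and confirm that all the $i\le p$ terms genuinely cancel in pairs rather than merely seeming to. Once the $p=0$ Leibniz identity is pinned down with the correct sign convention for $\Theta\wedge(-)$, the higher-level statement should follow by the same local argument applied in the last slot, so I would invest the bulk of the care in the $p=0$ base computation and the precise statement of the Leibniz rule for $\d{}^\nabla$ acting through a vector bundle morphism. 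After establishing the lemma, Theorem~\ref{thm:G_invariant} is immediate: $\d{}^\nabla$ is a cochain map iff all these commutators vanish iff $\Theta\equiv0$, which is the $G$-invariance condition; and since multiplicative forms are exactly the $\delta^1$-cocycles, $\d{}^\nabla$ then preserves them.
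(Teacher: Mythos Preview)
Your proposal is correct and follows essentially the same route as the paper's own proof: both isolate the terms $i\le p$ (which cancel because the face maps preserve $s\circ\pr$), and then reduce the commutator to the operator $\d{}^{\nabla^s}\Phi_*-\Phi_*\d{}^{\nabla^t}$ acting on $(f_{p+1}^{(p+1)})^*\omega$, which is computed to be $-(\pr_{p+1})^*\Theta\wedge(\cdot)$. Your ``Leibniz rule for a bundle morphism'' framing is just a slightly more conceptual packaging of the same explicit calculation the paper carries out term by term; the sign bookkeeping you flag is indeed the only delicate point, and the paper handles it by writing out the expansion of $\d{}^{\nabla^s}\Phi_*\gamma-\Phi_*\d{}^{\nabla^t}\gamma$ and identifying $\nabla^s\circ\Phi-\Phi\circ\nabla^t=-\pr_{p+1}^*\Theta$.
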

\begin{proof}
We will make use of the basic fact that if $E\ra B$ is any vector bundle equipped with a connection $\nabla$, and if $\pi\colon N\ra B$ is any smooth map, then the pullback connection $\nabla^\pi$ on $\Omega^\bullet(N;\pi^*E)$ is characterized by the following identity involving its exterior covariant derivative,
\begin{align}
\label{eq:pullback}
  \d{}^{\nabla^\pi}\pi^*=\pi^*\d{}^\nabla.
\end{align}
Using it on the face maps $\pi=f_i^{(p+1)}$ for $i\leq p$, with connection $\nabla^{s\circ\pr_p}$  in place of $\nabla$, the identity $s\circ\pr_p\circ \smash{f_i^{(p+1)}}=s\circ\pr_{p+1}$ yields
\[
\d{}^{\nabla^{s\circ\pr_{p+1}}}(f_i^{(p+1)})^*=(f_i^{(p+1)})^*\d{}^{\nabla^{s\circ\pr_p}}.
\]
Moreover, using \eqref{eq:pullback} on $\pi=f_{p+1}^{(p+1)}$, the identity $s\circ\pr_p\circ f^{(p+1)}_{p+1}=s\circ\pr_p=t\circ\pr_{p+1}$ yields
\[
  \d{}^{\nabla^{t\circ\pr_{p+1}}}(f_{p+1}^{(p+1)})^*=(f_{p+1}^{(p+1)})^*\d{}^{\nabla^{s\circ\pr_p}}.
\]
Using these two equalities and the defining equation \eqref{eq:delta_l} of $\delta$, we compute
\begin{align*}
  \d{}^{\nabla^{s\circ\pr_{p+1}}}\delta^p-\delta^p\d{}^{\nabla^{s\circ\pr_p}} &=(-1)^{p+1}\big(\d{}^{\nabla^{s\circ\pr_{p+1}}}\Phi_* (f_{p+1}^{(p+1)})^* -\Phi_*(f_{p+1}^{(p+1)})^*\d{}^{\nabla^{s\circ\pr_p}}\big)\\
  &=(-1)^{p+1}\big({\d{}^{\nabla^{s\circ\pr_{p+1}}}}\Phi_*-\Phi_*\d{}^{\nabla^{t\circ\pr_{p+1}}}\big)(f_{p+1}^{(p+1)})^*.
\end{align*}
Let us expand the differential operator on the right-hand side of the last expression, on an arbitrary form $\gamma\in\Omega^q(G^{(p+1)};(t\circ\pr_{p+1})^*V)$:
\begin{align*}
(\d{}^{\nabla^{s\circ\pr_{p+1}}}\Phi_*\gamma&-\Phi_*\d{}^{\nabla^{t\circ\pr_{p+1}}}\gamma)(X_1,\dots,X_{q+1})
\\
&=\textstyle\sum\limits_i(-1)^{i+1}\big(\nabla_{X_i}^{s\circ\pr_{p+1}}\circ\Phi-\Phi\circ\nabla_{X_i}^{t\circ\pr_{p+1}}\big)\gamma(X_1,\dots,\widehat X_i,\dots, X_{q+1}),
\end{align*}
for any vectors $X_i\in T G^{(p+1)}$ over the same point, where the terms with sums over double indices have cancelled out. But it is not hard to see that 
\[
\nabla^{s\circ\pr_{p+1}}\circ\Phi-\Phi\circ\nabla^{t\circ\pr_{p+1}}=-\pr_{p+1}^*\Theta,
\]
which is enough to be checked on pullback sections $(t\circ\pr_{p+1})^*\xi$ for $\xi\in \Gamma(V)$. Indeed, the stated equality is immediate once we use the relation $\Phi\circ\pr_{p+1}^*=\pr_{p+1}^*\circ\phi$. At last, taking $\smash{\gamma=(f_{p+1}^{(p+1)})^*}\omega$ concludes the proof of \eqref{eq:commutator_higher}. Identity \eqref{eq:commutator_zero} is proved similarly.
\end{proof}

Theorem \ref{thm:G_invariant} now follows by surjectivity and submersivity of face maps and projections, since the lemma above implies that $\Theta=0$ if and only if at any level $p\geq 0$ (hence, at all levels) the exterior covariant derivative commutes with the simplicial differential.
In conclusion, any $G$-invariant connection $\nabla$ on $V$ yields the columns of a curved double complex,
\[
(\Omega^{\bullet,\bullet}(G;V),\delta,\d{}^\nabla).
\]
We once again emphasize the notable feature that $\d{}^\nabla$ does not square to zero unless $\nabla$ is flat, in which case we obtain a flat double complex.
\begin{align*}
\begin{tikzcd}[ampersand replacement=\&, column sep=large, row sep=large]
	{\Omega^{q+1}(M;V)} \& {\Omega^{q+1}(G;s^*V)} \& {\Omega^{q+1}(G^{(2)};(s\circ\pr_2)^*V)} \& \cdots \\
	{\Omega^q(M;V)} \& {\Omega^{q}(G;s^*V)} \& {\Omega^q(G^{(2)};(s\circ\pr_2)^*V )} \& \cdots
	\arrow["{\delta}", from=1-1, to=1-2]
	\arrow["{\delta}", from=1-2, to=1-3]
	\arrow["{\delta}", from=1-3, to=1-4]
	\arrow["{\d{}^\nabla}", from=2-1, to=1-1]
	\arrow["{\delta}", from=2-1, to=2-2]
	\arrow["{\d{}^\nabla}", from=2-2, to=1-2]
	\arrow["{\delta}", from=2-2, to=2-3]
	\arrow["{\d{}^\nabla}", from=2-3, to=1-3]
	\arrow["{\delta}", from=2-3, to=2-4]
\end{tikzcd}
\end{align*}

\section{The infinitesimal picture}
Now suppose $V$ is a representation of a Lie algebroid $A\Ra M$. As before, we now again assume that a connection $\nabla$ on the representation $V\ra M$ is given, with no a priori assumptions on its compatibility with the algebroid action $A\curvearrowright V$.
\begin{definition}
  \label{def:d_nabla_weil}
  Let $\nabla$ be a connection on a representation $V$ of a Lie algebroid $A\Ra M$. The \textit{exterior covariant derivative} of $V$-valued Weil cochains is the map
  \[\d{}^\nabla\colon W^{p,q}(A;V)\ra W^{p,q+1}(A;V),\]
  whose leading term is defined on any $c\in W^{p,q}(A;V)$ as the exterior covariant derivative of its leading term $c_0$, that is,
  \[
  (\d{}^\nabla c)_0(\ul\alpha)=\d{}^\nabla c_0(\ul\alpha).
  \]
  The correction coefficients $(\d{}^\nabla c)_k$ are defined by
  \begin{align}
    \label{eq:dnabla}
    (-1)^k(\d{}^\nabla c)_k&(\ul\alpha\|\ul\beta)=\d{}^\nabla c_k (\ul\alpha\|\ul\beta)-\textstyle\sum_{i=1}^k c_{k-1}(\beta_i,\ul\alpha\|\beta_1,\dots,\widehat{\beta_i},\dots,\beta_k).
    \end{align}
\end{definition}
In the definition above, the correction terms were obtained using the general principle. As always, one needs to check the obtained map is well-defined, which is a straightforward computation that we have provided in Lemma \ref{lemma:wd_dnabla}. 
\begin{example}
At $p=1$, the exterior covariant derivative of $c=(c_0,c_1)\in W^{1,q}(A;V)$ reads
\begin{align}
\label{eq:ext_cov_der_p=1}
(\d{}^\nabla c)_0(\alpha)=\d{}^\nabla c_0(\alpha),\quad (\d{}^\nabla c)_1(\beta)=c_0(\beta)-\d{}^\nabla c_1(\beta).
\end{align}
This coincides with the operator defined in \cite{mec}*{Proposition A.10}. Going one level further, at $p=2$ we obtain:
\begin{align}
\label{eq:ext_cov_der_p=2}
\begin{split}
(\d{}^\nabla c)_0(\alpha_1,\alpha_2)&=\d{}^\nabla c_0(\alpha_1,\alpha_2),\\
(\d{}^\nabla c)_1(\alpha\|\beta)&=c_0(\beta,\alpha)-\d{}^\nabla c_1(\alpha\|\beta),\\
(\d{}^\nabla c)_2(\beta_1,\beta_2)&=\d{}^\nabla c_2(\beta_1,\beta_2)-c_1(\beta_1\|\beta_2)-c_1(\beta_2\|\beta_1),
\end{split}
\end{align}
for any $c=(c_0,c_1,c_2)\in W^{2,q}(A;V)$.
\end{example}

We now prove the infinitesimal analogue of Theorem \ref{thm:G_invariant}.
\begin{theorem}
\label{thm:A_invariant}
Let $\nabla$ be a connection on a representation $V$ of a Lie algebroid $A\Ra M$. The map $\d{}^\nabla$ is a cochain map if and only if $\nabla$ is $A$-invariant, that is, if there holds
\[\nabla^A_\alpha=\nabla_{\rho(\alpha)}\quad\text{and}\quad\iota_{\rho(\alpha)}R^\nabla=0,\]
for any $\alpha\in A$. In particular, the operator defined by \eqref{eq:ext_cov_der_p=1} maps IM forms to IM forms. 
\end{theorem}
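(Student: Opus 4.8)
The strategy mirrors the proof of Theorem \ref{thm:G_invariant}: I would establish that $\d{}^\nabla$ commutes with $\delta$ at \emph{all} levels $p\geq 0$ by first computing an explicit formula for the commutator $[\d{}^\nabla,\delta]\colon W^{p,q}(A;V)\ra W^{p+1,q+1}(A;V)$, and then observing that this commutator vanishes identically precisely when the two $A$-invariance conditions $\nabla^A_\alpha=\nabla_{\rho(\alpha)}$ and $\iota_{\rho(\alpha)}R^\nabla=0$ hold. So the bulk of the work is an auxiliary lemma, the infinitesimal analogue of Lemma \ref{lem:G_invariant}, which I will call Lemma \ref{lem:A_invariant} (already referenced in the introduction). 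The cleanest route is to compute on the leading term and on the correction terms separately, using the explicit formulas \eqref{eq:delta_inf} for $\delta$ and \eqref{eq:dnabla} for $\d{}^\nabla$.

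\textbf{First step: the leading term.} On leading terms, $(\d{}^\nabla c)_0(\ul\alpha)=\d{}^\nabla c_0(\ul\alpha)$ and $(\delta c)_0$ is the Koszul differential with respect to the $\Gamma(A)$-module structure given by the Lie derivative $\L^A$. Hence $[\d{}^\nabla,\delta]$ on the leading term reduces to comparing $\d{}^\nabla\L^A_\alpha$ with $\L^A_\alpha\d{}^\nabla$ on forms in $\Omega^{q}(M;S^k(A^*)\otimes V)$. A direct computation (of the same flavour as the classical identity $[\d{},\iota_X]=\L_X$, but covariantized) shows that the failure of these to commute is governed by two tensorial quantities: the difference $\nabla^A_\alpha-\nabla_{\rho(\alpha)}$ acting on the $V$-factor, and the curvature term $\iota_{\rho(\alpha)}R^\nabla$. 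Concretely, one gets something of the shape $(\d{}^\nabla\L^A_\alpha-\L^A_\alpha\d{}^\nabla)\gamma = (\text{expression in } \nabla^A_\alpha-\nabla_{\rho(\alpha)})\wedge\gamma + \iota_{\rho(\alpha)}R^\nabla\wedge\gamma + (\text{lower-order correction-term contributions})$. I would package the first two terms as a single $\End(V)$-valued (or $\Hom$-valued) tensor, analogous to the tensor $\Theta$ of \eqref{eq:invariance_form_G}; call it $\theta\in\Omega^1(M;\End V)$ — in fact $\theta(\alpha)=\nabla^A_\alpha-\nabla_{\rho(\alpha)}$ together with the curvature obstruction, exactly the combination appearing in the $A$-invariance condition.

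\textbf{Second step: the correction terms.} Here one must carry the same computation through the recursion \eqref{eq:dnabla}, using that the correction terms of $\delta c$ are the $\iota_{\rho(\beta_j)}c_{k-1}$-type terms. The key simplification is that the extra $\iota_{\rho(\beta_j)}$ insertions interact cleanly with $\d{}^\nabla$ via the graded commutator $[\d{}^\nabla,\iota_{\rho(\beta)}]=\L^\nabla_{\rho(\beta)}$ (covariant Lie derivative), and that the obstruction tensor $\theta$ is itself $\rho$-horizontal in the relevant sense once the invariance conditions are imposed. I expect the commutator at level $p$ to take the form $[\d{}^\nabla,\delta]c = (\pm 1)\,\theta\mathbin{\tilde\wedge}(\text{a face-type operation applied to } c)$, i.e. a Weil cochain whose leading term is $\theta\wedge(\text{last face of }c_0)$, mirroring \eqref{eq:commutator_higher} exactly. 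Then $[\d{}^\nabla,\delta]=0$ iff $\theta=0$, and unwinding $\theta=0$ gives precisely $\nabla^A_\alpha=\nabla_{\rho(\alpha)}$ and $\iota_{\rho(\alpha)}R^\nabla=0$ (the first from the degree-zero part, the second from the fact that flatness of the combined operator forces the curvature to be $\rho$-annihilated). Finally, since IM forms are the $1$-cocycles $\ker\delta^1$, a cochain map $\d{}^\nabla$ commuting with $\delta$ sends $\ker\delta^1$ to $\ker\delta^1$, giving the last sentence of the theorem for free.

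\textbf{Main obstacle.} The hard part is the bookkeeping of signs and of the correction-term recursion: verifying that the auxiliary Lemma \ref{lemma:wd_dnabla} (well-definedness of $\d{}^\nabla$ on Weil cochains, i.e. compatibility with the Leibniz identity) is compatible with the commutator computation, and that all the symmetrization/antisymmetrization conventions in \eqref{eq:delta_inf} line up so that the cross-terms (double sums over $[\alpha_i,\alpha_j]$, and the $\iota_{\rho(\beta)}$ terms) cancel, leaving only the single $\theta$-term. An alternative that sidesteps most of this is to pass to the model of exterior cochains from \sec\ref{sec:alternative_model_weil}: under the isomorphism $\ev$, the simplicial differential on Weil cochains becomes the \emph{standard} algebroid differential \eqref{eq:delta_vb} on $\AA_q\Ra\MM_q$, and $\d{}^\nabla$ should correspond to a covariant differential along one of the $TA$-directions; then $[\d{}^\nabla,\delta]$ is a standard curvature-type commutator on a $\vb$-algebroid, and its vanishing is transparently equivalent to the two stated conditions. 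I would likely present the direct Weil-cochain computation (for continuity with Theorem \ref{thm:G_invariant}) but keep the exterior-cochain reformulation in mind as a cross-check.
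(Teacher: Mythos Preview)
Your overall strategy is correct and matches the paper's: compute the commutator $[\d{}^\nabla,\delta]$ explicitly and show it is controlled by an infinitesimal obstruction that vanishes exactly under the two $A$-invariance conditions. Your leading-term computation is essentially the paper's, where one shows $\d{}^\nabla\L^A_\alpha-\L^A_\alpha\d{}^\nabla=T(\alpha)\wedge\cdot$ with $T(\alpha)=\d{}^\nabla\theta(\alpha)-\iota_{\rho(\alpha)}R^\nabla$ and $\theta(\alpha)=\nabla^A_\alpha-\nabla_{\rho(\alpha)}$.

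Two points where you diverge from the paper. First, your packaging of the obstruction is slightly off: it is not a single tensor $\theta\in\Omega^1(M;\End V)$ but rather a \emph{Weil $1$-cochain} $(T,\theta)\in W^{1,1}(A;\End V)$, with $\theta\colon A\to\End V$ the symbol and $T\colon\Gamma(A)\to\Omega^1(M;\End V)$ the leading term. The commutator at level $p$ is then $(T,\theta)\wedge c$, a Weil $(p{+}1)$-cochain whose terms involve both $T(\alpha_i)\wedge c_k$ and $\theta(\beta_j)\cdot c_{k-1}$ contributions; the vanishing of this pair gives the two conditions separately.

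Second, and more substantively: you plan to verify the commutator formula on correction terms by direct recursion through \eqref{eq:dnabla} and \eqref{eq:delta_inf}, and you correctly flag the sign/cross-term bookkeeping as the main obstacle. The paper bypasses this entirely with the trick of Remark~\ref{rem:action_trick}: once one checks that $c\mapsto(T,\theta)\wedge c$ is a well-defined map of Weil cochains (a short Leibniz-identity check, Lemma~\ref{lem:T_theta_wedge}), the leading-term identity forces the full identity whenever $q\leq\dim M$; for $q>\dim M$ one pulls back along an action algebroid $A\ltimes P\to A$ with $\dim P\geq q+1$, using naturality of all operators and injectivity of the pullback. This replaces your brute-force recursion with a two-line reduction. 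Your exterior-cochain alternative would also work, but the paper's action-algebroid trick is lighter and stays within the Weil model.
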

\begin{remark}
  \label{rem:G_inv_implies_A_inv}
  In \cite{mec}*{Proposition A.8}, it was shown that $G$-invariance implies $A$-invariance, and that the converse also holds if $G$ is source-connected. A new proof of this is given in Proposition \ref{prop:g_inv_a_inv}, where the statement is seen as a consequence of the properties of the van Est map.
\end{remark}
\noindent As with groupoids, we prove the theorem by computing the formula for the commutator 
\[
[\d{}^\nabla,\delta]\colon W^{p,q}(A;V)\ra W^{p+1,q+1}(A;V),
\]
which will turn out to be vital later.
\begin{lemma}
\label{lem:A_invariant}
Let $V$ be a representation of a Lie algebroid $A\Ra M$ and let $\nabla$ be a  connection on $V$. The commutator $[\d{}^\nabla,\delta]$ evaluated on a cochain $c=(c_0,\dots,c_p)\in W^{p,q}(A;V)$ reads
\begin{align}
(\d{}^\nabla\delta c)_k(\alpha_0,&\dots,\alpha_{p-k}\|\ul\beta)-(\delta\d{}^\nabla c)_k(\alpha_0,\dots,\alpha_{p-k}\|\ul\beta)\nonumber\\
&=\textstyle\sum_{i=0}^{p-k} (-1)^i T(\alpha_i)\wedge c_k(\alpha_0,\dots,\widehat{\alpha_i},\dots,\alpha_{p-k}\|\ul\beta)\label{eq:A_invariant_commutator}\\
&+\textstyle\sum_{j=1}^k\theta(\beta_j)\cdot c_{k-1}(\ul\alpha\|\beta_1,\dots,\widehat{\beta_j},\dots,\beta_k),\nonumber
\end{align}
where the tensor $\theta\colon A\rightarrow \End(V)$ and the map $T\colon \Gamma(A)\ra \Omega^1(M;\End V)$ are given by
\begin{align}
\label{eq:invariance_form}
\begin{split}
\theta(\alpha)&=\nabla^A_\alpha-\nabla_{\rho(\alpha)},\\
T(\alpha)&=\d{}^\nabla\theta(\alpha)-\iota_{\rho(\alpha)}R^\nabla.
\end{split}
\end{align}
\end{lemma}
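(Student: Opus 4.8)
The plan is to establish the commutator formula \eqref{eq:A_invariant_commutator} by a direct expansion of both $(\d{}^\nabla\delta c)_k$ and $(\delta\d{}^\nabla c)_k$ using the explicit formula \eqref{eq:delta_inf} for the simplicial differential on Weil cochains together with the defining formula \eqref{eq:dnabla} for $\d{}^\nabla$, and then comparing term-by-term. Before starting, I would record the handful of operator identities that do all the work: on $V$-valued forms one has Cartan's formula $[\d{}^\nabla,\iota_X]=\L^\nabla_X$ (graded commutator), its consequence $[\d{}^\nabla,\L^\nabla_X]=-(\iota_X R^\nabla)\wedge(-)$ obtained from the graded Jacobi identity and $(\d{}^\nabla)^2=R^\nabla\wedge(-)$, the Leibniz rule $[\d{}^\nabla,\psi]=(\d{}^\nabla\psi)\wedge(-)$ for $\psi\in\Gamma(\End V)$ with $\d{}^\nabla$ the induced connection on $\End V$, and, crucially, the splitting $\L^A_\alpha=\L^\nabla_{\rho(\alpha)}+\theta(\alpha)$ where $\theta(\alpha)=\nabla^A_\alpha-\nabla_{\rho(\alpha)}$ is tensorial precisely because the $[\rho(\alpha),X_j]$-contributions to the two Lie derivatives coincide.

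First I would treat the leading term $k=0$. Here $(\delta c)_0$ is the Koszul differential $\sum_i(-1)^i\L^A_{\alpha_i}\big(c_0(\dots\widehat{\alpha_i}\dots)\big)$ plus the bracket terms $c_0([\alpha_i,\alpha_j],\dots)$; since $\d{}^\nabla$ passes through the latter (they carry no $\nabla$-dependence) they cancel in the difference $\d{}^\nabla\delta c-\delta\d{}^\nabla c$, leaving $\sum_i(-1)^i[\d{}^\nabla,\L^A_{\alpha_i}]\big(c_0(\dots)\big)$. Feeding in the recorded identities, $[\d{}^\nabla,\L^A_{\alpha_i}]=[\d{}^\nabla,\L^\nabla_{\rho(\alpha_i)}]+[\d{}^\nabla,\theta(\alpha_i)]=\big(\d{}^\nabla\theta(\alpha_i)-\iota_{\rho(\alpha_i)}R^\nabla\big)\wedge(-)=T(\alpha_i)\wedge(-)$, which is exactly the $k=0$ instance of \eqref{eq:A_invariant_commutator} (the $\theta$-sum being empty). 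This already isolates the role of the tensors $\theta$ and $T$ of \eqref{eq:invariance_form}.

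Next come the correction terms $k\geq 1$. The $\L^A_{\alpha_i}$-blocks of \eqref{eq:delta_inf} reproduce the $T(\alpha_i)\wedge c_k$-terms verbatim as above, and the double-bracket blocks again cancel. The genuinely new interaction is between the contraction block $-\sum_j\iota_{\rho(\beta_j)}c_{k-1}(\ul\alpha\|\dots\widehat{\beta_j}\dots)$ of $\delta$ and the ``promotion'' block $-\sum_i c_{k-1}(\beta_i,\ul\alpha\|\dots)$ of $\d{}^\nabla$ in \eqref{eq:dnabla}: writing out $\d{}^\nabla$ of the former and $\iota_{\rho(\beta_j)}$ of the latter, Cartan's formula turns the combination $\d{}^\nabla\iota_{\rho(\beta_j)}+\iota_{\rho(\beta_j)}\d{}^\nabla$ acting on $c_{k-1}$ into $\L^\nabla_{\rho(\beta_j)}=\L^A_{\beta_j}-\theta(\beta_j)$, where the $\L^A_{\beta_j}$-part is absorbed into the $\delta$-structure applied to $c_{k-1}$ and the leftover $-\theta(\beta_j)\cdot c_{k-1}(\ul\alpha\|\dots\widehat{\beta_j}\dots)$ is exactly the second sum of \eqref{eq:A_invariant_commutator} once the $(-1)^k$ prefactor of \eqref{eq:dnabla} and the $(-1)^i$, $(-1)^{i+j}$ signs of \eqref{eq:delta_inf} are tracked; all remaining cross-terms (where $\iota_{\rho(\beta_j)}$ meets a bracket or a second contraction) must cancel in pairs. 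As a sanity check one may instead argue that both sides of \eqref{eq:A_invariant_commutator} are genuine Weil cochains (the right-hand side using $\theta(f\alpha)=f\theta(\alpha)$, hence $T(f\alpha)=fT(\alpha)+\d f\wedge\theta(\alpha)$) with the same leading term, and invoke the fact that a Weil cochain is determined by its leading term through the Leibniz identity; the well-definedness input here is comparable to that of Lemma \ref{lemma:wd_dnabla}.

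The main obstacle is therefore combinatorial rather than conceptual: organizing the expansion so that the bracket/contraction cross-terms visibly cancel and only the $T$- and $\theta$-blocks survive, while keeping all signs consistent. Working at the level of operators — feeding the building-block identities into the schematic decompositions $\delta=(\text{Koszul in the }\alpha\text{'s})-(\text{contraction in the }\rho(\beta)\text{'s})$ and ``$(-1)^k(\d{}^\nabla c)_k=\d{}^\nabla c_k-(\text{sum of promotions }\beta_i\to\alpha)$'' and invoking graded Jacobi — keeps this manageable. Finally, Theorem \ref{thm:A_invariant} is immediate from \eqref{eq:A_invariant_commutator}: the $T$- and $\theta$-contributions are independent, so $[\d{}^\nabla,\delta]=0$ forces $\theta\equiv 0$, i.e.\ $\nabla^A_\alpha=\nabla_{\rho(\alpha)}$, and then $T\equiv 0$ reduces to $\iota_{\rho(\alpha)}R^\nabla=0$; conversely, these two conditions annihilate the right-hand side, so $\d{}^\nabla$ is then a cochain map and in particular preserves IM forms.
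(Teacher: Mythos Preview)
Your $k=0$ computation is exactly the paper's: both reduce the leading term to $[\d{}^\nabla,\L^A_{\alpha}]=T(\alpha)\wedge(-)$ via the splitting $\L^A_\alpha=\L^\nabla_{\rho(\alpha)}+\theta(\alpha)$ and the identity \eqref{eq:d_lie}. For the correction terms, however, the paper does \emph{not} carry out the direct expansion you sketch. Instead it takes precisely your ``sanity check'' route as the main argument: it verifies (Lemma~\ref{lem:T_theta_wedge}) that the right-hand side $c\mapsto (T,\theta)\wedge c$ is a well-defined map of Weil cochains, so both sides of \eqref{eq:A_invariant_commutator} are Weil cochains with the same leading term, hence equal.

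There is one genuine gap in your version of this argument that the paper closes and you do not: a Weil cochain is determined by its leading term only when $q\leq\dim M$, since for $q>\dim M$ the leading term (and several subsequent ones) vanish identically for degree reasons. The paper handles this by the action-algebroid trick of Remark~\ref{rem:action_trick}: pull everything back along the surjective algebroid morphism $p\colon A\ltimes P\to A$ for an action on some $\mu\colon P\to M$ with $\dim P\geq q+1$, check naturality $p^*\d{}^\nabla=\d{}^{\nabla^\mu}p^*$ and $p^*((T,\theta)\wedge c)=p^*(T,\theta)\wedge p^*c$, and use injectivity of $p^*$. Your direct expansion for $k\geq 1$ would avoid this issue entirely if carried through, but as written it leaves the cross-term cancellations as an assertion (``must cancel in pairs''); the paper's route trades that bookkeeping for the naturality check, which is shorter.
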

\begin{remark}
The exterior covariant derivative $\d{}^\nabla$ in the definition of the map $T$ is with respect to the induced connection on $\End V$. We also note that $(T,\theta)\in W^{1,1}(A;\End V)$ is a Weil cochain with values in the induced representation of $A$ on $\End V$, since it clearly satisfies the Leibniz rule \[T(f\alpha)=f T(\alpha)+\d f\otimes \theta(\alpha).\]
In \sec \ref{sec:obstruction_invariance}, we will see that this is actually an IM form with values in $\End V$, and we will use it to define an obstruction class to the existence of $A$-invariant connections. The pair $(T,\theta)$ will henceforth be called the \textit{$A$-invariance form} of the connection $\nabla$.
\end{remark}
\begin{proof}
Let us first check the theorem holds at the level of leading terms. By the definition of maps $\d{}^\nabla$ and $\delta$, there holds
\begin{align*}
(\d{}^\nabla\delta c)_0(\ul\alpha)&-(\delta\d{}^\nabla c)_0(\ul\alpha)=\textstyle\sum_i (-1)^i \big({\d{}^\nabla}\L^A_{\alpha_i}-\L^A_{\alpha_i}\d{}^\nabla\big) c_0(\alpha_0,\dots,\widehat{\alpha_i},\dots,\alpha_{p}),
\end{align*}
so we must compute $({\d{}^\nabla}\L^A_\alpha-\L^A_\alpha\d{}^\nabla)\gamma$ for any $\gamma\in\Omega^q(M;V)$. Notice that 
\[
\L^A_\alpha\gamma=\L^\nabla_{\rho(\alpha)}\gamma+\theta(\alpha) \gamma,
\]
and using Cartan's magic formula, we get
\begin{align}
\label{eq:d_lie}
\d{}^\nabla\L^\nabla_X\gamma-\L^\nabla_X\d{}^\nabla\gamma =(\d{}^\nabla)^2\iota_X\gamma-\iota_X(\d{}^\nabla)^2\gamma=-(\iota_X R^\nabla)\wedge \gamma
\end{align}
for any $X\in \vf(M)$. A simple computation using these two identities shows
\begin{align*}
{\d{}^\nabla}\L^A_\alpha\gamma-\L^A_\alpha\d{}^\nabla\gamma=T(\alpha)\wedge\gamma,
\end{align*}
proving our claim for the leading coefficient.

We now claim that it was actually enough to check that the formula \eqref{eq:A_invariant_commutator} holds at the level of leading coefficients. This follows from a similar argument as in Remark \ref{rem:action_trick} after this proof, and was inspired by the technique in the proof of \cite{weil}*{Proposition 4.1}. To be precise, first observe that the right-hand side of equation \eqref{eq:A_invariant_commutator} defines an operator 
\[c\mapsto (T,\theta)\wedge c,\]
which maps Weil cochains to Weil cochains by Lemma \ref{lem:T_theta_wedge}, raising both the degree and the level by 1. Hence, the case $q\leq\dim M-1$ is proved. If $q\geq\dim M$, the idea is to apply the same proof for the leading coefficient, but on a larger algebroid. Take any action of $A$ on a surjective submersion $\mu\colon P\ra M$ with $\dim P\geq q+1$ and form the action algebroid $A\ltimes P\Ra P$. The surjective algebroid morphism $p\colon A\ltimes P\ra A$ then induces a pullback of Weil cochains, denoted $p^*$, defined by \eqref{eq:weil_pullback}. The following naturality relations of the operators $\d{}^\nabla$ and $(T,\theta)\wedge \cdot$ then  hold, as a consequence of the characterizing relation \eqref{eq:pullback} of the pullback of forms with coefficients:
\begin{align*}
  p^*\d{}^\nabla&=\d{}^{\nabla^\mu}p^*,\\ p^*\big((T,\theta)\wedge c\big)&=p^*(T,\theta)\wedge p^*c,
\end{align*}
where $p^*(T,\theta)\in W^{1,1}(A\ltimes P;\mu^*V)$ is precisely the $A$-invariance form of the pullback connection $\nabla^\mu$ on the representation $\mu^*V$ of $A\ltimes P$. Since $p^*$ is injective, we are done.
\end{proof}
\begin{remark}
  \label{rem:action_trick}
  Suppose we are given two Weil cochains $c,c'\in W^{p,q}(A;V)$, and we want to show $c=c'$. Observe that for any $k\in \set{0,\dots,p-1}$, $c_k=c_k'$ implies $c_{k+1}=c'_{k+1}$ by the Leibniz rule, provided $q-k\leq \dim M$. Hence, $c_0=c_0'$ implies $c=c'$ provided $q\leq \dim M$, so in this case it is enough to check that the leading terms coincide. On the other hand, if $q>\dim M$, then the first nontrivial component of a nonzero cochain $c$ is $c_{q-\dim M}$, and we would like to see it as the leading term. The following trick is a precise way of doing so.
  
  Take any action of $A\Ra M$ on a surjective submersion $\mu\colon P\ra M$, as in \cite{actions}*{Definition 3.1} and consider the action algebroid $A\ltimes P\Ra P$. For clarity, let us briefly recall its construction. To begin with, its underlying vector bundle is the pullback bundle $\mu^* A=P\times_M A\ra P$. As such, its space of sections  is canonically isomorphic as a $C^\infty(P)$-module to 
  \[\Gamma(\mu^*A)\cong C^\infty(P)\otimes_{C^\infty(M)}\Gamma(A),\]
  and it is thus generated by $\Gamma(A)$.\footnote{The isomorphism $C^\infty(P)\otimes\Gamma(A)\ra \Gamma(\mu^*A)$ is given by  $h\otimes\alpha\mapsto (p\mapsto(p,h(p)\alpha_{\mu(p)}))$, and the tensor product over $C^\infty(M)$ identifies $h\otimes f\alpha=(f\circ \mu)h\otimes \alpha$, where $h\in C^\infty(P), \alpha\in \Gamma(A)$ and $f\in C^\infty(M)$.} The algebroid structure on $A\ltimes P$ is determined by
  \begin{align*}
    \rho_{A\ltimes P}(1\otimes \alpha)&=X^\alpha,\\
    [1\otimes\alpha,1\otimes\beta]_{A\ltimes P}&=1\otimes[\alpha,\beta]
  \end{align*}
  on generators $\alpha,\beta\in\Gamma(A)$, and extended to the whole space by $C^\infty(P)$-linearity of the anchor and the Leibniz rule of the bracket. Here, $\alpha\mapsto X^\alpha$ denotes the action $\Gamma(A)\ra \vf(P)$. We thus obtain the following natural surjective submersion of Lie algebroids.
\begin{align}
  \label{eq:p_mu}
  \begin{split}
    \begin{tikzcd}[ampersand replacement=\&]
    	{A\ltimes P} \& A \\
    	P \& M
    	\arrow["p", from=1-1, to=1-2]
    	\arrow[Rightarrow, from=1-1, to=2-1]
    	\arrow[Rightarrow, from=1-2, to=2-2]
    	\arrow["\mu"', from=2-1, to=2-2]
    \end{tikzcd}
  \end{split}
\end{align}
Moreover, a representation $V$ of $A$ induces a representation $\mu^* V$ of $A\ltimes P$, determined by
\[\nabla^{A\ltimes P}_{1\otimes \alpha}(1\otimes\xi)=1\otimes \nabla^A_\alpha\xi,\] 
for any $\xi\in \Gamma(V)$, and extended by $C^\infty(P)$-linearity and the Leibniz rule. 
Now observe that the pullback along the algebroid map \eqref{eq:p_mu} induces a monomorphism of Weil complexes,
\begin{align}
  \label{eq:weil_pullback}
  \begin{split}
    &p^*\colon W^{p,q}(A;V)\ra W^{p,q}(A\ltimes P;\mu^*V),\\
    &(p^* c)_k(1\otimes \alpha_1,\dots,1\otimes \alpha_{p-k}\|1\otimes \beta_1, \dots, 1\otimes \beta_k)=\mu^* c_k(\ul\alpha\|\ul\beta),
  \end{split}
\end{align} 
defined on the generators and extended by the Leibniz rule in the antisymmetric arguments and by $C^\infty(P)$-linearity in the symmetric ones. Since $\dim P\geq\dim M$, $p^*$ is well-defined. The upshot now is that in the case $q>\dim M$, the first nontrivial term of a nonzero Weil cochain $c\in W^{p,q}(A;V)$ is $c_{q-\dim M}$, but choosing any space $P$ with $\dim P\geq q$ translates this term into $(p^* c)_0\neq 0$. Hence, showing $c=c'$ is equivalent to showing $(p^*c)_0=(p^*c')_0$.
\end{remark}

Theorem \ref{thm:A_invariant} follows directly from the last lemma. In conclusion, an $A$-invariant connection $\nabla$ on $V$ yields the columns of a curved double complex, 
\[
(W^{\bullet,\bullet}(A;V),\delta,\d{}^\nabla).
\]
As in the case of groupoids, it has the obvious feature that $\d{}^\nabla$ does not square to zero unless $\nabla$ is flat, in which case we obtain a flat double complex.
\begin{align}
\begin{tikzcd}[ampersand replacement=\&, column sep=large, row sep=large]
	{\Omega^{q+1}(M;V)} \& {W^{1,q}(A;V)} \& {W^{2,q+1}(A;V)} \& \cdots \\
	{\Omega^q(M;V)} \& {W^{1,q}(A;V)} \& {W^{2,q}(A;V)} \& \cdots
	\arrow["{\delta}", from=1-1, to=1-2]
	\arrow["{\delta}", from=1-2, to=1-3]
	\arrow["{\delta}", from=1-3, to=1-4]
	\arrow["{\d{}^\nabla}", from=2-1, to=1-1]
	\arrow["{\delta}", from=2-1, to=2-2]
	\arrow["{\d{}^\nabla}", from=2-2, to=1-2]
	\arrow["{\delta}", from=2-2, to=2-3]
	\arrow["{\d{}^\nabla}", from=2-3, to=1-3]
	\arrow["{\delta}", from=2-3, to=2-4]
\end{tikzcd}
\end{align}
\begin{remark}
  If $\nabla$ is an $A$-invariant connection, its curvature tensor is an invariant form,
  \[
  R^\nabla\in \Omega^2_\inv(M;\End V),
  \]
  with respect to the induced representation of $A$ on $\End V$. This follows from computing
  \[
  \L^A_\alpha R^\nabla = {\underbrace{\L^\nabla_{\rho(\alpha)} R^\nabla}_{\mathclap{\d{}^{\nabla^{\End V}}\iota_{\rho(\alpha)} R^\nabla}}} + \theta(\alpha)\circ R^\nabla-R^\nabla\circ \theta(\alpha)=0,
  \]
  where the under-brace is due to the Bianchi identity $\d{}^{\nabla^{\End V}} R^\nabla=0$.
\end{remark}

\section{Van Est map versus the exterior covariant derivative}
We now establish the relationship of the van Est map with the exterior covariant derivative.

\begin{theorem}
    \label{thm:van_est_G_A}
    Let $\nabla$ be a connection on a representation $V$ of a Lie groupoid $G\rra M$ with Lie algebroid $A\Ra M$. If $\nabla$ is $G$-invariant, then the van Est map commutes with the exterior covariant derivatives:
    \begin{align}
      \label{eq:ve_d}
      \ve \d{}_G^\nabla=\d{}_A^\nabla \ve.
    \end{align}
    Moreover, this equality holds on all normalized forms regardless of $G$-invariance of the connection $\nabla$, so in particular, it holds for multiplicative forms.
    \end{theorem}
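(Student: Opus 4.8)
The plan is to reduce the identity $\ve\,\d{}_G^\nabla=\d{}_A^\nabla\,\ve$ to a single commutation relation between the level‑lowering operator $R_\alpha$ underlying the van Est map and the exterior covariant derivative. Both $\ve\circ\d{}_G^\nabla$ and $\d{}_A^\nabla\circ\ve$ are maps $\Omega^{p,q}(G;V)\ra W^{p,q+1}(A;V)$, and both land in the Weil complex (for the second this uses well‑definedness of $\d{}^\nabla$ on Weil cochains). By the action trick of Remark \ref{rem:action_trick}---pulling back, if $q+1>\dim M$, along an action algebroid $A\ltimes P\Ra P$ with $\dim P\geq q+1$, and using that the leading term of a pulled‑back Weil cochain is the pullback of the leading term---it suffices to show the two \emph{leading terms} agree. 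Since $(\d{}_A^\nabla c)_0=\d{}^\nabla c_0$ by Definition \ref{def:d_nabla_weil}, and since by \eqref{eq:ve_leading} the leading term of $\ve$ is the antisymmetrised iterated composite of the operators $R_\alpha$, this reduces (pushing $\d{}^\nabla$ through one factor $R_{\alpha_{\sigma(i)}}$ at a time) to proving
\begin{align*}
R_\alpha\circ\d{}^\nabla=\d{}^\nabla\circ R_\alpha\colon\Omega^{p,q}(G;V)\ra\Omega^{p-1,q+1}(G;V),\qquad\alpha\in\Gamma(A).
\end{align*}

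To establish this commutation relation when $\nabla$ is $G$‑invariant, I would unwind the definition of $R_\alpha$ and check that each ingredient commutes with $\d{}^\nabla$. Writing $A_\lambda\colon(\phi^{\rho(\alpha)}_\lambda)^*V\ra V$ for the vector bundle isomorphism over $M$ given fibrewise by the groupoid action $v\mapsto\phi^{\cev\alpha}_\lambda(1_x)\cdot v$ (the exp‑bisection of Example \ref{ex:exp_bisection}), one has $R_\alpha\omega=j_p^*\big(\smallderiv\lambda0(A_\lambda)_*(\phi^{\alpha^{(p)}}_\lambda)^*\omega\big)$. Using the characterisation \eqref{eq:pullback} of pullback connections: the degeneracy pullback $j_p^*$ commutes with $\d{}^\nabla$, because $s\circ\pr_p\circ j_p=s\circ\pr_{p-1}$; the flow pullback $(\phi^{\alpha^{(p)}}_\lambda)^*$ intertwines $\d{}^\nabla$ with the exterior covariant derivative of the $\lambda$‑shifted pullback connection, since $s\circ\pr_p\circ\phi^{\alpha^{(p)}}_\lambda=\phi^{\rho(\alpha)}_\lambda\circ s\circ\pr_p$; and $(A_\lambda)_*$ brings that shifted derivative back to $\d{}^\nabla$, because $A_\lambda$ is $\nabla$‑parallel---this last point is exactly where $G$‑invariance is used, as $\Theta=0$ in \eqref{eq:invariance_form_G} says precisely that the coefficient isomorphism $\phi\colon t^*V\ra s^*V$ is parallel, hence so is its pullback along the exp‑bisection. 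Since $\d{}^\nabla$ is independent of $\lambda$ it commutes with $\smallderiv\lambda0$, and chaining the three observations gives $\d{}^\nabla R_\alpha=R_\alpha\d{}^\nabla$, which by the reduction above proves the first statement.

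For the second statement I would drop $G$‑invariance and re‑examine the third step, which is the only one that fails. The discrepancy between $\d{}^\nabla\circ(A_\lambda)_*$ and $(A_\lambda)_*\circ(\text{shifted }\d{}^\nabla)$ is $C^\infty$‑linear, i.e.\ of the form $E_\lambda\wedge(\cdot)$ for an $\End(V)$‑valued $1$‑form $E_\lambda$ with $E_0=0$ (its derivative $\dot E_0$ being expressible through the $A$‑invariance form $(T,\theta)$ of \eqref{eq:invariance_form}, though its precise shape is irrelevant). Carrying this extra term through the computation and applying the product rule at $\lambda=0$, where $A_0=\id$ and $\phi^{\alpha^{(p)}}_0=\id$, yields
\begin{align*}
(R_\alpha\d{}^\nabla-\d{}^\nabla R_\alpha)\omega=j_p^*\big(\dot E_0\wedge\omega\big)=(j_p^*\dot E_0)\wedge(j_p^*\omega).
\end{align*}
When $\omega$ is normalized the degeneracy pullback $j_p^*\omega$ vanishes, so the commutator vanishes and the reduction again gives $\ve\,\d{}_G^\nabla\omega=\d{}_A^\nabla\,\ve\,\omega$. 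In particular this applies to multiplicative forms, which are normalized (set $g=h=1_x$ in \eqref{eq:multiplicative}).

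I expect the main obstacle to be making the third step precise: one must verify carefully that the defect $E_\lambda$ is genuinely of order zero---so that $j_p^*$ annihilates the entire commutator on normalized forms---while keeping honest track of how the degeneracy $j_p$, the flow $\phi^{\alpha^{(p)}}_\lambda$ and the action twist $A_\lambda$ are composed, including the base maps along which the various connections are pulled back. A secondary point is to confirm that the action‑trick reduction genuinely applies here, i.e.\ that pullback along $A\ltimes P\ra A$ is compatible both with $\d{}^\nabla$ and with the passage to leading terms of $\ve$; everything else is routine manipulation with \eqref{eq:pullback} and the covariant Cartan calculus.
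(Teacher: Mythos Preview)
Your overall strategy---reduce to leading terms via the action trick, then establish $R_\alpha\d{}^\nabla=\d{}^\nabla R_\alpha$---is exactly the paper's. Your decomposition of $R_\alpha$ into degeneracy pullback, flow pullback, and action twist $(A_\lambda)_*$ is a valid alternative to the paper's reformulation $R_\alpha\omega=j_p^*\L^{\nabla}_{\alpha^{(p)}}\omega$ via the covariant Lie derivative; both routes produce a commutator of the form $(\text{something})\wedge j_p^*\omega$, which vanishes under either hypothesis.

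There is, however, a gap in your treatment of the normalized case. You derive $(R_\alpha\d{}^\nabla-\d{}^\nabla R_\alpha)\omega=(j_p^*\dot E_0)\wedge(j_p^*\omega)$ and conclude this vanishes when $j_p^*\omega=0$. But the reduction you invoke requires pushing $\d{}^\nabla$ past \emph{every} factor $R_{\alpha_{\sigma(i)}}$ in the iterated composite $R_{\alpha_{\sigma(1)}}\cdots R_{\alpha_{\sigma(p)}}$, and for that you need the commutator to vanish not only on $\omega$ but on each intermediate form $R_{\alpha_{\sigma(i+1)}}\cdots R_{\alpha_{\sigma(p)}}\omega$. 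Normalization of $\omega$ alone does not give this: you must check that $R_\alpha$ \emph{preserves} normalization, i.e., that $(j_{p-1}^k)^*R_\alpha=R_\alpha(j_p^k)^*$ for every degeneracy $j_p^k$ with $k\leq p-1$. The paper states and uses this identity explicitly; without it your iterative argument works only at level $p=1$. This is easy to fix---the identity follows from the simplicial relations between face and degeneracy maps together with the fact that $\alpha^{(p)}$ is $j_p^k$-related to $\alpha^{(p-1)}$ for $k<p$---but it is a genuine missing step, not something absorbed into the checks you list at the end.
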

      We recall from \cite{weil} that a form $\omega\in\Omega^{p,q}(G;V)$ is said to be \textit{normalized}, if its pullbacks along all the degeneracy maps $G^{(p-1)}\ra G^{(p)}$ vanish. Any multiplicative form $\omega\in\Omega_m^q(G;V)$ necessarily satisfies $u^*\omega=0$, so it is normalized.
    \begin{lemma}
      Let $\nabla$ be a connection on a representation $V$ of a Lie groupoid $G\rra M$. If either the connection $\nabla$ is $G$-invariant, or a given form $\omega\in\Omega^{p,q}(G;V)$ satisfies $j_p^*\omega=0$, then
    \begin{align}
      \label{eq:R_jL}
      R_\alpha\omega=j_p^* \L^{\nabla^{s\circ\pr_p}}_{\alpha^{(p)}}\omega,
    \end{align}
    for any $\alpha\in\Gamma(A)$.
    \end{lemma}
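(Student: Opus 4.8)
The claim is a formula for the level-lowering operator $R_\alpha$ occurring in the van Est map, valid either for $G$-invariant connections or for forms annihilated by the degeneracy $j_p$. The plan is to start from the definition
\[
R_\alpha\omega|_{(g_1,\dots,g_{p-1})}=j_p^*\Big(\smallderiv{\lambda}{0}\phi^{\cev\alpha}_\lambda\big(1_{s(g_{p-1})}\big)\cdot \big(\phi^{\alpha^{(p)}}_\lambda\big)^*\omega\Big),
\]
and to recognize the derivative at $\lambda=0$ as a Lie derivative. Concretely, the flow $\phi^{\alpha^{(p)}}_\lambda$ of the vector field $\alpha^{(p)}$ on $G^{(p)}$ contributes the ordinary geometric Lie derivative $\L_{\alpha^{(p)}}$ of $\omega$ as a vector-bundle-valued form, while the extra factor $\phi^{\cev\alpha}_\lambda(1_{s(g_{p-1})})\cdot$, acting on the coefficients via the representation $G\curvearrowright V$ (pulled back along $s\circ\pr_p$), contributes the infinitesimal action of $\alpha$ on the coefficient bundle. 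I would make this precise by differentiating a product: write $\phi^{\cev\alpha}_\lambda(1_{s(g_{p-1})})\cdot\big(\phi^{\alpha^{(p)}}_\lambda\big)^*\omega$ and apply the Leibniz rule for $\smallderiv{\lambda}{0}$, using Proposition \ref{prop:reps_groupoid_to_algebroid} (equation \eqref{eq:representation_formula}) to identify the coefficient-derivative term with $\nabla^A_\alpha$ applied fibrewise.

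The key algebraic point is then to compare this raw Lie-derivative expression with $\L^{\nabla^{s\circ\pr_p}}_{\alpha^{(p)}}$, the covariant Lie derivative with respect to the pullback connection. These two operators differ exactly by the "non-invariance tensor": by the definition of $\L^\nabla$ (covariant Cartan formula $\L^\nabla_X=\iota_X\d{}^\nabla+\d{}^\nabla\iota_X$) one has, for any connection, $\L_X^{\text{rep}}\omega - \L^{\nabla}_X\omega$ equal to a zeroth-order term built from the difference between the representation-induced parallel transport and $\nabla$-parallel transport along the flow of $X$. On $G^{(p)}$ with $X=\alpha^{(p)}$ and coefficients $(s\circ\pr_p)^*V$, that difference is precisely governed by the tensor $\Theta$ from \eqref{eq:invariance_form_G} (pulled back appropriately), which is exactly what $G$-invariance kills. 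Hence under the first hypothesis the two operators agree and \eqref{eq:R_jL} follows immediately after applying $j_p^*$.

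Under the second hypothesis — $j_p^*\omega=0$ but $\nabla$ possibly non-invariant — I would argue that the correction term, though nonzero on $G^{(p)}$, lies in the kernel of $j_p^*$. The mechanism is that the discrepancy between the representation-Lie-derivative and the $\nabla$-Lie-derivative of $\omega$ is itself (pointwise, at the units $j_p(G^{(p-1)})$) an algebraic expression in $\omega$ and $\Theta$ that factors through the evaluation of $\omega$ at points of the degenerate locus; more carefully, the flow $\phi^{\alpha^{(p)}}_\lambda$ preserves the last-factor structure and the correction term, when restricted along $j_p$, is a multiple of $j_p^*\omega$ plus terms involving $j_p^*\iota_{(\cdots)}\omega$, all of which vanish. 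I would extract this by writing out $\smallderiv{\lambda}{0}$ carefully and noting that the non-covariant piece only sees the value of $\omega$ (and its contraction with $\alpha^{(p)}$) at the degeneracy image, where $j_p^*\omega=0$ forces it to zero — here one uses that $j_p$ inserts the unit in the last slot and that $\alpha^{(p)}$ is supported (as a vertical vector) in that same last slot, so $\iota_{\alpha^{(p)}}\omega$ also pulls back to zero along $j_p$ when $j_p^*\omega=0$ together with normalization-type identities.

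\textbf{Main obstacle.} The delicate step is the bookkeeping in the second case: showing that the correction term genuinely pulls back to zero along $j_p$ rather than merely being "small". This requires carefully disentangling how $\smallderiv{\lambda}{0}$ of the twisted pullback decomposes, and verifying that every summand in the discrepancy either is covariant (hence already accounted for) or is a contraction/restriction of $\omega$ that dies under $j_p^*$. I expect one must use not just $j_p^*\omega=0$ but the compatibility of $j_p$ with the flow $\phi^{\alpha^{(p)}}_\lambda$ (the flow commutes suitably with degeneracy, up to the unit insertion), so that $\big(\phi^{\alpha^{(p)}}_\lambda\big)^*\omega$ also has vanishing $j_p$-pullback for all $\lambda$, making the whole $\lambda$-derivative of the pullback vanish and leaving only the representation-twist term, which is then manifestly the covariant Lie derivative restricted along $j_p$. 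Once that structural observation is in place the computation is routine.
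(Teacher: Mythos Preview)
Your core strategy is the same as the paper's: compute the difference $R_\alpha\omega - j_p^*\L^{\nabla^{s\circ\pr_p}}_{\alpha^{(p)}}\omega$ and show it is controlled by the invariance tensor. The paper does this cleanly using the parallel-transport description of the covariant Lie derivative,
\[
(\L^\nabla_X\omega)_y(Y_i)_i=\smallderiv{\lambda}{0}\,\tau(\gamma^X_y)^\nabla_{\lambda,0}\Big(\omega\big(\d(\phi^X_\lambda)_y(Y_i)\big)_i\Big),
\]
so that both $R_\alpha\omega$ and $j_p^*\L^{\nabla}_{\alpha^{(p)}}\omega$ differentiate the \emph{same} $\lambda$-family of form values $\omega\big(\d(\phi^{\alpha^{(p)}}_\lambda)\d j_p(X_i)\big)_i$, the only difference being the ``transport back to $\lambda=0$'' operator: the groupoid action $\Delta_{\phi^{\alpha^L}_\lambda(1_x)}$ versus parallel transport $\tau^{\nabla^s}$. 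The Leibniz rule then gives the exact formula
\[
R_\alpha\omega - j_p^*\L^{\nabla^{s\circ\pr_p}}_{\alpha^{(p)}}\omega \;=\; \theta(\alpha)\cdot j_p^*\omega,
\]
with $\theta(\alpha)=\nabla^A_\alpha-\nabla_{\rho\alpha}$ the $A$-invariance tensor (not $\Theta$; everything has already been restricted to units). Both hypotheses kill the right-hand side immediately: $G$-invariance forces $\theta=0$, while $j_p^*\omega=0$ kills the other factor. The correction is zeroth-order in $\omega$, so there is no $\iota_{\alpha^{(p)}}\omega$ term to worry about.

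Your ``Main obstacle'' paragraph contains a genuine error. You claim one should use that the flow $\phi^{\alpha^{(p)}}_\lambda$ is compatible with $j_p$, so that $j_p^*\big(\phi^{\alpha^{(p)}}_\lambda\big)^*\omega=0$ for all $\lambda$. This is false: the flow sends $(g_1,\dots,g_{p-1},1_x)$ to $\big(g_1,\dots,g_{p-1},\phi^{\alpha^L}_\lambda(1_x)\big)$, and $\phi^{\alpha^L}_\lambda(1_x)$ is not a unit for $\lambda\neq 0$, so the image of $j_p$ is not preserved. Worse, if your claim held then $R_\alpha\omega$ itself would vanish whenever $j_p^*\omega=0$ (the action-twist term also factors through $j_p^*\omega$), which is clearly wrong --- $R_\alpha$ on normalized forms is exactly what produces the nonzero leading term of the van Est map. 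Drop this line of reasoning entirely; the tensorial nature of $\theta(\alpha)\cdot j_p^*\omega$ is all you need.
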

    \begin{remark}
    Given a connection $\nabla$ on a vector bundle $V\ra M$, the Lie derivative of a $V$-valued differential form $\omega\in\Omega^q(M;V)$ is defined by Cartan's formula, $\L^\nabla_X\omega=\d{}^\nabla\iota_X\omega+\iota_X\d{}^\nabla\omega$ for any $X\in \vf(M)$. Equivalently, we can use parallel transport:
      \[
      (\L^\nabla_X\omega)_x(Y_i)_i=\deriv \lambda 0 \tau(\gamma^X_x)^\nabla_{\lambda,0}\big(\omega(\d(\phi^X_\lambda)_x(Y_i))_i\big),
      \]
      where $\gamma^X_x$ denotes the (maximal) integral path of $X$ starting at $x$, and \[\tau(\gamma^X_x)^\nabla_{\lambda,0}\colon V_{\gamma^X_x(\lambda)}\ra V_x\] denotes the parallel transport along $\gamma^X_x$ with respect to the connection $\nabla$ from time $\lambda$ to $0$.
    \end{remark}
    \begin{proof}
      Consider first the level $p=1$. Let us compute the difference
      \begin{align*}
        \big(R_\alpha\omega-u^*\L^{\nabla^s}_{\alpha^L}\omega\big)_x(X_i)_i=\deriv \lambda 0\bigg(\Delta_{\phi^{\alpha^L}_\lambda(1_x)}-\tau\big(\gamma^{\alpha^L}_{1_x}\big)^{\nabla^s}_{\lambda,0}\bigg)\underbrace{\omega\big({\d(\phi^{\alpha^L}_\lambda)_{1_x}\d u (X_i)}\big)_i}_{\xi_{\gamma^{\rho\alpha}_{x}(\lambda)}}
      \end{align*}
      where $\Delta$ denotes the action of $G$ on $V$, and the expression denoted by $\xi$ defines a section of $V$ along the path $\gamma^{\rho\alpha}_x$. However, for any section $\xi$ along the path $\gamma^{\rho\alpha}_x$, there holds
      \begin{align*}
        \deriv \lambda 0\bigg(\Delta_{\phi^{\alpha^L}_\lambda(1_x)}-\tau\big(\gamma^{\alpha^L}_{1_x}\big)^{\nabla^s}_{\lambda,0}\bigg)\cdot \xi_{\gamma^{\rho\alpha}_{x}(\lambda)}=\nabla^A_{\alpha_x}\xi-\nabla_{\rho\alpha_x}\xi=\theta(\alpha_x)\cdot \xi_x,
      \end{align*}
      where we have observed there holds $\tau\big(\gamma^{\alpha^L}_{1_x}\big)^{\nabla^s}=\tau(s\circ\gamma^{\alpha^L}_{1_x})^\nabla=\tau(\gamma^{\rho\alpha}_x)^\nabla$ since $\nabla^s$ is a pullback connection and $s_*\alpha^L=\rho\alpha$. This implies
      \begin{align}
        \label{eq:R_uL_theta}
        R_\alpha\omega-u^*\L^{\nabla^s}_{\alpha^L}\omega=\theta(\alpha)\cdot u^*\omega.
      \end{align}
      More generally, at any higher level $p>1$, for a given $\omega\in \Omega^{p,q}(G;V)$ we similarly obtain
      \begin{align}
      \big(R_\alpha\omega-j_p^*\L^{\nabla^{s\circ \pr_p}}_{\alpha^{(p)}}\omega\big)_{(g_1,\dots,g_{p-1})}=\theta(\alpha_{s(g_{p-1})})\cdot (j_p^*\omega)_{(g_1,\dots,g_{p-1})},
      \end{align}
      proving the lemma since $G$-invariance implies $A$-invariance.
    \end{proof}
    \begin{proof}[Proof of Theorem \ref{thm:van_est_G_A}]
    Let us first check the theorem holds at the level of leading terms, 
    \begin{align}
      \label{eq:ve_dnabla_0}
      (\ve \d{}^\nabla\omega)_0=(\d{}^\nabla\ve\omega)_0.
    \end{align}
    We do so by first establishing the following identity for the case when either $\nabla$ is $G$-invariant, or the form $\omega$ satisfies $j_p^*\omega=0$:
    \begin{align}
    \label{eq:R_d}
    R_\alpha \d{}^{\nabla^{s\circ\pr_p}}\omega=\d{}^{\nabla^{s\circ\pr_{p-1}}}R_\alpha\omega.
    \end{align}
    Similarly to equation \eqref{eq:d_lie}, the commutator of the $V$-valued Lie derivative with $\d{}^\nabla$ reads
    \[
    \L^{\nabla^{s\circ\pr_p}}_{\alpha^{(p)}}\d{}^{\nabla^{s\circ\pr_p}}-\d{}^{\nabla^{s\circ\pr_p}}\L^{\nabla^{s\circ\pr_p}}_{\alpha^{(p)}}=\iota_{\alpha^{(p)}}R^{\nabla^{s\circ\pr_p}}\wedge \cdot=(s\circ\pr_p)^*(\iota_{\rho\alpha}R^\nabla)\wedge \cdot
    \]
    Moreover, using the identity \eqref{eq:pullback} on $\pi=j_p$ with $\nabla^{s\circ\pr_p}$ as the connection, we get
    \[
    \d{}^{\nabla^{s\circ\pr_{p-1}}}j_p^*=j_p^*\d{}^{\nabla^{s\circ\pr_p}},
    \]
    since $s\circ\pr_p\circ j_p=s\circ\pr_{p-1}$. Hence, by the last lemma, we obtain
    \[
      R_\alpha \d{}^{\nabla^{s\circ\pr_p}}\omega-\d{}^{\nabla^{s\circ\pr_{p-1}}} R_\alpha \omega=(s\circ \pr_{p-1})^* (\iota_{\rho\alpha} R^\nabla)\wedge j_p^*\omega,
    \]
    This proves the identity \eqref{eq:R_d} and thus also \eqref{eq:ve_dnabla_0} for the case when $\nabla$ is $G$-invariant. Dropping the assumption of invariance and instead assuming $\omega$ is normalizable, identity \eqref{eq:R_d} now follows by the fact that $R_\alpha$ preserves normalizability---more precisely, for any $\omega\in\Omega^{p,q}(G;V)$ there holds
    \[
    (j_{p-1}^k)^*R_\alpha\omega=R_\alpha(j_{p}^k)^*\omega,
    \] 
    for any $k\leq p-1$, where $j_{p}^k\colon G^{(p-1)}\ra G^{(p)}$ denotes the degeneracy map that inserts the unit into the $k$-th place (hence $j_p=j_p^p$). Hence, for any $\alpha_1,\dots,\alpha_p\in\Gamma(A)$,
    \begin{align*}
      R_{\alpha_1}\dots R_{\alpha_p}\d{}^{\nabla^{s\circ\pr_p}}\omega&=R_{\alpha_1}\dots R_{\alpha_{p-1}}\d{}^{\nabla^{s\circ\pr_{p-1}}}R_{\alpha_p}\omega&\text{(since $j_p^*\omega=0$)}\\
      &=R_{\alpha_1}\dots R_{\alpha_{p-2}}\d{}^{\nabla^{s\circ\pr_{p-1}}}R_{\alpha_{p-2}}R_{\alpha_p}\omega &\text{(since $(j_p^{p-1})^*\omega=0$)}\\
      &\vdotswithin{=}\\
      &=\d{}^\nabla R_{\alpha_1}\dots R_{\alpha_p}\omega &\text{(since $(j_p^{1})^*\omega=0$)}
    \end{align*}
    From this the desired result for normalized forms follows.
    
    To see that equation \eqref{eq:ve_dnabla_0} was actually enough to show, we again use the trick from Remark \ref{rem:action_trick}. More precisely, we now note that if $G\rra M$ acts on a surjective submersion $\mu\colon P\ra M$, the action differentiates to an action  $\Gamma(A)\ra \vf(P)$ of its Lie algebroid, and the obtained action algebroid $A\ltimes P$ is now just the algebroid of the action groupoid $G\ltimes P$. Denoting the respective surjective submersive Lie groupoid and algebroid morphisms by $p_G\colon G\ltimes P\ra G$ and $p_A\colon A\ltimes P\ra A$, there now clearly holds 
    \[R_{1\otimes\alpha}(p_G)^*=(p_G)^*R_\alpha,\quad J_{1\otimes \alpha}(p_G)^*=(p_G)^* J_\alpha.\]
    Moreover, at level $p=0$, both pullbacks $(p_G)^*$ and $(p_A)^*$ restrict simply to $\mu^*$. Hence, the naturality of van Est map follows:
    \[\begin{tikzcd}[row sep=large]
        {\Omega^{p,q}(G;V)} & {\Omega^{p,q}(G\ltimes P;\mu^*V)} \\
        {W^{p,q}(A;V)} & {W^{p,q}(A\ltimes P;\mu^*V)}
        \arrow["{(p_G)^*}", from=1-1, to=1-2]
        \arrow["\ve"', from=1-1, to=2-1]
        \arrow["\ve", from=1-2, to=2-2]
        \arrow["{(p_A)^*}"', from=2-1, to=2-2]
    \end{tikzcd}\]
    This concludes the proof.
    \end{proof}

\section{Obstruction to existence of invariant connections}
\label{sec:obstruction_invariance}
Given a representation $V$ of either a Lie groupoid or an algebroid, we now construct the cohomological class which controls the existence of invariant connections on $V$. Let
\[\A_\inv(G;V)\quad\text{and}\quad\A_\inv(A;V)\]
denote the sets of all invariant linear connections on $V$ for the global and the infinitesimal case, respectively. We start with the global case.

\subsection*{The global case}
We first observe that the form \eqref{eq:invariance_form_G} controlling the $G$-invariance of a linear connection $\nabla$ on a representation $V$ can be seen as an $\End(V)$-valued form,
\begin{align}
  \label{eq:invariance_form_theta}
  \Theta\in\Omega^1(G;s^*{\End V}),\quad \Theta(X)\xi=\phi\big(\nabla^t_X(\phi^{-1}\xi)\big)-\nabla_X^s\xi,
\end{align}
for any $\xi\in\Gamma(s^*V)$. It is called the \textit{$G$-invariance form} of the connection $\nabla$. Without further ado:
\begin{theorem}
  \label{thm:obstruction_invariance_G}
  Let $V\ra M$ be a representation of a Lie groupoid $G\rra M$. 
  \begin{enumerate}[label={(\roman*)}]
    \item Given any connection $\nabla$ on $V\ra M$, its $G$-invariance form \eqref{eq:invariance_form_theta} is multiplicative,
    \[\Theta\in\Omega_m^1(G;\End V).\]
    \item The set $\A_\inv(G;V)$ is an affine space over  invariant, endomorphism-valued 1-forms \[\Omega^1_\inv(M;\End V).\]
    \item The cohomological class of the $G$-invariance form $\Theta$ of any connection $\nabla$ on $V$ is independent of the choice of connection $\nabla$. We denote it by
    \begin{align*}
      \obs_{\A_\inv(G;V)}\coloneq [\Theta]\in H^{1,1}(G;\End V).
    \end{align*}
    \item A $G$-invariant connection on $V$ exists if and only if $\obs_{\A_\inv(G;V)}=0$.
  \end{enumerate}
\end{theorem}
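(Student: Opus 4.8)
The plan is to prove the four items of Theorem~\ref{thm:obstruction_invariance_G} in order, leveraging Theorem~\ref{thm:G_invariant} and Lemma~\ref{lem:G_invariant} throughout; the crux is item (i), and the rest then follows by routine affine-space bookkeeping together with Theorem~\ref{thm:G_invariant}.

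\textbf{Item (i).} First I would observe that $\Theta$ as defined in \eqref{eq:invariance_form_theta} is, up to the coefficient identification $\phi\colon t^*V\xrightarrow{\sim}s^*V$, the same tensor as in \eqref{eq:invariance_form_G}, so its vanishing is equivalent to $G$-invariance of $\nabla$. To show multiplicativity, I would apply Lemma~\ref{lem:G_invariant} at the level $p=0$: there, $\delta^0\colon \Omega^0(M;V)\ra\Omega^0(G;s^*V)$ (functions, say, or more generally the formula \eqref{eq:commutator_zero}) gives $(\d{}^{\nabla^s}\delta^0-\delta^0\d{}^\nabla)\omega=\Theta\wedge t^*\omega$. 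Rather than unwinding this identity for general $\omega$, the cleaner route is to note that $\Theta\in\Omega^1(G;s^*\End V)$ and that it records exactly the failure of the exterior covariant derivative to commute with $\delta^0$; but multiplicativity of $\Theta$ (i.e.\ $\delta^1\Theta=0$) should be extracted directly from the cocycle identity $\delta^1\delta^0=0$ applied to the curved-double-complex structure, or — more honestly — by a direct computation. Concretely, I would unpack $\delta^1\Theta$ using \eqref{eq:delta_l} at $p=1$, evaluate on a composable pair $(g,h)$ and on a tangent vector $(X,Y)\in T_{(g,h)}G^{(2)}$, and use the chain rule for $\nabla^s,\nabla^t$ along $s\circ m = s\circ\pr_2$, $t\circ m = t\circ\pr_1$, together with the cocycle property $\phi_{gh} = \phi_h\circ(h\text{-action})$ built into $\Phi$. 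The defining relation $\Phi\circ\pr_{p+1}^* = \pr_{p+1}^*\circ\phi$ used in the proof of Lemma~\ref{lem:G_invariant} is precisely the algebraic input that makes the three terms of $\delta^1\Theta$ collapse. I expect this to be the main obstacle: it is a bookkeeping computation with three pullback connections and the coefficient-changing map, and one must be careful that the ``curvature-like'' failure terms cancel pairwise.

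\textbf{Item (ii).} Next I would show $\A_\inv(G;V)$ is an affine space over $\Omega^1_\inv(M;\End V)$. The space of all linear connections on $V\ra M$ is affine over $\Omega^1(M;\End V)$; if $\nabla' = \nabla + \eta$ with $\eta\in\Omega^1(M;\End V)$, a direct substitution into \eqref{eq:invariance_form_theta} gives $\Theta' = \Theta + \delta^0\eta$, where $\delta^0$ here is the level-$0$ simplicial differential of $\End V$-valued forms (this uses that the induced connection on $\End V$ changes by the commutator bracket with $\eta$, and that $\phi$ intertwines $s^*$ and $t^*$ on $\End V$). Hence $\nabla'$ is $G$-invariant iff $\Theta = -\delta^0\eta$, and in particular, if $\nabla$ is already $G$-invariant, $\nabla'$ is $G$-invariant iff $\delta^0\eta = 0$, i.e.\ $\eta\in\ker\delta^0 = \Omega^1_\inv(M;\End V)$. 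Provided $\A_\inv(G;V)$ is nonempty, this exhibits it as a torsor over $\Omega^1_\inv(M;\End V)$.

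\textbf{Items (iii) and (iv).} From $\Theta' = \Theta + \delta^0\eta$ it is immediate that $[\Theta]\in H^{1,1}(G;\End V)$ is independent of the choice of $\nabla$, so $\obs_{\A_\inv(G;V)}$ is well-defined. Finally, for item (iv): if a $G$-invariant connection $\nabla_0$ exists then $\Theta_0 = 0$ so the class is zero. Conversely, if $[\Theta] = 0$ for some (hence any) connection $\nabla$, write $\Theta = -\delta^0\eta$ for some $\eta\in\Omega^1(M;\End V)$; then by the computation in item (ii) the connection $\nabla + \eta$ has vanishing $G$-invariance form, i.e.\ is $G$-invariant, by Theorem~\ref{thm:G_invariant}. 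This closes the equivalence. The only subtlety to flag is ensuring that ``$[\Theta]$ well-defined'' already presupposes $\Theta$ is a cocycle, which is exactly item (i); so the logical order is (i) $\Rightarrow$ (iii), with (ii) providing the coboundary computation that powers both (iii) and (iv).
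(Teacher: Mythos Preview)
Your proposal is correct and follows essentially the same approach as the paper: a direct computation for (i), and the coboundary identity relating the invariance forms of two connections for (ii)--(iv). Two minor remarks: your exploratory suggestion in (i) to extract multiplicativity from $\delta^1\delta^0=0$ does not obviously work, since $\Theta$ is $\End V$-valued while Lemma~\ref{lem:G_invariant} concerns $V$-valued forms, so the direct computation you fall back on is indeed the right move (and is exactly what the paper does, via auxiliary maps $\Phi_1,\Phi_2$); and in (ii) the sign should be $\Theta'=\Theta-\delta^0\eta$ rather than $+\delta^0\eta$, though your internal logic in (iv) is consistent.
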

\begin{proof}
  The proof of (i) essentially consists of understanding what multiplicativity means for $\End(V)$-valued forms. By definition of the induced representation on $\End(V)$, we need to check that for any $X\in\vf(G^{(2)})$ and $\xi\in\Gamma((s\circ\pr_2)^* V)$ there holds
  \begin{align*}
    (m^*\Theta)(X)\cdot\xi=(\pr_2^*\Theta)(X)\cdot\xi+\Phi\big((\pr_1^*\Theta)(X)\cdot(\Phi^{-1}\xi)\big),
  \end{align*}
  where $\Phi\colon(s\circ \pr_1)^*V\ra (s\circ\pr_2)^*V$ is given by $(g,h,\xi)\mapsto (g,h,h^{-1}\cdot\xi)$. By definition of $\Theta$, the terms on the right-hand side above equal
  \begin{align}
    \label{eq:intermed_twoterms_theta}
    (\pr_2^*\Theta)(X)\cdot\xi&=\Phi_2\big(\nabla^{t\circ\pr_2}_X(\Phi_2^{-1}\xi)\big)-\nabla^{s\circ\pr_2}_X \xi\\
    \Phi\big((\pr_1^*\Theta)(X)\cdot(\Phi^{-1}\xi)\big)&=\Phi_2\Phi_1\big(\nabla^{t\circ\pr_1}_X(\Phi_1^{-1}\Phi_2^{-1}\xi)\big)-\Phi_2\big(\nabla^{s\circ\pr_1}_X(\Phi_2^{-1}\xi)\big),\label{eq:intermed_twoterms_theta2}
  \end{align}
  where we are denoting by $\Phi_i$ ($i=1,2$) the vector bundle morphisms covering $\id_{G^{(2)}}$, determined by the identities $\Phi_i (\pr_i^*\eta)=\pr_i^*\phi(\eta)$ for any $\eta\in\Gamma(t^* V)$. They are given by:
\[\begin{tikzcd}[row sep=-2pt]
	{(t\circ\pr_1)^*V} & {(s\circ\pr_1)^*V} & {(s\circ\pr_2)^*V} & \hspace{-1.5em}{\pr_i^* (t^*V)} & {\pr_i^* (s^*V)} \\
	{(g,h,\xi_{t(g)})} & {(g,h,g^{-1}\cdot\xi_{t(g)})} \\
	& {(g,h,\xi_{s(g)})} & {(g,h,h^{-1}\cdot \xi_{s(g)})} & {t^*V} & {s^*V}
	\arrow["{\Phi_1}", from=1-1, to=1-2]
	\arrow["{\Phi_2}", from=1-2, to=1-3]
	\arrow["{\Phi_i}", from=1-4, to=1-5]
	\arrow[from=1-4, to=3-4]
	\arrow[from=1-5, to=3-5]
	\arrow[maps to, from=2-1, to=2-2]
	\arrow[maps to, from=3-2, to=3-3]
	\arrow["\phi"', from=3-4, to=3-5]
\end{tikzcd}\]
Identities  \eqref{eq:intermed_twoterms_theta} and \eqref{eq:intermed_twoterms_theta2} are easily shown on pullback sections, i.e., those of the form $\xi=\pr_2^*\eta$ and $\Phi^{-1}\xi=\pr_1^*\eta$ for $\eta\in\Gamma(s^*V)$, respectively, by observing there holds $\Phi=\Phi_2$.  Notice that the first term of \eqref{eq:intermed_twoterms_theta} and the second term of \eqref{eq:intermed_twoterms_theta2} cancel out since $s\circ\pr_1=t\circ \pr_2$, hence we just need to check $\Theta$ satisfies
\begin{align*}
  (m^*\Theta)(X)\cdot\xi=\Phi_2\Phi_1\big(\nabla^{t\circ\pr_1}_X(\Phi_1^{-1}\Phi_2^{-1}\xi)\big)-\nabla^{s\circ\pr_2}_X \xi,
\end{align*}
This is clear since $t\circ m=t\circ \pr_1$, $s\circ m=s\circ \pr_2$ and the map $\Phi_m\colon (t\circ\pr_1)^*V\ra (s\circ\pr_2)^*V$ determined by $\Phi_m m^*=m^*\phi$ clearly equals $\Phi_m=\Phi_2\Phi_1$, hence (i) is proved.

The points (ii), (iii) and (iv) are direct consequences of the  following observation: if $\tilde\nabla$ and $\nabla$ are two connections on $V$ with respective invariance forms $\tilde\Theta$ and $\Theta$, then
\[
\Tilde\Theta-\Theta=-\delta^0\gamma
\]
where $\gamma=\tilde\nabla-\nabla\in\Omega^1(M;\End V)$. This easily follows from the identity $s^*\gamma=\tilde\nabla^s-\nabla^s$ and a similar one for $t^*\gamma$, concluding the proof.
\end{proof}

\subsection*{The infinitesimal case}
We now prove the infinitesimal analogue of Theorem \ref{thm:obstruction_invariance_G}.
\begin{theorem}
  \label{thm:obstruction_invariance}
  Let $V\ra M$ be a representation of a Lie algebroid $A\Ra M$. 
  \begin{enumerate}[label={(\roman*)}]
    \item Given any connection $\nabla$ on $V\ra M$, its $A$-invariance form \eqref{eq:invariance_form} is multiplicative: \[(T,\theta)\in\Omega^1_{im}(A; \End V).\]
    \item The set $\A_\inv(A;V)$ is an affine space over  invariant, endomorphism-valued 1-forms \[\Omega^1_\inv(M;\End V).\]
    \item The cohomological class of the $A$-invariance form $(T,\theta)$ of any connection $\nabla$ on $V$ is independent of the choice of connection $\nabla$. We denote it by
    \begin{align*}
      \obs_{\A_\inv(A;V)}\coloneq [(T,\theta)]\in H^{1,1}(A;\End V).
    \end{align*}
    \item An $A$-invariant connection on $V$ exists if and only if $\obs_{\A_\inv(A;V)}=0$.
  \end{enumerate}
\end{theorem}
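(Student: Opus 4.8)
The strategy is to mirror the proof of Theorem \ref{thm:obstruction_invariance_G} exactly, replacing each statement about forms on the nerve of $G$ with the corresponding statement about Weil cochains, and using Lemma \ref{lem:A_invariant} as the substitute for the explicit commutator formula that drove the global argument. The key structural fact is already contained in Lemma \ref{lem:A_invariant}: the $A$-invariance form $(T,\theta)$ is precisely the obstruction to $\d{}^\nabla$ being a cochain map, and by Theorem \ref{thm:A_invariant} it vanishes if and only if $\nabla$ is $A$-invariant. So the content of (iv) is really that $(T,\theta)$ represents a well-defined cohomology class, which is the content of (i), (ii), (iii). I would carry out the four points in the order stated.

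For (i): I need to show $(T,\theta)\in\Omega^1_{im}(A;\End V)$, i.e.\ that it is a $1$-cocycle for $\delta$ in the Weil complex with coefficients in $\End V$ (the induced representation). Rather than verify the compatibility conditions \eqref{eq:c1}--\eqref{eq:c3} by hand, I would argue conceptually: by Lemma \ref{lem:A_invariant} the operator $c\mapsto (T,\theta)\wedge c$ equals the commutator $[\d{}^\nabla,\delta]$ on all of $W^{\bullet,\bullet}(A;V)$, and since $\delta^2=0$ and $\d{}^\nabla$ is degree-preserving in the simplicial direction, $[\delta,[\d{}^\nabla,\delta]]=0$; combined with the fact (Lemma \ref{lem:T_theta_wedge}, referenced in the excerpt) that wedging with a Weil cochain is a cochain map up to the Leibniz sign, this forces $\delta(T,\theta)=0$. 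Alternatively — and this is cleaner — one takes the $\End V$-valued cochain $(T,\theta)$ directly and checks $\delta(T,\theta)=0$ using \eqref{eq:delta_p=1} and the definitions $\theta(\alpha)=\nabla^A_\alpha-\nabla_{\rho(\alpha)}$, $T(\alpha)=\d{}^\nabla\theta(\alpha)-\iota_{\rho(\alpha)}R^\nabla$; the three resulting identities unwind to the Bianchi identity $\d{}^{\nabla^{\End V}}R^\nabla=0$, the flatness $\nabla^A_{[\alpha,\beta]}=[\nabla^A_\alpha,\nabla^A_\beta]$ of the representation, and the Leibniz rule for $\L^A$.

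For (ii) and (iii): if $\tilde\nabla$ and $\nabla$ are two connections with $\gamma=\tilde\nabla-\nabla\in\Omega^1(M;\End V)$, then the difference of their $A$-invariance forms is $(\tilde T,\tilde\theta)-(T,\theta)=-\delta^0\gamma$, which I would verify termwise: $\tilde\theta(\alpha)-\theta(\alpha)=-\gamma(\rho(\alpha))=-(\delta^0\gamma)_1(\alpha)$ by \eqref{eq:delta0_im}, and for the leading term $\tilde T(\alpha)-T(\alpha)=\d{}^\nabla\gamma(\rho(\alpha))-[\gamma(\rho(\alpha))\text{-terms}]-\iota_{\rho(\alpha)}(R^{\tilde\nabla}-R^\nabla)$ which collapses to $-\L^A_\alpha\gamma=-(\delta^0\gamma)_0(\alpha)$ after using $R^{\tilde\nabla}-R^\nabla=\d{}^\nabla\gamma+\frac12[\gamma,\gamma]$ and the vanishing of the symmetric correction term. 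Hence $\A_\inv(A;V)$, being the set of connections with $(T,\theta)=0$, is either empty or an affine space modelled on $\ker\delta^0=\Omega^1_\inv(M;\End V)$, and $[(T,\theta)]\in H^{1,1}(A;\End V)$ is independent of $\nabla$. Point (iv) is then immediate: $\obs_{\A_\inv(A;V)}=0$ means some $(T,\theta)$ is a coboundary $\delta^0\gamma$, and then $\nabla-\gamma$ is an $A$-invariant connection by Theorem \ref{thm:A_invariant}; conversely an $A$-invariant connection gives $(T,\theta)=0$, whose class is trivially zero.

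\textbf{Main obstacle.} The only genuinely delicate point is the termwise computation in (ii)--(iii) when $\dim M$ is small, where the correction term $\tilde\theta-\theta$ is not determined by $\tilde T-T$ via the Leibniz rule and must be checked independently; and more subtly, verifying that the symmetric piece of the difference (the $[\gamma,\gamma]$ contribution to $R^{\tilde\nabla}-R^\nabla$ and the symmetrization in the correction term of $\d{}^\nabla$) genuinely cancels rather than contributing to $\tilde T-T$. I would handle this exactly as in the global proof — reducing to checking on sections and using $\iota_{\rho(\beta)}$ of the symmetric term — and, if a degree obstruction arises ($q\geq\dim M$), invoke the action-algebroid pullback trick of Remark \ref{rem:action_trick} to reduce to the case where the leading term determines everything. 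Everything else is a direct transcription of Theorem \ref{thm:obstruction_invariance_G}.
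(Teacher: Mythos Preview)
Your overall strategy is correct and matches the paper's. The direct verification you propose for (i) is exactly what the paper does: it checks condition \eqref{eq:c1} by hand, using the explicit formula \eqref{eq:T_explicit}
\[
T(\alpha)(X)\xi=\nabla_X\nabla^A_\alpha\xi-\nabla^A_\alpha\nabla_X\xi+\nabla_{[\rho\alpha,X]}\xi,
\]
and the computation reduces to flatness of $\nabla^A$ and the Jacobi identity on $\vf(M)$, just as you say. Your ``conceptual'' alternative via $[\delta,[\d{}^\nabla,\delta]]=0$ is not used in the paper and, as you present it, is not quite complete: you would still need to show that the operator $c\mapsto(\delta(T,\theta))\wedge c$ arises as that iterated commutator, which requires a separate compatibility of the wedge product with $\delta$ that you have not established.

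Where your route genuinely diverges is in (ii)--(iii). You propose to compute $\tilde T-T$ from the defining expression $T(\alpha)=\d{}^\nabla\theta(\alpha)-\iota_{\rho(\alpha)}R^\nabla$, which is \emph{quadratic} in $\nabla$ and forces you to track the $[\gamma,\gamma]$ contribution to $R^{\tilde\nabla}-R^\nabla$ against the $\operatorname{ad}\gamma$ contribution to $\d{}^{\tilde\nabla}$. These do cancel, so your argument goes through, but the paper bypasses this entirely: it works with formula \eqref{eq:T_explicit} above, which is \emph{linear} in $\nabla$ (since $\nabla^A$ is fixed). Substituting $\nabla+\gamma$ for $\nabla$ there gives $\tilde T(\alpha)=T(\alpha)-\L^A_\alpha\gamma$ in one line, with no quadratic terms ever appearing. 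This is the cleaner route, and it dissolves what you flagged as the ``main obstacle''.

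Your concern about the case $q\geq\dim M$ and the action-algebroid trick is unfounded here: we are at $q=1$, so the symbol is always determined (or at worst independently and trivially checked, as both the paper and you do), and the pullback trick of Remark \ref{rem:action_trick} is not needed anywhere in this proof.
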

\begin{proof}
  The proof of (i) consists of a straightforward computation verifying the condition \eqref{eq:c1} for the map $T$, which we will now carry out. Writing out the map $T$ in full, it has a formula similar to the usual curvature tensor,
  \begin{align}
    \label{eq:T_explicit}
    T(\alpha)(X)\xi=\nabla_X\nabla^A_\alpha\xi-\nabla^A_\alpha\nabla_X\xi+\nabla_{[\rho\alpha,X]}\xi,
  \end{align}
  for any $\alpha\in\Gamma(A)$, $X\in \vf(M)$ and $\xi\in\Gamma(V)$. If $\beta\in\Gamma(A)$ is another section, then
  \begin{align*}
    \L^A_\alpha (T\beta)(X)\xi&=\nabla^A_\alpha(T(\beta)(X)\xi)-T(\beta)(X)\nabla^A_\alpha\xi-T(\beta)([\rho\alpha,X])\xi\\
    &=\nabla^A_\alpha(\nabla_X\nabla^A_\beta\xi-\nabla^A_\beta\nabla_X\xi+\nabla_{[\rho\beta,X]}\xi)\\
    &-(\nabla_X\nabla^A_\beta-\nabla^A_\beta\nabla_X+\nabla_{[\rho\beta,X]})\nabla^A_\alpha\xi\\
    &-(\nabla_{[\rho\alpha,X]}\nabla^A_\beta\xi-\nabla^A_\beta\nabla_{[\rho\alpha,X]}\xi+\nabla_{[\rho\beta,[\rho\alpha,X]]}\xi)
  \end{align*}
  In this expression, the following terms come in pairs with respect to interchanging $\alpha$ and $\beta$: first and fifth, third and eighth, sixth and seventh. Hence, subtracting $\L^A_\beta (T\alpha)(X)\xi$ from this expression kills these terms. We are left with  
  \begin{align*}
    \L^A_\alpha (T\beta)(X)\xi-\L^A_\beta (T\alpha)(X)\xi=\nabla_X \nabla^A_{[\alpha,\beta]}\xi-\nabla^A_{[\alpha,\beta]}\nabla_X\xi+\nabla_{[[\rho\alpha,\rho\beta],X]}\xi=T[\alpha,\beta](X)\xi,
  \end{align*}
  where we have used the flatness $\smash{\nabla^A_{[\alpha,\beta]}=[\nabla_\alpha^A,\nabla_\beta^A]}$ of the representation $\nabla^A$ and the Jacobi identity on $\vf(M)$. This proves (i).

  The remaining points (ii), (iii) and (iv) are direct consequences of the following more general observation. If $\tilde \nabla$ and $\nabla$ are two connections on $V$ with respective invariance forms $(\tilde T,\tilde \theta)$ and $(T,\theta)$, then there holds
  \begin{align}
    (\tilde T,\tilde \theta)-(T,\theta) = -\delta^0\gamma,
  \end{align}
  where $\gamma=\tilde\nabla-\nabla\in \Omega^1(M;\End V)$. To see this, we first note that on the level of symbols, we clearly have $\tilde\theta(\alpha)-\theta(\alpha)=-\gamma(\rho\alpha)$. For the leading terms, we insert $\nabla+\gamma$ in place of connection $\nabla$ in the equation \eqref{eq:T_explicit} to obtain
  \begin{align*}
    \tilde T(\alpha)(X)\xi&=T(\alpha)(X)\xi+\gamma(X)\nabla^A_\alpha\xi-\nabla^A_\alpha(\gamma(X)\xi)+\gamma([\rho\alpha,X])\xi\\
    &=T(\alpha)(X)\xi-(\L^A_\alpha\gamma)(X)\xi,
  \end{align*}
  that is, $\tilde T(\alpha)=T(\alpha)-\L^A_\alpha \gamma$. This concludes the proof.
\end{proof}
\begin{example}
  As an ill-behaved example, we consider the general linear algebroid $\frak{gl}(V)\Ra M$ of a vector bundle $V\ra M$. It has a natural representation on the vector bundle $V$ by simply applying the derivation:
  \[
  \nabla^A_{(D,X)}\xi=D\xi,
  \]
  for any derivation $(D,X)\in\Gamma(\frak{gl}(V))$ and $\xi\in\Gamma(V)$. This representation does not admit an invariant connection on $V$. Indeed, the existence of such a connection $\nabla$ would mean there holds $\nabla_X=D$ for any derivation $(D,X)$, which is clearly false since we can take $D=\tilde \nabla_X$ for some connection $\tilde \nabla\neq \nabla$. Later, in Proposition \ref{prop:invariant_abelian}, we will see this is a manifestation of the fact that $\End(V)$ is nonabelian as a Lie algebra bundle with the fibrewise commutator.
\end{example}
Finally, we show that the van Est map relates the global and infinitesimal invariance forms.
\begin{proposition}
  \label{prop:g_inv_a_inv}
  Let $A$ be the Lie algebroid of $G$. For any connection $\nabla$ on a representation $V$ of $G$, the van Est map sends the $G$-invariance form of $\nabla$ to its $A$-invariance form:
  \begin{align}
    \label{eq:ve_Theta}
    \ve(\Theta)=(T,\theta).
  \end{align}
  In particular, $G$-invariance implies $A$-invariance of $\nabla$, and if $G$ is source-connected, the converse also holds. Moreover, the van Est map relates the obstruction classes:
    \[\ve(\obs_{\A_{\inv}(G;V)})=\obs_{\A_{\inv}(A;V)}.\]
\end{proposition}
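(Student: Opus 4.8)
The plan is to compute the two components of $\ve(\Theta)\in W^{1,1}(A;\End V)$ directly from the explicit level-$p=1$ formula \eqref{eq:van_est_p=1} and identify them with $T$ and $\theta$ from \eqref{eq:invariance_form}. The structural fact that makes everything work is that $\Theta$ is multiplicative, hence normalized, by Theorem \ref{thm:obstruction_invariance_G}(i); in particular $u^*\Theta=0$, and the identity \eqref{eq:R_jL} may be applied to $\Theta$.

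For the correction term, \eqref{eq:van_est_p=1} gives $\ve(\Theta)_1(\beta)=(J_\beta\Theta)_x=\Theta_{1_x}(\beta_x)$, and I would evaluate this endomorphism on sections of the form $s^*\eta$ with $\eta\in\Gamma(V)$. Realizing $\beta_x=\cev\beta_{1_x}$ by the curve $\lambda\mapsto\phi^{\cev\beta}_\lambda(1_x)$ --- which stays in the $t$-fibre over $x$ and is carried by $s$ to the flow line of $\rho(\beta)$ through $x$ --- the term $\phi(\nabla^t_{\beta_x}(\phi^{-1}s^*\eta))$ in \eqref{eq:invariance_form_theta} reduces to $\deriv\lambda0\,\phi^{\cev\beta}_\lambda(1_x)\cdot\eta(\phi^{\rho(\beta)}_\lambda(x))$, which is $(\nabla^A_\beta\eta)(x)$ by \eqref{eq:representation_formula}, while $\nabla^s_{\beta_x}(s^*\eta)=\nabla_{\rho(\beta)_x}\eta$. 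Hence $\Theta_{1_x}(\beta_x)=\theta(\beta)_x$, i.e.\ $\ve(\Theta)_1=\theta$. The same computation at units also shows $u^*(\iota_{\cev\alpha}\Theta)=\theta(\alpha)$, which is needed below.

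For the leading term, normalization of $\Theta$ together with \eqref{eq:R_jL} gives $\ve(\Theta)_0(\alpha)=R_\alpha\Theta=u^*\L^{\nabla^s}_{\cev\alpha}\Theta$, and Cartan's formula splits this as $u^*\d^{\nabla^s}(\iota_{\cev\alpha}\Theta)+u^*(\iota_{\cev\alpha}\d^{\nabla^s}\Theta)$. The first summand equals $\d^\nabla\theta(\alpha)$, since $u^*(\iota_{\cev\alpha}\Theta)=\theta(\alpha)$ and $s\circ u=\id_M$ makes $u^*$ intertwine $\nabla^s$ on $0$-form sections with $\nabla$ on $\End V$. For the second summand I would use that $\Theta$ is the difference of the connection $\xi\mapsto\phi(\nabla^t_{\,\cdot\,}(\phi^{-1}\xi))$ and $\nabla^s$ on $s^*V$, so that $\d^{\nabla^s}\Theta=\phi_*(t^*R^\nabla)-s^*R^\nabla-\Theta\wedge\Theta$ (composition in $\End V$), where $\phi_*$ denotes conjugation by $\phi$. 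Pulling back along $u$ and contracting with $\cev\alpha$: the $\phi_*(t^*R^\nabla)$-term vanishes because $\cev\alpha$ is tangent to the $t$-fibres, so $\d t_{1_x}(\alpha_x)=0$; the $\Theta\wedge\Theta$-term vanishes because $u^*\Theta=0$; and the $s^*R^\nabla$-term contributes $\iota_{\rho(\alpha)}R^\nabla$ because $\d s_{1_x}(\alpha_x)=\rho(\alpha)_x$. Therefore $R_\alpha\Theta=\d^\nabla\theta(\alpha)-\iota_{\rho(\alpha)}R^\nabla=T(\alpha)$, giving $\ve(\Theta)=(T,\theta)$. I expect this paragraph to contain the only genuine obstacle: pinning down the structure equation for $\d^{\nabla^s}\Theta$ in consistent sign conventions and carefully tracking how $u^*$ interacts with contractions and covariant derivatives; conceptually it is routine.

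Everything else is formal. Since $G$-invariance of $\nabla$ means $\Theta=0$ and $A$-invariance means $(T,\theta)=0$, the identity $\ve(\Theta)=(T,\theta)$ yields $G$-invariance $\Rightarrow$ $A$-invariance at once; for the converse under source-connectedness of $G$ I would invoke \cite{mec}*{Proposition A.8}, or argue directly that $\theta=0$ together with $\iota_{\rho(\alpha)}R^\nabla=0$ forces the action $\Delta_g$ to coincide with parallel transport along any path from $1_{s(g)}$ to $g$ inside the connected $s$-fibre, whence $\Theta=0$. Finally, $\ve$ being a cochain map descends to $H^{1,1}(G;\End V)\to H^{1,1}(A;\End V)$, and $\ve(\Theta)=(T,\theta)$ on cochains gives $\ve([\Theta])=[(T,\theta)]$, that is $\ve(\obs_{\A_\inv(G;V)})=\obs_{\A_\inv(A;V)}$.
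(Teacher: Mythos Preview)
Your proposal is correct. The symbol computation and the structural reductions are essentially the same as in the paper; the difference lies in how you handle the leading term.

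The paper, after reaching $R_\alpha\Theta=u^*\L^{\nabla^s}_{\cev\alpha}\Theta$, expands the Lie derivative directly on $(\L^{\nabla^s}_{\cev\alpha}\Theta)(Y)\cdot s^*\xi$ for an $s$-projectable extension $Y$ of $u_*X$, collects the resulting terms into an auxiliary tensor $S(\alpha)(Y)$, rewrites it via the curvature $R^{\overline\nabla^t}=\phi R^{\nabla^t}\phi^{-1}$ of the conjugate connection, and then checks termwise that everything reduces to $T(\alpha)(X)\xi$. You instead apply Cartan's formula and use the standard structure equation for the difference of two connections, $\d^{\nabla^s}\Theta=\phi_*(t^*R^\nabla)-s^*R^\nabla-\Theta\wedge\Theta$, after which the three contractions with $\cev\alpha$ at units are immediate ($\d t(\cev\alpha)=0$, $\d s(\cev\alpha)|_M=\rho(\alpha)$, $u^*\Theta=0$). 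This is genuinely shorter and more conceptual: it isolates the curvature contribution cleanly and avoids the auxiliary $S$ and the choice of extension $Y$. The paper's approach, by contrast, makes the passage through $\overline\nabla^t$ and the pullback identities at units completely explicit, which is perhaps pedagogically useful but heavier. For the converse under source-connectedness, the paper argues intrinsically via injectivity of $\ve$ on multiplicative forms (since $\Theta$ is multiplicative by Theorem~\ref{thm:obstruction_invariance_G}(i)), which is slightly more self-contained than invoking \cite{mec}*{Proposition A.8}.
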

\begin{proof}
  The second part follows from the first part and Theorem \ref{thm:obstruction_invariance_G} (i), since the restriction of the van Est map to multiplicative forms is injective when $G$ is source-connected. That the van Est map relates the obstruction classes is also a direct consequence of \eqref{eq:ve_Theta}.

  Let us prove the identity \eqref{eq:ve_Theta}. We begin by proving that the symbols coincide: for any $\alpha\in\Gamma(A)$ and $\xi\in\Gamma(V)$, there holds
  \begin{align*}
    u^*(\Theta(\alpha^L))\cdot\xi=u^*(\Theta(\alpha^L)\cdot s^*\xi)=u^*\big(\phi\big(\nabla^t_{\alpha^L}(\phi^{-1}s^*\xi)\big)\big)-u^*(\nabla^s_{\alpha^L}s^*\xi).
  \end{align*}
  The second term clearly equals $\nabla_{\rho\alpha}\xi$ since $s_*\alpha^L=\rho\alpha$. Using parallel transport, we can write the first term at any $g\in G$ as 
  \begin{align*}
    \phi\big(\nabla^t_{\alpha^L}(\phi^{-1}s^*\xi)\big)_g &= g^{-1}\cdot\deriv\lambda 0 \underbrace{\tau(\gamma^{\alpha^L}_g)^{\nabla^t}_{\lambda,0}}_{\id_{V_{t(g)}}}\Big(\underbrace{\phi^{\alpha^L}_\lambda(g)}_{\mathclap{g\phi^{\alpha^L}_\lambda(1_{s(g)})}}\cdot \xi_{\phi^{\rho\alpha}_\lambda(s(g))}\Big)=\deriv\lambda 0 \phi^{\alpha^L}_\lambda(1_{s(g)})\cdot\xi_{\phi^{\rho\alpha}_\lambda(s(g))},
  \end{align*}
  which equals $(s^*\nabla^A_\alpha\xi)_g$. We have used here that the action is linear and $t_*\alpha^L=0$. By tensoriality of $\Theta$, we have shown $\smash{\Theta(\alpha^L)=s^*\theta(\alpha)}$ and thus proved that the symbols coincide.

  For the leading terms, we need to show $R_\alpha\Theta=T(\alpha)$. To this end, we first use multiplicativity of $\Theta$ with equation \eqref{eq:R_jL} to express the left-hand side as $R_\alpha\Theta=u^*\L_{\alpha^L}^{\nabla^s}\Theta$ where $\nabla^s$ is the induced connection on $s^*\End(V)$. That is, for any $X\in\vf(M)$ and $\xi\in\Gamma(V)$,
  \begin{align*}
    (R_\alpha\Theta)(X)\cdot\xi&=u^*(\L^{\nabla^s}_{\alpha^L}\Theta)(X)\cdot\xi=u^*\big((\L^{\nabla^s}_{\alpha^L}\Theta)(Y)\cdot s^*\xi\big)\\
    &=u^*\big(\nabla^s_{\alpha^L}(\Theta(Y)\cdot s^*\xi)-\Theta(Y)\cdot \nabla^s_{\alpha^L}s^*\xi-\Theta[\alpha^L,Y]\cdot s^*\xi\big)
  \end{align*}
  where $Y\in \vf(G)$ extends $u_*X$ on $u(M)$ and can be chosen $s$-projectable to $s_*Y=X$. Expanding the three terms on the right-hand side and denoting by $\smash{\overline\nabla^t=\phi\nabla^t\phi^{-1}}$ the pullback connection along $t$ on the bundle $s^*V\ra G$, we get
  \begin{align*}
    \nabla^s_{\alpha^L}\big(\Theta(Y)\cdot s^*\xi\big)&=\nabla^s_{\alpha^L}\overline\nabla^t_Ys^*\xi-s^*\nabla_{\rho\alpha}\nabla_X\xi,\\
    \Theta(Y)\cdot \nabla^s_{\alpha^L}s^*\xi&=\overline\nabla^t_Y\nabla^s_{\alpha^L}s^*\xi-s^*\nabla_X\nabla_{\rho\alpha}\xi,\\
    \Theta[\alpha^L,Y]\cdot s^*\xi&=\overline\nabla^t_{[\alpha^L,Y]}s^*\xi-s^*\nabla_{[\rho\alpha,X]}\xi,
  \end{align*}
  where we have observed that $s_*[\alpha^L,Y]=[\rho\alpha,X]$. Hence, we obtain
  \[
    (R_\alpha\Theta)(X)\cdot\xi = u^*\big(\underbrace{\big(\nabla^s_{\alpha^L}\overline\nabla^t_Y-\overline\nabla^t_Y\nabla^s_{\alpha^L}-\overline\nabla^t_{[\alpha^L,Y]}\big)s^*\xi}_{\eqqcolon S(\alpha)(Y)\cdot s^*\xi}\big)-R^\nabla(\rho\alpha,X)\cdot \xi.
  \]
  We observe that the expression denoted by $S$ is tensorial in both $Y$ and $s^*\xi$ (but not in $\alpha$), so it defines a map $S\colon\Gamma(A)\ra \Omega^1(G;s^*\End V)$. Moreover, we can express it as
  \[
  S(\alpha)(Y)\cdot s^*\xi=R^{\overline \nabla^t}(\alpha^L,Y)\cdot s^*\xi+\overline \nabla^t_Y\big(\Theta(\alpha^L)\cdot s^*\xi \big)-\Theta(\alpha^L)\cdot\overline\nabla^t_Y s^*\xi.
  \]
  Since $R^{\overline\nabla^t}=\phi R^{\nabla^t}\phi^{-1}$ and $R^{\nabla^t}=t^* R^\nabla$, the first term vanishes by $t_*\alpha^L=0$. Hence, it is enough to show that 
  \[
    u^*\Big(\overline \nabla^t_Y\big(\Theta(\alpha^L)\cdot s^*\xi \big)-\Theta(\alpha^L)\cdot\overline\nabla^t_Y s^*\xi\Big)=\nabla_X(\theta(\alpha)\cdot\xi)-\theta(\alpha)\cdot\nabla_X\xi,
  \]
  but this is clear since $u^*\Theta(\alpha^L)=\theta(\alpha)$ and $u^*\overline\nabla^t_Y=\nabla_X$ since $Y$ extends $u_*X$.
\end{proof}

\begin{subappendices}
    \section{Appendix: Auxiliary computations}
    \begin{lemma}
        \label{lemma:wd_dnabla}
        Let $\nabla$ be a connection on a representation $V$ of a Lie algebroid $A\Ra M$. The map $\d{}^\nabla\colon W^{p,q}(A;V)\ra W^{p,q+1}(A;V)$ given by \eqref{eq:dnabla} is well-defined.
        \end{lemma}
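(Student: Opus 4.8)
The goal is to verify that the sequence $\d{}^\nabla c = \big((\d{}^\nabla c)_0,\dots,(\d{}^\nabla c)_p\big)$ defined by the leading-term prescription $(\d{}^\nabla c)_0(\ul\alpha)=\d{}^\nabla c_0(\ul\alpha)$ together with the correction formula \eqref{eq:dnabla} actually lands in $W^{p,q+1}(A;V)$; that is, each $(\d{}^\nabla c)_k$ is $\R$-multilinear and antisymmetric in its $p-k$ antisymmetric slots, $C^\infty(M)$-multilinear and symmetric in its $k$ symmetric slots, and the whole tuple satisfies the Leibniz identity
\[
(\d{}^\nabla c)_k(f\alpha_1,\alpha_2,\dots\|\ul\beta)=f(\d{}^\nabla c)_k(\alpha_1,\dots\|\ul\beta)+\d f\wedge (\d{}^\nabla c)_{k+1}(\alpha_2,\dots\|\alpha_1,\ul\beta).
\]
Antisymmetry in the unsymmetrized arguments and symmetry/$C^\infty(M)$-linearity in the symmetrized ones are inherited termwise from the corresponding properties of $c_k$ and $c_{k-1}$, together with the fact that $\d{}^\nabla$ acts only on the form-degree and commutes with permuting section-arguments; the only genuine point is the Leibniz identity relating level $k$ to level $k+1$.

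\textbf{Key step.} Fix $k$ and compute $(\d{}^\nabla c)_k(f\alpha_1,\alpha_2,\dots,\alpha_{p-k}\|\ul\beta)$ using \eqref{eq:dnabla}. The term $\d{}^\nabla c_k(f\alpha_1,\dots\|\ul\beta)$ expands via the Leibniz identity for $c$ as $\d{}^\nabla\big(f c_k(\alpha_1,\dots\|\cdot)+\d f\wedge c_{k+1}(\alpha_2,\dots\|\alpha_1,\cdot)\big)$, and since $\d{}^\nabla$ is a degree-one derivation on $V$-valued forms this becomes
\[
f\,\d{}^\nabla c_k(\ul\alpha\|\ul\beta)+\d f\wedge c_k(\ul\alpha\|\ul\beta)-\d f\wedge \d{}^\nabla c_{k+1}(\alpha_2,\dots\|\alpha_1,\ul\beta),
\]
using $\d{}^\nabla\d f=0$ (as $\d f$ is a scalar one-form, $R^\nabla$ does not enter). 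For the sum $\sum_{i=1}^k c_{k-1}(\beta_i,f\alpha_1,\dots\|\dots)$, each term is $C^\infty(M)$-linear... no: $c_{k-1}$ is $\R$-multilinear in its antisymmetric slots, so here $f\alpha_1$ sits in an antisymmetric slot and the Leibniz identity for $c$ applies again, producing $f\,c_{k-1}(\beta_i,\alpha_1,\dots\|\dots)+\d f\wedge c_k(\beta_i,\alpha_2,\dots\|\alpha_1,\dots)$ — here one must be slightly careful about which slot $\beta_i$ occupies and the sign picked up moving $\d f\wedge$ past it, but this is the routine part. Collecting all pieces, the $f$-proportional terms assemble into $f(\d{}^\nabla c)_k(\ul\alpha\|\ul\beta)$, and the $\d f\wedge$-terms must assemble, after matching signs with the $(-1)^k$ versus $(-1)^{k+1}$ prefactors in \eqref{eq:dnabla} and the sign from commuting $\d f\wedge$ through a form of the appropriate degree, into exactly $\d f\wedge(\d{}^\nabla c)_{k+1}(\alpha_2,\dots\|\alpha_1,\ul\beta)$ as given by \eqref{eq:dnabla} at level $k+1$. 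One cross-checks the leftover term $\d f\wedge c_k(\ul\alpha\|\ul\beta)$ against the term $c_k$ appearing inside $(\d{}^\nabla c)_{k+1}$ (the $i$-summand with $\beta_i=\alpha_1$ now in the leading slot) to see the bookkeeping closes.

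\textbf{Main obstacle.} The only real difficulty is sign bookkeeping: tracking the Koszul signs from moving $\d f\wedge$ past forms of varying degree, reconciling the alternating $(-1)^k$ prefactors across consecutive levels, and confirming that the ``$\beta_i$ moved into an antisymmetric slot'' terms recombine correctly with the $\d{}^\nabla c_k$ piece. I would organize the computation by separating, at the outset, the $f$-homogeneous part from the $\d f\wedge$ part, verify the former trivially gives $f(\d{}^\nabla c)_k$, and then devote the bulk of the argument to showing the latter equals $\d f\wedge(\d{}^\nabla c)_{k+1}(\alpha_2,\dots\|\alpha_1,\ul\beta)$ by direct comparison with formula \eqref{eq:dnabla} at level $k+1$. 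Since this is a purely algebraic identity with no representation-theoretic subtlety (the flatness of $\nabla^A$ plays no role here — only that $\d{}^\nabla$ is a derivation and annihilates exact scalar one-forms), the verification is mechanical once the signs are pinned down.
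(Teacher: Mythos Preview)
Your proposal is correct and follows essentially the same approach as the paper: apply the Leibniz identity for $c$ to both the $\d{}^\nabla c_k$ term and the $\sum_i c_{k-1}(\beta_i,f\alpha_1,\dots)$ terms, use the graded derivation property of $\d{}^\nabla$, then separate the $f$-proportional and $\d f\wedge$-proportional pieces and match them against the defining formula \eqref{eq:dnabla} at levels $k$ and $k+1$. One small correction: in your expansion of $c_{k-1}(\beta_i,f\alpha_1,\dots\|\dots)$ the Leibniz term carries a \emph{minus} sign (from antisymmetry in swapping $\beta_i$ and $f\alpha_1$ before applying the Leibniz rule in the first slot), which is exactly the careful sign you flag as needing attention.
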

        \begin{proof}
          Given $c=(c_0,\dots,c_p)\in W^{p,q}(A;V)$, we would like to show that \[\d{}^\nabla c=((\d{}^\nabla c)_0,\dots,(\d{}^\nabla c)_p)\] defines a Weil cochain. Letting $\ul\alpha=(\alpha_1,\dots,\alpha_{p-k})$, $\ul\beta=(\beta_1,\dots,\beta_k)$ and $f\in C^\infty(M)$, we straightforwardly compute:
        \begin{align*}
          &(-1)^k (\d{}^\nabla c)_k(f\alpha_1,\dots,\alpha_{p-k}\|\ul\beta)=\d{}^\nabla\big(fc_k(\ul\alpha\|\ul\beta)+\d f\wedge c_{k+1}(\alpha_2,\dots,\alpha_{p-k}\|\alpha_1,\beta)\big)\\
          &-\textstyle\sum_i\big(f c_{k-1}(\beta_i,\ul\alpha\|\beta_1,\dots,\widehat{\beta_i},\dots,\beta_k)-\d f\wedge c_k(\beta_i,\alpha_2,\dots,\alpha_{p-k}\|\alpha_1,\beta_1,\dots,\widehat{\beta_i},\dots,\beta_k)\big)\\
          &=f\big({\d{}}^\nabla c_k(\ul\alpha\|\ul\beta)-\textstyle\sum_i c_{k-1}(\beta_i,\ul\alpha\|\beta_1,\dots,\widehat{\beta_i},\dots,\beta_k)\big)\\
          &+\d f\wedge \big(c_k(\ul\alpha\|\ul\beta)+\textstyle\sum_i c_k(\beta_i,\alpha_2,\dots,\alpha_{p-k}\|\alpha_1,\beta_1,\dots,\widehat{\beta_i},\dots,\beta_k)-\d{}^\nabla c_{k+1}(\alpha_2,\dots,\alpha_{p-k}\|\alpha_1,\ul\beta)\big)\\
          &=f (-1)^k (\d{}^\nabla c)_k(\ul\alpha\|\ul\beta)-\d f\wedge (-1)^{k+1}(\d{}^\nabla c)_{k+1}(\alpha_2,\dots,\alpha_{p-k}\|\alpha_1,\ul\beta),
        \end{align*}
        where we have used the Leibniz identity for $c$ in the first and second line, and the graded Leibniz rule for $\d{}^\nabla$ in the third and fourth. In the last line, we recognized that the expressions in the parentheses in the third and fourth line are precisely the defining expressions for $(-1)^k(\d{}^\nabla c)_k$ and $-(-1)^{k+1}(\d{}^\nabla c)_{k+1}$, respectively. Cancelling the factor $(-1)^k$ on both sides finishes the proof.
        \end{proof}

\begin{lemma}
    \label{lem:T_theta_wedge}
    Let $V$ be a representation of a Lie algebroid $A\Ra M$. For any $c\in W^{p,q}(A;V)$ and $(T,\theta)\in W^{1,1}(A;\End V)$, the following expression defines the coefficients of a Weil cochain $(T,\theta)\wedge c\in W^{p+1,q+1}(A;V)$.
  \begin{align*}
  \big((T,\theta)\wedge c\big)_k(\alpha_0,\dots,\alpha_{p-k}\|\ul\beta)&=\textstyle\sum_{i=0}^{p-k} (-1)^i T(\alpha_i)\wedge c_k(\alpha_0,\dots,\widehat{\alpha_i},\dots,\alpha_{p-k}\|\ul\beta)\\
  &+\textstyle\sum_{j=1}^k\theta(\beta_j)\cdot c_{k-1}(\ul\alpha\|\beta_1,\dots,\widehat{\beta_j},\dots,\beta_k),
  \end{align*}
  \end{lemma}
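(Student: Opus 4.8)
The statement asserts that the explicit formula for $(T,\theta)\wedge c$ produces a genuine Weil cochain, i.e. the sequence of maps $\big((T,\theta)\wedge c\big)_0,\dots,\big((T,\theta)\wedge c\big)_{p+1}$ satisfies the Leibniz identity of \sec\ref{sec:bss_weil}. The plan is to verify this directly, mimicking the well-definedness proof of $\d{}^\nabla$ in Lemma \ref{lemma:wd_dnabla}: fix $f\in C^\infty(M)$, substitute $f\alpha_0$ into the leading antisymmetric slot of $\big((T,\theta)\wedge c\big)_k$, expand using the Leibniz rules for the pieces that appear, and recognize the result as $f\cdot\big((T,\theta)\wedge c\big)_k + \d f\wedge\big((T,\theta)\wedge c\big)_{k+1}$ evaluated on the shifted arguments.

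First I would write out $\big((T,\theta)\wedge c\big)_k(f\alpha_0,\alpha_1,\dots,\alpha_{p-k}\|\ul\beta)$ and split the first sum according to whether the index $i$ equals $0$ or not. For $i=0$ we get $T(f\alpha_0)\wedge c_k(\alpha_1,\dots,\alpha_{p-k}\|\ul\beta)$, and since $(T,\theta)\in W^{1,1}(A;\End V)$ we have $T(f\alpha_0)=fT(\alpha_0)+\d f\otimes\theta(\alpha_0)$; the $\d f$-part contributes $\d f\wedge\big(\theta(\alpha_0)\cdot c_k(\alpha_1,\dots,\alpha_{p-k}\|\ul\beta)\big)$. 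For $i\geq 1$ the term is $(-1)^i T(\alpha_i)\wedge c_k(f\alpha_0,\alpha_1,\dots,\widehat{\alpha_i},\dots,\alpha_{p-k}\|\ul\beta)$, and here I apply the Leibniz identity for $c$ itself, which splits $c_k(f\alpha_0,\dots\|\cdot)$ into $f c_k(\alpha_0,\dots\|\cdot) + \d f\wedge c_{k+1}(\alpha_1,\dots,\widehat{\alpha_i},\dots\|\alpha_0,\cdot)$. Meanwhile the second sum $\sum_j\theta(\beta_j)\cdot c_{k-1}(f\alpha_0,\ul\alpha'\|\dots)$ splits via the Leibniz identity for $c$ in the same way. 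Collecting the $f$-proportional terms reassembles $f\cdot\big((T,\theta)\wedge c\big)_k(\ul\alpha\|\ul\beta)$ immediately. The $\d f$-proportional terms must be shown to reassemble $\pm\d f\wedge\big((T,\theta)\wedge c\big)_{k+1}(\alpha_1,\dots,\alpha_{p-k}\|\alpha_0,\beta_1,\dots,\beta_k)$; this requires checking that the $\theta(\alpha_0)$-term, the $T(\alpha_i)\wedge c_{k+1}$-terms, and the $\theta(\beta_j)\cdot c_k$-terms match, slot by slot, the definition of $\big((T,\theta)\wedge c\big)_{k+1}$ with $\alpha_0$ promoted into the symmetric block.

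The main obstacle is bookkeeping of signs: the sign $(-1)^i$ in the antisymmetric sum, the implicit sign from the graded wedge $\d f\wedge(\cdot)$ commuting past a $1$-form versus a $q$-form, and the sign conventions built into the Leibniz identity ($c_k(f\alpha_1,\dots\|\cdot)=fc_k+\d f\wedge c_{k+1}(\dots\|\alpha_1,\cdot)$, where $\alpha_1$ moves to the front of the symmetric arguments). I would handle this by carefully tracking that promoting $\alpha_0$ to the symmetric block corresponds exactly to the index shift in the Leibniz identity, and that reindexing $i\mapsto i-1$ in the $T(\alpha_i)$-sum after removing $\alpha_0$ absorbs the requisite sign. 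Once the combinatorial matching is set up correctly, each remaining identity is term-by-term trivial, so I would present the computation in the compact chained-equality style of Lemma \ref{lemma:wd_dnabla}, indicating in parenthetical remarks which Leibniz rule is used at each step, and conclude by cancelling the overall $(-1)^k$. No genuinely new idea is needed beyond this; the content is entirely in organizing the three families of terms so that the $f$- and $\d f$-parts separate cleanly.
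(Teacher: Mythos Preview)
Your proposal is correct and follows essentially the same approach as the paper's proof: substitute $f\alpha_0$, split the $T$-sum at $i=0$ versus $i\geq 1$, apply the Leibniz rule for $(T,\theta)$ to the former and the Leibniz rule for $c$ to the latter and to the $\theta(\beta_j)$-sum, then separate the $f$- and $\d f$-proportional parts. The paper's proof also explicitly notes the sign from commuting $\d f$ past $T(\alpha_i)$ and then recognizes the $\d f$-bracket as $\big((T,\theta)\wedge c\big)_{k+1}(\alpha_1,\dots,\alpha_{p-k}\|\alpha_0,\ul\beta)$, exactly as you outline.
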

  \begin{proof}
    For any $\ul\alpha=(\alpha_0,\dots,\alpha_{p-k})$, $\ul\beta=(\beta_1,\dots,\beta_k)$ and $f\in C^\infty(M)$, we compute:
  \begin{align*}
  \big((T&,\theta)\wedge c\big)_k(f\alpha_0,\dots,\alpha_{p-k}\|\ul\beta)=\underbrace{T(f\alpha_0)}_{\mathclap{fT(\alpha_0)+\d f\otimes\theta(\alpha_0)}}\wedge\, c_k(\alpha_1,\dots,\alpha_{p-k}\|\ul\beta)\\
  &+\textstyle\sum_{i=1}^{p-k}(-1)^i T(\alpha_i)\wedge \underbrace{c_k(f\alpha_0,\alpha_1,\dots,\widehat{\alpha_i},\dots,\alpha_{p-k}\|\ul\beta)}_{\mathclap{fc_k(\alpha_0,\dots,\widehat{\alpha_i},\dots,\alpha_{p-k}\|\ul\beta)+\d f\wedge c_{k+1}(\alpha_1,\dots,\widehat{\alpha_i},\dots,\alpha_{p-k}\|\alpha_0,\ul\beta)}}\\
  &+\textstyle\sum_{j=1}^k\theta(\beta_j)\cdot \underbrace{c_{k-1}(f\alpha_0,\dots,\alpha_{p-k}\|\beta_1,\dots\widehat{\beta_i},\dots,\beta_k)}_{\mathclap{f c_{k-1}(\alpha_0,\dots,\alpha_{p-k}\|\beta_1,\dots\widehat{\beta_i},\dots,\beta_k)+\d f\wedge c_k(\alpha_1,\dots,\alpha_{p-k}\|\alpha_0,\beta_1,\dots\widehat{\beta_i},\dots,\beta_k)}}
  \end{align*}
  Exchanging $\d f$ with $T(\alpha_i)$ yields an additional minus, and separating the terms with $f$ and $\d f$ yields
  \begin{align*}
    \big((T&,\theta)\wedge c\big)_k(f\alpha_0,\dots,\alpha_{p-k}\|\ul\beta)=f\big((T,\theta)\wedge c\big)_k(\alpha_0,\dots,\alpha_{p-k}\|\beta)\\
    &+\d f\wedge \big({-}\textstyle\sum_{i=1}^{p-k}(-1)^i T(\alpha_i)\wedge c_{k+1}(\alpha_1,\dots,\widehat{\alpha_i},\dots,\alpha_{p-k}\|\alpha_0,\ul\beta)\\
    &\phantom{+\d f\wedge \big(}+\theta(\alpha_0)c_k(\alpha_1,\dots,\alpha_{p-k}\|\ul\beta)+\textstyle\sum_{j=1}^k\theta(\beta_j)c_k(\alpha_1,\dots,\alpha_{p-k}\|\alpha_0,\beta_1,\dots,\widehat{\beta_j},\dots,\beta_k)\big).
    \end{align*}
  Now just observe that the terms in the parentheses in the second and third line add up to 
    \begin{align}
    \big((T,\theta)\wedge c\big)_{k+1}(\alpha_1,\dots,\alpha_{p-k}\|\alpha_0,\ul\beta).\tag*\qedhere
    \end{align}
  \end{proof}
\end{subappendices}

\addtocontents{toc}{\vfill\protect\pagebreak}

\clearpage \pagestyle{plain}
\chapter{Multiplicative Ehresmann connections}\label{chapter:mec}
\pagestyle{fancy}
\fancyhead[CE]{Chapter \ref*{chapter:mec}} 
\fancyhead[CO]{Multiplicative Ehresmann connections}

In this chapter, we present our contributions to the theory of multiplicative Ehresmann connections on Lie groupoids and Lie algebroids. These developments will be crucial for our desired generalization of Yang--Mills theory, however, we believe they are interesting and useful in their own light. The results presented here can be found in the preprint \cite{covariant_derivatives}.

\section{Bundles of ideals}
In the remainder of this thesis, we will consider a special class of representations: subrepresentations of the adjoint representation $\Ad\colon G\curvearrowright \ker\rho$. Note that the latter is only a representation in the set-theoretic sense unless $\ker\rho$ is a vector bundle (that is, unless $G$ is regular), so we instead phrase this in the following way.
\begin{definition}
\label{defn:boi}
On a Lie groupoid $G\rra M$, a vector bundle $\frak k\subset \ker \rho$ is called a \textit{bundle of ideals}, if for any $g\in G$ the map $\Ad\colon G\curvearrowright \ker\rho$ restricts to a map
\[\Ad_g\colon \frak k_{s(g)}\ra \frak k_{t(g)}.\] 
This means precisely that $\Ad\colon G\curvearrowright\frak k$ is a representation of $G$.

On a Lie algebroid $A\Ra M$, a vector bundle $\frak k\subset \ker \rho$ is said to be a \textit{bundle of ideals} if for any $\alpha\in \Gamma(A)$ and $\xi\in\Gamma(\frak k)$ there holds
\begin{align}
  \label{eq:boi_inf}
  [\alpha,\xi]\in\Gamma(\frak k).
\end{align}
\end{definition}
\begin{remark}
  The terminology comes from \cite{mec}, however, it is not standard. In \cite{ideals}, a vector bundle $\frak k\subset\ker\rho$ satisfying \eqref{eq:boi_inf} is instead called a \textit{naïve ideal} of a Lie algebroid.
\end{remark}
\noindent If $A$ is the Lie algebroid of $G$, then 
\[[\alpha,\xi]_x=\deriv\lambda0\Ad_{\phi^{\alpha^L}_\lambda(1_x)}\big(\xi_{\phi^{\rho\alpha}_\lambda(x)}\big),\]
 holds at every $x\in M$, so that if $\frak k$ is a bundle of ideals on $G$, it is also a bundle of ideals on $A$. In other words, the map $(\alpha,\xi)\mapsto [\alpha,\xi]$ is just the infinitesimal version of the adjoint representation of $G\rra M$ on $\frak k$, i.e., a flat $A$-connection on $\frak k\ra M$, which we will denote by $\nabla^A_\alpha\xi=[\alpha,\xi]$. 
 
 \begin{samepage}
 \begin{example}
  \label{ex:boi_groupoid_morphism}
  The main class of examples of bundles of ideals on Lie groupoids comes from surjective submersive groupoid morphisms $\Phi\colon G\ra H$ covering the identity, where one takes $\frak k$ to be the kernel of the associated Lie algebroid morphism, $\frak k=\ker\d\Phi|_M$. Not all bundles of ideals arise in this way---given a bundle of ideals $\frak k$ on $G\rra M$, we define its \textit{smearing} as the distribution $K\subset TG$,
  \begin{align}
    \label{eq:smearing}
    K_g\coloneqq \d (L_g)_{1_{s(g)}}(\frak k_{s(g)})=\d(R_g)_{1_{t(g)}}(\frak k_{t(g)}).
  \end{align}
  It is easy to see $K$ is involutive. If the corresponding foliation $\F(K)$ on $G$ is simple, the natural projection $\Phi\colon G\ra G/\F(K)$ is a surjective submersive Lie groupoid morphism with $\ker\d\Phi|_M=\frak k$. On the other hand, on Lie algebroids, every bundle of ideals $\frak k$ is clearly the kernel of the fibrewise surjective Lie algebroid morphism $\phi\colon A\ra A/\frak k$ that covers the identity.
 \end{example}
\end{samepage}

\section{The global picture}

\subsection{Basic definitions}
In the study of bundles of ideals, the notion of a multiplicative connection offers a richer framework than that of an invariant connection. To lay the groundwork, we begin by recalling the definition and highlighting some fundamental properties.
\begin{definition}
Let $\frak k$ be a bundle of ideals on a Lie groupoid $G\rra M$. A \textit{multiplicative Ehresmann connection} for $\frak k$ is a distribution $E\subset TG$ which is also a wide Lie subgroupoid of $TG\rra TM$, satisfying
\[
TG=E\oplus K,
\]
where $K$ is the smearing of $\frak k$, defined by \eqref{eq:smearing}. 
\end{definition}
\begin{example}
  If $\frak k$ comes from a surjective submersive Lie groupoid morphism $\Phi\colon G\ra H$, there holds $K=\ker\d\Phi$, hence $E$ is an Ehresmann connection in the usual sense, with the additional property of being multiplicative.
 \end{example}

A multiplicative Ehresmann connection can alternatively be viewed as a splitting of the following short exact sequence of $\vb$-groupoids covering $G\rra M$.
\[\begin{tikzcd}[column sep=large]
	0 & K & TG & {TG/K} & 0 \\
	0 & {0_M} & TM & TM & 0
	\arrow[from=1-1, to=1-2]
	\arrow[from=1-2, to=1-3]
	\arrow[shift left, from=1-2, to=2-2]
	\arrow[shift right, from=1-2, to=2-2]
	\arrow[from=1-3, to=1-4]
	\arrow[shift left, from=1-3, to=2-3]
	\arrow[shift right, from=1-3, to=2-3]
	\arrow[from=1-4, to=1-5]
	\arrow[shift left, from=1-4, to=2-4]
	\arrow[shift right, from=1-4, to=2-4]
	\arrow[from=2-1, to=2-2]
	\arrow[from=2-2, to=2-3]
	\arrow[from=2-3, to=2-4]
	\arrow[from=2-4, to=2-5]
\end{tikzcd}\]
In fact, there are several equivalent descriptions of multiplicative Ehresmann connections, among which we particularly favor the description with differential forms. Given a multiplicative Ehresmann connection $E\subset TG$, we can define a differential form $\omega \in \Omega^1(G;s^*\frak k)$ on $G$ with values in the pullback bundle $s^*\frak k\ra G$, 
\begin{align}
\label{eq:omega}
\omega(X)=\d(L_{g^{-1}})_g(v(X))\in \frak k_{s(g)},
\end{align}
for any $X\in T_gG$, where $v(X)$ is the so-called \textit{vertical} component in the unique decomposition $X=h(X)+v(X)\in E_g\oplus K_g=T_gG$. Here, the vector $h(X)$ is called the \textit{horizontal} component of $X$. The differential form just defined favors two characteristic properties. 

\pagebreak
\begin{proposition}[\cite{mec}*{Proposition 2.8}]
Let $G\rra M$ be a Lie groupoid. Given a multiplicative Ehresmann connection $E\subset TG$, the corresponding 1-form $\omega\in \Omega^1(G;s^*\frak k)$ satisfies:
\begin{enumerate}[label={(\roman*)}]
\item $\omega$ is \textit{multiplicative}: for any composable pair $(X,Y)\in T_{(g,h)}G^{(2)}$, there holds
\begin{align*}
\omega_{gh}(\d m(X,Y))=\Ad_{h^{-1}}\circ \omega_g(X) + \omega_h(Y).
\end{align*}
\item $\omega$ restricts to identity on $\frak k$: 
\[\omega|_{\frak k}=\id_{\frak k}.\]
\end{enumerate}
Conversely, given such a form $\omega$, $E=\ker \omega$ defines a multiplicative Ehresmann connection.
\end{proposition}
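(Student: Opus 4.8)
The plan is to unpack the data of a multiplicative Ehresmann connection $E$ into the three pieces of structure it carries — that $E$ is a complement to $K$, that it is a subgroupoid of the tangent groupoid, and that these two facts interact compatibly — and to translate each piece into a statement about the associated form $\omega$ defined by \eqref{eq:omega}. First I would verify that $\omega$ is a well-defined smooth $1$-form with values in $s^*\frak k$: smoothness follows since the projections $h,v\colon TG\to TG$ onto the summands of $TG=E\oplus K$ are smooth vector bundle maps (the splitting is by smooth subbundles), and $v(X)\in K_g=\d(L_g)_{1_{s(g)}}(\frak k_{s(g)})$ by definition of the smearing, so $\d(L_{g^{-1}})_g(v(X))$ indeed lands in $\frak k_{s(g)}$ and depends linearly and smoothly on $X$.

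Next I would prove property (ii), $\omega|_{\frak k}=\id_{\frak k}$. Here one must be slightly careful about what "$\omega|_{\frak k}$" means: identifying $\frak k_{s(g)}\subset T_gG$ as a subspace requires a choice. The natural identification is via left translation — a vector $\xi\in\frak k_{s(g)}$ is viewed inside $K_g\subset T_gG$ as $\d(L_g)_{1_{s(g)}}(\xi)$. With that understanding, for $X=\d(L_g)_{1_{s(g)}}(\xi)\in K_g$ we have $h(X)=0$ and $v(X)=X$, hence $\omega(X)=\d(L_{g^{-1}})_g(X)=\d(L_{g^{-1}}L_g)_{1_{s(g)}}(\xi)=\xi$, using $L_{g^{-1}}\circ L_g=\id$ on the appropriate target fibre. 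So (ii) holds essentially by construction.

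The main work is property (i), multiplicativity of $\omega$. The key input is that $E\subset TG$ is a wide subgroupoid of $TG\rra TM$, so that the multiplication $\d m\colon (TG)^{(2)}\to TG$ of the tangent groupoid restricts to $E$; combined with $E\oplus K=TG$, this forces $K$ to also behave well under $\d m$ — concretely, for composable $(X,Y)$ one decomposes $X=h(X)+v(X)$, $Y=h(Y)+v(Y)$, applies $\d m$, and uses that $\d m(h(X),h(Y))\in E$ while the remaining terms, after a computation with the tangent-groupoid multiplication and the identities relating $\d m$ to left/right translations (namely $\d m(\d(R_h)_g u, 0_h$-type terms$)$ and $\d m(0_g, \d(L_g)_h w)$), land in $K_{gh}$ and reassemble into the vertical part of $\d m(X,Y)$. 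Translating this vertical part through $\d(L_{(gh)^{-1}})$ and using the relation $\d(L_{g^{-1}})\circ \d(R_h) = \d(R_h)\circ \d(L_{g^{-1}})$ together with the definition of $\Ad_{h^{-1}}=\d(L_{h^{-1}})\circ\d(R_h)$ on the isotropy directions yields exactly $\omega_{gh}(\d m(X,Y))=\Ad_{h^{-1}}\omega_g(X)+\omega_h(Y)$. I expect the bookkeeping of which translation acts on which factor, and getting the $\Ad_{h^{-1}}$ (rather than $\Ad_h$ or its inverse) to come out on the correct term, to be the main obstacle; the cleanest route is probably to first record the two auxiliary identities $v(\d m(X,Y)) = \d m(v(X), v(Y))$-analogue and then specialize, or simply to cite \cite{mec}*{Proposition 2.8} for the detailed verification since the statement is quoted from there.

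Finally, for the converse, given $\omega\in\Omega^1(G;s^*\frak k)$ satisfying (i) and (ii), I would set $E\coloneqq\ker\omega$. Property (ii) gives, fibrewise, that $\omega_g\colon T_gG\to\frak k_{s(g)}$ restricted to $K_g$ is an isomorphism onto $\frak k_{s(g)}$ (via the left-translation identification), so $\ker\omega_g$ is a complement to $K_g$ of the correct rank, varying smoothly; hence $TG=E\oplus K$. Property (i), the multiplicativity of $\omega$, is precisely the condition that $\d m$ maps composable pairs in $E^{(2)}=(\ker\omega)^{(2)}$ into $\ker\omega$: if $\omega_g(X)=0$ and $\omega_h(Y)=0$ then $\omega_{gh}(\d m(X,Y))=\Ad_{h^{-1}}(0)+0=0$. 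One checks likewise that $E$ contains the unit directions $\d u(TM)$ (from $u^*\omega=0$, which follows from (i) by setting $g=h=1_x$ and using (ii): the multiplicativity relation at units forces $\omega$ to vanish on $\d u(TM)$) and is closed under the tangent inversion, so $E\rra TM$ is a wide subgroupoid of $TG\rra TM$. This establishes the bijective correspondence.
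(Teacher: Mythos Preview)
Your proposal is correct and follows essentially the same approach as the paper (which, in fact, simply cites \cite{mec}*{Proposition 2.8} and omits the proof; the commented-out argument there is precisely yours): decompose $(X,Y)$ into horizontal and vertical parts, use that $E$ is a subgroupoid to kill the horizontal contribution $\d m(h(X),h(Y))\in E_{gh}$, and compute the vertical remainder via the translation identities $\d m(v(X),0_h)=\d(R_h)_g v(X)$ and $\d m(0_g,v(Y))=\d(L_g)_h v(Y)$, then unwind with $\d(L_{(gh)^{-1}})$ to recover $\Ad_{h^{-1}}\omega_g(X)+\omega_h(Y)$. Your treatment of the converse (checking $u^*\omega=0$ and inversion-closure from multiplicativity) is actually more explicit than what the paper records.
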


\begin{remark}
  Another way of expressing the differential form $\omega$ is by means of the \textit{Maurer--Cartan form} on $G$; this is a vector bundle isomorphism, defined on any  tangent vector $X\in \ker\d t_g$ by 
  $
  \Theta_{MC}(X)=\d{(L_{g^{-1}})}_g(X).
  $
  \[\begin{tikzcd}[column sep=small]
	{\ker\d t} && {s^* A} \\
	& G
	\arrow["{\Theta_{MC}}", from=1-1, to=1-3]
	\arrow[from=1-1, to=2-2]
	\arrow[from=1-3, to=2-2]
\end{tikzcd}\]
  Given a multiplicative Ehresmann connection $E$, its connection form $\omega$ is defined in terms of the Maurer--Cartan form simply by $\omega=\Theta_{MC}\circ v$. Moreover, we note that by multiplicativity, property (ii) above is equivalent to $\omega|_K=\Theta_{MC}|_K$, or in other words, $\omega(\xi^L)=s^*\xi$ for any section $\xi\in\Gamma(\frak k)$.
\end{remark}
We will denote the set of forms satisfying the properties from the last proposition as 
\begin{align*}
  \A(G;\frak k)=\set{\omega\in \Omega^1_m(G;\frak k)\given \omega|_{\frak k}=\id_{\frak k}},
\end{align*}
and sometimes simply call them  \textit{multiplicative connections} on $G$. 
The following observation will be needed in the next section: a multiplicative connection defines a distribution $E^s\subset\ker\d s\subset TG$, 
\[
E^s=E\cap \ker\d s.
\]
Clearly, this distribution satisfies the following two properties:
\begin{align}
  \ker \d s&= E^s\oplus K,\\
 \hspace{3.75em}\d(R_h)_g(E^s_g)&=E^s_{gh},\quad\text{for any }(g,h)\in \comp G.\label{eq:right_inv}
\end{align}
The second property will be referred to as \textit{right-invariance} of $E^s$. In terms of the connection 1-form $\omega$, it reads: \[(R_h)^*(\omega|_{\ker\d s_g})=\Ad_{h^{-1}}\circ\omega|_{\ker\d s_{gh}},\] for any composable pair 	$(g,h)\in\comp G$. 
\begin{remark}
  In the particular case when $G$ is regular and $\frak k=\ker \rho$, these two properties imply that for any $x\in M$, $E^s|_{G_x}$ defines a principal connection on the principal $G_x^x$-bundle $t\colon G_x\ra\O_x$. If $G$ is the gauge groupoid of a principal bundle $P\ra M$, we recover the classical notion of a principal connection on $P$.
\end{remark}

\subsection{Induced linear connection on bundles of ideals}

In this section, we show that any multiplicative Ehresmann connection $\omega$ for a bundle of ideals $\frak k$, induces a linear connection $\nabla$ on $\frak k\rightarrow M$. Later, in Proposition \ref{prop:conn_global_inf}, we will see that $\nabla$  is actually a part of the infinitesimal data contained within $\omega$. In what follows, $\nabla$ is constructed using a global approach, in terms of the given Lie groupoid $G\rra M$, since this will turn out to be useful for the subsequent sections. 

To begin, let us denote by $v\colon TG\ra K$ and $h\colon TG\ra E$ the vertical and horizontal projection (both are morphisms of vector bundles over $G$) with respect to a given multiplicative Ehresmann connection that complements $\frak k$. 
\begin{proposition}
\label{prop:conn}
Let $G\rra M$ be a Lie groupoid with a multiplicative Ehresmann connection $\omega\in\A(G;\frak k)$. Let the map $\nabla\colon \vf(M)\times \Gamma(\frak k)\ra  \Gamma(\frak k)$ be given by:
\begin{align}
\label{eq:nabla}
\nabla_X\xi=v[h(Y),\xi^L]|_M
\end{align}
where $Y\in \vf(G)$ is any lift of $X$ along the source map, i.e., $s_*Y=X$, and the bracket denotes the Lie bracket on $TG$. 
The following holds.
\begin{enumerate}[label={(\roman*)}]
  \item $\nabla$ defines a linear connection on $\frak k$.
  \item The left-invariant extension of $\nabla_X\xi$ reads $(\nabla_X\xi)^L=v[h(Y),\xi^L]$.
  \item $\nabla$ preserves the Lie bracket on the bundle of ideals:
\begin{align}
\label{eq:nabla_preserves_bracket}
  \nabla_X[\xi,\eta]_{\frak k}=[\nabla_X\xi,\eta]_{\frak k}+[\xi,\nabla_X\eta]_{\frak k}.
\end{align}
In particular, $\frak k$ is a locally trivial Lie algebra bundle.
\end{enumerate}
\end{proposition}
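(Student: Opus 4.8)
The plan is to verify the three claims essentially by unwinding definitions, using the decomposition $TG = E \oplus K$ at the units and the fact that $E$ is a subgroupoid. First I would establish well-definedness: if $Y, Y'$ are two lifts of $X$ along $s$, then $Y - Y'$ is tangent to the $s$-fibres, and since $h$ restricts to a vector bundle map $TG \to E$, the horizontal components $h(Y)$ and $h(Y')$ differ by a section of $E^s = E \cap \ker\d s$. One then checks that $[E^s_g \cap \ker\d t_g, \xi^L]$ lands in $E \cap \ker\d t$ along $M$ — this follows because $E$ is a subgroupoid of $TG \rightrightarrows TM$, so it is closed under the Lie bracket of invariant-type vector fields, and $\xi^L$ is a section of $\ker\d t$ that is tangent to $E$ at the units by the property $\omega|_{\frak k} = \id_{\frak k}$ (equivalently $\omega(\xi^L) = s^*\xi$, so $\xi^L \notin E$ in general, but the issue is only along $M$). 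Actually the cleanest route: $h(Y)$ along $M$ equals $h(Y)|_M$, and replacing $Y$ by $Y'$ changes $h(Y)|_M$ by a vector in $E^s|_M \cap \ker\d t$; since such vectors together with $\xi^L$ all lie in the subgroupoid $E$ near the units, the bracket lies in $\Gamma(E)$ near $M$, hence its vertical part $v[\cdot,\xi^L]|_M$ vanishes. This gives independence of the lift $Y$.

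Next, for claim (i), $C^\infty(M)$-linearity in $X$: if $Y$ lifts $X$ then $(f\circ s)Y$ lifts $fX$, and $h((f\circ s)Y) = (f\circ s)h(Y)$, so $[h((f\circ s)Y), \xi^L] = (f\circ s)[h(Y),\xi^L] - (\xi^L(f\circ s))h(Y)$; the second term vanishes along $M$ because $\xi^L$ is tangent to $t$-fibres while $f\circ s$ is constant on... — more carefully, one observes $\xi^L(f \circ s) = \d s(\xi^L)(f) \circ (\text{shift})$, and $\d s (\xi^L) = \rho(\xi) = 0$ since $\xi \in \Gamma(\frak k) \subset \Gamma(\ker\rho)$. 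So that term is identically zero, giving $C^\infty(M)$-linearity. For the Leibniz rule, $[h(Y), (f)\xi^L]$ where we must write $(f\xi)^L = (f\circ s)\xi^L$; then $[h(Y),(f\circ s)\xi^L] = (f\circ s)[h(Y),\xi^L] + (h(Y)(f\circ s))\xi^L$, and along $M$ we have $h(Y)(f\circ s)|_M = \d s(h(Y))(f)|_M = X(f)$ since $h$ differs from the identity only by something in $K \subset \ker\d s$, so $\d s \circ h = \d s$. Taking vertical parts along $M$ and noting $v(\xi^L|_M) = \xi$ yields $\nabla_X(f\xi) = f\nabla_X\xi + X(f)\xi$.

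Claim (ii) is the computational heart but is short: one shows the right-hand side $v[h(Y),\xi^L]$ is a left-invariant vector field, i.e., lies in $\vf^L(G)$, so that it must be the left-invariant extension of its restriction to $M$. The key point is that $h(Y)$ can be chosen right-invariant on $\ker\d s$ in the relevant sense — or more simply, that the vertical distribution $K$ is spanned by left-invariant vector fields ($\xi^L$ for $\xi \in \Gamma(\frak k)$) and $[h(Y),\xi^L]$ projects under $v$ to a combination of such; invariance of $E^s$ (property \eqref{eq:right_inv}) and left-invariance of $\xi^L$ then force $v[h(Y),\xi^L] \in \vf^L(G)$. This requires a small argument using $\d m$ and the interchange-type identities but no heavy calculation. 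Finally, claim (iii): apply (ii) and the Jacobi identity on $TG$ to $h(Y)$, $\xi^L$, $\eta^L$, using $[\xi^L,\eta^L] = [\xi,\eta]_{\frak k}^L$ (since $\frak k$ is a bundle of ideals, in fact a bundle of Lie algebras, the left-invariant extension respects brackets along $M$), together with the fact that $v$ is a projection onto $K$ and $h(Y)$ is horizontal; the mixed Jacobi terms involving $h[h(Y),\xi^L]$ vanish upon applying $v$ because $E$ is a subalgebroid, so $[h(Y), h(\cdot)]$ has horizontal... — one pushes the bracket through and uses \eqref{eq:nabla_preserves_bracket} drops out. Local triviality of $\frak k$ as a Lie algebra bundle is then a standard consequence: a linear connection preserving a fibrewise bracket has parallel transport by Lie algebra isomorphisms, so all fibres are isomorphic and local trivializations can be built from parallel frames.

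The main obstacle I anticipate is claim (ii) — proving $v[h(Y),\xi^L]$ is globally left-invariant, not merely that its restriction to $M$ is well-defined. This is where the multiplicativity of $E$ (its being a subgroupoid) must be used in an essential way, via the behavior of $\d m$ on $E \oplus K$; the bookkeeping with $\d(L_g)$, $\d(R_h)$ and the splitting is the only genuinely delicate part. Everything else reduces to the observation $\rho(\frak k) = 0$ and $\d s \circ h = \d s$.
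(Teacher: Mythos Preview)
Your argument for part (i) (linearity and Leibniz rule) is correct and matches the paper. The serious problems are in well-definedness, (ii), and (iii), and they share a common root: you repeatedly use ``$E$ is a subgroupoid, so it is closed under the Lie bracket'' or ``$E$ is a subalgebroid''. This is false in general---$E \subset TG$ being a wide subgroupoid of $TG \rightrightarrows TM$ is \emph{not} the same as $E$ being an involutive distribution on $G$; the curvature $\Omega^\omega$ measures exactly this failure (Proposition~\ref{prop:multiplicative_curvature}(ii)). You also claim $\xi^L$ lies in $E$ near the units, but $\omega|_{\frak k}=\id_{\frak k}$ says precisely the opposite: $\omega(\xi^L)=s^*\xi$, so $\xi^L \in \Gamma(K)$, never in $E$ unless $\xi=0$.

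The missing mechanism, which the paper uses for both well-definedness and (iii), is this: the flow of $\xi^L$ is right translation by $\exp(\lambda\xi)$, and $E^s = E \cap \ker\d s$ is \emph{right-invariant} (equation~\eqref{eq:right_inv}). Hence $(\phi^{\xi^L}_\lambda)_* Z \in \Gamma(E^s)$ for any $Z \in \Gamma(E^s)$, and differentiating gives $[Z,\xi^L] \in \Gamma(E^s)$, so $v[Z,\xi^L]=0$. This immediately handles independence of the lift (take $Z = h(Y)-h(\tilde Y)$), and in (iii) it kills the term $v[h[h(Y),\xi^L],\eta^L]$ since $h[h(Y),\xi^L] \in \Gamma(E^s)$ (it is horizontal and $s$-projects to zero). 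The remaining ingredient for (iii) is just involutivity of $K$, not of $E$.

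For (ii) you correctly flag it as the hard part but give no argument; gesturing at right-invariance of $E^s$ and left-invariance of $\xi^L$ is not enough, since $h(Y)$ is neither left- nor right-invariant. The paper's technique is different: fix $g\in G$, choose a local bisection $\sigma$ through $g$ with $\im\d\sigma_x \subset E_g$, and show that $\d(L_\sigma)$ preserves the splitting $E\oplus K$ along $G^x$ (this \emph{does} use multiplicativity of $E$, via $\d m$). Then compare $v[h(Y),\xi^L]_g$ with $\d(L_\sigma)(v[h(Y),\xi^L]_{1_x})$; the difference is shown to lie in $E^s_g$ by the same flow argument as above, hence vanishes under $v$.
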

\begin{remark}
The defining equation \eqref{eq:nabla} above is similar to \cite{mec}*{Equation 2.4}, where $\nabla$ is obtained by first integrating $\frak k$ to a bundle of simply-connected Lie groups. In fact, our definition of $\nabla$ was motivated by the referenced equation, and our aim was to construct it entirely in terms of the groupoid $G$, which is of paramount importance in the next section.
\end{remark}
\begin{proof}
First, we prove that our definition \eqref{eq:nabla} is independent of the choice of the lift $Y\in\vf(G)$ of $X$ along $s\colon G\ra M$. Pick another such lift $\tilde Y\in \vf(G)$, so $Y-\tilde Y\in\Gamma(\ker \d s)$, and hence $Z\coloneqq h(Y-\tilde Y)\in \Gamma(E^s)$. Note that the flow $\phi_\lambda^{\smash{\xi^L}}=R_{\exp(\lambda \xi)}$ of a left-invariant vector field $\smash{\xi^L}$ is given by the right translation along the (target) bisection $\exp(\lambda \xi)$, hence by \eqref{eq:right_inv}, 
	\begin{align}
	\label{eq:principaltrick}
	\big((\phi_\lambda^{\xi^L})_*Z\big)_{g}\in E^s_{g},
	\end{align}
for all $\lambda\in\R$. This implies $\smash{[Z,\xi^L]=[h(Y),\xi^L]-[h(\tilde Y),\xi^L]}\in \Gamma(E^s)$. Since this difference is horizontal, the vertical parts of the two terms in the difference coincide, proving the expression $v[h(Y),\xi^L]$ is independent of the lift $Y$, and hence the same thus holds for $\nabla$. Let us now show $\nabla$ defines a connection. For $C^\infty(M)$-linearity in the first argument of the map $\nabla$, pick any $f\in C^\infty(M)$ and observe that if $Y$ is a lift of $X$, then $(f\circ s)Y$ is a lift of $fX$. Now use the Leibniz rule of the Lie bracket on $TG$ to obtain 
\[[h((f\circ s)Y),\xi^L]=(f\circ s)[h(Y),\xi^L]-\d(f\circ s)(\xi^L)h(Y).\]
The second term vanishes since $\frak k\subset \ker\d s$ and the first term gives us the wanted $C^\infty(M)$-linearity when its vertical part is evaluated at the units. 
For the Leibniz rule of $\nabla$, observe there holds $(f\xi)^L=(f\circ s)\xi^L$, so the Leibniz rule of the Lie bracket on $TG$ yields
\[
[h(Y),(f\circ s)\xi^L]=(f\circ s)[h(Y),\xi^L]+\d (f\circ s)(h(Y))\xi^L.
\]
Since $\d s_{1_x}(h(Y))=\d s_{1_x}(Y)=X_x$, vertically project and evaluate at the units to conclude.

To prove (ii), we must show that $v[h(Y),\xi^L]\in \Gamma(K)$ is left-invariant. For brevity, we will assume that $Y$ is a horizontal lift of $X$ along the source map, so $h(Y)=Y$. Our technique of proving (ii) utilizes bisections on $G$: fix $g\in G$, denote $s(g)=x$, and pick any local (source) bisection $\sigma\colon U\ra G$ with $\sigma(x)=g$ such that\footnote{Such a choice of a local bisection is always possible, by a similar proof as in \cite{mackenzie}*{Proposition 1.4.9}.} 
\begin{align}
\label{eq:bisection_in_E}
  \im\d \sigma_{x}\subset E_g.
\end{align}
The left translation by $\sigma$, given as the diffeomorphism \[
L_\sigma\colon t^{-1}(U)\ra t^{-1}(t\circ\sigma(U))
,\quad L_\sigma(h)=\sigma(t(h))h,
\]
then preserves the splitting $T_hG=E_h\oplus K_h$ for any $h\in G^x$, i.e., its differential restricts to
\begin{align}
  \label{eq:L_sigma_E}
    \d(L_\sigma)_{h}\colon E_{h}\ra E_{gh}.
\end{align}
This follows from a simple computation: differentiate the equation $L_\sigma=m\circ(\sigma\circ t,\id)$ which defines  $L_\sigma$ and use it on an arbitrary horizontal vector $v\in E_h$ to obtain
\[\d(L_\sigma)_h(v)=\d m_{(g,h)}(\underbrace{\d\sigma_x (\d t_h (v))}_{\in E_g},v)\in E_{gh},
\]
where we have used \eqref{eq:bisection_in_E} and the assumption that $E$ is multiplicative. We now use the left translation $L_\sigma$ to locally write $(L_\sigma)_*[Y,\xi^L]=[(L_\sigma)_*Y,\xi^L]$ and thus 
\begin{align*}
	\d(L_\sigma)_{1_{x}}[Y,\xi^L]-[Y,\xi^L]_g&=[(L_\sigma)_*Y-Y,\xi^L]_g=\deriv\lambda0\big((\phi_\lambda^{\xi^L})_*((L_\sigma)_*Y-Y)\big)_g.
\end{align*}
For convenience, let us denote $k_\lambda\coloneqq \exp(-\lambda \xi)_x=\phi^{\xi^L}_{-\lambda}(1_x)\in G^x_x$. We have
$$
\big((\phi_\lambda^{\xi^L})_*((L_\sigma)_*Y-Y)\big)_g=\d(\phi_\lambda^{\xi^L})_{gk_\lambda}\big({\d(L_\sigma)_{k_\lambda}(Y)}-Y_{gk_\lambda}\big).
$$
By \eqref{eq:L_sigma_E}, the difference $\smash{\d(L_\sigma)_{k_\lambda}(Y)-Y_{gk_\lambda}}$ is horizontal, and since $\d s$ annihilates it by the fact that $s\circ L_\sigma=s$, it must be contained in $\smash{E^s_{gk_\lambda}}$ for all $\lambda\in\R$. But since the flow of $\xi^L$ is given by right translations, \eqref{eq:right_inv} now implies
$$
\d(L_\sigma)_{1_{x}}[Y,\xi^L]-[Y,\xi^L]_g\in E^s_g,
$$
and taking the vertical part of this expression yields
\[
v[Y,\xi^L]_g=v(\d(L_\sigma)_{1_{x}}[Y,\xi^L])=v(\d(L_\sigma)_{1_{x}}(h[Y,\xi^L]_{1_x}+v[Y,\xi^L]_{1_x}))=\d(L_g)_{1_{x}}(v[Y,\xi^L]_{1_x}),
\]
where we have again used \eqref{eq:L_sigma_E} on the third equality, together with the fact that $L_\sigma$ restricts on $t$-fibres to the usual left-translation. This proves (ii).

Finally, preservation of the Lie bracket in (iii) easily follows from the Jacobi identity of the Lie bracket on $TG$. That is, supposing $Y$ is a horizontal lift of $X$ along the source map,
\[
(\nabla_X[\xi,\eta]_{\frak k})^L=v[Y,[\xi,\eta]^L]=v[Y,[\xi^L,\eta^L]]=v[[Y,\xi^L],\eta^L]+v[\xi^L,[Y,\eta^L]]
\]
and now observe that in the first term on the right-hand side, $h[Y,\xi^L]$ is horizontal and $s$-projectable to zero, hence it is a section of $E^s$, so by \eqref{eq:right_inv} the bracket $[h[Y,\xi^L],\eta^L]$ is again a section of $E^s$ and vanishes when vertically projected. Now use involutivity of $K$.
\end{proof}

\begin{remark}
Instead of working with lifts along $s\colon G\ra M$ and left-invariant extensions, we can work with lifts along $t\colon G\ra M$ and right-invariant extensions. In fact, there holds
\[
(\nabla_X\xi)^R=v[h(W),\xi^R],
\]
for any lift $W\in \vf(G)$ of $X\in\vf(M)$ along $t\colon G\ra M$, and any $\xi\in \Gamma(\frak k)$. This follows from the following two facts: firstly, if $Y$ is a lift of $X$ along the source map, then $\inv_* Y$ is a lift of $X$ along the target map; secondly, for any $\xi\in \Gamma(\frak k)$, there holds $\xi^L=-\inv_*(\xi^R)$, which is a consequence of the equality $\d(\inv)|_{\frak k}=-\id_{\frak k}$ that comes from the theory of Lie groups. Hence:
\begin{align*}
v[h(W),\xi^R]&=v[\inv_*(h(Y)),-\inv_*(\xi^L)]=-v(\inv_*[h(Y),\xi^L])\\
&=-\inv_*(v[h(Y),\xi^L])=-\inv_*(\nabla_X\xi)^L=(\nabla_X\xi)^R,
\end{align*}
where we have observed that $\inv_*$ commutes with both $v$ and $h$.
\end{remark}

The linear connection $\nabla$ on $\frak k$ obtained in Proposition \ref{prop:conn} has a particularly nice expression when differentiating with respect to vectors tangent to the orbit foliation, as seen below.

\begin{corollary}
\label{cor:conn_orb}
Let $G\rra M$ be a Lie groupoid with a multiplicative Ehresmann connection $\omega\in\A(G;\frak k)$. For any $\alpha\in\Gamma(A)$ and $\xi\in\Gamma(\frak k)$, there holds
\begin{align}
  \nabla_{\rho(\alpha)} \xi=[h(\alpha),\xi],
\end{align}
where the bracket denotes the Lie bracket on the algebroid $A$.
\end{corollary}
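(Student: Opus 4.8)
The plan is to reduce Corollary \ref{cor:conn_orb} to the defining formula \eqref{eq:nabla} of $\nabla$ by exhibiting a particularly convenient source-lift of the vector field $\rho(\alpha)\in\vf(M)$. The key observation is that the left-invariant vector field $\alpha^L\in\vf(G)$ is itself an $s$-lift of $\rho(\alpha)$, since $s_*\alpha^L = \rho(\alpha)$ (this is exactly the statement that $\rho^L = \d s|_{A^L}$, together with the fact that $\alpha^L$ is the left-invariant extension of $\alpha$ viewed as a section of $\ker\d t = A^L$). So I would take $Y = h(\alpha^L)$ as the horizontal lift in \eqref{eq:nabla}, giving
\[
(\nabla_{\rho(\alpha)}\xi)^L = v[h(\alpha^L),\xi^L] = v[h(\alpha),\xi^L],
\]
where in the last step I am using the notation $h(\alpha)\coloneqq h(\alpha^L)$ for the horizontal part of the left-invariant extension (this is the convention the paper has in mind, since $h(\alpha)$ is referred to in the statement as living where the Lie bracket on $A$ makes sense — i.e., it is tangent to $t$-fibres, being the difference $\alpha^L - v(\alpha^L)$ of two such vector fields, as $K\subset\ker\d t$).

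The second step is to identify $v[h(\alpha^L),\xi^L]$ with $[h(\alpha),\xi]^L$ evaluated via the algebroid bracket, i.e., to show that $[h(\alpha),\xi]$ — the bracket of the two $\ker\d t$-valued vector fields, which is again $\ker\d t$-valued and $s$-related to something on $M$ — restricted to the units is exactly $v[h(\alpha^L),\xi^L]|_M$. Since $\xi^L$ and $h(\alpha^L)$ are both sections of $\ker\d t$, so is their bracket $[h(\alpha^L),\xi^L]$; thus it decomposes as $h[h(\alpha^L),\xi^L] + v[h(\alpha^L),\xi^L]$ with both summands in $\ker\d t$. By Proposition \ref{prop:conn}(ii), $v[h(\alpha^L),\xi^L] = (\nabla_{\rho(\alpha)}\xi)^L$ is left-invariant. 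It remains to argue that $h[h(\alpha^L),\xi^L]$ is also left-invariant; this is the only substantive point. I would show this using right-invariance \eqref{eq:right_inv}: the flow of $\xi^L$ is $R_{\exp(\lambda\xi)}$, and $h(\alpha^L)$ is left-invariant modulo $K$ — more precisely, following the bisection argument in the proof of Proposition \ref{prop:conn}(ii), the bracket $[h(\alpha^L),\xi^L]$ differs from its left-invariant extension by a section of $E^s$, whose vertical part vanishes; hence the horizontal part of $[h(\alpha^L),\xi^L]$ is itself left-invariant, being $h(\alpha^L)\text{-and-}\xi^L$ "compatible" with left translations up to $E^s$. Then $[h(\alpha^L),\xi^L]$ is a sum of two left-invariant $\ker\d t$-valued vector fields, hence left-invariant, hence equals $([h(\alpha^L),\xi^L]|_M)^L = [h(\alpha),\xi]^L$ where the bracket on the right is the algebroid bracket of $A = A^L(G)$ (recall $\vf^L(G)$ is closed under the Lie bracket and isomorphic to $\Gamma(A)$ by Lemma \ref{lem:closed_lie_bracket}). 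Comparing left-invariant extensions then gives $\nabla_{\rho(\alpha)}\xi = v[h(\alpha^L),\xi^L]|_M = [h(\alpha),\xi]|_M$, which is the desired identity.

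The main obstacle I anticipate is precisely the left-invariance of the \emph{horizontal} part $h[h(\alpha^L),\xi^L]$: unlike in part (ii) of Proposition \ref{prop:conn}, where one only needed the vertical part to be lift-independent and left-invariant, here I genuinely need both components to be left-invariant so that the full bracket can be recognized as an algebroid bracket. The cleanest route is probably to observe directly that $h(\alpha^L)$, while not left-invariant, is a horizontal lift of $\rho(\alpha)$, and that the bracket of a left-invariant vector field with any $t$-fibre-tangent, $s$-projectable vector field that is "multiplicative modulo $E^s$" stays multiplicative modulo $E^s$ — this is essentially because $E$ is a subgroupoid of $TG$ and $E^s$ is right-invariant, exactly the ingredients already assembled in the proof of Proposition \ref{prop:conn}. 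Once that compatibility is in place the computation is a one-line comparison of left-invariant extensions, so I would present step one (choosing $Y=h(\alpha^L)$), then a short lemma-style paragraph recording the left-invariance of $[h(\alpha^L),\xi^L]$, and conclude.
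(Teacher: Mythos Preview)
Your approach is the same as the paper's in spirit---take $Y=\alpha^L$ as the $s$-lift of $\rho(\alpha)$---but you are missing the one observation that makes the argument immediate: the vector field $h(\alpha^L)$ is \emph{already left-invariant}, namely $h(\alpha^L)=h(\alpha)^L$, where on the right $h(\alpha)\in\Gamma(A)$ is the horizontal part of $\alpha$ with respect to the splitting $A=\frak k\oplus H$ at the units. This holds because $E\subset TG$ is a subgroupoid: for any $g$ and $Y\in E_h$ one has $\d(L_g)_h(Y)=\d m_{(g,h)}(0_g,Y)\in E_{gh}$, so left translations preserve $E$, and hence $h(\alpha)^L$ is horizontal while $v(\alpha)^L$ is vertical, giving the decomposition $\alpha^L=h(\alpha)^L+v(\alpha)^L$.

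Once you have $h(\alpha^L)=h(\alpha)^L$, the rest is a one-liner: both $h(\alpha)^L$ and $\xi^L$ are left-invariant, so $[h(\alpha)^L,\xi^L]=[h(\alpha),\xi]^L$ by Lemma~\ref{lem:closed_lie_bracket}, and since $\frak k$ is a bundle of ideals, $[h(\alpha),\xi]\in\Gamma(\frak k)$, hence $[h(\alpha),\xi]^L\in\Gamma(K)$ is already vertical. Thus $(\nabla_{\rho(\alpha)}\xi)^L=v[h(\alpha)^L,\xi^L]=[h(\alpha),\xi]^L$, and restricting to the units gives the claim. Your proposed route through showing left-invariance of $h[h(\alpha^L),\xi^L]$ via ``multiplicative modulo $E^s$'' is not wrong in principle, but it is vague as stated and entirely unnecessary once you notice that $h(\alpha^L)$ is left-invariant to begin with.
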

\begin{proof}
The vector field $h(\alpha)^L\in\vf(G)$ is a horizontal lift of $\rho(\alpha)$ along the source map, hence we can use Proposition \ref{prop:conn} (ii), to write $(\nabla_{\rho(\alpha)}\xi)^L=v[h(\alpha)^L,\xi^L]=v([h(\alpha),\xi]^L)$. Containment $[\Gamma(A),\Gamma(\frak k)]\subset\Gamma(\frak k)$ ensures that $[h(\alpha),\xi]^L$ is already vertical.
\end{proof}

\subsection{Horizontal exterior covariant derivative}
\label{sec:hor_ext_cov_der}
We now show that any multiplicative Ehresmann connection $\omega$ gives rise to the horizontal exterior covariant derivative on $\frak k$-valued forms. Just like the operator $\d{}^\nabla$ for an invariant connection $\nabla$, this operator will turn out to be a cochain map. Before defining it, we observe that a bundle of ideals $\frak k$ determines an  intrinsic subcomplex of $(\Omega^{\bullet,q}(G;\frak k),\delta)$. 
\begin{definition}
\label{defn:horizontal}
A form $\alpha\in\Omega^{p,q}(G;\frak k)$ is said to be \textit{horizontal}, if it vanishes when evaluated on a $p$-tuple of composable vectors from $K$, i.e., $\iota_X\alpha=0$ for any $X\in K^{(p)}$, where
\[
  K^{(p)}= (\underbrace{K\times\dots\times K}_{p \text{ copies}})\cap T G^{(p)}.
\]
The set of such forms will be denoted by \[\Omega^{p,q}(G;\frak k)^\Hor=\Gamma(\Lambda^q(K^{(p)})^\circ\otimes  (s\circ \pr_p)^*\frak k).\]
This definition applies when $p\geq 1$; at level $p=0$, all forms are defined to be horizontal.
At any fixed $q$, we obtain a subcomplex $(\Omega^{\bullet,q}(G;\frak k)^\Hor,\delta)\subset(\Omega^{\bullet,q}(G;\frak k),\delta)$. This follows from Definition \eqref{eq:delta_0} since $K\subset\ker\d s\cap \ker\d t$ and Definition \eqref{eq:delta_l} since $K\rra 0_M$ is a subgroupoid of $TG\rra TM$. It is called the \textit{horizontal subcomplex} of $\Omega^{\bullet,q}(G;\frak k)$.
\end{definition}
A multiplicative Ehresmann connection $\omega\in\A(G;\frak k)$ induces a \textit{horizontal projection} of tangent vectors in $TG^{(p)}$, by projecting all the components. That is,
\[
h\colon TG^{(p)}\ra E^{(p)}=(\underbrace{E\times\dots\times E}_{p\text{ copies}})\cap TG^{(p)},\quad h(X_1,\dots,X_p)=(h(X_1),\dots,h(X_p)).
\]
In turn, we obtain a horizontal projection of differential forms, given by the precomposition with the horizontal projection $\smash{TG^{(p)}\stackrel h\ra E^{(p)}\hookrightarrow TG^{(p)}}$ in all arguments. We denote it by
\begin{align*}
&h^*\colon \Omega^{p,q}(G;\frak k)\ra \Omega^{p,q}(G;\frak k)^\Hor.
\end{align*}
By multiplicativity of $E=\ker\omega$, it is also clear from the defining equation \eqref{eq:delta_l} of the simplicial differential $\delta$ that $h^*$ is a cochain map, 
\[h^*\delta=\delta h^*.\]
\begin{definition}
\label{defn:D}
Let $G\rra M$ be a Lie groupoid with a multiplicative Ehresmann connection $\omega\in\A(G;\frak k)$. The \textit{horizontal exterior covariant derivative} is defined as the map 
\[
\D{}^\omega=h^*\d{}^\nabla\colon \Omega^{p,q}(G;\frak k)\ra \Omega^{p,q+1}(G;\frak k),
\]
where $\nabla$ is the induced connection by $\omega$ on $\frak k$. Namely, for any $\vartheta\in\Omega^{p,q}(G;\frak k)$, 
\[
(\D{}^\omega\vartheta) (X_0,\dots,X_q)=(\d{}^{\nabla^{s\circ\pr_p}}\vartheta)(h(X_0),\dots,h(X_q)),
\]
for any given vector fields $X_i\in \vf(G^{(p)})$.
\end{definition}

The task at hand is to show that $\D{}^\omega$ is a cochain map; we will also see that in general, the connection $\nabla$ induced by $\omega$ is not $G$-invariant, so that $\d{}^\nabla$ is not a cochain map.\footnote{More precisely, if $\nabla$ is $G$-invariant, then $\frak k$ is abelian; the converse holds if $G$ is source-connected. This is a direct consequence of Proposition \ref{prop:g_inv_a_inv} and the fact that infinitesimally, invariance of $\nabla$ is equivalent to $\frak k$ being abelian, as shown in Proposition \ref{prop:invariant_abelian}.} To this end, we need to compute the tensor $\Theta$ appearing in Theorem \ref{thm:G_invariant}. We note that the means of defining the connection $\nabla$ entirely in terms of the groupoid $G$ as in equation \eqref{eq:nabla} turns out to be crucial for this purpose, together with the left-invariance property in Proposition \ref{prop:conn} (ii). 

The tensor $\Theta$ relates the pullback connections $\nabla^s$ and $\nabla^t$ on the pullback bundles $s^*\frak k\ra G$ and $t^*\frak k\ra G$. Note that these vector bundles are now both canonically isomorphic to $K\ra G$, with the isomorphisms given by restricting the Maurer--Cartan forms, that is,
\begin{align}
\begin{split}
  s^*\frak k\ra K,\ (g,\xi_{s(g)})&\mapsto \d(L_g)_{1_{s(g)}}(\xi_{s(g)}),\label{eq:iso_pullback_k}\\
t^*\frak k\ra K,\ (g,\xi_{t(g)})&\mapsto \d(R_g)_{1_{t(g)}}(\xi_{t(g)}).
\end{split}
\end{align}
This means that $\nabla^s$ and $\nabla^t$ can both be seen as connections on the vector bundle $K\ra G$. In what follows, these identifications will be employed, in order to relate the two pullback connections  simply by their difference $\nabla^t-\nabla^s$, thus avoiding notational complications. Importantly, we will show that $\nabla_X^t-\nabla_X^s$ depends only on the vertical component of $X$. We begin with a few simple observations. 
\begin{lemma}
\label{lem:nabla_st}
Let $G\rra M$ be a Lie groupoid with a multiplicative Ehresmann connection $\omega\in\A(G;\frak k)$. For any $\xi\in\Gamma(\frak k)$ and $X\in \vf(G)$ we have:
\begin{align}
\nabla^s_X(\xi^L)&=v[h(X),\xi^L],\label{eq:nablascinfty}\\
\nabla^t_X(\xi^R)&=v[h(X),\xi^R].\label{eq:nablascinfty2}
\end{align}
\end{lemma}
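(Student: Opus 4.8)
The strategy is to reduce both identities to the defining equation \eqref{eq:nabla} of $\nabla$ via the left-invariance established in Proposition \ref{prop:conn}(ii), using the elementary properties of the vertical and horizontal projections. I would start with \eqref{eq:nablascinfty}. The key point is that the pullback connection $\nabla^s$ on $s^*\frak k$, transported to $K$ via the Maurer--Cartan isomorphism \eqref{eq:iso_pullback_k}, is computed by first writing $X = h(X) + v(X)$ and then observing that only $h(X)$ contributes to $\nabla^s_X$ acting on a left-invariant section $\xi^L$. Indeed, $\nabla^s$ is a pullback connection, so $\nabla^s_X(\xi^L)$ depends on $X$ only through $s_*X$; since $s_*v(X) = 0$ (because $K \subset \ker\d s$), we get $\nabla^s_X(\xi^L) = \nabla^s_{h(X)}(\xi^L)$. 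On the other hand, Proposition \ref{prop:conn}(ii) says $(\nabla_{s_*X}\xi)^L = v[h(X'),\xi^L]$ for any source-lift $X'$ of $s_*X$; taking $X' = h(X)$ (which is itself a source-lift of $s_*X$, since $s_*h(X) = s_*X$) gives $(\nabla_{s_*X}\xi)^L = v[h(X),\xi^L]$. It remains to identify $\nabla^s_{h(X)}(\xi^L)$ with $(\nabla_{s_*X}\xi)^L$ under the isomorphism $s^*\frak k \cong K$; this is essentially the statement that the Maurer--Cartan trivialization intertwines $\nabla^s$ with the left-invariant extension, which follows from the very definition of the pullback connection together with the fact that $\nabla_{s_* X}\xi$ is, by construction, obtained by vertically projecting a Lie bracket and then left-translating.

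For \eqref{eq:nablascinfty2} I would argue symmetrically, replacing left translations with right translations and $s$ with $t$, and invoking the Remark following Proposition \ref{prop:conn} (the one giving $(\nabla_X\xi)^R = v[h(W),\xi^R]$ for any target-lift $W$ of $X$). Concretely: $\nabla^t_X(\xi^R)$ depends on $X$ only through $t_*X$, and $t_*v(X) = 0$ since $K \subset \ker\d t$, so $\nabla^t_X(\xi^R) = \nabla^t_{h(X)}(\xi^R)$; then $h(X)$ is a target-lift of $t_*X$, and the cited Remark gives $(\nabla_{t_*X}\xi)^R = v[h(X),\xi^R]$, which matches under the isomorphism $t^*\frak k \cong K$ from the second line of \eqref{eq:iso_pullback_k}.

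The one routine verification that needs care — and the only place where something could go wrong — is the precise bookkeeping of the Maurer--Cartan identifications: one must check that under $s^*\frak k \to K$, $(g,\xi_{s(g)}) \mapsto \d(L_g)(\xi_{s(g)})$, the pullback connection $\nabla^s$ corresponds to the connection on $K$ whose covariant derivative along a vector $Y$ of the section $g \mapsto \d(L_g)(\xi_{s(g)})$ is $v[h(Y),\xi^L]$ plus the horizontal correction that was discarded. Here I would use that $\nabla^s$ is, by definition, $s^*\nabla$, so its action on the section $s^*\xi$ along a curve $\gamma$ in $G$ is the parallel transport of $\xi$ along $s\circ\gamma$; combined with Proposition \ref{prop:conn}(ii) and $C^\infty$-linearity in the vector argument, this pins down $\nabla^s_{h(Y)}(\xi^L) = (\nabla_{s_*Y}\xi)^L$. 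I expect this identification step to be the main (though still elementary) obstacle, since it requires being scrupulous about which of the several canonically isomorphic bundles one is working in; everything else is a direct substitution into the formulas already proved.

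\smallskip
\noindent\emph{Remark.} Both formulas together will immediately give the desired description of $\nabla^t - \nabla^s$ depending only on the vertical component, since under the identifications \eqref{eq:iso_pullback_k} a left-invariant section $\xi^L$ and the right-invariant section $(\xi')^R$ representing the same element of $K$ at $g$ differ by conjugation, and subtracting \eqref{eq:nablascinfty} from \eqref{eq:nablascinfty2} isolates the vertical contribution; this is presumably the content of the next lemma, so I would not pursue it further here.
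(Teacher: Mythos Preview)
Your approach is correct and essentially the same as the paper's: both reduce to the $s$-projectable case via the definition of the pullback connection and Proposition~\ref{prop:conn}(ii), then extend by $C^\infty(G)$-linearity of $X\mapsto v[h(X),\xi^L]$. Two minor points: the horizontal--vertical decomposition $X=h(X)+v(X)$ is a detour, since once you assume $X$ is $s$-projectable (as your notation $s_*X$ implicitly does), you can take $X$ itself as the source-lift in Proposition~\ref{prop:conn}(ii) without passing through $h(X)$; and the Maurer--Cartan bookkeeping you flag as the main obstacle is in fact immediate---under $s^*\frak k\cong K$ the section $s^*\xi$ is exactly $\xi^L$, so $\nabla^s_X(s^*\xi)=s^*(\nabla_{s_*X}\xi)$ translates directly to $\nabla^s_X(\xi^L)=(\nabla_{s_*X}\xi)^L$ with no further verification needed.
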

\begin{proof}
We only prove \eqref{eq:nablascinfty}; the proof of \eqref{eq:nablascinfty2} is similar. By definition of $\nabla^s$ and Proposition \ref{prop:conn} (ii), \eqref{eq:nablascinfty} already holds whenever $X\in\vf(G)$ is an $s$-projectable vector field, since $\xi^L$ is identified under \eqref{eq:iso_pullback_k} with $s^*\xi$. However, for any fixed $\xi\in \Gamma(\frak k)$, the map $\vf(G)\ra \Gamma(K)$ given as $X\mapsto v[h(X),\xi^L]$ is $C^\infty(G)$-linear. This implies that the right-hand side of \eqref{eq:nablascinfty} depends on $X$ pointwise, making the requirement of $s$-projectability redundant.
\end{proof}
\begin{remark}[Restriction of $\nabla^s$ to a $K$-connection is independent of $\omega$]
\label{rem:nabla_s}
Importantly, note that the last lemma implies $\nabla^s_X(\xi^L)=0$ for any vertical vector $X\in K$ and any $\xi\in\Gamma(\frak k)$. By virtue of Leibniz rule, the restricted $K$-connection on $K$ (again denoted by $\nabla^s$) is completely determined by this condition, i.e., that it vanishes on left-invariant sections of $K$. This shows that the $K$-connection $\nabla^s$ is intrinsic---it is independent of the choice of a multiplicative connection. Under the isomorphism \eqref{eq:iso_pullback_k}, it coincides with the trivial pullback connection on $s^*\frak k\ra G$ restricted to $K$, since the latter is defined precisely by the property that it vanishes on the pullback sections, and these are identified with left-invariant sections of $K$. More explicitly, in a local frame $(b_i)_i$ of $\frak k$ over an open subset $U\subset M$, any $Y\in\Gamma(K)$ may be expressed as $Y=Y^ib_i^L$ for some functions $Y^i\in C^\infty(s^{-1}(U))$, so that
\[
\nabla^s_X Y=X(Y^i)b_i^L
\]
for any $X\in K_g$, $g\in s^{-1}(U)$. 

\end{remark}

\begin{corollary}
\label{cor:diff_nabla_ts}
Let $G\rra M$ be a Lie groupoid with a multiplicative Ehresmann connection $\omega\in\A(G;\frak k)$. For any $X\in \vf(G)$ and $Y\in\Gamma(K)$ there holds:
\begin{align}
\label{eq:diff_nabla_ts}
  \nabla^t_XY-\nabla^s_XY=\nabla^t_{v(X)}Y-\nabla^s_{v(X)}Y.
\end{align}
\end{corollary}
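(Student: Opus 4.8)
The claim is that $\nabla^t_X Y-\nabla^s_X Y$ depends only on the vertical part $v(X)$ of $X\in\vf(G)$. Since the difference of two connections is $C^\infty(G)$-linear in the differentiating direction, it suffices to show that the difference tensor $\nabla^t_X Y-\nabla^s_X Y$ vanishes whenever $X$ is horizontal, i.e., $X\in\Gamma(E)$. Equivalently, writing $X=h(X)+v(X)$ and using linearity, the asserted identity \eqref{eq:diff_nabla_ts} is just the statement that $\nabla^t_{h(X)}Y=\nabla^s_{h(X)}Y$ for all $Y\in\Gamma(K)$.

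The plan is to verify this on a generating set of sections of $K$ and then extend by the Leibniz rule. First I would reduce to left-invariant sections: the $C^\infty(G)$-module $\Gamma(K)$ is generated by left-invariant vector fields $\xi^L$, $\xi\in\Gamma(\frak k)$ (alternatively by right-invariant ones $\xi^R$). By the Leibniz rule for both connections, it is enough to check $\nabla^t_{h(X)}(\xi^L)=\nabla^s_{h(X)}(\xi^L)$ for a horizontal vector field $h(X)$ and a section $\xi\in\Gamma(\frak k)$. For the right-hand side, Lemma \ref{lem:nabla_st}, equation \eqref{eq:nablascinfty}, gives $\nabla^s_{h(X)}(\xi^L)=v[h(h(X)),\xi^L]=v[h(X),\xi^L]$, since $h\circ h=h$. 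For the left-hand side, the subtlety is that $\xi^L$ is not a pullback section for $\nabla^t$; here I would instead use the companion identity \eqref{eq:nablascinfty2}, $\nabla^t_{W}(\eta^R)=v[h(W),\eta^R]$, applied to a right-invariant generator. So the cleanest route is to prove the identity first for $Y=\eta^R$: then $\nabla^t_{h(X)}(\eta^R)=v[h(X),\eta^R]$ by \eqref{eq:nablascinfty2}, and on the other side $\nabla^s_{h(X)}(\eta^R)$ must be computed by writing $\eta^R$ in a left-invariant local frame $(b_i^L)$ of $K$ and using the explicit formula from Remark \ref{rem:nabla_s}, $\nabla^s_Z(Z'^i b_i^L)=Z(Z'^i)b_i^L$ for $Z$ vertical — but here $Z=h(X)$ is horizontal, so I cannot directly invoke that; instead I use \eqref{eq:nablascinfty} which is valid for arbitrary $X\in\vf(G)$, giving $\nabla^s_{h(X)}(\eta^R)=v[h(X),\eta^R]$ after rewriting $\eta^R$ as a $C^\infty(G)$-combination of the $b_i^L$ and applying the Leibniz rule together with \eqref{eq:nablascinfty}. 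Comparing, both sides equal $v[h(X),\eta^R]$, so they agree.

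The main obstacle, and the step requiring care, is precisely this comparison of the two connections on a common generating set: $\nabla^s$ is naturally expressed via left-invariant sections of $K$ while $\nabla^t$ is naturally expressed via right-invariant ones, so one must pass between the two frames. The key observation that makes this work is that both Lemma \ref{lem:nabla_st} identities \eqref{eq:nablascinfty} and \eqref{eq:nablascinfty2} have the same shape — vertical projection of the bracket of $h(X)$ with the given section of $K$ — so after expanding everything in terms of brackets and using $C^\infty(G)$-linearity (which both formulas respect, as noted in the proof of Lemma \ref{lem:nabla_st}), the horizontal parts contribute identically to $\nabla^t$ and $\nabla^s$ and hence cancel in the difference. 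Concretely: for horizontal $h(X)$, the identity $\nabla^t_{h(X)}\xi^L-\nabla^s_{h(X)}\xi^L = 0$ follows because one can compute both sides via a local left-invariant frame $(b_i^L)$, write $\xi^L=\xi^i b_i^L$ with $\xi^i\in C^\infty(G)$, apply the Leibniz rule and \eqref{eq:nablascinfty}/\eqref{eq:nablascinfty2}, and observe that the only potentially differing pieces would be $\nabla^t_{h(X)}b_i^L$ versus $\nabla^s_{h(X)}b_i^L$; but by \eqref{eq:nablascinfty} the latter is $v[h(X),b_i^L]$, while \eqref{eq:nablascinfty2} applied after re-expressing $b_i^L$ through a right-invariant frame gives the same vertical bracket, the horizontal discrepancy between the two frames being killed by $v$ and by right-invariance of $E^s$ (equation \eqref{eq:right_inv}) exactly as in the proof of Proposition \ref{prop:conn}(ii). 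Once the identity is established on generators it extends to all $Y\in\Gamma(K)$ by $C^\infty(G)$-linearity of $X\mapsto \nabla^t_XY-\nabla^s_XY$, which finishes the proof.
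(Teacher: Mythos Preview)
Your approach is correct and essentially the same as the paper's: both proofs hinge on Lemma~\ref{lem:nabla_st} together with expanding one kind of invariant section of $K$ in a local frame of the other kind. The paper takes $Y=\xi^L$ left-invariant, writes it in a right-invariant frame $\xi^L=Y^ib_i^R$, and computes the full difference $\nabla^t_XY-\nabla^s_XY$ directly, obtaining $(v(X)Y^i)b_i^R=\nabla^t_{v(X)}Y$ in one line (then notes $\nabla^s_{v(X)}\xi^L=0$). Your version reformulates the claim as $\nabla^t_{h(X)}=\nabla^s_{h(X)}$ on $\Gamma(K)$ and checks it on $Y=\eta^R$ by expanding in a left-invariant frame --- the mirror computation. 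Both are fine, and the underlying mechanism is identical.

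Two small points. First, your last sentence invokes ``$C^\infty(G)$-linearity of $X\mapsto\nabla^t_XY-\nabla^s_XY$'' to extend from generators to all $Y$; this should be $C^\infty(G)$-linearity in $Y$ (the difference of two connections on $K$ is tensorial in $Y$). Second, the final paragraph's appeal to right-invariance of $E^s$ and Proposition~\ref{prop:conn}(ii) is unnecessary and muddies the water: once you have $\nabla^s_{h(X)}(\eta^R)=v[h(X),\eta^R]$ from the straightforward Leibniz expansion (as you correctly outline just before), you are done --- no further geometric input is required.
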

\begin{proof}
Since both sides of equation \eqref{eq:diff_nabla_ts} are $C^\infty(G)$-linear in $Y$, it is enough to show that this equality holds for any left-invariant section $Y\in \Gamma(K)$. Pick  any local frame $(b_i)_i$  of $\frak k$ over an open subset $U\subset M$, and expand the left-invariant section $Y$ with respect to the right-invariant extension of the local frame $(b_i)_i$. That is, write $Y=Y^ib_i^R$ for some functions $Y^i\in C^\infty(t^{-1}(U))$, using the summation convention. The left-hand side reads:
\begin{align*}
\nabla_X^t(Y^ib_i^R)-v[h(X),Y^ib_i^R]&=Y^i\nabla_X^t(b_i^R)+X(Y^i)b_i^R-Y^iv[h(X),b_i^R]-(h(X)Y^i)b_i^R\\
&=(v(X)Y^i)b_i^R=\nabla^t_{v(X)}Y
\end{align*}
where we have used the Leibniz rules for $\nabla^t$ and for the Lie bracket on $K$ in the second equality, and Lemma \ref{lem:nabla_st} in the third. The obtained expression clearly equals the right-hand side of equation \eqref{eq:diff_nabla_ts} since $\nabla^s_{v(X)}Y=0$ due to left-invariance of $Y$.
\end{proof}
\begin{theorem}
\label{thm:deltaD}
Let $G\rra M$ be a Lie groupoid with a multiplicative Ehresmann connection $\omega\in\A(G;\frak k)$. The horizontal exterior covariant derivative $\D{}^\omega$ is a cochain map, i.e., 
\begin{align}
\label{eq:deltaD}
  \delta\D{}^\omega=\D{}^\omega\delta.
\end{align}
In particular, $\D{}^\omega$ maps multiplicative forms to multiplicative forms.
\end{theorem}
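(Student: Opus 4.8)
The plan is to reduce the identity $\delta\D{}^\omega=\D{}^\omega\delta$ to a computation of the $G$-invariance form $\Theta$ of the induced connection $\nabla$, using the explicit commutator formula from Lemma~\ref{lem:G_invariant}. Recall that $\D{}^\omega=h^*\d{}^\nabla$, and that $h^*$ is already a cochain map, $h^*\delta=\delta h^*$. Therefore
\[
\delta\D{}^\omega-\D{}^\omega\delta=\delta h^*\d{}^\nabla-h^*\d{}^\nabla\delta=h^*(\delta\d{}^\nabla-\d{}^\nabla\delta)=-h^*[\d{}^\nabla,\delta],
\]
so by Lemma~\ref{lem:G_invariant} it suffices to show that $h^*$ kills the right-hand side of \eqref{eq:commutator_higher} (and \eqref{eq:commutator_zero}); that is, that the $(q+1)$-form $(\pr_{p+1})^*\Theta\wedge(f_{p+1}^{(p+1)})^*\omega$ vanishes after precomposing with the horizontal projection in all arguments. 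Looking at the explicit formula in Lemma~\ref{lem:G_invariant}, every term contains the factor $\Theta(X_i^{p+1})$ where $X_i^{p+1}$ is the last component of a composable tangent vector. After applying $h^*$ that component is replaced by $h(X_i^{p+1})\in E$, so the claim reduces to the pointwise statement that $\Theta$ vanishes on horizontal vectors:
\begin{align*}
\iota_{h(X)}\Theta=0\qquad\text{for all }X\in TG.
\end{align*}

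The heart of the argument is thus to compute $\Theta$ and show $\Theta(v)=0$ whenever $v\in E$. By definition \eqref{eq:invariance_form_G}, $\Theta(X)$ measures the failure of $\phi\colon t^*\frak k\to s^*\frak k$ to intertwine $\nabla^t$ and $\nabla^s$; under the identifications \eqref{eq:iso_pullback_k} of both pullback bundles with $K\to G$, the map $\phi$ becomes the identity, so $\Theta$ is literally the difference $\nabla^t-\nabla^s$ of two $K$-connections on $K\to G$. By Corollary~\ref{cor:diff_nabla_ts}, this difference depends only on the \emph{vertical} component: $\nabla^t_X-\nabla^s_X=\nabla^t_{v(X)}-\nabla^s_{v(X)}$. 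Hence for a horizontal vector $X\in E$ we have $v(X)=0$, giving $\Theta(X)=\nabla^t_XY-\nabla^s_XY=0$ for every section $Y$ of $K$, which is exactly what is needed. (One must be mildly careful that the identification carrying $t^*\frak k$ and $s^*\frak k$ into $K$ is compatible with the way $\Theta$ was rewritten as an $\operatorname{End}$-valued form, but this is precisely the bookkeeping already set up in the paragraph preceding Lemma~\ref{lem:nabla_st}.)

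With $\iota_{h(X)}\Theta=0$ established, the remaining steps are routine: substitute into the explicit expansion of $(\pr_{p+1})^*\Theta\wedge(f_{p+1}^{(p+1)})^*\omega$ from Lemma~\ref{lem:G_invariant}, observe that $\D{}^\omega$ applied to a form feeds horizontally-projected vectors into every slot, so each summand acquires a factor $\Theta(h(X_i^{p+1}))=0$; the level-$0$ case \eqref{eq:commutator_zero} is handled identically since $\Theta\wedge t^*\omega$ again carries one $\Theta$-slot. This gives $h^*[\d{}^\nabla,\delta]=0$ at every level $p\ge 0$, hence \eqref{eq:deltaD}. The final sentence follows because multiplicative forms are by definition the $1$-cocycles $\ker\delta^1$, and a cochain map preserves cocycles: if $\delta\vartheta=0$ then $\delta\D{}^\omega\vartheta=\D{}^\omega\delta\vartheta=0$.

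\textbf{Main obstacle.} The only genuinely delicate point is the identification step: verifying that, under the two Maurer--Cartan isomorphisms \eqref{eq:iso_pullback_k}, the tensor $\Theta$ of \eqref{eq:invariance_form_G}--\eqref{eq:invariance_form_theta} really does become the plain difference $\nabla^t-\nabla^s$ of $K$-connections, so that Corollary~\ref{cor:diff_nabla_ts} can be invoked verbatim. Everything else is a direct consequence of the commutator formula already proved in Lemma~\ref{lem:G_invariant} together with $h^*\delta=\delta h^*$.
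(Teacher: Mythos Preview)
Your proposal is correct and follows essentially the same approach as the paper: reduce $[\D{}^\omega,\delta]$ to $h^*[\d{}^\nabla,\delta]$ via $h^*\delta=\delta h^*$, then use Corollary~\ref{cor:diff_nabla_ts} to conclude $\Theta(h(X))=0$, so that the explicit commutator from Lemma~\ref{lem:G_invariant} dies under $h^*$. The identification step you flag as the main obstacle is exactly the setup the paper carries out in the paragraph preceding Lemma~\ref{lem:nabla_st}, and your verification that $\phi$ becomes the identity under the two Maurer--Cartan isomorphisms \eqref{eq:iso_pullback_k} is the right way to see it.
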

\begin{proof}
  Corollary \ref{cor:diff_nabla_ts} states that the tensor $\Theta$ equals 
  \[
  \Theta(X)\xi=\phi(\nabla^t_{v(X)}\xi)-\nabla^s_{v(X)}\phi(\xi)
  \]
  for any $X\in \vf(G)$ and $\xi\in \Gamma(t^*\frak k)$,
  where $\phi\colon t^*\frak k\ra s^*\frak k$ is the bundle isomorphism given by $(g,v)\mapsto (g,\Ad_{g^{-1}}(v))$ and $\nabla^s$ and $\nabla^t$ denote the trivial connections on the pullback bundles (as in Remark \ref{rem:nabla_s}). Since $h^*$ is a cochain map, we have 
  \[
  [\D{}^\omega,\delta]= h^*{\d{}^\nabla}\delta-\delta h^*{\d{}^\nabla}=h^*[\d{}^\nabla,\delta],
  \]
  which vanishes by the expression for $[\d{}^\nabla,\delta]$ from Lemma \eqref{lem:G_invariant} since $\Theta(h(X))=0$.
  \end{proof}
In conclusion, a multiplicative Ehresmann connection yields the columns of a curved double complex, depicted in the diagram below, with the feature that $\D{}^\omega$ does not square to zero unless $E=\ker \omega$ is involutive.
\begin{align}
\label{eq:bss_ideals}
\begin{tikzcd}[ampersand replacement=\&, column sep=large, row sep=large]
	{\Omega^{q+1}(M;\frak k)} \& {\Omega^{q+1}(G;s^*\frak k)} \& {\Omega^{q+1}(G^{(2)};(s\circ\pr_2)^*\frak k)} \& \cdots \\
	{\Omega^q(M;\frak k)} \& {\Omega^{q}(G;s^*\frak k)} \& {\Omega^q(G^{(2)};(s\circ\pr_2)^*\frak k )} \& \cdots
	\arrow["{\delta}", from=1-1, to=1-2]
	\arrow["{\delta}", from=1-2, to=1-3]
	\arrow["{\delta}", from=1-3, to=1-4]
	\arrow["{\d{}^\nabla}", from=2-1, to=1-1]
	\arrow["{\delta}", from=2-1, to=2-2]
	\arrow["{\D{}^\omega}", from=2-2, to=1-2]
	\arrow["{\delta}", from=2-2, to=2-3]
	\arrow["{\D{}^\omega}", from=2-3, to=1-3]
	\arrow["{\delta}", from=2-3, to=2-4]
\end{tikzcd}
\end{align}
In the case when $E$ is involutive, a short computation shows we have
\[
(\D{}^\omega)^2\vartheta=h^*(R^{\nabla^{s\circ\pr_p}}\!\wedge\vartheta),
\]
for any $\vartheta\in\Omega^{p,q}(G;\frak k)$. Since involutivity of $E$ also implies $\nabla$ is flat, $(\D{}^\omega)^2=0$ follows.
\begin{remark}
  We observe that the proof generalizes to the following setting: suppose $V$ is an arbitrary representation of $G\rra M$ equipped with a linear connection $\nabla$, and $E$ is a multiplicative Ehresmann connection for a fixed bundle of ideals $\frak k$. Then $\D{}=h^*\circ\d{}^\nabla$ is a cochain map $\Omega^{\bullet,q}(G;V)\ra \Omega^{\bullet,q+1}(G;V)$ if and only if there holds
  $h^*\Theta=0$, where $\Theta$ is the $G$-invariance form of $\nabla$ from \eqref{eq:invariance_form_G}. Furthermore, taking $\frak k=0_M$ recovers the framework from \sec \ref{chapter:invariant}, so the usual exterior covariant derivative can be seen as a special case of the horizontal exterior covariant derivative.
\end{remark}

\subsection{Curvature}
\label{sec:curvature}
\begin{definition}
    The \textit{curvature} $\Omega^\omega\in\Omega^2(G;s^*\frak k)$ of a multiplicative Ehresmann connection $\omega\in \A(G;\frak k)$ on a Lie groupoid $G\rra M$ is given by \[\Omega^\omega=\D{}^\omega\omega.\]
    \end{definition}
    In what follows we state several important properties of the curvature form, generalizing the ones already known from the theory of principal bundles. The following result is already established in \cite{mec}*{Propositions 2.22 and 2.24}, however, a direct proof of (i) below is now possible due to our results from \sec\ref{sec:hor_ext_cov_der}, and also a simpler proof of the structure equation in (iii) on account of our construction of $\nabla$ in Proposition \ref{prop:conn}. 
    
    \begin{samepage}
      \begin{proposition}
        \label{prop:multiplicative_curvature}
          Let $\omega\in\A(G;\frak k)$ be a multiplicative Ehresmann connection on $G\rra M$. Its curvature satisfies the following properties.
          \begin{enumerate}[label={(\roman*)}]
              \item $\Omega^\omega$ is a multiplicative form.
              \item For any horizontal vector fields $X,Y\in \Gamma(E)$ there holds
              \begin{align}
              \label{eq:curv_horizontal}
                  \d (L_g)_{1_{s(g)}}\Omega^\omega_g(X,Y)=h([X,Y]_g)-[X,Y]_g.
              \end{align}
              In particular, the curvature $\Omega^\omega$ vanishes if and only if $E$ is involutive.
              \item The structure equation for $\Omega^\omega$ holds:
              \begin{align}
              \label{eq:structure}
              \Omega^\omega=\d{}^{\nabla^s}\omega+\frac 12 [\omega,\omega]_{s^*\frak k}.
              \end{align}
              \item The \textit{Bianchi identity} for $\Omega^\omega$ holds:
              \begin{align}
              \label{eq:bianchi}
                  \D{}^\omega \Omega^\omega = 0.
              \end{align}
          \end{enumerate}
      \end{proposition}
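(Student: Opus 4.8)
Each of the four statements follows by combining the cochain-map property of $\D{}^\omega$ (Theorem \ref{thm:deltaD}) with the explicit description of $\nabla$ from Proposition \ref{prop:conn}. The plan is to prove them in the order (ii), (iii), (i), (iv), since (ii) gives the most hands-on characterization of $\Omega^\omega$ and (iii), (i), (iv) are then either algebraic consequences or immediate from what came before.

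For \emph{(ii)}, I would start from the definition $\Omega^\omega = \D{}^\omega\omega = h^*\d{}^{\nabla^s}\omega$ and evaluate on horizontal vector fields $X, Y\in\Gamma(E)$. Writing out the Koszul formula for $\d{}^{\nabla^s}$, the terms $\nabla^s_X(\omega(Y))$ and $\nabla^s_Y(\omega(X))$ vanish because $\omega$ annihilates horizontal vectors, leaving only $\Omega^\omega_g(X,Y) = -\omega_g([X,Y]_g)$. Now $[X,Y]_g = h([X,Y]_g) + v([X,Y]_g)$, and $\omega$ kills the horizontal part while, by the property $\omega|_K = \Theta_{MC}|_K$ and $\omega|_{\frak k}=\id_{\frak k}$, we have $\omega_g(v([X,Y]_g)) = \d(L_{g^{-1}})_g(v([X,Y]_g))$. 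Rearranging and applying $\d(L_g)_{1_{s(g)}}$ gives exactly \eqref{eq:curv_horizontal}. The equivalence with involutivity of $E$ is then immediate: $\Omega^\omega=0$ iff $h([X,Y])=[X,Y]$ for all horizontal $X,Y$, i.e.\ iff $[X,Y]$ is always horizontal. (One should also check $\Omega^\omega$ vanishes when one argument is vertical, which is automatic from $h^*$.)

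For \emph{(iii)}, the structure equation, I would compute $\d{}^{\nabla^s}\omega + \tfrac12[\omega,\omega]_{s^*\frak k}$ evaluated on arbitrary vector fields and show it equals $h^*\d{}^{\nabla^s}\omega$. The point is that the difference $\d{}^{\nabla^s}\omega - h^*\d{}^{\nabla^s}\omega$ is supported on the vertical components, and using Lemma \ref{lem:nabla_st} (which says $\nabla^s_X(\xi^L) = v[h(X),\xi^L]$, in particular $\nabla^s$ vanishes on left-invariant sections in vertical directions) together with involutivity of $K$ one identifies this difference with $-\tfrac12[\omega,\omega]_{s^*\frak k}$ after transporting everything via the Maurer--Cartan isomorphism $s^*\frak k\cong K$. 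Concretely, for $X,Y\in\Gamma(K)$ one has $[\omega,\omega]_{s^*\frak k}(X,Y)$ corresponding to $[v(X),v(Y)]$ in $K$, and the mixed horizontal-vertical terms are handled by Lemma \ref{lem:nabla_st}. This is the most computational of the four parts, but it is a routine bracket bookkeeping once the identifications are set up.

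For \emph{(i)}, multiplicativity of $\Omega^\omega$: since $\omega$ is multiplicative, i.e.\ $\delta\omega = 0$, and $\D{}^\omega$ commutes with $\delta$ by Theorem \ref{thm:deltaD}, we get $\delta\Omega^\omega = \delta\D{}^\omega\omega = \D{}^\omega\delta\omega = 0$, so $\Omega^\omega$ is multiplicative. (It is also horizontal by construction, being in the image of $h^*$.) For \emph{(iv)}, the Bianchi identity: apply $\D{}^\omega$ again, $\D{}^\omega\Omega^\omega = \D{}^\omega\D{}^\omega\omega = h^*\d{}^{\nabla^s}h^*\d{}^{\nabla^s}\omega$. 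Expanding $(\D{}^\omega)^2 = h^*\d{}^\nabla h^* \d{}^\nabla$ on the $1$-form $\omega$ and using that $\omega|_E = 0$ (so only horizontal arguments survive the outer $h^*$, on which $\omega$ itself vanishes), the curvature-wedge term $h^*(R^{\nabla^s}\wedge\omega)$ drops out because $\omega$ is killed by $h^*$ in the relevant slot; hence $\D{}^\omega\Omega^\omega = 0$. The main obstacle I anticipate is organizing the Maurer--Cartan identifications in part (iii) cleanly enough that the bracket term $\tfrac12[\omega,\omega]_{s^*\frak k}$ appears with the correct sign and normalization; everything else is a direct consequence of the cochain-map property already proved.
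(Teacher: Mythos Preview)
Your arguments for (i), (ii), and (iii) are essentially the paper's. For (ii) you compute $\Omega^\omega(X,Y)=-\omega([X,Y])$ on horizontal fields exactly as the paper does; for (i) you invoke $\delta\D{}^\omega=\D{}^\omega\delta$ just as the paper does; and for (iii) your plan to analyze the difference $\d{}^{\nabla^s}\omega - h^*\d{}^{\nabla^s}\omega$ using Lemma~\ref{lem:nabla_st} and involutivity of $K$ is equivalent to the paper's case-by-case check on pairs of vectors that are (horizontal, horizontal), (vertical left-invariant, vertical left-invariant), and mixed. The mixed case is where the paper explicitly invokes the left-invariance from Proposition~\ref{prop:conn}~(ii), so make sure you actually use it.

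Your sketch for (iv) has a genuine gap. You write $(\D{}^\omega)^2 = h^*\d{}^{\nabla^s}h^*\d{}^{\nabla^s}$ and then speak of ``the curvature-wedge term $h^*(R^{\nabla^s}\wedge\omega)$'' dropping out, but $h^*\d{}^{\nabla^s}h^*\d{}^{\nabla^s}\omega$ is \emph{not} $h^*(\d{}^{\nabla^s})^2\omega = h^*(R^{\nabla^s}\wedge\omega)$: the inner $h^*$ produces extra terms coming from the vertical parts of brackets $[X_i,X_j]$ in the Koszul expansion, and you have not accounted for those. The paper's route is to use the structure equation (iii) you have just proved: since $\Omega^\omega=\d{}^{\nabla^s}\omega+\tfrac12[\omega,\omega]_{s^*\frak k}$, one computes directly
\[
\d{}^{\nabla^s}\Omega^\omega = R^{\nabla^s}\wedge\omega + [\d{}^{\nabla^s}\omega,\omega]_{s^*\frak k},
\]
using $(\d{}^{\nabla^s})^2=R^{\nabla^s}\wedge\cdot$ and the graded Leibniz rule \eqref{eq:d_omega_graded_leibniz}. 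Now \emph{both} terms on the right contain a factor of $\omega$, so applying $h^*$ (i.e.\ inserting three horizontal vectors) annihilates the whole expression since $\omega|_E=0$. This is a two-line argument once (iii) is in hand, and it is exactly the missing piece in your outline.
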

    \end{samepage}
    \begin{remark}
        The bracket on $\Omega^\bullet(G;s^*\frak k)$ which appears in equation \eqref{eq:structure} is the bracket of forms induced by the Lie bracket on the bundle of Lie algebras $s^*\frak k\ra G$. Skew-symmetry of the
         latter and graded commutativity of the wedge product ensure there holds
       \[[\alpha,\beta]_{s^*\frak k}=(-1)^{kl+1}[\beta,\alpha]_{s^*\frak k}.\] Moreover, the graded Leibniz rule applies: 
    \begin{align}
    \label{eq:d_omega_graded_leibniz}
           \d{}^{\nabla^s}[\alpha,\beta]_{s^*\frak k}=[\d{}^{\nabla^s}\alpha,\beta]_{s^*\frak k}+(-1)^{k}[\alpha,\d{}^{\nabla^s}\beta]_{s^*\frak k},
    \end{align} 
    which follows from the fact that the induced connection $\nabla$ preserves the Lie bracket on $\frak k$. 
    \end{remark}
    \begin{proof}
    Multiplicativity of $\Omega^\omega$ is a direct consequence of Theorem \ref{thm:deltaD}. To show (ii), we note that if $X,Y\in \Gamma(E)$, then there holds
    \begin{align*}
    \Omega^\omega(X,Y)&=\nabla^s_X\omega(Y)-\nabla^s_Y\omega(X)-\omega([X,Y])=-\omega([X,Y])
    \end{align*}
    and now use the defining equation $\eqref{eq:omega}$ of $\omega$ to conclude (ii). 
    
    To prove (iii), first write out the right-hand side of equation \eqref{eq:structure}:
    \begin{align*}
    \Big({\d{}}^{\nabla^s}\omega&+\frac 12[\omega,\omega]_{s^*\frak k}\Big)(X,Y)=\nabla^s_X\omega(Y)-\nabla^s_Y\omega(X)-\omega([X,Y])+[\omega(X),\omega(Y)]_{s^*\frak k},
    \end{align*}
    for any vector fields $X,Y\in \vf (G)$. Note that both sides of \eqref{eq:structure} are $C^\infty(G)$-linear and moreover, any vertical vector can be extended to a vertical left-invariant vector field, and any horizontal vector can be extended to a horizontal $s$-projectable vector field on $G$. Hence, it is enough to consider all possible combinations of $X,Y$ being either vertical and left-invariant, or horizontal and $s$-projectable. If both $X$ and $Y$ are horizontal and $s$-projectable, both sides of \eqref{eq:structure} evaluate to $-\omega([X,Y])$, as already seen above. Furthermore, if both are vertical and left-invariant, i.e., $X=\xi^L$ and $Y=\eta^L$ for some $\xi,\eta\in \Gamma(\frak k)$, then both sides of \eqref{eq:structure} vanish since $[X,Y]=[\xi,\eta]^L$ and so $\omega([X,Y])=s^*[\xi,\eta]_{s^*\frak k}=[\omega(X),\omega(Y)]_{s^*\frak k}$. Finally, suppose that $X=\xi^L$ for some $\xi\in \Gamma(\frak k)$ and $Y\in \Gamma(E)$ is $s$-projectable to some vector field $s_*Y=U\in \vf(M)$. We need to show that the following identity holds:
    \begin{align*}
    \nabla^s_Y\omega(X)=\omega([Y,X]).
    \end{align*}	
    Notice that at any $g\in G$, we have
    \begin{align*}
        \nabla^s_{Y}\omega(X)|_g&=\nabla^s_Y(s^*\xi)|_g=s^*(\nabla_U\xi)|_g=(g,v[Y,\xi^L]_{1_{s(g)}}),\\
        \omega_g([Y,X])&=(g,\d(L_{g^{-1}})_g(v[Y,X]_g)),
    \end{align*}
    where we have used the defining equation \eqref{eq:nabla} of $\nabla$ in the first line, and the defining equation \eqref{eq:omega} of $\omega$ in the second. These expressions coincide by left-invariance of $v[Y,\xi^L]$ from Proposition \ref{prop:conn} (ii).
    
    The proof of the Bianchi identity is a matter of applying the structure equation:
    \begin{align}
    \label{eq:bianchi_intermediate}
      \d{}^{\nabla^s}\Omega^\omega=\d{}^{\nabla^s} (\d{}^{\nabla^s} \omega+\frac 12[\omega,\omega]_{s^*\frak k})=R^{\nabla^s}\wedge \omega+ [\d{}^{\nabla^s}\omega,\omega]_{s^*\frak k}
    \end{align}
    where we have denoted by $R^{\nabla^s}\in\Gamma(\Lambda^2(T^*G)\otimes \End(s^*\frak k))$ the usual curvature tensor of the pullback connection $\nabla^s$ on $s^*\frak k\ra G$. Inserting any three horizontal vectors into this equation now yields zero since $E=\ker \omega$.
    \end{proof}
    \begin{remark}
    If we insert the structure equation into \eqref{eq:bianchi_intermediate} once more, we see that the Bianchi identity can also be written in the alternative form
    \[
     \d{}^{\nabla^s}\Omega^\omega+[\omega,\Omega^\omega]_{s^*\frak k}=R^{\nabla^s}\wedge\omega.
    \]
    \end{remark}

\section{The infinitesimal picture}
\label{sec:weil_boi}
As observed in \cite{mec}, any multiplicative Ehresmann connection is mapped with the van Est map to an infinitesimal multiplicative form whose symbol restricts to the identity map on the bundle of ideals $\frak k\subset \ker\rho$. Importantly, this infinitesimal notion of a multiplicative connection makes sense on its own, without the need for a given algebroid to be integrable. 

\subsection{Infinitesimal multiplicative connections}
\label{sec:mec_inf}

As in the case of groupoids, we start this section by briefly recalling some definitions and results regarding infinitesimal multiplicative connections.
\begin{definition}
Let $\frak k\subset A$ be a bundle of ideals on a Lie algebroid $A\Rightarrow M$. An \textit{infinitesimal multiplicative connection} (more briefly, an \textit{IM connection}) for $\frak k$, is a $\frak k$-valued IM form $(\C,v)\in \Omega^1_{im}(A;\frak k)$, whose symbol $v\colon A\rightarrow \frak k$ restricts on $\frak k$ to the identity:
\begin{align}
\label{eq:symbol_id}
  v|_{\frak k}=\id_{\frak k}.
\end{align}
The set of all IM connections on $A$ for $\frak k$ is denoted $\A(A,\frak k)$.
\end{definition}
\begin{remark}
  If a Lie groupoid $G$ integrates $A$, it is clear that the van Est map takes multiplicative Ehresmann connections to IM connections, since the symbol of $(\C,v)=\ve(\omega)$ for any form $\omega\in \Omega^1_m(G;s^*V)$ is given by $v=\omega|_A$. Together with this observation, Corollary \ref{corollary:van_est_multiplicative} implies that if $G$ has simply-connected source fibres, the van Est map restricts to a bijective correspondence between multiplicative Ehresmann connections on $G$ and IM connections on $A$.
\end{remark}

It is important to recognize that the defining property  \eqref{eq:symbol_id} of an IM connection $(\C,v)$ means precisely that its symbol $v$ is a splitting of the following short exact sequence of vector bundles:
\begin{align}
\label{eq:splitting}
  \begin{tikzcd}[ampersand replacement=\&]
	0 \& {\frak k} \& A \& {A/\frak k} \& 0
	\arrow[from=1-1, to=1-2]
	\arrow[from=1-2, to=1-3]
	\arrow["v", bend left=30, from=1-3, to=1-2]
	\arrow[from=1-3, to=1-4]
	\arrow[from=1-4, to=1-5]
\end{tikzcd}
\end{align}
So, let us denote by $H=\ker (v)$ the \textit{horizontal subbundle} of $A$, and denote by $\alpha=v(\alpha)+h(\alpha)$ the unique decomposition of any vector $\alpha\in A$, pertaining to the splitting $A=\frak k\oplus H$ given by the symbol. We will call $v(\alpha)\in \frak k$ and $h(\alpha)\in H$ the \textit{vertical} and \textit{horizontal} component of $\alpha$, respectively. 

\subsubsection{Associated coupling data of an IM connection}
The splitting of \eqref{eq:splitting}, determined by the symbol of an IM connection $(\C,v)$, enables us to split the information which is contained within the leading term $\C$ into the following two objects.
\begin{enumerate}[label={(\roman*)}]
  \item A linear connection $\nabla$ on the bundle of ideals $\frak k$, given by $\nabla=\C|_{\frak k}$, that is 
\begin{align}
\label{eq:nabla2}
  \nabla_X\xi=\C(\xi)(X).
\end{align}
  \item A tensor field $U\in \Gamma(H^*\otimes T^*M\otimes \frak k)$, given by
\begin{align}
\label{eq:defU}
  U(\alpha)(X)=-\C(\alpha)(X).
\end{align}
\end{enumerate}
The pair $(\nabla,U)$ constructed above will be called the \textit{coupling data} of an IM connection $(\C,v)$, which, owing to the Leibniz identity, indeed consists of a connection and a tensor field. In other words, we can write
\begin{align}
\label{eq:split_C}
  \C(\alpha)=\nabla(v\alpha)-U(h\alpha),
\end{align}
for any $\alpha\in\Gamma(A)$. 
To see the meaning behind the tensor $U$, note that compatibility condition \eqref{eq:c2} implies
\begin{align}
\label{eq:U_along_orbits}
  v[h(\alpha),h(\beta)]=U(h\alpha)(\rho\beta),
\end{align}
so that the orbital part of $U$ measures the failure of the splitting \eqref{eq:splitting} to be a splitting of Lie algebroids, i.e, $U$ vanishes on $T\F$ precisely when $H$ is a Lie subalgebroid of $A$. However, as we will see, $U$ does not encode the whole information about the curvature of $(\C,v)$, since we also need to account for the curvature $R^\nabla$ of $\nabla$. This will become clearer in \sec\ref{sec:im_curvature}.


Turning to the linear connection $\nabla$, we see that its construction is  more direct than in the groupoid case (Proposition \ref{prop:conn}). If $A$ is the Lie algebroid of a Lie groupoid $G$ equipped with the IM connection $(\C,v)=\ve(\omega)$ for a multiplicative Ehresmann connection $\omega$, then the two linear connections on $\frak k$ coincide---this is shown in Proposition \ref{prop:conn_global_inf} below. Before proving it, let us establish the infinitesimal version of Corollary \ref{cor:conn_orb}.
\begin{proposition}
\label{prop:conn2}
Let $A$ be a Lie algebroid with an IM connection $(\C,v)\in \A(A;\frak k)$. For any $\alpha\in\Gamma(A)$ and $\xi\in\Gamma(\frak k)$, there holds
\begin{align}
\label{eq:conn_orb2}
  \nabla_{\rho(\alpha)}\xi=[h(\alpha),\xi].
\end{align}
\end{proposition}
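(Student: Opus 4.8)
The plan is to verify the identity \eqref{eq:conn_orb2} directly from the defining formula \eqref{eq:nabla2} of $\nabla$ together with the compatibility condition \eqref{eq:c2} for the IM form $(\C,v)$. Unwinding the definitions, $\nabla_{\rho(\alpha)}\xi = \C(\xi)(\rho(\alpha))$ for any $\alpha\in\Gamma(A)$ and $\xi\in\Gamma(\frak k)$, so the task is to show $\C(\xi)(\rho(\alpha)) = [h(\alpha),\xi]$, where $h(\alpha)\in\Gamma(H)$ is the horizontal component of $\alpha$ with respect to the splitting $A=\frak k\oplus H$ determined by the symbol $v$.

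First I would apply condition \eqref{eq:c2}, namely $c_1[\alpha',\beta'] = \L^A_{\alpha'}(c_1(\beta')) - \iota_{\rho(\beta')}c_0(\alpha')$, with a careful choice of arguments: take $\beta' = \xi\in\Gamma(\frak k)$ and $\alpha'$ an arbitrary section of $A$. Since $v(\xi)=\xi$ by \eqref{eq:symbol_id}, the symbol evaluated on $\xi$ is $\xi$ itself, and the bracket $[\alpha',\xi]$ lands in $\Gamma(\frak k)$ because $\frak k$ is a bundle of ideals; applying the symbol to \eqref{eq:c2} then relates $v[\alpha',\xi]$ to $\nabla^A_{?}$-type terms, but more usefully, evaluating the leading term $\C = c_0$ of \eqref{eq:c2} on $\rho$-images isolates $\C(\xi)(\rho(\alpha'))$. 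The key point is that $c_1(\xi) = v(\xi) = \xi$ is a \emph{constant} section-valued object (it does not see the connection), so the Lie-derivative term $\L^A_{\alpha'}(c_1(\xi))$ simplifies, and the remaining $\iota_{\rho(\xi)}c_0(\alpha')$ term vanishes when $\xi$ has zero anchor — which it does, since $\frak k\subset\ker\rho$. Matching the pieces, and using the decomposition \eqref{eq:split_C} of $\C$ into $\nabla$ and $U$ together with \eqref{eq:defU} (so that $\C(\xi)=\nabla(\xi)$ since $v(\xi)=\xi$ and $h(\xi)=0$), yields the claimed formula after observing that only the horizontal part $h(\alpha)$ of $\alpha$ contributes on the right.

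The main obstacle I anticipate is bookkeeping: condition \eqref{eq:c2} is stated for general sections, and one must be disciplined about which arguments are vertical versus horizontal, and about the fact that $\C(\alpha)$ for $\alpha$ horizontal involves the tensor $U$ (via \eqref{eq:split_C}) whereas we ultimately only want the $\nabla$-part. A clean way around this is to prove the identity first for $\alpha = h(\alpha)$ horizontal and then note both sides of \eqref{eq:conn_orb2} depend on $\alpha$ only through $h(\alpha)$: the left side because $\rho(\alpha)=\rho(h(\alpha)) + \rho(v(\alpha)) = \rho(h(\alpha))$ as $v(\alpha)\in\frak k\subset\ker\rho$, and the right side trivially. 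Alternatively, and perhaps more transparently, I would mimic the groupoid proof of Corollary \ref{cor:conn_orb}: there $\nabla_{\rho(\alpha)}\xi$ was computed by recognizing that $[h(\alpha),\xi]^L$ is already vertical, and the analogous infinitesimal statement is that $[h(\alpha),\xi]\in\Gamma(\frak k)$ (again because $\frak k$ is a bundle of ideals), so that the vertical projection is the identity on it. Combining either route with \eqref{eq:c2} gives the result; I expect the proof to be short, essentially a one-line consequence of \eqref{eq:c2} once the arguments are chosen correctly.
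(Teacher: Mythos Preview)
Your overall strategy is right---the identity does follow from \eqref{eq:c2} in one line---but your explicit choice of arguments is backwards, and with it the computation collapses to a triviality. Taking $\beta'=\xi$ and $\alpha'=\alpha$ in \eqref{eq:c2} gives
\[
v[\alpha,\xi]=\L^A_\alpha\big(v(\xi)\big)-\iota_{\rho(\xi)}\C(\alpha)=[\alpha,\xi]-0,
\]
since $v(\xi)=\xi$ and $\rho(\xi)=0$; but $[\alpha,\xi]\in\Gamma(\frak k)$ already, so this is the tautology $[\alpha,\xi]=[\alpha,\xi]$. The term $\iota_{\rho(\beta')}c_0(\alpha')$ you hoped would produce $\C(\xi)(\rho\alpha)$ is instead $\iota_{\rho(\xi)}\C(\alpha)=0$---you have the two slots swapped.

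The fix is to reverse the roles: take $\alpha'=\xi$ and $\beta'=\alpha$. Then \eqref{eq:c2} reads $v[\xi,\alpha]=\L^A_\xi\big(v(\alpha)\big)-\iota_{\rho(\alpha)}\C(\xi)$, and the last term is exactly $\C(\xi)(\rho\alpha)=\nabla_{\rho(\alpha)}\xi$. Since $\L^A_\xi v(\alpha)=[\xi,v(\alpha)]$ and $v[\xi,\alpha]=[\xi,\alpha]$ (as $[\xi,\alpha]\in\Gamma(\frak k)$), rearranging gives $\nabla_{\rho(\alpha)}\xi=[\xi,v(\alpha)]-[\xi,\alpha]=[\xi,v(\alpha)]+[\alpha,\xi]=[h(\alpha),\xi]$, which is the paper's computation verbatim. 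Your reduction to the case $\alpha=h(\alpha)$ in the second paragraph is fine as an auxiliary observation, but it is not needed once the arguments in \eqref{eq:c2} are placed correctly.
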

\begin{proof}
Equation \eqref{eq:conn_orb2} is shown with a straightforward computation:
\begin{align*}
  \nabla_{\rho(\alpha)}\xi=\C(\xi)(\rho\alpha)=\L^A_\xi v(\alpha)-v[\xi,\alpha]=[\xi,v(\alpha)]+[\alpha,\xi]=[h(\alpha),\xi],
\end{align*}
where we have used the definition of $\nabla$ and condition \eqref{eq:c2}. 
\end{proof}

\begin{proposition}
  \label{prop:conn_global_inf}
  Let $\omega\in\A(G;\frak k)$ be a multiplicative Ehresmann connection on a Lie groupoid $G\rra M$. Let $A$ be its Lie algebroid, endowed with the IM connection $(\C,v)=\ve(\omega)$. The two linear connections induced by $\omega$ and $(\C,v)$, respectively defined by equations \eqref{eq:nabla} and \eqref{eq:nabla2}, coincide.
\end{proposition}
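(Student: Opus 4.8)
Write $\nabla$ for the global connection on $\frak k$ defined by \eqref{eq:nabla}; the plan is to compute $\C|_{\frak k}$, i.e.\ the infinitesimal connection defined by \eqref{eq:nabla2}, directly from the van Est formula and show it equals $\nabla$. Since $(\C,v)=\ve(\omega)$, the leading term $\C$ is $\ve(\omega)_0$, so by \eqref{eq:van_est_p=1} we have, for any $\xi\in\Gamma(\frak k)$ and $X\in T_xM$,
\[
\C(\xi)(X)=(R_\xi\omega)_x(X).
\]
The first input is that $\omega$, being multiplicative, is normalized: $u^*\omega=0$ (recalled after the van Est theorem for multiplicative forms). Hence the lemma yielding \eqref{eq:R_jL} applies at level $p=1$, and because $u^*\omega=0$ that identity holds for \emph{any} linear connection on $\frak k$ — I will feed in the global connection $\nabla$ itself. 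This gives $R_\xi\omega=u^*\L^{\nabla^s}_{\xi^L}\omega$, with $\nabla^s$ the pullback of $\nabla$ along $s$.

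Next I would expand via Cartan's magic formula, $\L^{\nabla^s}_{\xi^L}\omega=\iota_{\xi^L}\d{}^{\nabla^s}\omega+\d{}^{\nabla^s}\iota_{\xi^L}\omega$, and dispose of the two summands separately. For the second, since $\xi^L$ is vertical and left-invariant one has $\iota_{\xi^L}\omega=\omega(\xi^L)=s^*\xi$ (property~(ii) of a multiplicative connection together with multiplicativity); combining this with the characterizing identity \eqref{eq:pullback} of the pullback connection gives $\d{}^{\nabla^s}\iota_{\xi^L}\omega=s^*(\d{}^\nabla\xi)$, and pulling back along $u$ (using $s\circ u=\id_M$) returns precisely the $1$-form $X\mapsto\nabla_X\xi$. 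Thus everything reduces to showing the first summand contributes nothing, i.e.\ that $(\d{}^{\nabla^s}\omega)_{1_x}(\xi^L_{1_x},\d u_x X)=0$ for all $x$ and all $X\in T_xM$.

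For this I would invoke the structure equation \eqref{eq:structure}, rewriting $\d{}^{\nabla^s}\omega=\Omega^\omega-\tfrac12[\omega,\omega]_{s^*\frak k}$, and evaluate at $(\xi^L_{1_x},\d u_x X)$. The curvature term vanishes because $\Omega^\omega=\D{}^\omega\omega=h^*\d{}^\nabla\omega$ is horizontal by Definition~\ref{defn:horizontal}, while $\xi^L_{1_x}\in K_{1_x}$ is vertical. The bracket term equals $[\omega(\xi^L_{1_x}),\omega(\d u_x X)]_{\frak k}$, which is $0$ since $\omega(\d u_x X)=(u^*\omega)_x(X)=0$. Hence $R_\xi\omega=u^*\bigl(\d{}^{\nabla^s}(s^*\xi)\bigr)$, that is $\C(\xi)(X)=\nabla_X\xi$, which is the assertion.

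\textbf{Expected obstacle.} There is no deep difficulty here; the argument is essentially bookkeeping once \eqref{eq:R_jL} and the structure equation are in place. The one point demanding care is the web of identifications $s^*\frak k\cong t^*\frak k\cong K$ via the Maurer--Cartan form, and keeping straight over which base each connection lives ($\nabla$ on $\frak k\to M$, its $s$-pullback on $s^*\frak k\to G$, versus the intrinsic restricted $K$-connection of Remark~\ref{rem:nabla_s}). Crucially, because $u^*\omega=0$ we are free to plug the \emph{global} $\nabla$ into \eqref{eq:R_jL}, so no circularity arises. (One could instead expand $\d{}^{\nabla^s}\omega(\xi^L,Y)$ directly for a suitable $s$-projectable extension $Y$ of $\d u_x X$, but that reuses Proposition~\ref{prop:conn}(ii) inside the term $[\xi^L,Y]$ and is messier than routing through the structure equation.)
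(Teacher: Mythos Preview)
Your argument is correct and takes a genuinely different route from the paper's. The paper proves the identity by a direct flow computation: it expands both $\C(\xi)_x(X)$ and $v[Y,\xi^L]_{1_x}$ (for $Y$ a horizontal $s$-lift of $X$) as $\lambda$-derivatives along the flow of $\xi^L$, uses that the flow of $\xi^L$ is right translation together with the fact that $v\colon TG\to K$ is a groupoid morphism (multiplicativity of $E$), and matches the two expressions by a chain-rule argument exploiting $u^*\omega=0$.

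Your approach instead leverages machinery already in place: the lemma behind \eqref{eq:R_jL} (valid here since $u^*\omega=0$, for \emph{any} connection on $\frak k$, so feeding in the global $\nabla$ is legitimate and non-circular), Cartan's formula, and the structure equation \eqref{eq:structure}. This replaces the flow manipulation by two clean vanishing observations---horizontality of $\Omega^\omega$ kills one term, $u^*\omega=0$ kills the bracket---leaving only $u^*s^*(\nabla\xi)=\nabla\xi$. The trade-off is that you rely on Proposition~\ref{prop:multiplicative_curvature}(iii), whose proof in the paper uses Proposition~\ref{prop:conn}(ii); this is not circular (you only need the global $\nabla$ to be well-defined, not its relation to $\C$), but it does make your argument less self-contained. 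The paper's direct computation, while more hands-on, avoids this dependency and makes the role of multiplicativity of $E$ more explicit.
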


\begin{proof}
  We need to show that for any $\xi\in\Gamma(\frak k)$ and $X\in\vf(M),$ there holds
  \[
  \C(\xi)_x(X)=v[Y,\xi^L]_{1_x}
  \]
  for any $x\in M$, where $Y$ is any horizontal $s$-lift of $X$. Letting $g_\lambda=\phi^{\xi^L}_{-\lambda}(1_x)$, the right side equals
  \begin{align*}
    v[Y,\xi^L]_{1_x}=\deriv\lambda 0 v\big({\d(\phi^{\xi^L}_\lambda)_{g_{\lambda}}}(Y_{g_{\lambda}})\big).
  \end{align*}
  The flow of a left-invariant vector field is given by the right translation along the bisection $\exp(\lambda\xi)$, i.e., $\phi^{\xi^L}_\lambda=R_{\exp(\lambda\xi)}$ and differentiating 
  $R_{\exp(\lambda\xi)}=m\circ(\id,\phi^{\xi^L}_\lambda\circ u\circ s),$ we get
  \[
  \d{}\big(R_{\exp(\lambda\xi)}\big)_{g_\lambda}(Y)=\d m \big(Y_{g_\lambda},\d{}(\phi^{\xi^L}_\lambda)_{1_x}\d u(X_x)\big)
  \]
  Now observe that since $E$ is multiplicative, $v\colon TG\ra K$ is a groupoid morphism. But since $Y$ is horizontal, we obtain
   \begin{align}
    \label{eq:aux_v_Y_xiL}
     v[Y,\xi^L]_{1_x}=\deriv\lambda 0 \d{}({L_{g_{\lambda}}}) v\big({\d(\phi^{\xi^L}_\lambda)_{1_x}\d u(X_x)}\big).
   \end{align}
  On the other hand, we have
  \begin{align*}
    \C(\xi)_x(X)=\deriv\lambda 0 \underbrace{\Ad_{g_{\lambda}^{-1}}}_{\frak k_x\ra\frak k_x}\big(\underbrace{\omega(\d(\phi^{\xi^L}_\lambda)\d u(X_x))}_{\text{in }\frak k_x\text{ for all }\lambda}\big),
  \end{align*}
  so we can apply the chain rule to differentiate this expression. But since $u^*\omega=0$, this yields precisely the expression \eqref{eq:aux_v_Y_xiL}, concluding the proof.
\end{proof}

To conclude this section, we note that the compatibility conditions \eqref{eq:c1}--\eqref{eq:c3} for an IM connection $(\C,v)$ can actually be rewritten  entirely in terms of $\nabla$ and $U$, as shown in \cite{mec}*{Propositions 5.11 and 5.13}. We record these rewritten conditions here for our convenience. The following holds for all $\alpha,\beta\in\Gamma(A)$ and $\xi,\eta\in\Gamma(\frak k)$: 
\begin{enumerate}[label={(\roman*)}]
  \item The connection $\nabla$ preserves the Lie bracket on $\frak k$:
  \begin{align}
  \nabla[\xi,\eta]_{\frak k}&=[\nabla\xi,\eta]_{\frak k}+[\xi,\nabla\eta]_{\frak k}.\tag{S.1}\label{eq:s1}
  \end{align}
In particular, $\frak k$ is a locally trivial bundle of Lie algebras.
	\item The curvature tensor $R^\nabla$ of $\nabla$ relates to $U$ as follows:
  \begin{align}
	\iota_{\rho(\alpha)}R^\nabla\cdot\xi=[U(h\alpha),\xi]_{\frak k}.\tag{S.2}\label{eq:s2}
  \end{align}
  \item The tensor $U$ acts on the bracket of sections as:
\begin{align}
U(h[\alpha,\beta])=\L^\nabla_{\rho(\alpha)}U(h\beta)-\L^\nabla_{\rho(\beta)}U(h\alpha)+\nabla U(h\alpha)(\rho\beta).
\tag{S.3}\label{eq:s3}
\end{align}
\end{enumerate}
Using the point (ii) above, we can now show that $A$-invariance of the connection $\nabla$ is equivalent to the bundle of ideals $\frak k$ being abelian.
\begin{proposition}
\label{prop:invariant_abelian}
Let $A$ be a Lie algebroid with an IM connection $(\C,v)\in \A(A;\frak k)$. The connection $\nabla=\C|_\frak k$ is $A$-invariant if and only if the bundle of ideals $\frak k$ is abelian.
\end{proposition}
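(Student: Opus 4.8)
The plan is to unpack what $A$-invariance of $\nabla$ means via Theorem \ref{thm:A_invariant} and match it against the rewritten compatibility conditions \eqref{eq:s1}--\eqref{eq:s3}, in particular \eqref{eq:s2}. Recall that $\nabla$ is $A$-invariant precisely when $\nabla^A_\alpha = \nabla_{\rho(\alpha)}$ and $\iota_{\rho(\alpha)}R^\nabla = 0$ for all $\alpha$, where here the representation $\nabla^A$ on $\frak k$ is the adjoint one, $\nabla^A_\alpha\xi = [\alpha,\xi]$. The first of these two conditions, $[\alpha,\xi] = \nabla_{\rho(\alpha)}\xi$, is already automatic for \emph{any} IM connection: this is exactly Proposition \ref{prop:conn2} combined with the decomposition $\alpha = v(\alpha) + h(\alpha)$ and the fact that $[\xi, v(\alpha)] \in \Gamma(\frak k)$ with $\nabla_{\rho(v(\alpha))} = 0$ on the orbit-trivial part — more carefully, one checks $\nabla_{\rho(\alpha)}\xi = [h(\alpha),\xi]$ and $[\alpha,\xi] = [v(\alpha),\xi] + [h(\alpha),\xi]$, so the two agree iff $[v(\alpha),\xi] = 0$, i.e. iff $\frak k$ is abelian. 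So already the first invariance condition alone is equivalent to $\frak k$ being abelian.

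Thus the cleanest route: First I would show $(\Leftarrow)$. If $\frak k$ is abelian, then $[v(\alpha),\xi]=0$ for all $\alpha\in\Gamma(A)$, $\xi\in\Gamma(\frak k)$, so by the displayed computation $\nabla^A_\alpha\xi = [\alpha,\xi] = [h(\alpha),\xi] = \nabla_{\rho(\alpha)}\xi$ using Proposition \ref{prop:conn2}. For the curvature condition, \eqref{eq:s2} gives $\iota_{\rho(\alpha)}R^\nabla\cdot\xi = [U(h\alpha),\xi]_{\frak k} = 0$ since $\frak k$ is abelian. Hence $\nabla$ is $A$-invariant. Then $(\Rightarrow)$: if $\nabla$ is $A$-invariant, the condition $\nabla^A_\alpha = \nabla_{\rho(\alpha)}$ gives $[\alpha,\xi] = \nabla_{\rho(\alpha)}\xi = [h(\alpha),\xi]$ by Proposition \ref{prop:conn2}, so subtracting yields $[v(\alpha),\xi] = 0$ for all $\alpha,\xi$. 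Since $v\colon A\to\frak k$ is surjective (it restricts to the identity on $\frak k$ by \eqref{eq:symbol_id}), every section of $\frak k$ is of the form $v(\alpha)$, hence $[\zeta,\xi]_{\frak k} = 0$ for all $\zeta,\xi\in\Gamma(\frak k)$, i.e. $\frak k$ is abelian.

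I expect no serious obstacle here; the statement is essentially a bookkeeping consequence of Proposition \ref{prop:conn2} plus surjectivity of the symbol. The one point deserving a line of care is the passage from ``$[v(\alpha),\xi]=0$ for all $\alpha\in\Gamma(A)$'' to ``$\frak k$ is abelian'': this uses that $v$ is a fibrewise-surjective bundle map (a splitting of \eqref{eq:splitting}), so that $\{v(\alpha) : \alpha\in\Gamma(A)\}$ spans $\Gamma(\frak k)$ as a $C^\infty(M)$-module — indeed one may simply take $\alpha$ to be a section of $\frak k$ itself and use $v|_{\frak k}=\id$. A secondary remark worth including: the curvature half of $A$-invariance ($\iota_{\rho(\alpha)}R^\nabla=0$) is then \emph{automatic} once $\frak k$ is abelian via \eqref{eq:s2}, so the two halves of the $A$-invariance condition are not independent in this setting — only the symbol/bracket condition carries content. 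This also dovetails with the earlier ill-behaved example of $\frak{gl}(V)$ and with Proposition \ref{prop:g_inv_a_inv}, explaining the footnote remark in \sec\ref{sec:hor_ext_cov_der} that $G$-invariance of the induced connection forces $\frak k$ abelian.
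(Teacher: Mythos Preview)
Your proof is correct and takes essentially the same approach as the paper: both compute $\nabla^A_\alpha-\nabla_{\rho(\alpha)}=[v(\alpha),\cdot]$ via Proposition~\ref{prop:conn2} and the decomposition $\alpha=v(\alpha)+h(\alpha)$, invoke surjectivity of $v$ onto $\frak k$ to characterize when this vanishes, and use \eqref{eq:s2} to dispose of the curvature condition in the abelian case. Your exploratory first paragraph contains the false start ``is already automatic for any IM connection'' which you immediately retract --- the cleaned-up second paragraph is exactly the paper's argument (the paper records the key identity as \eqref{eq:diff_a_conn}).
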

\begin{proof}
Equation \eqref{eq:conn_orb2} implies that for any $\alpha\in\Gamma(A)$, we have
\begin{align}
\label{eq:diff_a_conn}
  \nabla^A_\alpha-\nabla_{\rho(\alpha)}=[v(\alpha),\cdot],
\end{align}
so it is clear that $\nabla^A_\alpha=\nabla_{\rho(\alpha)}$ holds if and only if $\frak k$ is abelian. In this case, $\iota_{\rho(\alpha)}R^\nabla=0$ is implied by the identity \eqref{eq:s2}.
\end{proof}

\subsection{Horizontal projection of Weil cochains}
Following the same approach as in the Lie groupoid picture, we begin by providing the infinitesimal analogue of horizontal forms on the nerve (Definition \ref{defn:horizontal}). 
\begin{definition}
\label{defn:horizontal_inf}
A Weil cochain $c=(c_0,\dots,c_p)\in W^{p,q}(A;\frak k)$ is said to be \textit{horizontal} if 
\begin{align*}
c_i(\cdot\|\xi,\cdot)=0, \quad\text{for all }i\geq 1\text{ and } \xi\in\Gamma(\frak k).
\end{align*}
Note this is a condition on correction terms only. It is clear from equation \eqref{eq:delta_inf} that $\delta$ maps horizontal cochains to horizontal cochains, so at each fixed $q\geq 0$ we obtain the \textit{horizontal subcomplex} 
\[
W^{\bullet,q}(A;\frak k)^\Hor\leq W^{\bullet,q}(A;\frak k).
\]
Equation \eqref{eq:J_alpha} ensures the van Est map restricts to a map between horizontal subcomplexes.
\end{definition}
\begin{example}
\label{ex:horizontal_p=1}
At level $p=1$, horizontal cochains $c=(c_0,c_1)$ are simply the ones whose symbol restricts on $\frak k$ to zero: \[c_1|_{\frak k}=0.\] It is clear from definition \eqref{eq:delta0_im} of $\delta^0$ that any cohomologically trivial form is horizontal (and multiplicative), that is, $\im\delta^0\subset \Omega^\bullet_{im}(A;\frak k)^\Hor$. On a transitive algebroid, every IM form of degree $q\geq 2$ is horizontal by condition \eqref{eq:c3}, that is, $\Omega^q_{im}(A;\frak k)=\Omega^q_{im}(A;\frak k)^\Hor$.
\end{example}
Just as with multiplicative connections on Lie groupoids, we expect an IM connection for $\frak k$ to induce a horizontal projection of Weil cochains,
\[
h^*\colon W^{p,q}(A;\frak k)\ra W^{p,q}(A;\frak k)^\Hor.
\] 
However, in contrast with the case of groupoids, there is now no straightforward and intuitive way of defining $h^*$. The issue, essentially, lies in the model $W^{p,q}(A;\frak k)$ that we are using to describe forms in the infinitesimal setting. We overcome this obstacle in \sec\ref{sec:im_connections_as_vb_algebroids}, by employing the alternative viewpoint of exterior cochains from \sec\ref{sec:alternative_model_weil}. The idea is the following.
\begin{itemize}
  \item Firstly, note that an IM connection $(\C,v)$ can equivalently be described in terms of an Ehresmann connection $E\subset TA$ for the projection $A\ra A/\frak k$. Infinitesimal multiplicativity of $(\C,v)$ is equivalent to $E$ being a $\vb$-subalgebroid of $TA$. 
  \item Secondly, observe that on exterior cochains, the horizontal projection has a simple definition. Namely, as with the groupoid case, the map $h^*$ on this alternative model is given by the precomposition with the map $h\colon TA\ra E$ (Definition \ref{def:hor_proj_ext}). 

  \item At last, use the isomorphism between the two models to derive the wanted formula for $h^*$ on Weil cochains and infer its elementary properties (Theorem \ref{thm:derivation_hor_proj}). Importantly, establishing the properties of $h^*$ is significantly easier if done in the realm of exterior cochains---it is more conceptual and less combinatorial/computational. 
\end{itemize}
Ultimately, this procedure yields the map $h^*$ for Weil cochains, which we will now describe. Preliminarily, note that since $V=\frak k$, we may introduce the pairing
\begin{align}
  \label{eq:pairing}
\begin{split}
  &\Omega^k(M;S^j(A^*)\otimes \frak k)\times \Omega^\ell(M;\frak k) \xlongrightarrow{\wedgedot}\Omega^{k+\ell}(M;S^{j-1}(A^*)\otimes \frak k),\\
  &(\gamma\wedgedot \vartheta)(\beta_1,\dots,\beta_{j-1})(X_1,\dots,X_{k+\ell})\\
  &\phantom{(\gamma\wedgedot}=\frac 1{k!\ell!}\smashoperator{\sum_{\quad\sigma\in S_{k+\ell}}}(\sgn\sigma)\gamma\big(\vartheta(X_{\sigma(1)},\dots,X_{\sigma(\ell)}),\beta_1,\dots,\beta_{j-1}\big)(X_{\sigma(\ell+1)},\dots,X_{\sigma(\ell+k)}).
\end{split}
\end{align}
We will actually only need the case $\ell=1$, when $\vartheta$ is a 1-form. The form we obtain by pairing $\gamma$ consecutively with 1-forms $\vartheta_1,\dots,\vartheta_\ell\in\Omega^1(M;\frak k)$, where $\ell\leq j$, will be denoted
\begin{align}
\label{eq:wedgedot_one_by_one}
\gamma\wedgedot (\vartheta_1,\dots,\vartheta_\ell)\coloneqq \gamma\wedgedot \vartheta_\ell\wedgedot\vartheta_{\ell-1}\wedgedot\cdots\wedgedot\vartheta_1 \in \Omega^{k+\ell}(M;S^{j-\ell}(A^*)\otimes\frak k).
\end{align}
It will turn out that we only need the case when $j=\ell$, i.e., when we use up all the symmetric arguments by pairing with 1-forms. A short computation shows that \eqref{eq:wedgedot_one_by_one}, evaluated on vector fields $X_1,\dots, X_{k+\ell}\in\vf(M)$, reads
\begin{align}
  \label{eq:wedgedot_multiple}
\begin{split}
  \big(\gamma\wedgedot (&\vartheta_1,\dots,\vartheta_\ell)\big)(X_1,\dots, X_{k+\ell})\\&=\frac 1{k!}\smashoperator{\sum_{\quad\sigma\in S_{k+\ell}}}(\sgn\sigma)\gamma\big(\vartheta_1(X_{\sigma(1)}),\dots,\vartheta_\ell(X_{\sigma(\ell)})\big)(X_{\sigma(\ell+1)},\dots,X_{\sigma(\ell+k)}).
\end{split}
\end{align}

\begin{definition}
\label{def:hor_proj}
The \textit{horizontal projection} of Weil cochains, induced by an IM connection $(\C,v)\in\A(A;\frak k)$ on a Lie algebroid $A$, is defined as the map
\[
h^*\colon W^{p,q}(A;\frak k)\ra W^{p,q}(A;\frak k)^\Hor,
\]
whose leading term for a given $c\in W^{p,q}(A;\frak k)$ is given by
\begin{align*}
\begin{split}
(h^*c)_0(\alpha_1,\dots,\alpha_p)=\sum_{j=0}^p\,(-1)^j\smashoperator{\sum_{\hspace{2em}\sigma\in S_{(j,p-j)}}}(\sgn\sigma)c_j(\alpha_{\sigma(j+1)},\dots,\alpha_{\sigma(p)})\wedgedot (\C\alpha_{\sigma(1)}, \dots, \C\alpha_{\sigma(j)}),
\end{split}
\end{align*}
where $S_{(j,p-j)}\subset S_p$ denotes the set of $(j,p-j)$-shuffles. 
The correction terms are given as
\begin{align*}
\begin{split}
(h^*c)_k&(\alpha_1,\dots \alpha_{p-k}\|\beta_1,\dots,\beta_k)\\
&=\sum_{j=k}^p (-1)^{j-k}\smashoperator{\sum_{\hspace{3em}\sigma\in S_{(j-k,p-j)}}}(\sgn\sigma)c_j(\alpha_{\sigma(j-k+1)},\dots,\alpha_{\sigma(p-k)}\| h\ul\beta,\cdot)\wedgedot (\C\alpha_{\sigma(1)}, \dots, \C\alpha_{\sigma(j-k)}).
\end{split}
\end{align*}
The last correction term here simply reads $(h^*c)_p(\ul\beta)=c_p(h\ul\beta).$
\end{definition}
That the correction terms of $h^*$ indeed satisfy the Leibniz identity is a direct consequence of Theorem \ref{thm:derivation_hor_proj}. One can also show well-definedness of $h^*$ directly, using Lemma \ref{lem:wedgedot} (ii) and (v).
\begin{example}
\label{ex:low_levels_h}
We now write down the formula for the horizontal projection for low levels $p$. The most important case is $p=1$, where the above definition reads
\begin{align}
\begin{split}
\label{eq:h_p=1}
(h^*c)_0(\alpha)=c_0(\alpha)-c_1\wedgedot \C\alpha,\quad(h^*c)_1(\beta)=c_1(h\beta).
\end{split}
\end{align}
In other words, $(\alpha\mapsto c_1\wedgedot \C\alpha,\beta\mapsto c_1(v\beta))$ is the vertical part of $c$. We remark that by definition of $\wedgedot$, the second term in the leading coefficient reads
\[
(c_1\wedgedot\C\alpha)(X_1,\dots,X_q)=\textstyle\sum_i(-1)^{i+1}c_1(\C\alpha(X_i))(X_1,\dots,\widehat{X_i},\dots,X_q).
\]
At level $p=2$, we obtain
\begin{align*}
(h^*c)_0(\alpha_1,\alpha_2)&=c_0(\alpha_1,\alpha_2)-(c_1(\alpha_2)\wedgedot \C\alpha_1-c_1(\alpha_1)\wedgedot \C\alpha_2)+c_2\wedgedot (\C\alpha_1,\C\alpha_2),\hspace{0.3em}\\
(h^*c)_1(\alpha\|\beta)&=c_1(\alpha\| h\beta)-c_2(h\beta,\cdot)\wedgedot\C\alpha,\\
(h^*c)_2(\beta_1,\beta_2)&=c_2(h\beta_1,h\beta_2).
\end{align*}
We observe that the $k$-th correction term $(h^*c)_k$ of the horizontal projection of a Weil cochain $c$ contains all the higher correction terms $c_{\ell}$, for $\ell\geq k$.
\end{example}
\begin{remark}
\label{rem:wedges}
The pairing \eqref{eq:pairing} should be regarded as follows. Restricting any one of the symmetric arguments of $\gamma\in \Omega^k(M;S^j(A^*)\otimes \frak k)$ to $\frak k$, we obtain a tensor from $\Omega^k(M;S^{j-1}(A^*)\otimes \End\frak k)$, which will again for simplicity just be denoted $\gamma$. The pairing $\wedgedot$ is then related to the following natural pairing: for any vector bundle $V\ra M$, we can define
\begin{align*}
  &\wedge\colon\Omega^k(M;\End V)\times\Omega^\ell(M;V)\rightarrow \Omega^{k+\ell}(M;V),
\\
  &(\gamma\wedge\vartheta)(X_1,\dots,X_{k+\ell})=\frac 1{k!\ell!}\sum_{\mathclap{\hspace{1em}\sigma\in S_{k+\ell}}}\sgn (\sigma)\,\gamma(X_{\sigma(1)},\dots,X_{\sigma(k)})\cdot\vartheta(X_{\sigma(k+1)},\dots,X_{\sigma(k+\ell)}),
\end{align*}
and by an easy combinatorial exercise, the two pairings are related by
\begin{align}
\label{eq:wedge_wedgedot}
\gamma\wedgedot\vartheta=(-1)^{k\ell}\gamma\wedge \vartheta,
\end{align}
where we have omitted the arguments $\beta_1,\dots,\beta_{j-1}$. The usual wedge $\wedge$ is, in a way, more natural to work with, but it would introduce some additional complications regarding the signs if we used it in place of $\wedgedot$ in the definition of $h^*$. In the following lemma, we use this relation to infer some important properties of $\wedgedot$.
\end{remark}

\begin{samepage}
  \begin{lemma}
  \label{lem:wedgedot}
  The pairing \eqref{eq:pairing} satisfies the following properties, for any $\gamma\in \Omega^k(M;S^j(A^*)\otimes \frak k)$.
  \begin{enumerate}[label={(\roman*)}]
  \item $\gamma\wedgedot (\vartheta_1,\dots,\vartheta_j)$ is alternating and $C^\infty(M)$-multilinear in $\vartheta_1,\dots,\vartheta_j\in\Omega^1(M;\frak k)$.
  \item For a simple tensor $\vartheta\otimes\xi$, where $\vartheta\in\Omega^1(M)$ and $\xi\in\Gamma(\frak k)$, there holds \[\gamma\wedgedot (\vartheta\otimes\xi)=\vartheta\wedge \gamma(\xi,\cdot).\]
  More generally, if 1-forms $\vartheta_2,\dots,\vartheta_\ell\in\Omega^1(M;\frak k)$ are given, then
  \[
  \gamma\wedgedot(\vartheta\otimes\xi, \vartheta_2,\dots,\vartheta_\ell)=\vartheta\wedge\big(\gamma(\xi,\cdot)\wedgedot (\vartheta_2,\dots,\vartheta_\ell)\big).
  \]
  \item For any $\vartheta\in\Omega^\ell(M;\frak k)$ and $X\in\vf(M)$, there holds
  \[\iota_X(\gamma\wedgedot\vartheta)=\gamma\wedgedot(\iota_X\vartheta)+(-1)^\ell (\iota_X\gamma)\wedgedot \vartheta.\]
  \item For any $\vartheta\in\Omega^\ell(M;\frak k)$ and $\alpha\in\Gamma(A)$, there holds
  \[
  \L^A_\alpha(\gamma\wedgedot\vartheta)
  =(\L^A_\alpha\gamma)\wedgedot\vartheta+\gamma\wedgedot(\L^A_\alpha\vartheta). 
  \]
  \item If $\gamma=c_j$ is the $j$-th correction term of $c=(c_0,\dots,c_p)\in W^{p,q}(A;\frak k)$, then there holds
  \begin{align*}
  c_j(f\alpha_1,\alpha_2,&\dots,\alpha_{p-j})\wedgedot (\vartheta_1,\dots,\vartheta_i)\\
  &=f c_j(\ul\alpha)\wedgedot\ul\vartheta+(-1)^i\d f\wedge c_{j+1}(\alpha_2,\dots,\alpha_{p-j}\|\alpha_1,\cdot)\wedgedot \ul\vartheta,
  \end{align*}
  for any sections $\alpha_1,\dots,\alpha_{p-j}\in\Gamma(A)$, forms $\vartheta_1,\dots,\vartheta_i\in\Omega^1(M;\frak k)$ where $i\leq j$, and any function $f\in C^\infty(M)$. 
  \end{enumerate}
  \end{lemma}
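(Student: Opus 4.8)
The statement collects five elementary but technically delicate properties of the pairing $\wedgedot$ defined in \eqref{eq:pairing}. The strategy throughout is to reduce everything to the ordinary pairing $\wedge$ between $\End V$-valued and $V$-valued forms via the translation formula \eqref{eq:wedge_wedgedot}, where it is available, and otherwise to argue directly from the defining formula \eqref{eq:wedgedot_multiple}. The five items are logically almost independent, so I would prove them in the order (i), (ii), (iii), (iv), (v), using earlier items as lemmas where convenient.

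For (i), alternation and $C^\infty(M)$-multilinearity in the $\vartheta_i$ is immediate from \eqref{eq:wedgedot_multiple}: the iterated pairing $\gamma\wedgedot(\vartheta_1,\dots,\vartheta_j)$ is, up to the combinatorial $1/k!$ factor, an antisymmetrised sum over $S_{k+j}$ in which the slots filled by $\vartheta_1(X_{\sigma(1)}),\dots,\vartheta_j(X_{\sigma(j)})$ sit inside the $j$ symmetric arguments of $\gamma$; since $\gamma$ is symmetric in those slots, swapping $\vartheta_a\leftrightarrow\vartheta_b$ only relabels the summation indices and produces the sign from the transposition of $X$-slots, giving alternation. $C^\infty(M)$-linearity is manifest as no derivatives of the $\vartheta_i$ enter. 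For (ii), substituting a simple tensor $\vartheta\otimes\xi$ into the last symmetric slot of $\gamma$ produces $\vartheta(X_{\sigma(j)})\gamma(\xi,\beta_1,\dots,\beta_{j-2},\cdot)(\dots)$, and collecting the $\vartheta$ factor out front, the remaining antisymmetrised sum is exactly the definition of $\vartheta\wedge\bigl(\gamma(\xi,\cdot)\bigr)$; the more general iterated identity follows by induction on $\ell$ using (i) to move the $\vartheta\otimes\xi$ into the first pairing slot, which is the one evaluated last in the convention \eqref{eq:wedgedot_one_by_one}. This is a bookkeeping computation with the shuffle signs; the only care needed is to track that $\gamma(\xi,\cdot)$ is now an $\End\frak k$-valued (rather than $\frak k$-valued) form of symmetric degree $j-1$, so that the inductive hypothesis applies verbatim.

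Items (iii) and (iv) are Leibniz-type identities and I would prove them in the cleanest generality, namely for the genuine wedge $\wedge$ first, then transport via \eqref{eq:wedge_wedgedot}. For (iii): $\iota_X$ is a graded derivation of degree $-1$ of the wedge product $\Omega^\bullet(M;\End V)\times\Omega^\bullet(M;V)\to\Omega^\bullet(M;V)$, giving $\iota_X(\gamma\wedge\vartheta)=(\iota_X\gamma)\wedge\vartheta+(-1)^k\gamma\wedge(\iota_X\vartheta)$; rewriting both sides with \eqref{eq:wedge_wedgedot} (taking into account that $\iota_X$ lowers the form degree of $\gamma$ from $k$ to $k-1$ and of $\gamma\wedgedot\vartheta$ from $k+\ell$ to $k+\ell-1$) yields the stated sign $(-1)^\ell$. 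For (iv): $\L^A_\alpha$ is, by its very definition via the chain rule, a derivation of all the relevant tensorial pairings (the representation part acts on the $\frak k$-factor, the anchor part on the form part, and the $S^j(A^*)$-factor is dealt with by the same Leibniz bookkeeping); hence $\L^A_\alpha(\gamma\wedge\vartheta)=(\L^A_\alpha\gamma)\wedge\vartheta+\gamma\wedge(\L^A_\alpha\vartheta)$, and since $\L^A_\alpha$ preserves form degree the translation \eqref{eq:wedge_wedgedot} introduces no extra sign, giving the identity as stated. The one point requiring attention is that $\L^A_\alpha$ as defined acts on forms valued in $S^n(A^*)\otimes V$; I would note that the restriction-to-$\frak k$ operation $\gamma\mapsto\gamma(\xi,\cdot)$ intertwines $\L^A_\alpha$ on $S^j(A^*)\otimes\frak k$ with $\L^A_\alpha$ on $S^{j-1}(A^*)\otimes\End\frak k$ up to the term $\gamma(\nabla^A_\alpha\xi,\cdot)$, so that working with $\wedge$ one must keep the $S^{j-1}(A^*)$-valued nature of $\gamma(\xi,\cdot)$ in mind — but this is exactly parallel to (ii).

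Finally, (v) is the assertion that $\wedgedot$ interacts correctly with the Leibniz identity defining $W^{p,q}(A;\frak k)$. Here I would argue by induction on $i$, the number of $1$-forms paired. For $i=0$ the claim is vacuous; for $i=1$, write $\vartheta_1$ locally as a sum of simple tensors and use (ii) together with the Leibniz identity for $c$ in its antisymmetric arguments (the defining property of Weil cochains): $c_j(f\alpha_1,\alpha_2,\dots)=f c_j(\alpha_1,\dots)+\d f\wedge c_{j+1}(\alpha_2,\dots\|\alpha_1,\cdot)$, and then pair with $\vartheta_1$, using (iii) to commute $\d f\wedge$ past $\wedgedot\vartheta_1$, which is where the sign $(-1)^i$ appears. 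For the inductive step one pairs with the remaining $\vartheta_2,\dots,\vartheta_i$ one at a time, repeatedly invoking (i) to reorder and (iii) to propagate the $\d f\wedge$ factor, each application of (iii) against a $1$-form contributing a further sign $-1$ so that after $i$ pairings the total is $(-1)^i$. I expect item (v) to be the main obstacle, precisely because it is the only one where the correction-term structure of $c$ interacts with the pairing and one must carefully verify that the $\d f\wedge c_{j+1}(\alpha_2,\dots\|\alpha_1,\cdot)$ term survives intact through all $i$ pairings with the correct sign; once the sign bookkeeping in (iii) is pinned down, however, the induction is routine. All five proofs are short computations, so I would present them compactly, stating the translation to $\wedge$ once at the outset and then treating each item in a couple of lines.
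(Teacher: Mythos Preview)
Your proposal is correct and follows essentially the same route as the paper: items (i) and (ii) are read off directly from the defining formulae, items (iii) and (iv) are deduced from the standard derivation properties of $\iota_X$ and $\L^A_\alpha$ with respect to the ordinary pairing $\wedge$ and then transported via \eqref{eq:wedge_wedgedot}, and item (v) comes from (ii) together with the Leibniz identity for Weil cochains. The only minor difference is that the paper dispatches (v) in one line using (ii) plus the Leibniz rule (the sign $(-1)^i$ arising from commuting $\d f$ past the $i$ scalar $1$-forms $\vartheta^m$ via ordinary graded commutativity), whereas you set up an induction and invoke (iii); both work, and your version simply spells out what the paper leaves implicit.
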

\end{samepage}
\begin{proof}
Properties (i) and (ii) are clear from the definition. Property (iii) is easily inferred from relation \eqref{eq:wedge_wedgedot} by noting that the identity
\[
\iota_X(T\wedge\vartheta)=(\iota_X T)\wedge\vartheta+(-1)^k T\wedge(\iota_X\vartheta)
\]
holds for any $T\in\Omega^k(M;\End V)$ and $\vartheta\in\Omega^\ell(M;V)$, for any given vector bundle $V\ra M$. The statement (iv) is a consequence 
of the following general fact:  if an $A$-connection $\nabla^A$ on $V$ is given, then $\L^A$ is distributive over $\wedge$, that is, 
\begin{align}
\label{eq:L_A_wedge}
  \L^A_\alpha(T\wedge\vartheta)=(\L^A_\alpha T)\wedge\vartheta + T\wedge (\L^A_\alpha \vartheta),
\end{align}
where $\L^A_\alpha T$ is defined, as usual, by the chain rule $(\L^A_\alpha T)\cdot\xi=\L^A_\alpha(T\cdot \xi)-T\cdot \nabla_\alpha^A\xi$, for any $\xi\in\Gamma(V)$. It then follows from relation \eqref{eq:wedge_wedgedot} that $\L^A$ is distributive over $\wedgedot$ as well. Finally, item (v) is a straightforward consequence of item (ii), combined with the Leibniz rule for $c$.
\end{proof}
We now arrive to the crucial property of the horizontal projection $h^*$.
\begin{proposition}
\label{prop:h_cochain}
Let $A$ be a Lie algebroid with an IM connection $(\C,v)\in\A(A;\frak k)$ for a bundle of ideals $\frak k$. The horizontal projection of Weil cochains is a cochain map, that is,
\[
h^*\delta=\delta h^*.
\]
\end{proposition}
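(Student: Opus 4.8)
The plan is to reduce the statement entirely to the realm of exterior cochains, where the horizontal projection has the transparent description as precomposition with the vector bundle map $h\colon TA\ra E$ (Definition \ref{def:hor_proj_ext}), and then transport the conclusion back through the evaluation isomorphism $\ev$ of \eqref{eq:evaluation_map}. Concretely, I would first invoke Theorem \ref{thm:derivation_hor_proj}, which identifies the map $h^*$ of Definition \ref{def:hor_proj} as the one obtained by conjugating the ``precompose with $h$'' operator on $\Gamma_\ext(\MM_q,\Lambda^\bullet\AA_q^*)$ with $\ev$. Since an IM connection $(\C,v)$ corresponds to a $\vb$-subalgebroid $E\subset TA$ complementing the smearing of $\frak k$ (the infinitesimal analogue of the discussion in \sec\ref{sec:mec_inf}, spelled out in \sec\ref{sec:im_connections_as_vb_algebroids}), the horizontal projection on exterior cochains is literally $h^*_\ext\colon \omega\mapsto \omega(h(\cdot),\dots,h(\cdot))$.

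The key step is then to check that $h^*_\ext$ commutes with the simplicial differential $\delta$ of exterior cochains, which by \eqref{eq:delta_vb} is just the \emph{standard} Chevalley--Eilenberg-type differential of the big $\vb$-algebroid $\AA_q\Ra\MM_q$. The mechanism is exactly as in the groupoid case (Definition \ref{defn:horizontal}, the remark after the horizontal projection of forms on the nerve): $h^*_\ext\delta = \delta h^*_\ext$ holds provided $E\subset\AA_q$ is closed under the bracket, i.e.\ a subalgebroid. This is precisely the content of infinitesimal multiplicativity of $(\C,v)$ — the compatibility conditions \eqref{eq:c1}--\eqref{eq:c3}, equivalently \eqref{eq:s1}--\eqref{eq:s3}, are what guarantee that the horizontal subbundle $H=\ker v$ lifts to a $\vb$-subalgebroid of $TA$ and hence, after taking the appropriate Whitney sum with $\oplus^q TM$ and the $V^*$-action factor, that the corresponding $E\subset\AA_q$ is a subalgebroid. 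One must also verify that $h^*_\ext$ preserves the ``ext'' conditions (skew-symmetry and multilinearity over $\AA_q\ra A$ from Definition \ref{def:exterior_cochains}) and lands in the horizontal subcomplex, but this is immediate: $h$ is a morphism of the relevant vector bundle structures, and evaluating on vertical inputs (those in $\frak k$) gives zero after applying $h$. Since $\ev$ and $\frak F$-type evaluation maps are isomorphisms of complexes and intertwine $\delta$ with $\delta$, the identity $h^*\delta=\delta h^*$ on $W^{\bullet,q}(A;\frak k)$ follows by conjugation.

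I expect the main obstacle to be purely bookkeeping rather than conceptual: one has to be careful that the $\vb$-subalgebroid $E\subset\AA_q$ associated to $(\C,v)$ is the \emph{right} one — namely that the linear/core section decomposition of $\Gamma(\MM_q,\AA_q)$ in \eqref{eq:linear_and_core_sections}, under restriction to $E$, matches the splitting $A=\frak k\oplus H$, and that the bracket formulas \eqref{eq:bracket_Z_T} restrict correctly. The commutation $h^*_\ext\delta=\delta h^*_\ext$ on exterior cochains then reduces to the observation that for sections $X^i$ of $E$ one has $h(X^i)=X^i$, so both the anchor terms $\L_{\rho_{\AA_q}(X^i)}$ and the bracket terms $\omega([X^i,X^j]_{\AA_q},\dots)$ in \eqref{eq:delta_vb} are unaffected by inserting $h$ — here one uses $[E,E]_{\AA_q}\subset E$ crucially — while for a general tuple the extra non-horizontal pieces are annihilated by $h$ in every slot. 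An alternative, more self-contained route avoiding the exterior-cochain machinery would be to verify the Leibniz identity and the cochain property directly from the explicit formulas in Definition \ref{def:hor_proj}, using Lemma \ref{lem:wedgedot}(ii)--(v) to push $\delta$ past the $\wedgedot$-pairings and the conditions \eqref{eq:s1}--\eqref{eq:s3} to handle the $\C$-terms; but this is considerably more combinatorial, essentially re-deriving on the nose what the $\vb$-algebroid formalism gives for free, so I would present the exterior-cochain argument as the main proof and relegate the direct check to a remark.
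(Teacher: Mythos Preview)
Your proposal is correct and matches the paper's approach exactly: the paper deduces the proposition from Proposition \ref{prop:h_cochain_ext} (which proves $h^*_\ext\delta=\delta h^*_\ext$ on exterior cochains) together with Theorem \ref{thm:derivation_hor_proj} (the intertwining $\ev\circ h^*=h^*\circ\ev$), and then also supplies the direct computational check at level $p=1$ using Lemma \ref{lem:wedgedot}. One small sharpening: the cleanest way to phrase the key step is that $h\colon\AA_q\to\AA_q$ is itself a Lie algebroid morphism over $\id_{\MM_q}$ (so $\rho_{\AA_q}(hX)=\rho_{\AA_q}(X)$ and $h[X,Y]=[hX,hY]$), which immediately gives the commutation with the standard differential \eqref{eq:delta_vb}---your formulation via $[E,E]_{\AA_q}\subset E$ is part of this but you also implicitly need that the vertical part $K$ is an ideal, which holds since it is the kernel of the algebroid morphism $\d\phi$.
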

This is a direct consequence of Proposition \ref{prop:h_cochain_ext} and Theorem \ref{thm:derivation_hor_proj} since the two models are isomorphic, however, the last lemma enables us to prove it directly. The computation for the general case is tedious, so we only provide a direct proof for the level $p=1$, revealing the key ingredient is infinitesimal multiplicativity of $(\C,v)$.
\begin{proof}
Let $c=(c_0,c_1)\in W^{1,q}(A;\frak k)$ be a Weil 1-cochain; we begin with the leading term. Using the definition \eqref{eq:delta_p=1} of $\delta$ together with the definition of $h^*$, we obtain
\begin{align*}
\begin{split}
(h^*\delta c)_0(\alpha_1,\alpha_2)&=(\delta c)_0(\alpha_1,\alpha_2)-\big((\delta c)_1(\alpha_2\|\cdot)\wedgedot \C\alpha_1- (\delta c)_1(\alpha_1\|\cdot)\wedgedot \C\alpha_2\big)+\cancel{(\delta c)_2\wedgedot (\C\alpha_1,\C\alpha_2)}\\
&=\L^A_{\alpha_1}c_0(\alpha_2)-\L^A_{\alpha_2}c_0(\alpha_1)-c_0[\alpha_1,\alpha_2]-(-\L^A_{\alpha_2}c_1\wedgedot \C\alpha_1+\L^A_{\alpha_1}c_1\wedgedot \C\alpha_2),
\end{split}
\end{align*}
where we have used that $\frak k\subset\ker\rho$. We now apply Lemma \ref{lem:wedgedot} (iv) on the last two terms, and combine with the first two terms to obtain
\begin{align*}
(h^*\delta c)_0&(\alpha_1,\alpha_2)=\L^A_{\alpha_1}(h^*c)_0(\alpha_2)-\L^A_{\alpha_2}(h^*c)_0(\alpha_1)-\big(c_0[\alpha_1,\alpha_2]-c_1\wedgedot(\L^A_{\alpha_1}\C\alpha_2-\L^A_{\alpha_2}\C\alpha_1)\big).
\end{align*}
Using the condition \eqref{eq:c1} for the IM connection $(\C,v)$ shows this is equal to $(\delta h^* c)_0(\alpha_1,\alpha_2)$. We next inspect the first correction term:
\begin{align*}
(h^*\delta c)_1(\alpha\|\beta)&=(\delta c)_1(\alpha\|h\beta)-(\delta c)_2(h\beta,\cdot)\wedgedot \C\alpha\\
&=-\L^A_\alpha (c_1 (h\beta))+c_1[\alpha,h\beta]+\iota_{\rho(\beta)} c_0(\alpha)+(\iota_{\rho(\beta)}c_1)\wedgedot \C\alpha,
\end{align*}
where we have observed that $\rho (h\beta)=\rho(\beta)$. On the other hand,
\begin{align*}
(\delta h^* c)_1(\alpha\|\beta)&=-\L^A_\alpha((h^*c)_1(\beta))+(h^*c)_1[\alpha,\beta]+\iota_{\rho(\beta)}(h^* c)_0(\alpha)\\
&=-\L^A_\alpha (c_1 (h\beta))+c_1 (h[\alpha,\beta])+\iota_{\rho(\beta)}c_0(\alpha)-\iota_{\rho(\beta)}(c_1\wedgedot \C\alpha).
\end{align*}
Using the condition \eqref{eq:c2} on the IM connection, we get 
\[
h[\alpha,\beta]=[\alpha,h\beta]+\iota_{\rho(\beta)}\C\alpha,
\]
and now use Lemma \ref{lem:wedgedot} (iii) to conclude $(h^* \delta c)_1=(\delta h^* c)_1$. The equality for the second correction term is simple and left to the reader.
\end{proof}

\subsection{Horizontal exterior covariant derivative}

\begin{definition}
    Let $A$ be a Lie algebroid with an IM connection $(\C,v)\in\A(A;\frak k)$. The \textit{horizontal exterior covariant derivative} of Weil cochains is defined as the map
    \begin{align*}
    \D{}^{(\C,v)}=h^*\d{}^\nabla\colon W^{p,q}(A;\frak k)\ra W^{p,q+1}(A;\frak k)^\Hor,
    \end{align*}
    where $\nabla=\C|_\frak k$ is the induced connection on $\frak k$.
    \end{definition}
    \begin{example}
    The most important case is $p=1$, where equations \eqref{eq:ext_cov_der_p=1} and \eqref{eq:h_p=1} yield
    \begin{align}
    \label{eq:D_inf}
    \begin{split}
      (\D{}^{(\C,v)}c)_0(\alpha)&=\d{}^\nabla c_0(\alpha)-(\d{}^\nabla c)_1\wedgedot \C\alpha,\\
      (\D{}^{(\C,v)}c)_1(\beta)&=(\d{}^\nabla c)_1(h\beta),
    \end{split}
    \end{align}
    where we recall that $(\d{}^\nabla c)_1(\beta)=c_0(\beta)-\d{}^\nabla (c_1(\beta))$ for any $\beta\in\Gamma(A)$, hence the second term of the leading coefficient, for any vectors $X_i\in \vf(M)$, reads
    \begin{align*}
      \big((\d{}^\nabla c)_1\wedgedot \C\alpha\big)(X_0,\dots,X_q)=\textstyle\sum_i (-1)^i \big(c_0(\C\alpha (X_i))-\d{}^\nabla c_1(\C\alpha (X_i))\big)(X_0,\dots,\widehat{X_i},\dots,X_q).
    \end{align*}
    \end{example}
    We now prove the infinitesimal analogue of Theorem \ref{thm:deltaD}.
    \begin{theorem}
    \label{thm:deltaD_inf}
    Let $A$ be a Lie algebroid with an IM connection $(\C,v)\in\A(A;\frak k)$. The horizontal exterior covariant derivative is a cochain map, that is,
    \begin{align*}
    \delta\D{}^{(\C,v)}=\D{}^{(\C,v)}\delta.
    \end{align*}
    In particular, $\D{}^{(\C,v)}$ maps IM forms to IM forms. 
    \end{theorem}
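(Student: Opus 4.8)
The statement to prove is that $\D{}^{(\C,v)}=h^*\d{}^\nabla$ is a cochain map on the Weil complex, i.e.\ $\delta\D{}^{(\C,v)}=\D{}^{(\C,v)}\delta$. The plan is to split the claim into the two commutation facts that $\D{}^{(\C,v)}$ is built from, mirroring the proof of the global Theorem~\ref{thm:deltaD}. First I would invoke Proposition~\ref{prop:h_cochain}, which already gives that the horizontal projection commutes with $\delta$: $h^*\delta=\delta h^*$. What remains is to understand the commutator $[\d{}^\nabla,\delta]$ and show it is killed by $h^*$. By Lemma~\ref{lem:A_invariant}, this commutator is governed by the $A$-invariance form $(T,\theta)$ of the connection $\nabla=\C|_{\frak k}$: explicitly, $(\d{}^\nabla\delta c)_k-(\delta\d{}^\nabla c)_k$ equals the sum over the antisymmetric slots of $(-1)^i T(\alpha_i)\wedge c_k(\dots)$ plus the sum over the symmetric slots of $\theta(\beta_j)\cdot c_{k-1}(\dots)$. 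In the notation of that lemma this is the operator $c\mapsto (T,\theta)\wedge c$, so the content to establish is $h^*\big((T,\theta)\wedge c\big)=0$ for every Weil cochain $c$.

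The key computational step, then, is to identify $(T,\theta)$ for the connection induced by an IM connection. Here I would use Proposition~\ref{prop:invariant_abelian} together with the compatibility conditions \eqref{eq:s1}--\eqref{eq:s3}: from \eqref{eq:diff_a_conn} we have $\theta(\alpha)=\nabla^A_\alpha-\nabla_{\rho(\alpha)}=[v(\alpha),\cdot\,]_{\frak k}$, and from \eqref{eq:s2} we have $\iota_{\rho(\alpha)}R^\nabla\cdot\xi=[U(h\alpha),\xi]_{\frak k}$; feeding these into the definition $T(\alpha)=\d{}^\nabla\theta(\alpha)-\iota_{\rho(\alpha)}R^\nabla$ expresses $(T,\theta)$ entirely in terms of the bracket on $\frak k$, the leading term $\C$ (via $v$, $U$ and $\nabla$), and the splitting $A=\frak k\oplus H$. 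The crucial structural observation is that every term of $(T,\theta)\wedge c$ contains a factor which is an endomorphism of $\frak k$ acting by the bracket, paired against a slot of $c$ that lands in $\frak k$; after applying the horizontal projection $h^*$, which by Definition~\ref{def:hor_proj} restricts all symmetric arguments of the correction terms along $h$ and pairs leftover symmetric arguments with $\C$ (hence against $\frak k$-valued data), these contributions are precisely of the form that Definition~\ref{defn:horizontal_inf} forces to vanish on a horizontal cochain. Equivalently, I would argue that $(T,\theta)\wedge c$ is already "vertical" in the sense dual to horizontality, so $h^*$ annihilates it.

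A cleaner route, which I would prefer to write up, is to avoid the explicit form of $(T,\theta)$ altogether and work through the exterior-cochain model. Since $\D{}^{(\C,v)}=h^*\circ\d{}^\nabla$ and the isomorphism $\ev$ of \eqref{eq:evaluation_map} intertwines $h^*$ on Weil cochains with the horizontal projection $h^*$ on exterior cochains (Theorem~\ref{thm:derivation_hor_proj}) and intertwines the two simplicial differentials and the two exterior covariant derivatives, it suffices to prove $\delta\D{}=\D{}\delta$ on exterior cochains, where $\D{}$ is precomposition with $h\colon\AA_q\ra E$ followed by the algebroid differential of the enlarged $\vb$-algebroid. There, horizontality is the literal statement that a form annihilates the $\frak k$-directions, the horizontal projection is honest precomposition with an idempotent $h$, and the claim reduces to: $E\subset TA$ being a $\vb$-subalgebroid (which is exactly infinitesimal multiplicativity of $(\C,v)$) ensures that $\delta$ descends to the horizontal subcomplex. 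This is the same mechanism as $h^*\delta=\delta h^*$ in Proposition~\ref{prop:h_cochain}, applied now after $\d{}^\nabla$; one just needs that $\d{}^\nabla$ commutes with $\delta$ up to a term supported on the $\frak k$-directions, which $h^*$ then kills.

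\textbf{Main obstacle.} The hard part is the bookkeeping in the Weil model: showing $h^*\big((T,\theta)\wedge c\big)=0$ directly requires expanding Definition~\ref{def:hor_proj} against the shuffle sums in Lemma~\ref{lem:A_invariant} and tracking which $\wedgedot$-contractions land in $\frak k$. Using Lemma~\ref{lem:wedgedot}(ii),(iii) one can push the $\frak k$-valued factors into the symmetric slots and recognise them as the arguments that horizontality forbids, but the signs and the shuffle combinatorics are delicate. For this reason I expect the efficient proof to be the one via exterior cochains, invoking Proposition~\ref{prop:h_cochain_ext} and Theorem~\ref{thm:derivation_hor_proj} to transport the result, with the Weil-model computation relegated to a remark or to the level $p=1$ only, exactly as was done for Proposition~\ref{prop:h_cochain}. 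The final sentence of the theorem, that $\D{}^{(\C,v)}$ maps IM forms to IM forms, is then immediate: IM forms are the $1$-cocycles, $\D{}^{(\C,v)}$ raises the form degree by one while preserving the level, and a cochain map sends cocycles to cocycles.
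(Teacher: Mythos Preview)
Your strategy is the paper's: write $[\D{}^{(\C,v)},\delta]=h^*[\d{}^\nabla,\delta]$ via Proposition~\ref{prop:h_cochain}, invoke Lemma~\ref{lem:A_invariant} to identify $[\d{}^\nabla,\delta]c=(T,\theta)\wedge c$, then show $h^*$ annihilates the result. The step you are missing, and which dissolves the anticipated combinatorics, is that $(T,\theta)$ has an extremely clean closed form. From \eqref{eq:diff_a_conn} one has $\theta(\alpha)=[v\alpha,\cdot\,]$, and then using \eqref{eq:s1}, \eqref{eq:s2} and \eqref{eq:split_C} in the definition $T(\alpha)=\d{}^\nabla\theta(\alpha)-\iota_{\rho\alpha}R^\nabla$ gives
\[
T(\alpha)\cdot\xi=\nabla[v\alpha,\xi]-[v\alpha,\nabla\xi]-[U(h\alpha),\xi]=[\nabla(v\alpha)-U(h\alpha),\xi]=[\C\alpha,\xi].
\]
So $(T,\theta)=\ad\circ(\C,v)$ as an $\End(\frak k)$-valued Weil $1$-cochain. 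Now $h^*(T,\theta)=0$ is immediate from the level-$1$ formula \eqref{eq:h_p=1}: the symbol gives $\theta(h\beta)=[v(h\beta),\cdot\,]=0$, and the leading term gives $T(\alpha)(X)-\theta(\C\alpha(X))=[\C\alpha(X),\cdot\,]-[\C\alpha(X),\cdot\,]=0$ since $\C\alpha(X)\in\frak k$ and $v|_{\frak k}=\id$. That is the entire proof in the paper; no shuffle bookkeeping enters.

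Two remarks. First, the passage from $h^*(T,\theta)=0$ to $h^*\big((T,\theta)\wedge c\big)=0$, which you correctly flag as the content, is left implicit in the paper by analogy with the global case; it is transparent in the exterior model, where $h^*$ is precomposition with a $\vb$-algebroid morphism and hence multiplicative for the wedge, so $\ker h^*$ is an ideal. Second, your alternative route through exterior cochains has a gap: you assume $\ev$ intertwines the exterior covariant derivatives on the two models, but $\d{}^\nabla$ is only ever defined on Weil cochains (Definition~\ref{def:d_nabla_weil}); you would first have to construct its exterior-cochain avatar and verify the compatibility, which is more work than the two-line computation above.
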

    In other words, an IM connection yields the columns of a curved double complex, depicted in the diagram below, with the feature that $\D{}^{(\C,v)}$ does not square to zero unless $(\C,v)$ is flat, which we will see is equivalent to $U=0$ and $R^\nabla=0$ in the next section.
    \begin{align}
    \label{eq:weil_ideals}
    \begin{tikzcd}[ampersand replacement=\&, column sep=large,row sep=large]
        {\Omega^{q+1}(M;\frak k)} \& {W^{1,q}(A;\frak k)} \& {W^{2,q+1}(A;\frak k)} \& \cdots \\
        {\Omega^q(M;\frak k)} \& {W^{1,q}(A;\frak k)} \& {W^{2,q}(A;\frak k)} \& \cdots
        \arrow["{\delta}", from=1-1, to=1-2]
        \arrow["{\delta}", from=1-2, to=1-3]
        \arrow["{\delta}", from=1-3, to=1-4]
        \arrow["{\d{}^\nabla}", from=2-1, to=1-1]
        \arrow["{\delta}", from=2-1, to=2-2]
        \arrow["{\D{}^{(\C,v)}}", from=2-2, to=1-2]
        \arrow["{\delta}", from=2-2, to=2-3]
        \arrow["{\D{}^{(\C,v)}}", from=2-3, to=1-3]
        \arrow["{\delta}", from=2-3, to=2-4]
    \end{tikzcd}
    \end{align}
    \begin{proof}
    First observe that by Proposition \ref{prop:h_cochain}, $[\D{}^{(\C,v)},\delta]=h^*[\d{}^\nabla,\delta]$. As in the case of groupoids, we will use the explicit expression for $[\d{}^\nabla,\delta]$ from Lemma \ref{lem:A_invariant}, and to do so, we first have to compute the invariance form $(T,\theta)$ of connection $\nabla=\C|_{\frak k}$, given by \eqref{eq:invariance_form}. Equation \eqref{eq:diff_a_conn} already states that the tensor $\theta\colon A\ra \End\frak k$ reads
    \[
    \theta(\alpha)=[v\alpha,\cdot],
    \]
    and on the other hand, the map $T\colon\Gamma(A)\ra \Omega^1(M;\End \frak k)$ equals
    \begin{align*}
    T(\alpha)\cdot\xi&=\nabla(\theta(\alpha)\cdot\xi)-\theta(\alpha)\cdot\nabla\xi-\iota_{\rho(\alpha)}R^\nabla=\nabla[v\alpha,\xi]-[v\alpha,\nabla\xi]-[U(h\alpha),\xi]\\&=[\nabla (v\alpha)-U(h\alpha),\xi]=[\C\alpha,\xi],
    \end{align*}
    where we have used the properties \eqref{eq:s1} and \eqref{eq:s2} of the IM connection $(\C,v)$. 
    Hence, we need to show $h^*(T,\theta)=0$. This is a simple computation: for the leading term,
    \begin{align*}
      (h^*(T,\theta))_0(\alpha)(X)\xi=T(\alpha)(X)\xi-\theta(\C\alpha(X))\xi=[\C\alpha(X),\xi]-[\C\alpha(X),\xi]=0,
    \end{align*}
    and for the symbol, $(h^*(T,\theta))_1(\alpha)=\theta(h\alpha)=0$.
    \end{proof}

\subsection{Curvature}
\label{sec:im_curvature}
From now on, we will sometimes use the simplified notation 
\[c(\alpha)=(c_0(\alpha),c_1(\alpha))\in \Omega^q(M;\frak k)\times \Omega^{q-1}(M;\frak k),\] for any Weil cochain $c=(c_0,c_1)\in W^{1,q}(A;\frak k)$, where $\alpha\in \Gamma(A)$.
\begin{definition}
  Given a Lie algebroid $A$ with an IM connection $(\C,v)\in\A(A;\frak k)$, its \textit{curvature} is the horizontal IM form $\Omega^{(\C,v)}\in\Omega^2_{im}(A;\frak k)^\Hor$, defined as 
  \[\Omega^{(\C,v)}=\D{}^{(\C,v)}(\C,v).\]
  \end{definition} 
  To obtain an explicit formula for the curvature, simply note that $(\d{}^\nabla(\C,v))_1=\C-\d{}^\nabla v$ restricts on the bundle of ideals $\frak k$ to zero, hence we get
  \begin{align}
  \begin{split}
    \label{eq:im_curv}
    \Omega^{(\C,v)}\alpha&=(\d{}^\nabla\C\alpha,\C h\alpha)=(R^\nabla\cdot v\alpha-\d{}^\nabla U(h\alpha),-U(h\alpha))
  \end{split}
  \end{align}
  for any $\alpha\in\Gamma(A)$, where we have used the expression \eqref{eq:split_C} for $\C$ in terms of $\nabla$ and $U$. We observe that this coincides with \cite{mec}*{equation 5.9}, where a different approach for obtaining the formula for the curvature was used.
  
  \begin{remark}
  By definition, all the compatibility conditions for $\Omega^{(\C,v)}$ must follow from those for $(\C,v)$. More precisely, one can show that the condition \eqref{eq:c2} for $\Omega^{(\C,v)}$ is equivalent to \eqref{eq:s3} for $(\C,v)$; moreover, the condition \eqref{eq:c1} for $\Omega^{(\C,v)}$ may be split into three pieces, by considering $\Omega^{(\C,v)}[\alpha,\beta]$ for $\alpha$ and $\beta$ either horizontal or vertical sections:
  \begin{itemize}[]
  \item When both are vertical, the condition translates to: the curvature tensor $R^\nabla$ takes values in the bundle $\Der(\frak k)\subset\End(\frak k)$ consisting of fibrewise derivations of the bracket on $\frak k$. In other words, for any $\xi,\eta\in\frak k$ there holds
  \begin{align}
  \label{eq:R_derivation}
  R^\nabla\cdot [\xi,\eta]=[R^\nabla\cdot\xi,\eta]+[\xi,R^\nabla\cdot\eta].
  \end{align}
  This is a direct consequence of condition \eqref{eq:s1}.
  \item  When one is horizontal and one vertical, the condition reads $\iota_{\rho\alpha}\d{}^{\nabla^{\End\frak k}} R^\nabla=0$, where $\nabla^{\End\frak k}$ denotes the induced connection on $\End \frak k\ra M$. However, the Bianchi identity $\smash{\d{}^{\nabla^{\End\frak k}}} R^\nabla=0$ already holds for any connection $\nabla$, so this case is trivial.
  \item When both are horizontal, one straightforwardly checks that the obtained condition is equivalent to the equation one gets when applying  $\d{}^\nabla$ to both sides of identity \eqref{eq:s3}.
  \end{itemize}
  \end{remark}
  
  Since we have discovered the horizontal exterior covariant derivative of IM forms in equation \eqref{eq:D_inf}, it now becomes easy to establish the infinitesimal version of the Bianchi identity \eqref{eq:bianchi} for the curvature of an IM connection.
  \begin{theorem}[Infinitesimal Bianchi identity] 
    \label{thm:bianchi_inf}
  The horizontal exterior covariant derivative of the curvature of any IM connection $(\C,v)\in \A(A;\frak k)$ on a Lie algebroid $A$ vanishes:
  \begin{align}
  \label{eq:bianchi_inf}
    \D{}^{(\C,v)}\Omega^{(\C,v)}=0.
  \end{align}
  \end{theorem}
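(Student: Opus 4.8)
The plan is to deduce the infinitesimal Bianchi identity \eqref{eq:bianchi_inf} purely formally from the results already established, in exact parallel with the global case (Proposition \ref{prop:multiplicative_curvature} (iv)). The key point is that $\D{}^{(\C,v)}\Omega^{(\C,v)}=\D{}^{(\C,v)}\D{}^{(\C,v)}(\C,v)$, so it suffices to understand the square of the horizontal exterior covariant derivative on the IM connection form itself. First I would unwind the definition $\D{}^{(\C,v)}=h^*\d{}^\nabla$ to write
\begin{align*}
\D{}^{(\C,v)}\D{}^{(\C,v)}(\C,v)=h^*\d{}^\nabla h^*\d{}^\nabla(\C,v).
\end{align*}
The natural strategy is to show that, after applying the outer $h^*$, the troublesome terms — those involving the curvature $R^\nabla$ of $\nabla$, which obstruct $(\d{}^\nabla)^2=0$, and those coming from the failure of $h^*$ and $\d{}^\nabla$ to commute — all drop out. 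Concretely, at level $p=1$ one has the explicit formula \eqref{eq:D_inf}, so I would compute $\D{}^{(\C,v)}$ applied to the $2$-form $\Omega^{(\C,v)}$ whose components are given by \eqref{eq:im_curv}, namely $\Omega^{(\C,v)}\alpha=(R^\nabla\!\cdot v\alpha-\d{}^\nabla U(h\alpha),-U(h\alpha))$, and verify that the leading term and the correction term both vanish.

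The main technical input I expect to lean on is the interplay between three facts: (a) the graded Leibniz rule for $\d{}^\nabla$ together with $(\d{}^\nabla)^2\vartheta=R^\nabla\wedge\vartheta$ for ordinary $\frak k$-valued forms; (b) the structure equations \eqref{eq:s1}--\eqref{eq:s3} for the coupling data $(\nabla,U)$ of the IM connection, in particular \eqref{eq:s2} which says $\iota_{\rho\alpha}R^\nabla\cdot\xi=[U(h\alpha),\xi]_{\frak k}$ and \eqref{eq:s3} which controls $U(h[\alpha,\beta])$; and (c) the fact that $h^*$ kills everything with a vertical leg, so that terms of the form $R^\nabla\wedge(\text{something})$ get annihilated once we horizontally project, because the remaining structure-equation identities force the surviving contributions to be vertical. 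An alternative — and possibly cleaner — route is to transfer the entire computation to the model of exterior cochains via the evaluation isomorphism \eqref{eq:evaluation_map}: there $h^*$ is simply precomposition with $h\colon TA\to E$ (Definition \ref{def:hor_proj_ext}), $\d{}^\nabla$ becomes a standard exterior covariant derivative on the $\vb$-algebroid $\AA_q$, and the identity $\D{}^{(\C,v)}\D{}^{(\C,v)}=0$ on forms valued in the subbundle where the connection restricts trivially becomes a short formal manipulation using that $E\subset TA$ is a $\vb$-subalgebroid.

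The hard part will be bookkeeping the correction terms: unlike the global picture, where $\Omega^\omega$ is an honest $2$-form on $G$ and the Bianchi identity \eqref{eq:bianchi} is a one-line consequence of the structure equation, here $\Omega^{(\C,v)}$ is a Weil cochain with a nontrivial symbol, and applying $h^*\d{}^\nabla$ produces cross-terms mixing $\d{}^\nabla(R^\nabla\!\cdot v\alpha)$, $(\d{}^\nabla)^2U(h\alpha)$, and the pairing $\wedgedot$ with $\C\alpha$ from the definition of $h^*$ in \eqref{eq:h_p=1}. I would organize this by first checking the symbol (the $k=1$ correction term), where $(\D{}^{(\C,v)}\Omega^{(\C,v)})_1(\beta)$ should reduce via $(\d{}^\nabla U(h\cdot))_1(h\beta)$ and the Bianchi identity $\d{}^{\nabla^{\End\frak k}}R^\nabla=0$ to an expression that vanishes by \eqref{eq:s3}; and then the leading term, where the $R^\nabla$-contributions pair against $\C\alpha$ and, after invoking \eqref{eq:s2} to convert $\iota_{\rho\cdot}R^\nabla$ into a bracket with $U$, cancel against the $\d{}^\nabla U$-terms precisely because of \eqref{eq:s3}. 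If the direct route becomes unwieldy, I would fall back on the $\vb$-algebroid model, where the proof is essentially the classical argument $\d{}^\nabla\d{}^\nabla\omega=R^\nabla\wedge\omega$ followed by horizontal projection, and the vanishing is immediate from $h^*(R^\nabla\wedge\,\cdot\,)=0$ on the relevant subcomplex.
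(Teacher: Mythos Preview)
Your plan of direct computation via \eqref{eq:D_inf} is the right one, but you are overestimating the difficulty and citing the wrong ingredients. The paper's proof is two lines and uses \emph{neither} \eqref{eq:s2} nor \eqref{eq:s3}; the only inputs are $(\d{}^\nabla)^2=R^\nabla\wedge\cdot$ and the sign relation \eqref{eq:wedge_wedgedot}. The key is to work with the \emph{compact} form of the curvature $\Omega^{(\C,v)}\alpha=(\d{}^\nabla\C\alpha,\C h\alpha)$ (the first equality in \eqref{eq:im_curv}), not the expanded $(\nabla,U)$-form you chose. With $c=\Omega^{(\C,v)}$ one gets immediately
\[
(\d{}^\nabla c)_1(\alpha)=c_0(\alpha)-\d{}^\nabla c_1(\alpha)=\d{}^\nabla\C\alpha-\d{}^\nabla(\C h\alpha)=\d{}^\nabla\C(v\alpha)=R^\nabla\!\cdot v\alpha,
\]
since $\C|_{\frak k}=\nabla$. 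The symbol then vanishes trivially: $(\D{}^{(\C,v)}\Omega^{(\C,v)})_1(\beta)=R^\nabla\!\cdot v(h\beta)=0$ because $v\circ h=0$ --- no appeal to \eqref{eq:s3} is needed. For the leading term, \eqref{eq:D_inf} gives
\[
(\D{}^{(\C,v)}\Omega^{(\C,v)})_0(\alpha)=\d{}^\nabla\d{}^\nabla\C\alpha-(\d{}^\nabla c)_1\wedgedot\C\alpha=R^\nabla\wedge\C\alpha-R^\nabla\wedgedot\C\alpha,
\]
and this vanishes by \eqref{eq:wedge_wedgedot} since here $k=2$, $\ell=1$, so $(-1)^{k\ell}=1$. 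No cross-terms, no brackets with $U$, no appeal to \eqref{eq:s2}.

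Your expanded-form approach would in fact also arrive here: one checks $\d{}^\nabla(R^\nabla\!\cdot v\alpha-\d{}^\nabla U(h\alpha))=R^\nabla\wedge(\nabla v\alpha-U(h\alpha))=R^\nabla\wedge\C\alpha$ directly, again without the structure equations. So your expected ``hard part'' with cancellations via \eqref{eq:s2}--\eqref{eq:s3} simply does not arise; the $\vb$-algebroid fallback is unnecessary.
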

  \begin{proof}
  Using the explicit expression \eqref{eq:im_curv} for the curvature, we first compute:
  \begin{align*}
    (\d{}^\nabla\Omega^{(\C,v)})_1(\alpha)=\d{}^\nabla\C\alpha-\d{}^\nabla\C (h\alpha)=\d{}^\nabla \C(v\alpha)=R^\nabla\cdot v\alpha,
  \end{align*}
  for any $\alpha\in \Gamma(A)$. Hence,
  \begin{align*}
    \D{}^{(\C,v)}\Omega^{(\C,v)}\alpha=(\underbrace{\d{}^\nabla\d{}^\nabla \C\alpha}_{R^\nabla\wedge\:\C\alpha}-R^\nabla\wedgedot\C\alpha,R^\nabla\cdot v(h\alpha))=0,
  \end{align*}
  where we have used the relation \eqref{eq:wedge_wedgedot} between $\dot\wedge$ and $\wedge$.
  \end{proof}

  \subsection{Van Est map and the horizontal exterior covariant derivative}
  \label{sec:van_est_D}
  In this section we inspect the relationship between the van Est map and the horizontal exterior covariant derivative in the global and infinitesimal realm. The following theorem states they commute at the level of multiplicative forms, and the proof will also reveal that they do not commute on general  cochains.
  \begin{theorem}
    \label{thm:van_est_D}
    Let $\omega\in\A(G;\frak k)$ be a multiplicative Ehresmann connection on a Lie groupoid $G$. Let $A$ be its Lie algebroid, endowed with the IM connection $(\C,v)=\ve(\omega)$. The van Est map commutes with the horizontal exterior covariant derivatives at the level of multiplicative forms, that is, the following diagram commutes.
  
  \begin{align}
    \label{eq:square_D}
      \begin{tikzcd}[row sep=large,column sep=large,ampersand replacement=\&]
      {\Omega^\bullet_m(G;\frak k)} \& {\Omega^{\bullet+1}_m(G;\frak k)} \\
      {\Omega^\bullet_{im}(A;\frak k)} \& {\Omega^{\bullet+1}_{im}(A;\frak k)}
      \arrow["{\D{}^\omega}", from=1-1, to=1-2]
      \arrow["{\ve}"', from=1-1, to=2-1]
      \arrow["{\ve}", from=1-2, to=2-2]
      \arrow["{\D{}^{(\C,v)}}"', from=2-1, to=2-2]
    \end{tikzcd}
    \end{align}
  \end{theorem}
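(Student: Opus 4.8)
The plan is to reduce the statement to the two facts already at our disposal: (i) the van Est map is a cochain map that intertwines $\d{}_G^\nabla$ with $\d{}_A^\nabla$ \emph{on normalized forms} (Theorem~\ref{thm:van_est_G_A}), and (ii) the horizontal projections $h^*$ on both sides are built from the connection data that corresponds under $\ve$ (Propositions~\ref{prop:conn_global_inf} and the definitions of the two $h^*$'s). Since $\D{}^\omega = h^*\circ\d{}^\nabla$ and $\D{}^{(\C,v)} = h^*\circ\d{}^\nabla$, the square \eqref{eq:square_D} will follow once I show the two commuting squares
\[
\ve\circ \d{}_G^\nabla = \d{}_A^\nabla\circ\ve \qquad\text{and}\qquad \ve\circ h^*_G = h^*_A\circ\ve
\]
hold on the relevant subspace. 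The first is Theorem~\ref{thm:van_est_G_A}, valid on all normalized (in particular multiplicative) forms with no invariance hypothesis. The crux is therefore the second identity, and — as the statement of the theorem warns — it will only hold after restriction, which is why the diagram is stated at the level of multiplicative (equivalently, IM) forms rather than on the full curved double complexes.

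\textbf{Key steps.} First I would fix a multiplicative form $\vartheta\in\Omega^q_m(G;\frak k)$ and set $\eta = \d{}^\nabla\vartheta$; by Theorem~\ref{thm:van_est_G_A}, $\ve(\eta)=\d{}_A^\nabla\ve(\vartheta)$, and since $\eta$ is again multiplicative (Theorem~\ref{thm:G_invariant} applies because $\nabla$, though not $G$-invariant, still preserves multiplicativity along the subcomplex — more carefully, $\d{}^\nabla$ preserves multiplicativity on horizontal forms as in Theorem~\ref{thm:deltaD}, but here I only need $\eta$ multiplicative, which follows from $[\d{}^\nabla,\delta]$ vanishing on multiplicative forms since $\Theta$ annihilates horizontal vectors after $h^*$; alternatively reduce to normalized forms directly). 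Second, and centrally, I would prove $\ve\circ h^*_G = h^*_A\circ\ve$ on multiplicative forms. The idea is to work at the level of symbols and leading terms, using the explicit formulas \eqref{eq:van_est_p=1}, \eqref{eq:h_p=1}, and \eqref{eq:omega}: for a multiplicative $\omega$-horizontal-related form, the horizontal projection on $G$ amounts to precomposing with $h\colon TG\to E=\ker\omega$, while on the algebroid side $h^*_A$ is precomposition with $h\colon TA\to E_A=\ker v$ (this is the content of Definition~\ref{def:hor_proj} together with its derivation via exterior cochains, Theorem~\ref{thm:derivation_hor_proj}). Since $v=\omega|_A$ and the induced connections coincide (Proposition~\ref{prop:conn_global_inf}), the operators $R_\alpha$ and $J_\alpha$ defining $\ve$ respect these horizontal decompositions: concretely, $R_\alpha(h^*_G\vartheta)$ and $h^*_A(R_\alpha\vartheta)$ agree because the degeneracy insertion $j_p$ and the flow $\phi^{\alpha^L}_\lambda$ both preserve horizontality (the flow of $\xi^L$ acting on horizontal $s$-projectable vector fields stays in $E^s$, cf.\ \eqref{eq:right_inv} and \eqref{eq:principaltrick}), and the symbol terms match because $\ve(\omega)_1 = J_\beta\omega$ evaluates $\omega$ on $\d u$-images, on which $\omega$ and $v$ literally agree. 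Third, I would assemble: $\ve(\D{}^\omega\vartheta) = \ve(h^*_G\d{}^\nabla\vartheta) = h^*_A\ve(\d{}^\nabla\vartheta) = h^*_A\d{}_A^\nabla\ve(\vartheta) = \D{}^{(\C,v)}\ve(\vartheta)$.

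\textbf{Main obstacle.} The hard part will be the middle step, $\ve\circ h^*_G = h^*_A\circ\ve$, and in particular controlling why it \emph{fails} off the multiplicative locus — this is what forces the restriction in the theorem and what the proof must make visible. On general (non-normalized) cochains, the van Est operators $R_\alpha$ involve the degeneracy $j_p$ and the action of $G$ on $\frak k$, and the discrepancy between $\nabla^s$ and $\nabla^t$ (the tensor $\Theta$, resp.\ $(T,\theta)$) enters; by Lemma~\ref{lem:G_invariant} and the identity \eqref{eq:R_uL_theta}, $R_\alpha\omega - u^*\L_{\alpha^L}\omega = \theta(\alpha)\cdot u^*\omega$, so the correction term only drops out when $u^*\omega = 0$, i.e.\ when $\omega$ is normalized. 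The cleanest way to handle this, mirroring the proof of Theorem~\ref{thm:van_est_G_A}, is: (a) reduce to the leading term using the action-algebroid trick of Remark~\ref{rem:action_trick} so that it suffices to verify one scalar identity on symbols, and (b) on multiplicative forms, invoke \eqref{eq:R_jL} to replace $R_\alpha$ by $j_p^*\L^{\nabla^{s\circ\pr_p}}_{\alpha^{(p)}}$, after which the commutation of $h^*$ with the Lie derivative along $\alpha^{(p)}$ — a horizontal vector field when $h(\alpha)$ is used, whose flow preserves $E$ by multiplicativity of $E\subset TG$ — becomes a routine (if lengthy) check. I expect the bookkeeping of shuffles and Koszul signs in the correction terms to be the most tedious ingredient, but conceptually nothing beyond the tools already developed is needed; the only genuine mathematical input is the compatibility of the two horizontal decompositions under $\ve$, which is exactly Proposition~\ref{prop:conn_global_inf} upgraded to the level of the defining operators $R_\alpha,J_\alpha$.
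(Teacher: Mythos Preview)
Your high-level decomposition $\D{}=h^*\circ\d{}^\nabla$ and the plan to commute each factor with $\ve$ separately is exactly right, and matches the paper's architecture. The genuine gap is in the assembly step
\[
\ve(h^*_G\,\d{}^\nabla\vartheta)=h^*_A\,\ve(\d{}^\nabla\vartheta).
\]
You propose to justify this by proving $\ve\circ h^*_G=h^*_A\circ\ve$ on \emph{multiplicative} forms and then applying it to $\eta=\d{}^\nabla\vartheta$. But $\d{}^\nabla\vartheta$ is \emph{not} multiplicative: by Lemma~\ref{lem:G_invariant}, $\delta\,\d{}^\nabla\vartheta=-(\pr_2^*\Theta)\wedge(f_2^{(2)})^*\vartheta$, and $\Theta\neq 0$ unless $\frak k$ is abelian (Proposition~\ref{prop:invariant_abelian}). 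Your attempted argument that $\eta$ is multiplicative conflates $h^*\Theta=0$ (which is true, and is what proves Theorem~\ref{thm:deltaD}) with $\Theta=0$ (which is false). The alternative ``reduce to normalized forms'' does not help either: $\d{}^\nabla\vartheta$ is normalized, but the commutator $[\ve,h^*]$ is controlled by $\delta\eta$, not by $j_p^*\eta$, so it does not vanish on all normalized forms.

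The paper closes this gap by first computing the commutator $[\ve,h^*]$ explicitly at level $p=1$ (Lemma~\ref{lem:cochain_homotopy}): it has vanishing symbol, and its leading term on any $\eta$ is a specific expression in $v^*\delta\eta$. Applied to $\eta=\d{}^\nabla\vartheta$ with $\vartheta$ multiplicative, one gets $v^*\delta\,\d{}^\nabla\vartheta=-v^*[\d{}^\nabla,\delta]\vartheta$, and the explicit commutator formula from Lemma~\ref{lem:G_invariant} shows this is built from $\Theta$ evaluated on the \emph{second} components $\d(\inv)\,h\,X_i^\lambda$, which are horizontal---so $\Theta$ kills them by Corollary~\ref{cor:diff_nabla_ts}. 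In other words, the correction term is nonzero in general but vanishes here by a structural cancellation, not because $\d{}^\nabla\vartheta$ lies in a nice subspace. Your sketch of why flows preserve horizontality (via \eqref{eq:right_inv}) is relevant to proving Lemma~\ref{lem:cochain_homotopy} itself, but you still need that lemma's explicit formula to see why the error term disappears for $\d{}^\nabla\vartheta$.
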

  \begin{lemma}
    \label{lem:cochain_homotopy}
    Let $\omega\in \A(G;\frak k)$ be a multiplicative connection on a Lie groupoid $G\rra M$. The commutator $[\ve,h^*]\colon \Omega^{p,q}(G;\frak k)\ra W^{p,q}(A;\frak k)^\Hor$ at level $p=1$ has a vanishing symbol, and its leading term equals
    \begin{align*}
      \ve (h^*\eta)_0(\alpha)(X_i)_i-(h^*\ve \eta)_0(\alpha)(X_i)_i=\deriv \lambda 0 (v^*\delta \eta)_{(g_\lambda,g_{\lambda}^{\smash{-1}})}\big(X^\lambda_{i}, \d(\inv)h X^\lambda_{i}\big)_i
      \end{align*}
      for any $\eta\in\Omega^q(G;s^*\frak k)$, $\alpha\in \Gamma(A)$ and $X_i\in T_x M$. Here, we have denoted $g_\lambda=\phi^{\alpha^L}_\lambda(1_x)$ and $X_i^\lambda=\d(\phi^{\alpha^L}_{\lambda_{\phantom L}})\d u(X_i)$, and $v^*=\id-h^*$ denotes the vertical projection of differential forms. 
  \end{lemma}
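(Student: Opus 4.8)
The statement to be proved is Lemma \ref{lem:cochain_homotopy}, which computes the commutator $[\ve,h^*]$ at level $p=1$; once this lemma is in hand, Theorem \ref{thm:van_est_D} follows quickly because on multiplicative forms $\eta\in\Omega_m^q(G;\frak k)$ we have $\delta\eta=0$, so the right-hand side of the lemma's formula vanishes, giving $\ve\, h^* = h^*\ve$ on multiplicative forms, and then combining this with Theorem \ref{thm:van_est_G_A} (which gives $\ve\,\d{}^\nabla = \d{}_A^\nabla\ve$ on normalized, hence multiplicative, forms) yields $\ve\D{}^\omega = \ve\, h^*\d{}^\nabla = h^*\ve\,\d{}^\nabla = h^*\d{}_A^\nabla\ve = \D{}^{(\C,v)}\ve$. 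So the real content is the lemma.

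First I would recall the explicit form of $h^*$ at level $p=1$ for groupoid forms: for $\eta\in\Omega^{1,q}(G;\frak k)$, the vertical projection is $v^*\eta = \eta - h^*\eta$, and pointwise $v^*$ precomposes each argument with the vertical projection $v\colon TG\to K$. The symbol statement is the easy half: the symbol of $\ve\eta$ is $J_\alpha\eta$ (equation \eqref{eq:J_alpha}/\eqref{eq:van_est_p=1}), which evaluates $\eta$ on $(\beta_x, \d u(X_1),\dots)$; applying $h^*$ either before or after $\ve$ amounts to replacing $\eta$ by $h^*\eta$ inside the same formula, and since $h^*$ commutes with the operations defining $J_\alpha$ (it is pointwise and compatible with unit insertion), $(h^*\ve\eta)_1 = (\ve h^*\eta)_1 = J_\alpha(h^*\eta)$ — wait, more carefully, the symbol of the horizontal projection on the algebroid side is $c_1(h\beta)$ by \eqref{eq:h_p=1}, and on the groupoid side it is the pullback by the horizontal projection of tangent vectors, so one checks both sides equal the evaluation of $\eta$ on horizontalized vectors plus a unit; these coincide because $\d u_x(X)$ is horizontal whenever $\omega$ restricts to identity on $\frak k$ is irrelevant here — actually because $E = \ker\omega$ is a subgroupoid and $h$ is a groupoid morphism $TG\to E$, so $h(\d u_x X) = \d u_x(h\text{-something})$; the point is this is a routine diagram-chase showing the symbols of $[\ve,h^*]\eta$ vanish.

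The leading-term computation is the heart. I would start from the level-$p=1$ formula \eqref{eq:van_est_p=1}: $\ve(\eta)_0(\alpha)(X_i)_i = \frac{d}{d\lambda}\big|_0\, \phi^{\alpha^L}_\lambda(1_x)\cdot \eta_{1_x}\big(\d(\phi^{\alpha^L}_\lambda)\d u(X_i)\big)_i$, but evaluated really at $g_\lambda$ after using multiplicativity, following the same manipulation as in the proof of Proposition \ref{prop:conn_global_inf} (express the flow of $\alpha^L$ as right translation along $\exp(\lambda\alpha)$, differentiate, use $u^*\eta$-type normalization). Then I would compute the difference $\ve(h^*\eta)_0(\alpha) - (h^*\ve\eta)_0(\alpha)$ by noting: $(h^*\ve\eta)_0(\alpha) = \ve(\eta)_0(\alpha) - (\ve\eta)_1\wedgedot \C\alpha$ (the $p=1$ horizontal projection \eqref{eq:h_p=1}), while $\ve(h^*\eta)_0(\alpha)$ involves $\ve$ of $\eta - v^*\eta$. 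The difference therefore reduces to comparing $\ve(v^*\eta)_0(\alpha)$ with $(\ve\eta)_1\wedgedot\C\alpha$, and both of these should be re-expressible as the derivative at $\lambda=0$ of $v^*\eta$ (equivalently $\delta\eta$, after accounting for the coboundary structure — here I'd use that $\delta^0$ applied to the relevant form and the decomposition $v^* = \id - h^*$ interact via the core sequence) evaluated on the composable pair $(g_\lambda, g_\lambda^{-1})$ with the arguments $X_i^\lambda$ and their images $\d(\inv)h X_i^\lambda$. The key geometric input is that $v\colon TG\to K$ being a groupoid morphism (since $E$ is multiplicative, used already in the proof of Proposition \ref{prop:conn_global_inf}) lets one rewrite $v^*\eta$ evaluated at $g_\lambda$ in terms of pullback along $m$ and $\inv$, which is exactly what produces the $(g_\lambda, g_\lambda^{-1})$ and the $\d(\inv)$ terms. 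I would carry out this rewriting carefully, tracking the vertical/horizontal split of each $X_i^\lambda$ and using $\d(\inv)|_{\frak k} = -\id_{\frak k}$ to handle the vertical parts. \textbf{The main obstacle} I anticipate is the bookkeeping in matching the $\wedgedot$-pairing $(\ve\eta)_1\wedgedot\C\alpha$ — which symmetrizes over insertion positions with signs — against the derivative expression, which naturally produces a single insertion: one must verify that the antisymmetrization built into $\wedgedot$ (Lemma \ref{lem:wedgedot}) is precisely compensated by the antisymmetry of $\eta$ in its form-arguments, so that the $q$ terms of the pairing collapse correctly. This is the kind of combinatorial sign-chase that is routine in principle but delicate in practice, and I would isolate it as a sublemma rather than inline it.
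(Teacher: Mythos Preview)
Your overall strategy for the lemma matches the paper's: reduce the leading-term difference to comparing $R_\alpha(v^*\eta)$ with $(\ve\eta)_1\wedgedot\C\alpha$, then use the multiplicativity of the splitting $TG=E\oplus K$ to produce the pair $(g_\lambda,g_\lambda^{-1})$ and the $\d(\inv)$ arguments. The paper makes this precise by expanding $(v^*\eta)(X_i)_i$ as a sum over shuffles $\sigma\in S_{(k,q-k)}$ of terms $\eta(vX_{\sigma(1)},\dots,vX_{\sigma(k)},hX_{\sigma(k+1)},\dots,hX_{\sigma(q)})$, and then for each such term rewriting the $\Ad_{g_\lambda}\cdot\eta(\dots)$ expression using the $\delta\eta$ formula at $(g_\lambda,g_\lambda^{-1})$ with the specific composable vectors $v_i=vY_i^\lambda$ or $hY_i^\lambda$, $w_i=0$ or $\d(\inv)hY_i^\lambda$. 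This splits each term into two pieces: one evaluates $\eta$ at $1_x$ on vectors including $\Ad_{g_\lambda}\cdot\omega(Y_i^\lambda)$, whose derivative at $\lambda=0$ gives $\C\alpha(X_i)$---but since $u^*\omega=0$, only the $k=1$ shuffle survives, and this is exactly the $\wedgedot$-pairing; the other piece is the $\delta\eta$ term, which one then recognizes as $v^*\delta\eta$ on the composable pair because $h(Y^\lambda,\d(\inv)hY^\lambda)=(hY^\lambda,\d(\inv)hY^\lambda)$ and $v(Y^\lambda,\d(\inv)hY^\lambda)=(vY^\lambda,0)$. Your description of this mechanism is too vague---you gesture at ``$v^*\eta$ (equivalently $\delta\eta$\dots)'', but these are not equivalent; the $\delta\eta$ formula is a tool to decompose $R_\alpha v^*\eta$, and the specific choice of composable vectors is the crux.

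There is a genuine gap in your plan for deriving Theorem~\ref{thm:van_est_D} from the lemma. Your chain $\ve\D{}^\omega=\ve\,h^*\d{}^\nabla=h^*\ve\,\d{}^\nabla=\cdots$ requires $[\ve,h^*]$ to vanish on $\d{}^\nabla\eta$, not on $\eta$. But $\d{}^\nabla\eta$ is \emph{not} multiplicative in general (it is only so when $\nabla$ is $G$-invariant), so $\delta(\d{}^\nabla\eta)\neq 0$ and the lemma's right-hand side does not vanish for the reason you give. The paper applies the lemma to $\d{}^\nabla\eta$ and then argues separately that the error term vanishes: since $\eta$ is multiplicative, $\delta\d{}^\nabla\eta=-[\d{}^\nabla,\delta]\eta$, and by Lemma~\ref{lem:G_invariant} this commutator is $-\pr_2^*\Theta\wedge\pr_1^*\eta$; the $v^*$ in the lemma's formula either leaves the second component $\d(\inv)hX_i^\lambda$ untouched or sends it to zero, and since $h^*\Theta=0$ (Corollary~\ref{cor:diff_nabla_ts}), the error term vanishes. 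This step is essential and your argument misses it.
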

  \begin{remark}
    \label{rem:cochain_homotopy}
    The lemma suggests that $\ve$ and $h^*$ commute only up to a cochain homotopy $\Psi$, as portrayed in  the (noncommutative) diagram below. Specifically, the formula for $\Psi^2$ can be read from the equation above, whereas $\Psi^1=0$. However, this more general statement will not be further explored here.
  \[\begin{tikzcd}[row sep=large, column sep=large]
    \cdots & {\Omega^{p-1,q}(G;\frak k)} & {\Omega^{p,q}(G;\frak k)} & {\Omega^{p+1,q}(G;\frak k)} & \cdots \\
    \cdots & {W^{p-1,q}(G;\frak k)} & {W^{p,q}(G;\frak k)} & {W^{p+1,q}(G;\frak k)} & \cdots
    \arrow[from=1-1, to=1-2]
    \arrow["{\delta^{p-1}}", from=1-2, to=1-3]
    \arrow["{\delta^p}", from=1-3, to=1-4]
    \arrow["{\Psi^{p}}"{description}, from=1-3, to=2-2]
    \arrow["{\ve h^*}"', shift right, from=1-3, to=2-3]
    \arrow["{h^*\ve}", shift left, from=1-3, to=2-3]
    \arrow[from=1-4, to=1-5]
    \arrow["{\Psi^{p+1}}"{description}, from=1-4, to=2-3]
    \arrow[from=2-1, to=2-2]
    \arrow["{\delta^{p-1}}"', from=2-2, to=2-3]
    \arrow["{\delta^p}"', from=2-3, to=2-4]
    \arrow[from=2-4, to=2-5]
  \end{tikzcd}\]
  \end{remark}
  \begin{proof}
    We begin with the symbol, where the computation is simple---for any section $\beta\in\Gamma(A)$,
    \begin{align*}
    \ve (h^*\eta)_1(\beta)=J_\beta (h^*\eta)=u^*\iota_{\beta^L} (h^*\eta)=u^* h^*\iota_{h\beta^L}\eta=u^*\iota_{h\beta^L}\eta=(h^*\ve\eta)_1(\beta),
    \end{align*}
    where we have used that $E\subset TG$ is a wide subgroupoid. For the leading term, first observe that the vertical projection $v^*= \id-h^*$ enables us to write
  \begin{align*}
  (R_\alpha h^*\eta)_x(X_i)_i=(R_\alpha \eta)_x(X_i)_i-(R_\alpha v^*\eta)_x(X_i)_i,
  \end{align*}
  for any vectors $X_i\in T_x M$. Observe that 
  \begin{align*}
    (v^*\eta)(X_i)_i&=\eta(vX_i+hX_i)_i-\eta(hX_i)_i\\
    &=\sum\limits_{k=1}^q\sum\limits_{\mathclap{\hspace{2.5em}\sigma\in S_{(k,q-k)}}}\sgn(\sigma)\eta(vX_{\sigma(1)},\dots,vX_{\sigma(k)},hX_{\sigma(k+1)},\dots,h X_{\sigma(q)}),
  \end{align*}
  hence $(R_\alpha v^*\eta)_x(X_i)_i$ may be expressed as a sum of terms of the form (up to a sign)
  \begin{align}
    \label{eq:term_in_Rv}
    \deriv\lambda 0 \Ad_{\phi^{\alpha^L}_\lambda(1_x)}\cdot\eta(vY_1^\lambda,\dots, vY_k^\lambda,hY_{k+1}^\lambda,\dots,hY_{q}^\lambda)
  \end{align}
  for some $k\in\set{1,\dots, q}$, where we are denoting $Y^\lambda_i=X^\lambda_{\sigma(i)}$ for a fixed permutation $\sigma\in S_{(k,q-k)}$, and $X^\lambda_i=\d(\phi^{\alpha^L}_\lambda)\d u(X_i)$. 
  We now use the formula for the simplicial differential,
  \begin{align*}
    \delta\eta_{(g,h)}(v_i,w_i)_i=\eta_h(w_i)_i-\eta_{gh}(\d m_{(g,h)}(v_i,w_i))_i+\Ad_{h^{-1}}\cdot\eta_g(v_i)_i,
  \end{align*}
  with arrows $g=g_\lambda\coloneqq \phi^{\alpha^L}_\lambda(1_x)$, $h=g^{-1}$ and vectors
  \begin{align*}
    v_i=\begin{cases}
      vY^\lambda_i & \text{if }i\leq k,\\
      h Y^\lambda_i & \text{if }i>k,
    \end{cases}
    \qquad\quad
    w_i=\begin{cases}
      0 & \text{if }i\leq k,\\
      \d(\inv) h Y^\lambda_i & \text{if }i>k.
    \end{cases}
  \end{align*}
  Using these vectors allows us to write \eqref{eq:term_in_Rv} as
  \begin{align*}
    \eqref{eq:term_in_Rv}&= \eta_{gh}(\d m_{(g,h)}(v_i,w_i))_i+\delta\eta_{(g,h)}(v_i,w_i)_i\eqqcolon(\star)+(\star\star)
  \end{align*}
  It is not hard to see there holds 
  \begin{align*}
    \d m(v_i,w_i)=\begin{cases}
      \Ad_{g_\lambda}\cdot\omega(Y^\lambda_i) & \text{if }i\leq k,\\
      \d u (Y^0_i) & \text{if }i>k,
    \end{cases}
  \end{align*}
  hence the first term $(\star)$ becomes
  \begin{align*}
    (\star)&=\deriv\lambda 0 \eta_{1_x}\big(\Ad_{g_\lambda}\cdot\omega(Y_1^\lambda),\dots, \Ad_{g_\lambda}\cdot\omega(Y_k^\lambda),\d u (Y^0_{k+1}),\dots, \d u (Y^0_{q})\big)\\
    &=\sum_{i=1}^k\eta_{1_x}\bigg({\omega(\d u(Y^0_1))},\dots,\underbrace{\deriv\lambda 0\Ad_{g_\lambda}\cdot\omega(Y_i^\lambda)}_{\C(\alpha)(Y^0_i)},\dots,{\omega(\d u(Y^0_k))},\d u(Y^0_{k+1}),\dots, \d u(Y^0_{q})\bigg).
  \end{align*}
  Since $u^*\omega=0$ by multiplicativity of the connection $\omega$, the expression above is nonzero only in the case when $k=1$. Assuming for a moment that $\eta$ is multiplicative, we thus obtain
  \begin{align*}
    (R_\alpha v^*\eta)_x(X_i)_i&=\smallderiv\lambda 0\textstyle\sum_i  \phi^{\alpha^L}_\lambda(1_x)\cdot\eta(hX_1^\lambda,\dots, vX_i^\lambda,\dots,hX_{q}^\lambda)\\
    &=\textstyle\sum_i (-1)^{i+1}\eta_{1_x}(\C\alpha(X_i),\d u(X_1),\dots,\widehat{\d u(X_i)},\dots, \d u(X_q))\\
    &=(c_1\wedgedot \C\alpha)(X_i)_i,
  \end{align*}
  where we are denoting by $c_1$ the symbol of $\ve (\eta)$. Since the leading term is $c_0(\alpha)=R_\alpha\eta$, this proves the lemma for the case when $\eta$ is multiplicative. If $\eta$ is not multiplicative, the term $(\star\star)$ becomes
  {\begin{align*}
    (\star\star)=\deriv \lambda 0 \delta\eta_{(g_\lambda,g_{\lambda}^{\smash{-1}})}\big(&(v Y_1^\lambda, 0),\dots ,(v Y_k^\lambda, 0),(h Y^\lambda_{k+1}, \d(\inv)h Y^\lambda_{k+1}),\dots,(h Y^\lambda_{q}, \d(\inv)h Y^\lambda_{q})\big).
  \end{align*}
  }Finally, observe that on $TG^{(2)}$ there holds
  \begin{align*}
  h(Y^\lambda_{i}, \d(\inv)h Y^\lambda_{i})&=(hY^\lambda_{i}, \d(\inv)h Y^\lambda_{i}),\\
  v(Y^\lambda_{i}, \d(\inv)h Y^\lambda_{i})&=(vY^\lambda_i,0).
  \end{align*}
  This establishes the wanted formula for the leading terms. 
  \end{proof}
  \begin{proof}[Proof of Theorem \ref{thm:van_est_D}]
    We need to check $\ve (h^* \d{}^{\nabla}\eta) =h^*\d{}^\nabla \ve(\eta)$ holds for any multiplicative form $\eta\in\Omega^q_m(G;s^*\frak k)$. By the lemma above, we have
    \begin{align*}
      \ve(h^*\d{}^{\nabla}\eta)_0(\alpha)(X_i)&=(h^*\d{}^\nabla\ve \eta)_0(\alpha)(X_i)_i+\deriv \lambda 0 (v^*\delta \d{}^\nabla\eta)_{(g_\lambda,g_{\lambda}^{\smash{-1}})}\big(X^\lambda_{i}, \d(\inv)h X^\lambda_{i}\big)_i
    \end{align*}
    where we have already used on the first term that $\ve$ commutes with $\d{}^\nabla$ on multiplicative forms, as established in Theorem \ref{thm:van_est_G_A}. To show that the second term vanishes, first note that multiplicativity of the form $\eta$ implies
    \[
      v^*\delta\d{}^\nabla\eta=-v^*[\d{}^\nabla,\delta]\eta.
    \]
    As already observed in the lemma, $v^*$ acts on individual arguments of a given form by either the horizontal or the vertical projection, so the second component $\d(\inv)h X^\lambda_{i}$ of any vector is either untouched by $v^*$ or sent to zero. This is important since by the formula for the commutator \eqref{eq:commutator_higher}, the second components are inserted into $\Theta$, and so by the fact that $h^*\Theta=0$ from Corollary \ref{cor:diff_nabla_ts}, the second term on the right-hand side above vanishes. This proves the theorem for leading terms; that the symbols coincide follows directly from the last lemma.
  \end{proof}
  \begin{corollary}
    Let $\omega\in\A(G;\frak k)$ be a multiplicative Ehresmann connection on a Lie groupoid $G$, and let $A$ be its Lie algebroid, endowed with the IM connection $(\C,v)=\ve(\omega)$. The van Est map takes the curvature of $\omega$ to the curvature of $(\C,v)$, that is,
    \[
    \Omega^{(\C,v)}=\ve(\Omega^\omega).
    \]
  \end{corollary}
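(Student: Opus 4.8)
The plan is to combine the two commutation results already at our disposal: Theorem~\ref{thm:van_est_D}, which says that $\ve$ commutes with the horizontal exterior covariant derivatives $\D{}^\omega$ and $\D{}^{(\C,v)}$ at the level of multiplicative forms, and Proposition~\ref{prop:conn_global_inf}, which identifies the induced linear connection $\nabla$ on $\frak k$ coming from $\omega$ with the one coming from $(\C,v)=\ve(\omega)$. The key preliminary observation is that a multiplicative Ehresmann connection $\omega\in\A(G;\frak k)$ is itself, in particular, a multiplicative $\frak k$-valued $1$-form, so $\omega\in\Omega^1_m(G;s^*\frak k)$, and the square \eqref{eq:square_D} of Theorem~\ref{thm:van_est_D} applies to it.

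First I would recall the two definitions of curvature: $\Omega^\omega=\D{}^\omega\omega$ on the groupoid side (Section~\ref{sec:curvature}) and $\Omega^{(\C,v)}=\D{}^{(\C,v)}(\C,v)$ on the algebroid side (Section~\ref{sec:im_curvature}). Next I would note that the van Est map sends a multiplicative Ehresmann connection to its associated IM connection: as recorded in the remark following the definition of $\A(A;\frak k)$ in Section~\ref{sec:mec_inf}, the symbol of $\ve(\omega)$ is $\omega|_A$, so by definition $\ve(\omega)=(\C,v)$. Finally I would apply commutativity of the square to the multiplicative form $\omega$:
\[
\ve(\Omega^\omega)=\ve(\D{}^\omega\omega)=\D{}^{(\C,v)}(\ve(\omega))=\D{}^{(\C,v)}(\C,v)=\Omega^{(\C,v)},
\]
which is exactly the desired identity. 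Here the middle equality is precisely Theorem~\ref{thm:van_est_D}, valid because $\omega$ is multiplicative, and the last equality is the definition of $\Omega^{(\C,v)}$.

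There is essentially no obstacle beyond checking that the hypotheses of Theorem~\ref{thm:van_est_D} are met, namely that $\omega$ is a multiplicative form and that $(\C,v)=\ve(\omega)$ is the IM connection used to build $\D{}^{(\C,v)}$; both are immediate from the setup of the corollary. The only subtle point worth spelling out is that the horizontal exterior covariant derivative $\D{}^\omega$ uses the induced connection $\nabla$ of Proposition~\ref{prop:conn}, and $\D{}^{(\C,v)}$ uses $\nabla=\C|_{\frak k}$ from equation~\eqref{eq:nabla2}; these two connections agree by Proposition~\ref{prop:conn_global_inf}, so the two sides of the square of Theorem~\ref{thm:van_est_D} are genuinely built from compatible data. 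With that remark in place the chain of equalities above completes the proof.
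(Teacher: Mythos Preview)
Your proof is correct and is exactly the argument the paper has in mind: the corollary is stated immediately after Theorem~\ref{thm:van_est_D} with no separate proof, because it follows by applying the commutative square \eqref{eq:square_D} to the multiplicative form $\omega$ itself, just as you do. Your remark about Proposition~\ref{prop:conn_global_inf} is accurate but already baked into the hypotheses of Theorem~\ref{thm:van_est_D}, so it need not be re-verified here.
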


\subsection{IM connections as \texorpdfstring{$\vb$}{VB}-algebroids}
\label{sec:im_connections_as_vb_algebroids}
In this subsection, we will first see that the viewpoint of IM connections as $\vb$-subalgebroids of $TA$, known from \cite{mec}, is just a manifestation of the model isomorphism \eqref{eq:evaluation_map} between Weil and exterior cochains. Secondly, the main purpose of this section is to derive the desired horizontal projection $h^*$ of Weil cochains, which is done by employing the viewpoint of exterior cochains.

First note that the differential of the projection $\phi\colon A\ra B=A/\frak k$, with $\phi$ viewed as a surjective submersion between smooth manifolds, defines a short exact sequence of $\vb$-algebroids.
\begin{align}
  \label{eq:ses_vb_algebroids}
  \begin{tikzcd}[ampersand replacement=\&, column sep=large]
  	0 \& \ker\d\phi \& TA \& {TB} \& 0 \\
  	0 \& {0_M} \& TM \& TM \& 0
  	\arrow[from=1-1, to=1-2]
  	\arrow[hook, from=1-2, to=1-3]
  	\arrow[Rightarrow,from=1-2, to=2-2]
  	\arrow["{\d\phi}", from=1-3, to=1-4]
  	\arrow[Rightarrow,from=1-3, to=2-3]
  	\arrow[from=1-4, to=1-5]
  	\arrow[Rightarrow,from=1-4, to=2-4]
  	\arrow[from=2-1, to=2-2]
  	\arrow[hook, from=2-2, to=2-3]
  	\arrow[from=2-3, to=2-4]
  	\arrow[from=2-4, to=2-5]
  \end{tikzcd}
\end{align}
As was observed in \cite{mec}*{Lemma 5.2}, there is a canonical identification of $K\coloneqq \ker\d\phi$ with $A\oplus_M\frak k=\pi^*\frak k$, given by the injective map
\begin{align}
  \label{eq:identification_i_K}
  &i\colon A\oplus_M\frak k\ra TA,\quad i(\alpha,\xi)=\deriv\lambda 0 (\alpha+\lambda\xi)
\end{align}
whose image is precisely $K$ due to dimensional reasons and $\frak k=\ker\phi$. It is easy to see that the formula for $i$ can be rewritten in the following way, which will be more useful later:
\begin{align}
  \label{eq:identification_i_plus_tm}
  i(\alpha,\xi)=0_A(\alpha)+_{TM}\deriv\lambda 0 \lambda\xi.
\end{align}

Now, to obtain the correspondence of IM connections $\A(A;\frak k)$ with $\vb$-subalgebroids of $TA$ complementary to $K$, suppose we are given a splitting $v^{TA}\colon TA\ra K$ of the sequence \eqref{eq:ses_vb_algebroids}, covering the identity on $A\Ra M$. Such a splitting can be identified via $i\colon A\oplus_M\frak k\ra K$ with a Lie algebroid morphism $\nu\colon TA\ra A\oplus_M\frak k$ satisfying $\nu\circ i=\id_{\pi^*\frak k}$.
\[\begin{tikzcd}[row sep=small, column sep=small]
	TA && {A\oplus_M\frak k} \\
	& A && A \\
	TM && {0_M} \\
	& M && M
	\arrow["{\nu}", from=1-1, to=1-3]
	\arrow[from=1-1, to=2-2]
	\arrow[Rightarrow, from=1-1, to=3-1]
	\arrow[from=1-3, to=2-4]
	\arrow[Rightarrow, from=1-3, to=3-3]
	\arrow["{\id_A}"{pos=0.2}, from=2-2, to=2-4, crossing over]
	\arrow[Rightarrow, from=2-4, to=4-4]
	\arrow[from=3-1, to=3-3]
	\arrow[from=3-1, to=4-2]
	\arrow[from=3-3, to=4-4]
	\arrow[from=4-2, to=4-4]
  \arrow[Rightarrow, from=2-2, to=4-2, crossing over]
\end{tikzcd}\]
Since it is $C^\infty(A)$-linear, $\nu$ can thus be viewed as an element $\nu\in \Gamma_\ext(\MM_1,\Lambda^1\AA_1^*)$. Using the identification of models \eqref{eq:evaluation_map}, one then obtains an IM form
\[
(\C,v)=\ev(\nu)\in \Omega_{im}^1(A;\frak k),
\]
whose multiplicativity corresponds to $\nu$ being a $\vb$-algebroid morphism, and  the condition $v|_{\frak k}=\id_{\frak k}$ corresponds to $\nu\circ i=\id_{\pi^*\frak k}$. Explicitly, the pair $(\C,v)$ reads
\begin{align}
  \label{eq:C_vb_picture}
  \C(\alpha)(w)&=\pr_{\frak k}\big(\nu(\d\alpha(w))\big),\quad (w\in T_xM)\\
  v(\alpha)&=\pr_{\frak k}(\nu(\alpha_c)),\label{eq:v_vb_picture}
\end{align}
where $\alpha_c\in\Gamma(TM,TA)$ is the core section induced by $\alpha\in\Gamma(A)$. It is implicit here that the vector $\nu(\alpha_c(w))$ is independent of the choice of $w\in T_x M$, since $v^{TA}(0_{TM}(w))$ equals the zero of the core $0_A(0_x)=0_{TM}(0_x)$, so we obtain
\begin{align*}
  v^{TA}(\alpha_c(w))=v^{TA}\Big(0_{TM}(w)+_A\deriv\lambda 0 \lambda \alpha_x\Big)=v^{TA}\Big(\deriv\lambda 0 \lambda \alpha_x\Big).
\end{align*}
In other words,
\begin{align}
  \label{eq:v_core_section}
  v^{TA}\Big(\deriv\lambda 0\lambda\alpha_x\Big)=\deriv\lambda 0 \lambda v(\alpha_x).
\end{align}
Since any exterior cochain $\nu\in\Gamma_\ext(\MM_1,\AA_1^*)$ is determined by its values on the linear and core sections of $\AA_1$, identities \eqref{eq:C_vb_picture} and \eqref{eq:v_vb_picture} also tell us how an IM connection $(\C,v)$ induces $\nu$.
Summing up, any IM connection can alternatively be viewed as a wide $\vb$-subalgebroid $E\subset TA$:
\[\begin{tikzcd}
  E & A \\
  TM & M
  \arrow[from=1-1, to=1-2]
  \arrow[Rightarrow, from=1-1, to=2-1]
  \arrow[Rightarrow, from=1-2, to=2-2]
  \arrow[from=2-1, to=2-2]
\end{tikzcd}\]
The core of $E$ is $H=\ker v$, so we now view $v\colon A\ra \frak k$ as the induced splitting of the core $A\ra M$ of the tangent algebroid $TA\Ra TM$.\footnote{The fact that $\Gamma(A)\ra \Gamma(TM,TA), \alpha\mapsto\alpha_c$ is not a morphism of Lie algebras gives another perspective on why the induced core splitting $v\colon A\ra M$ is, in general, not a splitting of Lie algebroids, but merely of vector bundles.}

\subsubsection{Horizontal projection of exterior cochains}
\label{sec:derivation_horproj_weil}
The purpose of this subsection is to derive the formula for the horizontal projection $h^*$ of Weil cochains, induced by an IM connection (Definition \ref{def:hor_proj}). Let us first identify the notion of horizontality (Definition \ref{defn:horizontal_inf}) for the alternative model of exterior cochains.
\begin{definition}
  Suppose $\frak k\subset A$ is a bundle of ideals on a Lie algebroid $A\Ra M$. A cochain $\omega\in \Gamma_{\ext}(\MM_q,\Lambda^p\AA_q^*)$ is said to be \textit{horizontal}, if for any $\xi\in\Gamma(\frak k)$ and $i=1,\dots,q$, there holds
  \[
  \iota_{\Z_i(\xi)}\omega=0,
  \]
  where $\Z_i(\xi)$ is a core section, see \eqref{eq:linear_and_core_sections}.
  We denote the space of horizontal exterior cochains by 
  \[\Gamma_{\ext}(\MM_q,\Lambda^p\AA_q^*)^\Hor\subset \Gamma_{\ext}(\MM_q,\Lambda^p\AA_q^*).\]
\end{definition}
That this forms a subcomplex (for a fixed $q$) is a consequence of the fact that the evaluation map is an isomorphism of cochain complexes that clearly maps horizontal exterior cochains to horizontal Weil cochains, which themselves form a subcomplex. For clarity, we prove this directly.
\begin{proposition}
  $\Gamma_{\ext}(\MM_q,\Lambda^p\AA_q^*)^\Hor$ is a subcomplex of $\Gamma_{\ext}(\MM_q,\Lambda^p\AA_q^*)$.
\end{proposition}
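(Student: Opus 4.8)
The goal is to show that $\Gamma_{\ext}(\MM_q,\Lambda^p\AA_q^*)^\Hor$ is closed under the differential $\delta$ from equation \eqref{eq:delta_vb}. So I must take a horizontal exterior cochain $\omega$ (meaning $\iota_{\Z_i(\xi)}\omega=0$ for all core sections $\Z_i(\xi)$ with $\xi\in\Gamma(\frak k)$, $i=1,\dots,q$) and verify that $\iota_{\Z_i(\xi)}\delta\omega=0$ as well. The key idea is to use the Cartan-type formula relating $\iota$, $\L$ and $\delta$ for the algebroid differential: for a section $Z\in\Gamma(\MM_q,\AA_q)$,
\[
  \iota_Z\delta\omega = \L_{\rho_{\AA_q}(Z)}\omega - \delta(\iota_Z\omega) - \text{(terms involving brackets $[Z,X^j]_{\AA_q}$)}.
\]
Applied to $Z=\Z_i(\xi)$, the middle term $\delta(\iota_{\Z_i(\xi)}\omega)$ vanishes by horizontality of $\omega$. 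Since $\Z_i(\xi)$ is a core section and $\frak k\subset\ker\rho$, its image under the anchor $\rho_{\AA_q}$ is zero (the anchor is defined componentwise using the anchors of $TA\Ra TM$ and $\pi^*V^*\Ra V^*$, and core sections of the tangent algebroid factor of $\AA_q$ are anchored to zero vectors over $TM$, while the $V^*$-component of $\Z_i(\xi)$ is the zero covector). Hence the Lie-derivative term also vanishes. What remains is to control the bracket terms.

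Concretely, I would expand $\delta\omega$ evaluated on $\Z_i(\xi), X^0,\dots,X^{p-1}$ where the $X^j$ range over generating sections. Writing out \eqref{eq:delta_vb} with one slot filled by $\Z_i(\xi)$ and using $\rho_{\AA_q}(\Z_i(\xi))=0$, every surviving term either is $\iota_{\Z_i(\xi)}$ applied to one of the $\L_{\rho(X^j)}\omega(\dots)$ pieces — which vanishes by horizontality of $\omega$ — or involves $\omega([\Z_i(\xi),X^j]_{\AA_q},\dots)$. So the whole argument reduces to showing that $[\Z_i(\xi),X^j]_{\AA_q}$ is again (a sum of) core sections of the form $\Z_k(\xi')$ with $\xi'\in\Gamma(\frak k)$, so that horizontality of $\omega$ kills these terms too. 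This is where the bundle-of-ideals hypothesis enters: by the bracket relations \eqref{eq:bracket_Z_T}, $[\T\alpha,\Z_i\beta]_{\AA_q}=\Z_i[\alpha,\beta]$ and $[\Z_i\alpha,\Z_j\beta]_{\AA_q}=0$; since a general generating section $X^j$ is a $C^\infty(\MM_q)$-linear combination of linear sections $\T\alpha$ and core sections $\Z_k\gamma$, the Leibniz rule gives $[\Z_i(\xi),X^j]_{\AA_q}$ as a combination of $\Z_i[\alpha,\xi]$-type terms plus terms $\rho_{\AA_q}(\Z_i(\xi))(\text{function})\cdot(\dots)=0$. Because $\frak k$ is a bundle of ideals, $[\alpha,\xi]\in\Gamma(\frak k)$, so indeed $[\Z_i(\xi),X^j]_{\AA_q}$ is a core section coming from $\frak k$, and $\iota$ of it against $\omega$ vanishes.

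\textbf{Main obstacle.} The bookkeeping is the delicate part: I need to be careful that when $X^j$ is itself a core section $\Z_k(\gamma)$ with $\gamma\notin\Gamma(\frak k)$, the term $\omega([\Z_i(\xi),\Z_k(\gamma)]_{\AA_q},\dots)$ still vanishes — but \eqref{eq:bracket_Z_T} gives $[\Z_i\xi,\Z_k\gamma]_{\AA_q}=0$ outright, so this is fine; and when $X^j=\T\alpha$, I get $\Z_i[\alpha,\xi]$ which lies in $\Gamma(\frak k)$ by the ideal property. The genuinely fiddly point is handling the $C^\infty(\MM_q)$-linear combinations (the Leibniz correction terms), and checking that the anchor-applied-to-$\Z_i(\xi)$ annihilates all the resulting coefficient derivatives; this follows since $\rho_{\AA_q}(\Z_i(\xi))=0$, but it should be spelled out. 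I expect no conceptual difficulty — just a careful term-by-term expansion of \eqref{eq:delta_vb} on generators, invoking \eqref{eq:bracket_Z_T}, the ideal condition \eqref{eq:boi_inf}, and horizontality of $\omega$ — and the cleanest writeup is to reduce everything to the Cartan formula $\iota_{\Z_i(\xi)}\delta\omega = -\delta(\iota_{\Z_i(\xi)}\omega) + (\text{core-section bracket terms})$ and then kill both pieces.
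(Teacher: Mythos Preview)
Your proposal is correct and follows essentially the same argument as the paper: expand $\iota_{\Z_i(\xi)}\delta\omega$ via the Koszul formula \eqref{eq:delta_vb}, kill the anchor term using $\rho_{\AA_q}(\Z_i(\xi))=0$ (which comes down to $\frak k\subset\ker\rho$), and kill the bracket terms using \eqref{eq:bracket_Z_T} together with the ideal property $[\alpha,\xi]\in\Gamma(\frak k)$. The paper handles your ``main obstacle'' (extending from generators to arbitrary sections) in one line by invoking $C^\infty(\MM_q)$-multilinearity of $\omega$, which is equivalent to your Leibniz-rule discussion since $\rho_{\AA_q}(\Z_i(\xi))=0$ annihilates the derivative-of-coefficient terms.
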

\begin{proof}
Assuming $\iota_{\Z_i(\xi)}\omega =0$, use the definition \eqref{eq:delta_vb} of $\delta$ to get
\begin{align*}
  (\iota_{\Z_i(\xi)}\delta\omega)(X^1,\dots,X^p)=\L_{\rho_{\AA_q}(\Z_i(\xi))}\omega(X^1,\dots,X^p)+\textstyle\sum_{j=1}^p\omega([\Z_i(\xi),X^j]_{\AA_q},X^1,\dots,\widehat{X^j},\dots,X^p),
\end{align*}
for any sections $X^i\in\Gamma(\MM_q,\AA_q)$. To see the first term vanishes, note that the anchor $\smash{\rho_{\AA_q}}$ is defined componentwise, and the anchor $\rho_{TA}$ is defined as the composition of the canonical involution on $T(TM)$ with $\d\rho\colon TA\ra T(TM)$. Hence, it suffices to compute:
\[
\d\rho\big({\d 0(w)}+_A \smallderiv\lambda 0\lambda\xi\big)=\d\rho(\smallderiv\lambda 0\lambda\xi)=\smallderiv\lambda 0 \lambda\rho(\xi)=0,
\]
since $\frak k\subset\ker\rho$. For the remaining terms, using the fact that the Lie bracket on $\AA_q$ is defined componentwise, together with the definition of the Lie bracket on $TA$, we first use \eqref{eq:bracket_Z_T} to write
\[
[\Z_i(\xi),\Z_j(\alpha)]_{\AA_q}=0,\quad [\Z_i(\xi),\T(\alpha)]_{\AA_q}=\Z_i([\xi,\alpha]),
\]
for any $\alpha\in\Gamma(A)$. Notice that $[\xi,\alpha]\in\Gamma(\frak k)$ on account of $\frak k$ being a bundle of ideals, hence by multilinearity of $\omega$ with respect to $\AA_q\ra \MM_q$ and the fact that $\Gamma(\MM_q,\AA_q)$ is generated by the linear and core sections as a $C^\infty(\MM_q)$-module, the second term also vanishes.
\end{proof}
There is now a straightforward way of defining the horizontal projection of exterior cochains, given an IM connection for $\frak k$.
\begin{definition}
  \label{def:hor_proj_ext}
  Let $E\subset TA$ be an IM connection for a bundle of ideals $\frak k$ on $A\Ra M$, and let us denote by 
  $h\colon TA\ra E\hookrightarrow TA$ 
  the $\vb$-algebroid morphism over $\id_A$ and $\id_{TM}$, 
  \begin{align}
    \label{eq:horizontal_proj}
    h(X)=X-_A v^{TA}(X).
  \end{align}
  The \textit{horizontal projection} of exterior cochains is the map
\begin{align*}
  &h^*\colon \Gamma_{\ext}(\MM_q,\Lambda^p\AA_q^*)\ra \Gamma_{\ext}(\MM_q,\Lambda^p\AA_q^*)^\Hor,\\
  &(h^*\omega)_{(w_1,\dots,w_q,\zeta)}(X^1,\dots,X^p)=\omega_{(w_1,\dots,w_q,\zeta)}(hX^1,\dots,h X^p),
\end{align*}
for any vectors $X^i=(X^i_1,\dots,X^i_q,(a^i,\zeta))$ with $\d\pi(X^i_j)=w_j\in T_x M$ and $a^i\in A_x$. Here, we have denoted by $h\colon \AA_q\ra \AA_q$ the $\vb$-algebroid morphism covering $\id_A$ and $\id_{\MM_q}$, induced by \eqref{eq:horizontal_proj}. Namely, $h\colon \AA_q\ra \AA_q$ is defined on any vector $X^i$ as above by
\[hX^i=(hX_1^i,\dots,hX_q^i,(a^i,\zeta)).\] 
\end{definition}
\begin{remark}
  We will use the same letter $h$ (resp., $v$) for all three horizontal (resp., vertical) projections of vectors in $\AA_q$, $TA$, and $A$, as it will always be contextually clear which one is used.
\end{remark}
What follows is the most important property of $h^*$, the proof of which is now almost trivial. Theorem \ref{thm:derivation_hor_proj} will imply that the same holds for Weil cochains, where proving this is  tedious.
\begin{proposition}
  \label{prop:h_cochain_ext}
  Let $E\subset TA$ be an IM connection for a bundle of ideals $\frak k$ on $A\Ra M$. The horizontal projection of exterior cochains is a cochain map, that is:
  \[\delta h^*=h^*\delta.\]
\end{proposition}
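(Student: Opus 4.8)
The plan is to reduce the statement to a purely algebraic fact about the $\vb$-algebroid morphism $h\colon \AA_q\ra \AA_q$, namely that $h$ is compatible with the structure maps of $\AA_q\Ra \MM_q$ in the appropriate sense, and then feed this into the Chevalley--Eilenberg--type formula \eqref{eq:delta_vb} for $\delta$. Concretely, for a cochain $\omega\in\Gamma_\ext(\MM_q,\Lambda^p\AA_q^*)$ and sections $X^0,\dots,X^p\in\Gamma(\MM_q,\AA_q)$, I would write out both $(h^*\delta\omega)(X^0,\dots,X^p)=(\delta\omega)(hX^0,\dots,hX^p)$ and $(\delta h^*\omega)(X^0,\dots,X^p)$ using \eqref{eq:delta_vb}, and compare term by term. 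The Lie-derivative terms match provided $\rho_{\AA_q}(hX^i)=\rho_{\AA_q}(X^i)$, which holds because $h$ covers $\id_{\MM_q}$ (equivalently, $v^{TA}$ is anchored to zero, since its image $K=\ker\d\phi\subset \ker\rho_{TA}$ sits over the zero section $0_M\Ra M$; at the level of $\AA_q$ the vertical part lands in $\pi^*\frak k$-directions, which are $\rho_{\AA_q}$-annihilated because $\frak k\subset\ker\rho$). The bracket terms match provided $h$ is bracket-compatible in the sense that $[hX^i,hX^j]_{\AA_q}$ has the same $\omega$-value after horizontal projection as $h[X^i,X^j]_{\AA_q}$ does.

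The crux is therefore the bracket identity, and this is exactly where infinitesimal multiplicativity of the IM connection enters. First I would note that it suffices to verify the needed relation on the generating linear and core sections $\T\alpha,\Z_i\alpha$ of $\AA_q$ (as $C^\infty(\MM_q)$-module generators), since $\omega$ is $C^\infty(\MM_q)$-linear and $h$ is $C^\infty(\MM_q)$-linear. On such sections, the bracket is given componentwise by \eqref{eq:bracket_Z_T}, so everything is controlled by the bracket on a single copy $TA\Ra TM$ together with the trivial action bracket on $\pi^*V^*$. On $TA$, the horizontal/vertical decomposition is governed by the $\vb$-subalgebroid structure of $E\subset TA$: since $E\Ra TM$ is a $\vb$-subalgebroid, $[\Gamma(TM,E),\Gamma(TM,E)]_{TA}\subset\Gamma(TM,E)$, and since $K=\ker\d\phi$ is a bundle of ideals inside $TA$ (being the kernel of the $\vb$-algebroid morphism $\d\phi$), $[\Gamma(TM,E),\Gamma(TM,K)]_{TA}\subset\Gamma(TM,K)$ and $[\Gamma(TM,K),\Gamma(TM,K)]_{TA}\subset\Gamma(TM,K)$ (the latter because $K$ is even involutive, being the vertical bundle of the submersion $\phi$). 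These three containments are precisely the $\vb$-algebroid translation of the compatibility conditions \eqref{eq:c1}--\eqref{eq:c3}, and they give: $h[hX^i,hX^j]=[hX^i,hX^j]$ modulo a $K$-valued term that is annihilated once we plug into a horizontalized $\omega$, while the contribution of $v^{TA}X^i$ to $[X^i,X^j]$ is vertical and hence also annihilated after horizontal projection. Running this through \eqref{eq:delta_vb} shows every discrepancy between $(h^*\delta\omega)$ and $(\delta h^*\omega)$ is a sum of terms in which $\omega$ (which is horizontal, being a horizontal projection) is evaluated on a core $\Z_i(\xi)$-type vertical vector, hence vanishes.

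I would organize the write-up as: (i) record that $h\colon\AA_q\ra\AA_q$ covers $\id_A$ and $\id_{\MM_q}$ and that $\rho_{\AA_q}\circ h=\rho_{\AA_q}$, invoking $\frak k\subset\ker\rho$ and formula \eqref{eq:identification_i_plus_tm} as in the preceding proposition's proof; (ii) reduce to generators and use \eqref{eq:bracket_Z_T} to localize the bracket computation to $TA$; (iii) use that $E\subset TA$ is a $\vb$-subalgebroid and $K\subset TA$ a bundle of ideals to decompose $[hX^i,hX^j]_{\AA_q}$ and $h[X^i,X^j]_{\AA_q}$ into a common horizontal part plus vertical ($K$-valued, i.e.\ $\Z_\bullet$-direction) remainders; (iv) substitute into \eqref{eq:delta_vb} and observe that $h^*\omega$ kills all vertical arguments, so the remainders drop out and $h^*\delta\omega=\delta h^*\omega$. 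The main obstacle is purely bookkeeping: tracking the vertical remainder terms through the componentwise bracket \eqref{eq:bracket_Z_T} and confirming each one genuinely lands in a $\Z_i(\xi)$-type direction with $\xi\in\Gamma(\frak k)$ (so that horizontality of $h^*\omega$ applies); once that is set up the cancellation is immediate, and indeed this is the same cancellation that, transported across the model isomorphism \eqref{eq:evaluation_map}, yields the much more computational Proposition \ref{prop:h_cochain} for Weil cochains.
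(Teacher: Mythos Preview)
Your approach is correct and arrives at the same conclusion, but it is considerably more laborious than the paper's argument. The paper's proof is a one-liner: it simply invokes that $h\colon\AA_q\to\AA_q$ is (by construction, see Definition~\ref{def:hor_proj_ext}) a Lie algebroid morphism covering $\id_{\MM_q}$, which means precisely
\[
\rho_{\AA_q}(hX)=\rho_{\AA_q}(X),\qquad h[X,Y]_{\AA_q}=[hX,hY]_{\AA_q},
\]
and these two identities plugged into the Chevalley--Eilenberg formula \eqref{eq:delta_vb} give $\delta h^*=h^*\delta$ immediately.

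What you do is essentially re-derive the second identity from first principles (via $E$ being a subalgebroid and $K$ an ideal), which is fine, but your phrasing about ``vertical remainders being annihilated once we plug into a horizontalized $\omega$'' is misleading. In the term $\omega([hX^i,hX^j],hX^0,\dots)$ of $(h^*\delta\omega)(X^\bullet)$, it is $\omega$ itself---not $h^*\omega$---that is being evaluated, and $\omega$ need not be horizontal. So you cannot rely on horizontality to kill remainders there; you actually need the \emph{exact} equality $[hX,hY]=h[X,Y]$. Fortunately this holds on the nose: $[hX,hY]\in\Gamma(E)$ since $E$ is a subalgebroid, hence $h[hX,hY]=[hX,hY]$; and $[X,Y]-[hX,hY]\in\Gamma(K)$ since $K$ is an ideal, hence $h[X,Y]=h[hX,hY]=[hX,hY]$. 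There are no remainders. Once you recognize this, your steps (i)--(iv) collapse into the paper's two-line proof, and the reduction to generators is unnecessary.
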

\begin{proof}
  Inspecting the defining equation \eqref{eq:delta_vb} of $\delta$, we observe that this is a direct consequence of $h\colon  \AA_q\ra \AA_q$ being a Lie algebroid morphism, since this just means
  \begin{align*}
    \rho_{\AA_q}(h X)=\rho_{\AA_q}(X),\quad h[X,Y]_{TA}=[hX,hY]_{TA},
  \end{align*}
  for any sections $X,Y\in\Gamma(\MM_q,\AA_q)$.
\end{proof}

We can now start deriving the formula for $h^*$ on Weil cochains. Looking at equation \eqref{eq:def_exterior_weil_generators}, we see that to do so, we need to see how the horizontal projection acts on the generators of the module $\Gamma(\MM_q,\AA_q)$. The following lemma shows how to express the horizontal projection of any generator as a linear combination of the generators, with respect to the structure on $\AA_q\ra\MM_q$.
\begin{lemma}
  \label{lem:h_on_generators}
  Let $E\subset TA$ be an IM connection for a bundle of ideals $\frak k$ on $A$. For any section $\alpha\in\Gamma(A)$, the linear and core sections $\T\alpha,\Z_i\alpha\in\Gamma(\MM_q,\AA_q)$ satisfy:
  \begin{align*}
    h(\T\alpha)(w_1,\dots,w_q,\zeta)&=\T\alpha(w_1,\dots,w_q,\zeta)-_{\MM_q}\textstyle\sum_i^{\MM_q}\Z_i(\C\alpha(w_i))(w_1,\dots,w_q,\zeta),\\
    h(\Z_i\alpha)&=\Z_i(h\alpha),
  \end{align*}
  for any vectors $w_j\in T_x M$ and $\zeta\in V_x^*$. Here, the superscript on the sum indicates that the summation is with respect to $\AA_q\ra {\MM_q}$.
\end{lemma}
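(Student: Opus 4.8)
The plan is to prove Lemma \ref{lem:h_on_generators} by unwinding the definition \eqref{eq:horizontal_proj} of the horizontal projection $h=\id -_A v^{TA}$ on the generating sections, component by component, using the fact that $\AA_q$ is the Whitney sum (over $A$) of $q$ copies of $TA$ together with $\pi^*V^*$, so that $h\colon\AA_q\ra\AA_q$ acts componentwise exactly as in the last line of Definition \ref{def:hor_proj_ext}. The two assertions are independent, so I would handle them separately.

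For the core sections, I would first recall from \eqref{eq:linear_and_core_sections} that $\Z_i\alpha$ has all components equal to zero sections of $TM$, except the $i$-th, which is $0_{TM}(w_i)+_A\smallderiv\lambda0\lambda\alpha_x$; the $\pi^*V^*$-component is $0_\zeta$. Since $h$ is componentwise and $h(0_{TM}(w_j))=0_{TM}(w_j)$ (the zero section of $TM$ lies in $E$, as $E$ is wide), the only thing to check is that $h$ applied to the $i$-th component gives $0_{TM}(w_i)+_A\smallderiv\lambda0\lambda(h\alpha_x)$. Writing $0_{TM}(w_i)+_A\smallderiv\lambda0\lambda\alpha_x=\alpha_c(w_i)$ as a core section of $TA\Ra TM$, and using the identification \eqref{eq:identification_i_plus_tm} together with \eqref{eq:v_core_section}, one has $v^{TA}(\alpha_c(w_i))=v^{TA}(\smallderiv\lambda0\lambda\alpha_x)=\smallderiv\lambda0\lambda v(\alpha_x)$, hence $h(\alpha_c(w_i))=\alpha_c(w_i)-_A\smallderiv\lambda0\lambda v(\alpha_x)$. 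A small computation with the interchange laws of the double vector bundle $TA$ (subtracting the core vector $\smallderiv\lambda0\lambda v(\alpha_x)$ over $A$ leaves the $TM$-part $0_{TM}(w_i)$ untouched and replaces $\alpha_x$ by $\alpha_x-v(\alpha_x)=h(\alpha_x)$) then gives $h(\alpha_c(w_i))=(h\alpha)_c(w_i)$, which is exactly the $i$-th component of $\Z_i(h\alpha)$; this proves the second identity.

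For the linear sections, recall $\T\alpha$ has $j$-th component $\d\alpha(w_j)\in T_{\alpha_x}A$ and $\pi^*V^*$-component $\chi_\alpha(\zeta)=(\alpha_x,\zeta)$. Applying $h$ componentwise, I must compute $h(\d\alpha(w_j))=\d\alpha(w_j)-_A v^{TA}(\d\alpha(w_j))$. By \eqref{eq:C_vb_picture}, the $\frak k$-part of $\nu(\d\alpha(w_j))$ (equivalently of $v^{TA}(\d\alpha(w_j))$ under the identification $i$) is $\C\alpha(w_j)$, while the $A$-part of $v^{TA}(\d\alpha(w_j))$ is $\alpha_x$ itself since $v^{TA}$ covers $\id_A$ and $q_A^{TA}(\d\alpha(w_j))=\alpha_x$. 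Using \eqref{eq:identification_i_plus_tm}, $v^{TA}(\d\alpha(w_j))=0_A(\alpha_x)+_{TM}\smallderiv\lambda0\lambda\,\C\alpha(w_j)$. Subtracting this over $A$ from $\d\alpha(w_j)$: the $TM$-part becomes $\d(q_A)(\d\alpha(w_j))=w_j$, minus the $TM$-part $0$ of $0_A(\alpha_x)$, but then subtracting $\smallderiv\lambda0\lambda\C\alpha(w_j)$ over $A$ adds a core vector. Carefully bookkeeping the double structure, one recognizes the result as $\d\alpha(w_j)$ with a core section $\C\alpha(w_j)\in\frak k$ subtracted, which is precisely what $\Z_j(\C\alpha(w_j))$ contributes in its $j$-th slot; assembling all $q$ slots and noting the $\pi^*V^*$-component $(\alpha_x,\zeta)$ is unchanged (it is not subject to horizontal projection, as $h$ is the identity there) yields $h(\T\alpha)=\T\alpha-_{\MM_q}\sum_i^{\MM_q}\Z_i(\C\alpha(w_i))$, the first identity.

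The main obstacle I anticipate is purely bookkeeping: keeping track of which of the two vector-bundle structures on $TA$ (the tangent structure over $A$ versus the one over $TM$ obtained by differentiating $A\ra M$) a given addition or scalar multiplication refers to, and correctly invoking the interchange laws recorded in \cite{mackenzie_duality}*{Equations (2)--(7)} and the identity \eqref{eq:identification_i_plus_tm} when passing between them. There is no conceptual difficulty beyond the observation that $h$ is componentwise and that $v^{TA}$ on core sections is governed by \eqref{eq:v_core_section}; the rest is a careful but routine manipulation of the double vector bundle structure of $TA$ and the Whitney-sum structure of $\AA_q$. Once this lemma is in hand, Definition \ref{def:hor_proj} follows by substituting these expressions into the defining formula \eqref{eq:def_exterior_weil_generators}--\eqref{eq:def_exterior_weil_ck} for the evaluation map and expanding by multilinearity, and the cochain-map property (Proposition \ref{prop:h_cochain}) is inherited from Proposition \ref{prop:h_cochain_ext} via the model isomorphism.
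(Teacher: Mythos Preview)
Your proposal is correct and follows essentially the same approach as the paper: for core sections you use \eqref{eq:v_core_section} and the interchange laws (the paper simply says this case is ``clear''), and for linear sections you compute $h\d\alpha(w_j)$ by identifying $v^{TA}(\d\alpha(w_j))=0_A(\alpha_x)+_{TM}\smallderiv\lambda0\lambda\,\C\alpha(w_j)$ via \eqref{eq:identification_i_plus_tm} and \eqref{eq:C_vb_picture}, then carefully subtract over $A$ using the compatibility of the two vector bundle structures on $TA$---exactly the computation carried out in the paper leading to equation \eqref{eq:horizontal_proj_dalpha_TM}. Your anticipated obstacle (bookkeeping the two additions) is precisely what the paper's four-line display resolves explicitly.
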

\begin{proof}
  The formula for the core sections is clear from the definition of $\Z_i\alpha$ and equation \eqref{eq:v_core_section}. For the linear sections, we first write
  \begin{align*}
    h(\T\alpha)(w_1,\dots,w_q,\zeta)&=(h\d\alpha(w_1),\dots,h\d\alpha(w_q),\chi_\alpha(\zeta)),
  \end{align*}
  so we have to compute $h\d\alpha(w_i)$ for any $w_i\in T_xM$. Observe there holds
  \begin{align}
    h\d\alpha(w_i)&=\d\alpha(w_i)-_A v\d\alpha(w_i)\nonumber\\
    &=\big({\d\alpha(w_i)}+_{TM} 0_{TM}(w_i)\big)-_A \big(0_A(\alpha_x)+_{TM}\smallderiv\lambda0 \lambda \C(\alpha)(w_i)\big)\nonumber\\
    &=\big({\d\alpha(w_i)}-_{A} 0_{A}(\alpha_x)\big)+_{TM}\big(0_{TM}(w_i)-_A \smallderiv\lambda0 \lambda \C(\alpha)(w_i)\big)\nonumber\\
    &=\d\alpha(w_i)-_{TM}\big(0_{TM}(w_i)+_A \smallderiv\lambda0 \lambda \C(\alpha)(w_i)\big)\label{eq:horizontal_proj_dalpha_TM}
  \end{align}
  where we have used identities \eqref{eq:identification_i_plus_tm} and \eqref{eq:C_vb_picture} on the second equality and compatibility of the two vector bundle structures on $TA$ on the third. On the fourth, we have used that the two bundle structures coincide on the core $A\ra M$, so in particular, the two scalar multiplications coincide on the vector $\smallderiv\lambda 0 \lambda\C(\alpha)(w_i)$. Now, since the addition (over $\MM_q$) is componentwise, we obtain
  {\begin{align*}
    &h(\T\alpha)(w_1,\dots,w_q,\zeta)\\
    &=\T\alpha(w_1,\dots,w_q,\zeta)
    -_{\MM_q}\big(0_{TM}(w_1)+_A \smallderiv\lambda0 \lambda \C(\alpha)(w_1),\dots,0_{TM}(w_q)+_A \smallderiv\lambda0 \lambda \C(\alpha)(w_q), 0_\zeta \big)\\
    &=\T\alpha(w_1,\dots,w_q,\zeta)
    -_{\MM_q}\textstyle\sum_i^{\MM_q}\big(0_{TM}(w_1),\dots, 0_{TM}(w_i)+_A \smallderiv\lambda0 \lambda \C(\alpha)(w_i),\dots, 0_{TM}(w_q),0_\zeta\big),
  \end{align*}
  }from which the claimed formula for linear sections follows.
\end{proof}
\begin{remark}
  Intuitively, equation \eqref{eq:horizontal_proj_dalpha_TM} should be seen as a formula for how to horizontally project $\d\alpha$ within the fibre of $TA\ra TM$ instead of $TA\ra A$. As a consequence, it tells us how to horizontally project $\T\alpha$ within the fibre of $\AA_q\ra \MM_q$ instead of $\AA_q\ra A$.
  Henceforth, we will denote the \textit{$\MM_q$-vertical component} of $\T\alpha$ by
  \begin{align*}
    u(\T\alpha)\in\Gamma(\MM_q,\AA_q),\quad u(\T\alpha)(w_1,\dots,w_q,\zeta)=\textstyle\sum_i^{\MM_q}\Z_i(\C\alpha(w_i))(w_1,\dots,w_q,\zeta),
  \end{align*}
  for any vectors $w_i\in T_x M$ and $\zeta\in V^*_x$.
\end{remark}
  To obtain some insight for deriving the general formula for $h^*$ on Weil cochains, let us first deal with the simplest nontrivial case: $p=1$. By the last lemma, for any $\omega\in \Gamma_\ext(\MM_q,\Lambda^1\AA_q^*)$,
  \begin{align*}
    \tilde c_0(h^*\omega)(\alpha)=\omega(h\T\alpha)=\omega(\T\alpha)-\omega(u\T\alpha),
  \end{align*}
  where the first term is just $\tilde c_0(\omega)(\alpha)$. The second term reads
  \begin{align}
    \omega(u\T\alpha)(w_1,\dots,w_q,\zeta)&=\textstyle\sum_i\omega\big(\Z_i(\C\alpha(w_i))\big)(w_1,\dots,w_q,\zeta)\nonumber\\
    &=\textstyle\sum_i (-1)^{i+1}\omega\big(\Z_1(\C\alpha(w_i))\big)(w_i,w_1,\dots,\widehat{w_i},\dots, w_q,\zeta)\label{eq:zi_expression_minus}\\
    &=\tfrac 1{(q-1)!}\textstyle\sum_{\sigma\in S_q} \sgn(\sigma)\omega\big(\Z_1(\C\alpha(w_{\sigma(1)}))\big)(w_{\sigma(1)},\dots, w_{\sigma(q)},\zeta),\label{eq:zi_to_z1}
  \end{align}
  where we have used the skew-symmetry of $\omega$ with respect to $\AA_q\ra A$ in the second equality. In the last line, we have merely rewritten the expression \eqref{eq:zi_expression_minus} with permutations because this form will be useful in the general proof. In any case, by Remark \ref{rem:any_vectors_ck}, the line \eqref{eq:zi_expression_minus} equals 
  \[
    \omega(u\T\alpha)(w_1,\dots,w_q,\zeta)=\textstyle\sum_i (-1)^{i+1}c_1(\omega)(\C\alpha(w_i))(w_1,\dots,\widehat{w_i},\dots,w_q,\zeta),
  \]
  which finally yields the formula \eqref{eq:h_p=1}, that is, \[c_0(h^*\omega)(\alpha)=c_0(\omega)(\alpha)-c_1(\omega)\wedgedot\C\alpha.\] At the level of symbols, $c_1(h^*\omega)(\beta)=c_1(\omega)(h\beta)$ is also clear from the last lemma. We now provide the desired derivation for $h^*$ on Weil cochains for the general case $p\geq 1$, preliminarily noting that exactly the same ideas will be used as above, though the procedure will be combinatorially heavier.
\begin{theorem}
  \label{thm:derivation_hor_proj}
  Let $E\subset TA$ be an IM connection for a bundle of ideals $\frak k$ on $A$. The model isomorphism $\ev\colon \Gamma_\ext(\MM_q,\Lambda^p\AA_q^*)\ra W^{p,q}(A;\frak k)$ commutes with the horizontal projections,
  \begin{align*}
    \ev\circ h^*=h^*\circ \ev,
  \end{align*}
  where $h^*$ on the left and the right side correspond to Definitions \ref{def:hor_proj_ext} and \ref{def:hor_proj}, respectively.
\end{theorem}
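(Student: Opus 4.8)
The statement asserts the commutativity of a square relating the two horizontal projections through the model isomorphism $\ev$. The strategy is to reduce everything to the behaviour of $h^*$ on the generating linear and core sections of $\Gamma(\MM_q,\AA_q)$, which is exactly what Lemma \ref{lem:h_on_generators} records, and then to trace this through the definition \eqref{eq:def_exterior_weil_generators}--\eqref{eq:def_exterior_weil_ck} of $\ev$. Since both $\ev\circ h^*$ and $h^*\circ\ev$ produce Weil cochains, and a Weil cochain is uniquely determined by its leading term together with iterated use of the Leibniz identity (or, in the troublesome case $q>\dim M$, by the pullback trick of Remark \ref{rem:action_trick}), it suffices to check agreement of the leading terms $c_0$ together with the symbols $c_1,\dots,c_p$ — more precisely, to check $c_k(\ev h^*\omega) = c_k(h^*\ev\omega)$ for all $k$ on the generators.

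First I would set up the computation of $\tilde c_k(h^*\omega)(\ul\alpha\|\ul\beta) = (h^*\omega)(\Z_1\beta_1,\dots,\Z_k\beta_k,\T\alpha_1,\dots,\T\alpha_{p-k}) = \omega(h\Z_1\beta_1,\dots,h\Z_k\beta_k,h\T\alpha_1,\dots,h\T\alpha_{p-k})$. By Lemma \ref{lem:h_on_generators}, $h\Z_i\beta_i = \Z_i(h\beta_i)$ and $h\T\alpha_j = \T\alpha_j -_{\MM_q} u(\T\alpha_j)$, where $u(\T\alpha_j)$ is the $\MM_q$-vertical component expressed as $\sum_i^{\MM_q}\Z_i(\C\alpha_j(w_i))$. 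Expanding the multilinear form $\omega$ over the $\MM_q$-structure along each of the $p-k$ subtractions $\T\alpha_j -_{\MM_q} u(\T\alpha_j)$ produces a sum over subsets $S\subseteq\{1,\dots,p-k\}$: for $j\in S$ we insert $-u(\T\alpha_j)$, for $j\notin S$ we insert $\T\alpha_j$. The term with $|S|=j-k$ contributes, after distributing the inner sum defining $u$, after using the skew-symmetry of $\omega$ with respect to $\AA_q\ra A$ (the same $\Z_i\to\Z_1$ move as in the $p=1$ computation \eqref{eq:zi_expression_minus}--\eqref{eq:zi_to_z1}), and after invoking Remark \ref{rem:any_vectors_ck} to feed arbitrary vectors into the evaluation, exactly a term of the form $(\sgn\sigma)\,c_j(\omega)(\alpha_{\sigma(\cdots)}\| h\ul\beta,\cdot)\wedgedot(\C\alpha_{\sigma(1)},\dots,\C\alpha_{\sigma(j-k)})$ summed over $(j-k,p-j)$-shuffles $\sigma$, with sign $(-1)^{j-k}$ coming from the $(-1)$'s in the $u$-insertions. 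Matching the bookkeeping against Definition \ref{def:hor_proj} then gives precisely the stated formula for $(h^*c)_k$.

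The main obstacle is the combinatorial matching of signs and the correct appearance of the $\wedgedot$ pairing: the $\wedgedot$ operation, via its definition \eqref{eq:pairing} and the properties in Lemma \ref{lem:wedgedot} (especially (ii), which identifies $\gamma\wedgedot(\vartheta\otimes\xi)$ with a genuine wedge against $\gamma(\xi,\cdot)$), is exactly what encodes "evaluate one symmetric slot of $c_j$ on $\C\alpha$ and antisymmetrize against the form arguments", so the crux is to verify that the inner sum $\sum_i^{\MM_q}\Z_i(\C\alpha_j(w_i))$, once passed through $\omega$ and re-symmetrized using Remark \ref{rem:any_vectors_ck}, reproduces precisely this antisymmetrization with the $1/k!$-type normalization built into \eqref{eq:pairing}. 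One should carry out the case $p=1$ carefully in full (this is already essentially done in the text preceding the theorem), then the case $p=2$ to see how multiple $\T\alpha_j$'s interact and how the shuffle signs emerge, and finally present the general case as an induction or a direct shuffle-sum manipulation. Once leading terms and symbols match on generators, well-definedness and the Leibniz identity for $(h^*c)_k$ follow automatically since $\ev$ and the already-established exterior $h^*$ are both compatible with the module structures; in particular this simultaneously yields the direct proof of well-definedness of Definition \ref{def:hor_proj} and, combined with Proposition \ref{prop:h_cochain_ext}, the cochain-map property of Proposition \ref{prop:h_cochain}.
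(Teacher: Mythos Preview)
Your plan is correct and follows the paper's approach: expand $\omega(h\Z_1\beta_1,\dots,h\Z_k\beta_k,h\T\alpha_1,\dots,h\T\alpha_{p-k})$ multilinearly over $\MM_q$ using Lemma \ref{lem:h_on_generators}, group by shuffles, and then identify each resulting summand with the $\wedgedot$-expression by iterating the $\Z_i\to\Z_\ell$ skew-symmetry move on each $u\T\alpha_j$ in turn. The one technical ingredient the paper makes explicit that you do not mention is the vanishing $\iota_{\Z_i\alpha}\iota_{\Z_i\beta}\omega=0$ (a consequence of $\omega$ being linear over both bundle structures), which is precisely what allows the inner sum in $u\T\alpha_{j+1}$ to be restricted to indices $i\geq j+1$ once $u\T\alpha_1,\dots,u\T\alpha_j$ have already been converted to $\Z_1,\dots,\Z_j$; this is where the factorial normalizations matching \eqref{eq:wedgedot_multiple} come from, so make it explicit when you carry out the $p=2$ case.
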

\begin{proof}
  Let us first deal with the leading term. For $\omega\in\Gamma_\ext(\MM_q,\Lambda^p\AA_q^*)$, we begin by computing
\begin{align}
  \label{eq:grouped_rearranged}
\begin{split}
  &\tilde c_0(h^*\omega)(\alpha_1,\dots,\alpha_p)=\omega(h\T\alpha_1,\dots,h\T\alpha_p)=\omega\big(\T\alpha_1-_{\MM_q}u\T\alpha_1,\dots,\T\alpha_p-_{\MM_q}u\T\alpha_p\big)\\
  &=\textstyle\sum_{j=0}^p(-1)^j\textstyle\sum\limits_{\mathclap{\qquad\sigma\in S_{(j,p-j)}}}\sgn(\sigma)\,\omega(u\T\alpha_{\sigma(1)},\dots, u\T\alpha_{\sigma(j)},\T\alpha_{\sigma(j+1)},\dots,\T\alpha_{\sigma(p)}),
\end{split}
\end{align}
  where we have used multilinearity of $\omega$ with respect to $\AA_q\ra\MM_q$ and grouped the terms with the same number of $\MM_q$-vertical arguments. The arguments have also been rearranged using skew-symmetry with respect to $\AA_q\ra\MM_q$, so that the $\MM_q$-vertical arguments appear first. To establish the theorem at the level of leading terms, we thus have to show
  \begin{align}
    \label{eq:omega_cj_wedgedot}
    \omega\big(u\T\alpha_1,\dots,u\T\alpha_j,\T\alpha_{j+1},\dots,\T\alpha_p\big)=c_j(\omega)(\alpha_{j+1},\dots,\alpha_p)\wedgedot (\C\alpha_1,\dots,\C\alpha_j).
  \end{align}
  The plan now is to transform each of the sections $u\T\alpha_i$ on the left-hand side into a core section of $\AA_q\Ra\MM_q$. We do this step by step, starting with $u\T\alpha_1$ and proceeding towards the right. For any vectors $w_1,\dots,w_q\in T_x M$ and $\zeta\in V^*_x$, we have
  \begin{align}
    &\omega\big(u\T\alpha_1,\dots,u\T\alpha_j,\T\alpha_{j+1},\dots,\T\alpha_p\big)(w_1,\dots,w_q,\zeta)\label{eq:omega_u_Talpha}\\
    &=\omega\big(\textstyle\sum_i\Z_i(\C\alpha_1(w_i)),u\T\alpha_2,\dots,u\T\alpha_j,\T\alpha_{j+1},\dots,\T\alpha_p\big)(w_1,\dots,w_q,\zeta)\nonumber\\
    &=\tfrac 1{(q-1)!}\textstyle\sum_{{\sigma\in S_q}}\sgn(\sigma) \omega\big(\Z_1(\C\alpha_1(w_{\sigma(1)})),\underbrace{u\T\alpha_2}_{\mathclap{\textstyle\sum_{i\geq 2}\Z_i(\C\alpha_2(w_{\sigma(i)}))}},\dots,u\T\alpha_j,\T\alpha_{j+1},\dots,\T\alpha_p\big)(w_{\sigma(1)},\dots,w_{\sigma(q)},\zeta),\nonumber
  \end{align}
  where we have used Lemma \ref{lem:h_on_generators} on the first equality and a similar identity as \eqref{eq:zi_to_z1} on the second equality (see Intermezzo \ref{intermezzo:intermediate_eq} below). Importantly, the under-brace follows from
  \begin{align}
    \label{eq:Z_Z_omega}
    \iota_{\Z_i\alpha}\iota_{\Z_i\beta}\omega=0,
  \end{align}
  for any $\alpha,\beta\in\Gamma(A)$, as a consequence of multilinearity of $\omega$ with respect to both vector bundle structures, and the general fact that linear differential forms on vector bundles vanish whenever two vertical vectors are inserted.
 \begin{intermezzo}
  \label{intermezzo:intermediate_eq}
   We observe that the following generalization of the equation \eqref{eq:zi_to_z1} holds:
   for any $\omega\in\Gamma_\ext(\MM_q,\Lambda^p\AA_q^*)$ and for any given $\ell\geq 1$, skew-symmetry of $\omega$ with respect to $\AA_q\ra A$ implies
 \begin{align*}
 \begin{split}
       &\omega\big(\textstyle\sum_{i\geq \ell}^{\MM_q}\Z_i(\C\alpha(w_i)),X^2,\dots,X^p\big)(w_1,\dots,w_q,\zeta)\\
       &=\textstyle\sum_{i\geq \ell}(-1)^{i-\ell}\omega\big(\Z_\ell(\C\alpha(w_i)),X^2,\dots,X^p\big)(w_1,\dots,w_{\ell-1},w_i,w_\ell,\dots,\widehat{w_i},\dots,w_q,\zeta)\\
       &=\tfrac 1{(q-\ell)!}\textstyle\sum_{{\sigma\in S_q^{\ell-1}}}\sgn(\sigma)\omega\big(\Z_\ell(\C\alpha(w_{\sigma(\ell)})),X^2,\dots,X^p\big)(w_{\sigma(1)},\dots,w_{\sigma(q)},\zeta),
 \end{split}
 \end{align*}
 for any vectors $X^i\in\AA_q$ over $(w_1,\dots,w_q,\zeta)$, where $S_q^{\ell-1}\subset S_q$ denotes the permutations which restrict to the identity on $\set{1,\dots, \ell-1}$. In the second line, we have used Lemma \ref{lem:h_on_generators}.
 \end{intermezzo}
\noindent By the intermezzo, the computation \eqref{eq:omega_u_Talpha} continues as
\begin{align*}
  &\eqref{eq:omega_u_Talpha}=\tfrac 1{(q-1)!(q-2)!}
    \textstyle\sum_{{\sigma\in S_q}}
    \textstyle\sum_{{\tilde\sigma\in S_q^1}}
    \sgn(\sigma)\sgn(\tilde\sigma)\cdot
    \\
    &\hspace{1em}
    \cdot\omega\big(\Z_1(\C\alpha_1(w_{\sigma(1)})),\Z_2(\C\alpha_2(w_{\sigma\tilde \sigma(2)})),u\T\alpha_3,\dots,\T\alpha_{j+1},\dots,\T\alpha_p\big)(w_{\sigma\tilde\sigma(1)},w_{\sigma\tilde\sigma(1)},\dots,w_{\sigma\tilde\sigma(q)},\zeta).
\end{align*}
By introducing $\tau=\sigma\tilde\sigma$, the condition $\tilde\sigma(1)=1$ becomes $\tau(1)=\sigma(1)$, so we can replace the second sum $\sum_{{\tilde\sigma\in S_q^1}}$ with $\sum_{\tau\in S_q, \tau(1)=\sigma(1)}$. The summand is then independent of $\sigma$, and since there are $(q-1)!$ permutations $\sigma$ in $S_q$ with a fixed $\sigma(1)$, we get
\begin{align*}
  \eqref{eq:omega_u_Talpha}=\tfrac 1{(q-2)!}\textstyle\sum\limits_{\smash[b]{\mathclap{\tau\in S_q}}}\sgn(\tau)\omega\big(\Z_1(\C\alpha_1(w_{\tau(1)})),\Z_2(\C\alpha_2(w_{\tau(2)})),u\T\alpha_3,\dots,\T\alpha_p\big)(w_{\tau(1)},\dots,w_{\tau(q)},\zeta).
\end{align*}
Repeating this procedure on each of the remaining sections $u\T\alpha_i$, we are left with
\begin{align*}
  \eqref{eq:omega_u_Talpha}&=\tfrac 1{(q-j)!}\textstyle\sum\limits_{\mathclap{\tau\in S_q}}\sgn(\tau)\omega\big(\Z_1(\C\alpha_1(w_{\tau(1)})),\dots,\Z_j(\C\alpha_j(w_{\tau(j)})),\T\alpha_{j+1}\dots,\T\alpha_p\big)(w_{\tau(1)},\dots,w_{\tau(q)},\zeta)\\
  &=\tfrac 1{(q-j)!}\textstyle\sum\limits_{\mathclap{\tau\in S_q}}\sgn(\tau)c_j(\omega)(\alpha_{j+1},\dots,\alpha_{p}\| \C\alpha_{1}(w_{\tau(1)}),\dots,\C\alpha_{j}(w_{\tau(j)}))(w_{\tau(j+1)},\dots,w_{\tau(p)}),
\end{align*}
where we have used Remark \ref{rem:any_vectors_ck} in the second equality. Comparing this expression with \eqref{eq:wedgedot_multiple} proves the wanted identity \eqref{eq:omega_cj_wedgedot}, in turn proving the theorem at the level of leading terms. 

Finally, for correction terms, first observe that a similar identity as in \eqref{eq:grouped_rearranged} holds:
\begin{align*}
  &\tilde c_k(h^*\omega)(\alpha_1,\dots,\alpha_{p-k}\|\beta_1,\dots,\beta_k)=\omega(h\Z_1\beta_1,\dots,h\Z_k\beta_k, h\T\alpha_1,\dots,h\T\alpha_{p-k})\\
  &=\textstyle\sum_{j=k}^p(-1)^{j-k}\textstyle\sum\limits_{\mathclap{\qquad\sigma\in_{(j-k,p-j)}}}\sgn(\sigma)\,\omega(h\Z_1\beta_1,\dots,h\Z_k \beta_k,u\T\alpha_{\sigma(1)},\dots, u\T\alpha_{\sigma(j)},\T\alpha_{\sigma(j+1)},\dots,\T\alpha_{\sigma(p-k)}).
\end{align*}
We now have to prove the following equality for any sections $\alpha_i,\beta_j\in\Gamma(A)$:
\begin{align*}
  \omega(h\Z_1\beta_1,\dots,h\Z_k\beta_k,\,&u\T\alpha_1,\dots,u\T\alpha_j,\T\alpha_{j+1},\dots,\T\alpha_{p-k})\\
  &=c_j(\omega)(\alpha_{j+1},\dots,\alpha_{p-k}\|h\beta_1,\dots,h\beta_k)\wedgedot (\C\alpha_1,\dots,\C\alpha_j).
\end{align*}
This holds since $h(\Z_i\beta)=\Z_i(h\beta)$ for any $\beta\in\Gamma(A)$ by Lemma \ref{lem:h_on_generators}, by using the identical procedure as for the leading term.
\end{proof}
\section{Obstruction to existence of multiplicative connections}

In this section, we develop the necessary and sufficient condition for the existence of (infinitesimal) multiplicative Ehresmann connections. This generalizes the case for groupoid extensions \cite{gerbes}*{Proposition 6.13}, and establishes the infinitesimal analogue. As we will see, the obstruction is fairly simple to obtain as soon as one considers the right cohomology---that of the horizontal subcomplexes of the Bott--Shulman--Stasheff and Weil complex introduced in the previous sections.
\subsection*{The global case}
\begin{definition}
Let $\frak k$ be a bundle of ideals on a Lie groupoid $G\rra M$. The \textit{horizontal cohomology} of $\frak k$-valued forms on $G$ is the cohomology of the horizontal subcomplex from Definition \ref{defn:horizontal}, i.e.,
\begin{align*}
H^{p,q}(G;\frak k)^\Hor\coloneqq H^p\big(\Omega^{\bullet,q}(G;\frak k)^\Hor,\delta\big).
\end{align*}
\end{definition}
We now construct the obstruction class for the existence of multiplicative connections for a bundle of ideals $\frak k$. Suppose $E\subset TG$ is any (not necessarily multiplicative) distribution on $G$ complementing $K$, and let $\omega\in \Omega^1(G;s^*\frak k)$ be the corresponding 1-form defined by equation \eqref{eq:omega}, i.e., $\omega$ is the vertical projection under the isomorphism $K\cong s^*\frak k$. Since $\delta\circ\delta=0$, we obtain a cocycle $\delta\omega$, which is horizontal, i.e., it vanishes on $K^{(2)}$:
\begin{align*}
\delta\omega(X,Y)&=\omega(Y)-\omega(\d m(X,Y))+\Ad_{h^{-1}}\omega(X)\\
&=\d(L_{h^{-1}})Y-\d{(L_{h^{-1}g^{-1}})}(\d(L_g)(Y)+\d(R_h)(X))+\Ad_{h^{-1}}\d(L_{g^{-1}})X=0,
\end{align*}
for any $X\in K_g$ and $Y\in K_h$ where $s(g)=t(h)$. Hence, we may define
\begin{align}
\label{eq:obs}
\obs_{\A(G;\frak k)}=[\delta \omega] \in H^{2,1}(G;\frak k)^\Hor.
\end{align}
The difference of two 1-forms $\omega$ and $\tilde\omega$, corresponding to different distributions, is horizontal since their restrictions to $K$ equal the Maurer--Cartan form, $\tilde\omega|_K=\omega|_K=\Theta_{MC}|_K$. Therefore, the class above is independent of the choice of the distribution $E$.
\begin{proposition}
\label{prop:existence}
A multiplicative Ehresmann connection on a Lie groupoid $G$ for a bundle of ideals $\frak k$ exists if and only if the class $\mathrm{obs}_{\A(G;\frak k)}$ vanishes.
\end{proposition}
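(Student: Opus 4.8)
The plan is to show that the cohomology class $\obs_{\A(G;\frak k)}=[\delta\omega]$ vanishes precisely when one of the chosen complements $E\subset TG$ can be replaced by a multiplicative one. The forward direction is immediate: if a multiplicative Ehresmann connection $\omega_0\in\A(G;\frak k)$ exists, then $\omega_0$ is one of the allowed $1$-forms defining the class (it is the vertical projection under $K\cong s^*\frak k$ for the distribution $E_0=\ker\omega_0$), and multiplicativity means exactly $\delta\omega_0=0$; hence $\obs_{\A(G;\frak k)}=[\delta\omega_0]=0$.

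For the converse, suppose $\obs_{\A(G;\frak k)}=0$. Pick any distribution $E\subset TG$ complementing $K$, with associated $1$-form $\omega\in\Omega^1(G;s^*\frak k)$ satisfying $\omega|_{\frak k}=\id_{\frak k}$ (equivalently $\omega|_K=\Theta_{MC}|_K$), but $\omega$ not necessarily multiplicative. We have already checked $\delta\omega\in\Omega^{2,1}(G;\frak k)^\Hor$ is a horizontal $\delta$-cocycle, and vanishing of the class means $\delta\omega=\delta\eta$ for some horizontal $\eta\in\Omega^{1,1}(G;\frak k)^\Hor=\Omega^1(G;s^*\frak k)^\Hor$, i.e.\ $\eta$ vanishes on $K$. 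Then set $\omega_0\coloneqq \omega-\eta$. First I would observe that $\omega_0$ still restricts to the identity on $\frak k$: since $\eta$ kills $K$ and $\omega|_K=\Theta_{MC}|_K$, we get $\omega_0|_K=\Theta_{MC}|_K$, which is the condition $\omega_0|_{\frak k}=\id_{\frak k}$. Second, $\delta\omega_0=\delta\omega-\delta\eta=0$, so $\omega_0\in\Omega^1_m(G;s^*\frak k)$ is multiplicative. Hence $\omega_0\in\A(G;\frak k)$, and $E_0=\ker\omega_0$ is the sought multiplicative Ehresmann connection.

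The one point that needs a short verification is that a horizontal $1$-form $\eta$ (one vanishing on $K$) automatically has the complementary property $\eta|_{\frak k}=0$ needed above; but this is just the definition of the smearing $K$ together with $\frak k\subset K$ via the unit embedding, so $\eta$ vanishing on $K$ gives in particular $\eta|_{\frak k}=0$, and thus adding or subtracting $\eta$ does not disturb the normalization condition. The main obstacle—really the only substantive ingredient—is recognizing that the space of candidate $1$-forms $\omega$ (vertical projections of complements to $K$) is an affine space modeled on the horizontal forms $\Omega^1(G;s^*\frak k)^\Hor$, which is exactly the degree-$(1,1)$ piece of the horizontal subcomplex, so that the obstruction genuinely lives in $H^{2,1}(G;\frak k)^\Hor$ and its vanishing is equivalent to finding a multiplicative representative; everything else is a direct unwinding of the definitions of $\delta$ and of multiplicativity, already carried out in the computation preceding the statement.
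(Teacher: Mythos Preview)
Your proof is correct and follows essentially the same approach as the paper's: choose any complement with associated $1$-form $\omega$, use vanishing of the class to find a horizontal correction $\eta$ with $\delta\omega=\delta\eta$, and observe that $\omega-\eta$ is then multiplicative while still restricting to the identity on $\frak k$ because $\eta$ is horizontal. The paper's version is terser (it writes $\omega+\alpha$ with $\delta(\omega+\alpha)=0$), but the content is identical.
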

\begin{proof}
Clearly, the existence of a multiplicative connection implies the vanishing of the obstruction class. Conversely, if the class \eqref{eq:obs} vanishes, there is a horizontal form $\alpha\in \Omega^1(G;s^*\frak k)^\Hor$ with
\begin{align*}
\delta(\omega+\alpha)=0,
\end{align*}
where $\omega$ corresponds to a fixed distribution $E$, as above. This means that the form $\alpha$ corrects the connection $\omega$, making $\omega+\alpha$ a multiplicative Ehresmann connection.
\end{proof}
\begin{remark}
  This obstruction class can be used to provide alternative proofs of the following two results from \cite{mec}; this is currently a work in progress.
  \begin{itemize}
    \item Morita invariance of existence of multiplicative Ehresmann connections \cite{mec}*{Theorem 4.1}. This follows directly from the to-be-proved Morita invariance of horizontal cohomology. For Lie groupoid extensions, this was already proved in \cite{gerbes}*{Theorem 6.9}; our work in progress concerns generalizing this result to arbitrary bundles of ideals.
    \item Existence of multiplicative Ehresmann connections on proper Lie groupoids for arbitrary bundles of ideals \cite{mec}*{Theorem 4.2}. This statement is a direct consequence of a to-be-proved version of \textit{vanishing cohomology theorem} for (horizontal!)\ cohomology on proper Lie groupoids, with values in a representation. 
  \end{itemize}
\end{remark}

\subsection*{The infinitesimal case}
\begin{definition}
Let $\frak k$ be a bundle of ideals on a Lie algebroid $A\Ra M$. The \textit{horizontal cohomology} of $\frak k$-valued Weil cochains on $A$ is the cohomology of the horizontal subcomplex from Definition \ref{defn:horizontal_inf}, that is,
\begin{align*}
H^{p,q}(A;\frak k)^\Hor\coloneqq H^p\big(W^{\bullet,q}(A;\frak k)^\Hor,\delta\big).
\end{align*}
\end{definition}
The construction of the obstruction class now proceeds as follows. Pick any triple of the following form: a splitting $v\colon A\ra \frak k$ of the short exact sequence \eqref{eq:splitting}, a linear connection on $\frak k\ra M$ and a tensor $U\in\Gamma(H^*\otimes T^*M\otimes \frak k)$, where $H=\ker v$ (for example, $U=0$). Combine the connection and the tensor to obtain a map $\C\colon \Gamma(A)\ra \Omega^1(M;\frak k)$,
\[
\C(\alpha)=\nabla(v\alpha)-U(h\alpha),
\]
which clearly satisfies $\C(f\alpha)=f\C(\alpha)+\d f\otimes v\alpha$, so it defines a Weil cochain $(\C,v)\in W^{1,1}(A;\frak k)$. Since $\delta\circ\delta=0$, we obtain a cocycle $\delta(\C,v)$, which is horizontal---indeed, by equation \eqref{eq:c2} we get
\begin{align*}
\delta(\C,v)_1(\alpha\|\xi)&=-[\alpha,v(\xi)]+v[\alpha,\xi]=0,
\end{align*}
and $\delta(\C,v)_2$ vanishes automatically since $2=k\geq q=1$. Hence, we may define
\begin{align}
\label{eq:obs_inf}
\obs_{\A(A;\frak k)}=[\delta(\C,v)]\in H^{2,1}(A;\frak k)^\Hor.
\end{align}
If $(\tilde \C,\tilde v)$ is another cochain obtained as above, the difference $(\tilde \C,\tilde v)-(\C,v)$ is clearly horizontal since both $\tilde v$ and $v$ are splittings of \eqref{eq:splitting}, hence the class above is independent of the choice of the triple $(v,\nabla,U)$. The proof of the following is analogous to that of Proposition \ref{prop:existence}.
\begin{proposition}
  \label{prop:existence_im}
An IM connection on a Lie algebroid $A$ for a bundle of ideals $\frak k$ exists if and only if the class $\obs_{\A(A;\frak k)}$ vanishes. Moreover, if $A\Ra M$ is the algebroid of $G\rra M$ and $\frak k$ is a bundle of ideals on $G$, then the van Est map relates the two obstruction classes:
\[
\ve (\obs_{\A(G;\frak k)})=\obs_{\A(A;\frak k)}.
\]
\end{proposition}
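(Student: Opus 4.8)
The plan is to prove the two assertions of Proposition~\ref{prop:existence_im} by mirroring, in the infinitesimal setting, the argument behind Proposition~\ref{prop:existence}, and then to deduce the van Est compatibility from the fact that $\ve$ is a cochain map which preserves horizontality. The first assertion is the genuinely self-contained one: if an IM connection $(\C,v)\in\A(A;\frak k)$ exists, then $\obs_{\A(A;\frak k)}$ is represented by $\delta(\C,v)=0$, so it vanishes; conversely, if the class \eqref{eq:obs_inf} vanishes, pick any reference triple $(v,\nabla,U)$ with associated cochain $(\C,v)\in W^{1,1}(A;\frak k)$ (as constructed just before \eqref{eq:obs_inf}), and choose a horizontal Weil $1$-cochain $(L,l)\in W^{1,1}(A;\frak k)^\Hor$ with $\delta(L,l)=-\delta(\C,v)$. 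Then $(\C+L,v+l)$ is $\delta$-closed, hence an IM form; moreover $l|_{\frak k}=0$ by horizontality (Example~\ref{ex:horizontal_p=1}), so its symbol $v+l$ still restricts to $\id_{\frak k}$ on $\frak k$. Therefore $(\C+L,v+l)\in\A(A;\frak k)$ is the desired IM connection. The only point requiring a line of verification is that the reference cochain $(\C,v)$ really is a well-defined element of $W^{1,1}$ (its Leibniz identity $\C(f\alpha)=f\C(\alpha)+\d f\otimes v\alpha$) and that $\delta(\C,v)$ is horizontal — both of which are already checked in the paragraph preceding \eqref{eq:obs_inf}, using \eqref{eq:c2} and the degree count $2=k\ge q=1$ for the top correction term.

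For the second assertion, assume $A\Ra M$ is the Lie algebroid of $G\rra M$ and $\frak k$ is a bundle of ideals on $G$. First I would choose a reference (not necessarily multiplicative) distribution $E\subset TG$ complementing $K$, giving $\omega\in\Omega^1(G;s^*\frak k)$ as in \eqref{eq:omega}, so that $\obs_{\A(G;\frak k)}=[\delta\omega]\in H^{2,1}(G;\frak k)^\Hor$. Applying the van Est map and using that it is a cochain map (Theorem~\ref{thm:homogeneous}, $\ve\circ\delta_G=\delta_A\circ\ve$) gives $\ve(\delta\omega)=\delta(\ve\omega)$. Here $\ve\omega=(\C,v)$ where $v=\omega|_A$ is the symbol (cf.\ the remark after the definition of $\A(A;\frak k)$ in \sec\ref{sec:mec_inf}); since $\omega$ restricts to the Maurer--Cartan form on $K$, its symbol $v$ restricts to the identity on $\frak k$, so $v$ is a splitting of \eqref{eq:splitting}, and $\C=(\ve\omega)_0$ together with $v$ assemble into a Weil cochain of the type used to define $\obs_{\A(A;\frak k)}$. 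Crucially, the van Est map restricts to a map of horizontal subcomplexes: this is exactly the content of the sentence after Definition~\ref{defn:horizontal_inf} (equation \eqref{eq:J_alpha} ensures $J_\alpha$ lands in horizontal cochains), so $\ve$ descends to a homomorphism $H^{2,1}(G;\frak k)^\Hor\ra H^{2,1}(A;\frak k)^\Hor$. Hence $\ve(\obs_{\A(G;\frak k)})=[\ve(\delta\omega)]=[\delta(\ve\omega)]=\obs_{\A(A;\frak k)}$, the last equality holding because $(\ve\omega_0,v)$ is a legitimate reference triple-cochain and the obstruction class \eqref{eq:obs_inf} is independent of the choice of such a triple.

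There is one subtlety to pin down: the definition of $\obs_{\A(A;\frak k)}$ is phrased in terms of a triple $(v,\nabla,U)$ assembled into $\C(\alpha)=\nabla(v\alpha)-U(h\alpha)$, whereas $\ve\omega$ produces some $(\C,v)$ directly. I would reconcile these by invoking the coupling-data decomposition \eqref{eq:split_C}: given the splitting $v$, the leading term $\C$ of $\emph{any}$ Weil $1$-cochain with symbol $v$ decomposes uniquely as $\nabla(v\cdot)-U(h\cdot)$ with $\nabla=\C|_{\frak k}$ and $U=-\C|_H$, simply because $\C$ is Leibniz and $A=\frak k\oplus H$. So $(\ve\omega_0,v)$ \emph{is} the cochain associated to the triple $(v,\C|_{\frak k},-\C|_H)$, and the independence-of-triple statement established just after \eqref{eq:obs_inf} does the rest. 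The main obstacle I anticipate is not conceptual but bookkeeping: one must make sure the cohomology classes are compared in the correct groups, i.e.\ that all cochains involved genuinely lie in the \emph{horizontal} subcomplexes (so the brackets $[\,\cdot\,]$ are taken in $H^{2,1}(-;\frak k)^\Hor$ and not in the full simplicial cohomology), and that the representing cocycles chosen on the two sides correspond under $\ve$. Once horizontality is tracked carefully at each step — for $\delta\omega$, for $\ve(\delta\omega)$, and for $\delta(\ve\omega)$ — the argument closes with no further computation.
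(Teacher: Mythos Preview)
Your proposal is correct and follows essentially the same approach as the paper: the first assertion is proved by the identical ``correct a reference cochain by a horizontal primitive'' argument, and the second by the same chain $\ve[\delta\omega]=[\ve(\delta\omega)]=[\delta(\ve\omega)]=[\delta(\C,v)]$ together with $v|_{\frak k}=\id_{\frak k}$ from $\omega|_K=\Theta_{MC}$. Your extra care in reconciling the triple $(v,\nabla,U)$ with the cochain $\ve\omega$ via the decomposition \eqref{eq:split_C}, and in tracking horizontality through each step, makes explicit what the paper leaves implicit, but adds no genuinely new ingredient.
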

\begin{proof}
  The proof of the first part of the proposition is analogous to the groupoid case. For the second part, note that if a form $\omega\in\Omega^1(G;s^*\frak k)$ satisfies $\omega|_K=\Theta_{MC}$, then the symbol of $(\C,v)=\ve(\omega)$ is given on any $\xi\in\Gamma(\frak k)$ by $v(\xi)=\omega(\xi^L)|_M=(s^*\xi)|_M=\xi$, so $v|_\frak k=\id_\frak k$, and hence
\begin{align}
    \ve(\obs_{\A(G;\frak k)})=\ve[\delta\omega]=[\ve(\delta\omega)]=[\delta(\ve(\omega))]=[\delta(\C,v)]=\obs_{\A(A;\frak k)}.\tag*\qedhere
\end{align}
\end{proof}

\clearpage \pagestyle{plain}
\chapter[Foliated and multiplicative Yang--Mills theory]{Foliated and multiplicative\\Yang--Mills theory}{}\label{chapter:ym}
\pagestyle{fancy}
\fancyhead[CE]{Chapter \ref*{chapter:ym}} 
\fancyhead[CO]{Foliated and multiplicative Yang--Mills theory}

\section{Introduction}
As already stated in the introduction, Yang--Mills theory is fundamental to the Standard Model of elementary particle physics, describing the dynamics of the carriers of fundamental forces of nature---gauge bosons. The central mathematical notion which appears as its foundation is that of the curvature of a connection on a principal bundle; Yang--Mills theory boils down to an application of the variational principle to this geometrical framework, yielding an equivalence between the critical points of the Yang--Mills action, defined as the $L^2$-norm of the curvature, and the solutions to a certain partial differential equation, called the Yang--Mills equation. The simplest example is the case of a trivial $\mathrm U(1)$-bundle on Minkowski space, where the Yang--Mills equation corresponds  precisely to two of the four Maxwell equations, and the remaining two are captured by the Bianchi identity. They respectively read 
\begin{align*}
\d{}\star F=0\quad\text{and}\quad\d{F}=0,
\end{align*}
where $\star$ denotes the Hodge star operator on Minkowski space $M$, and the curvature $F\in\Omega^2(M)$ is interpreted as the electromagnetic field strength. As already mentioned in the introduction, classical Yang--Mills theory on principal bundles can be seen as a far-reaching generalization of the theory of electromagnetism, providing a
 coordinate-invariant way of writing the differential equations that govern the dynamics of gauge bosons in physics, on an arbitrary fixed spacetime. 

From the mathematical point of view, the interpretation of Yang--Mills theory is that by considering critical points of the Yang--Mills action, one aims to find those connections on a given principal bundle which minimize the $L^2$-norm of the curvature; this may be viewed as the next best scenario to having a flat connection. The research presented here provides a two-fold generalization of Yang--Mills theory: from principal bundles to possibly \textit{non-transitive} and \textit{non-integrable} Lie algebroids. 
To motivate this generalization, we start by interpreting connections on principal bundles within the framework of Lie groupoids and algebroids. Recall that a given principal bundle
\[
G \curvearrowright P \ra M
\]
gives rise to its \textit{gauge groupoid}, whose $\Hom$-sets are defined as the $G$-equivariant maps between the fibres of $P\ra M$. Its Lie algebroid fits into the following short exact sequence of Lie algebroids, commonly referred to as the \textit{Atiyah sequence},
\[\begin{tikzcd}
	0 & {\ad(P)} & {\frac{TP}G} & TM & 0,
	\arrow[from=1-1, to=1-2]
	\arrow[from=1-2, to=1-3]
	\arrow[from=1-3, to=1-4]
	\arrow[from=1-4, to=1-5]
\end{tikzcd}\]
where $\ad(P)=(P\times \frak g)/G$ is the adjoint bundle with respect to the adjoint representation of $G$ on its Lie algebra $\frak g$, and $G$ acts on $TP$ by the differential of the action. Since $G$-equivariance is built into the above sequence, its splittings are in a one-to-one correspondence with principal connections on $P$. This was first noted by Atiyah and Bott in \cite{atiyah-bott}, and it offers an important insight:  Yang--Mills theory on principal bundles is actually about \textit{critical splittings} of the Atiyah sequence.

This perspective provides a straightforward way of generalizing Yang--Mills theory from principal bundles to arbitrary Lie algebroids. That is, for a regular Lie algebroid $A$, we can consider the critical splittings of the short exact sequence,
\begin{align}
\label{eq:ses}
\begin{tikzcd}[ampersand replacement=\&]
	0 \& \ker\rho \& A \& T\F \& 0.
	\arrow[from=1-1, to=1-2]
	\arrow[from=1-2, to=1-3]
	\arrow["\rho", from=1-3, to=1-4]
	\arrow[from=1-4, to=1-5]
\end{tikzcd}
\end{align}
The resulting dynamics take place only along the orbital directions $T\F$, so it is instructive to think of the resulting framework as a \textit{foliated Yang--Mills theory}. Although simple and straightforward, this generalization alone already gives rise to some interesting examples (\sec\ref{sec:fym_examples}). On the other hand, the drawbacks of this framework are the following.
\begin{itemize}
	\item It cannot describe the dynamics of gauge fields in transversal directions to the orbit foliation.
	\item It works only for the case when the bundle of ideals is taken as the whole isotropy, $\frak k=\ker\rho$, hence it works only for regular algebroids.
	\item It requires additional assumptions on the orbit foliation, as well as on the Riemannian metric on the base manifold, which is part of the input data.
\end{itemize}
These drawbacks are all resolved by extending the theory to the setting of multiplicative connections. Despite the downsides, the formulation of the foliated Yang--Mills theory is an important stepping stone for developing the multiplicative Yang--Mills theory, since it becomes clearer what the ingredients should be for the multiplicative scenario. Let us thus begin with the foliated case.

\section{Foliated Yang--Mills theory}
\label{sec:foliated_ym}
We begin by considering a regular Lie algebroid $A\Ra M$, with the bundle of ideals $\frak k$ taken as the whole isotropy bundle, $\frak k=\ker\rho$. As motivated in the introduction, consider the set of splittings of the short exact sequence
\begin{align*}
\begin{tikzcd}[ampersand replacement=\&]
	0 \& \frak k \& A \& T\F \& 0,
	\arrow[from=1-1, to=1-2]
	\arrow[from=1-2, to=1-3]
	\arrow["\rho", from=1-3, to=1-4]
	\arrow[from=1-4, to=1-5]
\end{tikzcd}
\end{align*}
which will be denoted by
\[
\spl A=\set{\sigma\in\Omega^1(T\F;A)\given \rho\sigma=\id_{T\F}}.
\]
Observe that $\spl A$ is an affine space modelled on the vector space $\Omega^1(T\F;\frak k)$. Moving on, any splitting $\sigma$ induces a \textit{leafwise connection} on $\frak k$, which is a $T\F$-connection, given by
\begin{align}
\label{eq:leafwise_nabla}
\nabla^\sigma\colon \Gamma(T\F)\times\Gamma(\frak k)\ra \Gamma(\frak k),\quad\nabla^\sigma_X\xi=[\sigma(X),\xi].
\end{align}
The \textit{curvature} $F^\sigma\in\Omega^2(T\F;\frak k)$ of a splitting $\sigma$ is defined by
\begin{align}
    \label{eq:curvature_splitting_foliated}
    F^{\sigma}(X,Y)=\sigma[X,Y]-[\sigma(X),\sigma(Y)].
\end{align}
The following properties are simple and straightforward consequences of the Jacobi identity on $A$.

\begin{proposition}
\label{prop:foliated_props}
For a regular Lie algebroid $A\Ra M$, a splitting $\sigma\in\spl{A}$ satisfies the following properties.
\begin{enumerate}[label={(\roman*)}]
\item $\nabla^\sigma$ preserves the Lie bracket on $\frak k$: for any $\xi,\eta\in\Gamma(\frak k)$ there holds
\begin{align*}
\nabla^\sigma[\xi,\eta]=[\nabla^\sigma\xi,\eta]+[\xi,\nabla^\sigma\eta].
\end{align*}
\item The curvature $F^\sigma$ of $\sigma$ satisfies the leafwise Bianchi identity,
\begin{align}
\label{eq:bianchi_foliated}
\d{}^{\nabla^\sigma}F^\sigma = 0.
\end{align}
\item The curvature tensor $R^{\nabla^\sigma}\in\Omega^2(T\F;\End\frak k)$ of $\nabla^\sigma$ satisfies 
\[
R^{\nabla^\sigma}\cdot\xi=[\xi,F^\sigma],
\]
for any $\xi\in\Gamma(\frak k)$.
\end{enumerate}
\end{proposition}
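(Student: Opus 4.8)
All three claims are infinitesimal identities on sections of $A$, and the natural strategy is to reduce everything to the Jacobi identity on $\Gamma(A)$, exactly as suggested by the statement. I would first fix notation: for $X,Y,Z\in\Gamma(T\F)$ and $\xi,\eta\in\Gamma(\frak k)$, write $\sigma X$ for the lift, note $\rho\sigma = \id_{T\F}$, and recall that $\frak k=\ker\rho$ is a bundle of ideals, so $[\Gamma(A),\Gamma(\frak k)]\subset\Gamma(\frak k)$; in particular $\nabla^\sigma_X\xi = [\sigma X,\xi]$ genuinely lands in $\Gamma(\frak k)$ and $F^\sigma(X,Y)=\sigma[X,Y]-[\sigma X,\sigma Y]$ lands in $\Gamma(\frak k)$ because $\rho$ applied to it vanishes. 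I would also remark (briefly, since it parallels Proposition~\ref{prop:conn}(i) and \eqref{eq:s1}) that $\nabla^\sigma$ is genuinely a $T\F$-connection: $C^\infty(M)$-linearity in $X$ follows since $\sigma(fX)=f\sigma X$ and $[f\sigma X,\xi]=f[\sigma X,\xi]-\rho(\xi)(f)\sigma X = f[\sigma X,\xi]$ using $\xi\in\ker\rho$, and the Leibniz rule in $\xi$ follows from the algebroid Leibniz rule together with $\rho(\sigma X)=X$.

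For part (i), I would expand $\nabla^\sigma_X[\xi,\eta] = [\sigma X,[\xi,\eta]]$ and apply the Jacobi identity on $\Gamma(A)$ to rewrite it as $[[\sigma X,\xi],\eta] + [\xi,[\sigma X,\eta]] = [\nabla^\sigma_X\xi,\eta]+[\xi,\nabla^\sigma_X\eta]$; this is a one-line computation. Part (iii) is similarly short: $R^{\nabla^\sigma}(X,Y)\cdot\xi = \nabla^\sigma_X\nabla^\sigma_Y\xi - \nabla^\sigma_Y\nabla^\sigma_X\xi - \nabla^\sigma_{[X,Y]}\xi = [\sigma X,[\sigma Y,\xi]] - [\sigma Y,[\sigma X,\xi]] - [\sigma[X,Y],\xi]$; the Jacobi identity turns the first two terms into $[[\sigma X,\sigma Y],\xi]$, so the whole expression becomes $-[\,\sigma[X,Y]-[\sigma X,\sigma Y],\,\xi\,] = -[F^\sigma(X,Y),\xi] = [\xi,F^\sigma(X,Y)]$, which is the claim (modulo fixing the sign/bracket-orientation convention used for $R^{\nabla^\sigma}$ and for the $\End\frak k$-valued $2$-form pairing with $\xi$).

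Part (ii), the leafwise Bianchi identity $\d^{\nabla^\sigma}F^\sigma=0$, is the one requiring the most care, and I expect it to be the main obstacle — not conceptually, but combinatorially. The plan is to write out $(\d^{\nabla^\sigma}F^\sigma)(X,Y,Z)$ as the alternating sum $\sum_{\mathrm{cyc}}\big(\nabla^\sigma_X F^\sigma(Y,Z) - F^\sigma([X,Y],Z)\big)$, substitute the definitions $\nabla^\sigma_X\zeta = [\sigma X,\zeta]$ and $F^\sigma(Y,Z)=\sigma[Y,Z]-[\sigma Y,\sigma Z]$, and then show that the resulting nine-ish terms cancel cyclically. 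The $[\sigma X,\sigma[Y,Z]]$-type terms together with the $\sigma[[X,Y],Z]$-type terms vanish by the Jacobi identity applied to $[X,Y,Z]\in\Gamma(T\F)$ (using that $\sigma$ need not be a Lie algebroid morphism, but $\sigma[\cdot,\cdot]$ still satisfies enough linearity), while the $[\sigma X,[\sigma Y,\sigma Z]]$-type terms vanish by the Jacobi identity on $\Gamma(A)$, and the mixed $[\sigma[X,Y],\sigma Z]$-type terms cancel in cyclic pairs. Since $F^\sigma$ is $\frak k$-valued and the relevant $\nabla^\sigma$ is the one preserving $\frak k$, there are no anchor-derivative anomalies. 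If the bookkeeping becomes unwieldy I would instead observe that $F^\sigma$ is, up to sign, the $A$-valued part of the difference between $\sigma$-pushed-forward brackets, so Bianchi becomes a direct corollary of the graded Jacobi identity for the Chevalley--Eilenberg differential of $A$ restricted along $\sigma$; but the hands-on cyclic-cancellation argument is the cleanest to present.
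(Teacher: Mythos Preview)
Your approach is correct and matches the paper's, which simply states that all three parts are ``simple and straightforward consequences of the Jacobi identity on $A$'' without writing out the computations. One small correction to your bookkeeping in part~(ii): the $\sigma[[X,Y],Z]$-type terms vanish \emph{on their own} by the Jacobi identity on $T\F$, while the $[\sigma X,\sigma[Y,Z]]$-type terms cancel against the $[\sigma[X,Y],\sigma Z]$-type terms by antisymmetry of the bracket under cyclic re-indexing (not ``in cyclic pairs'' among themselves); once you write it out this regrouping is immediate.
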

\begin{remark}
Property (i) can be used to infer the well-known fact that the isotropy bundle of Lie algebras of a Lie algebroid is locally trivial when restricted to an orbit.
\end{remark}

\subsection{Yang--Mills data}
As already mentioned, similarly to the classical case of principal bundles, to construct an action functional, it is necessary to impose some data on a given Lie algebroid in order to construct a foliated (or multiplicative) Yang--Mills theory. 
\begin{definition}
    \label{def:ym_algebroid}
        A \emph{Yang--Mills data} for a bundle of ideals $\frak k\subset \ker\rho$ on a Lie algebroid $A\Ra M$ is the following additional data:
    \begin{enumerate}[label={(\roman*)}]
        \item An \emph{$\ad$-invariant} metric $\inner\cdot\cdot_{\frak k}$ on $\frak k$, that is, a positive-definite metric $\inner{\cdot}{\cdot}_{\frak k}$, such that 
        \[
        \rho(\alpha)\inner\xi\eta_{\frak k}=\inner{[\alpha,\xi]}\eta_{\frak k}+\inner\xi{[\alpha,\eta]}_{\frak k},
        \]
        for any sections $\xi,\eta\in\Gamma(\frak k)$ and $\alpha\in\Gamma(A)$.
        \item A Riemannian metric $g=\inner\cdot\cdot$ on the base manifold $M$. We also assume $M$ is orientable and choose an orientation on $M$, thus giving rise to the Riemannian volume form $\vol_M$.
    \end{enumerate} 
    \end{definition}
\noindent The importance of assuming an ad-invariant metric on the isotropy bundle is that the leafwise connection $\nabla^\sigma$, induced by \textit{any} splitting, is compatible with it. 

\begin{lemma}
\label{lemma:nabla_sigma_compatible}
Let $A\Ra M$ be a regular Lie algebroid and let $\inner\cdot\cdot_{\frak k}$ be an $\ad$-invariant metric on $\frak k$. For any splitting $\sigma\in\spl{A}$, the induced leafwise connection $\nabla^\sigma$ on $\frak k$ is compatible with $\inner\cdot\cdot_{\frak k}$. That is,
\[
X\inner\xi\eta_{\frak k}=\inner{\nabla^\sigma_X\xi}\eta_{\frak k}+\inner\xi{\nabla^\sigma_X\eta}_{\frak k}
\]
for any $X\in\Gamma(T\F)$ and $\xi,\eta\in\Gamma(\frak k)$.
\end{lemma}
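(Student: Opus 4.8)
The plan is to reduce the statement to the $\ad$-invariance of the metric via the explicit formula \eqref{eq:leafwise_nabla} for the leafwise connection. First I would recall that $\nabla^\sigma_X\xi=[\sigma(X),\xi]$ for any $X\in\Gamma(T\F)$ and $\xi\in\Gamma(\frak k)$, where $\sigma(X)\in\Gamma(A)$. Then, applying the $\ad$-invariance hypothesis from Definition \ref{def:ym_algebroid} (i) to the section $\alpha=\sigma(X)\in\Gamma(A)$, one obtains
\[
\rho(\sigma(X))\inner\xi\eta_{\frak k}=\inner{[\sigma(X),\xi]}{\eta}_{\frak k}+\inner{\xi}{[\sigma(X),\eta]}_{\frak k}.
\]
Since $\sigma$ is a splitting of the anchor sequence, $\rho(\sigma(X))=X$, so the left-hand side is exactly $X\inner\xi\eta_{\frak k}$, and the two terms on the right are precisely $\inner{\nabla^\sigma_X\xi}{\eta}_{\frak k}$ and $\inner{\xi}{\nabla^\sigma_X\eta}_{\frak k}$. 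This yields the claimed identity directly.

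The only subtlety worth spelling out is the well-definedness/tensoriality check: the statement should hold for all $X\in\Gamma(T\F)$, and one should make sure the computation is $C^\infty(M)$-linear in $X$ where required. This is automatic, since $\sigma$ is $C^\infty(M)$-linear and both sides of the compatibility identity are derivations in $X$ in the same way (the left-hand side $X\inner\xi\eta_{\frak k}$ is manifestly $C^\infty(M)$-linear in $X$, and so is the right-hand side because $\nabla^\sigma$ is a $T\F$-connection, i.e. $C^\infty(M)$-linear in its $T\F$-argument). Hence it suffices to verify the identity pointwise, or equivalently on a local frame of $T\F$, which the displayed computation does.

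I do not expect any real obstacle here — the lemma is essentially a one-line unwinding of definitions, and the substantive input (that the metric is $\ad$-invariant and that $\sigma$ splits $\rho$) is exactly what is needed. If anything, the mild care point is simply to emphasize that $\ad$-invariance is stated for \emph{all} sections $\alpha\in\Gamma(A)$, not just for sections of $\frak k$ or of a horizontal complement, so that it legitimately applies to $\sigma(X)$; this is what makes the leafwise connection induced by \emph{any} splitting automatically metric-compatible, as remarked before the lemma.
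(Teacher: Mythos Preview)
Your proof is correct and is exactly the one-line unwinding of definitions the paper has in mind; in fact the paper does not supply a proof at all (it later refers to the lemma as holding ``trivially''), so your argument is precisely the intended one. The extra paragraph on $C^\infty(M)$-linearity in $X$ is harmless but unnecessary, since the identity is between sections of $\R_M$ and is verified directly for arbitrary $X\in\Gamma(T\F)$.
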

\begin{remark}[Existence of ad-invariant metrics]
\label{rem:averaging}
In the case of a principal $G$-bundle $P$, an ad-invariant metric on $\ad(P)$ exists whenever the structure Lie group $G$ is compact. In fact, it is actually enough to assume there exists a compact integration of the structure Lie algebra $\frak g$ (not necessarily $G$) and that the structure Lie group $G$ is connected; see \cite{duistermaat}*{Theorem 3.6.2} and Lemma \ref{lem:pairing_A_invariant_implies_G_invariant}. For more general algebroids, to establish the existence of an $\ad$-invariant metric, it is sufficient to assume $A$ is integrable by a proper groupoid $G$. In this case, one first constructs an Ad-invariant metric, that is, \[\inner\xi\eta_{\frak k}=\inner{\Ad_g\xi}{\Ad_g\eta}_{\frak k}, \quad\text{for all $g\in G$ and $\xi,\eta\in\frak k_{s(g)}$}.\] The construction is by means of a standard averaging argument using a Haar system and a cutoff function \cite{measures_on_stacks}; this may be seen as the groupoid  version of the construction of an $\Ad$-invariant inner product on the Lie algebra of a compact Lie group. Differentiation then shows that the obtained $\Ad$-invariant metric is also $\ad$-invariant.
\end{remark}
\noindent As already mentioned in the introduction, the foliated Yang--Mills theory demands additional assumptions on the foliation and the Riemannian metric on the base. 
Namely, we will assume:
\begin{itemize}
    \item The orbit foliation $\F$ is a \emph{simple foliation}, i.e., the leaves of $\F$ are the fibres of a smooth submersion $\pi\colon M\ra N$ (with connected fibres), where $N=M/\F$ is the leaf space of $\F$.
    \item The metric $g$ on $M$ is \emph{$\pi$-invariant}, that is, it induces a (necessarily unique) Riemannian metric on the orbit space $N$, such that $\pi\colon M\ra N$ is a Riemannian submersion. More precisely, the induced metric $\pi_*g$ on $N$ reads
	\begin{align}
		\label{eq:q_invariant_metric}
		(\pi_*g)_y(v,w)=g_x((\d \pi^\perp_x)^{-1}(v), (\d \pi^\perp_x)^{-1}(w)),\quad \text{for all }v,w\in T_y N,
	\end{align}
	where $x\in \pi^{-1}(y)$, and we have denoted by $\d \pi^\perp_x\colon (\ker\d \pi_x)^\perp\xrightarrow{\cong} T_y N$ the restriction of $\d \pi$ to the orthogonal subspace. Importantly, the right-hand side is independent of the choice of $x\in \pi^{-1}(y)$.
	Such metrics always exist---we refer to \cite{riemannian_groupoids}*{\sec 2.1} for more background.
    \item The orbit foliation $\F$ is \emph{transversely orientable}, i.e., the normal bundle $TM/T\F$ is orientable, which in this case simply means the leaf space $N$ is orientable; we choose an orientation. 
\end{itemize}
\begin{definition}
	A regular Lie algebroid $A\Ra M$, together with a Yang--Mills data for $\frak k=\ker\rho$, satisfying the conditions above, will be called a \emph{foliated Yang--Mills algebroid}.
\end{definition}
Although the Lemma \ref{lemma:nabla_sigma_compatible} holds trivially, it has the following important nontrivial corollary, where the additional assumptions above come to light. First, observe that the metric on $M$ induces a symmetric $C^\infty(M)$-bilinear pairing on $\frak k$-valued foliated differential forms on the base,
\begin{align*}
  \inner\cdot\cdot_{\frak k}\colon \Omega^k(T\F;\frak k)\times \Omega^k(T\F;\frak k)\ra C^\infty(M).
\end{align*}	
In turn, this induces an inner product on compactly supported foliated forms:\footnote{If we instead assumed $(M,g)$ is pseudo-Riemannian, $\innerr\cdot\cdot_{\frak k}$ would be nondegenerate instead of positive-definite.}
\begin{align*}
  \innerr\cdot\cdot_{\frak k}\colon \Omega^k_c(T\F;\frak k)\times \Omega^k
 _c(T\F;\frak k)\ra \R,\quad \innerr\alpha\beta_{\frak k}=\int_M\inner\alpha\beta_{\frak k}\vol_M.
\end{align*}
Nondegeneracy of this pairing amounts to the fundamental lemma of calculus of variations. 
For the proof of the following result, we will assume the reader is familiar with the Hodge star operator; we direct to \cite{hamilton}*{\sec 7.2}, \cite{geometry_physics}*{\sec 14.1} for a reference.
\begin{proposition}
\label{prop:codiff}
Let $A\Ra M$ be a foliated Yang--Mills algebroid with a splitting $\sigma\in\spl{A}$. For any two compactly supported foliated forms $\alpha\in\Omega^{k-1}_c(T\F;\frak k)$, $\beta\in\Omega^{k}_c(T\F;\frak k)$, there holds
\begin{align}
  \innerr{\d{}^{\nabla^\sigma}\alpha}\beta_{\frak k}= (-1)^{k} \innerr{\alpha}{\star_\F^{-1}\d{}^{\nabla^\sigma}{\star_\F^{\mathclap{\phantom{-1}}}\beta}}_{\frak k},
\end{align}
where 
\[\star_\F\colon \Omega^\bullet(T\F;\frak k)\stackrel{\cong}\ra \Omega^{d-\bullet}(T\F;\frak k),\quad (d=\dim\F)\]
is the leafwise Hodge star operator with respect to the induced metric and orientation on $T\F$. In other words, the formal adjoint $\delta^{\nabla^\sigma}$ to the exterior differential $\d{}^{\nabla^\sigma}$ on $\Omega^k_c(T\F;\frak k)$ equals \[\delta^{\nabla^\sigma}=(-1)^{k}\star_\F^{-1}\d{}^{\nabla^\sigma}{\star}_\F^{\mathclap{\phantom{-1}}}\,.\]
\end{proposition}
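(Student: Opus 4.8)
The plan is to reduce the statement to the classical leafwise Stokes-type formula for the exterior covariant derivative of an orthogonal connection, via the usual Hodge-theoretic manipulation, taking care of the two new features: the values lie in the bundle of Lie algebras $\frak k$ (with an $\ad$-invariant fibre metric) rather than in $\R$, and the forms are \emph{foliated} rather than ordinary. First I would recall the key structural fact already at our disposal: by Lemma \ref{lemma:nabla_sigma_compatible}, the leafwise connection $\nabla^\sigma$ is compatible with the $\ad$-invariant metric $\inner\cdot\cdot_{\frak k}$, i.e.\ it is a metric connection. This is exactly the hypothesis that makes the integration-by-parts argument go through, because it guarantees that the Leibniz rule for $\d{}^{\nabla^\sigma}$ interacts correctly with the pointwise inner product on $\frak k$-valued foliated forms.

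The main computational step is to establish, for $\alpha\in\Omega^{k-1}_c(T\F;\frak k)$ and $\beta\in\Omega^k_c(T\F;\frak k)$, the pointwise identity
\begin{align*}
\inner{\d{}^{\nabla^\sigma}\alpha}{\beta}_{\frak k}\vol_M - (-1)^k\inner{\alpha}{\star_\F^{-1}\d{}^{\nabla^\sigma}\star_\F\beta}_{\frak k}\vol_M = \d{}\eta
\end{align*}
for some $(\dim M - 1)$-form $\eta$ with compact support, so that integrating over $M$ and applying Stokes' theorem kills the right-hand side. To produce $\eta$, I would work leafwise: on each leaf $L$ of $\F$, the restrictions of $\alpha,\beta$ are ordinary $\frak k|_L$-valued forms, and the metric connection $\nabla^\sigma|_L$ gives the usual formula $\d{}\innersmall{\alpha}{\star_L\beta}_{\frak k} = \innersmall{\d{}^{\nabla^\sigma}\alpha}{\star_L\beta}_{\frak k} - (-1)^{k-1}\innersmall{\alpha}{\d{}^{\nabla^\sigma}\star_L\beta}_{\frak k}$ (the sign coming from the Leibniz rule on the product $\alpha\wedge\star_L\beta$, using metric compatibility to differentiate $\innersmall{\cdot}{\cdot}_{\frak k}$ under the wedge). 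Rewriting $\innersmall{\d{}^{\nabla^\sigma}\alpha}{\star_L\beta}_{\frak k} = \innersmall{\d{}^{\nabla^\sigma}\alpha}{\beta}_{\frak k}\vol_L$ and $\innersmall{\alpha}{\d{}^{\nabla^\sigma}\star_L\beta}_{\frak k} = \innersmall{\alpha}{\star_L^{-1}\d{}^{\nabla^\sigma}\star_L\beta}_{\frak k}(-1)^{?}\vol_L$ gives the leafwise version of the desired identity. The subtlety — and the step I expect to be the main obstacle — is passing from the leafwise identity to a genuine exact form $\d{}\eta$ on all of $M$: one needs the $\pi$-invariance of $g$ and the resulting decomposition $\vol_M = \vol_\F \wedge \pi^*\vol_N$ (with $\vol_\F$ the leafwise volume) so that the leafwise exterior derivative of $\innersmall{\alpha}{\star_\F\beta}_{\frak k}$ extends to the ordinary $\d{}$ of the $(\dim\F - 1)$-form $\innersmall{\alpha}{\star_\F\beta}_{\frak k}$ wedged with $\pi^*\vol_N$, modulo terms that differentiate $\pi^*\vol_N$ in leafwise directions — and these vanish precisely because $g$ is $\pi$-invariant (equivalently, the leaves are the fibres of a Riemannian submersion, so $\pi^*\vol_N$ is leafwise closed). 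This is where the three extra assumptions (simple foliation, $\pi$-invariant metric, transverse orientability) are genuinely used.

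Once the exact-form identity is in hand, I would conclude: integrate over $M$, invoke $\int_M \d{}\eta = 0$ by Stokes (valid since $\eta$ is compactly supported, and $M$ has no boundary in the relevant sense), and read off
\begin{align*}
\innerr{\d{}^{\nabla^\sigma}\alpha}{\beta}_{\frak k} = (-1)^k \innerr{\alpha}{\star_\F^{-1}\d{}^{\nabla^\sigma}\star_\F\beta}_{\frak k},
\end{align*}
which by definition of the formal adjoint and nondegeneracy of $\innerr\cdot\cdot_{\frak k}$ (the fundamental lemma of the calculus of variations, as noted before the statement) identifies $\delta^{\nabla^\sigma} = (-1)^k\star_\F^{-1}\d{}^{\nabla^\sigma}\star_\F$ on $\Omega^k_c(T\F;\frak k)$. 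I would also double-check the sign bookkeeping by comparing with the classical scalar-valued case $\frak k = \R_M$, $\F = TM$: there the formula reduces to the standard $\delta = (-1)^k \star^{-1}\d{}\star$ on $k$-forms, which fixes all sign conventions and serves as a sanity check on the leafwise Leibniz computation.
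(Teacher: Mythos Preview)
Your proposal is correct and takes essentially the same approach as the paper: the Leibniz rule for $\d{}^{\nabla^\sigma}$ on the pairing $\innersmall{\alpha}{\star_\F\beta}_{\frak k}$ coming from metric compatibility (Lemma \ref{lemma:nabla_sigma_compatible}), Stokes' theorem, and the volume decomposition $\vol_M = \pm(\pr_{T\F})^*\vol_{T\F}\wedge\pi^*\vol_N$ from $\pi$-invariance. The only difference is organizational---the paper applies Stokes fibrewise (integrating $\d(\alpha\wedge\star_\F\beta)$ along the fibres of $\pi$ first, then integrating the resulting function against $\vol_N$), which cleanly sidesteps the step you correctly flag as the main obstacle of promoting the leafwise exact form to a global exact $(\dim M-1)$-form on $M$.
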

\begin{proof}
The proof closely follows that of \cite{atiyah-bott}*{Equation 4.9}, with the necessary changes to account for foliated forms. We first introduce a pairing of foliated $\frak k$-valued forms,
\begin{align*}
    &\Omega^k(T\F;\frak k)\times \Omega^l(T\F;\frak k)\ra \Omega^{k+l}(T\F),\\
    &(\alpha\wedge\beta)(X_1,\dots,X_{k+l})=\sum\limits_{\smash[b]{\mathclap{\quad\sigma\in S_{(k,l)}}}}\,(\sgn\sigma)\inner{\alpha(X_{\sigma(1)},\dots, X_{\sigma(k)})}{\beta(X_{\sigma(k+1)},\dots, X_{\sigma(k+l)})}_{\frak k}.
\end{align*}
The notation for this pairing comes from \cite{atiyah-bott}, and it is non-standard. Lemma \ref{lemma:nabla_sigma_compatible} ensures  $\d{}^{\nabla^\sigma}$ behaves as a derivation under this pairing, more precisely,
\begin{align}
    \label{eq:d_nabla_sigma_derivation_wedge}
    \d(\alpha\wedge\beta)=\d{}^{\nabla^\sigma}\alpha\wedge\beta+(-1)^k\alpha\wedge\d{}^{\nabla^\sigma}\beta,
\end{align}
where $\d{}$ on the left-hand side is the de Rham differential on foliated forms (which can also be viewed as the algebroid structure on $T\F$). 

Now suppose $\alpha$ and $\beta$ are compactly supported, of degrees $k-1$ and $k$, respectively, as in the statement of the corollary. Since $M$ is oriented and $\F$ is transversely orientable, the vector bundle $T\F$ is also orientable. Pick an orientation, and note that $g$ induces a metric on $T\F$, so there is an associated Hodge star operator $\star_\F$ on $T\F$ and a volume form $\vol_{T\F}\in\Omega^d(T\F)$. We may thus take the exterior derivative of the form
$
\alpha\wedge \star_\F \beta\in\Omega^{d-1}(T\F)
$
and obtain
\[
	\d{}(\alpha\wedge \star_\F \beta)={\d{}^{\nabla^\sigma}\alpha}\wedge\star_\F\beta + (-1)^{k-1}\alpha\wedge \d{}^{\nabla^\sigma}{\star_\F\beta}\in \Omega^d(T\F)
\]
Integrating this form along the fibres of $\pi\colon M\ra N$ and using Stokes' theorem, we get
\begin{align}
	\label{eq:integral_over_TF}
    \int_{\F} {\d{}^{\nabla^\sigma}\alpha}\wedge\star_\F\beta = (-1)^k\int_{\F} \alpha\wedge \d{}^{\nabla^\sigma}{\star_\F\beta}\in C^\infty(N).
\end{align}
Note that by the definition of the Hodge star operator, we have
\begin{align*}
    {\d{}^{\nabla^\sigma}\alpha}\wedge\star_\F\beta&=\inner{\d{}^{\nabla^\sigma}\alpha}{\beta}_{\frak k} \vol_{T\F},\\
    \alpha\wedge \d{}^{\nabla^\sigma}{\star_\F\beta}&=\alpha\wedge \star_\F^{\mathclap{\phantom{-1}}} (\star^{-1}_\F\d{}^{\nabla^\sigma}{\star_\F^{\mathclap{\phantom{-1}}}\beta})=\inner{\alpha}{\star^{-1}_\F\d{}^{\nabla^\sigma}{\star_\F^{\mathclap{\phantom{-1}}}\beta}}\vol_{T\F}.
\end{align*}
Therefore, integrating \eqref{eq:integral_over_TF} with respect to the volume form $\vol_N$ associated to the metric $\pi_*g$ and the chosen orientation on $N$, we obtain
\begin{align*}
	\int_N\Big(\int_{\F}\inner{\d{}^{\nabla^\sigma}\alpha}{\beta}_{\frak k} \vol_{T\F}\Big)\vol_N = \int_N\Big(\int_{\F}\inner{\alpha}{\star^{-1}_\F\d{}^{\nabla^\sigma}{\star_\F^{\mathclap{\phantom{-1}}}\beta}}\vol_{T\F}\Big)\vol_N 
\end{align*}
The proof is finished once we recognize $\pi$-invariance of $g$ implies $\vol_M=\pm(\pr_{T\F})^*\vol_{T\F}\wedge \pi^*\vol_N$, where $\pr_{T\F}\colon TM\ra T\F$ is the splitting given by the metric $g$. We make the choice of orientation on $T\F$ in such a way that we obtain the positive sign.
\end{proof}
\begin{remark}[Relation of $\star_\F$ with $\star$]
	It is natural to ask how the foliated Hodge star operator $\star_\F$ is related to the usual $\star$. To obtain the relationship, start with the defining identity of $\star_\F$,
	\[
	\alpha\wedge \star_\F \beta =\inner\alpha\beta \vol_{T\F},
	\]
	where $\alpha, \beta\in\Omega^\bullet(T\F)$ are usual foliated forms. By $\pi$-invariance of the metric on $M$, pulling back along the orthogonal projection $\pr_{T\F}$ (induced by the metric) and wedging with $\pi^*\vol_N$ gives
	\[
	\underbrace{\pr_{T\F}{}^*(\alpha\wedge\star_\F\beta)}_{\mathclap{(\pr_{T\F}{}^*\alpha) \,\wedge\, \pr_{T\F}{}^*{}(\star_\F\beta)}}\,\wedge\, \pi^*\vol_N=\inner\alpha\beta \vol_M=(\pr_{T\F}{}^*\alpha)\wedge \star\, (\pr_{T\F}{}^*\beta).
	\]
	Since this holds for any $\alpha$, we conclude
	\begin{align*}
		\pr_{T\F}{}^*({\star_\F\beta})\wedge \pi^*\vol_N=\star\,(\pr_{T\F}{}^*\beta).
	\end{align*}
\end{remark}

\subsection{Foliated Yang--Mills action}
The definition of the following action functional is motivated by the classical Yang--Mills theory on principal bundles.
\begin{definition}
Let $A\Ra M$ be a foliated Yang--Mills algebroid. The \textit{foliated Yang--Mills action functional} is defined as the map
\begin{align}
\label{eq:foliated_ym_action}
\S\colon\splc{A}\rightarrow \R,\quad
\S(\sigma)= \int_M \inner{F^\sigma}{F^\sigma}_{\frak k}\vol_M=\innerr{F^\sigma}{F^\sigma}_{\frak k},
\end{align}
where we have denoted by
\[
\splc{A}=\set{\sigma\in\spl{A}\given F^\sigma \text{ is compactly supported}}
\]
the set of \textit{asymptotically flat} splittings. Such a splitting $\sigma\in \splc{A}$ of the sequence \eqref{eq:ses} is said to be \textit{critical} for $\S$, if there holds
\begin{align*}
\deriv\lambda0\S(\sigma+\lambda\tau)=0,
\end{align*}
for any $\tau\in\Omega^1_c(T\F;\frak k)$.
\end{definition}
\begin{remark}
The definition relies on compactly supported forms to make sure the action $\S$ is well-defined, since we are not assuming the base manifold $M$ is compact. To see that $\splc{A}$ is an affine space modelled on the vector space $\Omega_c^1(T\F;\frak k)$, observe that under an affine deformation 
\[\sigma\rightarrow \sigma+\tau\]
of a splitting $\sigma\in\spl{A}$ by a form $\tau\in\Omega^1(T\F;\frak k)$, the curvature transforms as
\begin{align}
\label{eq:expansion_curvature_foliated}
F^{\sigma+\tau}=F^\sigma-\d{}^{\nabla^\sigma}\tau-\frac{1}2[\tau,\tau].
\end{align}
Indeed, for any vector fields $X,Y\in \Gamma(T\F)$ tangent to the foliation $\F$, there holds
\begin{align*}
	F^{\sigma+\tau}(X,Y)&=(\sigma+\tau)[X,Y]-[(\sigma+\tau)(X),(\sigma+\tau)(Y)]\\
	&=F^\sigma(X,Y)-([\sigma(X),\tau(Y)]+[\tau(X),\sigma(Y)]-\tau[X,Y])-[\tau(X),\tau(Y)].
\end{align*}
\end{remark}
\begin{theorem}
\label{thm:ym}
Let $A\Ra M$ be a foliated Yang--Mills algebroid. A splitting $\sigma\in\splc{A}$ is critical if and only if its curvature $F^{\sigma}$ is a solution to the \textbf{\emph{foliated Yang--Mills equation}},
\begin{align}
\label{eq:foliated_ym_equation}
\d{}^{\nabla^\sigma}\!\star_\F F^{\sigma}=0.
\end{align}
Moreover, the Hessian of $\S$ at any critical splitting $\sigma$ is given by the quadratic form
\[
H_\sigma(\tau)=\innerr{\delta^{\nabla^\sigma}\d{}^{\nabla^\sigma}\tau-\star_\F[\star_\F F^\sigma,\tau]}{\tau}_{\frak k},
\]
for any $\tau\in\Omega^1_c(T\F;\frak k)$, under the identification of the tangent space of $\splc{A}$ with $\Omega^1_c(M;\frak k)$. 
\end{theorem}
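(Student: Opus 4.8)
The plan is to compute the first variation of the action functional $\S$ directly, using the expansion of the curvature under an affine deformation established in equation \eqref{eq:expansion_curvature_foliated}. Given a critical splitting candidate $\sigma \in \splc{A}$ and a compactly supported variation $\tau \in \Omega^1_c(T\F;\frak k)$, I would substitute $\sigma + \lambda\tau$ into \eqref{eq:foliated_ym_action}, using that $F^{\sigma+\lambda\tau} = F^\sigma - \lambda\,\d{}^{\nabla^\sigma}\tau - \tfrac{\lambda^2}{2}[\tau,\tau]$. Differentiating $\S(\sigma+\lambda\tau) = \innerr{F^{\sigma+\lambda\tau}}{F^{\sigma+\lambda\tau}}_{\frak k}$ at $\lambda = 0$ and using symmetry and bilinearity of the pairing $\innerr\cdot\cdot_{\frak k}$ yields
\[
\smallderiv\lambda0\S(\sigma+\lambda\tau) = -2\,\innerr{\d{}^{\nabla^\sigma}\tau}{F^\sigma}_{\frak k}.
\]
Then I would invoke Proposition \ref{prop:codiff}: with $\alpha = \tau$ of degree $1$ and $\beta = F^\sigma$ of degree $2$, the formal adjoint relation gives $\innerr{\d{}^{\nabla^\sigma}\tau}{F^\sigma}_{\frak k} = \innerr{\tau}{\delta^{\nabla^\sigma}F^\sigma}_{\frak k} = -\innerr{\tau}{\star_\F^{-1}\d{}^{\nabla^\sigma}\star_\F F^\sigma}_{\frak k}$. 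Since $\tau$ ranges over all of $\Omega^1_c(T\F;\frak k)$ and $\innerr\cdot\cdot_{\frak k}$ is nondegenerate on compactly supported forms (the fundamental lemma of the calculus of variations), criticality is equivalent to $\star_\F^{-1}\d{}^{\nabla^\sigma}\star_\F F^\sigma = 0$, hence to $\d{}^{\nabla^\sigma}\star_\F F^\sigma = 0$ since $\star_\F$ is an isomorphism. This establishes the Euler--Lagrange equation \eqref{eq:foliated_ym_equation}.

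For the Hessian, I would compute the second derivative $\smallderiv{}{}\big|_{\lambda=0}^{(2)}\S(\sigma+\lambda\tau)$, which from the quadratic-in-$\lambda$ structure of $F^{\sigma+\lambda\tau}$ picks up two contributions: the square of the first-order term $\innerr{\d{}^{\nabla^\sigma}\tau}{\d{}^{\nabla^\sigma}\tau}_{\frak k}$, and a cross term between $F^\sigma$ and the second-order coefficient $-\tfrac12[\tau,\tau]$, giving $-\innerr{[\tau,\tau]}{F^\sigma}_{\frak k}$ up to a combinatorial constant. After dividing by $2$ to normalize the quadratic form, the first contribution becomes $\innerr{\delta^{\nabla^\sigma}\d{}^{\nabla^\sigma}\tau}{\tau}_{\frak k}$ again by Proposition \ref{prop:codiff}. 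The second contribution must be rewritten as $-\innerr{\star_\F[\star_\F F^\sigma,\tau]}{\tau}_{\frak k}$; this is a pointwise identity relating the pairing $\innerr{[\tau,\tau]}{F^\sigma}_{\frak k}$ with $\innerr{\star_\F[\star_\F F^\sigma,\tau]}{\tau}_{\frak k}$, which follows from $\ad$-invariance of $\inner\cdot\cdot_{\frak k}$ (so that the Lie bracket can be cyclically permuted inside the pairing) together with the defining property of $\star_\F$ on a $d$-dimensional leaf, applied degree by degree. I would be careful to track the degrees: $[\tau,\tau]$ is a $2$-form, $F^\sigma$ a $2$-form, $[\star_\F F^\sigma, \tau]$ is a form of degree $(d-2)+1 = d-1$, and applying $\star_\F$ returns a $1$-form, matching $\tau$.

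The main obstacle I anticipate is the bookkeeping in the cross term of the Hessian: getting the combinatorial factor right between the algebraic bracket of forms $[\tau,\tau]$, the pointwise Lie bracket, and the wedge pairing, and then correctly manipulating $\star_\F$ through the bracket using $\ad$-invariance. This is the step where sign and constant errors are easiest to make, and where the non-standard wedge-pairing conventions from \cite{atiyah-bott} (reused in Proposition \ref{prop:codiff}) need to be handled consistently. A secondary subtlety is ensuring that all integrations by parts are legitimate: the boundary terms vanish because $\tau$ is compactly supported and, crucially, because Proposition \ref{prop:codiff} already absorbs the Stokes' theorem argument (including the fibre integration over $\F$ and the $\pi$-invariance of $g$), so I would simply cite it rather than re-derive it. Finally, I would remark that the identification of the tangent space of $\splc{A}$ with $\Omega^1_c(T\F;\frak k)$ is exactly the affine-space structure recorded in the remark preceding the theorem, so no additional argument is needed there; the statement "$\Omega^1_c(M;\frak k)$" in the theorem should be read as $\Omega^1_c(T\F;\frak k)$ via the anchor.
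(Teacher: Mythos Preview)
Your proposal is correct and follows essentially the same route as the paper: expand $\S(\sigma+\lambda\tau)$ via \eqref{eq:expansion_curvature_foliated}, read off the linear term and apply Proposition~\ref{prop:codiff} for the Euler--Lagrange equation, then read off the quadratic term and rewrite $\innerr{F^\sigma}{[\tau,\tau]}_{\frak k}$ as $\innerr{\star_\F[\star_\F F^\sigma,\tau]}{\tau}_{\frak k}$ using $\ad$-invariance and the defining property of $\star_\F$. One small slip: with $k=2$ in Proposition~\ref{prop:codiff} the sign is $(-1)^2=+1$, so $\delta^{\nabla^\sigma}F^\sigma = +\star_\F^{-1}\d{}^{\nabla^\sigma}\star_\F F^\sigma$, not $-$; this does not affect your conclusion.
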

\begin{proof}
We use the equation \eqref{eq:expansion_curvature_foliated} to compute
\begin{align}
\label{eq:computation_ym_foliated}
\begin{split}
	\S(\sigma+\lambda\tau)&=\innerr{F^{\sigma+\lambda\tau}}{F^{\sigma+\lambda\tau}}_{\frak k}\\
	&=\S(\sigma)-2\lambda\innerr{F^{\sigma}}{\d{}^{\nabla^\sigma}\tau}_{\frak k}-\lambda^2\big(\innerr{F^\sigma}{[\tau,\tau]}_{\frak k}-\innerr{\d{}^{\nabla^\sigma}\tau}{\d{}^{\nabla^\sigma}\tau}_{\frak k}\big)+\mathcal O(\lambda^3).
\end{split}
\end{align}
Differentiating at $\lambda=0$, we obtain 
\[
    \deriv\lambda 0\S(\sigma+\lambda\tau)=-2\innerr{F^{\sigma}}{\d{}^{\nabla^\sigma}\tau}_{\frak k}=-2\innerr{\delta^{\nabla^\sigma}F^{\sigma}}{\tau}_{\frak k},
\]
where we have used Proposition \ref{prop:codiff}. By the non-degeneracy of $\innerr\cdot\cdot_{\frak k}$, this expression vanishes for all $\tau\in\Omega^1_c(T\F ;\frak k)$ if and only if the curvature $F^\sigma$ satisfies the foliated Yang--Mills equation.

For the second part, first note that the Hessian at a critical point $\sigma$ is defined as
\[
H_\sigma(\tau)=\frac12 \frac{d^2}{d\lambda^2}\S(\sigma+\lambda\tau)\Big|_{\lambda=0}=\innerr{\d{}^{\nabla^\sigma}\tau}{\d{}^{\nabla^\sigma}\tau}_{\frak k}-\innerr{F^\sigma}{[\tau,\tau]}_{\frak k}
\]
where we have read out the second order coefficient in the expansion \eqref{eq:computation_ym_foliated}. To bring this into the desired form, first use Proposition \ref{prop:codiff} on the first term. For the second term, note that for any $F\in \Omega^2_c(T\F;\frak k)$ and any $\alpha,\beta\in\Omega^1_c(T\F;\frak k)$, there holds
\begin{align*}
	\innerr{{[\alpha,\beta]}}{F}_{\frak k}&=\int_M\Big([\alpha,\beta]\wedge \star_\F F\Big)\wedge q^*\vol_N=\int_M\Big(\alpha\wedge [\beta,\star_\F F]\Big)\wedge q^*\vol_N=\innerr{\alpha}{\star_\F[\star_\F F,\beta]}_{\frak k},
\end{align*}
where $\vol_N\in\Omega^{m-d}(N)$ denotes the Riemannian volume form on the orbit space, and we have used ad-invariance in the second equality, or rather, its consequence
\[
[\alpha,\beta]\wedge\gamma=\alpha\wedge[\beta,\gamma],
\]
which holds for $\frak k$-valued foliated forms of arbitrary degree. On the last equality, we have also used a property of the Hodge star operator, namely that 
\begin{align}
	\label{eq:hodge_squared}
	(\star_\F)^2=(-1)^{k(d-k)}
\end{align} 
holds on forms of degree $k$.
\end{proof}
\begin{example}[Self-dual and anti self-dual solutions]
	\label{ex:self_dual_foliated}
	If the orbit foliation is 4-dimensional, and supposing that the curvature of a splitting $\sigma$ satisfies \[F^\sigma=\pm \star_\F F^\sigma,\] then the Bianchi identity implies $F^\sigma$ is a solution to the foliated Yang--Mills equation. Note that only $\pm 1$ is allowed as the prefactor in above equality, on account of the identity \eqref{eq:hodge_squared}. This is the foliated version of \cite{atiyah_geometry_of_ym_fields}*{equations (3.7) and (3.8)}.
\end{example}
\begin{remark}
	The last part of the proof shows that any $F\in \Omega^2_c(T\F;\frak k)$ induces a map 
\begin{align*}
	&\widehat F\colon \Omega^1_c(T\F;\frak k)\ra \Omega^1_c(T\F;\frak k),\quad \widehat F(\alpha)=\star_\F[\star_\F F,\alpha],
\end{align*}
which is characterized by the equality $\innerr{\widehat F(\alpha)}{\beta}_{\frak k}=\innerr{ F}{[\alpha,\beta]}_{\frak k}$, therefore it is self-adjoint.
\end{remark}
We now inspect how underdetermined the foliated Yang--Mills equation is, i.e., in what ways we can deform a critical splitting so that it remains critical. 
\begin{proposition}
	\label{prop:underdetermined_fym}
	Let $A\Ra M$ be a foliated Yang--Mills algebroid, and suppose $\sigma\in\splc{A}$ is a critical splitting. For any $\alpha\in\Omega_c^1(T\F;\frak k)$, the splitting $\sigma+\alpha$ is critical if and only if there holds
	\begin{align*}
		\widehat{F^{\sigma+\alpha}}(\alpha)=\delta^{\nabla^\sigma}(\d{}^{\nabla^\sigma}\alpha+\tfrac 12[\alpha,\alpha]).
	\end{align*}
	In particular, when the isotropy $\frak k$ is abelian, the splitting $\sigma+\alpha$ is critical if and only if $\smash{\d{}^{\nabla^\F}}\alpha=0$, where $\nabla^\F$ denotes the canonical flat leafwise connection on $\frak k$.
\end{proposition}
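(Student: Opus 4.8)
The plan is to expand the action $\S(\sigma + \alpha + \lambda\tau)$ to first order in $\lambda$ and demand that the linear term vanish for all $\tau \in \Omega^1_c(T\F;\frak k)$, using the computation already carried out in the proof of Theorem \ref{thm:ym}. First I would apply formula \eqref{eq:expansion_curvature_foliated} twice, or simply observe that the curvature of $\sigma + \alpha$ is $F^{\sigma+\alpha} = F^\sigma - \d{}^{\nabla^\sigma}\alpha - \tfrac12[\alpha,\alpha]$, and then expand the curvature of the further deformation $(\sigma + \alpha) + \lambda\tau$ as $F^{(\sigma+\alpha)+\lambda\tau} = F^{\sigma+\alpha} - \lambda\, \d{}^{\nabla^{\sigma+\alpha}}\tau - \tfrac{\lambda^2}{2}[\tau,\tau]$, exactly as in \eqref{eq:expansion_curvature_foliated} but with the base splitting $\sigma + \alpha$ in place of $\sigma$. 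Differentiating $\innerr{F^{(\sigma+\alpha)+\lambda\tau}}{F^{(\sigma+\alpha)+\lambda\tau}}_{\frak k}$ at $\lambda = 0$ yields $-2\innerr{F^{\sigma+\alpha}}{\d{}^{\nabla^{\sigma+\alpha}}\tau}_{\frak k}$, and then Proposition \ref{prop:codiff} (applied with the splitting $\sigma + \alpha$) turns this into $-2\innerr{\delta^{\nabla^{\sigma+\alpha}}F^{\sigma+\alpha}}{\tau}_{\frak k}$. By nondegeneracy of $\innerr{\cdot}{\cdot}_{\frak k}$, criticality of $\sigma + \alpha$ is equivalent to $\delta^{\nabla^{\sigma+\alpha}}F^{\sigma+\alpha} = 0$.

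Next I would rewrite the codifferential $\delta^{\nabla^{\sigma+\alpha}}$ in terms of $\delta^{\nabla^\sigma}$. Since $\nabla^{\sigma+\alpha}_X\xi = \nabla^\sigma_X\xi + [\alpha(X),\xi]$, the exterior covariant derivatives are related by $\d{}^{\nabla^{\sigma+\alpha}}\beta = \d{}^{\nabla^\sigma}\beta + [\alpha,\beta]$ for any $\beta \in \Omega^\bullet(T\F;\frak k)$, where $[\alpha,\cdot]$ means the wedge combined with the bracket. Dualizing via Proposition \ref{prop:codiff} gives $\delta^{\nabla^{\sigma+\alpha}}F = \delta^{\nabla^\sigma}F - \widehat F(\alpha)$ when $F$ has degree $2$, where $\widehat F$ is the self-adjoint operator from the remark after Theorem \ref{thm:ym}, characterized by $\innerr{\widehat F(\alpha)}{\tau}_{\frak k} = \innerr{F}{[\alpha,\tau]}_{\frak k}$; indeed $\innerr{[\alpha,F]}{\tau}_{\frak k} = \innerr{F}{[\alpha,\tau]}_{\frak k}$ holds by ad-invariance by the same computation as in the Hessian part of the proof of Theorem \ref{thm:ym}, up to the sign bookkeeping on the degrees. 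Substituting $F = F^{\sigma+\alpha} = F^\sigma - \d{}^{\nabla^\sigma}\alpha - \tfrac12[\alpha,\alpha]$ and using that $\delta^{\nabla^\sigma}F^\sigma = 0$ (criticality of $\sigma$), the equation $\delta^{\nabla^{\sigma+\alpha}}F^{\sigma+\alpha} = 0$ becomes $-\delta^{\nabla^\sigma}(\d{}^{\nabla^\sigma}\alpha + \tfrac12[\alpha,\alpha]) - \widehat{F^{\sigma+\alpha}}(\alpha) = 0$, which is the claimed identity after a sign check.

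For the abelian special case, all brackets $[\cdot,\cdot]$ on $\frak k$ vanish, so $\widehat{F^{\sigma+\alpha}} = 0$, $[\alpha,\alpha] = 0$, and the condition collapses to $\delta^{\nabla^\sigma}\d{}^{\nabla^\sigma}\alpha = 0$. Moreover, when $\frak k$ is abelian the leafwise connection $\nabla^\sigma$ is independent of $\sigma$ and coincides with the canonical flat leafwise connection $\nabla^\F$ (this follows from \eqref{eq:leafwise_nabla} since $[\sigma(X),\xi]$ depends only on the $T\F$-component of $\sigma(X)$, which is $X$, modulo $\frak k$; more precisely $\nabla^\sigma_X\xi = [\sigma(X),\xi]$ and adding a $\frak k$-valued form to $\sigma$ changes $\sigma(X)$ by a section of the abelian ideal, leaving the bracket unchanged). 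I would then invoke the standard Hodge-theoretic fact that for the flat connection $\nabla^\F$ on $\frak k$, one has $\delta^{\nabla^\F}\d{}^{\nabla^\F}\alpha = 0$ if and only if $\d{}^{\nabla^\F}\alpha = 0$: the forward implication is immediate, and the converse follows by pairing, $\innerr{\delta^{\nabla^\F}\d{}^{\nabla^\F}\alpha}{\alpha}_{\frak k} = \innerr{\d{}^{\nabla^\F}\alpha}{\d{}^{\nabla^\F}\alpha}_{\frak k} \geq 0$ with equality iff $\d{}^{\nabla^\F}\alpha = 0$, which is legitimate here because $\alpha$ (and hence $\d{}^{\nabla^\F}\alpha$) is compactly supported so the integration by parts of Proposition \ref{prop:codiff} applies with no boundary terms.

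The main obstacle I anticipate is the careful sign and degree bookkeeping in the relation $\delta^{\nabla^{\sigma+\alpha}} = \delta^{\nabla^\sigma} - \widehat{(\cdot)}(\alpha)$ and in verifying $\innerr{[\alpha,F]}{\tau}_{\frak k} = \innerr{F}{[\alpha,\tau]}_{\frak k}$ for the relevant degrees $(\deg\alpha,\deg F,\deg\tau) = (1,2,1)$; one must track the graded-commutativity signs of $\wedge$ on $\frak k$-valued foliated forms and the sign $(-1)^k$ in the formula $\delta^{\nabla^\sigma} = (-1)^k \star_\F^{-1}\d{}^{\nabla^\sigma}\star_\F$. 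This is routine but error-prone, and it is the only place where the computation is not a direct quotation of the proof of Theorem \ref{thm:ym}.
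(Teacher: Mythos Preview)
Your proposal is correct and follows essentially the same route as the paper: both arguments reduce criticality of $\sigma+\alpha$ to $\delta^{\nabla^{\sigma+\alpha}}F^{\sigma+\alpha}=0$, then split the codifferential via $\d{}^{\nabla^{\sigma+\alpha}}=\d{}^{\nabla^\sigma}+[\alpha,\cdot]$ and the Hodge star, and the abelian case is handled identically by the pairing argument. Two minor remarks: the paper invokes Theorem~\ref{thm:ym} directly for the equivalence with $\delta^{\nabla^{\sigma+\alpha}}F^{\sigma+\alpha}=0$ rather than re-expanding the action, and the sign you flagged indeed comes out as $\delta^{\nabla^{\sigma+\alpha}}F = \delta^{\nabla^\sigma}F + \widehat F(\alpha)$ (plus, not minus), so your final equation reads $-\delta^{\nabla^\sigma}(\d{}^{\nabla^\sigma}\alpha+\tfrac12[\alpha,\alpha])+\widehat{F^{\sigma+\alpha}}(\alpha)=0$, matching the statement.
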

\begin{proof}
	Noting that $\d{}^{\nabla^{\sigma+\alpha}}=\d{}^{\nabla^\sigma}+[\alpha,\cdot]$, we calculate
	\begin{align*}
		\delta^{\nabla^{\sigma+\alpha}} F^{\sigma+\alpha}&=\star_\F^{-1}\,(\d{}^{\nabla^{\sigma}}+[\alpha,\cdot])\star_\F F^{\sigma+\alpha}=\star_\F^{-1}\d{}^{\nabla^{\sigma}}\!\star_\F F^{\sigma+\alpha}+\widehat{F^{\sigma+\alpha}}(\alpha)\\
		&=
		-\delta^{\nabla^\sigma}(\d{}^{\nabla^\sigma}\alpha+\tfrac 12 [\alpha,\alpha])+\widehat{F^{\sigma+\alpha}}(\alpha),
	\end{align*}
	where we have used the identities \eqref{eq:expansion_curvature_foliated} and \eqref{eq:hodge_squared} together with the assumption that $\sigma$ is critical and Theorem \ref{thm:ym}. In the abelian case, criticality of $\sigma+\alpha$ is thus equivalent to $\smash{\delta^{\nabla^\F}\!\d{}^{\nabla^\F}\alpha=0}$, but then $0=\innerr{\delta^{\nabla^\F}\!\d{}^{\nabla^\F}\alpha}{\alpha}_{\frak k}=\innerr{\d{}^{\nabla^\F}\alpha}{\d{}^{\nabla^\F}\alpha}_{\frak k}$ means $\d{}^{\nabla^\F}\alpha=0$ by positive-definiteness.
\end{proof}
\begin{remark}
	\label{rem:tangent_space_of_critical_splittings}
	The computation in the proof above also provides us with the means of describing the (formal) tangent space to the space of solutions of the foliated Yang--Mills equation, at any critical splitting $\sigma\in\spl A$. Namely,  identifying the tangent space of $\splc A$ with $\Omega_c^1(T\F;\frak k)$, it consists of all $\tau\in\Omega_c^1(T\F;\frak k)$ such that
	\begin{align*}
		\smallderiv\lambda 0 \delta^{\nabla^{\sigma+\lambda\tau}}F^{\sigma+\lambda\tau}=\widehat{F^\sigma}(\tau)-\delta^{\nabla^\sigma}\!\d{}^{\nabla^\sigma}\tau=0.
	\end{align*}
	In other words, the tangent space to solutions of \eqref{eq:foliated_ym_equation} at a critical splitting $\sigma$ is precisely $\ker H_\sigma$.
\end{remark}

\subsection{Gauge invariance}
\label{sec:foliated_gauge_invariance}
In the classical setting of a principal $G$-bundle $P\ra M$, Yang--Mills theory has an important feature: the  action functional is invariant under $G$-equivariant bundle automorphisms $P\ra P$ (called gauge transformations). A natural question is how to extend this notion to the foliated Yang--Mills setting. To obtain an idea for this, one needs to first interpret gauge transformations in the groupoid language: they are closely related to inner automorphisms of the gauge groupoid (see Example \ref{ex:gauge_transformations_inner_automorphisms}). We recall an inner automorphism is defined as the conjugation with a  bisection $b\colon M\ra G$ of a Lie groupoid $G\rra M$. This notion makes sense on an arbitrary Lie groupoid $G$, hence, to extend gauge invariance to foliated Yang--Mills theory, the first idea is to assume that a given Lie algebroid $A$ is integrable, and show the foliated Yang--Mills action is invariant under inner automorphisms of an integrating groupoid. 

Hence, let $G$ be a Lie groupoid integrating $A$. We begin by showing that if $G$ is $s$-connected,  $\ad$-invariance of $\inner\cdot\cdot_{\frak k}$ is equivalent to its $\Ad$-invariance. Preliminarily, we must note that any bundle of ideals on a Lie algebroid $A$ is necessarily also a bundle of ideals on $G$, provided $G$ has connected source fibres, so we automatically get a representation $\Ad\colon G\curvearrowright\frak k$ from $\ad\colon A\curvearrowright \frak k$. This is shown in \cite{rigidity_poisson_submanifolds}*{Appendix B}, and the desired result relies on the method of proof therein.

\begin{lemma}
	\label{lem:pairing_A_invariant_implies_G_invariant}
	Let $\inner\cdot\cdot_{\frak k}$ be an $\ad$-invariant metric on a bundle of ideals $\frak k$ of a Lie algebroid $A$, and suppose a Lie groupoid $G$ integrates $A$. If $G$ has connected $s$-fibres, then $\inner{\cdot}{\cdot}_{\frak k}$ is also $\Ad$-invariant, that is, for any $g\in G$ and $\xi,\eta\in\frak k_{s(g)}$, there holds
	\begin{align}
		\label{eq:Ad_invariance}
		\inner{\Ad_g\xi}{\Ad_g\eta}_{\frak k}=\inner{\xi}{\eta}_{\frak k}.
	\end{align}
	Conversely, $\Ad$-invariance implies $\ad$-invariance regardless of connectivity of $s$-fibres of $G$.
\end{lemma}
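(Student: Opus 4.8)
The plan is to prove the two implications separately, with the forward direction (``$\ad$-invariance $\Rightarrow$ $\Ad$-invariance'' under $s$-connectedness) being the substantive one, and the converse being a routine differentiation argument.

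For the converse, I would simply differentiate the $\Ad$-invariance identity \eqref{eq:Ad_invariance} along a path of arrows. Concretely, fix $x\in M$ and $\xi,\eta\in\frak k_x$, take any section $\alpha\in\Gamma(A)$ and consider the (local) bisection $\exp(\lambda\alpha)$ whose value at $x$ is $g_\lambda=\phi^{\alpha^L}_\lambda(1_x)$, with $s(g_\lambda)=x$ and $t(g_\lambda)=\phi^{\rho\alpha}_\lambda(x)$. Apply $\Ad$-invariance with $g=g_\lambda$ to the parallel-transported (along $\gamma^{\rho\alpha}_x$) sections, differentiate at $\lambda=0$, and use that $[\alpha,\xi]_x=\smallderiv\lambda0\Ad_{g_\lambda}(\xi_{\phi^{\rho\alpha}_\lambda(x)})$ together with the Leibniz-type identity for differentiating an inner product of two varying sections. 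This yields exactly $\rho(\alpha)\inner\xi\eta_{\frak k}=\inner{[\alpha,\xi]}\eta_{\frak k}+\inner\xi{[\alpha,\eta]}_{\frak k}$, which is $\ad$-invariance; no connectedness is needed since we only differentiate.

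For the forward direction, the strategy is to show that the function on $G$ measuring the failure of \eqref{eq:Ad_invariance} is locally constant along $s$-fibres, hence identically equal to its value at the units (which is $1$) once $s$-fibres are connected. Precisely: fix $x\in M$, $\xi,\eta\in\frak k_x$, and for $g\in G_x$ define $f(g)=\inner{\Ad_g\xi}{\Ad_g\eta}_{\frak k}$, a smooth function on the source fibre $G_x$. I would show $f$ is constant on $G_x$ by showing its derivative along any left-invariant vector field $\alpha^L$ (which are tangent to $s$-fibres and span $TG_x$ at every point, since $t|_{G_x}$ is a submersion onto the orbit and left translations are diffeomorphisms) vanishes. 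For this one writes, for $g\in G_x$ and the flow $\phi^{\alpha^L}_\lambda$,
\[
\smallderiv\lambda0 f\big(\phi^{\alpha^L}_\lambda(g)\big)=\smallderiv\lambda0\inner{\Ad_{\phi^{\alpha^L}_\lambda(g)}\xi}{\Ad_{\phi^{\alpha^L}_\lambda(g)}\eta}_{\frak k},
\]
and uses the cocycle property $\Ad_{\phi^{\alpha^L}_\lambda(g)}=\Ad_{\phi^{\alpha^L}_\lambda(g)g^{-1}}\circ\Ad_g$ together with the fact that $\lambda\mapsto \phi^{\alpha^L}_\lambda(g)g^{-1}$ is an integral curve through $1_{t(g)}$ of the left-invariant field associated to $\alpha$ (up to a reparametrization / using that the flow of $\alpha^L$ is right translation by the bisection $\exp(\lambda\alpha)$, as recalled in Example \ref{ex:exp_bisection} and used repeatedly in Chapter \ref{chapter:mec}). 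This reduces the derivative to the infinitesimal expression $\inner{[\alpha,\Ad_g\xi]}{\Ad_g\eta}_{\frak k}+\inner{\Ad_g\xi}{[\alpha,\Ad_g\eta]}_{\frak k}-\rho(\alpha)\inner{\Ad_g\xi}{\Ad_g\eta}_{\frak k}$ evaluated at $t(g)$, which vanishes precisely by $\ad$-invariance of $\inner\cdot\cdot_{\frak k}$ (here I use that $\Ad_g\xi,\Ad_g\eta$ extend to local sections of $\frak k$, and that $\frak k$ being a bundle of ideals guarantees $[\alpha,\cdot]$ preserves $\Gamma(\frak k)$). Since $f$ is locally constant on the connected manifold $G_x$ and $f(1_x)=\inner\xi\eta_{\frak k}$, we conclude $f\equiv\inner\xi\eta_{\frak k}$, i.e.\ \eqref{eq:Ad_invariance} holds for all $g\in G_x$; letting $x$ range over $M$ finishes the proof.

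The main obstacle I anticipate is bookkeeping in the forward direction: making the reduction ``derivative of $f$ along $\alpha^L$ equals the $\ad$-invariance defect at $t(g)$'' clean requires carefully relating the flow of $\alpha^L$ at the point $g$ to a curve of arrows based at $1_{t(g)}$, and correctly tracking the base point ($t(g)$, not $x$) at which the pairing and the sections are evaluated. This is precisely the kind of computation carried out in \cite{rigidity_poisson_submanifolds}*{Appendix B} to establish that a bundle of ideals on $A$ integrates to one on $s$-connected $G$, so I would follow that method closely; once the infinitesimal reduction is set up correctly, the vanishing is immediate from Definition \ref{def:ym_algebroid}(i).
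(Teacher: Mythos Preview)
Your proposal is correct and follows essentially the same idea as the paper: reduce $\Ad$-invariance to the statement that the flow $\lambda\mapsto\Ad_{\exp(\lambda\alpha)}$ acts by isometries, which is exactly $\ad$-invariance, and then globalize using $s$-connectedness. The paper packages this slightly differently---it phrases the infinitesimal step via the flow of the derivation $([\alpha,\cdot],\rho(\alpha))\in\Gamma(\frak{gl}(\frak k))$ (citing \cite{rigidity_poisson_submanifolds}*{Appendix B} for $\Phi^{[\alpha,\cdot]}_\lambda=\Ad_{\exp(\lambda\alpha)}$) and globalizes via ``any neighborhood of the units generates an $s$-connected groupoid'', whereas you show directly that $f(g)=\inner{\Ad_g\xi}{\Ad_g\eta}_{\frak k}$ is locally constant on a connected fibre---but the content is identical.

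One small caveat on conventions: in this paper, left-invariant vector fields are tangent to $t$-fibres (Definition~\ref{defn:left_inv}), not $s$-fibres, so when you carry out the computation you should either work on $G^x$ rather than $G_x$, or switch to right-invariant fields; this is exactly the bookkeeping you already flagged as the main obstacle, and it causes no real difficulty.
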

\begin{proof}
	This is shown by a standard trick, namely, we first show that the equality above holds for all $g$ in a neighborhood of the units $u(M)\subset G$. Since any neighborhood of the units generates $G$ by $s$-connectedness (for instance, see \cite{mackenzie}*{Proposition 1.5.8}), the equality \eqref{eq:Ad_invariance} then holds for all $g\in G$. Hence, it is enough to show that for any $\xi,\eta\in\Gamma(\frak k)$, there holds
	\begin{align}
		\label{eq:ad_implies_Ad_intermediate}
		\inner{\Ad_{g_\lambda}(\xi_{s(g_\lambda)})}{\Ad_{g_\lambda}(\eta_{s(g_\lambda)})}_{\frak k}=\inner{\xi_{s(g_\lambda)}}{\eta_{s(g_\lambda)}}_{\frak k},\ \text{where}\ g_\lambda\coloneqq\phi^{\alpha^L}_\lambda(1_x),
	\end{align}
	 for any  $\alpha\in\Gamma(A)$ and all times $\lambda$ for which the integral path of $\alpha^L$ through $1_x$ is defined. 

	 To show this, we utilize the notion of derivations (Example \ref{ex:algebroid_examples} (iv)). We recall that on any vector bundle $V\ra M$, the \emph{flow} of a derivation $(D,X)\in\Gamma(\frak{gl}(V))$ is defined as the vector bundle map $\Phi^D_\lambda\colon V\ra V$ covering the flow $\phi^{X}_\lambda$ of $X\in\vf(M)$, denoted $\Phi^D_\lambda(x)\colon V_x\ra V_{\phi^X_\lambda(x)}$, which is the solution to the following differential equation.
	 \begin{equation}
		\label{eq:flow_of_derivation}
		\begin{aligned}
			\frac{d}{d\lambda}(\Phi^D_\lambda)^*(\xi)&=(\Phi^D_\lambda)^*(D\xi),\\ 
			\Phi^D_0&=\id_V.
		\end{aligned}
		\qquad
		\vcenter{\hbox{
			\begin{tikzcd}
				V & V \\
				M & M
				\arrow["{\Phi^{D}_\lambda}", from=1-1, to=1-2]
				\arrow[from=1-1, to=2-1]
				\arrow[from=1-2, to=2-2]
				\arrow["{\phi^{X}_\lambda}"', from=2-1, to=2-2]
			\end{tikzcd}
		}}
	\end{equation}
	Here, we have denoted $(\Phi^D_\lambda)^*(\xi)_x=\Phi^D_{-\lambda}(\phi^X_\lambda(x))\xi_{\phi^X_\lambda(x)}$. Given a metric $\inner\cdot\cdot_V$ on $V$, a derivation $(D,X)$ is said to be \emph{compatible} with the metric, if there holds
\begin{align}
	\label{eq:metric_derivation}
		X\inner{\xi}{\eta}_V=\inner{D\xi}{\eta}_V+\inner{\xi}{D\eta}_V\, \text{ for all }\,\xi,\eta\in\Gamma(V).
\end{align}
	Now, this is equivalent to saying that the flow $\Phi^D_\lambda$ acts on $V$ by isometries.\footnote{Proving this is tantamount to proving  that the parallel transport of a metric connection acts on $V$ by isometries.}
	In our case, we take the vector bundle $V=\frak k$. Letting  $\alpha\in\Gamma(A)$, we may assume without loss of generality that $\rho(\alpha)\in\vf(M)$ is complete, so that $\alpha^L$ is also complete (see \cite{mackenzie}*{Theorem 3.6.4}), and hence defines a derivation 
	$([\alpha,\cdot],\rho(\alpha))\in\Gamma(\frak{gl}(\frak k))$. It is shown in \cite{rigidity_poisson_submanifolds}*{Equation (43)} that its flow equals 
	\[
	\Phi_\lambda^{[\alpha,\cdot]}=\Ad_{\exp(\lambda\alpha)},
	\]
	that is, $\smash{\Phi^{[\alpha,\cdot]}_\lambda}$ equals the induced Lie algebroid map associated to the inner automorphism $I_{\exp(\lambda\alpha)}$ of $G$, induced by the bisection $\exp(\lambda\alpha)$---see the discussion after the proof and Remark \ref{rem:exp_bisections} for details. Using ad-invariance of $\inner\cdot\cdot_{\frak k}$, we conclude $\Ad_{\exp(\lambda\alpha)}$ acts on $\frak k$ by isometries. Since for any $\xi\in\frak k_x$, there holds $\Ad_{\exp(\lambda\alpha)}(\xi)=\Ad_{g_\lambda^{-1}}(\xi)$ where $g_\lambda$ is as in equation \eqref{eq:ad_implies_Ad_intermediate}, we are done.
\end{proof}
\noindent Finally, fix a bisection $b\in\mathrm{Bis}(G)$ of the integrating groupoid $G\rra M$ of $A\Ra M$, that is,
\begin{align}
\label{eq:bisection_gauge}
	b\colon M\ra G,\quad t\circ b=\id_M,\quad s\circ b=\varphi,
\end{align}
where $\varphi\colon M\ra M$ is a diffeomorphism. Consider the associated inner automorphism, defined as
\begin{equation}
\label{eq:inner_automorphism}
\begin{aligned}
	&I_b\colon G\ra G,\\ 
	&I_b(g)=b(t(g))^{-1}g b(s(g)).
\end{aligned}
\qquad
\vcenter{\hbox{
	\begin{tikzcd}
		G & G \\
		M & M
		\arrow["{I_b}", from=1-1, to=1-2]
		\arrow[shift left, from=1-1, to=2-1]
		\arrow[shift right, from=1-1, to=2-1]
		\arrow[shift left, from=1-2, to=2-2]
		\arrow[shift right, from=1-2, to=2-2]
		\arrow["\varphi", from=2-1, to=2-2]
	\end{tikzcd}
}}
\end{equation}
As portrayed in the diagram, $I_b$ is a Lie groupoid automorphism covering $\varphi$ on the base. Let $\Ad_b\coloneq(I_b)_*\colon A\ra A$ denote the induced Lie algebroid morphism over $\varphi$. It induces the following isomorphism of short exact sequences of Lie algebroids covering $\varphi$ on the base.
\[\begin{tikzcd}[column sep=large, row sep=large]
	0 & {\frak k} & A & {T\F} & 0 \\
	0 & {\frak k} & A & {T\F} & 0
	\arrow[from=1-1, to=1-2]
	\arrow[hook, from=1-2, to=1-3]
	\arrow["{\Ad_b|_{\frak k}}", from=1-2, to=2-2]
	\arrow["\rho", from=1-3, to=1-4]
	\arrow["{\Ad_b}", from=1-3, to=2-3]
	\arrow[from=1-4, to=1-5]
	\arrow["{\varphi_*}", from=1-4, to=2-4]
	\arrow[from=2-1, to=2-2]
	\arrow[hook, from=2-2, to=2-3]
	\arrow["\rho"', from=2-3, to=2-4]
	\arrow[from=2-4, to=2-5]
\end{tikzcd}\]
Since we are working with target bisections, the restriction of $\Ad_b$ to $\frak k$ is given on any $\xi\in\frak k_x$ by
\begin{align}
	\label{eq:counterintuitive_Ad_b}
	\Ad_{b}(\xi)=\Ad_{b(x)^{-1}}(\xi)\in \frak k_{\varphi(x)}.
\end{align}
Now, any splitting $\sigma\in\spl{A}$ can be pulled back along the Lie algebroid automorphism $\Ad_b$. The obtained splitting, denoted $\sigma_b=(\Ad_b)^*\sigma$, is defined by the identity
\[
	\Ad_b|_{\frak k}\circ \sigma_b =\sigma\circ \varphi_*.
\]
We are now ready to demonstrate the desired gauge invariance of the foliated Yang--Mills action.
\begin{theorem}
	\label{thm:fym_gauge_invariance}
	Let $A$ be a foliated Yang--Mills algebroid, and suppose a Lie groupoid $G$ with connected $s$-fibres integrates $A$. The foliated Yang--Mills action is invariant under inner automorphisms of $G$ covering an orientation-preserving isometry $\varphi$ of the base. That is, for any bisection $b\in\mathrm{Bis}(G)$ covering such a base map $\varphi$, there holds
	\[
	\S(\sigma_b)=\S(\sigma),
	\]
	for any splitting $\sigma$. In particular, the splitting $\sigma_b$ is critical for $\S$ if and only if $\sigma$ is.
\end{theorem}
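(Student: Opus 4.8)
### Proof Proposal

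The plan is to show that the inner automorphism $I_b$ induces a transformation on the space of splittings that preserves the curvature up to the gauge action, and that the integral defining $\S$ is insensitive to pulling back along an orientation-preserving isometry. More precisely, I would proceed in three steps: first establish the naturality of the curvature under $(\Ad_b)^*$, then show that the induced leafwise connection transforms equivariantly and that the ad-invariant metric is preserved by $\Ad_b|_{\frak k}$, and finally carry out the change-of-variables argument on the integral.

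First I would compute how the curvature $F^\sigma$ behaves under the pullback $\sigma\mapsto \sigma_b = (\Ad_b)^*\sigma$. Since $\Ad_b\colon A\to A$ is a Lie algebroid automorphism covering the diffeomorphism $\varphi\colon M\to M$, and since $F^\sigma$ is defined purely in terms of the Lie bracket on $A$ and the anchor (equation \eqref{eq:curvature_splitting_foliated}), functoriality gives
\begin{align*}
F^{\sigma_b}(X,Y) = \Ad_b|_{\frak k}^{-1}\big(F^\sigma(\varphi_* X,\varphi_* Y)\big),
\end{align*}
for $X,Y\in\Gamma(T\F)$; in other words $F^{\sigma_b} = (\Ad_b)^* F^\sigma$ under the natural pullback of $\frak k$-valued foliated 2-forms. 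The key technical point is unwinding the bracket identity $\Ad_b[\alpha,\beta]=[\Ad_b\alpha,\Ad_b\beta]$ together with the compatibility $\rho\circ\Ad_b = \varphi_*\circ\rho$, which is exactly the statement that the diagram of short exact sequences in the excerpt commutes. Concurrently, the same functoriality shows $\nabla^{\sigma_b}_X\xi = \Ad_b|_{\frak k}^{-1}\nabla^\sigma_{\varphi_* X}(\Ad_b|_{\frak k}\xi)$, so $\d{}^{\nabla^{\sigma_b}}$ is intertwined with $\d{}^{\nabla^\sigma}$ via $(\Ad_b)^*$.

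Next I would verify that the integrand $\inner{F^{\sigma_b}}{F^{\sigma_b}}_{\frak k}$ equals $\varphi^*\big(\inner{F^\sigma}{F^\sigma}_{\frak k}\big)$. This requires two ingredients: that $\Ad_b|_{\frak k}\colon \frak k_x\to\frak k_{\varphi(x)}$ is an isometry for the ad-invariant metric, and that $\varphi$ is an isometry of $(M,g)$ preserving the induced leafwise metric. The first follows from Lemma \ref{lem:pairing_A_invariant_implies_G_invariant}: since $G$ has connected $s$-fibres, $\inner\cdot\cdot_{\frak k}$ is $\Ad$-invariant, and by \eqref{eq:counterintuitive_Ad_b} the map $\Ad_b|_{\frak k}$ on fibres is given by $\Ad_{b(x)^{-1}}$, which is an isometry by \eqref{eq:Ad_invariance}. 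The second is the hypothesis that $\varphi$ is an orientation-preserving isometry; this forces $\varphi$ to preserve $T\F$ (as it covers the algebroid automorphism $\Ad_b$, which restricts to $\frak k$ and hence descends to $T\F$), and therefore $\varphi$ preserves the induced metric on $T\F$, the foliated Hodge star, and — being orientation-preserving — the volume form $\vol_M$. Combining these, $\varphi^*\vol_M = \vol_M$ and a pointwise contraction argument gives $\inner{F^{\sigma_b}}{F^{\sigma_b}}_{\frak k} = \varphi^*\big(\inner{F^\sigma}{F^\sigma}_{\frak k}\big)$. Hence
\begin{align*}
\S(\sigma_b) = \int_M \inner{F^{\sigma_b}}{F^{\sigma_b}}_{\frak k}\,\vol_M = \int_M \varphi^*\big(\inner{F^\sigma}{F^\sigma}_{\frak k}\big)\,\varphi^*\vol_M = \int_M \inner{F^\sigma}{F^\sigma}_{\frak k}\,\vol_M = \S(\sigma),
\end{align*}
using the change-of-variables formula for the orientation-preserving diffeomorphism $\varphi$. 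The final claim about criticality is then immediate, since $\sigma\mapsto\sigma_b$ is an affine bijection of $\splc A$ onto itself (its inverse is pullback along $\Ad_{b^{-1}}$, and it maps compactly supported curvatures to compactly supported curvatures because $\varphi$ is a diffeomorphism) whose derivative intertwines the first variations of $\S$.

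The main obstacle I anticipate is bookkeeping the direction conventions: the excerpt works with target bisections and the formula \eqref{eq:counterintuitive_Ad_b} for $\Ad_b|_{\frak k}$ involves $b(x)^{-1}$ rather than $b(x)$, so one must be careful that the isometry statement from Lemma \ref{lem:pairing_A_invariant_implies_G_invariant} is applied to the correct arrow and that the pullback $\sigma_b$ is defined consistently with the commuting diagram. A secondary subtlety is justifying that $\varphi$ genuinely preserves the leafwise data (orientation of $T\F$, the leafwise volume form) and not merely the ambient metric; this follows because $\Ad_b$ covers $\varphi$ and restricts to an isomorphism of the sequence $0\to\frak k\to A\to T\F\to 0$, but it should be stated explicitly. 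Everything else is a routine application of functoriality of the curvature and the change-of-variables theorem.
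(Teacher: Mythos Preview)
Your proposal is correct and follows essentially the same approach as the paper: compute the transformation law $F^{\sigma_b}|_x=\Ad_{b(x)}\circ (\varphi^* F^\sigma)_x$ via functoriality of the Lie algebroid automorphism $\Ad_b$, then invoke Lemma~\ref{lem:pairing_A_invariant_implies_G_invariant} for the $\Ad$-invariance of the metric together with the isometry hypothesis on $\varphi$ to conclude. Your version is in fact more detailed than the paper's---the paper's proof is two sentences and leaves the change-of-variables argument and the bookkeeping of $\Ad_b|_{\frak k}^{-1}$ versus $\Ad_{b(x)}$ implicit, whereas you spell these out carefully and correctly.
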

\begin{proof}
	A short calculation shows that the curvature $F^{\sigma_b}\in\Omega^2(T\F;\frak k)$ of the splitting $\sigma_b$ reads
	\[
	F^{\sigma_b}|_x=\Ad_{b(x)}\circ (\varphi^* F^\sigma)_x,
	\]
	for all $x\in M$, where the pullback $\varphi^*\colon \Omega^\bullet(T\F;\frak k)\ra \Omega^\bullet(T\F;\varphi^*\frak k)$ is defined on simple tensors as 
\begin{align}
	\label{eq:varphi_star_foliated_ym_invariance}
	\varphi^*(\gamma\otimes\xi)= \varphi^*\gamma\otimes\xi.
\end{align}
	 Using Lemma \ref{lem:pairing_A_invariant_implies_G_invariant} together with the assumption on the map $\varphi$ concludes the proof.
\end{proof}
\begin{remark}
	\label{rem:exp_bisections}
	As seen in Example \ref{ex:exp_bisection}, an important class of bisections is defined by the exponential map, $\exp(\lambda\alpha)$, for a given section $\alpha\in\Gamma(A)$ and $\lambda\in\R$. For instance, if $\alpha_x\in\ker({\rho_x})$ for all $x\in M$, then for any $\lambda\in\R$ the global bisection $\exp(\lambda\alpha)$ covers the identity on the base, hence it automatically satisfies the conditions of the theorem above. 
\end{remark}
At this point, it is important to recognize that there is a more general fact here at play: the map $\Ad_b$ is a Lie algebroid automorphism, covering an orientation-preserving isometry $\varphi$ on the base (by assumption), which restricts on $\frak k$ to a Lie algebra bundle automorphism which is moreover an isometry (with respect to $\inner\cdot\cdot_{\frak k}$). In general, since any such Lie algebroid automorphism preserves all structure at hand, it is clear that  the action functional is invariant under its pullbacks. In particular, we have the following two special cases:
\begin{itemize}
	\item In the case when $G$ does not have connected $s$-fibres, the theorem still holds under the additional assumptions that $\frak k$ is a bundle of ideals for $G$ and $\inner\cdot\cdot_{\frak k}$ is $\Ad$-invariant.
	\item \textit{Infinitesimal gauge invariance.} As observed in the proof of Lemma \ref{lem:pairing_A_invariant_implies_G_invariant}, $\ad$-invariance ensures that the flow of the derivation $([\alpha,\cdot],\rho(\alpha))\in\Gamma(\frak{gl}(A))$ acts on $\frak k$ by isometries, hence we may pull back a splitting along the flow of such a derivation. Of course, one has to assume that $\rho(\alpha)$ is complete, so that the flow is globally defined. The action functional will remain invariant under such a pullback whenever the flow of  $\rho(\alpha)$ is an orientation-preserving isometry of the base. As mentioned, this is trivially fulfilled when $\alpha\in \Gamma(\frak k)$.
\end{itemize}

\subsection{Examples}
\label{sec:fym_examples}
In the transitive and integrable case, i.e., when $A$ is the Atiyah algebroid of a principal bundle, the framework above recovers the classical Yang--Mills theory. We now consider some more examples.

\subsubsection{Abelian isotropy}
\label{ex:abelian_ym}
Before diving into specific examples, we note that when the isotropy bundle of Lie algebras $\frak k$ is abelian, the $T\F$-connection $\nabla^\sigma$ on $\frak k$ is independent of the choice of splitting $\sigma\in\spl{A}$, and its curvature tensor $R^{\nabla^\sigma}$ vanishes by  Proposition \ref{prop:foliated_props} (iii). Therefore, $\frak k$ admits a canonical flat $T\F$-connection, which we will denote by $\nabla^\F$. However, we emphasize that the curvature $F^\sigma$ of a splitting $\sigma$ can still be nonzero, so the foliated Yang--Mills equation can in general still admit nontrivial solutions. Moreover, note that a metric on $\frak k$ is ad-invariant if and only if it is compatible with the canonical flat $T\F$-connection. In this case, the foliated Yang--Mills equation reads
\[
\d{}^{\nabla^\F}\star_\F F^\sigma=0.
\]

\subsubsection{Presymplectic forms}
\label{ex:presymplectic}
As a first specific example, we consider the well-known transitive algebroid by Almeida and Molino \cites{molino, integration_transitive}. Historically, this was the first known example of a non-integrable algebroid. To construct it, fix a closed 2-form $\omega\in\Omega^2(M)$ on a manifold $M$, and consider the Whitney sum \[A_\omega=TM\oplus \R_M\] 
where $\R_M=M\times \R$ denotes the trivial line bundle over $M$. The anchor of $A_\omega$ is given by $\rho=\pr_{TM}$ and the bracket is defined on the sections as
\begin{align*}
[(X,f),(Y,g)]_\omega=([X,Y],Xg-Yf+\omega(X,Y)),
\end{align*}
for any $X,Y\in\vf(M)$ and $f,g\in \Gamma(\R_M)=C^\infty(M)$. The associated short exact sequence reads
\[\begin{tikzcd}
	0 & {\R_M} & A_\omega & TM & 0,
	\arrow[from=1-1, to=1-2]
	\arrow[from=1-2, to=1-3]
	\arrow[from=1-3, to=1-4]
	\arrow[from=1-4, to=1-5]
\end{tikzcd}\]
and clearly, any splitting $\sigma \in\spl{A_\omega}$ of this sequence is of the form
\begin{align*}
\sigma(X)=(X,\theta(X)),
\end{align*}
for some $\theta\in\Omega^1(M)$, so we can identify $\spl{A_\omega}=\Omega^1(M)$. The canonical flat connection $\nabla$ on $\R_M$ expectedly reads
\begin{align*}
\nabla_X f=[(X,\theta(X)),(0,f)]_\omega=Xf,
\end{align*}
and the curvature of a splitting $\theta\in\Omega^1(M)$ is identified with a form $F^\theta\in\Omega^2(M)$ by computing:
\begin{align*}
F^\sigma(X,Y)&=([X,Y],\theta[X,Y])-([X,Y],X(\theta(Y))-Y(\theta(X))+\omega(X,Y))\\
&=(0,-\d\theta(X,Y)+\omega(X,Y)).
\end{align*}
Hence, $F^\theta=\omega-\d\theta$, so we observe that the curvature of a splitting $\theta$ vanishes if and only if $\omega$ is exact with the form $\theta$ as its primitive.

In order to talk about Yang--Mills theory on $A_\omega$, we also need to fix a metric on $M$ and a fibrewise inner product on $\R_M$. It is easy to see that the usual Euclidean inner product on $\R_M$ (i.e., the product of functions) is ad-invariant, which in this case reads out as the product rule $X(fg)=(Xf)g+f(Xg)$. Thus, the action reads
\begin{align*}
\S\colon\Omega^1(M)\rightarrow \R,\quad \S(\theta)=\int_M\inner{\omega-\d\theta}{\omega-\d\theta}\vol_M,
\end{align*}
and by Theorem \ref{thm:ym}, a splitting $\theta\in\Omega^1(M)$ is critical for this action if and only if
\begin{align}
\label{eq:ym_example_abelian}
\d{}\star(\omega-\d\theta)=0,
\end{align}
where $\star$ is the Hodge-$\star$ operator associated to the given metric on $M$. Assuming $M$ is compact, we now observe that by Hodge theorem \cite{foundations_manifolds}*{Theorem 6.1}, critical splittings exist. Indeed, since $\omega$ is closed, it defines a cohomology class $[\omega]\in H_{dR}^2(M)$, which in turn admits a unique harmonic representative $\eta\in\Omega^2(M)$, i.e.,  $\eta=\omega-\d\theta$ for some $\theta\in\Omega^1(M)$. Furthermore, uniqueness of $\eta$ implies that $\theta$ is only unique up to a closed 1-form; this is just a rephrasing of the fact that the PDE \eqref{eq:ym_example_abelian} is underdetermined from Proposition \ref{prop:underdetermined_fym}. To reiterate: the harmonic representative of $\omega$ is given precisely by the curvature $F^\theta$ of a critical splitting $\theta\in\spl{A_\omega}$.

Additionally, we note that a splitting $\theta$ is critical for the action $\S$ if and only it is critical for the rescaled action
\[
\theta\mapsto\int_M\bigg(\frac 12\inner{\d\theta}{\d\theta}-\inner{\omega}{\d\theta}\bigg)\vol_M,
\]
where the integrand now has a clearer interpretation as a difference of a kinetic and a potential term (i.e., a Lagrangian). 
Finally, observe that one can exchange $\R$ for an arbitrary abelian Lie algebra, and the essential features of the resulting example will remain the same as above.

\subsubsection{Riemannian manifolds with harmonic curvature}
In what follows, we will show that the theory of harmonic curvature studied in \cites{harmonic_riemannian_curvature, harmonic_riemannian_dim_4, jost_riemannian_geometry} can be viewed as a special example of the developed framework. With the benefit of hindsight, we can remark it roughly corresponds to Yang--Mills theory for the general linear algebroid. 

Suppose $V\ra M$ is a vector bundle with a Riemannian metric $\kappa=\inner\cdot\cdot_V$, and consider the general linear algebroid $\frak{gl}(V)$ from Example \ref{ex:algebroid_examples} (iii). Let $\frak o(V)$ denote its subalgebroid consisting of all derivations compatible with the metric $\kappa$, as defined in \eqref{eq:metric_derivation}.
The associated short exact sequence reads
\[\begin{tikzcd}
	0 & {\End_\kappa(V)} & {\frak{o}(V)} & TM & 0,
	\arrow[from=1-1, to=1-2]
	\arrow[from=1-2, to=1-3]
	\arrow[from=1-3, to=1-4]
	\arrow[from=1-4, to=1-5]
\end{tikzcd}\]
where $\End_\kappa(V)$ denotes the Lie algebra bundle of endomorphisms of the fibres of $V$ which are skew-symmetric with respect to the metric $\kappa$. A splitting of this sequence is just a metric linear connection on $V$, and the space of metric connections is an affine space over skew-symmetric endomorphism-valued 1-forms on $M$. For any splitting $\sigma\colon TM\ra \frak{o}(V)$, let us accordingly denote $\nabla_X=\sigma(X)$ for any $X\in TM$. The induced linear connection $\nabla^\sigma$ on $\End_\kappa(V)$ is then simply the restriction of the induced connection $\nabla^{\End V}$ on $\End(V)$, which we will denote by the same symbol, $\nabla^\sigma=\nabla^{\End V}$. The curvature of a splitting $\sigma$ is just the curvature tensor of $\nabla$:
\[
	F^\sigma\in\Omega^2(M;\End V),\quad F^\sigma(X,Y)\xi=\nabla_{[X,Y]}\xi-[\nabla_X,\nabla_Y]\xi=-R^\nabla(X,Y)\xi,
\]
which also has values in skew-symmetric endomorphisms of $V$. Now, the metric $\kappa$ induces a metric $\inner\cdot\cdot_{\End V}$ on $\End V$, whose restriction to the skew-symmetric endomorphisms $\End_{\kappa}(V)$ is ad-invariant. 
To construct a Yang--Mills theory, we need to also fix a metric and an orientation on the base $M$.
The action functional \eqref{eq:foliated_ym_action} then reads
\[
\S(\nabla)=\int_M\innersmall{R^\nabla}{R^\nabla}_{\End V}\vol_M.
\]
Given any metric connection $\nabla$, the induced connection $\nabla^{\End V}$ is compatible with $\inner\cdot\cdot_{\End V}$, hence $\nabla$ is critical if and only if its curvature satisfies
\[
\d{}^{\nabla^{\End V}}\!\star R^\nabla=0.
\]
Since the Bianchi identity $\d{}^{\nabla^{\End V}}R^\nabla=0$ always holds, this is equivalent to saying that $R^\nabla$ is harmonic. If we take $V=TM$, this amounts to the vanishing divergence of $R^\nabla$, and examples of such Riemannian manifolds include 4-dimensional Einstein manifolds and conformally flat 4-manifolds of constant scalar curvature. We refer the reader to \cites{harmonic_riemannian_curvature, harmonic_riemannian_dim_4} for more details and interesting properties of Riemannian manifolds with harmonic curvature.

\section{Multiplicative Yang--Mills theory}
\label{sec:multiplicative_ym}
We now apply the research from \sec\ref{chapter:mec} to construct a Yang--Mills theory for infinitesimal multiplicative Ehresmann connections for an arbitrary bundle of ideals $\frak k\subset\ker\rho$ on a given Lie algebroid $A\Ra M$. For the reader's convenience, let us state how we have organized this section. 
\begin{itemize}
    \item We begin by considering the multiplicative version of equation \eqref{eq:expansion_curvature_foliated}---that is, we inspect how the curvature changes as we change a given multiplicative connection. For completeness, we do so in both the global and the infinitesimal realm; the latter will be essential for developing the multiplicative Yang--Mills theory, since we will have to vary IM connections.
    \item Next, we direct our attention to a particular class of \emph{primitive} multiplicative connections, i.e., those with cohomologically trivial curvature.  Restricting to this class has a two-fold purpose: it provides a clear way of defining the action functional for multiplicative Yang--Mills theory, and shows a clear relationship thereof with the foliated Yang--Mills theory.
    \item We construct an action functional on primitive IM connections, introduce two novel notions of criticality for this action functional, and obtain the desired Yang--Mills theory in the multiplicative setting, by recognizing the additional requirements which need to be met.
\end{itemize}

\subsection{Affine deformations of multiplicative connections}
\subsubsection{Global case}
The set $\A(G;\frak k)$ of multiplicative Ehresmann connections is an affine space modelled on the vector space $\Omega^1_m(G;s^*\frak k)^\Hor$ of horizontal multiplicative 1-forms, so we would like to know how the curvature changes as we make an affine deformation \[\omega\ra \omega+\lambda \alpha\]
of a multiplicative  connection $\omega \in\A(G;\frak k)$ by a horizontal multiplicative 1-form $\alpha\in \Omega_m^1(G;\frak k)^\Hor$, scaled by $\lambda\in \R$. That is, we aim to obtain an expansion of $\Omega^{\omega+\lambda\alpha}$ in terms of the scalar $\lambda$. 
\begin{theorem}
\label{thm:expansion}
	For any multiplicative connection $\omega\in \A(G;\frak k)$ on a Lie groupoid $G\rra M$, its curvature changes with an affine deformation as
	\begin{align}
	\label{eq:expansion}
		\Omega^{\omega+\lambda\alpha}=\Omega^\omega+\lambda \D{}^\omega\alpha+\lambda^2\mathbf c_2(\alpha),
	\end{align}
	where $\alpha\in \Omega^1_m(G;\frak k)^\Hor$ is any horizontal multiplicative form and $\lambda\in\R$. Here, the map \[\mathbf c_2\colon \Omega^1_m(G;\frak k)^\Hor\rightarrow \Omega^2_m(G;\frak k)^\Hor\] is homogeneous of degree two, and 
independent of the connection $\omega\in\A(G;\frak k)$.
\end{theorem}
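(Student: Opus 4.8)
The plan is to compute $\Omega^{\omega+\lambda\alpha} = \D{}^{\omega+\lambda\alpha}(\omega+\lambda\alpha)$ directly from the structure equation \eqref{eq:structure}, being careful that the horizontal projection $h^*$, the induced connection $\nabla$, and the exterior covariant derivative $\d{}^{\nabla^s}$ all depend on the chosen multiplicative connection. The cleanest route is to first establish what happens to the auxiliary objects under the deformation $\omega \mapsto \omega_\lambda := \omega + \lambda\alpha$. Writing $\alpha$ as the difference of two connection forms, I would show that the induced connection on $\frak k$ transforms affinely as $\nabla^{\omega_\lambda} = \nabla^\omega + \lambda\,\bar\alpha$ for an appropriate $\End(\frak k)$-valued $1$-form $\bar\alpha$ built from $\alpha$ (this follows from the defining formula \eqref{eq:nabla}, since replacing $\omega$ by $\omega_\lambda$ changes the horizontal lift $h(Y)$ by $-\lambda$ times the vertical correction determined by $\alpha$, and $\nabla$ preserves the bracket by Proposition \ref{prop:conn}(iii)); in fact by Corollary \ref{cor:conn_orb}, along orbit directions $\nabla^{\omega_\lambda}_{\rho(\alpha')}\xi = [\,h_\lambda(\alpha'),\xi\,]$, and $h_\lambda(\alpha') = h(\alpha') - \lambda\alpha(\alpha')$, so the correction term is $-\lambda[\alpha(\cdot),\xi]$, i.e. $\bar\alpha(\cdot) = -[\alpha(\cdot),-]$ acting by the bracket. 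Simultaneously, the horizontal projection changes by $h^*_\lambda = h^* - \lambda\,(\text{vertical part determined by }\alpha)$, since $h_\lambda = h - \lambda\,\omega(\cdot)^L$ composed with the appropriate identifications.

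Next I would substitute into the structure equation. Using \eqref{eq:structure}, $\Omega^{\omega_\lambda} = \d{}^{\nabla^{\omega_\lambda,s}}\omega_\lambda + \tfrac12[\omega_\lambda,\omega_\lambda]_{s^*\frak k}$. Expanding $[\omega_\lambda,\omega_\lambda] = [\omega,\omega] + 2\lambda[\omega,\alpha] + \lambda^2[\alpha,\alpha]$ and $\d{}^{\nabla^{\omega_\lambda,s}}\omega_\lambda = \d{}^{\nabla^{\omega,s}}\omega + \lambda\big(\d{}^{\nabla^{\omega,s}}\alpha + \bar\alpha^s\wedge\omega\big) + \lambda^2\,\bar\alpha^s\wedge\alpha$ (where $\bar\alpha^s$ is the pullback of the correction $1$-form), the $\lambda^0$ term reassembles to $\Omega^\omega$, the $\lambda^2$ term gives an expression $\mathbf c_2(\alpha) = \bar\alpha^s\wedge\alpha + \tfrac12[\alpha,\alpha]_{s^*\frak k}$ that is visibly quadratic in $\alpha$ alone — hence homogeneous of degree two and independent of $\omega$ — and the $\lambda^1$ term is $\d{}^{\nabla^{\omega,s}}\alpha + \bar\alpha^s\wedge\omega + [\omega,\alpha]_{s^*\frak k}$. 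The remaining task is to identify this linear coefficient with $\D{}^\omega\alpha = h^*\d{}^{\nabla^\omega}\alpha$. Since $\alpha$ is horizontal, $h^*\d{}^{\nabla^\omega}\alpha$ agrees with $\d{}^{\nabla^\omega}\alpha$ minus the terms supported on vertical arguments; the correction terms $\bar\alpha^s\wedge\omega + [\omega,\alpha]$ are exactly those vertical-argument contributions, because $\bar\alpha$ acts by bracketing with $\alpha$ and $\omega$ restricts to the identity on $\frak k$ — so plugging in one vertical vector produces precisely $[\alpha(\cdot),\xi]$, cancelling against the projection defect. I would verify this by evaluating on the three types of argument combinations (both horizontal; one horizontal one vertical; both vertical), exactly as in the proof of Proposition \ref{prop:multiplicative_curvature}(iii), using that vertical vectors extend to left-invariant vector fields and horizontal ones to $s$-projectable horizontal fields.

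Finally I would record that each coefficient lands in the right space: $\D{}^\omega\alpha \in \Omega^2_m(G;\frak k)^\Hor$ by Theorem \ref{thm:deltaD} (since $\D{}^\omega$ is a cochain map it preserves multiplicativity, and it lands in the horizontal subcomplex by construction), and $\mathbf c_2(\alpha)$ is multiplicative and horizontal because $\alpha$ is — multiplicativity of $[\alpha,\alpha]_{s^*\frak k}$ and of $\bar\alpha^s\wedge\alpha$ follows from multiplicativity of $\alpha$ together with the fact that the bracket on $s^*\frak k$ and the representation $\Ad$ are compatible (the same computation that shows $\delta[\alpha,\beta] = [\delta\alpha,\beta]\pm[\alpha,\delta\beta]$), and horizontality is immediate since plugging in any vertical vector into a wedge/bracket of horizontal forms gives zero only after noting $\alpha|_{\frak k}=0$; here one uses $\alpha \in \Omega^1_m(G;\frak k)^\Hor$ means precisely $\alpha$ vanishes on $K$.

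I expect the main obstacle to be the bookkeeping in matching the linear coefficient $\d{}^{\nabla^{\omega,s}}\alpha + \bar\alpha^s\wedge\omega + [\omega,\alpha]_{s^*\frak k}$ with $h^*\d{}^{\nabla^\omega}\alpha$: one must track carefully how the three pieces of the connection-dependent data ($\nabla$ itself, the pullback connection on $s^*\frak k$, and the horizontal projection $h^*$) each shift under $\omega \mapsto \omega_\lambda$, and confirm that the shift in $\d{}^{\nabla^s}$ conspires exactly with the shift in $h^*$ so that only the intrinsic $\D{}^\omega\alpha$ survives at first order. The conceptual reason this must work is that $\D{}^\omega = h^*\circ\d{}^{\nabla^\omega}$ is the derivative of $\omega \mapsto \Omega^\omega$ in the direction $\alpha$, but turning that heuristic into an identity requires the explicit transformation formulas above; the cleanest proof likely bypasses some of this by instead checking \eqref{eq:expansion} pointwise on horizontal versus vertical vectors, where on horizontal inputs the left side is just $-\omega_\lambda([X,Y])$ and everything is elementary, and on mixed/vertical inputs one uses left-invariance (Proposition \ref{prop:conn}(ii)) as in Proposition \ref{prop:multiplicative_curvature}.
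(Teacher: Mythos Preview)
Your overall strategy---expand the structure equation and track how the auxiliary data $(\nabla, h^*, \d{}^{\nabla^s})$ shift under $\omega\mapsto\omega_\lambda$---is exactly the paper's approach. But there is a genuine gap in your computation of the correction $\bar\alpha$ to the induced connection.

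You determine $\bar\alpha$ only along orbit directions via Corollary~\ref{cor:conn_orb}, obtaining $\bar\alpha(\cdot)=-[\alpha(\cdot),-]$, and then use this as the full answer. This is incorrect: the connection $\nabla$ lives over all of $TM$, not just $T\F$, and the transverse part of the correction is not captured by the bracket alone. The paper carries out this computation carefully (Lemma~\ref{lem:diff_ext_cov}, using the left-invariance statement of Proposition~\ref{prop:conn}(ii)) and finds that the difference of pullback connections on $s^*\frak k\cong K$ is
\[
(s^*\nabla^{\omega+\alpha}-s^*\nabla^\omega)_X = \tau^\alpha(X)-[\alpha(X),\cdot\,]_{s^*\frak k},
\]
where $\tau^\alpha(X)Y=\alpha[X,Y]+\nabla^s_Y\alpha(X)$ is a genuinely new tensor (Lemma~\ref{lem:tau_alpha}) that does not reduce to a bracket. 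The point is that although $\alpha$ is horizontal, for vertical $Y\in\Gamma(K)$ the quantities $\alpha[X,Y]$ and $\nabla^s_Y\alpha(X)$ need not vanish. With the correct formula, the second-order coefficient is $\mathbf c_2(\alpha)=\tau^\alpha\wedge\alpha-\tfrac12[\alpha,\alpha]_{s^*\frak k}$, and the linear coefficient matches $\D{}^\omega\alpha$ via the identity $\D{}^\omega\alpha=\d{}^\omega\alpha+\tau^\alpha\wedge\omega$ (Remark~\ref{rem:diff_D_d}). Your formula for $\mathbf c_2$ and your matching of the linear term are therefore both off by exactly this $\tau^\alpha$ contribution.

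A minor point: your direct argument that $\mathbf c_2(\alpha)$ is multiplicative and horizontal is unnecessary work. Once the expansion is established, the paper simply observes that $\Omega^{\omega+\alpha}$, $\Omega^\omega$, and $\D{}^\omega\alpha$ are all multiplicative and horizontal (the last by Theorem~\ref{thm:deltaD}), hence so is $\mathbf c_2(\alpha)=\Omega^{\omega+\alpha}-\Omega^\omega-\D{}^\omega\alpha$.
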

In the proof, an explicit formula for the coefficient $\mathbf c_2$ will also be given. The theorem is proved in several steps, starting with the following observation.
\begin{lemma}
\label{lem:tau_alpha}
Let $\frak k$ be a bundle of ideals on a Lie groupoid $G\rra M$. Any horizontal 1-form $\alpha\in\Omega^1(G;K)^\Hor$ determines a 1-form $\tau^\alpha\in\Omega^1(G;\End(K))$,
\[
\tau^\alpha(X)Y=\alpha[X,Y]+\nabla^s_Y\alpha(X),
\]
for any $X\in\vf(G)$ and $Y\in\Gamma(K)$. In turn, the form $\tau^\alpha$ defines an operator $\Omega^\bullet (G;K)\ra \Omega^{\bullet+1}(G;K)$, $\beta\mapsto \tau^\alpha\wedge\beta$, given on any $q$-form $\beta$ as
\[
(\tau^\alpha\wedge \beta)(X_i)_{i=0}^q=\sum_i(-1)^i\tau^\alpha(X_i)\cdot\beta(X_0,\dots,\hat{X_i},\dots,X_q).
\]
\end{lemma}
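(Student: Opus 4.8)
\textbf{Proof plan for Lemma \ref{lem:tau_alpha}.}

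The statement to prove has two parts: first, that the prescribed formula $\tau^\alpha(X)Y = \alpha[X,Y] + \nabla^s_Y\alpha(X)$ is well-defined as a tensorial object, i.e.\ that $\tau^\alpha \in \Omega^1(G;\End(K))$; and second, that the resulting wedge operator on $K$-valued forms is well-defined. The second part is essentially formal — once we know $\tau^\alpha$ is a genuine $\End(K)$-valued $1$-form, the wedge construction is the standard action of $\Omega^1(G;\End V)$ on $\Omega^\bullet(G;V)$ already appearing (with $V = s^*\frak k$) in Remark \ref{rem:wedges}, so there is nothing to check beyond writing the formula. Hence the real content is the first part: verifying $C^\infty(G)$-bilinearity of the map $(X,Y)\mapsto \alpha[X,Y] + \nabla^s_Y\alpha(X)$, together with the fact that it takes values in $K$.

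First I would check $C^\infty(G)$-linearity in the argument $Y \in \Gamma(K)$. Replacing $Y$ by $fY$ for $f \in C^\infty(G)$, the Lie bracket term picks up $\alpha[X,fY] = f\alpha[X,Y] + (Xf)\,\alpha(Y)$, and the connection term picks up $\nabla^s_{fY}\alpha(X) = f\nabla^s_Y\alpha(X)$ since $\nabla^s$ is $C^\infty(G)$-linear in the direction slot. The two extra terms combine to $(Xf)\,\alpha(Y)$, which vanishes because $\alpha$ is \emph{horizontal}: $\alpha(Y) = 0$ for $Y \in \Gamma(K)$ by definition of $\Omega^1(G;K)^\Hor$ at level $p=1$. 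So linearity in $Y$ is exactly where horizontality of $\alpha$ enters. Next, linearity in $X \in \vf(G)$: replacing $X$ by $fX$, the bracket term gives $\alpha[fX,Y] = f\alpha[X,Y] - (Yf)\,\alpha(X)$, and the connection term gives $\nabla^s_Y\alpha(fX) = \nabla^s_Y(f\alpha(X)) = f\nabla^s_Y\alpha(X) + (Yf)\,\alpha(X)$ by the Leibniz rule for $\nabla^s$. The two $(Yf)\,\alpha(X)$ terms cancel, leaving $f\tau^\alpha(X)Y$ as required. Thus $\tau^\alpha$ is $C^\infty(G)$-bilinear, hence a well-defined section of $T^*G \otimes (K^* \otimes K) = T^*G \otimes \End(K)$ — here one uses the canonical identification of $\End(K)$-valued forms with bilinear maps, noting $K \cong s^*\frak k$ as in \eqref{eq:iso_pullback_k}.

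It remains to confirm that $\tau^\alpha(X)Y$ actually lies in $K$ and not merely in $TG$. The term $\nabla^s_Y\alpha(X)$ is a section of $K$ since $\alpha(X) \in \Gamma(K)$ and $\nabla^s$ is a connection on $K$ (viewing $\nabla^s$ as the pullback connection on $K \cong s^*\frak k$, cf.\ Remark \ref{rem:nabla_s}). For the bracket term $\alpha[X,Y]$: since $Y \in \Gamma(K)$ and $\alpha$ is $K$-valued, $\alpha[X,Y]$ is automatically in $K$ as soon as it is defined — and it is defined because $\alpha$ is a $1$-form, so it only needs its argument $[X,Y]$ to be a vector field, which it is. (One does \emph{not} need $[X,Y]\in\Gamma(K)$ here; that would require involutivity of $K$, which does hold but is not needed for this particular formula.) I expect no genuine obstacle in this lemma — the only subtlety, and the step I would highlight, is that the well-definedness of $\tau^\alpha$ in its first argument rests entirely on the horizontality hypothesis $\alpha(Y)=0$ for vertical $Y$; without it the formula would fail to be tensorial in $Y$. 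The wedge-operator claim then follows by invoking the standard $\End$-module structure on vector-bundle-valued forms with no further work.
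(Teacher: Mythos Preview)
Your proof is correct and matches the paper's approach exactly: the paper simply states that the only thing to show is $C^\infty(G)$-linearity in both arguments, which is straightforward, and you have carried out precisely that verification (with the key observation that horizontality of $\alpha$ kills the $(Xf)\alpha(Y)$ term). One small slip in your closing summary: horizontality is what secures tensoriality in the \emph{second} argument $Y$, not the first---your own computation shows this correctly, so it is only the wording at the end that is off.
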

\begin{proof}
First note that $\nabla^s$ denotes the intrinsic $K$-connection on $K$ (see Remark \ref{rem:nabla_s}), so we are not fixing any multiplicative Ehresmann connection. The only thing to show is $C^\infty(G)$-linearity in both arguments of $\tau^\alpha$, which is straightforward. 
\end{proof}
What follows is a lemma relating the exterior covariant derivatives induced by two different multiplicative Ehresmann connections.
\begin{lemma}
\label{lem:diff_ext_cov}
	Let $G$ be a Lie groupoid with a multiplicative Ehresmann connection $\omega\in \A(G;\frak k)$. For any horizontal multiplicative 1-form $\alpha\in \Omega^1_m(G,\frak k)^\Hor$ and any form $\beta\in \Omega^\bullet(G;s^*\frak k)$, there holds
	\begin{align}
	\begin{split}
		\d{}^{\omega+\alpha}\beta-\d{}^\omega\beta=\tau^\alpha\wedge\beta\label{eq:diff_ext_cov}
		- [\alpha,\beta]_{s^*\frak k},
	\end{split}
	\end{align}
	with $\tau^\alpha\in \Omega^1(G;s^*\End(\frak k))$ under the identification $s^*\frak k\cong K$ of vector bundles.
\end{lemma}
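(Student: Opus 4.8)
The statement to prove is Lemma~\ref{lem:diff_ext_cov}, comparing the horizontal-agnostic exterior covariant derivatives $\d{}^{\omega}$ and $\d{}^{\omega+\alpha}$ induced by the linear connections $\nabla^\omega$ and $\nabla^{\omega+\alpha}$ on $\frak k$ (here $\d{}^\omega$ means $\d{}^{\nabla^{s}}$ with $\nabla=\nabla^\omega$, before applying any horizontal projection). The plan is to reduce everything to the difference of the two induced linear connections on $\frak k\ra M$, and then bootstrap from the degree-zero case (sections) to arbitrary degree via the graded Leibniz rule, exactly as one proves that two connections differing by a $1$-form have exterior covariant derivatives differing by wedging with that $1$-form.

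\textbf{Key steps.} First I would compute the difference $\nabla^{\omega+\alpha}-\nabla^\omega$ of the two induced linear connections. Using the defining formula \eqref{eq:nabla}, $\nabla_X\xi=v[h(Y),\xi^L]|_M$ where $Y$ is an $s$-lift of $X$, and the fact that the horizontal projections $h^\omega$ and $h^{\omega+\alpha}$ differ by the vertical $K$-valued form determined by $\alpha$ (namely $h^{\omega+\alpha}(Z)=h^\omega(Z)-\iota_Z\alpha$ after the identification $K\cong s^*\frak k$, since $\omega+\alpha$ has $\ker=\{Z : \omega(Z)+\alpha(Z)=0\}$), I would obtain that for $X\in\vf(M)$ and $\xi\in\Gamma(\frak k)$,
\[
(\nabla^{\omega+\alpha}_X\xi - \nabla^{\omega}_X\xi)^L = v[\,{-}\iota_{?}\alpha\,,\xi^L]|_M + \text{(terms from }\alpha\text{ being horizontal)},
\]
which after unwinding the identification $s^*\frak k\cong K$ via the Maurer--Cartan form \eqref{eq:iso_pullback_k} should reduce to precisely $(\tau^\alpha|_M)(X)\xi-[\alpha|_M(X),\xi]_{\frak k}$ — in other words, the pulled-back statement at degree zero. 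The bracket term $-[\alpha,\xi]_{\frak k}$ comes from the non-triviality of the adjoint action encoded in $\nabla^s$ being a non-flat connection once restricted, while $\tau^\alpha$ captures the $\alpha[X,\cdot]+\nabla^s_\cdot\alpha(X)$ contribution. I would double-check this against Lemma~\ref{lem:nabla_st} (\eqref{eq:nablascinfty}), which expresses $\nabla^s_X(\xi^L)=v[h(X),\xi^L]$ for \emph{any} $X$, not just $s$-projectable ones; this is what lets the computation proceed pointwise.

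Second, having established \eqref{eq:diff_ext_cov} on $\beta\in\Gamma(s^*\frak k)=\Omega^0$, I would extend to all degrees. Both sides of \eqref{eq:diff_ext_cov} are, as operators in $\beta$, $\mathbb R$-linear and satisfy the graded Leibniz rule with respect to multiplication by functions $f\in C^\infty(G)$: the left side because each $\d{}^{\nabla^s}$ does, and the right side because $\tau^\alpha\wedge\cdot$ and $[\alpha,\cdot]_{s^*\frak k}$ are tensorial (wedge with a fixed $1$-form followed by a pointwise operation). Since $\Omega^\bullet(G;s^*\frak k)$ is generated over $C^\infty(G)$ by $\Omega^0$ together with scalar forms $\d{} g$, and both sides agree on $\Omega^0$ and are determined on $\d{}g \cdot\xi$ by Leibniz (using $\d{}^{\nabla^s}(\d g\,\xi)=-\d g\wedge\d{}^{\nabla^s}\xi$ and the corresponding identity for $\tau^\alpha\wedge\cdot$ and $[\alpha,\cdot]$ after checking $\tau^\alpha\wedge(\d g\wedge\beta)=-\d g\wedge(\tau^\alpha\wedge\beta)$ etc.), the two sides agree everywhere. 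This is routine once the sign bookkeeping is set up.

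\textbf{Main obstacle.} The delicate point is the degree-zero computation: carefully tracking how the identification $s^*\frak k\cong K$ (via the Maurer--Cartan form) interacts with the change $h^\omega\rightsquigarrow h^{\omega+\alpha}$ of horizontal projections, and extracting exactly the two terms $\tau^\alpha\wedge\beta$ and $-[\alpha,\beta]_{s^*\frak k}$ with correct signs — in particular verifying that the $\nabla^s_Y\alpha(X)$ piece of $\tau^\alpha$ really does appear (it comes from differentiating $\alpha$ along the flow, using that $\alpha$ need not be horizontal-constant) and that horizontality of $\alpha$ is used precisely to kill the cross-terms that would otherwise obstruct the clean formula. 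I would organize this by first proving the pulled-back identity under $s^*\frak k\cong K$, where $\nabla^s$ becomes the intrinsic trivial $K$-connection of Remark~\ref{rem:nabla_s}, making the Leibniz manipulations transparent, and only afterward translating back.
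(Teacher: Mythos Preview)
Your approach is correct and essentially the same as the paper's. The extension from degree zero to all degrees is just the standard fact that the difference of two exterior covariant derivatives is wedging with the $\End$-valued $1$-form $s^*\nabla^{\omega+\alpha}-s^*\nabla^\omega$; the paper invokes this immediately rather than framing it as a Leibniz bootstrap, but the content is identical (note both sides are actually $C^\infty(G)$-linear in $\beta$, not merely Leibniz). The real work is the degree-zero computation, and here your plan of using Proposition~\ref{prop:conn}~(ii) (i.e.\ Lemma~\ref{lem:nabla_st}) together with $h^{\omega+\alpha}=h^\omega-\bar\alpha$ is exactly what the paper does: one arrives at $\bar\alpha[Y,\xi^L]-[\bar\alpha(Y),\xi^L]$ with the bracket of $TG$. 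One point your outline leaves implicit, which the paper makes explicit and which you will need, is the torsion relation between the $TG$-bracket on $K$ and the pullback bracket on $s^*\frak k$, namely $[\xi_1,\xi_2]=[\xi_1,\xi_2]_{s^*\frak k}+\nabla^s_{\xi_1}\xi_2-\nabla^s_{\xi_2}\xi_1$; this is precisely what produces the $\nabla^s_{\xi^L}\alpha(Y)$ term of $\tau^\alpha$ and separates it from the $-[\alpha,\cdot]_{s^*\frak k}$ term. Your sketch conflates the computation on $M$ with the one on $G$ (the $?$ placeholder and the stray $|_M$), but your final remark about working in the $K$-picture via Remark~\ref{rem:nabla_s} is the correct resolution and matches the paper's route.
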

\begin{remark}
	We are denoting $\d{}^\omega\coloneqq\d{}^{\nabla^s}$ where $\nabla$ is the linear connection on $\frak k$ from Proposition \ref{prop:conn} induced by $\omega$. In fact, in the proof below, the connection $\nabla$ induced by $\omega$ will be denoted $\nabla^\omega$. Moreover, we will denote the corresponding multiplicative distribution by $E^\omega=\ker \omega$, and the corresponding horizontal and vertical projections by $h_\omega\colon TG\ra E^\omega$ and $v_\omega\colon TG\ra K$, respectively. Throughout, we will be using the isomorphism $s^*\frak k\cong K$ of vector bundles as defined by \eqref{eq:iso_pullback_k}.
\end{remark}
\begin{proof}
	By the definitions of the exterior covariant derivatives, we have
	\begin{align}
	\begin{split}
	\label{eq:diff_ext_der_1}
	(\d{}^{\omega+\alpha}\beta&-\d{}^\omega\beta)(X_i)_{i=0}^q=\textstyle\sum_{i}(-1)^i(s^*\nabla^{\omega+\alpha}-s^*\nabla^\omega)_{X_i}\beta(X_0,\dots,\hat{X_i},\dots,X_q)
	\end{split}
	\end{align}
	for any vector fields $X_i\in \vf(G)$. First note that the difference of linear connections is an endomorphism-valued 1-form, in our case:
	\[
	s^*\nabla^{\omega+\alpha}-s^*\nabla^\omega\in \Omega^1(G;\End(s^*\frak k)).
	\]
	So, let us evaluate the expression 
	\begin{align*}
		(s^*\nabla^{\omega+\alpha}-s^*\nabla^\omega)_YX,
	\end{align*}
	for $Y\in \vf(G)$ an $s$-projectable vector field to $s_*Y=U\in \vf(M)$, and $X=s^*\xi$ the pullback of a section $\xi\in \Gamma(\frak k)$. In this case, by definition of the pullback of a linear connection, we have
	\[
	(s^*\nabla^{\omega+\alpha}-s^*\nabla^\omega)_YX=s^*(\nabla^{\omega+\alpha}_U\xi - \nabla^\omega_U\xi),
	\]
	so we inspect the difference of the covariant derivatives on the right-hand side. By Proposition \ref{prop:conn} (ii), there holds
	\begin{align*}
	(&\nabla^{\omega+\alpha}_U\xi - \nabla^\omega_U\xi)^L\\
  &=v_{\omega+\alpha}[h_{\omega+\alpha}(Y),\xi^L]-v_{\omega}[h_{\omega}(Y),\xi^L]=v_{\omega+\alpha}[Y-v_{\omega+\alpha}(Y),\xi^L]-v_{\omega}[Y-v_{\omega}(Y),\xi^L]\\
	&=(v_{\omega+\alpha}-v_\omega)[Y,\xi^L]-[(v_{\omega+\alpha}-v_\omega)Y,\xi^L]=\bar\alpha[Y,\xi^L]-[\bar\alpha(Y),\xi^L],
	\end{align*}
	where we have denoted by $\bar\alpha\in\Omega^1(G;K)$ the form $\alpha$ under the identification $s^*\frak k\cong K$, i.e., $\bar\alpha=v_{\omega+\alpha}-v_\omega$. We now want to write this as a section of $s^*\frak k$, and rewrite the second term using the bracket $[\cdot,\cdot]_{s^*\frak k}$. To do so, we first note that the canonical isomorphism $s^*\frak k\cong K$ of vector bundles, given in \eqref{eq:iso_pullback_k}, is not an isomorphism of Lie algebroids---in fact, the anchor of $K$ is the inclusion and the anchor of $s^*\frak k$ is zero. The relation between their brackets is the following: $[\cdot,\cdot]_{s^*\frak k}$ is the torsion of the canonical $K$-connection $\nabla^s$ on $K$ (up to a sign). Indeed, note that on left-invariant sections, $[\cdot,\cdot]$ on $K$ agrees with $[\cdot,\cdot]_{s^*\frak k}$ on $s^*\frak k$ (up to the identification $s^*\frak k\cong K$ as vector bundles), hence it is an application of the Leibniz rule of $[\cdot,\cdot]$ to see that
	\[
	\left[\xi_1,\xi_2\right]=[\xi_1,\xi_2]_{s^*\frak k}+\nabla^s_{\xi_1} \xi_2-\nabla^s_{\xi_2} \xi_1,
	\]
	for any $\xi_1,\xi_2\in \Gamma(K)$.  Using this relation, we can rewrite $(\nabla^{\omega+\alpha}_U\xi - \nabla^\omega_U\xi)^L$ as:
	\begin{align*}
  s^*(\nabla^{\omega+\alpha}_U\xi - \nabla^\omega_U\xi)&=\alpha[Y,\xi^L]-[\alpha(Y),s^*\xi]_{s^*\frak k}+\nabla^s_{\xi^L}\alpha(Y)\\
  &=\tau^\alpha(Y)s^*\xi-[\alpha(Y),s^*\xi]_{s^*\frak k},
\end{align*}
where we have used that $\nabla^s$ vanishes on pullback sections in the first line, and Lemma \ref{lem:tau_alpha} in the second. Due to $C^\infty(G)$-linearity of the difference $s^*\nabla^{\omega+\alpha}-s^*\nabla^\omega$, this allows us to rewrite \eqref{eq:diff_ext_der_1} as
	\begin{align*}
		(\d{}^{\omega+\alpha}\beta-\d{}^\omega\beta)(X_i)_{i=0}^q&=\textstyle\sum_{i}(-1)^i\tau^\alpha(X_i)\beta(X_0,\dots,\hat{X_i},\dots,X_q)\\
		&- \textstyle\sum_{i}(-1)^i[\alpha(X_i),\beta(X_0,\dots,\hat{X_i},\dots,X_q)]_{s^*\frak k}.
	\end{align*}
The first term is just $\tau^\alpha\wedge \beta$ in equation \eqref{eq:diff_ext_cov}, and the second term is $[\alpha,\beta]_{s^*\frak k}$
with a bit of combinatorics: since $\alpha$ is a 1-form, the bracket of forms $[\alpha,\beta]_{s^*\frak k}$ reads
	\begin{align*}
  [\alpha,\beta]_{s^*\frak k}(X_0,\dots,X_k)&=\frac1{k!}\textstyle\sum_{{\sigma\in S_{k+1}}}\sgn(\sigma)[\alpha(X_{\sigma(0)}),\beta(X_{\sigma(1)},\dots,X_{\sigma(k)})]_{s^*\frak k}\\
	&=\frac 1{k!}\textstyle\sum_{i}\textstyle\sum_{{\substack{\sigma\in S_{k+1} \\ \sigma(0)=i}}}\sgn(\sigma)[\alpha(X_i),\beta(X_{\sigma(1)},\dots,X_{\sigma(k)})]_{s^*\frak k}\\
  &=\textstyle\sum_{i}(-1)^i[\alpha(X_i),\beta(X_0,\dots,\hat{X_i},\dots,X_k)]_{s^*\frak k},
	\end{align*}
	where we have used that the number of permutations from $S_{k+1}$ with $\sigma(0)=i$ equals $k!$, and observed that given any such $\sigma$, ordering the arguments of $\beta(X_{\sigma(1)},\dots,X_{\sigma(k)})$ yields an additional factor of $(-1)^i\sgn\sigma$, concluding our proof.
\end{proof}
\begin{proof}[Proof of Theorem \ref{thm:expansion}]
	We first consider the case $\lambda=1$. The structure equation gives us
	\begin{align}
	\begin{split}
	\Omega^{\omega+\alpha}&=\d{}^{\omega+\alpha}(\omega+\alpha)+\frac12[\omega+\alpha,\omega+\alpha]_{s^*\frak k}
	\label{eq:structure_perturbed}\\
	&=\d{}^{\omega+\alpha}(\omega+\alpha)+\frac12[\omega,\omega]_{s^*\frak k}+[\omega,\alpha]_{s^*\frak k}+\frac {1}2[\alpha,\alpha]_{s^*\frak k},
	\end{split}
	\end{align}
	so we need to compute $\d{}^{\omega+\alpha}\omega$ and $\d{}^{\omega+\alpha}\alpha$. By Lemma \ref{lem:diff_ext_cov}, we have
	\begin{align*}
	\d{}^{\omega+\alpha}\omega-\d{}^\omega\omega&=\tau^\alpha\wedge \omega-[\alpha,\omega]_{s^*\frak k}=\D{}^\omega\alpha-\d{}^\omega\alpha-[\alpha,\omega]_{s^*\frak k},
	\end{align*}
	where we have used the equality
	\begin{align}
	\label{eq:diff_D_d}
		\D{}^\omega\alpha=\d{}^\omega\alpha+\tau^\alpha\wedge\omega
	\end{align}
	which holds due to horizontality of $\alpha$ and is a straightforward computation (see Remark \ref{rem:diff_D_d} after the proof). 
  On the other hand, using the lemma on $\alpha$ yields
		\begin{align*}
	\d{}^{\omega+\alpha}\alpha-\d{}^\omega\alpha=\tau^\alpha\wedge\alpha-[\alpha,\alpha]_{s^*\frak k}.
	\end{align*}
	When both equalities are plugged into the right-hand side of \eqref{eq:structure_perturbed}, the two terms with $\d{}^\omega\alpha$ cancel out, and likewise the terms containing $[\omega,\alpha]_{s^*\frak k}$, so we are left with
	\begin{align}
	\Omega^{\omega+\alpha}=\Omega^\omega+ \D{}^\omega\alpha+\big(\tau^\alpha\wedge\alpha-\frac 12[\alpha,\alpha]_{s^*\frak k}\big),
	\end{align}
	which proves the case $\lambda=1$. It is clear that the second-order coefficient 
\begin{align}
\label{eq:secondorder}
  \mathbf c_2(\alpha)=\tau^\alpha\wedge\alpha-\frac 12[\alpha,\alpha]_{s^*\frak k}
\end{align}
is independent of $\omega\in \A(G;\frak k)$, and since the forms $\Omega^{\omega+\alpha}, \Omega^\omega$ and $\D{}^\omega\alpha$ are horizontal and multiplicative by Theorem \ref{thm:deltaD}, so is $\mathbf c_2(\alpha)$. Since $\D{}^\omega$ is linear and $\mathbf c_2$ is clearly homogeneous of degree two, the general case $\lambda\in\R$ now also follows.
\end{proof}
\begin{remark}
\label{rem:diff_D_d}
More generally than in equation \eqref{eq:diff_D_d} above, it is straightforward to show (using the definition of the exterior covariant derivative $\d{}^\omega$) that for any horizontal form $\alpha\in\Omega^q(G;s^*\frak k)^\Hor$, there holds
	\begin{align*}
	(\D{}^\omega\alpha&-\d{}^\omega\alpha)(X_0,\dots,X_k)=-\textstyle\sum_i(-1)^i\nabla^s_{v_\omega(X_i)}\alpha(X_0,\dots,\hat{X_i},\dots,X_q)\\
	&-\textstyle\sum_{i<j}(-1)^{i+j}\alpha([X_i,v_\omega(X_j)]+[v_\omega(X_i),X_j],X_0,\dots,\hat{X_i},\dots,\hat{X_j},\dots,X_q),
	\end{align*}
	for any $X_i\in \vf (G)$. If $\alpha$ is a horizontal 1-form, this is just
\begin{align*}
  (\D{}^\omega\alpha-\d{}^\omega\alpha)(X,Y)&=\nabla^s_{v_\omega(Y)}\alpha(X)-\nabla^s_{v_\omega(X)}\alpha(Y)+\alpha[X,v_\omega (Y)]-\alpha[Y,v_\omega (X)]\\
  &=(\tau^\alpha\wedge\omega)(X,Y),
\end{align*}
for any $X,Y\in \vf(G)$.
\end{remark}
\begin{remark}
Roughly speaking (since the spaces are infinite dimensional), we can read from the first order coefficient that the derivative of the map
\[
\kappa\colon\A(G;\frak k)\ra \Omega^2_m(G;\frak k)^\Hor,\quad \kappa(\omega)=\Omega^\omega
\]
is given for any $\alpha\in\Omega^1_m(G;\frak k)^\Hor$ by
\[
\d \kappa_\omega(\alpha)=\D{}^\omega\alpha.
\]
\end{remark}

\subsubsection{Infinitesimal case}
Similarly to the global case, the set $\A(A;\frak k)$ of all IM connections is an affine space modelled on the vector space $\Omega^1_{im}(A;\frak k)^\Hor$ of horizontal IM 1-forms. We now discuss the infinitesimal analogue of Theorem 
\ref{thm:expansion}, i.e., we are going to inspect how the curvature of an IM connection changes as we make an affine deformation 
\begin{align*}
  (\C,v)\rightarrow (\C,v)+\lambda (L,l)
\end{align*}
of an IM connection $(\C,v)$ by a horizontal IM form $(L,l)\in\Omega^1_{im}(A,\frak k)^\Hor$ scaled by $\lambda\in\R$.

\begin{theorem}
\label{thm:expansion_inf}
On a Lie algebroid $A\Ra M$ with an IM connection $(\C,v)\in\A(A;\frak k)$, there holds
\begin{align}
\label{eq:expansion_inf}
  \Omega^{(\C,v)+\lambda(L,l)}=\Omega^{(\C,v)}+\lambda \D{}^{(\C,v)}(L,l)+\lambda^2\mathbf c_2(L,l),
\end{align}
for any horizontal IM form $(L,l)\in\Omega^1_{im}(A,\frak k)^\Hor$ and $\lambda\in \R$, where the second-order coefficient in the expansion is given by the map
\begin{align}
\begin{split}
\label{eq:secondorder_inf}
    &\mathbf c_2\colon\Omega^1_{im}(A,\frak k)^\Hor\rightarrow \Omega^2_{im}(A,\frak k)^\Hor,\\
  &\mathbf c_2(L,l)(\alpha)=-(L|_{\frak k}\wedgedot L\alpha, L|_{\frak k}\cdot l\alpha).
\end{split}
\end{align}
\end{theorem}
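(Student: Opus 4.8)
The strategy is to reduce the infinitesimal statement to the global one via the van Est machinery when $A$ is integrable, but to give a direct proof that works in full generality (since $A$ need not be integrable). The direct route mirrors the proof of Theorem~\ref{thm:expansion}, carrying out all computations in the Weil model, but using the horizontal projection $h^*$ and the operator $\D{}^{(\C,v)}=h^*\d{}^\nabla$ in place of their global counterparts. The starting point is the structure equation for the curvature. Recall from \eqref{eq:im_curv} that $\Omega^{(\C,v)}=\D{}^{(\C,v)}(\C,v)$, and that for any IM form its curvature is controlled by the coupling data $(\nabla,U)$. So first I would write $\nabla^{(\C,v)+\lambda(L,l)}=\nabla+\lambda L|_{\frak k}$ (the induced connection on $\frak k$ changes affinely, since $\nabla=\C|_{\frak k}$), and record how the coupling tensor $U$ changes: the horizontal subbundle moves, but the relevant combination appearing in $\C$ changes by $\lambda L$ (as $\C=\nabla(v\cdot)-U(h\cdot)$ and $(\C,v)+\lambda(L,l)$ has $\C$-part $\C+\lambda L$). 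This is the infinitesimal analogue of the bookkeeping at the start of the proof of Theorem~\ref{thm:expansion}.

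Next I would compute $\D{}^{(\C,v)+\lambda(L,l)}\big((\C,v)+\lambda(L,l)\big)$ and expand in $\lambda$. The zeroth-order term is $\Omega^{(\C,v)}$ by definition. For the first-order term, the key identity to establish is the infinitesimal analogue of Lemma~\ref{lem:diff_ext_cov}: the difference $\D{}^{(\C,v)+\lambda(L,l)}-\D{}^{(\C,v)}$, applied to Weil cochains, is $\lambda$ times an algebraic (tensorial) operator built from $L|_{\frak k}$ acting by $\wedgedot$ and by the bracket on $\frak k$. Concretely, since $\d{}^{\nabla+\lambda L|_{\frak k}}=\d{}^\nabla+\lambda\, L|_{\frak k}\wedge(\cdot)$ on $\frak k$-valued forms, and since $h^*$ itself depends on the connection only through $\C$ (which shifts by $\lambda L$), one pushes the $\lambda$-dependence into explicit tensorial correction terms. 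Collecting the order-$\lambda$ contributions, the terms involving $\d{}^\nabla(L,l)$-type pieces assemble into $h^*\d{}^\nabla(L,l)=\D{}^{(\C,v)}(L,l)$, exactly as the terms with $\d{}^\omega\alpha$ cancelled to leave $\D{}^\omega\alpha$ in the global proof, using the horizontality of $(L,l)$ throughout (this is where Lemma~\ref{lem:wedgedot}(iii) and the condition that $(L,l)$ is horizontal, i.e.\ $l|_{\frak k}=0$, get used). The remaining order-$\lambda^2$ contributions are manifestly independent of $(\C,v)$: they come only from $L|_{\frak k}$ paired against $L$ and $l$, giving $\mathbf c_2(L,l)(\alpha)=-(L|_{\frak k}\wedgedot L\alpha,\ L|_{\frak k}\cdot l\alpha)$, which one verifies lands in $\Omega^2_{im}(A;\frak k)^\Hor$ because $\D{}^{(\C,v)}$ maps IM forms to IM forms (Theorem~\ref{thm:deltaD_inf}) and the other two terms on the left are IM, so the difference is too; horizontality follows since $L|_{\frak k}\cdot l\alpha$ vanishes when $\alpha\in\frak k$ as $l|_\frak k=0$.

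The main obstacle, I expect, is the correction-term bookkeeping in the Weil model: unlike the global picture where forms on the nerve are honest differential forms, here $h^*$ and $\d{}^\nabla$ have nontrivial correction terms (Definitions~\ref{def:hor_proj} and \ref{def:d_nabla_weil}), and one must verify that the order-$\lambda$ and order-$\lambda^2$ parts genuinely organize into the claimed expressions at the level of both leading and correction terms. To tame this, I would either (a) work in the alternative model of exterior cochains from \sec\ref{sec:alternative_model_weil}, where $h^*$ is just precomposition with the $\vb$-algebroid morphism $h\colon TA\to E$ and $\d{}^\nabla$ is the standard differential, so that the whole computation becomes the exterior-cochain analogue of the clean global argument, and then transport the answer back via the isomorphism $\ev$ (Theorem~\ref{thm:derivation_hor_proj}); or (b) use the ``action trick'' of Remark~\ref{rem:action_trick}: since both sides of \eqref{eq:expansion_inf} are polynomial in $\lambda$ with Weil-cochain coefficients, it suffices to check the identity of leading terms after pulling back along a suitable $p\colon A\ltimes P\to A$, reducing to $q\le\dim P$ where leading terms determine everything. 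Option (a) is the more conceptual and less combinatorial route and is the one I would pursue; once the identity is verified there, formal properties ($\mathbf c_2$ homogeneous of degree two, independence of $(\C,v)$, values in the horizontal IM subcomplex) are immediate. As a final consistency check, when $G$ integrates $A$ with simply connected $s$-fibres one can verify that $\ve$ maps \eqref{eq:expansion} to \eqref{eq:expansion_inf} using Theorem~\ref{thm:van_est_D} and the fact that $\ve$ is a cochain map, which also pins down the normalization of $\mathbf c_2$.
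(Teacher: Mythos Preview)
Your strategy is essentially the paper's: expand $\D{}^{(\tilde\C,\tilde v)}(\tilde\C,\tilde v)$ with $(\tilde\C,\tilde v)=(\C,v)+\lambda(L,l)$ in powers of $\lambda$, using that $\tilde\nabla=\nabla+\lambda L|_{\frak k}$ and $\tilde h=h-\lambda l$. The paper even splits the computation as $\D{}^{(\tilde\C,\tilde v)}(\C,v)+\D{}^{(\tilde\C,\tilde v)}(L,l)$ exactly as you suggest, and the two pieces yield $\Omega^{(\C,v)}+(L|_{\frak k}\wedgedot L\alpha,L|_{\frak k}\cdot l\alpha)$ and $\D{}^{(\C,v)}(L,l)-2(L|_{\frak k}\wedgedot L\alpha,L|_{\frak k}\cdot l\alpha)$ respectively, which sum to the claim.

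Where you diverge is in the execution. You anticipate heavy correction-term bookkeeping and propose either the exterior-cochain model (Theorem~\ref{thm:derivation_hor_proj}) or the action trick (Remark~\ref{rem:action_trick}) to tame it. The paper does neither: since everything lives at level $p=1$, the explicit formula \eqref{eq:D_inf} for $\D{}^{(\C,v)}$ has only a leading term and a single symbol term, so the direct computation is short. Concretely, one writes $\ul J^\nabla\coloneqq(\d{}^\nabla(J,j))_1=J-\d{}^\nabla j$, checks $\ul J^{\tilde\nabla}=\ul J^\nabla-L|_{\frak k}\cdot j$, notes $\ul\C^\nabla|_{\frak k}=0$, and then expands $(\d{}^{\tilde\nabla}\C\alpha-\ul\C^{\tilde\nabla}\wedgedot\tilde\C\alpha,\ \ul\C^{\tilde\nabla}\tilde h\alpha)$ term by term; the cancellations (via \eqref{eq:wedge_wedgedot} relating $\wedge$ and $\wedgedot$) are visible by inspection. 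Your option (a) would certainly work and is more conceptual, but it is overkill here; the machinery of \sec\ref{sec:derivation_horproj_weil} was needed to \emph{define} $h^*$ at general $p$, not to compute with it at $p=1$. Your option (b) is unnecessary for the same reason: at $p=1$ with $q=1,2$ one is already below the dimension threshold, so leading terms determine everything without enlarging the base.
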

\begin{proof}
It is enough to show the identity 
\eqref{eq:expansion_inf} for the case $\lambda=1$, since $\mathbf c_2$ is homogeneous of degree two. Let us first fix some notation. Denote \[(\tilde\C,\tilde v)=(\C,v)+(L,l),\] so the relation between the respective horizontal projections is given by $\tilde h=h-l$, and the relation between induced linear connections is just 
$
\tilde \nabla = \nabla+ L|_{\frak k}.
$
The induced exterior derivatives are thus related by 
\[
\d{}^{\tilde\nabla}=\d{}^\nabla+L|_{\frak k}\wedge\cdot
\] 
where the wedge product is as in Remark \ref{rem:wedges}. For any $(J,j)\in\Omega^{1}_{im}(A;\frak k)$, let us simplify the notation for $(\d{}^\nabla(J,j))_1$ and simply denote it by $\ul J^\nabla$. It is not hard to see there holds
\[
  \ul J^{\tilde\nabla}\alpha = \ul J^\nabla\alpha - L|_{\frak k}\wedge j\alpha, 
\] 
for any $\alpha \in \Gamma(A)$. The task at hand now is to expand the expression \[
\D{}^{(\tilde \C,\tilde v)}(\tilde \C,\tilde v)=\D{}^{(\tilde \C,\tilde v)}(\C,v)+\D{}^{(\tilde \C,\tilde v)}(L,l),
\]
and we do this by means of a straightforward computation. The first term reads
\begin{align*}
  \D{}^{(\tilde \C,\tilde v)}(\C,v)\alpha &=(\d{}^{\tilde\nabla} \C\alpha-\ul\C^{\tilde\nabla}\wedgedot\tilde\C\alpha,\ul\C^{\tilde\nabla}\tilde h\alpha)\\
  &=(\d{}^\nabla\C\alpha+\bcancel{L|_{\frak k}\wedge\C\alpha} -\ul\C^{\nabla}\wedgedot \C\alpha + \bcancel{L|_{\frak k}\wedgedot\C\alpha}-\cancel{\ul\C^{\nabla}\wedgedot L\alpha}+L|_{\frak k}\wedgedot L\alpha,\\
  & \begingroup \color{white} =( \endgroup \ul\C^{\nabla}h\alpha-\cancel{\ul\C^{\nabla}l\alpha}-L|_{\frak k}\cdot \cancel{v(h\alpha)}+L|_{\frak k}\cdot l\alpha)\\
  &=\Omega^{(\C,v)}\alpha+ (L|_{\frak k}\wedgedot L\alpha,L|_{\frak k}\cdot l\alpha),
\end{align*}
where we have used the relation \eqref{eq:wedge_wedgedot} and observed that the restriction of $\ul\C^{\nabla}$ to $\frak k$ vanishes. On the other hand, the second term becomes
\begin{align*}
\D{}^{(\tilde \C,\tilde v)}(L,l)(\alpha)
&=(\d{}^{\tilde\nabla}L\alpha- \ul L^{\tilde\nabla}\wedgedot \tilde \C\alpha, \ul L^{\tilde\nabla}\tilde h\alpha)\\
&=(\d{}^\nabla L\alpha + L|_{\frak k}\wedge L\alpha - \ul L^{\nabla}\wedgedot \C\alpha - L|_{\frak k}\wedgedot L\alpha,
\ul L^\nabla h\alpha -L|_{\frak k}\cdot l\alpha - \ul L^{\nabla} l\alpha)\\
&=\D{}^{(\C,v)}(L,l)(\alpha)-2(L|_{\frak k}\wedgedot L\alpha,L|_{\frak k}\cdot l\alpha),
\end{align*}
where we have observed that $\ul L^{\tilde\nabla}$ coincides with $\ul L^{\nabla}$ when restricted to $\frak k$. Adding the two expressions together, we obtain the wanted identity
\begin{align*}
  \Omega^{(\tilde \C,\tilde v)}\alpha=\Omega^{(\C,v)}\alpha + \D{}^{(\C,v)}(L,l)\alpha-(L|_{\frak k}\wedgedot L\alpha, L|_{\frak k}\cdot l\alpha).\tag*\qedhere
\end{align*}
\end{proof}
The theorem may also be restated using the components $R^\nabla$ and $U$, appearing in the explicit expression \eqref{eq:im_curv} for the curvature $\Omega^{(\C,v)}$ of an IM connection $(\C,v)$. They transform with an affine deformation $(\tilde \C,\tilde v)=(\C,v)+\lambda(L,l)$ as
\begin{align}
R^{\tilde\nabla}\cdot \xi &=R^{\nabla}\cdot \xi+\lambda\d{}^{\nabla^{\End\frak k}}(L|_{\frak k})\cdot \xi+\lambda^2 L|_{\frak k}\wedge L\xi,\\
\tilde U\tilde h\alpha &=Uh\alpha+\lambda \ul L^\nabla h\alpha-\lambda^2 L|_{\frak k}\cdot l\alpha,
\end{align}
for any $\alpha \in \Gamma(A)$ and $\xi\in\Gamma(\frak k)$.

\begin{remark}
The second-order coefficient \eqref{eq:secondorder_inf} is just the infinitesimal counterpart of the one from Theorem \ref{thm:expansion}, defined in equation
\eqref{eq:secondorder}. More precisely, if a Lie groupoid $G$ integrates $A$, then the following diagram commutes. 
\[\begin{tikzcd}[row sep=large]
	{\Omega^1_m(G;\frak k)^\Hor} & {\Omega^2_m(G;\frak k)^\Hor} \\
	{\Omega^1_{im}(A,\frak k)^\Hor} & {\Omega^2_{im}(A,\frak k)^\Hor}
	\arrow["{\mathbf c_2}", from=1-1, to=1-2]
	\arrow["\ve"', from=1-1, to=2-1]
	\arrow["\ve", from=1-2, to=2-2]
	\arrow["{\mathbf c_2}", from=2-1, to=2-2]
\end{tikzcd}\]
This follows from both theorems on affine deformations and the diagram \eqref{eq:square_D}. 
\end{remark}

\subsection{Primitive IM connections and their curvings}
\label{sec:primitive}
We now focus on multiplicative connections with cohomologically trivial curvature. 
This class of connections has been very briefly studied in \cite{gerbes} for the particular case of Lie groupoid extensions; it has also implicitly appeared in \cite{mec}*{\sec 4.2} (see Definitions 4.10 and 4.11 therein). We extend this study to the infinitesimal realm, to arbitrary bundles of ideals, and develop the theory further extensively. The class of primitive IM connections turns out to be crucial for our developments of multiplicative Yang--Mills theory.

\begin{definition}
Let $\frak k\subset A$ be a bundle of ideals of a Lie algebroid $A\Rightarrow M$. An IM connection $(\C,v)\in\A(A;\frak k)$ is said to be \textit{primitive} if $\Omega^{(\C,v)}$ is cohomologically trivial, i.e., if the
cohomological class $[\Omega^{(\C,v)}]\in H^{1,2}(A;\frak k)^\Hor$ vanishes.
We denote the set of primitive connections by \[\arc(A;\frak k)\subset \A(A;\frak k).\]
Given a primitive connection $(\C,v)$, a form $F\in\Omega^2(M;\frak k)$ satisfying
\begin{align}
\label{eq:curving}
\Omega^{(\C,v)}=\delta^0 F
\end{align}
is then called a \emph{curving} of the connection $(\C,v)$. The differential form \[G\coloneqq \d{}^\nabla F\] is called the \textit{curvature 3-form} of $F$, where $\nabla=\C|_{\frak k}$ is the induced linear connection on $\frak k$.
\end{definition}
\begin{example}
If $V\ra M$ is a vector bundle, seen as a Lie algebroid with the trivial structure, and $\frak k=V$, then primitive connections are in a 1-to-1 correspondence with flat linear connections on $V$. This will follow as a particular case of Proposition 
\ref{prop:abelian_primitive}; alternatively (and more easily), this follows straightforwardly from equation \eqref{eq:R_ad_F}.
\end{example}
\begin{example}
\label{ex:transitive_primitive}
If $A\Ra M$ is a transitive algebroid and $\frak k=\ker\rho$, IM connections are in a bijective correspondence with splittings of the abstract Atiyah sequence by condition \eqref{eq:c2}, and every IM connection admits a unique curving---the curvature $F^v$ of the splitting $v\colon A\ra\frak k$. This follows from the fact that $\delta^0\colon \Omega^\bullet(M;\frak k)\ra \Omega^\bullet_{im}(A;\frak k)^\Hor$ is canonically an isomorphism, see  Proposition \ref{prop:orb_proj2}. 
We observe that the curvature 3-form of the curving $F^v$ vanishes by the Bianchi identity for the curvature of any splitting, $\d{}^\nabla F^v=0$.
\end{example}
Notice that a curving $F$ of a fixed IM connection $(\C,v)$ is in general not unique---it is only determined up to an invariant 2-form. In other words, the space of curvings $(\delta^0)^{-1}(\Omega^{(\C,v)})$ of a connection $(\C,v)$ is an affine space modelled on the vector space $\Omega^2_\inv(M;\frak k)=\ker\delta^0$ of invariant 2-forms on $M$. This means that any two curvings $\tilde F$ and $F$ must differ by a 2-form $\beta\in\Omega^2(M;\frak k)$ satisfying the following conditions:
\begin{align}
\label{eq:invariant}
\L^A_\alpha\beta=0,\quad\iota_{\rho(\alpha)}\beta=0, 
\end{align}
for any $\alpha\in \Gamma(A)$. The associated curvature 3-forms are then clearly related by
\begin{align}
\label{eq:3curv_diff}
\tilde G-G =\d{}^\nabla\beta.
\end{align}
\begin{remark}
\label{rem:invariant}
By the first condition in \eqref{eq:invariant}, any invariant form $\beta\in\Omega^\bullet(M;\frak k)$ is center-valued (take $\alpha\in\Gamma(\frak k)$). Furthermore, the second condition means that $\beta$ is \textit{transversal}, i.e.,\ it is a section of $\Lambda^\bullet(T\F)^\circ\otimes z(\frak k)$, so it must vanish wherever its degree is greater than the codimension of $\F$: for any $x\in M$, $\deg\beta>\operatorname{corank}\rho_x$ implies $\beta_x=0$. In particular, if $\codim\F=1$ then the curving of any multiplicative connection is unique, if it exists.
\end{remark}
The following is the infinitesimal analogue of \cite{gerbes}*{Theorem 6.33}.
\begin{lemma}
\label{lem:curving}
Let $(\C,v)\in \arc(A;\frak k)$ be a primitive IM connection on a Lie algebroid $A\Ra M$, and let $F$ be a curving. The following statements hold.
\begin{enumerate}[label={(\roman*)}]
\item The curvature tensors $U$ and $R^\nabla$ are determined by the curving $F$ as
\begin{align}
\label{eq:R_ad_F}
R^\nabla\cdot\xi&=[\xi,F],\\
U(\alpha)&=-\iota_{\rho(\alpha)}F \label{eq:U_iota_F}
\end{align}
for any $\xi\in\Gamma(\frak k)$ and $\alpha\in H$. In particular, $R^\nabla =0$ if and only if $F$ is centre-valued.
\item Bianchi identities: the curvature 3-form $G$ satisfies
\begin{align}
\label{eq:3curv}
\delta^0 G=0 \quad\text{and}\quad \d{}^\nabla G=0.
\end{align}
Hence, $G$ is center-valued and transversal, so it vanishes wherever $\codim\F\leq 2$.
\end{enumerate}
\end{lemma}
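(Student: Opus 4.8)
The plan is to unwind what the vanishing of the curvature class $[\Omega^{(\C,v)}]\in H^{1,2}(A;\frak k)^\Hor$ gives us, namely the curving equation $\Omega^{(\C,v)}=\delta^0 F$, and then to extract statements (i) and (ii) by comparing this with the explicit formula \eqref{eq:im_curv} for the curvature, together with the rewritten compatibility conditions \eqref{eq:s1}--\eqref{eq:s3}. First I would write out $\delta^0 F$ explicitly using \eqref{eq:delta0_im}: for any $\alpha\in\Gamma(A)$ we have $(\delta^0 F)_0(\alpha)=\L^A_\alpha F$ and $(\delta^0 F)_1(\alpha)=\iota_{\rho(\alpha)}F$. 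On the other hand, \eqref{eq:im_curv} gives $\Omega^{(\C,v)}(\alpha)=(R^\nabla\cdot v\alpha-\d{}^\nabla U(h\alpha),\,-U(h\alpha))$. Matching the symbol parts immediately yields \eqref{eq:U_iota_F}: for $\alpha\in H$ we have $v\alpha=0$ and $h\alpha=\alpha$, so $-U(\alpha)=\iota_{\rho(\alpha)}F$, i.e.\ $U(\alpha)=-\iota_{\rho(\alpha)}F$; for $\alpha\in\Gamma(\frak k)$ both sides vanish (the left since $U$ is defined on $H$, the right since $\rho(\alpha)=0$), so this is consistent. For \eqref{eq:R_ad_F} I would take $\alpha=\xi\in\Gamma(\frak k)$ in the leading term: then $h\xi=0$, $v\xi=\xi$, so $\Omega^{(\C,v)}(\xi)_0=R^\nabla\cdot\xi$, while $(\delta^0 F)_0(\xi)=\L^A_\xi F$; since $\frak k\subset\ker\rho$, the Lie derivative $\L^A_\xi F$ reduces to its $\nabla^A$-part, and recalling $\nabla^A_\xi=[\xi,\cdot]$ on $\frak k$, one computes $\L^A_\xi F=[\xi,F]$ (using that contraction terms drop out because $\rho(\xi)=0$ and the symmetric correction terms are irrelevant at this level). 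This gives $R^\nabla\cdot\xi=[\xi,F]$. The final clause of (i) is then immediate: $R^\nabla=0$ iff $[\xi,F]=0$ for all $\xi$, i.e.\ iff $F$ takes values in the centre $z(\frak k)$.

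Next, for the Bianchi identities in (ii): the identity $\d{}^\nabla G=\d{}^\nabla\d{}^\nabla F=R^{\nabla^{\End\frak k}}\wedge F$ would be my starting point, and I would argue this vanishes because by \eqref{eq:R_ad_F} the curvature $R^\nabla$ acts on $\frak k$ by the inner derivation $[\cdot\,,F]$, hence $R^{\nabla^{\End\frak k}}$ acts on $\frak k$-valued forms by $[\cdot\,,F]\wedge\cdot$ composed appropriately, and $[F,F]=0$ by graded antisymmetry of the bracket of $\frak k$-valued forms in even degree together with the Jacobi identity; more carefully one uses that $(R^{\nabla^{\End\frak k}}\wedge F)\cdot$ applied to the identity section is $[R^\nabla\wedge F]$-type and this is $[[\cdot,F]\wedge F]$ which cancels. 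Alternatively, and perhaps cleaner, I would invoke the infinitesimal Bianchi identity $\D{}^{(\C,v)}\Omega^{(\C,v)}=0$ from Theorem \ref{thm:bianchi_inf}: applying $\D{}^{(\C,v)}=h^*\d{}^\nabla$ to $\Omega^{(\C,v)}=\delta^0 F$ and using that $h^*$ and $\d{}^\nabla$ interact with $\delta^0$ via the established cochain-map properties should produce $\delta^0(\d{}^\nabla G)=0$ type relations, but actually the most direct route to $\delta^0 G=0$ is to show $G=\d{}^\nabla F$ is an invariant form. For $\delta^0 G=0$ I would compute $\L^A_\alpha G$ and $\iota_{\rho(\alpha)}G$ separately and show both vanish, using \eqref{eq:R_ad_F}, \eqref{eq:U_iota_F}, and the third compatibility condition \eqref{eq:s3} (which controls $U$ on brackets), together with the fact that $\nabla$ preserves the bracket \eqref{eq:s1}; this is where the bookkeeping is heaviest. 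Once $\delta^0 G=0$ is known, the remark after \eqref{eq:invariant} (Remark \ref{rem:invariant}) immediately gives that $G$ is centre-valued and transversal, hence vanishes wherever $\deg G=3>\operatorname{corank}\rho_x$, i.e.\ wherever $\codim\F\leq 2$.

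The main obstacle I anticipate is the verification $\delta^0 G=0$, specifically the $\L^A$-part: expanding $\L^A_\alpha\d{}^\nabla F$ requires commuting $\L^A_\alpha$ past $\d{}^\nabla$, which by Cartan-type identities introduces a curvature term $\iota_{\rho(\alpha)}R^\nabla\wedge F$ (as in \eqref{eq:d_lie}), and then one must see this cancels against terms coming from how $\nabla$ fails to be $A$-invariant (the tensor $\theta(\alpha)=[v\alpha,\cdot]$ from \eqref{eq:invariance_form} and Proposition \ref{prop:invariant_abelian}). Concretely, $\L^A_\alpha$ on a $\frak k$-valued form is $\L^\nabla_{\rho(\alpha)}+\theta(\alpha)\wedge\cdot$, and one needs the identity $\L^\nabla_{\rho(\alpha)}\d{}^\nabla F=\d{}^\nabla\L^\nabla_{\rho(\alpha)}F-\iota_{\rho(\alpha)}R^\nabla\wedge F$ combined with $\L^\nabla_{\rho(\alpha)}F=\iota_{\rho(\alpha)}\d{}^\nabla F+\d{}^\nabla\iota_{\rho(\alpha)}F$; substituting \eqref{eq:U_iota_F} turns $\iota_{\rho(\alpha)}F$ into $-U(h\alpha)$, and then \eqref{eq:s3} is exactly what is needed to close the computation. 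I would handle this by first recording the two Cartan-type commutator identities as a preliminary, then substituting, being careful with the fact that $F$ need not be centre-valued so $\theta(\alpha)\wedge F=[v\alpha,F]$ does not vanish a priori but instead pairs with the $R^\nabla$-term via \eqref{eq:R_ad_F}. Everything else (the $\d{}^\nabla G=0$ identity, the reductions to Remark \ref{rem:invariant}) should be short once this is in place.
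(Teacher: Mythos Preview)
Your approach to part (i) and to $\d{}^\nabla G=0$ matches the paper exactly: match the symbol and leading term of $\Omega^{(\C,v)}=\delta^0 F$ against \eqref{eq:im_curv}, and for $\d{}^\nabla G$ write $(\d{}^\nabla)^2 F=R^\nabla\wedge F=[F,F]=0$ using part (i) and the even degree of $F$ (your $\End\frak k$ detour is unnecessary).

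For $\delta^0 G=0$ you actually name the paper's argument but then misstate what it yields (you write ``$\delta^0(\d{}^\nabla G)=0$ type relations'') and retreat to a direct hand computation. The paper's proof is the one-liner you almost had:
\[
\delta^0 G=\delta^0\d{}^\nabla F=\D{}^{(\C,v)}\delta^0 F=\D{}^{(\C,v)}\Omega^{(\C,v)}=0,
\]
where the second equality uses that at level $p=0$ the horizontal projection is the identity (so $\D{}^{(\C,v)}=\d{}^\nabla$ there) together with the cochain-map property $\delta\D{}^{(\C,v)}=\D{}^{(\C,v)}\delta$ from Theorem \ref{thm:deltaD_inf}, and the last equality is the infinitesimal Bianchi identity (Theorem \ref{thm:bianchi_inf}). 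Your proposed direct verification via Cartan identities and \eqref{eq:s1}--\eqref{eq:s3} would succeed, but all of that bookkeeping is precisely what has been absorbed into Theorem \ref{thm:deltaD_inf}; invoking it here is the point.
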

\begin{proof}
The point (i) follows directly from using \eqref{eq:curving} with the explicit expression \eqref{eq:im_curv} for the curvature. For the second point, note that
\[
\delta^0 G=\delta^0 \d{}^\nabla F=\D{}^{(\C,v)}\delta^0F=\D{}^{(\C,v)}\Omega^{(\C,v)}=0,
\]
where we have used Theorem \ref{thm:deltaD_inf} and the Bianchi identity. Finally,
\[
\d{}^\nabla G=(\d{}^\nabla)^2 F=R^\nabla\wedge F=[F,F]=0,
\]
since $F$ is a 2-form, where we have used the point (i).
\end{proof}
Observe that equation \eqref{eq:U_iota_F} of item (i) of the last lemma, used with equation \eqref{eq:U_along_orbits}, implies
\begin{align}
    \label{eq:Fv_pullback_F}
    F^v=(\rho_B)^*F
\end{align}
where $F^v\in\Omega^2(B;\frak k)$ is the curvature of the splitting $v\colon A\ra \frak k$, and $B=A/\frak k$. On the other hand, equation \eqref{eq:R_ad_F} will henceforth simply be written as 
\[
R^\nabla=-\operatorname{ad}F,
\]
and it is stronger than the condition \eqref{eq:s2} for the coupling data $(\nabla,U)$ of a primitive connection. In a concealed way, the last lemma also tells us what \eqref{eq:s3} should be replaced with for the primitive case, as we will now see. In light of the coupling construction from \cite{mec}, we shall reverse the process, and start with a linear connection and a curving to construct a primitive IM connection.
\begin{proposition}
\label{prop:splitting_coh_triv}
Let $B\Ra M$ be a Lie algebroid and let $(\frak k,[\cdot,\cdot]_{\frak k})$ be a bundle of Lie algebras over $M$. Suppose that a connection $\nabla$ on $\frak k$ and a form $F\in \Omega^2(M;\frak k)$ are given, such that the following conditions are satisfied:
\begin{enumerate}[label={(\roman*)}]
\item The connection $\nabla$ preserves the Lie bracket on $\frak k$.
\item The tensors $R^\nabla$ and $F$ are related by $R^\nabla=-\ad F$.
\item The 3-form $\d{}^\nabla F$ is transversal, i.e., $\iota_{\rho_B(\alpha)}\d{}^\nabla F=0$ for any $\alpha\in\Gamma(B)$.
\end{enumerate}
Then the direct sum $A=B\oplus \frak k$ has a structure of a Lie algebroid, which admits a primitive IM connection $(\C,v)$, given for any $\alpha\in\Gamma(B)$, $\xi\in\Gamma(\frak k)$ by
\[
v(\alpha,\xi)=\xi,\quad\C(\alpha,\xi)=\nabla\xi+\iota_{\rho_B(\alpha)}F,
\]
with $F$ as its curving. Conversely, any Lie algebroid $A$ with a bundle of ideals $\frak k\subset A$ that admits a primitive IM connection, is isomorphic to one of this type.
\end{proposition}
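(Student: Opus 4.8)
The plan is to verify that the prescribed bracket and anchor on $A=B\oplus\frak k$ satisfy the Lie algebroid axioms, that $\frak k$ sits inside it as a bundle of ideals, and that the pair $(\C,v)$ as written is an IM connection whose curvature is $\delta^0F$; the converse direction then follows by running the coupling construction from \sec\ref{sec:mec_inf} backwards using Lemma \ref{lem:curving}.

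First I would write down the algebroid structure on $A=B\oplus\frak k$ explicitly, imitating the coupling data construction of \cite{mec}: the anchor is $\rho_A(\alpha,\xi)=\rho_B(\alpha)$, and the bracket is
\begin{align*}
  [(\alpha_1,\xi_1),(\alpha_2,\xi_2)]_A=\big([\alpha_1,\alpha_2]_B,\ \nabla_{\rho_B(\alpha_1)}\xi_2-\nabla_{\rho_B(\alpha_2)}\xi_1+[\xi_1,\xi_2]_{\frak k}+F(\rho_B(\alpha_1),\rho_B(\alpha_2))\big).
\end{align*}
The Leibniz rule is immediate from the fact that $\nabla$ is a connection, $\rho_B$ is an anchor, and $F$ is tensorial. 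The Jacobi identity is the first genuine computation: expanding $\sum_{\mathrm{cyc}}[[\,\cdot\,,\,\cdot\,]_A,\,\cdot\,]_A$, the $B$-component vanishes by Jacobi on $B$; the $\frak k$-component splits into several pieces that must cancel using, in turn, the Jacobi identity of $[\cdot,\cdot]_{\frak k}$, the compatibility $\nabla[\xi,\eta]_{\frak k}=[\nabla\xi,\eta]_{\frak k}+[\xi,\nabla\eta]_{\frak k}$ from (i), the curvature identity $R^\nabla=-\ad F$ from (ii) (which converts the $\nabla$-curvature terms into $\ad F$ terms), and finally the transversality condition $\iota_{\rho_B(\alpha)}\d{}^\nabla F=0$ from (iii) (which kills the remaining terms involving $\d{}^\nabla F$). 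This is the step I expect to be the main obstacle — not because any individual identity is hard, but because the bookkeeping of which of the three hypotheses is responsible for which cancellation needs to be done carefully; it is essentially the reverse of the computation behind \cite{mec}*{Propositions 5.11 and 5.13} together with the refinement recorded in Lemma \ref{lem:curving}.

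Next I would check that $\frak k=0\oplus\frak k\subset A$ is a bundle of ideals: $\frak k\subset\ker\rho_A$ is clear, and $[(\alpha,0),(0,\xi)]_A=(0,\nabla_{\rho_B(\alpha)}\xi)\in\Gamma(\frak k)$, so $\frak k$ is $\ad$-stable, with the induced $A$-representation being $\nabla^A_{(\alpha,\xi)}\eta=\nabla_{\rho_B(\alpha)}\eta+[\xi,\eta]_{\frak k}$. Then I would verify that $(\C,v)$ with $v(\alpha,\xi)=\xi$ and $\C(\alpha,\xi)=\nabla\xi+\iota_{\rho_B(\alpha)}F$ is an IM connection: the symbol condition $v|_{\frak k}=\id_{\frak k}$ is immediate, the Leibniz identity $\C(f(\alpha,\xi))=f\C(\alpha,\xi)+\d f\otimes v(\alpha,\xi)$ holds since $\C$ is built from a connection on the $\frak k$-part and a tensor on the $B$-part, and the compatibility conditions \eqref{eq:c1}--\eqref{eq:c3} are checked against the bracket just defined — here I would use the reformulation \eqref{eq:s1}--\eqref{eq:s3} in terms of $\nabla$ and $U$, noting that the coupling data of $(\C,v)$ is precisely $(\nabla,U)$ with $U(h(\alpha,\xi))=U(\alpha)=-\iota_{\rho_B(\alpha)}F$ (matching \eqref{eq:U_iota_F}), so that \eqref{eq:s1} is hypothesis (i), \eqref{eq:s2} is hypothesis (ii), and \eqref{eq:s3} reduces, via $U=-\iota_\rho F$, to the transversality hypothesis (iii). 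Finally, plugging $(\nabla,U)$ with $U=-\iota_\rho F$ into the explicit curvature formula \eqref{eq:im_curv} gives $\Omega^{(\C,v)}(\alpha)=(R^\nabla\cdot v\alpha-\d{}^\nabla U(h\alpha),-U(h\alpha))=(-\ad(F)\cdot v\alpha+\d{}^\nabla\iota_{\rho(h\alpha)}F,\ \iota_{\rho(h\alpha)}F)$, and a short check against the definition \eqref{eq:delta_0} of $\delta^0$ identifies this with $\delta^0 F$ (using $\iota_{\rho(\alpha)}F=\iota_{\rho(h\alpha)}F$ and the Lie derivative identity for $\delta^0$), so $(\C,v)$ is primitive with curving $F$.

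For the converse, suppose $A$ is a Lie algebroid with a bundle of ideals $\frak k$ carrying a primitive IM connection $(\C,v)$ with curving $F$. The symbol $v$ splits \eqref{eq:splitting}, giving $A\cong \frak k\oplus H$ with $H=\ker v$; setting $B=A/\frak k\cong H$ as a Lie algebroid (its bracket and anchor induced from $A$) and $\nabla=\C|_{\frak k}$, Lemma \ref{lem:curving} gives exactly $R^\nabla=-\ad F$ (item (i)), Proposition \ref{prop:conn2}-type arguments give that $\nabla$ preserves $[\cdot,\cdot]_{\frak k}$ (condition \eqref{eq:s1}), and $\d{}^\nabla F=G$ is transversal by \eqref{eq:3curv}; moreover \eqref{eq:U_iota_F} shows the tensor $U$ of $(\C,v)$ equals $-\iota_\rho F$, so that the $A$-bracket, written out in the decomposition $\frak k\oplus H$ via \eqref{eq:U_along_orbits} and \eqref{eq:conn_orb2}, is precisely the bracket displayed above. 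Hence $A$ is isomorphic to one of the constructed type, and under this isomorphism $(\C,v)$ corresponds to the constructed IM connection. I would close by remarking that this makes Proposition \ref{prop:splitting_coh_triv} the ``primitive'' refinement of the general coupling classification of \cite{mec}, with hypothesis (iii) replacing the full strength of \eqref{eq:s3}.
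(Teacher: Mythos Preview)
Your approach is essentially the paper's, with two remarks. First, you have a sign error in the bracket: the $\frak k$-component of $[(\alpha_1,0),(\alpha_2,0)]_A$ should be $-F(\rho_B(\alpha_1),\rho_B(\alpha_2))$, not $+F$; this is forced by your own identification $U(\alpha)=-\iota_{\rho_B(\alpha)}F$ together with \eqref{eq:U_along_orbits}, and the paper's bracket formula \eqref{eq:bracket_construction} has the minus sign. Second, the paper avoids your proposed direct Jacobi computation altogether: it simply observes that (i)--(iii) translate to \eqref{eq:s1}--\eqref{eq:s3} for the pair $(\nabla,U)$ with $U=-\iota_{\rho_B}F$ (noting that \eqref{eq:s3} for this $U$ says $\d{}^\nabla F$ vanishes on two orbital vectors, which is implied by (iii)), and then invokes \cite{mec}*{Proposition 5.13} as a black box for the algebroid structure and the IM-connection property of $(\C,v)$. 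So your two verification steps---Jacobi by hand, then \eqref{eq:s1}--\eqref{eq:s3}---collapse into one citation. For primitivity the paper makes the identification with $\delta^0F$ explicit via Cartan's formula: condition (iii) gives $-\d{}^\nabla U(\alpha)=\d{}^\nabla\iota_{\rho_B(\alpha)}F=\L^\nabla_{\rho_B(\alpha)}F$, and adding $R^\nabla\cdot\xi=[\xi,F]$ yields $\L^A_{(\alpha,\xi)}F$, matching \eqref{eq:delta0_im} (not \eqref{eq:delta_0}, which is the groupoid version). Your converse is correct and matches the paper's logic.
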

\begin{remark}
  This proposition simplifies considerably when $\frak k$ is a semisimple Lie algebra bundle; see \sec\ref{sec:semisimple} and Corollary \ref{cor:semisimple_extension}.
\end{remark}
\begin{proof}
Assumptions (i) and (ii) imply that conditions \eqref{eq:s1} and \eqref{eq:s2} are satisfied for the pair $(\nabla,U)$, where $U\in \Gamma (B^*\otimes T^*M\otimes \frak k)$ is given by $U(\alpha)=-\iota_{\rho_B(\alpha)}F$. Moreover, it is straightforward to see that \eqref{eq:s3} for our case  amounts to saying that $\d{}^\nabla F$ vanishes when evaluated on two vectors tangent to the orbit foliation of $B$. Hence, (iii) implies \eqref{eq:s3}, and we can apply \cite{mec}*{Proposition 5.13} to conclude $A$ is a Lie algebroid, with the anchor given by the composition $A\ra B\xrightarrow{\rho_B}TM$, and the bracket by
\begin{align}
\label{eq:bracket_construction}
[(\alpha,\xi),(\beta,\eta)]=\big([\alpha,\beta]_B, \nabla_{\rho_B(\alpha)}\eta-\nabla_{\rho_B(\beta)}\xi+[\xi,\eta]_{\frak k}-F(\rho_B(\alpha),\rho_B(\beta))\big). 
\end{align}
The obtained IM connection $(\C,v)$ is indeed primitive since condition (iii) used with Cartan's magic formula yields 
\[
-\d{}^\nabla U(\alpha)=\L^\nabla_{\rho_B(\alpha)}F,
\]
for any $\alpha\in\Gamma(B)$, hence also 
\[
\Omega^{(\C,v)}(\alpha,\xi)=(R^\nabla\cdot\xi-\d{}^\nabla U(\alpha),-U(\alpha))=(\L^A_{(\alpha,\xi)} F,\iota_{\rho(\alpha,\xi)}F)=\delta^0 F.\qedhere
\]
\end{proof}
\begin{corollary}
  \label{cor:primitive_bijective_correspondence}
  Let $\frak k$ be a bundle of ideals of a Lie algebroid $A$. There is a bijective correspondence between primitive IM connections for $\frak k$ together with a choice of curving, and triples $(v,\nabla,F)$, where $v\colon A\ra \frak k$ is a splitting, the pair $(\nabla,F)$ satisfies conditions (i)--(iii) of Proposition \ref{prop:splitting_coh_triv}, and the splitting is compatible with $(\nabla,F)$, that is:
  \begin{align}
    \label{eq:primitive_additional_conditions}
    \nabla^A_{h(\alpha)}=\nabla^{}_{\rho(\alpha)},\quad F^v=(\rho_B)^* F
  \end{align}
  for any $\alpha\in A$, where $B=A/\frak k$ and $F^v\in\Omega^2(B;\frak k)$ is the curvature of the splitting.
\end{corollary}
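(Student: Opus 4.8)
The plan is to construct the correspondence in both directions and verify they are mutually inverse, building directly on Proposition \ref{prop:splitting_coh_triv} and Corollary \ref{cor:primitive_bijective_correspondence_inputs} — more precisely on the data already assembled in Lemma \ref{lem:curving} and equation \eqref{eq:Fv_pullback_F}. First I would set up the forward map: given a primitive IM connection $(\C,v)$ for $\frak k$ together with a choice of curving $F$, extract the splitting $v\colon A\ra\frak k$ itself, the induced linear connection $\nabla=\C|_{\frak k}$, and the same form $F$. I must then check that the triple $(v,\nabla,F)$ lies in the target set: conditions (i)--(iii) of Proposition \ref{prop:splitting_coh_triv} are exactly Lemma \ref{lem:curving}(i) (which gives $R^\nabla=-\ad F$), condition \eqref{eq:s1} for an IM connection (giving that $\nabla$ preserves the bracket, restated as \eqref{eq:s1}), and transversality of $\d{}^\nabla F=G$, which is the content of Lemma \ref{lem:curving}(ii). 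The compatibility conditions \eqref{eq:primitive_additional_conditions} are then Proposition \ref{prop:conn2} (equation \eqref{eq:conn_orb2}, rewritten as $\nabla^A_{h(\alpha)}=\nabla_{\rho(\alpha)}$ using \eqref{eq:diff_a_conn} and $\rho(h\alpha)=\rho(\alpha)$) together with the identity $F^v=(\rho_B)^*F$ from \eqref{eq:Fv_pullback_F}.

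Next I would construct the backward map. Given a triple $(v,\nabla,F)$ as in the statement, use the splitting $v$ to identify $A\cong B\oplus\frak k$ with $B=A/\frak k$, and define $\C(\alpha)=\nabla(v\alpha)-U(h\alpha)$ with $U(\alpha)=-\iota_{\rho(\alpha)}F$ on the horizontal bundle $H=\ker v$, exactly as in Proposition \ref{prop:splitting_coh_triv}. The pair $(\C,v)$ is a Weil $1$-cochain by the Leibniz identity for $\C$; that it is an IM connection amounts to verifying the compatibility conditions \eqref{eq:c1}--\eqref{eq:c3}, equivalently \eqref{eq:s1}--\eqref{eq:s3} for $(\nabla,U)$. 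Here \eqref{eq:s1} is hypothesis (i), \eqref{eq:s2} follows from $R^\nabla=-\ad F$ in hypothesis (ii), and \eqref{eq:s3} follows from transversality of $\d{}^\nabla F$ in hypothesis (iii) combined with Cartan's magic formula $-\d{}^\nabla U(\alpha)=\L^\nabla_{\rho(\alpha)}F$ — this is precisely the computation already carried out in the proof of Proposition \ref{prop:splitting_coh_triv}. The same computation there shows $\Omega^{(\C,v)}=\delta^0 F$, so $(\C,v)$ is primitive with curving $F$. The subtlety, and the reason the extra conditions \eqref{eq:primitive_additional_conditions} are needed, is that Proposition \ref{prop:splitting_coh_triv} only produces \emph{some} Lie algebroid structure on $B\oplus\frak k$; to land in the fibre over the \emph{given} $A$ with its \emph{given} bundle of ideals, one must know that the splitting $v$ is adapted to $(\nabla,F)$ in the sense of \eqref{eq:primitive_additional_conditions}, which guarantees that the reconstructed bracket \eqref{eq:bracket_construction} agrees with the original bracket of $A$ under the identification $A\cong B\oplus\frak k$.

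The two constructions are then checked to be mutually inverse. Starting from $(\C,v,F)$, the backward map applied to the extracted triple returns $\C'(\alpha)=\nabla(v\alpha)-U(h\alpha)$ with $U=-\iota_{\rho(\cdot)}F$; by Lemma \ref{lem:curving}(i), equation \eqref{eq:U_iota_F}, this is exactly the original $\C$ via the decomposition \eqref{eq:split_C}. Conversely, starting from $(v,\nabla,F)$ the forward map recovers $\nabla=\C|_{\frak k}$ and the same $v$ and $F$ tautologically. The only genuine content beyond bookkeeping is the reconstruction of the algebroid structure: I would verify, using the compatibility conditions \eqref{eq:primitive_additional_conditions}, that the anchor $\rho=\rho_B\circ\pr_B$ and bracket \eqref{eq:bracket_construction} coincide with those of $A$. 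The first condition in \eqref{eq:primitive_additional_conditions} pins down the mixed horizontal–vertical part of the bracket via $\nabla_{\rho(\alpha)}\eta=[h(\alpha),\eta]$ (Proposition \ref{prop:conn2}), the second, $F^v=(\rho_B)^*F$, pins down the $H$–$H$ component via \eqref{eq:U_along_orbits} and \eqref{eq:curvature_splitting_foliated}, and the $\frak k$–$\frak k$ component is just the bracket on $\frak k$; together these determine the full bracket, so the reconstruction is faithful.

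\textbf{Main obstacle.} The hard part will be organizing the last step cleanly — showing that the Lie algebroid produced by Proposition \ref{prop:splitting_coh_triv} is canonically isomorphic to the original $A$ (not merely abstractly isomorphic), where the isomorphism is the identity on underlying vector bundles once $v$ is fixed. This requires carefully tracking how the compatibility conditions \eqref{eq:primitive_additional_conditions} feed into each of the three components of the bracket, and I expect this to be the only place where a nontrivial verification is needed rather than a direct appeal to earlier results.
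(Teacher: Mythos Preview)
Your proposal is correct and follows essentially the same approach as the paper's proof, which is very terse: the forward direction cites Lemma \ref{lem:curving} and identities \eqref{eq:U_along_orbits}, \eqref{eq:conn_orb2}, and the backward direction appeals to Proposition \ref{prop:splitting_coh_triv} together with the observation that conditions \eqref{eq:primitive_additional_conditions} ensure the reconstructed algebroid on $B\oplus\frak k$ is isomorphic to $A$. You have unpacked these citations in detail and correctly identified the one nontrivial verification (matching the three components of the bracket), which the paper leaves implicit. One small slip: you cite a nonexistent ``Corollary \texttt{cor:primitive\_bijective\_correspondence\_inputs}''; drop that reference.
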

\begin{proof}
  One direction is clear from the  identities \eqref{eq:U_along_orbits}, \eqref{eq:conn_orb2} and Lemma \ref{lem:curving}. For the other direction we use the previous proposition, where the conditions \eqref{eq:primitive_additional_conditions} ensure that the obtained algebroid structure on $B\oplus \frak k$ is isomorphic to $A$. 
\end{proof}
\begin{remark}
  If $F'\in \Omega^2(M;\frak k)$ is another 2-form satisfying the same properties as $F$ in the corollary above, then $(v,\nabla,F')$ defines the same IM connection as $(v,\nabla, F)$ if and only if the difference $F-F'$ is transversal, that is, $\iota_{\rho(\alpha)}(F-F')=0$ for any $\alpha\in\Gamma(A)$. The equivalence follows by computing 
  \[
  \L^A_\alpha(F-F')=\L^\nabla_{\rho\alpha}(F-F')+[v\alpha,F-F']=0,
  \]
  where we have used Cartan's formula, and properties (ii) and (iii) from the proposition. 
\end{remark}

We now examine the affinity of the space of primitive multiplicative connections. We emphasize that this should not be confused with the aforementioned affinity of $(\delta^0)^{-1}(\C,v)$ for a fixed IM connection $(\C,v)$.
\begin{proposition}
\label{prop:affine_primitive}
Let $(\C,v)\in \arc(A;\frak k)$ be a primitive connection on a Lie algebroid $A$. If $(L,l)\in\Omega^1_{im}(A;\frak k)^\Hor$ is a cohomologically trivial IM form, then $(\C,v)+(L,l)$ is also primitive. 
In particular, if $H^{1,1}(A;\frak k)^\Hor=0$ then $\arc(A;\frak k)$ is an affine subspace of $\A(A;\frak k)$.
\end{proposition}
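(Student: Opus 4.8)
The plan is to show that the curvature $\Omega^{(\C,v)+\lambda(L,l)}$ remains cohomologically trivial in $H^{1,2}(A;\frak k)^\Hor$ whenever $(\C,v)$ is primitive and $(L,l)$ is cohomologically trivial, for every $\lambda\in\R$; since primitivity is by definition the vanishing of $[\Omega^{(\C,v)}]\in H^{1,2}(A;\frak k)^\Hor$, this is exactly what must be checked. The key tool is the affine-deformation expansion from Theorem~\ref{thm:expansion_inf},
\begin{align*}
  \Omega^{(\C,v)+\lambda(L,l)}=\Omega^{(\C,v)}+\lambda\,\D{}^{(\C,v)}(L,l)+\lambda^2\mathbf c_2(L,l).
\end{align*}
So I would argue term by term that each of the three summands on the right is a coboundary in the horizontal subcomplex. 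The first term, $\Omega^{(\C,v)}$, is cohomologically trivial by the hypothesis that $(\C,v)$ is primitive: $\Omega^{(\C,v)}=\delta^0 F$ for some curving $F\in\Omega^2(M;\frak k)$.

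For the second term, write $(L,l)=\delta^0\gamma$ for some $\gamma\in\Omega^1(M;\frak k)$, which is possible because $(L,l)$ is assumed cohomologically trivial. Then $\D{}^{(\C,v)}(L,l)=\D{}^{(\C,v)}\delta^0\gamma=\delta^0\d{}^\nabla\gamma$ by Theorem~\ref{thm:deltaD_inf} (which states $\delta\D{}^{(\C,v)}=\D{}^{(\C,v)}\delta$, and here the leftmost column differential at level $p=0$ is the vertical map $\d{}^\nabla$), so this term is $\delta^0$ of the $\frak k$-valued $2$-form $\d{}^\nabla\gamma$ on $M$. Hence the second term is also a coboundary in $W^{\bullet,2}(A;\frak k)^\Hor$. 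The only remaining point is the third term $\mathbf c_2(L,l)$; here I would use the explicit formula \eqref{eq:secondorder_inf}, namely $\mathbf c_2(L,l)(\alpha)=-(L|_{\frak k}\wedgedot L\alpha,\ L|_{\frak k}\cdot l\alpha)$, and show it equals $\delta^0$ of a base $2$-form. Since $(L,l)=\delta^0\gamma$, we have $l|_{\frak k}=\id_{\frak k}$ cancelled against $v|_{\frak k}=\id$—more precisely $l\alpha=v(\xi)$-type terms—so that $L|_{\frak k}=\d{}^\nabla$ applied appropriately; I expect $\mathbf c_2(\delta^0\gamma)$ to reduce to $-\tfrac12\delta^0[\gamma,\gamma]_{\frak k}$ (compare the analogous structure-equation computation $F^{\lambda\gamma}=F+\lambda\d{}^\nabla\gamma-\tfrac{\lambda^2}2[\gamma,\gamma]$ noted in the introduction, item (iii) of the primitive-connection summary). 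Putting the three pieces together, $\Omega^{(\C,v)+\lambda(L,l)}=\delta^0\big(F+\lambda\d{}^\nabla\gamma-\tfrac{\lambda^2}2[\gamma,\gamma]_{\frak k}\big)$, exhibiting an explicit curving and proving primitivity.

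For the final ``in particular'' clause: if $H^{1,1}(A;\frak k)^\Hor=0$, then every horizontal IM $1$-form is cohomologically trivial (a $1$-cocycle in the horizontal subcomplex that is automatically a coboundary), so by the first part, adding any element of the modelling vector space $\Omega^1_{im}(A;\frak k)^\Hor$ to a primitive connection keeps it primitive; conversely the difference of two primitive connections lies in $\Omega^1_{im}(A;\frak k)^\Hor$, which under the vanishing-cohomology hypothesis consists entirely of cohomologically trivial forms. Therefore $\arc(A;\frak k)$ is closed under affine combinations within $\A(A;\frak k)$, i.e.\ it is an affine subspace modelled on $\Omega^1_{im}(A;\frak k)^\Hor$.

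\textbf{Main obstacle.} The routine-but-delicate step is the computation of $\mathbf c_2(\delta^0\gamma)$ as an honest $\delta^0$-coboundary: one must unwind the $\wedgedot$ pairing together with the explicit form of $\delta^0$ on $\Omega^\bullet(M;\frak k)$ (the leading term is the Koszul/Lie-derivative differential and the correction term is contraction by $\rho$), and verify that the self-pairing term $L|_{\frak k}\wedgedot L\alpha$ with $(L,l)=\delta^0\gamma$ collapses to $\delta^0(-\tfrac12[\gamma,\gamma]_{\frak k})$ using that $\nabla$ preserves the bracket on $\frak k$ (condition \eqref{eq:s1}) and that $R^\nabla=-\ad F$. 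Everything else is a direct application of Theorems~\ref{thm:expansion_inf} and~\ref{thm:deltaD_inf}; no genuinely new idea is needed beyond bookkeeping.
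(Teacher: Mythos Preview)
Your proposal is correct and follows essentially the same route as the paper: reduce via Theorem~\ref{thm:expansion_inf} to showing each of the three terms is a $\delta^0$-coboundary, handle the linear term using $\D{}^{(\C,v)}\delta^0=\delta^0\d{}^\nabla$, and establish the key identity $\mathbf c_2(\delta^0\gamma)=-\tfrac12\delta^0[\gamma,\gamma]$ for the quadratic term, yielding the explicit curving $F^{\lambda\gamma}$.

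One small correction to your expected bookkeeping: the identity $\mathbf c_2(\delta^0\gamma)=-\tfrac12\delta^0[\gamma,\gamma]$ does not require \eqref{eq:s1} or $R^\nabla=-\ad F$ (indeed $\mathbf c_2$ is independent of $(\C,v)$, so no property of the specific connection can enter). What is actually used is that for $(L,l)=\delta^0\gamma$ one has $L|_{\frak k}\cdot\xi=\L^A_\xi\gamma=[\xi,\gamma]$ directly from the definition of $\delta^0$, and then the Jacobi identity on $A$ does the rest of the work in the leading-term computation. Also, your aside ``$l|_{\frak k}=\id_{\frak k}$'' is a slip: since $l(\alpha)=\iota_{\rho(\alpha)}\gamma$, one has $l|_{\frak k}=0$, which is exactly the horizontality of $\delta^0\gamma$.
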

\begin{proof}
We have to show that if $(L,l)$ and $\Omega^{(\C,v)}$ are cohomologically trivial, then so is $\Omega^{(\C,v)+(L,l)}$. By Theorems \ref{thm:expansion_inf} and \ref{thm:deltaD_inf}, we only need to check that $\mathbf c_2(L,l)$ is cohomologically trivial. We show this by proving 
\begin{align}
\label{eq:c2_cohtriv}
\mathbf c_2(\delta^0\gamma)=-\frac 12\delta^0[\gamma,\gamma],
\end{align}
i.e., that the following diagram commutes.
\[
\begin{tikzcd}[row sep=large]
	{\Omega^1_{im}(A;\frak k)^\Hor} & {\Omega^2_{im}(A;\frak k)^\Hor} \\
	{\Omega^1(M;\frak k)} & {\Omega^2(M;\frak k)}
	\arrow["{\mathbf c_2}", from=1-1, to=1-2]
	\arrow["{\delta^0}", from=2-1, to=1-1]
	\arrow["{\gamma\mapsto-\frac 12[\gamma,\gamma]}"', from=2-1, to=2-2]
	\arrow["{\delta^0}"', from=2-2, to=1-2]
\end{tikzcd}
\]
Let us denote $(L,l)=\delta^0\gamma$ and $(J,j)= \mathbf c_2(\delta^0\gamma)$, that is, 
\[
(J,j)\alpha=-(L|_{\frak k}\wedgedot L\alpha,L|_{\frak k}\cdot l\alpha)=-(L|_{\frak k}\wedgedot\L^A_\alpha\gamma, [\gamma(\rho\alpha),\gamma]),
\]
for any $\alpha\in\Gamma(A)$, where $L|_{\frak k}=[-,\gamma]$. To check that the symbols coincide, just note that $[\gamma,\gamma](X,Y)=2[\gamma(X),\gamma(Y)]$ holds for any $X,Y\in\vf(M)$, hence
\[
j(\alpha)=-\frac 12\big(\iota_{\rho(\alpha)}[\gamma,\gamma]\big).
\]
For the leading term, we compute 
\begin{align*}
-J\alpha(X,Y)&=[(\L^A_\alpha\gamma)(X),\gamma(Y)]-[(\L^A_\alpha\gamma)(Y),\gamma(X)]\\
&=[[\alpha,\gamma(X)],\gamma(Y)]-[\gamma[\rho\alpha,X],\gamma(Y)]-[[\alpha,\gamma(Y)],\gamma(X)]+[\gamma[\rho\alpha,Y],\gamma(X)]\\
&=[\alpha,[\gamma(X),\gamma(Y)]]-[\gamma[\rho\alpha,X],\gamma(Y)]-[\gamma(X),\gamma[\rho\alpha,Y]],
\end{align*}
where  the first and third term in the second line were combined using the Jacobi identity. Now using the identity for $[\gamma,\gamma]$ on all three terms shows that this equals
\[
-J\alpha(X,Y)=\frac 12\L^A_\alpha[\gamma,\gamma](X,Y),
\]
which concludes our proof.
\end{proof}
\begin{remark}
\label{rem:deformation_curving}
The proof shows that by choosing a curving $F$ of an IM connection $(\C,v)$, we also choose a curving for all connections of the form $(\C,v)+\delta^0\gamma$:
\begin{align}
\label{eq:curving_deformation}
F^\gamma&=F+\d{}^\nabla\gamma-\frac 12[\gamma,\gamma].
\end{align}
Moreover, the linear connection on $\frak k$ induced by $(\C,v)+\delta^0\gamma$ (denoted $\nabla^\gamma$) reads
\begin{align}
\label{eq:nabla_gamma}
\nabla^\gamma\xi=\nabla\xi+[\xi,\gamma],
\end{align}
for all $\xi\in\Gamma(\frak k)$. Remarkably, the 3-curvature does not change with this deformation:
\begin{align}
\begin{split}
  \label{eq:3_curvature_invariant}
  G^\gamma&=\d{}^{\nabla^\gamma}F^\gamma=(\d{}^\nabla+[\cdot,\gamma])\big(F+\d{}^\nabla\gamma-\frac 12[\gamma,\gamma]\big)\\
  &=G+R^\nabla\wedge\gamma-[\d{}^\nabla\gamma,\gamma]+[F,\gamma]+[\d{}^\nabla\gamma,\gamma]-\frac 12[[\gamma,\gamma],\gamma]=G,
\end{split}
\end{align}
where we have used \eqref{eq:R_ad_F} and observed that the Jacobi identity for $[\cdot,\cdot]_{\frak k}$ implies $[[\gamma,\gamma],\gamma]=0$. This observation will be of great importance in \sec\ref{sec:multiplicative_ym_action}.
\end{remark}
We now inspect primitive multiplicative connections in various important special cases.
\subsubsection{The transitive case}
In this subsection, we will prove that on a transitive algebroid $A$ with bundle of ideals $\frak k=\ker\rho$, any IM connection is uniquely primitive. We first consider a slightly more general context.

\begin{definition}
Let $V\ra M$ be a representation of a regular Lie algebroid $A\Ra M$. The \textit{orbital projection} of horizontal IM forms (where $\frak k=\ker\rho$) to foliated forms on the base is defined as
\begin{align}
\begin{split}
  &T\colon \Omega_{im}^k(A;V)^\Hor\rightarrow \Omega^k(T\F;V)\\
  &T(L,l)(v_1,\dots, v_k)=l(\tilde v_1)(v_2,\dots,v_k),
\end{split}
\end{align}
where $\tilde v_1$ is any lift of $v_1$ along the anchor. 
\end{definition}
Independence of the choice of lift follows from horizontality. Moreover, condition \eqref{eq:c3} ensures that the result is indeed an antisymmetric tensor field.  We observe that the following diagram commutes.
\[\begin{tikzcd}[row sep=large]
	{\Omega_{im}^k(A;V)^\Hor} \\
	{\Omega^k(M;V)} & {\Omega^k(T\F;V)}
	\arrow["T", from=1-1, to=2-2]
	\arrow["{\delta^0}", from=2-1, to=1-1]
	\arrow[from=2-1, to=2-2]
\end{tikzcd}\]
Here, the arrow at the bottom denotes the restriction map, so in particular, $T$ is surjective. Using the map $T$, we can now prove the statement from Example \ref{ex:transitive_primitive}.
\begin{proposition}
\label{prop:orb_proj2}
Let $V$ be a representation of a transitive Lie algebroid $A\Ra M$. The map
\begin{align*}
  \delta^0\colon \Omega^k(M;V)\rightarrow \Omega^k_{im}(A;V)^\Hor
\end{align*}
is an isomorphism with inverse $T$. Hence, $H^{0,\bullet}(A;\frak k)=\ker\delta^0=0$ and $H^{1,\bullet}(A;\frak k)^\Hor=0$.
\end{proposition}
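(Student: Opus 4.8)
The plan is to show that $\delta^0$ and $T$ are mutually inverse bijections between $\Omega^k(M;V)$ and $\Omega^k_{im}(A;V)^\Hor$, from which the two cohomology vanishing statements follow immediately: injectivity of $\delta^0$ forces $H^{0,\bullet}(A;\frak k)=\ker\delta^0=0$, and surjectivity of $\delta^0$ onto the horizontal subcomplex forces every horizontal IM form to be cohomologically trivial, so $H^{1,\bullet}(A;\frak k)^\Hor=0$. (Here the transitivity hypothesis enters: $\frak k=\ker\rho$ and $T\F=TM$, so $\Omega^k(T\F;V)=\Omega^k(M;V)$ and the restriction map at the bottom of the commuting triangle is the identity.)

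First I would verify that $T\circ\delta^0=\id_{\Omega^k(M;V)}$. By the commuting triangle already recorded in the excerpt, $T\circ\delta^0$ equals the restriction map $\Omega^k(M;V)\to\Omega^k(T\F;V)$, which is the identity when $A$ is transitive. This direction is essentially free. The substantive direction is $\delta^0\circ T=\id$ on $\Omega^k_{im}(A;V)^\Hor$. Given a horizontal IM form $(L,l)$, set $F=T(L,l)\in\Omega^k(M;V)$ and I must show $\delta^0 F=(L,l)$, i.e.\ that the leading term of $\delta^0F$ is $L$ and its symbol is $l$. For the symbol: by \eqref{eq:delta0_im}, $(\delta^0 F)_1(\beta)=\iota_{\rho(\beta)}F$; unwinding the definition of $T$, for $\beta\in\Gamma(A)$ one has $\iota_{\rho(\beta)}F=\iota_{\rho(\beta)}T(L,l)=l(\beta)$ whenever $\beta$ is itself a lift of $\rho(\beta)$ — and since $A$ is transitive every element of $TM$ lifts, and horizontality plus \eqref{eq:c3} make this independent of the lift; the only point needing care is that $l$ vanishes on $\frak k=\ker\rho$ (which is exactly the horizontality condition at level $p=1$, Example \ref{ex:horizontal_p=1}), so that $l(\beta)$ depends only on $\rho(\beta)$ and agrees with $\iota_{\rho(\beta)}F$. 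For the leading term: by \eqref{eq:delta0_im}, $(\delta^0 F)_0(\alpha)=\L^A_\alpha F$; I would use the compatibility conditions \eqref{eq:c1}--\eqref{eq:c2} to express $\L^A_\alpha T(L,l)$ in terms of $L(\alpha)$ and conclude $(\delta^0 F)_0(\alpha)=L(\alpha)$. Concretely, $T(L,l)$ is determined by $\iota_X T(L,l)=l(\tilde X)$ for a lift $\tilde X$ of $X$, so Cartan-type manipulation of $\L^A_\alpha$ together with \eqref{eq:c2} (which relates $l[\alpha,\beta]$, $\L^A_\alpha(l(\beta))$ and $\iota_{\rho(\beta)}L(\alpha)$) should collapse to $L(\alpha)$; transitivity guarantees enough lifts are available to test the resulting $k$-form on all tangent vectors.

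The main obstacle I anticipate is the bookkeeping in the leading-term computation $(\delta^0 T(L,l))_0=L$: one has to juggle the lift $\tilde X$ of each argument, commute $\L^A_\alpha$ past the contraction operators, and invoke \eqref{eq:c2} in the right multilinear slot, all while checking that the choices of lifts wash out by horizontality and \eqref{eq:c3}. This is purely mechanical but error-prone, so I would organize it as an induction on $k$ (or argue slotwise), isolating the single identity $\iota_{\rho(\beta)}\bigl(\L^A_\alpha T(L,l)\bigr)=\iota_{\rho(\beta)}L(\alpha)$ for all $\alpha,\beta$, which suffices by transitivity. Once $\delta^0\circ T=\id$ and $T\circ\delta^0=\id$ are both in hand, $\delta^0$ is a bijection onto $\Omega^k_{im}(A;V)^\Hor$; taking $V=\frak k$ and reading off cohomology gives $H^{0,\bullet}(A;\frak k)=0$ and $H^{1,\bullet}(A;\frak k)^\Hor=0$, completing the proof.
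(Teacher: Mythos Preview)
Your proposal is correct and follows essentially the same route as the paper: use the commuting triangle for $T\circ\delta^0=\id$, then verify $\delta^0\circ T=\id$ by checking symbols via horizontality and the leading term by expanding $\L^A_\alpha T(L,l)$ and invoking \eqref{eq:c2}. The paper carries out the leading-term computation directly on $k$ vectors (only \eqref{eq:c2} is needed, not \eqref{eq:c1}), whereas you propose the equivalent slotwise check $\iota_{\rho(\beta)}\L^A_\alpha T(L,l)=\iota_{\rho(\beta)}L(\alpha)$; these are the same computation organized slightly differently.
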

\begin{remark}
  Since all IM forms of degree $q\geq 2$ on a transitive algebroid are horizontal (see Example \ref{ex:horizontal_p=1}), this gives a canonical isomorphism $\Omega^q_{im}(A;\frak k)\cong \Omega^q(M;\frak k)$ for all $q\geq 2$. 
\end{remark}

\begin{proof}
That the map $T$ is a left inverse of $\delta^0$ follows from the diagram above. We need to show that in the transitive case, it is also its right inverse. We will denote $\gamma=T(L,l)$, so we need to show
$
\delta^0\gamma=(L,l).
$
First we show that the symbols coincide:
\begin{align*}
(\delta^0\gamma)_1(\alpha)(v_1,\dots,v_{k-1})=\gamma(\rho(\alpha),v_1,\dots,v_{k-1})=l(\alpha)(v_1,\dots,v_{k-1}),
\end{align*}
for any $\alpha\in \Gamma(A)$ and vector fields $v_i\in \vf(M)$. For the leading term, we compute
\begin{align}
(\L_\alpha^A\gamma)(v_1,\dots,v_k)&=\nabla^A_\alpha\gamma(v_1,\dots,v_k) - \textstyle\sum_{i=1}^k\gamma(v_1,\dots,[\rho(\alpha),v_i],\dots,v_k)\nonumber\\
&=\nabla^A_\alpha l(\tilde v_1)(v_2,\dots,v_k)+\textstyle\sum_{i=1}^k(-1)^i l[\alpha,\tilde v_i](v_1,\dots,\hat v_i,\dots,v_k)\label{eq:prf_coh_triv_T},
\end{align}
where we have used the definition of $\gamma$ in the last line. Using the definition of $\L^A$ on the first term, this becomes
\begin{align*}
\nabla^A_\alpha l(\tilde v_1)(v_2,\dots,v_k)&=\L_\alpha^A l(\tilde v_1)(v_2,\dots,v_k)+\textstyle\sum_{i=2}^k l(\tilde v_1)(v_2,\dots,[\rho(\alpha),v_i],\dots,v_k)\\
&=\L_\alpha^Al(\tilde v_1)(v_2,\dots,v_k)-\textstyle\sum_{i=2}^k (-1)^i l[\alpha,\tilde v_i](v_1,\dots,\hat v_i,\dots, v_k),
\end{align*}
and now notice that the second term cancels out with all but the first term of the sum in the expression \eqref{eq:prf_coh_triv_T}, so we obtain
\begin{align*}
  (\L_\alpha^A\gamma)(v_i)_i&=\L_\alpha^Al(\tilde v_1)(v_2,\dots,v_k)-l[\alpha,\tilde v_1](v_2,\dots,v_k),
\end{align*}
which equals $L(\alpha)(v_1,\dots,v_k)$ by condition \eqref{eq:c2}.
\end{proof}
\begin{example}
Applying $T$ to the curvature $\Omega^{(\C,v)}$ of an IM connection $(\C,v)$, we get 
\begin{align*}
T\Omega^{(\C,v)}(X,Y)=-U(\sigma(X))Y=-v[\sigma(X),\sigma(Y)]=\sigma[X,Y]-[\sigma(X),\sigma(Y)],
\end{align*}
for any $X,Y\in\Gamma(T\F)$, where $\sigma$ is the horizontal lift pertaining to the splitting $v\colon A\ra \frak k$ of the short exact sequence \eqref{eq:ses}, and we have used the identity \eqref{eq:U_along_orbits}. As expected, the orbital projection maps the curvature of an IM connection $(\C,v)$ to the curvature $F^\sigma$ of the splitting $\sigma$.
\end{example}

\subsubsection{Lie algebroids of principal type}
  Let $A\Ra M$ be an arbitrary algebroid with a bundle of ideals $\frak k$ and suppose we are given a primitive connection $(\C,v)\in\A(A;\frak k)$ with a curving $F$. Observe that the curvature 3-form $G=\d{}^\nabla F$ vanishes if and only if the direct sum $A'\coloneqq TM\oplus \frak k$ is a Lie algebroid, with anchor $\pr_1\colon A'\ra TM$ and the Lie bracket given by
  \begin{align}
    \label{eq:principal_bracket}
    [(X,\xi),(Y,\eta)]=([X,Y],\nabla_X\eta-\nabla_Y\xi+[\xi,\eta]_{\frak k}-F(X,Y)).
  \end{align}
  The vanishing of $G$ amounts precisely to the vanishing Jacobiator for this bracket. In this case, $A'$ is clearly a transitive algebroid, and furthermore, the algebroid $A\Ra M$ is isomorphic to a Lie algebroid \textit{of principal type} \cite{mec}*{\sec 6.6},
  \[
  A'\times_{TM} B \Ra M,\quad A'\times_{TM} B \coloneqq\set{(\alpha,\beta)\in A'\times B\given \rho_{A'}(\alpha)=\rho_B(\beta)}.
  \]
  Its algebroid structure is determined by the condition that the inclusion $A'\times_{TM}B\hookrightarrow A'\times B$ is a Lie algebroid monomorphism into the direct product algebroid. The isomorphism reads \[\alpha\mapsto (\rho(\alpha),v(\alpha), \phi(\alpha)),\] where $\phi\colon A\ra B=A/\frak k$ is the canonical projection. Hence, the existence of a primitive connection for $\frak k$, with a curving whose curvature 3-form vanishes, forces the algebroid to be of principal type. 
  
  This observation is closely related to the construction of IM connections on algebroids of principal type from \cite{mec}*{\sec 6.6}. To elaborate, let $A\coloneqq A'\times_{TM}B$ be an algebroid of principal type for some pair of Lie algebroids $A'$ and $B$, with $A'$ transitive. Denote by $\phi\colon A\ra B$ the projection and consider the bundle of ideals $\frak k=\ker\phi$, which can be identified with $\ker\rho_{A'}$ under the projection $\pr_{A'}\colon A\ra A'$. Recall that a splitting of the Atiyah sequence of $A'$,
  \begin{align}
    \label{eq:splitting_principal_type}
      \begin{tikzcd}[ampersand replacement=\&, column sep=large]
      0 \& {\frak k} \& {A'} \& {TM} \& 0,
      \arrow[from=1-1, to=1-2]
      \arrow[from=1-2, to=1-3]
      \arrow["v'"{pos=0.45}, bend left=30, from=1-3, to=1-2]
      \arrow["\rho_{A'}", from=1-3, to=1-4]
      \arrow[from=1-4, to=1-5]
    \end{tikzcd}
    \end{align}
  is the same thing as an IM connection $(\C',v')\in\A(A';\frak k)$ by condition \eqref{eq:c2}, which can then be pulled back along $\pr_{A'}$ to an IM connection $(\C,v)\in\A(A;\frak k)$:
\begin{align}
  \label{eq:principal_type_IM_connection_induced_by_splitting}
      \C(\alpha,\beta)=\C'(\alpha),\quad v(\alpha,\beta)=v'(\alpha),
\end{align}
  for any $\alpha\in\Gamma(A')$ and $\beta\in\Gamma(B)$ with $\rho_{A'}(\alpha)=\rho_B(\beta)$. In this way, the authors of \cite{mec} conclude that Lie algebroids of principal type admit IM connections. As the following shows, the obtained IM connection is actually primitive.
  \begin{proposition}
    Let $A=A'\times_{TM} B$ be a Lie algebroid of principal type, with the bundle of ideals $\frak k=\ker\phi$, where $\phi\colon A\ra B$ is the canonical projection. The IM connection \eqref{eq:principal_type_IM_connection_induced_by_splitting} induced by a splitting $v'\colon A'\ra \frak k$ of the transitive algebroid $A'$ is primitive with curving $F^{v'}$, whose curvature 3-form vanishes.
  \end{proposition}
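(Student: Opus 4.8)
The plan is to compute the curvature $\Omega^{(\C,v)}$ of the induced IM connection directly via the explicit formula \eqref{eq:im_curv} and to recognize it as $\delta^0 F^{v'}$. First I would set up the dictionary between $A$, $A'$ and $B$. Under the identification $\frak k\cong\ker\rho_{A'}$, an element $\xi\in\frak k$ sits inside $A=A'\times_{TM}B$ as $(\xi,0)$ (legitimate since $\rho_{A'}(\xi)=0=\rho_B(0)$); consequently, for a local section $(\alpha,\beta)$ of $A$ with $\alpha\in\Gamma(A')$, $\beta\in\Gamma(B)$ and $\rho_{A'}\alpha=\rho_B\beta$, the symbol and horizontal component of the pulled-back connection \eqref{eq:principal_type_IM_connection_induced_by_splitting} read $v(\alpha,\beta)=v'(\alpha)$ and $h(\alpha,\beta)=(h'(\alpha),\beta)$, where $h'=\id-v'$ is the horizontal projection of $A'$. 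Since $\C(\alpha,\beta)=\C'(\alpha)$ by definition, the linear connection $\nabla=\C|_{\frak k}$ induced by $(\C,v)$ coincides with the connection $\nabla'=\C'|_{\frak k}$ induced by $(\C',v')$, and the coupling tensor of $(\C,v)$ satisfies $U(h(\alpha,\beta))=-\C(h(\alpha,\beta))=-\C'(h'(\alpha))=U'(h'(\alpha))$, where $U'$ denotes the coupling tensor of $(\C',v')$.

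Next I would invoke that, because $A'$ is transitive, $(\C',v')$ is primitive with unique curving $F^{v'}$, namely the curvature of the splitting $v'$ of the Atiyah sequence of $A'$ (Example \ref{ex:transitive_primitive}, Proposition \ref{prop:orb_proj2}). By Lemma \ref{lem:curving} this gives $R^{\nabla'}\cdot\xi=[\xi,F^{v'}]$, $U'(\alpha')=-\iota_{\rho_{A'}(\alpha')}F^{v'}$ for $\alpha'\in H'$, and $\d{}^{\nabla'}F^{v'}=0$ (the curvature $3$-form vanishes since $\codim\F_{A'}=0$; equivalently, this is the Bianchi identity for the curvature of a splitting). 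Feeding these into \eqref{eq:im_curv} and using $\rho_A(\alpha,\beta)=\rho_{A'}(\alpha)=\rho_{A'}(h'\alpha)$, the symbol of $\Omega^{(\C,v)}$ becomes $-U(h(\alpha,\beta))=\iota_{\rho_A(\alpha,\beta)}F^{v'}=(\delta^0 F^{v'})_1(\alpha,\beta)$, while the leading term becomes
\[
R^\nabla\cdot v'(\alpha)-\d{}^\nabla U(h'(\alpha))=[v'(\alpha),F^{v'}]+\d{}^\nabla\iota_{\rho_A(\alpha,\beta)}F^{v'}.
\]

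To match the leading term with $(\delta^0 F^{v'})_0(\alpha,\beta)=\L^A_{(\alpha,\beta)}F^{v'}$, I would use the Cartan-type decomposition $\L^A_\gamma=\L^\nabla_{\rho(\gamma)}+\theta(\gamma)$ from the proof of Lemma \ref{lem:A_invariant}: for a bundle of ideals, $\theta((\alpha,\beta))\cdot=(\nabla^A_{(\alpha,\beta)}-\nabla_{\rho(\alpha,\beta)})\cdot=[v(\alpha,\beta),\cdot]=[v'(\alpha),\cdot]$ by Proposition \ref{prop:conn2}; combined with Cartan's magic formula $\L^\nabla_{\rho_A(\alpha,\beta)}F^{v'}=\d{}^\nabla\iota_{\rho_A(\alpha,\beta)}F^{v'}+\iota_{\rho_A(\alpha,\beta)}\d{}^\nabla F^{v'}$ and the vanishing $\d{}^\nabla F^{v'}=0$, this reproduces exactly the right-hand side above. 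Hence $\Omega^{(\C,v)}=\delta^0 F^{v'}$, so $(\C,v)$ is primitive with curving $F^{v'}$, and its curvature $3$-form is $G=\d{}^\nabla F^{v'}=\d{}^{\nabla'}F^{v'}=0$. (Alternatively, one could phrase this via Corollary \ref{cor:primitive_bijective_correspondence}, checking the triple $(v,\nabla,F^{v'})$ against conditions (i)--(iii) and the compatibility conditions \eqref{eq:primitive_additional_conditions}, but the direct route above is self-contained.)

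The argument is mechanical once the dictionary between $A$, $A'$ and $B$ is in place; the one point requiring care — and the step I expect to be a nuisance rather than a genuine obstacle — is keeping the embedding $\frak k\hookrightarrow A$, $\xi\mapsto(\xi,0)$, consistent throughout, because $A$ is in general not transitive (its orbit foliation is $\F_B$), so one cannot transport Proposition \ref{prop:orb_proj2} from $A'$ to $A$ and must instead run the explicit curvature computation \eqref{eq:im_curv} over $A$ itself.
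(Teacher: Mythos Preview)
Your proof is correct and reaches the same conclusion as the paper, but the route differs in a way worth noting. The paper's argument is more conceptual and shorter: it observes in one line that $\Omega^{(\C,v)}(\alpha,\beta)=\Omega^{(\C',v')}(\alpha)$ because $(\C,v)=(\pr_{A'})^*(\C',v')$ is a pullback, then writes $\Omega^{(\C',v')}(\alpha)=\delta^0_{A'}(F^{v'})(\alpha)=(\L^{A'}_\alpha F^{v'},\iota_{\rho_{A'}(\alpha)}F^{v'})$ from the transitive case, and finally checks that $\L^A_{(\alpha,\beta)}=\L^{A'}_\alpha$ by computing the bracket $[(\alpha,\beta),(\xi,0)]=([\alpha,\xi],0)$ directly. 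In contrast, you unpack the curvature via the explicit coupling-data formula \eqref{eq:im_curv}, identify $R^\nabla$ and $U$ with their primed counterparts, and then reassemble $\L^A_{(\alpha,\beta)}F^{v'}$ from $[v'(\alpha),F^{v'}]+\d{}^\nabla\iota_{\rho_A(\alpha,\beta)}F^{v'}$ using the decomposition $\L^A=\L^\nabla_\rho+\theta$ together with Cartan's magic formula and $\d{}^\nabla F^{v'}=0$. Your approach is more hands-on and makes the mechanism fully explicit, at the cost of a few extra lines; the paper's approach buys brevity by staying at the level of pullbacks of IM forms and never descending to $(R^\nabla,U)$.
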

  \begin{proof}
    First observe that the curving of the IM connection \eqref{eq:principal_type_IM_connection_induced_by_splitting} is given by the curvature $F^{v'}$ of the splitting $v'\colon A'\ra \frak k$. Indeed, we have
  \[
  \Omega^{(\C,v)}(\alpha,\beta)=\Omega^{(\C',v')}(\alpha)=\delta^0_{A'}(F^{v'})(\alpha)=(\L^{A'}_{\alpha}F^{v'},\iota_{\rho_{A'}(\alpha)}F^{v'})
  \]
  where the first equality holds since $(\C,v)=(\pr_{A'})^*(\C',v')$. Now observe that $\rho(\alpha,\beta)=\rho_{A'}(\alpha)$ and the representation of $A$ on $\frak k$ is given for any $\xi\in\Gamma(\frak k)$ as
  \[
  [(\alpha,\beta),(\xi,0)]=([\alpha,\xi],0),
  \]
  so we conclude $\smash{\L^A_{(\alpha,\beta)}=\L^{A'}_\alpha}$ and hence $\Omega^{(\C,v)}=\delta^0_A(F^{v'})$. In conclusion, any IM connection on a Lie algebroid of principal type $A'\times_{TM}B$, induced by a splitting $v'$ of \eqref{eq:splitting_principal_type}, is primitive with a canonical curving $F^{v'}$ satisfying $\d{}^\nabla F^{v'}=0$ by the Bianchi identity.
  \end{proof}


\subsubsection{The semisimple case}
\label{sec:semisimple}
A particularly well-behaved scenario for the theory of multiplicative connections is when the typical fibre of $\frak k$ is a semisimple Lie algebra, in which case an IM connection on $A$ for $\frak k$ always exists. This was established in \cite{mec}*{Corollary 6.6} by showing that $A$ is then isomorphic to an algebroid of principal type, with $\frak k$ the isotropy of a transitive algebroid $A'$, whose sections are bracket-preserving derivations of $\frak k$. We now establish some stronger properties.

In what follows, we will refer to a locally trivial bundle of Lie algebras $\frak k$ with a semisimple typical fibre as a \textit{semisimple Lie algebra bundle}.\footnote{When the base $M$ is connected and all the fibres are semisimple, local triviality is automatic by the rigidity results for bundles of semisimple Lie algebras.} In the case when $\frak k\subset A$ is also a bundle of ideals, it will be called a \textit{semisimple bundle of ideals} of $A$. As concerns cohomological triviality, we have the following.
\begin{proposition}
\label{prop:primitive_semisimple}
Let $\frak k$ be a semisimple bundle of ideals of a Lie algebroid $A\Ra M$. The following holds:
\begin{enumerate}[label={(\roman*)}]
  \item The simplicial differential $\delta^0\colon \Omega^\bullet(M;\frak k)\ra \Omega^\bullet_{im}(A;\frak k)^\Hor$ is an isomorphism.
  \item Any multiplicative connection for $\frak k$ is uniquely primitive, with vanishing  3-curvature.
\item There is a bijective correspondence:\\
\begin{minipage}{\linewidth}
\begin{align*}
\left\{
\parbox{3cm}{\centering IM connections $\A(A;\frak k)$}
\right\}
\longleftrightarrow
\left\{
\parbox{8.3cm}{\centering Pairs $(v,
\nabla)$, where $v\colon A\ra \frak k$ is a splitting and\\$\nabla$ is a bracket-preserving connection on $\frak k$,\\such that $\nabla^A_{h(\alpha)}=\nabla^{}_{\rho(\alpha)}$ for all $\alpha\in A$}
\right\}
\end{align*}
\end{minipage}
\end{enumerate}
\end{proposition}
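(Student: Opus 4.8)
\textbf{Proof plan for Proposition \ref{prop:primitive_semisimple}.}

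The strategy is to exploit the basic structural facts about semisimple Lie algebras --- the vanishing of their first and second Chevalley--Eilenberg cohomology with values in the adjoint representation (Whitehead's lemmas), and the fact that every derivation is inner --- propagated fibrewise and then bundle-wise. I would organize the three items so that (i) feeds into (ii), and (ii) together with Corollary \ref{cor:primitive_bijective_correspondence} gives (iii).

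For item (i), the key observation is that $\delta^0$ is always injective (its kernel consists of invariant forms, which by Remark \ref{rem:invariant} must be centre-valued, and the centre $z(\frak k)$ vanishes fibrewise since $\frak k$ is semisimple). So I only need surjectivity. Let me unwind what a horizontal IM form $(L,l)\in\Omega^k_{im}(A;\frak k)^\Hor$ is: the symbol $l$ vanishes on $\frak k$, so it factors through $B=A/\frak k$; I want to produce $\gamma\in\Omega^k(M;\frak k)$ with $\delta^0\gamma=(L,l)$. The natural candidate, mimicking the transitive case (Proposition \ref{prop:orb_proj2}), is not directly available because $A$ need not be transitive, so $l$ does not obviously come from a foliated form via the anchor. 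Instead I would argue as follows: restrict attention to the leading term $L$ at $p=1$; equation \eqref{eq:c1} says $L$ is a cocycle for the Lie derivative action, and I want to trivialize it. Here I would pass to the model of Proposition \ref{prop:splitting_coh_triv}/Corollary \ref{cor:primitive_bijective_correspondence}: since $\frak k$ is semisimple, any bracket-preserving connection $\nabla$ on $\frak k$ automatically has $R^\nabla$ valued in inner derivations (by $\Der(\frak k)=\operatorname{ad}(\frak k)$, i.e.\ $H^1(\frak g;\frak g)=0$ fibrewise), so $R^\nabla = -\operatorname{ad}F$ for a \emph{unique} $F\in\Omega^2(M;\frak k)$ --- uniqueness because $\operatorname{ad}$ is a fibrewise isomorphism $\frak k\xrightarrow{\cong}\operatorname{ad}(\frak k)$, again by the trivial centre. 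This is precisely the mechanism by which every IM connection acquires a canonical curving, and by tracking through $\delta^0 F = \Omega^{(\C,v)}$ one recovers surjectivity onto the horizontal IM 2-cocycles; the higher-degree and general-$p$ cases follow by the same $\operatorname{ad}$-inverse argument combined with the fact that the correction terms are determined by the leading term via the Leibniz identity (cf.\ Remark \ref{rem:action_trick}).

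For item (ii): given any IM connection $(\C,v)$ with induced connection $\nabla=\C|_{\frak k}$, condition \eqref{eq:s1} says $\nabla$ preserves the bracket, hence (semisimplicity) $R^\nabla$ is inner; write $R^\nabla=-\operatorname{ad}F$ for the unique $F\in\Omega^2(M;\frak k)$. Then by the explicit curvature formula \eqref{eq:im_curv} and Lemma \ref{lem:curving}(i), one checks $U(\alpha)=-\iota_{\rho(\alpha)}F$ (using \eqref{eq:s2}: $[U(h\alpha),\xi]=\iota_{\rho\alpha}R^\nabla\cdot\xi=-[\iota_{\rho\alpha}F,\xi]$, and $\operatorname{ad}$ injective), and consequently $\Omega^{(\C,v)}=\delta^0 F$. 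Thus every IM connection is primitive, and the curving is unique because the ambiguity lives in $\Omega^2_{\inv}(M;\frak k)=\ker\delta^0=0$ (trivial centre). Finally, the curvature 3-form $G=\d{}^\nabla F$ is invariant by Lemma \ref{lem:curving}(ii), hence centre-valued, hence zero. Item (iii) is then a direct reading of Corollary \ref{cor:primitive_bijective_correspondence}: since every IM connection is primitive with a \emph{unique} curving, and that curving is forced to be $\operatorname{ad}^{-1}(-R^\nabla)$, the data of $(v,\nabla,F)$ collapses to just $(v,\nabla)$; the compatibility conditions \eqref{eq:primitive_additional_conditions} reduce to $\nabla^A_{h(\alpha)}=\nabla_{\rho(\alpha)}$ (the condition $F^v=(\rho_B)^*F$ being automatic once $F$ is determined by $\nabla$, via \eqref{eq:Fv_pullback_F} and the transversality in Lemma \ref{lem:curving}), and one must check conditions (i)--(iii) of Proposition \ref{prop:splitting_coh_triv} hold for $(\nabla,F)$ --- (i) by hypothesis, (ii) by construction of $F$, (iii) because $\d{}^\nabla F=G$ is transversal (indeed zero) as just shown.

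\textbf{Main obstacle.} The delicate point is surjectivity of $\delta^0$ in item (i) at general bidegree $(p,q)$: the clean $\operatorname{ad}^{-1}$ argument produces the leading term of the preimage, but I must verify that the correction terms assembled via the Leibniz identity actually match those of the given cochain $(L,l)$ rather than merely defining \emph{some} Weil cochain. I expect this to require either the action-algebroid trick of Remark \ref{rem:action_trick} to reduce to the leading-term comparison, or a careful induction on $p$ using that $\delta$ is a cochain map and that horizontality is preserved. Everything else is bookkeeping with the fibrewise isomorphism $\operatorname{ad}\colon\frak k\xrightarrow{\cong}\Der(\frak k)$ and the vanishing of $z(\frak k)$.
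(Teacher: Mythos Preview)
Your plan for (ii) is essentially the paper's argument and is fine. There are two genuine issues elsewhere.

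\textbf{Item (i): your surjectivity argument only hits curvatures.} You argue via IM connections: $R^\nabla=-\operatorname{ad}F$ gives a curving, hence $\delta^0 F=\Omega^{(\C,v)}$. But this shows only that \emph{curvature} forms lie in $\operatorname{im}\delta^0$, not that an arbitrary horizontal IM form $(L,l)\in\Omega^k_{im}(A;\frak k)^\Hor$ does. Your sentence ``the higher-degree and general-$p$ cases follow by the same $\operatorname{ad}$-inverse argument'' points at the right mechanism but is not the argument you wrote down (and there is no ``general $p$'' here: $\delta^0$ goes from level $0$ to level $1$). The paper's proof is direct and uniform in $k$: horizontality makes $L|_{\frak k}$ tensorial, condition \eqref{eq:c1} with both arguments in $\Gamma(\frak k)$ makes it $\Der(\frak k)$-valued, so $L|_{\frak k}=-\operatorname{ad}\gamma$ for a unique $\gamma\in\Omega^k(M;\frak k)$; one then verifies $\delta^0\gamma=(L,l)$ using \eqref{eq:c2} for the symbol and \eqref{eq:c1} plus the Jacobi identity for the leading term. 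No detour through connections is needed.

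\textbf{Item (iii): circularity in the $(v,\nabla)\Rightarrow$ IM connection direction.} To apply Corollary \ref{cor:primitive_bijective_correspondence} you must verify, for the candidate $F=\operatorname{ad}^{-1}(-R^\nabla)$, that $\d{}^\nabla F$ is transversal and that $F^v=(\rho_B)^*F$. You justify the first by citing Lemma \ref{lem:curving}(ii) (``$G$ invariant hence centre-valued hence zero'') and the second via \eqref{eq:Fv_pullback_F}. But both of these facts are \emph{consequences} of $(\C,v)$ already being a primitive IM connection with curving $F$ --- precisely what you are trying to construct. The paper avoids this by direct computation: from $R^\nabla=-\operatorname{ad}F$ and the ordinary Bianchi identity $\d{}^{\nabla^{\End\frak k}}R^\nabla=0$ one gets
\[
[\xi,\d{}^\nabla F]=\d{}^\nabla(R^\nabla\!\cdot\xi)-R^\nabla\!\wedge\nabla\xi=(\d{}^{\nabla^{\End\frak k}}R^\nabla)\cdot\xi=0,
\]
so $\d{}^\nabla F$ is centre-valued, hence zero. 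For the second condition, the hypothesis $\nabla^A_{h(\alpha)}=\nabla_{\rho(\alpha)}$ yields $(\rho_B)^*R^\nabla=-\operatorname{ad}F^v$ (this holds without semisimplicity), and combining with $R^\nabla=-\operatorname{ad}F$ and injectivity of $\operatorname{ad}$ gives $(\rho_B)^*F=F^v$.
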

\begin{remark}
As will be clear from the proof, it is actually enough to assume that \[\ad\colon \frak g\rightarrow \Der(\frak g)\] is an isomorphism, where $\frak g$ is the typical fibre of $\frak k$. In other words, (i) states that the vanishing of the zeroth and first Chevalley--Eilenberg cohomology groups of the typical fibre $\frak g$, with values in the adjoint representation of $\frak g$, implies the vanishing of the zeroth and first horizontal simplicial cohomology groups of $\frak k$-valued IM forms on $A\Ra M$,
\begin{align*}
  H^{0,\bullet}(A;\frak k)^\Hor=H^{1,\bullet}(A;\frak k)^\Hor=0.
\end{align*}
\end{remark}
\begin{proof}
The center $z(\frak k)$ of a semisimple Lie algebra bundle $\frak k$ is trivial, so $\delta^0$ is injective by Remark \ref{rem:invariant}. For surjectivity, we take any $(L,l)\in\Omega^\bullet_{im}(A;\frak k)^\Hor$ and observe that $L|_{\frak k}$ is tensorial by horizontality. By condition \eqref{eq:c1}, it is moreover a derivation-valued form $L|_{\frak k}\in\Omega^\bullet(M;\Der\frak k)$, i.e., its values are derivations of the bracket $[\cdot,\cdot]_{\frak k}$, hence semisimplicity implies there is a unique $\gamma\in\Omega^\bullet(M;\frak k)$ which satisfies $L|_{\frak k}=-\ad \gamma$. That is,
\begin{align}
\label{eq:semisimple_inverse}
[\xi,\gamma]=L|_{\frak k}\cdot \xi,
\end{align}
for any $\xi\in\Gamma(\frak k)$. To see that $\delta^0\gamma=(L,l)$, we let $\alpha\in\Gamma(A)$ and apply $\iota_{\rho(\alpha)}$ to both sides of \eqref{eq:semisimple_inverse}. By condition \eqref{eq:c2}, there holds
\[
[\xi,\iota_{\rho(\alpha)}\gamma]=\iota_{\rho(\alpha)}L\xi=\L_\xi^A l\alpha-l[\xi,\alpha]=[\xi,l\alpha],
\]
where we have used horizontality of $(L,l)$. This shows that the symbols coincide. Finally, for the leading term we use condition \eqref{eq:c1} to get
\begin{align*}
[\xi,L\alpha]=\L_\xi^A L\alpha=L[\xi,\alpha]+\L_\alpha^A L\xi=[[\xi,\alpha],\gamma]+\L^A_\alpha[\xi,\gamma].
\end{align*}
It is now easy to see that the right-hand side equals $[\xi,\L_\alpha^A\gamma]$ by the Jacobi identity, concluding part (i). Part (ii) is clearly implied by (i): applying equation \eqref{eq:semisimple_inverse} to the curvature $\Omega^{(\C,v)}$ of an IM connection $(\C,v)$ yields the unique curving $F$, implicitly defined by 
\begin{align}
  \label{eq:semisimple_def_F}
  R^\nabla=-\ad F.
\end{align} 
For part (iii), if we are given a pair $(v,\nabla)$, there is a unique form $F$ satisfying \eqref{eq:semisimple_def_F}, so we would now like to apply Corollary \ref{cor:primitive_bijective_correspondence}. The condition (iii) there is automatically fulfilled since $\d{}^\nabla F$ is $z(\frak k)$-valued:
\begin{align*}
[\xi,\d{}^\nabla F]=\d{}^\nabla[\xi,F]-[\nabla\xi,F]=\d{}^\nabla(R^\nabla\cdot \xi)-R^\nabla\wedge\nabla\xi=(\d{}^{\nabla^{\End\frak k}}R^\nabla)\cdot \xi=0
\end{align*}
where we used the Bianchi identity for $R^\nabla$ in the last step. Moreover, the condition $\nabla^A_{h\alpha}=\nabla_{\rho\alpha}$ implies $(\rho_B)^*R^\nabla=-\ad F^v$ already in the non-semisimple case, and now inserting \eqref{eq:semisimple_def_F} into this equation yields $(\rho_B)^*F=F^v$, which is what we needed to show.
\end{proof}
The last part of the proof above shows that the coupling construction from Proposition \ref{prop:splitting_coh_triv} significantly simplifies in the semisimple case: we just need to assume that $\nabla$ is bracket-preserving, and the other two conditions are automatically satisfied by defining $F$ with $R^\nabla$. However, the existence of a bracket-preserving connection $\nabla$ is equivalent to local triviality of $\frak k$, which is already assumed, so we obtain the following.
\begin{corollary}
\label{cor:semisimple_extension}
If $B\Ra M$ is a Lie algebroid and $(\frak k,[\cdot,\cdot]_{\frak k})$ is a semisimple Lie algebra bundle, then $A=B\oplus \frak k$ has a structure of a Lie algebroid defined by \eqref{eq:bracket_construction}, where $\nabla$ is any bracket-preserving linear connection on $\frak k$ and the form $F$ is implicitly defined by $R^\nabla=-\ad F$.  Conversely, any Lie algebroid $A$ with a semisimple bundle of ideals $\frak k$ is isomorphic to one of this type.
\end{corollary}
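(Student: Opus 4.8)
\textbf{Proof plan for Corollary \ref{cor:semisimple_extension}.}
The strategy is to deduce both directions directly from the already-established machinery: Proposition \ref{prop:splitting_coh_triv} (the coupling construction for primitive IM connections) and Proposition \ref{prop:primitive_semisimple} (the semisimple simplifications). The point is that in the semisimple case, out of the three hypotheses (i)--(iii) of Proposition \ref{prop:splitting_coh_triv}, only hypothesis (i)---that $\nabla$ preserves the bracket on $\frak k$---is an actual input, while (ii) and (iii) become automatic once $F$ is \emph{defined} by $R^\nabla=-\ad F$. So the corollary is essentially a repackaging of Proposition \ref{prop:splitting_coh_triv} where the data $(\nabla,F)$ is replaced by the single datum $\nabla$.

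For the forward direction, I would start with a bracket-preserving linear connection $\nabla$ on $\frak k$; such a connection exists because $\frak k$ is a locally trivial bundle of Lie algebras (one patches together flat model connections with a partition of unity, or simply invokes that any locally trivial Lie algebra bundle admits a compatible connection). Since $\frak k$ is semisimple, $\ad\colon \frak k\ra \Der(\frak k)$ is a fibrewise isomorphism, and the curvature tensor $R^\nabla$ is $\Der(\frak k)$-valued because $\nabla$ preserves the bracket (this is the argument already used in the proof of Proposition \ref{prop:primitive_semisimple}); hence there is a \emph{unique} $F\in\Omega^2(M;\frak k)$ with $R^\nabla=-\ad F$. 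I then verify the hypotheses of Proposition \ref{prop:splitting_coh_triv}: (i) holds by assumption, (ii) holds by construction of $F$, and (iii)---transversality of $\d{}^\nabla F$, i.e.\ that it is $z(\frak k)$-valued and $z(\frak k)=0$ so it vanishes---follows from the computation $[\xi,\d{}^\nabla F]=(\d{}^{\nabla^{\End\frak k}}R^\nabla)\cdot\xi=0$ already carried out in the proof of Proposition \ref{prop:primitive_semisimple} (the Bianchi identity for $R^\nabla$). Proposition \ref{prop:splitting_coh_triv} then yields that $A=B\oplus\frak k$ is a Lie algebroid with anchor $\rho_B\circ\pr_B$ and bracket \eqref{eq:bracket_construction}, and in fact $\d{}^\nabla F$ vanishes identically (not just transversally) since $z(\frak k)=0$.

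For the converse, suppose $A$ is a Lie algebroid with a semisimple bundle of ideals $\frak k$. By Proposition \ref{prop:primitive_semisimple} (ii), every multiplicative connection for $\frak k$ is uniquely primitive; such a connection exists (e.g.\ by Proposition \ref{prop:primitive_semisimple} (iii), or \cite{mec}*{Corollary 6.6}), so pick one, say $(\C,v)$, with its unique curving $F$ and induced connection $\nabla=\C|_{\frak k}$. By the converse part of Proposition \ref{prop:splitting_coh_triv}, $A$ is isomorphic to $B\oplus\frak k$ with $B=A/\frak k$, equipped with the bracket \eqref{eq:bracket_construction} built from $(\nabla,F)$; and by Lemma \ref{lem:curving} (i) together with semisimplicity, $F$ is precisely the form determined by $R^\nabla=-\ad F$, while $\d{}^\nabla F=0$ by Lemma \ref{lem:curving} (ii) since $\frak k$ has trivial centre. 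This gives exactly the stated normal form. I do not anticipate a serious obstacle here: the substantive work has already been done in Propositions \ref{prop:splitting_coh_triv} and \ref{prop:primitive_semisimple}, and the only mildly delicate point is making sure the existence of a bracket-preserving $\nabla$ is cited cleanly (it is equivalent to local triviality of $\frak k$, which is part of the standing hypothesis) rather than re-proved.
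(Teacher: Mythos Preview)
Your proposal is correct and follows essentially the same approach as the paper. The paper's argument is even terser: it simply observes that the last part of the proof of Proposition \ref{prop:primitive_semisimple} (iii) already shows that conditions (ii) and (iii) of Proposition \ref{prop:splitting_coh_triv} become automatic once $F$ is defined by $R^\nabla=-\ad F$, and that the existence of a bracket-preserving $\nabla$ is equivalent to local triviality of $\frak k$, which is part of the standing hypothesis---your write-up just unpacks this in slightly more detail and handles the converse direction explicitly via the existence result for IM connections in the semisimple case.
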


\subsubsection{The abelian case}
We now establish a criterion for the existence of primitive IM connections for an abelian bundle of ideals $\frak k\subset A$. Preliminarily, observe that when $\frak k$ is abelian, the representation $\nabla^A$ descends to a  representation $\nabla^B$ of $B$ on $\frak k$, given by $\nabla^B_\alpha\xi=[\tilde\alpha,\xi]$, where $\tilde\alpha\in\Gamma(A)$ denotes any lift of $\alpha\in\Gamma(B)$ along the projection $A\ra B$, and $\xi\in\Gamma(\frak k)$.

Now suppose $(\C,v)\in \arc(A;\frak k)$ is a primitive connection and $F$ is its curving. By identity \eqref{eq:R_ad_F}, $\nabla$ is now necessarily flat, and moreover, equation \eqref{eq:conn_orb2} now just says that $\nabla$ induces $\nabla^B$, that is, 
\begin{align}
\label{eq:nabla_induces}
\nabla^B_\alpha=\nabla_{\rho(\alpha)}.
\end{align}
 Let us consider the set of 2-forms $F$ with \textit{transverse coboundary} $\d{}^\nabla F$, that is,
\[
\Omega^2_{tc,\nabla}(M;\frak k)\coloneqq \set{F\in \Omega^2(M;\frak k)\given \iota_X\d{}^\nabla F=0\text{ for any $X\in T\F$}}.
\]
Let $(\rho_B)^*\colon \Omega^2(M;\frak k)\ra \Omega^2(B;\frak k)$ denote the pullback along the anchor of $B$, and observe that there holds $(\rho_B)^*\d{}^\nabla=\d{}^{\nabla^B}(\rho_B)^*$ by equation \eqref{eq:nabla_induces}, so that the pullback restricts to
\[
(\rho_B)^*_{tc,\nabla}\colon \Omega_{tc,\nabla}^2(M;\frak k)\ra Z^2(B;\frak k)
\]
where $Z^2(B;\frak k)$ denotes $\frak k$-valued 2-cocycles on $B$ with respect to the differential $\d{}^{\nabla^B}$. As stated in equation \eqref{eq:Fv_pullback_F}, the map above takes $F$ to the curvature of the splitting $F^v$. Moreover, in the usual Lie algebroid cohomology  of $B$ with values in $\frak k$, the class 
\[c(A)\coloneqq [F^v]\in H^2(B;\frak k)\] 
is independent of the splitting $v\colon A\ra \frak k$ since $\frak k$ is abelian. It is thus in the image of the map $\Omega_{tc,\nabla}^2(M;\frak k)\ra H^2(B;\frak k)$ induced by $(\rho_B)^*_{tc,\nabla}$, which we denote by the same symbol. With this in mind, we formulate the following.
\begin{proposition}
\label{prop:abelian_primitive}
An abelian bundle of ideals $\frak k\subset A$ admits a primitive connection if and only if there exists a flat connection $\nabla$ on $\frak k$, which induces $\nabla^B$ and satisfies \[c(A)\in\im(\rho_B)^*_{tc,\nabla}.\] Moreover, we have the following bijective correspondence.
\begin{align*}
\left\{
\parbox{4.9cm}{\centering Primitive connections for $\frak k$\\with a choice of curving}
\right\}
\longleftrightarrow
\left\{
\parbox{8.3cm}{\centering Triples $(v,
\nabla,F)$, where $v\colon A\ra \frak k$ is a splitting,\\$\nabla$ is a  flat connection on $\frak k$ inducing $\nabla^B$, and\\$F\in\Omega^2_{tc,\nabla}(M;\frak k)$ satisfies $F^v=(\rho_B)^*F$.}
\right\}
\end{align*}
\end{proposition}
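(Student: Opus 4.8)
The plan is to prove the two halves of the statement in one go, by constructing the announced bijection and reading off the existence criterion as a corollary. The key observation is that Corollary \ref{cor:primitive_bijective_correspondence} already gives a bijection between primitive connections with a choice of curving and triples $(v,\nabla,F)$ where $(\nabla,F)$ satisfies conditions (i)--(iii) of Proposition \ref{prop:splitting_coh_triv} and $v$ is compatible with $(\nabla,F)$ in the sense of \eqref{eq:primitive_additional_conditions}; so the bulk of the work is to show that, in the abelian case, the data $(v,\nabla,F)$ appearing there is exactly the data $(v,\nabla,F)$ in the statement, i.e.\ that the conditions match up.

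First I would unravel what conditions (i)--(iii) and \eqref{eq:primitive_additional_conditions} become when $\frak k$ is abelian. Condition (i) of Proposition \ref{prop:splitting_coh_triv} (that $\nabla$ preserves the bracket) is vacuous. Condition (ii), $R^\nabla=-\ad F$, forces $R^\nabla=0$ since $\frak k$ is abelian, so $\nabla$ must be flat; conversely any flat $\nabla$ satisfies (ii). Condition (iii) says $\d{}^\nabla F$ is transversal, which is precisely $F\in\Omega^2_{tc,\nabla}(M;\frak k)$. The first half of the compatibility condition \eqref{eq:primitive_additional_conditions}, $\nabla^A_{h(\alpha)}=\nabla_{\rho(\alpha)}$, becomes $\nabla^B_\alpha=\nabla_{\rho_B(\alpha)}$, i.e.\ "$\nabla$ induces $\nabla^B$", once one uses that in the abelian case $\nabla^A$ descends to $\nabla^B$; the second half is $F^v=(\rho_B)^*F$ verbatim. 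Putting these together, the triples of Corollary \ref{cor:primitive_bijective_correspondence} become exactly the triples $(v,\nabla,F)$ in the statement, which establishes the claimed bijective correspondence. I would also remark (using \eqref{eq:nabla_induces} and equation \eqref{eq:conn_orb2}/Proposition \ref{prop:conn2}) that for a primitive connection the induced $\nabla$ automatically induces $\nabla^B$, so that this is genuinely a condition only when one is constructing connections, not when one starts from them.

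For the existence statement: a primitive connection for $\frak k$ exists if and only if there is some triple $(v,\nabla,F)$ as above. Given such a triple, $\nabla$ is a flat connection inducing $\nabla^B$, and $(\rho_B)^*F=F^v$ represents $c(A)=[F^v]\in H^2(B;\frak k)$, so $c(A)\in\im(\rho_B)^*_{tc,\nabla}$. Conversely, given a flat $\nabla$ inducing $\nabla^B$ with $c(A)\in\im(\rho_B)^*_{tc,\nabla}$, choose any splitting $v\colon A\ra\frak k$ (these always exist) and any $F_0\in\Omega^2_{tc,\nabla}(M;\frak k)$ with $[(\rho_B)^*F_0]=[F^v]$ in $H^2(B;\frak k)$; then $(\rho_B)^*F_0-F^v=\d{}^{\nabla^B}\mu$ for some $\mu\in\Omega^1(B;\frak k)$, and I would need to correct either $F_0$ or the splitting $v$ so that the equality $F^v=(\rho_B)^*F$ holds on the nose. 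The cleanest route is probably to keep $F=F_0$ fixed and replace $v$ by $v'=v-l$ for a suitable horizontal IM deformation, using Remark \ref{rem:deformation_curving}/\eqref{eq:curving_deformation} and \eqref{eq:Fv_pullback_F} (for abelian $\frak k$ the quadratic term $[\gamma,\gamma]$ vanishes, so the curving deforms by $\d{}^\nabla\gamma$ and $F^{v}$ deforms by $\d{}^{\nabla^B}$ of the corresponding $1$-form); then one solves for $\gamma$ so that the cohomologically-exact discrepancy $\mu$ is absorbed. Once the triple is in hand, Proposition \ref{prop:splitting_coh_triv} (its converse direction) produces the primitive connection.

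The step I expect to be the main obstacle is this last matching-on-the-nose argument: passing from the cohomological statement $c(A)\in\im(\rho_B)^*_{tc,\nabla}$ to an \emph{actual} triple with $F^v=(\rho_B)^*F$ strictly (not just up to coboundary). One must check that the freedom in choosing $v$ (equivalently, the freedom of deforming by $\delta^0\gamma$, reflected in \eqref{eq:curving_deformation} and \eqref{eq:nabla_gamma}) is exactly enough to kill the discrepancy $\mu\in\Omega^1(B;\frak k)$ while keeping $F\in\Omega^2_{tc,\nabla}$ and keeping $\nabla$ unchanged — and here one should be careful that deforming $v$ by $\delta^0\gamma$ changes $\nabla$ to $\nabla^\gamma=\nabla+[\cdot,\gamma]=\nabla$ (the bracket term vanishes in the abelian case, which is precisely what makes the argument work), so the flat connection is preserved. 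Everything else is a direct translation of earlier results, and I would present it as such, referring to Corollary \ref{cor:primitive_bijective_correspondence}, Proposition \ref{prop:splitting_coh_triv}, and Remark \ref{rem:deformation_curving} rather than redoing any computation.
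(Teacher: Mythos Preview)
Your reduction to Corollary \ref{cor:primitive_bijective_correspondence} and the unwinding of its conditions in the abelian case are correct and essentially match the paper's argument; the bijective correspondence follows exactly as you describe. The issue is precisely in the step you flag as the main obstacle, and it is a genuine gap: your parenthetical identification of ``the freedom in choosing $v$'' with ``the freedom of deforming by $\delta^0\gamma$'' is false. Splittings $v\colon A\to\frak k$ form an affine space over $\Hom(B,\frak k)=\Omega^1(B;\frak k)$, whereas the symbol of $\delta^0\gamma$ is $\alpha\mapsto\gamma(\rho\alpha)$, so those deformations only reach the image of $(\rho_B)^*\colon\Omega^1(M;\frak k)\to\Omega^1(B;\frak k)$, a proper subspace whenever $\rho_B$ has nontrivial kernel. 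You therefore cannot absorb an arbitrary discrepancy $\mu\in\Omega^1(B;\frak k)$ this way. Invoking Remark \ref{rem:deformation_curving} is also circular here, since that remark presupposes an IM connection, which is what you are trying to construct.

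The fix is to change the splitting directly rather than through an IM deformation: for $\sigma'=\sigma+l$ with $l\in\Omega^1(B;\frak k)$ one computes (using $[\sigma X,lY]=\nabla^B_X(lY)$ and $[\frak k,\frak k]=0$) that $F^{\sigma'}=F^\sigma-\d{}^{\nabla^B}l$, so the full freedom in the splitting moves $F^v$ through its entire $\d{}^{\nabla^B}$-cohomology class. The hypothesis $c(A)=[(\rho_B)^*F]$ then yields a splitting with $F^v=(\rho_B)^*F$ on the nose. This is the standard classification of abelian extensions of $B$ by $\frak k$ via $H^2(B;\frak k)$, and it is exactly what the paper's one-sentence argument (``Since $\frak k$ is abelian, the latter means precisely that there exists a splitting $v$ with $F^v=(\rho_B)^*F$'') is invoking.
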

\begin{proof}
One direction for the statement about existence is already proved above. For the other direction, suppose there is a flat connection $\nabla$ that induces $\nabla^B$ and satisfies $c(A)\in\im(\rho_B)^*_{tc,\nabla}$. Since $\frak k$ is abelian, the latter means precisely that there exists a splitting $v\colon A\ra \frak k$ with curvature 
\begin{align}
\label{eq:temp_abelian_prop}
F^v=(\rho_B)^* F,
\end{align}
for some $F\in\Omega^2(M;\frak k)$ with $\iota_{\rho_B(\alpha)}\d{}^\nabla F=0$ for any $\alpha\in B$. The pair $(\nabla, F)$ satisfies the assumptions of Proposition \eqref{prop:splitting_coh_triv}, and equation \eqref{eq:temp_abelian_prop} ensures that the Lie algebroid $A$ is isomorphic to the obtained algebroid $B\oplus \frak k$, which thus has an IM connection with curving $F$. The statement regarding the bijective correspondence is clear from this argument. 
\end{proof}
\begin{remark}
The last proposition is related to the criterion for existence of kernel flat connections on an abelian bundle of ideals $\frak k$ from \cite{mec}*{Proposition 5.21}. In fact, Proposition \ref{prop:abelian_primitive} is its refinement, and their relation can be precisely summarized with the following diagram.
\[\begin{tikzcd}[row sep=large]
	{H_{im}^2(B;\frak k)} & {H^2(B;\frak k)} \\
	{\Omega_{tc,\nabla}^2(M;\frak k)}
	\arrow["\operatorname{sym}", from=1-1, to=1-2]
	\arrow["{\delta^0}", from=2-1, to=1-1]
	\arrow["{(\rho_B)^*_{tc,\nabla}}"', from=2-1, to=1-2]
\end{tikzcd}\]
Here, $H_{im}^\bullet(B;\frak k)$ denotes the cohomology of the cochain complex $(\Omega^\bullet_{im}(B;\frak k),\d{}^\nabla_{im})$, where the differential $\d{}^\nabla_{im}$ is the restriction to IM forms of $\d{}^\nabla$ as in equation \eqref{eq:ext_cov_der_p=1}, and \[\operatorname{sym}(c_0,c_1)(\alpha,\beta)=c_1(\alpha)(\rho \beta).\]
\end{remark}

\subsection{Multiplicative Yang--Mills action}
\label{sec:multiplicative_ym_action}
We now arrive to the last main result of this thesis: the desired generalization of Yang--Mills theory to the multiplicative setting. Equipped with the theory developed in the previous sections, the idea now is to define the desired action functional on triples $((\C,v),F)$, where $(\C,v)\in\arc(A;\frak k)$ is a (primitive) IM connection and $F\in\Omega^2(M;\frak k)$ its curving. We recall from Corollary \ref{cor:primitive_bijective_correspondence} that the information stored in such a triple may alternatively be described by $(v,\nabla,F)$, where $v\colon A\ra \frak k$ is a splitting $A=B\oplus \frak k$, $\nabla$ is a connection on $\frak k$, and $F\in\Omega^2(M;\frak k)$. The compatibility conditions for such a triple read:
\begin{multicols}{2}
\begin{enumerate}
  \item $\nabla$ preserves the Lie bracket $[\cdot,\cdot]_{\frak k}$ on $\frak k$.
  \item $R^\nabla=-\ad F$.
  \item The 3-curvature $G=\d{}^\nabla F$ is transversal:\\$\iota_{X}G=0$ for all $X\in T\F$.
  \item $\nabla_{\rho(\alpha)}=[h(\alpha),\cdot]$, for any $\alpha\in A$.
  \item $F^v=(\rho_B)^*F$.
  \item[\vspace{\fill}] \phantom{}\\\phantom{}
\end{enumerate}
\end{multicols}
\noindent The conditions (i)--(iii) mean $\nabla$ and $F$ are compatible, and they generalize the foliated case (compare with Proposition \ref{prop:foliated_props}), while the conditions (iv) and (v) mean $\nabla$ and $F$ are mutually compatible with the splitting $v$. To construct a Yang--Mills theory for such triples, we must assume certain data. From Definition \ref{def:ym_algebroid}, recall the \textit{Yang--Mills data} for a bundle of ideals $\frak k$ consists of:
\begin{enumerate}[label={(\roman*)}]
  \item An $\ad$-invariant metric $\kappa=\inner\cdot\cdot_{\frak k}$ on the bundle of ideals $\frak k$. Assuming IM connections exist, this implies the typical fibre of $\frak k$ is a compact Lie algebra \cite{duistermaat}*{Theorem 3.6.2}.
  \item A Riemannian metric $g=\inner\cdot\cdot$ on the oriented base manifold $M$.
\end{enumerate}
As we will see, the space of primitive IM connections needs to be further restricted:
\begin{itemize}[leftmargin=9mm]
    \item As with foliated Yang--Mills theory, if we do not assume the base manifold $M$ is compact, then it is necessary to focus on curvings $F$ that are compactly supported, $F\in\Omega_c^2(M;\frak k)$. For simplicity, we will from now on just assume $M$ is compact.
    \item We will restrict to the IM connections $(\C,v)$ whose induced linear connection $\nabla=\C|_{\frak k}$ is compatible with $\inner\cdot\cdot_{\frak k}$.
    This is to ensure that the formal adjoint of the exterior covariant derivative can be expressed simply with the Hodge star operator, see equation \eqref{eq:delta_nabla} below.
\end{itemize}
\begin{remark}[Transversal metric compatibility]
  In the last item, it is enough to assume $\nabla$ is \emph{transversally compatible} with the given metric $\inner{\cdot}{\cdot}_{\frak k}$ on $\frak k$, i.e., that every point $x\in M$ admits tangent vectors $(X_i)_i$ which induce a basis of $N_x\F=T_xM/\im\rho_x$, satisfying
    \begin{align}
      \label{eq:transversal_compatibility}
        X_i\inner{\xi}{\eta}_{\frak k}=\inner{\nabla_{X_i}\xi}{\eta}_{\frak k} + \inner{\xi}{\nabla_{X_i}\eta}_{\frak k}
    \end{align}
    for any $\xi,\eta\in\Gamma(\frak k)$ and $i=1,\dots,\codim_x\F$. This is due to Proposition \ref{prop:conn2} and $\ad$-invariance of $\kappa$, which together imply that $\nabla=\C|_{\frak k}$ is already compatible with $\inner\cdot\cdot_{\frak k}$ in the orbital directions.
\end{remark}

Similarly to the foliated case, assuming a Yang--Mills data is given, the metric and orientation on $M$ ensure we can introduce an inner product on  $\frak k$-valued differential forms on $M$:
\begin{align*}
  \innerr\cdot\cdot_{\frak k}\colon \Omega^k(M;\frak k)\times \Omega^k
 (M;\frak k)\ra \R,\quad \innerr\alpha\beta_{\frak k}=\int_M\inner\alpha\beta_{\frak k}\vol_M.
\end{align*}
It is now a standard result that compatibility of $\nabla$ with $\kappa$ implies that the formal adjoint to $\d{}^\nabla$ on $\frak k$-valued forms of degree $k$ reads
\begin{align}
  \label{eq:delta_nabla}
  \delta^\nabla=(-1)^k\star^{-1}\!{\d{}^\nabla{\star}},
\end{align}
where $\star$ is the Hodge star operator with respect to the chosen metric and orientation on $M$. This is proved similarly (and in fact, more easily) as in Proposition \ref{prop:codiff}.

\begin{definition}
  \label{def:multiplicative_ym}
  Let $\frak k$ be a bundle of ideals of a Lie algebroid $A\Ra M$, and assume a Yang--Mills data for $\frak k$ is given. Let us denote
  \begin{align*}
    \DD(A;\frak k)=\set*{\big((\C,v),F\big)\in\A(A;\frak k)\times \Omega^2(M;\frak k)\given 
      \delta^0F=\Omega^{(\C,v)}
    }.
  \end{align*}
  The \emph{multiplicative Yang--Mills action functional} is defined as the map 
    \begin{align}
      \label{eq:multiplicative_ym_action}
      &\S\colon \DD(A;\frak k)\ra \R,\quad \S((\C,v),F)=\int_M\innersmall{F}{F}_{\frak k}\,\vol_M+\mu\int_M\innersmall{G}{G}_{\frak k}\,\vol_M,
    \end{align}
    where $G=\d{}^\nabla F$ is the 3-curvature of $F$ and $\mu\in\R$ is an a priori chosen constant, called the \emph{structure constant} of the theory. We now introduce two notions of criticality for the action functional $\S$, together with another relevant notion. A triple $((\C,v),F)$ is said to be:
    \begin{enumerate}[label={(\roman*)}]
      \item \emph{Longitudinally critical}, if 
      \begin{align}
        \label{eq:longitudinally_critical_im_connections}
        \deriv\lambda0\S\big((\C,v)+\lambda \delta^0\gamma, F^{\lambda\gamma}\big)=0,\quad \text{for all $\gamma\in\Omega^1(M;\frak k)$,}
      \end{align}
      where we have denoted $F^{\lambda\gamma}=F+\lambda\d{}^\nabla \gamma-\frac {\lambda^2}2 [\gamma,\gamma]\in\Omega^2(M;\frak k)$.
      \item \emph{Transversally critical}, if 
      \begin{align}
        \label{eq:transversally_critical_im_connections}
        \deriv\lambda0\S\big((\C,v), F+\lambda\beta\big)=0,\quad \text{for all $\beta\in\Omega_{\inv}^2(M;\frak k)$.}
      \end{align}
      \item \emph{Adapted} to the given Riemannian metric and orientation on $M$, if the 2-form $-\mu\delta^\nabla G$ is also a curving of $(\C,v)$, that is,
      \begin{align}
        \label{eq:adapted}
        \delta^0(\delta^\nabla G)=-\frac 1\mu\Omega^{(\C,v)}.
      \end{align}
      This notion is motivated by the upcoming Theorem \ref{thm:ym_multiplicative}. 
    \end{enumerate}

\end{definition}
\begin{remark}[Well-definedness]
  The expression on the left-hand side of equation \eqref{eq:longitudinally_critical_im_connections} is well-defined: by Proposition \ref{prop:affine_primitive} and equation \eqref{eq:curving_deformation}, the deformed IM connection $(\C,v)+\lambda\delta^0\gamma$ is again primitive with curving $F^{\lambda\gamma}$. Well-definedness of the left-hand side of \eqref{eq:transversally_critical_im_connections} is clear.
\end{remark}
\begin{remark}[Decompositions into affine spaces]
  \label{rem:decompositions_affine}
  For intuition regarding the definitions of criticality \eqref{eq:longitudinally_critical_im_connections} and \eqref{eq:transversally_critical_im_connections}, we note that the set $\DD(A;\frak k)$ carries two relevant equivalence relations:
  \begin{align*}
    ((\tilde\C,\tilde v),\tilde F)\sim_1 ((\C,v),F)&\iff (\tilde \C,\tilde v)-(\C,v)=\delta^0\gamma\text{ and }\tilde F=F^{\gamma}\text{ for some }\gamma \in\Omega^1(M;\frak k),\\
    ((\tilde\C,\tilde v),\tilde F)\sim_2 ((\C,v),F)&\iff (\tilde\C,\tilde v)=(\C,v)\text{ and }\delta^0(\tilde F- F)=0.
  \end{align*}
  The equivalence classes of $\sim_1$ and $\sim_2$ are affine spaces, modelled on cohomologically trivial IM 1-forms, and invariant 2-forms, respectively. Each of the two notions of criticality takes into account only the directions which are tangential to the equivalence classes defined by respective relation. We note that the closure of the union of $\sim_1$ and $\sim_2$ yields another equivalence relation:
  \begin{align}
    ((\tilde\C,\tilde v),\tilde F)\sim_3 ((\C,v),F)&{\ \iff\ } [(\tilde \C,\tilde v)]=[(\C,v)]\in H^{1,1}(A;\frak k)\label{eq:tilde_3}
    \\
    &{\ \iff\ } (\tilde \C,\tilde v)-(\C,v)=\delta^0\gamma\text{ and }\tilde F=F^{\gamma}+\beta, \nonumber\\
    &\phantom{\ \iff\ }\text{for some }\gamma \in\Omega^1(M;\frak k), \beta\in\Omega^2_{\inv}(M;\frak k),\nonumber
  \end{align}
  Intuitively, the equivalence classes of $\sim_3$ define the tangential directions which are altogether accounted for by both notions of criticality. We note the intersection of $\sim_1$ and $\sim_2$ is not trivial; the role of this fact will be clarified in \sec\ref{sec:constrained_variational_problem}.
\end{remark}
\begin{remark}[Simplification of the action]
  Let us suppose that invariant 3-forms on the base vanish, $H^{0,3}(A;\frak k)=0$. For instance, this is automatically fulfilled if either: 
   \begin{itemize}
     \item the leaves of the orbit foliation $\F$ are at most of codimension $2$, or
     \item the typical fibre $\frak g$ of $\frak k$ has a vanishing center; this is equivalent to semisimplicity of $\frak g$, since as already noted, the existence of an $\ad$-invariant metric implies  Lie algebra $\frak g$ is compact.
   \end{itemize}
   In this case, the definition above simplifies substantially. Indeed, since the 3-curvature $G$ is an invariant form, the second term of the action \eqref{eq:multiplicative_ym_action} vanishes. Moreover,  the proof of the following theorem shows that in this case, transversal criticality is equivalent to $F\perp \Omega_{\inv}^2(M;\frak k)$. On the other hand, the notion of adaptedness for this case is very restrictive: it implies the IM connection $(\C,v)$ is flat.
 \end{remark}
\begin{theorem}
  \label{thm:ym_multiplicative}
  Let $\frak k$ be a bundle of ideals of a Lie algebroid $A\Ra M$, suppose a Yang--Mills data for $\frak k$ is given, and let $\mu\in\R$ denote the structure constant for the multiplicative Yang--Mills action functional \eqref{eq:multiplicative_ym_action}. Let $((\C,v),F)\in \DD(A;\frak k)$ be a triple whose induced connection $\nabla=\C|_{\frak k}$ is compatible with $\inner\cdot\cdot_{\frak k}$. The following equivalences hold. 
  \begin{enumerate}[label={(\roman*)}]
    \item The triple $((\C,v),F)$ is longitudinally critical if and only if the curving $F$ is a solution to the \textbf{\emph{first Yang--Mills equation}},
    \begin{align}
      \label{eq:ym_1}
    \d{}^{\nabla}{\star\,F}=0.
    \end{align}
    \item The triple $((\C,v),F)$ is both transversally critical and adapted if and only if the pair $(F,G)$ is a solution of the \textbf{\emph{second Yang--Mills equation}},
    \begin{align}
      \label{eq:ym_2}
      \d{}^\nabla{\star\,G}={\tfrac 1\mu}\star F.
    \end{align}
  \end{enumerate}
  \end{theorem}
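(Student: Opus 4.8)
The structure of the argument follows closely the proof of Theorem \ref{thm:ym} in the foliated case, but now we must vary the action \eqref{eq:multiplicative_ym_action} in the two prescribed directions separately. The main tool is the adjointness formula \eqref{eq:delta_nabla}, valid because $\nabla$ is assumed compatible with $\inner\cdot\cdot_{\frak k}$, together with the non-degeneracy of the pairing $\innerr\cdot\cdot_{\frak k}$ on $\frak k$-valued forms. I will first compute the two first-variation formulas, then translate the vanishing conditions into the stated PDEs using $\delta^0$ and its injectivity properties (Remark \ref{rem:invariant}).

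\textbf{Part (i): longitudinal criticality.} Along the deformation $(\C,v)\rightarrow (\C,v)+\lambda\delta^0\gamma$, the curving changes to $F^{\lambda\gamma}=F+\lambda\d{}^\nabla\gamma-\frac{\lambda^2}2[\gamma,\gamma]$ (equation \eqref{eq:curving_deformation}), and crucially the 3-curvature is unchanged, $G^{\lambda\gamma}=G$, by the computation \eqref{eq:3_curvature_invariant}. Hence the second term of the action is constant along this deformation, and
\[
\deriv\lambda 0\S\big((\C,v)+\lambda\delta^0\gamma,F^{\lambda\gamma}\big)=\deriv\lambda 0\innerr{F^{\lambda\gamma}}{F^{\lambda\gamma}}_{\frak k}=2\innerr{F}{\d{}^\nabla\gamma}_{\frak k}=2\innerr{\delta^\nabla F}{\gamma}_{\frak k},
\]
where the last step uses \eqref{eq:delta_nabla}. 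By non-degeneracy of $\innerr\cdot\cdot_{\frak k}$, this vanishes for all $\gamma\in\Omega^1(M;\frak k)$ if and only if $\delta^\nabla F=0$, which by \eqref{eq:delta_nabla} is equivalent to $\d{}^\nabla{\star\,F}=0$, i.e.\ equation \eqref{eq:ym_1}. One subtlety to address: along this deformation the induced connection changes to $\nabla^\gamma=\nabla+[\cdot,\gamma]$ (equation \eqref{eq:nabla_gamma}), so I should be careful that $\d{}^\nabla$ and $\delta^\nabla$ appearing in the derivative at $\lambda=0$ are the ones for the base connection $\nabla$; since we differentiate at $\lambda=0$, the $\lambda$-dependence of the connection only contributes at higher order in the first term (the correction $[\cdot,\gamma]$ enters multiplied by $\lambda$), so this is harmless.

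\textbf{Part (ii): transversal criticality plus adaptedness.} Along $F\rightarrow F+\lambda\beta$ with $\beta\in\Omega^2_{\inv}(M;\frak k)$, the IM connection $(\C,v)$ is fixed, and the 3-curvature changes by $\d{}^\nabla\beta$ (equation \eqref{eq:3curv_diff}), so $G\rightarrow G+\lambda\d{}^\nabla\beta$. Thus
\[
\deriv\lambda 0\S\big((\C,v),F+\lambda\beta\big)=2\innerr{F}{\beta}_{\frak k}+2\mu\innerr{G}{\d{}^\nabla\beta}_{\frak k}=2\innerr{F}{\beta}_{\frak k}+2\mu\innerr{\delta^\nabla G}{\beta}_{\frak k}=2\innerr{F+\mu\,\delta^\nabla G}{\beta}_{\frak k},
\]
again using \eqref{eq:delta_nabla}. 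So transversal criticality is equivalent to $F+\mu\,\delta^\nabla G\perp \Omega^2_{\inv}(M;\frak k)$. The remaining task is to show that, \emph{given} adaptedness, this orthogonality condition is equivalent to \eqref{eq:ym_2}. The key observation is that $\delta^0(F+\mu\,\delta^\nabla G)=\Omega^{(\C,v)}-\Omega^{(\C,v)}=0$ by the definition of $\DD(A;\frak k)$ and the adaptedness condition \eqref{eq:adapted}; hence $F+\mu\,\delta^\nabla G\in\ker\delta^0=\Omega^2_{\inv}(M;\frak k)$. A form that is both in $\Omega^2_{\inv}(M;\frak k)$ and orthogonal to all of $\Omega^2_{\inv}(M;\frak k)$ must vanish (positive-definiteness of $\kappa$, hence of $\innerr\cdot\cdot_{\frak k}$), giving $F+\mu\,\delta^\nabla G=0$, i.e.\ $\delta^\nabla G=-\frac1\mu F$, which by \eqref{eq:delta_nabla} rearranges to $\d{}^\nabla{\star\,G}=\frac1\mu\star F$. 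Conversely, if \eqref{eq:ym_2} holds then $F+\mu\,\delta^\nabla G=0$, which is certainly orthogonal to everything (transversal criticality) and lies in $\ker\delta^0$, giving back adaptedness via \eqref{eq:adapted}.

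\textbf{Main obstacle.} The genuinely delicate point is the interplay between adaptedness and transversal criticality in part (ii): neither condition alone yields \eqref{eq:ym_2}, and one must use precisely that $F+\mu\,\delta^\nabla G$ is simultaneously \emph{invariant} (forced by adaptedness together with membership in $\DD(A;\frak k)$) and \emph{transversal to invariant forms} (transversal criticality), so that positive-definiteness collapses it to zero. I also need to double-check well-definedness of the variation in \eqref{eq:transversally_critical_im_connections}: since $(\C,v)$ is unchanged and $\delta^0(F+\lambda\beta)=\delta^0 F=\Omega^{(\C,v)}$ because $\beta\in\ker\delta^0$, the deformed triple stays in $\DD(A;\frak k)$, so the functional is indeed defined along the whole path. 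Apart from this, everything reduces to the adjointness identity \eqref{eq:delta_nabla} and the non-degeneracy of $\innerr\cdot\cdot_{\frak k}$, both already available.
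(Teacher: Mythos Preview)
Your proposal is correct and follows essentially the same approach as the paper's proof: both compute the first variation along the two prescribed directions using \eqref{eq:3_curvature_invariant} (to kill the $G$-term in the longitudinal case) and the adjointness formula \eqref{eq:delta_nabla}, and then in part (ii) combine adaptedness with membership in $\DD(A;\frak k)$ to force $F+\mu\,\delta^\nabla G$ into $\Omega^2_{\inv}(M;\frak k)$, where orthogonality and positive-definiteness collapse it to zero. Your extra remark about the $\lambda$-dependence of $\nabla^{\lambda\gamma}$ contributing only at higher order is a nice touch of care that the paper leaves implicit.
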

\begin{remark}
  Collecting the Yang--Mills equations (written with the covariant codifferential $\delta^\nabla$) together with the Bianchi identities and  the defining identities, we have
  \begin{align}
    \label{eq:ym_bianchi_gathered}
    \begin{aligned}
      \delta^\nabla F&=0,&\qquad \d{}^\nabla F&=G,&\qquad \delta^0 F&=\Omega^{(\C,v)},\\
      \delta^\nabla G&=-\tfrac 1\mu F,& \d{}^\nabla G&=0,&\qquad \delta^0 G&=0,
    \end{aligned}
  \end{align}
  for a curving $F$ and its curvature 3-form $G$ of an IM connection $(\C,v)$. We observe that applying $\d{}^\nabla$ to the second Yang--Mills equation yields \[\d{}^\nabla{\star\,F}=\mu[\star\, G, F],\] and hence if $\nabla$ is flat, the second Yang--Mills equation implies the first since flatness of $\nabla$ means the curving $F$ is centre-valued.
  \end{remark}
  \begin{proof}
  For the statement in (i), first observe that by equation \eqref{eq:3_curvature_invariant}, the 3-curvature is invariant under a longitudinal variation of $((\C,v),F)$, hence the second term of the action \eqref{eq:multiplicative_ym_action} will not play a role. For the first term, we use the equation \eqref{eq:curving_deformation} to compute
  \begin{align*}
    \S((\C,v)&+\lambda \delta^0\gamma,F^{\lambda\gamma})=\innerr{F^{\lambda\gamma}}{F^{\lambda\gamma}}_{\frak k}+\mu\innerr{G}{G}_{\frak k}\\
  &=\S(\C,v)+2\lambda\innerr{F}{\d{}^{\nabla}\gamma}_{\frak k}+\lambda^2(\innerr{\d{}^{\nabla}\gamma}{\d{}^{\nabla}\gamma}_{\frak k}-\innerr{F}{[\gamma,\gamma]}_{\frak k})+\mathcal O(\lambda^3),
  \end{align*}
  for any $\gamma\in\Omega^1(M;\frak k)$ and $\lambda\in\R$. Differentiating at $\lambda=0$, we obtain 
  \[
      \deriv\lambda 0\S((\C,v)+\lambda \delta^0\gamma,F^{\lambda\gamma})=2\innerr{F}{\d{}^{\nabla}\gamma}_{\frak k}=2\innerr{\delta^{\nabla}F}{\gamma}_{\frak k}.
  \]
  By the non-degeneracy of $\innerr\cdot\cdot_{\frak k}$, this expression vanishes for all $\gamma\in\Omega^1(M ;\frak k)$ if and only if the curving $F$ satisfies $\delta^\nabla F=0$, which is equivalent to \eqref{eq:ym_1} by identity \eqref{eq:delta_nabla}.

  For the proof of the statement in (ii), we begin by computing
  \begin{align*}
    \S((\C,v),F+\lambda\beta)&=\innerr{F+\lambda\beta}{F+\lambda\beta}_{\frak k}+\mu\innerr{G+\lambda\d{}^\nabla \beta}{G+\lambda\d{}^\nabla \beta}_{\frak k}\\
    &=\S((\C,v),F)+2\lambda(\innerr{F}{\beta}_{\frak k}+\mu\innerr{G}{\d{}^\nabla \beta}_{\frak k})+\lambda^2(\innerr{\beta}{\beta}_{\frak k}+\mu\innerr{\d{}^\nabla \beta}{\d{}^\nabla \beta}_{\frak k})\\
    &=\S((\C,v),F)+2\lambda\innerr{F+\mu\delta^\nabla G}{\beta}_{\frak k}+\lambda^2\innerr{\beta+\mu\delta^\nabla\d{}^\nabla\beta}{\beta}_{\frak k},
  \end{align*}
  for any $\beta\in\Omega^2_{\inv}(M;\frak k)$ and $\lambda\in\R$. Differentiating, we obtain
  \[
    \deriv\lambda 0\S((\C,v),F+\lambda\beta)=2\innerr{F+\mu\delta^\nabla G}{\beta}_{\frak k},
\]
from which we see that transversal criticality of $((\C,v),F)$ is equivalent to $F+\mu\delta^\nabla G$ being orthogonal to the subspace of invariant 2-forms $\Omega_{\inv}^2(M;\frak k)$ with respect to $\innerr\cdot\cdot_{\frak k}$, 
    \[
      (F+\mu\delta^\nabla G)\perp \Omega_{\inv}^2(M;\frak k).
    \]
    By definition, the condition of adaptedness means that the 2-form $-\mu\delta^\nabla G$ is a curving of $(\C,v)$, which means precisely that the form $F+\mu\delta^\nabla G$ is invariant. Since $\innerr{\cdot}{\cdot}_{\frak k}$ is positive-definite, it restricts to an inner product on $\Omega^\bullet_\inv(M;\frak k)$, therefore, adaptedness and transversal criticality together imply $\delta^\nabla G=-\frac 1\mu F$. The converse implication clearly holds, and using the identity \eqref{eq:delta_nabla} concludes the proof.
\end{proof}
\begin{remark}
  \label{rem:laplacian_eigenvalues}
  There is a simple necessary condition on the structure constant $\mu$ for the pair of Yang--Mills equations to admit a solution. Namely, observe that applying $\d{}^\nabla$ to the first equation and adding it to the second, we obtain
  \[
  \Delta F=(\d{}^\nabla\delta^\nabla+\delta^\nabla\d{}^\nabla)F=-\tfrac 1\mu F.
  \]
  In other words, $F$ must be an eigenvector of the Laplacian, with eigenvalue $-\frac 1\mu$; this is in striking contrast with the classical Yang--Mills theory, where the solutions of the Yang--Mills equation are harmonic. On the other hand, it is easy to see that the eigenvalues of $\Delta$ must be non-negative by positive-definiteness of $\innerr\cdot\cdot_{\frak k}$: if $\psi\in\Omega^\bullet(M;\frak k)$ is an eigenvector with eigenvalue $\lambda$,
  \[
    \lambda\innerr{\psi}{\psi}_{\frak k}=\innerr{\Delta \psi}{\psi}_{\frak k}=\innerr{\d{}^\nabla\psi}{\d{}^\nabla\psi}_{\frak k}+\innerr{\delta^\nabla\psi}{\delta^\nabla\psi}_{\frak k}\geq 0.
  \]
  Therefore, the structure constant must be negative, $\mu< 0$, as a necessary condition for the second Yang--Mills equation to admit a solution. Recall that since the Laplacian is a positive, self-adjoint and elliptic operator of second order, it has arbitrarily large eigenvalues (see \cite{foundations_manifolds}*{page 254} and \cite{laplacian_spectre}).
\end{remark}

\begin{remark}[Harmonicity of curvature tensor $R^\nabla$]
  For a longitudinally critical triple $((\C,v),F)$, the curvature tensor $R^\nabla$ is harmonic, that is,
  \[
    (\d{}^{\nabla^{\End\frak k}}\delta^{\nabla^{\End\frak k}}+\delta^{\nabla^{\End\frak k}}\d{}^{\nabla^{\End\frak k}})R^\nabla=0.
  \]
  Indeed, first note that by the Bianchi identity $\d{}^{\nabla^{\End\frak k}}R^\nabla=0$, harmonicity is equivalent to
  \begin{align}
    \label{eq:harmonic_curvature_tensor}
    \d{}^{\nabla^{\End\frak k}}\! \star R^\nabla =0.
  \end{align}
  To establish this identity, we apply the operators $\star$ and $\d{}^\nabla$ consecutively to both sides of the identity $R^\nabla\cdot\xi=[\xi,F]$, where $\xi\in\Gamma(\frak k)$  is an arbitrary section. We obtain
\begin{align*}
  [\nabla\xi,\star\, F]+[\xi,\d{}^\nabla{\star\, F}]&=\d{}^\nabla((\star\, R^\nabla)\cdot\xi)=(\d{}^{\nabla^{\End\frak k}}\!\star R^\nabla)\cdot\xi+(\star\, R^\nabla)\cdot\nabla\xi.
\end{align*}
Notice that the leftmost term and the rightmost term cancel out, hence using the assumption of criticality on the second term on the left-hand side yields the identity \eqref{eq:harmonic_curvature_tensor}.
\end{remark}

\subsubsection{Self-dual and anti self-dual solutions}
  As in Example \ref{ex:self_dual_foliated}, we now discuss a simple class of solutions to the Yang--Mills equations, this time by utilizing the symmetry of the set of equations \eqref{eq:ym_bianchi_gathered}. Let the base manifold $M$ be pseudo-Riemannian and 5-dimensional, and suppose a curving $F$ of an IM connection satisfies $G=c
   \star F$ for some constant $c\in\R-\set 0$. We must then have $F=(-1)^s \frac 1 c \star G$ since $\star\star=(-1)^{k(m-k)+s}$ on forms of degree $k$, where $m=\dim M$ and $s$ denotes the index of the metric on $M$, i.e., the number of negative components in the signature of $g$. Hence,
  \begin{align*}
    \d{}^\nabla{\star\, F} &=\tfrac 1 c\d{}^\nabla G=0,\\
    \d{}^\nabla{\star\,G}&=(-1)^s c\d{}^\nabla F = (-1)^s c\, G= (-1)^s c^2 \star F.
  \end{align*}
  Hence, we see that for $(F,G)$ to be a solution to the seond Yang--Mills equation, we would need to have $(-1)^s c^2=\frac 1\mu$. We immediately observe that in the Riemannian case ($s=0$), this implies $\mu>0$, which is in contradiction with Remark \ref{rem:laplacian_eigenvalues}. The situation is rectified by allowing the Riemannian metric to be pseudo-Riemannian, which removes the restriction on $\Delta$ having non-negative eigenvalues, hence removing the constraint $\mu< 0$. However, this comes with a small price: we need to impose that the restriction of $\innerr\cdot\cdot_{\frak k}$ to invariant forms $\Omega^\bullet_\inv(M;\frak k)$ is again a nondegenerate pairing (for the proof of Theorem \ref{thm:ym_multiplicative} (ii) to work), which was automatic in the Riemannian case. For example, in the case when $A$ is a regular algebroid and the orbit foliation $\F$ is simple, this requirement can be attained with a $\pi$-invariant metric $g$ on $M$, defined in equation \eqref{eq:q_invariant_metric}, where $\pi\colon M\ra M/\F$ is the projection to the orbit space.
  
  Summing up, we conclude the Riemannian case does not allow for (anti) self-dual solutions, and the pseudo-Riemannian case in general allows them: the (anti) self-dual solutions are defined as those of the form
  \begin{align}
    G=\pm \frac{1}{\sqrt {(-1)^s\mu}}\star F,
  \end{align}
  for a given structure constant $\mu$ satisfying $\sgn\mu=(-1)^s$.

\subsection{Multiplicative Yang--Mills theory as a constrained variational problem}
\label{sec:constrained_variational_problem}
In this section, we interpret the obtained theory as a constrained variational problem. As a consequence, this will enable us to discuss the formal tangent space to the space of triples which are both longitudinally and transversally critical. 

We have observed in Remark \ref{rem:decompositions_affine} that the directions in which we vary the triples (consisting of IM connections and their curvings) are described by a fixed equivalence class of a given primitive IM connection in $H^{1,1}(A;\frak k)$---see equation \eqref{eq:tilde_3}. 
This already suggests we are dealing with a variational problem constrained to a constant cohomological class $\chi\in H^{1,1}(A;\frak k)$. More precisely, the class $\chi$ is defined by a primitive IM connection, i.e.,
\[
\chi\in \arc(A;\frak k)/\im\delta^0\subset H^{1,1}(A;\frak k).
\]
To elaborate on this point of view, let us denote the constrained space of triples by
\[
\DD_\chi(A;\frak k)=\set{((\C,v),F)\in \arc(A;\frak k)\times \Omega^2(M;\frak k)\given \delta^0 F=\Omega^{(\C,v)}, [(\C,v)]=\chi}.
\]
The following proposition shows that this is an affine space, whose affine structure combines the two distinct notions of affinity from Remark \ref{rem:decompositions_affine} into a single one.
\begin{proposition}
  For $\chi$ as above, $\DD_\chi(A;\frak k)$ is an affine space, modelled on the vector space $Q_\chi$ defined as the quotient
\[\begin{tikzcd}
	0 & {\Omega^1_\inv(A;\frak k)} & {\Omega^1(M;\frak k)\oplus \Omega^2_\inv(M;\frak k)} & {Q_\chi} & 0,
	\arrow[from=1-1, to=1-2]
	\arrow["j_\chi", from=1-2, to=1-3]
	\arrow[from=1-3, to=1-4]
	\arrow[from=1-4, to=1-5]
\end{tikzcd}\]
where $j_\chi(\gamma)=(\gamma,-\d{}^\nabla\gamma)$ and
$\nabla$ is the connection induced by an arbitrary representative of $\chi$.
\end{proposition}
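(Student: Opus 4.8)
The plan is to exhibit a free and transitive action of the vector space $Q_\chi$ on the set $\DD_\chi(A;\frak k)$. Concretely, given $((\C,v),F)\in\DD_\chi(A;\frak k)$ and a class $[(\gamma,\beta)]\in Q_\chi$, I would define the action by
\[
((\C,v),F)\cdot[(\gamma,\beta)] = \big((\C,v)+\delta^0\gamma,\; F^{\gamma}+\beta\big),
\]
where $F^\gamma = F+\d{}^\nabla\gamma-\tfrac12[\gamma,\gamma]$ as in equation \eqref{eq:curving_deformation}. The three things to verify are: (1) this is well-defined, i.e.\ independent of the representative $(\gamma,\beta)$ of its class in $Q_\chi$ and lands in $\DD_\chi(A;\frak k)$; (2) it is a group action of the additive group $Q_\chi$; (3) it is free and transitive. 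Affinity of $\DD_\chi(A;\frak k)$ over $Q_\chi$ is then precisely the statement that such an action exists.

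\textbf{Key steps.} First I would check the target lands in the right space. That $(\C,v)+\delta^0\gamma$ is primitive with curving $F^\gamma$ is exactly Proposition \ref{prop:affine_primitive} together with Remark \ref{rem:deformation_curving}; adding an invariant 2-form $\beta\in\Omega^2_\inv(M;\frak k)=\ker\delta^0$ does not change the relation $\delta^0(F^\gamma+\beta)=\Omega^{(\C,v)+\delta^0\gamma}$, and the cohomology class $[(\C,v)+\delta^0\gamma]=[(\C,v)]=\chi$ is unchanged since $\delta^0\gamma$ is a coboundary. Second, well-definedness on $Q_\chi$: if $(\gamma,\beta)$ and $(\gamma',\beta')$ differ by $j_\chi(\eta)=(\eta,-\d{}^\nabla\eta)$ for some $\eta\in\Omega^1_\inv(A;\frak k)$, I must show the two resulting triples coincide. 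Since $\eta$ is invariant, $\delta^0\eta=0$, so the IM connection part is unchanged; for the curving part I would compute $F^{\gamma+\eta}+\beta-\d{}^\nabla\eta$ and check it equals $F^\gamma+\beta$, using that $\eta$ invariant means (by Remark \ref{rem:invariant}) it is transversal and centre-valued, hence $[\gamma,\eta]=0$ fibrewise (centre-valued kills the bracket modulo the relevant terms) and $[\eta,\eta]=0$, so that $F^{\gamma+\eta}=F^\gamma+\d{}^\nabla\eta$; this cancels the $-\d{}^\nabla\eta$ exactly. This computation — verifying $F^{\gamma+\eta}=F^\gamma+\d{}^\nabla\eta$ for invariant $\eta$ — is the one delicate point and I expect it to be the main obstacle, since one has to be careful about whether $\eta$ being invariant (in $A$) forces the needed vanishing of $[\gamma,\eta]+[\eta,\gamma]$ and of $[\eta,\eta]$; the centre-valued property from Remark \ref{rem:invariant} is what makes it work.

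\textbf{Action axioms, freeness, transitivity.} The action property $((\C,v),F)\cdot[(\gamma_1,\beta_1)]\cdot[(\gamma_2,\beta_2)] = ((\C,v),F)\cdot[(\gamma_1+\gamma_2,\beta_1+\beta_2)]$ reduces to the cocycle-type identity $(F^{\gamma_1})^{\gamma_2}+\beta_1+\beta_2 = F^{\gamma_1+\gamma_2}+\beta_1+\beta_2$, i.e.\ $(F^{\gamma_1})^{\gamma_2}=F^{\gamma_1+\gamma_2}$, which is a direct expansion using $\d{}^{\nabla^{\gamma_1}}=\d{}^\nabla+[\cdot,\gamma_1]$ from equation \eqref{eq:nabla_gamma} and the Jacobi identity for $[\cdot,\cdot]_{\frak k}$. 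For transitivity: given two triples $((\C,v),F)$ and $((\tilde\C,\tilde v),\tilde F)$ in $\DD_\chi(A;\frak k)$, the equality of cohomology classes $[(\tilde\C,\tilde v)]=\chi=[(\C,v)]$ gives $(\tilde\C,\tilde v)-(\C,v)=\delta^0\gamma$ for some $\gamma\in\Omega^1(M;\frak k)$; then $\tilde F$ and $F^\gamma$ are both curvings of $(\tilde\C,\tilde v)$, so their difference $\beta=\tilde F-F^\gamma$ lies in $\ker\delta^0=\Omega^2_\inv(M;\frak k)$, exhibiting $((\C,v),F)\cdot[(\gamma,\beta)]=((\tilde\C,\tilde v),\tilde F)$. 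For freeness: if $((\C,v),F)\cdot[(\gamma,\beta)]=((\C,v),F)$ then $\delta^0\gamma=0$, so $\gamma\in\Omega^1_\inv(A;\frak k)$, and then $F^\gamma+\beta=F$ forces $\beta=F-F^\gamma=-\d{}^\nabla\gamma$ (using $\gamma$ invariant again, as in step two), so $(\gamma,\beta)=j_\chi(\gamma)$ represents $0$ in $Q_\chi$. Finally I would remark that the short exact sequence in the statement defines $Q_\chi$ and that $j_\chi$ is injective because its first component is the identity on $\Omega^1_\inv(A;\frak k)\hookrightarrow\Omega^1(M;\frak k)$ (the inclusion of invariant forms among all $\frak k$-valued 1-forms via the symbol), so the sequence is exact as claimed and the dimension bookkeeping is consistent.
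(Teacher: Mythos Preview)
Your approach is essentially the paper's: define the affine action by the same formula $((\C,v),F)+\llbracket\gamma,\beta\rrbracket=((\C,v)+\delta^0\gamma,F^\gamma+\beta)$ and check it is well-defined, free, and transitive. The paper is terser and declares the affine axioms ``immediate'', whereas you actually verify them; your computation $(F^{\gamma_1})^{\gamma_2}=F^{\gamma_1+\gamma_2}$ is correct, though be careful with the sign: from \eqref{eq:nabla_gamma} one gets $\d{}^{\nabla^{\gamma_1}}\alpha=\d{}^\nabla\alpha+(-1)^{\deg\alpha}[\alpha,\gamma_1]$, so on the 1-form $\gamma_2$ the sign is $-[\gamma_2,\gamma_1]$, not $+$; with that correction the identity does hold.

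There is one genuine gap. The proposition asserts that $j_\chi$ (hence $Q_\chi$) is well-defined using ``the connection induced by an \emph{arbitrary} representative of $\chi$'', and you never verify this independence. The paper does: since invariant 1-forms $\eta$ are centre-valued (Remark~\ref{rem:invariant}), and two representatives of $\chi$ induce connections differing by $[\cdot,\gamma]$ (equation~\eqref{eq:nabla_gamma}), the operator $\d{}^\nabla$ restricted to centre-valued forms is independent of the representative. One must also check that $\d{}^\nabla\eta$ lands in $\Omega^2_\inv(M;\frak k)$, which follows from Theorem~\ref{thm:deltaD_inf} (the map $\d{}^\nabla=\D{}^{(\C,v)}$ at level $p=0$ commutes with $\delta^0$). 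Without these two points, neither the codomain of $j_\chi$ nor its dependence only on $\chi$ is justified, so $Q_\chi$ is not yet a well-defined vector space.
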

\begin{proof}
  First, note that when restricted to the centre $z(\frak k)$, any linear connection $\nabla$ as above does not depend on the choice of a representative of $\chi$, by identity \eqref{eq:nabla_gamma}. This is important since invariant forms are centre-valued, so $\d{}^\nabla$ appearing in the statement is also independent of such a choice; moreover, $\d{}^\nabla$ preserves invariance of forms by Theorem \ref{thm:deltaD_inf}, which altogether shows that the map $j_\chi$ is well-defined and only depends on the class $\chi$. The affine structure on $\DD_\chi(A;\frak k)$ is defined by
\begin{align}
  \label{eq:affine_DD_chi}
    ((\C,v),F)+\llbracket \gamma,\beta\rrbracket=((\C,v)+\delta^0 \gamma, F+\d{}^\nabla\gamma-\tfrac 12[\gamma,\gamma]+\beta)
\end{align}
  for any $\gamma\in\Omega^1(M;\frak k)$ and $\beta\in\Omega^2_\inv(M;\frak k)$, where the double bracket $\llbracket \cdot,\cdot \rrbracket$ denotes the equivalence class of $(\gamma,\beta)$ in $Q_\chi$. That this is a well-defined affine structure on $\DD_\chi(A;\frak k)$ is immediate from the definition of the quotient $Q_\chi$.
\end{proof}
Now, it is clear that a triple $((\C,v),F)$ is both longitudinally and transversally critical if and only if it is \textit{totally critical}, that is, 
  \[
    \deriv\lambda0\S\big(((\C,v),F)+\lambda\llbracket \gamma,\beta\rrbracket\big)=\deriv\lambda0\S\big((\C,v)+\lambda \delta^0\gamma, F^{\lambda\gamma}+\lambda\beta\big)=0,
  \]
  for all $\gamma\in\Omega^1(M;\frak k)$ and $\beta\in\Omega^2_\inv(M;\frak k)$. In other words, total criticality of $((\C,v),F)$ means precisely the criticality in the usual sense, with respect to the affine structure \eqref{eq:affine_DD_chi} on $\DD_{\chi}(A;\frak k)$, where $\chi=[(\C,v)]$, thus finding totally critical triples amounts precisely to a constrained variational problem for the action $\S\colon \DD(A;\frak k)\ra \R$. Note also that by equation \eqref{eq:nabla_gamma}, as soon as one representative of $\chi$ is compatible with $\inner{\cdot}{\cdot}_\frak k$, the same holds for all of them, so metric compatibility with $\inner\cdot\cdot_{\frak k}$ can in fact be viewed as a property of the class $\chi$. To sum up, Theorem \ref{thm:ym_multiplicative} tells us that a triple $((\C,v),F)$ is a solution of the constrained variational problem if and only if it satisfies
\begin{align}
  \label{eq:totally_critical_eqs}
    \d{}^\nabla\! \star F=0\quad \text{and}\quad (F+\mu \delta^\nabla G)\perp \Omega^2_\inv(M;\frak k).
\end{align}
In this picture, the adaptedness condition \eqref{eq:adapted}  simply becomes a second additional constraint of the variational problem for the multiplicative Yang--Mills action $\S\colon\DD(A;\frak k)\ra M$.
  
\subsubsection{The tangent space to the space of totally critical triples}
We now follow a similar procedure as in Remark \ref{rem:tangent_space_of_critical_splittings} to obtain the (formal) tangent space to the space of solutions of the constrained variational problem above. As usual, we identify the tangent space of $\DD_\chi(A;\frak k)$ at any triple $((\C,v),F)$ with $Q_\chi$. 

We start by observing that the Hessian at a totally critical triple $((\C,v),F)$ can now easily be computed: it is the quadratic form $H_{((\C,v),F)}\colon Q_\chi\ra \R$, given by
\begin{align*}
  H_{((\C,v),F)}\llbracket \gamma,\beta\rrbracket&=\frac12 \frac{d^2}{d\lambda^2}\S\big(((\C,v),F)+\lambda\llbracket \gamma,\beta\rrbracket\big)\Big|_{\lambda=0}\\
  &=\innerr{\delta^\nabla\d{}^\nabla\gamma-\widehat F(\gamma)}{\gamma}_{\frak k}+\innerr{\beta+2\d{}^\nabla\gamma+\mu\delta^\nabla\d{}^\nabla\beta}{\beta}_{\frak k},
\end{align*}
where $\widehat F\colon \Omega^1(M;\frak k)\ra \Omega^1(M;\frak k)$ is given by $\widehat F(\gamma)=\star\,[\star\, F,\gamma]$. The same argument as in the proof of Theorem \ref{thm:ym} shows that $\widehat F$ is characterized by the equality $\innerr{\widehat F(\gamma)}{\tilde\gamma}_{\frak k}=\innerr{F}{[\gamma,\tilde\gamma]}_{\frak k}$ for all $\gamma,\tilde\gamma\in\Omega^1(M;\frak k)$, therefore it is self-adjoint.
\begin{proposition}
  \label{prop:formal_tangent_space}
  Let $((\C,v),F)\in\DD_\chi(A;\frak k)$ be a totally critical triple of the multiplicative Yang--Mills action functional. The formal tangent space at $((\C,v),F)$ to the space  of all totally critical triples in $\DD_\chi(A;\frak k)$ is given by all $\llbracket \gamma,\beta \rrbracket\in Q_\chi$ which satisfy 
  \begin{gather}
    \delta^\nabla(\d{}^\nabla\gamma+\beta)=(-1)^{\dim M}\widehat F(\gamma),\label{eq:fts_1}\\
    (\d{}^\nabla\gamma+\beta)+\mu\delta^\nabla\d{}^\nabla\beta\perp \Omega^2_\inv(M;\frak k).\label{eq:fts_2}
  \end{gather}
  The formal tangent space at $((\C,v),F)$ to adapted triples in $\DD_\chi(A;\frak k)$ is spanned by $\llbracket \gamma,\beta \rrbracket$  with
  \begin{align}
    \d{}^\nabla\gamma+\mu\delta^\nabla\d{}^\nabla\beta\in \Omega^2_\inv(M;\frak k).\label{eq:fts_3}
  \end{align}
  In particular, the formal tangent space at $((\C,v),F)$ to triples in $\DD_\chi(A;\frak k)$ which are simultaneously transversally critical and adapted, consists of $\llbracket \gamma,\beta \rrbracket$  satisfying
  \begin{align}
    \label{eq:fts_4}
    (\d{}^\nabla\gamma+\beta)+\mu\delta^\nabla\d{}^\nabla\beta=0.
  \end{align}
\end{proposition}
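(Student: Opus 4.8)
The statement asserts three formal tangent space computations, all of which follow the same bookkeeping strategy as in Remark \ref{rem:tangent_space_of_critical_splittings}: differentiate the defining conditions of the relevant subsets of $\DD_\chi(A;\frak k)$ along a curve $\lambda\mapsto ((\C,v),F)+\lambda\llbracket\gamma,\beta\rrbracket$ and read off the linearized constraint. The plan is to treat each of the three assertions in turn, reusing the affine structure \eqref{eq:affine_DD_chi} and the variational formulas obtained in the proof of Theorem \ref{thm:ym_multiplicative}.

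For the first assertion, I would start from the characterization \eqref{eq:totally_critical_eqs} of totally critical triples: a triple is totally critical iff $\d{}^\nabla\!\star F=0$ and $(F+\mu\delta^\nabla G)\perp\Omega^2_\inv(M;\frak k)$. Along the curve with tangent $\llbracket\gamma,\beta\rrbracket$, the curving deforms as $F\mapsto F+\lambda(\d{}^\nabla\gamma+\beta)+\mathcal O(\lambda^2)$ and the induced connection deforms as $\nabla\mapsto\nabla+\lambda[\cdot,\gamma]$ by \eqref{eq:nabla_gamma}, so $\d{}^\nabla\mapsto\d{}^\nabla+\lambda[\gamma,\cdot]$. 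Differentiating $\delta^\nabla F=\pm\star^{-1}\d{}^\nabla\!\star F=0$ at $\lambda=0$, using $\delta^\nabla=(-1)^{\dim M}\star^{-1}\d{}^\nabla\!\star$ on 2-forms (identity \eqref{eq:delta_nabla}) and the fact that the variation of the Hodge star vanishes (it does not depend on $\nabla$), gives $\delta^\nabla(\d{}^\nabla\gamma+\beta)+(\text{term from varying }\nabla\text{ in }\delta^\nabla\text{ acting on }F)=0$; the latter term is precisely $\pm\widehat F(\gamma)$ by the characterizing identity $\innerr{\widehat F(\gamma)}{\tilde\gamma}_{\frak k}=\innerr{F}{[\gamma,\tilde\gamma]}_{\frak k}$ and the relation $\star[\star F,\gamma]=\widehat F(\gamma)$, reproducing \eqref{eq:fts_1} up to the sign $(-1)^{\dim M}$. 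For the orthogonality condition, I would differentiate $\innerr{F+\mu\delta^\nabla G}{\beta'}_{\frak k}=0$ (for all $\beta'\in\Omega^2_\inv$) at $\lambda=0$; using $G=\d{}^\nabla F$, the variation of $F$ contributes $\d{}^\nabla\gamma+\beta$, the variation inside $\delta^\nabla\d{}^\nabla$ contributes $\mu\delta^\nabla\d{}^\nabla\beta$ (with the cross terms from varying $\nabla$ vanishing upon pairing against centre-valued invariant forms, since $[\gamma,\cdot]$ annihilates $z(\frak k)$), yielding \eqref{eq:fts_2}.

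For the second assertion, I would linearize the adaptedness condition \eqref{eq:adapted}, rewritten as: $F+\mu\delta^\nabla G\in\Omega^2_\inv(M;\frak k)$. Differentiating along $\llbracket\gamma,\beta\rrbracket$ gives exactly $\d{}^\nabla\gamma+\mu\delta^\nabla\d{}^\nabla\beta\in\Omega^2_\inv(M;\frak k)$, which is \eqref{eq:fts_3}; here one must again note that varying $\nabla$ inside $\delta^\nabla\d{}^\nabla$ acting on the centre-valued form $G$ produces a term which is itself already centre-valued, hence lands in the target space and can be absorbed (I should check this carefully — this is the one spot where a sign or an extra term could sneak in). The third (``in particular'') assertion is then the intersection of the transversal-criticality tangent space — obtained by differentiating $(F+\mu\delta^\nabla G)\perp\Omega^2_\inv(M;\frak k)$ exactly as above, giving \eqref{eq:fts_2} — with the adapted tangent space \eqref{eq:fts_3}: imposing both, the pairing of $(\d{}^\nabla\gamma+\beta)+\mu\delta^\nabla\d{}^\nabla\beta$ against all invariant 2-forms vanishes \emph{and} this same form is invariant, so by positive-definiteness of $\innerr\cdot\cdot_{\frak k}$ on $\Omega^\bullet_\inv(M;\frak k)$ it must vanish, giving \eqref{eq:fts_4}. (In the pseudo-Riemannian setting one instead invokes nondegeneracy of $\innerr\cdot\cdot_{\frak k}$ on invariant forms, as discussed after the self-dual solutions.)

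\textbf{Main obstacle.} The routine part is the Leibniz-rule differentiation; the delicate point, which I expect to be the main obstacle, is controlling the cross terms arising from the simultaneous variation of the connection $\nabla$ and of the forms $F,G$ inside the composite operators $\delta^\nabla\d{}^\nabla$. These must be shown to either vanish (because they pair trivially against centre-valued invariant forms, as $[\gamma,\cdot]|_{z(\frak k)}=0$) or to reassemble into the operator $\widehat F$. A secondary subtlety is well-definedness: the expressions \eqref{eq:fts_1}--\eqref{eq:fts_4} are written in terms of a representative of $\chi$, so I must confirm — exactly as in the proof of the preceding proposition, via \eqref{eq:nabla_gamma} — that $\d{}^\nabla$ and $\delta^\nabla$ restricted to the relevant (centre-valued, invariant) forms, and the operator $\widehat F$, descend to $Q_\chi$ and depend only on $\chi$ and on the totally critical triple itself, so that the tangent space is genuinely a subspace of $Q_\chi$.
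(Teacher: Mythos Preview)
Your approach is correct and essentially identical to the paper's: differentiate the conditions \eqref{eq:totally_critical_eqs} and the adaptedness condition along the affine curve $\lambda\mapsto((\C,v),F)+\lambda\llbracket\gamma,\beta\rrbracket$, tracking the variation of both the form and the connection $\nabla^{\lambda\gamma}=\nabla+\lambda[\cdot,\gamma]$. The one point where you hedge can be sharpened: the cross term from varying $\nabla$ inside $\delta^\nabla$ acting on $G$ is (up to sign) $\star^{-1}[\star G,\gamma]$, which \emph{vanishes} outright because $G$ is centre-valued---it is not merely absorbable but zero, and this is exactly how the paper dispatches it (via \eqref{eq:3_curvature_invariant} for $G^{\lambda\gamma}=G$ and the cancellation $[\cdot,\gamma]|_{z(\frak k)}=0$); likewise the missing $\beta$ in your linearized adaptedness is explained simply by $\beta\in\Omega^2_\inv$ already.
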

\begin{proof}
  The task at hand is to differentiate the equations \eqref{eq:totally_critical_eqs}. For the first equation, we compute
\begin{align*}
  \delta^{\nabla^{\lambda\gamma}}(F^{\lambda\gamma}+\lambda\beta)&= \star^{-1}(\d{}^\nabla+\lambda[\cdot,\gamma])\star(F^{\lambda\gamma}+\lambda\beta)\\
  &=\delta^{\nabla}F+\lambda\big({\star^{-1}}[\star\, F,\gamma]+\delta^\nabla \d{}^\nabla\gamma+\delta^\nabla\beta\big)+\mathcal O(\lambda^2).
\end{align*}
Setting this expression to zero and differentiating at $\lambda=0$, we obtain the equation \eqref{eq:fts_1}. 
To differentiate the second identity in \eqref{eq:totally_critical_eqs}, we similarly compute
\begin{align*}
  (F^{\lambda\gamma}+\lambda\beta)+\mu\delta^{\nabla^{\lambda\gamma}}(G^{\lambda\gamma}+\lambda\d{}^{\nabla^{\lambda\gamma}}\beta)&=(F^{\lambda\gamma}+\lambda\beta)-\mu\star^{-1}(\d{}^\nabla+
\lambda\cancel{[\cdot,\gamma]})\star(G+\lambda\d{}^\nabla\beta),
\end{align*}
where we used equation \eqref{eq:3_curvature_invariant} and the fact that invariant forms are centre-valued. Setting this expression as orthogonal to $\Omega^2_\inv(M;\frak k)$ and differentiating at $\lambda=0$, we obtain the equation \eqref{eq:fts_2}.
At last, we must differentiate the adaptedness condition; we observe
\begin{align*}
  \mu\delta^0\delta^{\nabla^{\lambda\gamma}}(G^{\lambda\gamma}+\lambda\d{}^{\nabla^{\lambda\gamma}}\beta)=\mu\delta^0\delta^\nabla(G+\lambda\d{}^\nabla\beta).
\end{align*}
Setting this expression to $-\Omega^{(\C,v)+\lambda\delta^0\gamma}=-\delta^0 F^{\lambda\gamma}$ and differentiating, we obtain \eqref{eq:fts_3}.
\end{proof}
Combining equations \eqref{eq:fts_1} and \eqref{eq:fts_4}, we also obtain the following.
\begin{corollary}
  The formal tangent space at $((\C,v),F)\in\DD_\chi(A;\frak k)$ to the space of solutions in $\DD_\chi(A;\frak k)$ of the Yang--Mills equations \eqref{eq:ym_1} and \eqref{eq:ym_2} consists of all vectors $\llbracket \gamma,\beta \rrbracket\in Q_\chi$  satisfying  the identities $\widehat F(\gamma)=0$ and \eqref{eq:fts_4}.
\end{corollary}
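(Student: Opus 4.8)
The plan is to combine the two already-established descriptions of the relevant formal tangent spaces. Recall that by Proposition \ref{prop:formal_tangent_space}, the formal tangent space at $((\C,v),F)$ to the space of totally critical triples consists of all $\llbracket\gamma,\beta\rrbracket\in Q_\chi$ satisfying the pair of identities \eqref{eq:fts_1} and \eqref{eq:fts_2}, while the formal tangent space to the space of triples which are simultaneously transversally critical and adapted consists of those $\llbracket\gamma,\beta\rrbracket$ satisfying the single identity \eqref{eq:fts_4}. The space of solutions of the Yang--Mills equations \eqref{eq:ym_1} and \eqref{eq:ym_2} is, by Theorem \ref{thm:ym_multiplicative}, precisely the intersection of the space of longitudinally critical triples with the space of triples which are transversally critical and adapted; since at a totally critical triple $((\C,v),F)$ the latter is automatically longitudinally critical (being totally critical), the space of Yang--Mills solutions near such a triple is just the intersection of the two spaces above. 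Hence I would describe the formal tangent space to the space of Yang--Mills solutions as the intersection of the two tangent spaces, i.e.\ the set of $\llbracket\gamma,\beta\rrbracket\in Q_\chi$ satisfying \eqref{eq:fts_1}, \eqref{eq:fts_2} and \eqref{eq:fts_4} simultaneously.

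The key observation is that \eqref{eq:fts_4} makes \eqref{eq:fts_2} redundant: if $(\d{}^\nabla\gamma+\beta)+\mu\delta^\nabla\d{}^\nabla\beta=0$, then this quantity is in particular orthogonal to $\Omega^2_\inv(M;\frak k)$, so \eqref{eq:fts_2} holds. Thus the simultaneous system \eqref{eq:fts_1}, \eqref{eq:fts_2}, \eqref{eq:fts_4} reduces to just \eqref{eq:fts_1} together with \eqref{eq:fts_4}. Substituting the vanishing relation $\d{}^\nabla\gamma+\beta=-\mu\delta^\nabla\d{}^\nabla\beta$ from \eqref{eq:fts_4} into the left-hand side of \eqref{eq:fts_1}, the term $\delta^\nabla(\d{}^\nabla\gamma+\beta)$ becomes $-\mu\delta^\nabla\delta^\nabla\d{}^\nabla\beta=0$ since $\delta^\nabla\circ\delta^\nabla=0$. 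Therefore \eqref{eq:fts_1} collapses to $(-1)^{\dim M}\widehat F(\gamma)=0$, i.e.\ $\widehat F(\gamma)=0$. This is exactly the statement of the corollary: the formal tangent space consists of all $\llbracket\gamma,\beta\rrbracket\in Q_\chi$ satisfying $\widehat F(\gamma)=0$ and \eqref{eq:fts_4}.

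The only point requiring a small amount of care is well-definedness on the quotient $Q_\chi$: one must check that the conditions $\widehat F(\gamma)=0$ and \eqref{eq:fts_4} are stable under replacing a representative $(\gamma,\beta)$ of $\llbracket\gamma,\beta\rrbracket$ by $(\gamma+\gamma_0,\beta-\d{}^\nabla\gamma_0)$ for $\gamma_0\in\Omega^1_\inv(A;\frak k)$; but this is already implicit in the fact that both Proposition \ref{prop:formal_tangent_space} and its proof are phrased in terms of $Q_\chi$, so no new verification is needed beyond citing that proposition. I do not expect any genuine obstacle here: the whole argument is a two-line algebraic manipulation using $\delta^\nabla\circ\delta^\nabla=0$ and the definition of orthogonality, so the ``hard part'' is merely bookkeeping---making sure the sign $(-1)^{\dim M}$ and the structure constant $\mu$ are carried correctly and that one is indeed intersecting the correct two tangent spaces (solutions of \eqref{eq:ym_1} $\cap$ solutions of \eqref{eq:ym_2}, the latter being ``transversally critical and adapted'' by Theorem \ref{thm:ym_multiplicative}(ii)). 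The corollary then follows immediately.
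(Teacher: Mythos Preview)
Your approach is the same as the paper's---it simply says ``combining equations \eqref{eq:fts_1} and \eqref{eq:fts_4}''---and your bookkeeping about intersecting tangent spaces is fine (the detour through \eqref{eq:fts_2} is unnecessary but harmless). There is, however, one genuine slip: the identity $\delta^\nabla\circ\delta^\nabla=0$ that you invoke is \emph{false} in general. Since $\delta^\nabla=\pm\star^{-1}\d{}^\nabla\star$, one has $(\delta^\nabla)^2=\pm\star^{-1}(\d{}^\nabla)^2\star=\pm\star^{-1}(R^\nabla\wedge\cdot)\star$, and the connection $\nabla=\C|_{\frak k}$ is not flat here; by primitivity, $R^\nabla=-\ad F$.

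The vanishing $\delta^\nabla\delta^\nabla\d{}^\nabla\beta=0$ you need does hold, but for a different reason: $\beta\in\Omega^2_\inv(M;\frak k)$ is centre-valued (Remark \ref{rem:invariant}), and since $\nabla$ preserves the bracket on $\frak k$ (property \eqref{eq:s1}) it also preserves the centre, so $\d{}^\nabla\beta$ and $\star\d{}^\nabla\beta$ are centre-valued as well. Then $R^\nabla\wedge\star\d{}^\nabla\beta$ applies $R^\nabla(\cdot,\cdot)=-\ad F(\cdot,\cdot)$ to vectors in $z(\frak k)$, which gives zero. With this correction in place, your substitution indeed reduces \eqref{eq:fts_1} to $\widehat F(\gamma)=0$, and the corollary follows.
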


\subsection{Gauge invariance}
\label{sec:multiplicative_gauge_invariance}
As in \sec\ref{sec:foliated_gauge_invariance}, we now show that the multiplicative Yang--Mills action is gauge invariant. That is, the action functional is invariant under the pullback by any Lie algebroid automorphism $\phi$, covering an orientation-preserving isometry $\varphi$ on the base, restricting on $\frak k$ to a Lie algebra bundle automorphism which is moreover an isometry. Such a morphism $(\phi,\varphi)$, preserving all structure, will simply be called a \textit{gauge transformation}.
\[\begin{tikzcd}
	A & A \\
	M & M
	\arrow["\phi", from=1-1, to=1-2]
	\arrow[Rightarrow, from=1-1, to=2-1]
	\arrow[Rightarrow, from=1-2, to=2-2]
	\arrow["\varphi", from=2-1, to=2-2]
\end{tikzcd}\]
As in the foliated case, examples of gauge transformations include the following.
\begin{enumerate}[label={(\roman*)}]
  \item Algebroid automorphisms of the form $\Ad_b=(I_b)_*$, where $I_b$ is the inner automorphism by a (target) bisection $b\in\mathrm{Bis}(\G)$ of an integrating $s$-connected Lie groupoid $\G$ of $A$, whose base map $\varphi=s\circ b$ is an orientation-preserving isometry. This relies on Lemma \ref{lem:pairing_A_invariant_implies_G_invariant}.
  \item Flows of inner derivations $[\alpha,\cdot]$ of the Lie algebroid $A$, for sections $\alpha\in\Gamma(A)$ such that the flow of the (complete) vector field $\rho(\alpha)$ is an orientation-preserving isometry of $M$. Specifically, any $\alpha\in\Gamma(\ker\rho)$ is of this kind.
\end{enumerate}
To demonstrate gauge invariance, first note that any Weil cochain $c\in W^{p,q}(A;\frak k)$ can be pulled back along $\phi$. Indeed, observe that on $\Omega^\bullet(M;\frak k)$, the pullback along $\phi$ is defined on simple tensors as
$\phi^*(\gamma\otimes\xi)=\varphi^*\gamma\otimes (\phi^{-1})_*\xi$. 
  With this in mind, the pullback $\phi^* c\in W^{p,q}(A;\frak k)$ reads
\begin{align*}
  (\phi^* c)_k(\alpha_1,\dots,\alpha_{p-k}\|\beta_1,\dots,\beta_k)=\phi^*c_k(\phi_*(\alpha_1),\dots,\phi_*(\alpha_{p-k})\|\phi_*(\beta_1),\dots,\phi_*(\beta_k)).
\end{align*}
Since $\phi$ is a Lie algebroid automorphism, $\phi^*$ defines an automorphism of the cochain complex $W^{\bullet,q}(A;\frak k)$, for any fixed $q\geq 0$. In particular, we can pull back an IM connection $(\C,v)$ and again obtain an IM connection $\phi^*(\C,v)\eqcolon(\C_\phi,v_\phi)$, since $\phi$ restricts to a Lie algebra bundle automorphism $\frak k\ra \frak k$ covering $\varphi$.
The induced linear connection $\nabla^\phi\coloneq \C_\phi|_{\frak k}$ is simply the pullback connection, given by
\begin{align*}
  \nabla^\phi_X\xi=(\phi^{-1})_*\nabla_{\varphi_*X}(\phi_*\xi),
\end{align*}
for any $X\in\vf(M)$ and $\xi\in\Gamma(\frak k)$. With this in mind, it is easy to see the following.
\begin{lemma}
  For a given Lie algebroid $A\Ra M$ with an $\ad$-invariant metric $\inner\cdot\cdot_{\frak k}$ on a bundle of ideals $\frak k$, metric compatibility of IM connections is stable under gauge transformations.
\end{lemma}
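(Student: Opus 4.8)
The claim is that if $(\C,v)$ is an IM connection whose induced linear connection $\nabla=\C|_{\frak k}$ is (transversally) compatible with the $\ad$-invariant metric $\inner\cdot\cdot_{\frak k}$, then the same holds for the pulled-back IM connection $\phi^*(\C,v)=(\C_\phi,v_\phi)$, whenever $\phi$ is a gauge transformation in the sense described above, i.e.\ $\phi$ is a Lie algebroid automorphism covering an isometry $\varphi$ of $M$ which restricts on $\frak k$ to a Lie algebra bundle automorphism that is also fibrewise isometric. The plan is to reduce the statement to a direct verification of the metric-compatibility identity for the pullback connection $\nabla^\phi$, using only the two defining properties of $\phi$ (that $\varphi$ is an isometry and that $\phi|_{\frak k}$ is a fibrewise isometry) together with the hypothesis on $\nabla$.

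First I would record the explicit formula for the pullback connection, which is already stated just before the lemma: for $X\in\vf(M)$ and $\xi\in\Gamma(\frak k)$,
\[
\nabla^\phi_X\xi=(\phi^{-1})_*\,\nabla_{\varphi_*X}\,(\phi_*\xi).
\]
Then, given two sections $\xi,\eta\in\Gamma(\frak k)$, I would compute $X\inner{\xi}{\eta}_{\frak k}$ by first using that $\phi|_{\frak k}$ is a fibrewise isometry to write $\inner{\xi}{\eta}_{\frak k}=\varphi^*\inner{\phi_*\xi}{\phi_*\eta}_{\frak k}$, hence
\[
X\inner{\xi}{\eta}_{\frak k}=X\big(\varphi^*\inner{\phi_*\xi}{\phi_*\eta}_{\frak k}\big)=\varphi^*\big((\varphi_*X)\inner{\phi_*\xi}{\phi_*\eta}_{\frak k}\big).
\]
Now apply metric compatibility of the original connection $\nabla$ to the vector field $\varphi_*X$ and the sections $\phi_*\xi,\phi_*\eta$ (this is where the hypothesis on $\nabla$ enters), obtaining
\[
(\varphi_*X)\inner{\phi_*\xi}{\phi_*\eta}_{\frak k}=\inner{\nabla_{\varphi_*X}(\phi_*\xi)}{\phi_*\eta}_{\frak k}+\inner{\phi_*\xi}{\nabla_{\varphi_*X}(\phi_*\eta)}_{\frak k}.
\]
Finally, pull back by $\varphi^*$ and use the fibrewise isometry property of $\phi|_{\frak k}$ in the reverse direction, i.e.\ $\varphi^*\inner{\zeta}{\zeta'}_{\frak k}=\inner{(\phi^{-1})_*\zeta}{(\phi^{-1})_*\zeta'}_{\frak k}$, to rewrite each term and recognize $(\phi^{-1})_*\nabla_{\varphi_*X}(\phi_*\xi)=\nabla^\phi_X\xi$; this yields exactly $X\inner{\xi}{\eta}_{\frak k}=\inner{\nabla^\phi_X\xi}{\eta}_{\frak k}+\inner{\xi}{\nabla^\phi_X\eta}_{\frak k}$, which is metric compatibility of $\nabla^\phi$.

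For the transversal version of the statement, the same computation goes through verbatim once one observes that if $(X_i)_i$ induce a basis of $N_x\F=T_xM/\im\rho_x$, then $(\varphi_*X_i)_i$ induce a basis of $N_{\varphi(x)}\F$, because $\varphi$ is an automorphism of the orbit foliation (being the base map of a Lie algebroid automorphism, it maps $\im\rho$ to $\im\rho$); so transversal compatibility of $\nabla$ at $\varphi(x)$ transfers to transversal compatibility of $\nabla^\phi$ at $x$. The only genuinely delicate point — though still routine — is to confirm that the pullback $\phi^*(\C,v)$ is indeed again an IM connection (so that $\nabla^\phi=\C_\phi|_{\frak k}$ makes sense as the linear connection induced by an IM connection): this follows from $\phi^*$ being an automorphism of the Weil complex $W^{\bullet,q}(A;\frak k)$ at each $q$, hence preserving cocycles, combined with the fact that $\phi|_{\frak k}=\id$ on the symbol level is replaced here by $\phi|_{\frak k}$ being an isomorphism, so $v_\phi|_{\frak k}=\id_{\frak k}$ still holds — these observations are already made in the paragraph preceding the lemma. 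I expect no serious obstacle; the statement is essentially the functoriality of metric compatibility under isometric pullback, and the main care is in bookkeeping the directions $\varphi_*$ versus $(\varphi^{-1})_*$ and $\phi_*$ versus $(\phi^{-1})_*$ correctly.
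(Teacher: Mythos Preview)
Your proposal is correct and is essentially the same computation as the paper's proof, just run in the opposite direction: the paper starts from $\inner{\nabla^\phi_X\xi}{\eta}_{\frak k}+\inner{\xi}{\nabla^\phi_X\eta}_{\frak k}$ and works toward $X\inner{\xi}{\eta}_{\frak k}$, whereas you start from $X\inner{\xi}{\eta}_{\frak k}$ and unwind to the sum. Your attribution of the key steps to the fibrewise isometry property of $\phi|_{\frak k}$ is arguably cleaner than the paper's phrasing (which attributes them to ``$\ad$-invariance''), and your remarks on the transversal version and on $\phi^*(\C,v)$ being an IM connection are correct and already covered by the paper's preceding discussion.
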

\begin{proof}
  Suppose $\nabla=\C|_{\frak k}$ is compatible with $\inner\cdot\cdot_{\frak k}$. For any $X\in\vf(M)$ and $\xi,\eta\in\Gamma(\frak k)$, we compute
  \begin{align*}
    \innersmall{\nabla^\phi_X\xi}{\eta}_{\frak k}+\innersmall{\xi}{\nabla^\phi_X\eta}_{\frak k}&=\innersmall{\nabla_{\varphi_*X}(\phi_*\xi)}{\phi_*\eta}_{\frak k}\circ\varphi+\innersmall{\phi_*\xi}{\nabla_{\varphi_*X}(\phi_*\eta)}_{\frak k}\circ\varphi\\
    &=(\varphi_*X)\innersmall{\phi_*\xi}{\phi_*\eta}_{\frak k}\circ\varphi\\
    &=(\varphi_*X)(\inner{\xi}{\eta}_{\frak k}\circ\varphi^{-1})\circ\varphi\\
    &=X\inner{\xi}{\eta}_{\frak k}
  \end{align*}
  where we have used $\ad$-invariance in the first and third equality, and the assumption that $\nabla$ is compatible with the given metric $\inner\cdot\cdot_{\frak k}$ on the second equality. In the last line, we used \cite{lee}*{Corollary 8.21}. Hence, compatibility with an $\ad$-invariant metric $\inner\cdot\cdot_{\frak k}$ is stable under gauge transformations. 
\end{proof} 

\begin{theorem}
  \label{thm:multiplicative_ym_gauge_invariance}
	Let $A$ be a Lie algebroid with Yang--Mills data. The multiplicative Yang--Mills action is invariant under the pullback of a gauge transformation $(\phi, \varphi)$, that is, 
	\[
	\S(\phi^*(\C,v),\phi^* F)=\S((\C,v),F),
	\]
	for any triple $((\C,v),F)\in \DD(A;\frak k)$. In particular, longitudinal and transversal criticality are invariant under gauge transformations, and moreover, so is adaptedness.
\end{theorem}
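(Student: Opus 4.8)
The plan is to reduce the gauge-invariance statement to a single structural observation: that a gauge transformation $(\phi,\varphi)$ preserves every ingredient entering the definition of the action functional $\S$, and therefore commutes with the formation of the curvature, the curving, and the $3$-curvature. Concretely, I would first record how the pullback $\phi^*$ interacts with the relevant operations. Since $\phi$ is a Lie algebroid automorphism, $\phi^*$ commutes with the simplicial differential $\delta$, so it maps the constraint set $\DD(A;\frak k)$ to itself: from $\delta^0 F=\Omega^{(\C,v)}$ we get $\delta^0(\phi^*F)=\phi^*\delta^0 F=\phi^*\Omega^{(\C,v)}=\Omega^{\phi^*(\C,v)}$, the last equality because $\phi^*$ commutes with the horizontal projection $h^*$ (as $\phi$ preserves the bundle of ideals $\frak k$, hence the horizontal subbundle is carried to the horizontal subbundle of the pulled-back IM connection) and with $\d{}^\nabla$ up to replacing $\nabla$ by $\nabla^\phi=\C_\phi|_{\frak k}$. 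Thus $(\phi^*(\C,v),\phi^*F)\in\DD(A;\frak k)$, and by the preceding lemma the induced connection $\nabla^\phi$ remains compatible with $\inner\cdot\cdot_{\frak k}$, so the statement is at least well-posed.

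Next I would compute the two terms of $\S$ after pullback. For the $3$-curvature, $G_\phi=\d{}^{\nabla^\phi}(\phi^*F)=\phi^*(\d{}^\nabla F)=\phi^*G$, using that $\d{}^{\nabla^\phi}$ is the pullback connection's exterior covariant derivative and hence intertwines with $\phi^*$ on $\frak k$-valued forms. It then remains to show $\innerr{\phi^*F}{\phi^*F}_{\frak k}=\innerr{F}{F}_{\frak k}$ and $\innerr{\phi^*G}{\phi^*G}_{\frak k}=\innerr{G}{G}_{\frak k}$. Pointwise, $\inner{\phi^*\omega}{\phi^*\omega}_{\frak k}$ at $x$ equals $\inner{\omega}{\omega}_{\frak k}$ at $\varphi(x)$, composed with $\varphi$, precisely because $\varphi$ is an isometry of $(M,g)$ (so the metric on $\Lambda^k T^*M$ is preserved) and $\phi|_{\frak k}$ is a fibrewise isometry of $\inner\cdot\cdot_{\frak k}$. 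Integrating and using that $\varphi$ is an orientation-preserving diffeomorphism (hence $\varphi^*\vol_M=\vol_M$ and $\int_M\varphi^*(\,\cdot\,)=\int_M(\,\cdot\,)$) gives $\innerr{\phi^*\omega}{\phi^*\omega}_{\frak k}=\innerr{\omega}{\omega}_{\frak k}$. Applying this to $\omega=F$ and $\omega=G$ yields $\S(\phi^*(\C,v),\phi^*F)=\S((\C,v),F)$.

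Finally, the statements about criticality and adaptedness follow formally. Since $\phi^*$ is an automorphism of each complex $W^{\bullet,q}(A;\frak k)$ and intertwines $\delta^0$, $\d{}^\nabla$, $\delta^\nabla$ (the latter because $\varphi$ is an isometry, so $\phi^*$ commutes with the Hodge star, whence with $\delta^\nabla=(-1)^k\star^{-1}\d{}^\nabla\star$ by equation \eqref{eq:delta_nabla}), it carries a longitudinal variation $(\C,v)+\lambda\delta^0\gamma$ with curving $F^{\lambda\gamma}$ to $\phi^*(\C,v)+\lambda\delta^0(\phi^*\gamma)$ with curving $(\phi^*F)^{\lambda\phi^*\gamma}$, and likewise a transversal variation $F+\lambda\beta$ to $\phi^*F+\lambda\phi^*\beta$ with $\phi^*\beta\in\Omega^2_\inv(M;\frak k)$ (invariance is preserved as $\phi^*$ commutes with $\L^A$ and $\iota_\rho$). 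Combined with the already-established equality $\S\circ\phi^*=\S$, differentiating in $\lambda$ shows the corresponding derivatives agree, so $((\C,v),F)$ is longitudinally (resp.\ transversally) critical iff $(\phi^*(\C,v),\phi^*F)$ is; and adaptedness, being the identity $\delta^0(\delta^\nabla G)=-\tfrac1\mu\Omega^{(\C,v)}$, is transported by $\phi^*$ in the same way. The only genuinely delicate point—and the main thing to verify carefully—is that $\phi^*$ really does commute with the horizontal projection $h^*$ of Weil cochains and with $\D{}^{(\C,v)}$ in the sense that $\phi^*\D{}^{(\C,v)}=\D{}^{\phi^*(\C,v)}\phi^*$; this is where the concrete description of $h^*$ from Definition \ref{def:hor_proj} (or, more conceptually, its $\vb$-algebroid incarnation from Definition \ref{def:hor_proj_ext}, since $\d\phi$ carries the $\vb$-subalgebroid $E\subset TA$ to the one defining $\phi^*(\C,v)$) must be invoked, and I would phrase that step via the exterior-cochain model to keep it clean.
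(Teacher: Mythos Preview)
Your proposal is correct and follows essentially the same approach as the paper's proof: both reduce the argument to the naturality identity $\phi^*\D{}^{(\C,v)}=\D{}^{\phi^*(\C,v)}\phi^*$, established via $\d{}^{\nabla^\phi}\phi^*=\phi^*\d{}^\nabla$ and $h^*_\phi\phi^*=\phi^*h^*$ (the latter through the $\vb$-algebroid picture, exactly as you suggest), and then use the isometry assumptions on $\varphi$ and $\phi|_{\frak k}$ to handle the inner products and the Hodge star. Your treatment of criticality is in fact slightly more explicit than the paper's, which simply notes at the end that adaptedness is preserved because pullback along an orientation-preserving isometry commutes with $\star$.
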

\begin{proof}
  We first note that the curvature of the pullback IM connection is the pullback of the curvature, that is,
  \[
  \D{}^{\phi^*(\C,v)}=\phi^*\Omega^{(\C,v)}.
  \]
  This is an immediate consequence of naturality of the horizontal exterior covariant derivative, i.e., it commutes with pullbacks along Lie algebroid isomorphisms,
\begin{align*}
    \D{}^{\phi^*(\C,v)}\phi^*=\phi^*\D{}^{(\C,v)},
\end{align*}
  which is implied by the identities 
  \begin{align*}
    \d{}^{\nabla^\phi}\phi^*=\phi^*\d{}^\nabla\,\text{ and }\,h^*_\phi\phi^*=\phi^*h^*,
  \end{align*} 
  where $h_\phi^*$ is the horizontal projection of Weil cochains with respect to the IM connection $\phi^*(\C,v)$. The first identity is clear, and the second is most easily seen in the $\vb$-algebroid picture---it follows from $\d\phi\circ \smash{h^{TA}_\phi}=h^{TA}\circ\d\phi$, where $h^{TA}_\phi\colon TA\ra E_\phi=\d\phi^{-1}(E)$ is the horizontal projection with respect to the IM connection $\phi^*(\C,v)$. 
  
  Now, if $F$ is a curving of $(\C,v)$ with 3-curvature $G$, then $\phi^*F$ is a curving of $\phi^*(\C,v)$ with 3-curvature $\phi^*G$. This follows from the already observed fact that $\phi^*$ is an automorphism of the Weil cochain complex (at any fixed degree $q$). Thus, both the curving and its 3-curvature satisfy
  \begin{align}
    \phi^*F=(\phi^{-1})_*\circ \varphi^*F,\quad \phi^*G=(\phi^{-1})_*\circ \varphi^*G,
  \end{align}
  where  the pullback $\varphi^*\colon \Omega^\bullet(M;\frak k)\ra \Omega^\bullet(M;\varphi^*\frak k)$ is defined on simple tensors by pulling back the form and leaving the coefficients alone, as in \eqref{eq:varphi_star_foliated_ym_invariance}. This concludes the proof of the first part of the theorem. For the last part, invariance of adaptedness follows from the fact that the pullback along any orientation-preserving isometry commutes with the Hodge star operator.
\end{proof}

\subsection{Example: central $S^1$-extensions}
\label{sec:central_extensions_ym}
We now present an example whose purpose is to stress the following point: although the multiplicative Yang--Mills action functional can be constructed purely in terms of infinitesimal data, restricting to primitive connections of a specific integrating groupoid might sometimes instead be desirable. This is also the simplest non-transitive example that we know of.

Let us consider an \emph{$S^1$-extension of a submersion groupoid} is a Lie groupoid $\G\rra M$, together with a surjective submersive Lie groupoid morphism $f\colon \G\ra M\times_\pi M$ onto the submersion groupoid of a surjective submersion $\pi\colon M\ra N$, whose kernel is the trivial abelian Lie group bundle $S^1_M=M\times S^1$. The situation is portrayed with the short exact sequence of Lie groupoids,
\[\begin{tikzcd}
	1 & S^1_M & \G & {M\times_\pi M} & 1.
	\arrow[from=1-1, to=1-2]
	\arrow[from=1-2, to=1-3]
	\arrow["f",from=1-3, to=1-4]
	\arrow[from=1-4, to=1-5]
\end{tikzcd}\]
An $S^1$-extension is said to be \emph{central}, if for any $g\in \G$ and $\theta\in S^1$ there holds 
\[C_g(s(g),\theta)=(t(g),\theta),\]
where $C_g$ is the conjugation by $g$. At the level of Lie algebroids, the short exact sequence reads
\[\begin{tikzcd}
	0 & \R_M & A & {T\F} & 0,
	\arrow[from=1-1, to=1-2]
	\arrow[from=1-2, to=1-3]
	\arrow["{f_*}", from=1-3, to=1-4]
	\arrow[from=1-4, to=1-5]
\end{tikzcd}\]
and centrality translates to the restricted adjoint representations $\Ad\colon \G\curvearrowright \R_M$, $\ad\colon A\curvearrowright \R_M$ being the trivial ones. It is instructive to keep in mind the special case $f=(t,s)$, in which case $f_*=\rho$, the kernel $M\times S^1$ is the isotropy bundle of $\G$, and $\F$ is its orbit foliation, which is simple. 

Let us now consider a multiplicative Ehresmann connection $\omega\in\A(\G;\R_M)$ for the groupoid morphism $f$. By \cite{mec}*{Proposition 4.9}, such a connection exists (since $\G$ must be proper) and moreover, the induced linear connection $\nabla$ on $\R_M$ from Proposition \eqref{eq:nabla}, must be the canonical flat connection. This follows from the fact that the differential of $\exp\colon \R_M\ra S^1_M$ maps the induced linear connection, viewed as a distribution  $E^\nabla\subset T(\R_M)$, to the multiplicative Ehresmann connection $E^{S^1_M}=T(S^1_M)\cap E$ where $E=\ker\omega$, see \cite{mec}*{equation (2.3)}. It implies the constant section of $\R_M$ given by $x\mapsto (x,1)$ must be flat, hence all constant sections are flat and thus $\nabla$ is the canonical flat connection. Importantly, this holds due to our choice of integration---an IM connection on $A$ for $\R_M$ might give rise to a different connection $\nabla$ on $\R_M$.

As a consequence of this observation, any multiplicative connection $\omega$ on $\G$ for $\R_M$ must in fact be primitive. The proof of this claim rests upon the following procedure from \cite{mec}. First, note that the structure equation implies the curvature of $\omega$ equals
\[
\Omega^\omega= \d\omega \in\Omega^2_m(\G).
\]
Since $\Omega^\omega$ is horizontal and because there exists a multiplicative connection with $R^\nabla=0$, there is a unique 2-form on the submersion groupoid $\ul\Omega^\omega\in\Omega^2_m(M\times_\pi M)$ which pulls back to $\Omega^\omega$ via the morphism $f$. However, every multiplicative form on the submersion groupoid is cohomologically trivial by \cite{mec}*{Lemma 4.5}---this should actually be seen as a consequence of the deeper fact that $H^{p,q}(M\times_\pi M)=0$ for all $p\geq 1,q\geq 0$ (see \cite{bundle_gerbes_original}*{\sec 8}). We remind the reader of the definition of the cohomology groups \eqref{eq:bss_cohomology}. So, there exists a 2-form $F\in\Omega^2(M)$ such that
\begin{align*}
  \ul\Omega^\omega=\pr_2^*F-\pr_1^*F.
\end{align*}
This is our desired curving of $\omega$, that is, $\delta^0 F=\Omega^\omega$. Note that due to the centrality of extension, invariant forms $\Omega_\inv^\bullet(M)$ are precisely the ones in the image of the pullback $\pi^*\colon \Omega^\bullet (N)\ra \Omega^\bullet (M)$. Hence, for instance, $F$ is only unique up to a form $\beta=\pi^*\tilde\beta$ for some $\tilde\beta\in\Omega^2(N)$, and the curvature 3-form $G=\d F$ must equal $G=\pi^* \tilde G$ for some unique $\tilde G\in\Omega^3(N)$.

Equipped with this insight, we are now ready to formulate and study the desired Yang--Mills theory, restricted to multiplicative connections on $\G$. To begin with, we take $\kappa$ as the canonical metric on $\R_M$ and fix any Riemannian metric $g$ on $M$. We define the action functional by “restricting” the functional \eqref{eq:multiplicative_ym_action} to the (primitive) multiplicative Ehresmann connections on $\G$:
\begin{align}
  \S_\G\colon \DD(\G;\R_M)\ra \R,\quad \S_\G(\omega, F)=\S(\ve(\omega),F)=\innerr{F}{F}+\mu\innerr{\d F}{\d F},
\end{align}
where the domain now consists of multiplicative Ehresmann connections and their splittings,
\[
\DD(\G;\R_M)=\set{(\omega,F)\in\A(\G;\R_M)\times \Omega^2(M)\given \delta^0F=\Omega^\omega}.
\]
The notions of criticality and adaptedness for the global case are now defined analogously to the infinitesimal case, see Definition 
\ref{def:global_multiplicative_ym}; moreover, the respective global and infinitesimal notions of criticality and adaptedness are equivalent when $\G$ has connected $s$-fibres (see Proposition \ref{prop:relation_multiplicative_ym_groupoids_algebroids}). Now, the analogous equivalences to those from Theorem \ref{thm:ym_multiplicative} read:
\begin{align*}
  \text{$(\omega,F)$ is longitudinally critical}\quad&\iff\quad \d{}\star F=0,\\[0.3em]
  \text{$(\omega,F)$ is transversally critical and adapted}\quad&\iff\quad \d{} \star G=\tfrac 1\mu\star F.\qquad\qquad
\end{align*}
Let us first discuss the case $\dim N\leq 2$, when the focus is on solving the first Yang--Mills equation. To show it admits a solution, first take any pair $(\omega,F)$. Note that since the curvature 3-form $G=\d F$ vanishes, Hodge theorem ensures $[F]\in H^2_{dR}(M)$ has a (unique) harmonic representative, i.e., there exists a form $\gamma\in\Omega^1(M)$, unique up to a closed 1-form, such that $\delta(F+\d\gamma)=0$. We conclude that longitudinally critical points exist: since $\R_M$ is abelian, $F+\d\gamma$ is the curving of $\omega+\delta^0\gamma$ by equation \eqref{eq:curving_deformation}, hence the pair
\[
(\omega+\delta^0\gamma,F+\d\gamma)\in\DD(\G;\R_M)
\]
is longitudinally critical. 

The case $\dim N\geq 3$ is more intricate. Firstly, observe that since $\nabla$ is flat, the second Yang--Mills equation implies the first. But solvability of the second equation amounts to the operator $\delta{\d{}}$ on 2-forms having an eigenvalue $-\frac 1\mu$, which is a statement about the Riemannian manifold $M$. To dissect the second equation further, we note that transversal criticality is now equivalent to orthogonality of $F+\mu\delta G$ to the subspace of pullback forms, i.e., 
\[(F+\mu\delta G)\perp \im(\pi^*)\subset \Omega^2(M).\]
If we suppose that the metric on $M$ is $\pi$-invariant, this means precisely that $F+\mu\delta G$ vanishes whenever a pair of vectors from $\ker \d \pi^\perp$ is inserted. On the other hand, adaptedness simply means that $\iota_{X}(F+\mu\delta G)=0$ whenever $X\in\ker\d \pi$. Hence, we see the two conditions together mean precisely $\delta G=-\frac 1\mu F$.

\subsection{The integrable case}
\label{sec:multiplicative_ym_groupoids}
As witnessed in \sec\ref{sec:central_extensions_ym}, the global case is of independent interest. Albeit with some apparent repetition, we shall now provide the global analogues of the notions we have introduced for algebroids, and provide a simple relation between the global and infinitesimal Yang--Mills theory.

To begin with, given a bundle of ideals $\frak k$ of a groupoid $\G$, since we are not imposing any a priori connectedness assumptions on $\G$, the metric on $\frak k$ is now assumed $\Ad$-invariant---see \eqref{eq:Ad_invariance}.
\begin{definition}
  \label{def:global_multiplicative_ym}
  Let $\frak k$ be a bundle of ideals of a Lie groupoid $\G\rra M$, together with some Yang--Mills data. Let us denote
  \begin{align*}
    \DD(\G;\frak k)=\set*{(\omega,F)\in\A(\G;\frak k)\times \Omega^2(M;\frak k)\given 
      \Omega^\omega=\delta^0F
    }.
  \end{align*}
  The (global) \emph{multiplicative Yang--Mills action functional} is now defined as the map 
    \begin{align}
      \label{eq:global_multiplicative_ym_action}
      &\S_\G\colon \DD(\G;\frak k)\ra \R,\quad \S_\G(\omega,F)=\S(\ve(\omega),F)=\innerr FF_{\frak k}+\mu \innerr GG_{\frak k},
    \end{align}
    The relevant notions pertaining to criticality are now defined for $(\omega,F)$ as:
    \begin{enumerate}[label={(\roman*)}]
      \item \emph{Longitudinal criticality}: $\smallderiv\lambda0\S_\G(\omega+\lambda \delta^0\gamma, F^{\lambda\gamma})=0$, for all $\gamma\in\Omega^1(M;\frak k)$.
      \item \emph{Transversal criticality}:
        $\smallderiv\lambda0\S_\G(\omega, F+\lambda\beta)=0,$ 
        for all $\beta\in\Omega_{\inv,\G}^2(M;\frak k)$.
      \item \emph{Adaptedness}: $\delta^0(\delta^\nabla G)=-\frac 1\mu\Omega^\omega.$
    \end{enumerate}
    We note that  $\delta^0$ is now the simplicial differential on forms on $\G$ at level zero, $\Omega^\bullet(M;\frak k)\ra \Omega^\bullet_m(\G;\frak k)$, and invariant forms $\smash{\Omega_{\inv,\G}^2(M;\frak k)}$ are those in its kernel. The form $F^{\lambda\gamma}$ is the same as in the infinitesimal case, and one analogously shows it is a curving of $\omega+\lambda\delta^0\gamma$ if $F$ is a curving of $\omega$.
\end{definition}
\noindent Inspecting the proof of Theorem \ref{thm:ym_multiplicative}, we see we again have the following equivalences:
\begin{align*}
  \text{$(\omega,F)$ is longitudinally critical}\quad&\iff\quad \d{}^{\nabla}{\star\,F}=0,\\[0.3em]
  \text{$(\omega,F)$ is transversally critical}\quad&\iff\quad (F+\mu\delta^\nabla G)\perp \Omega_{\inv,\G}^2(M;\frak k),\qquad\\[0.3em]
  \text{$(\omega,F)$ is transversally critical and adapted}\quad&\iff\quad \d{}^\nabla{\star\,G}={\tfrac 1\mu}\star F.
\end{align*}
What follows is the promised relation between the global and infinitesimal theory. 

\begin{samepage}
  \begin{proposition}
    \label{prop:relation_multiplicative_ym_groupoids_algebroids}
    Let $\frak k$ be a bundle of ideals of a Lie groupoid $\G$ with Lie algebroid $A$, together with a fixed Yang--Mills data, and suppose a pair $(\omega,F)\in\DD(\G;\frak k)$ is given. The van Est map relates the notions of criticality and adaptedness as follows.
    \begin{enumerate}[label={(\roman*)}]
      \item $(\omega,F)$ is longitudinally critical if and only if $(\ve(\omega),F)$ is longitudinally critical.
      \item If $(\ve(\omega),F)$ is transversally critical, then $(\omega,F)$ is transversally critical.
      \item If $(\omega,F)$ is adapted, then $(\ve(\omega),F)$ is adapted.
      \item $(\omega,F)$ is transversally critical and adapted if and only if $(\ve(\omega),F)$ is transversally critical and adapted.
    \end{enumerate}
    The converse implications to (ii) and (iii) hold if $\G$ has connected $s$-fibres. Furthermore, if $\G$ has simply connected $s$-fibres, the bijective map 
    \begin{align*}
      &\DD(\G;\frak k)\ra \DD(A;\frak k),\quad (\omega,F)\mapsto (\ve(\omega),F)
    \end{align*}
    restricts to bijections between pairs and triples which are longitudinally critical, transversally critical, or adapted, respectively.
  \end{proposition}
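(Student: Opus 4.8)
The heart of the proof is to translate each of the statements about the global objects $(\omega,F)\in\DD(\G;\frak k)$ into the corresponding infinitesimal statement via the van Est map, using three facts established earlier: (a) the van Est map sends multiplicative Ehresmann connections to IM connections and, by the Corollary following Theorem~\ref{thm:van_est_D}, sends the curvature $\Omega^\omega$ to the curvature $\Omega^{\ve(\omega)}$; (b) $\ve$ commutes with $\delta^0$ (it is a cochain map, Theorem~\ref{thm:homogeneous}), so $\delta^0 F=\Omega^\omega$ on $\G$ is equivalent to $\delta^0 F=\Omega^{\ve(\omega)}$ on $A$ — hence the map $(\omega,F)\mapsto(\ve(\omega),F)$ really does land in $\DD(A;\frak k)$ and is well-defined; and (c) the induced linear connection on $\frak k$ is the same whether computed from $\omega$ or from $\ve(\omega)$ (Proposition~\ref{prop:conn_global_inf}), so the operators $\d{}^\nabla$, $\star$, $\delta^\nabla$ appearing in both Yang--Mills equations literally coincide, and likewise the $3$-curvature $G=\d{}^\nabla F$ is the same object.

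\textbf{Key steps.} First I would record that the action functionals agree on the nose: $\S_\G(\omega,F)=\S(\ve(\omega),F)$ by the very definition of $\S_\G$ in Definition~\ref{def:global_multiplicative_ym}. Next, for (i), longitudinal criticality of $(\omega,F)$ and of $(\ve(\omega),F)$ are literally the same condition once one notes $\ve(\omega+\lambda\delta^0\gamma)=\ve(\omega)+\lambda\delta^0\gamma$ (again since $\ve$ is linear and commutes with $\delta^0$) and that $F^{\lambda\gamma}$ is defined by the same formula in both settings using the same $\nabla$; by the proof of Theorem~\ref{thm:ym_multiplicative}(i) both are equivalent to $\d{}^\nabla\!\star F=0$, which is an equation purely on $M$. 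For (ii) and (iii) the asymmetry comes from the spaces of invariant $2$-forms: $\Omega^2_{\inv,\G}(M;\frak k)=\ker\delta^0_\G$ while $\Omega^2_{\inv}(M;\frak k)=\ker\delta^0_A$, and since $\ve\circ\delta^0_\G=\delta^0_A\circ\ve=\delta^0_A$ on forms on $M$ (the van Est map is the identity at level $p=0$), one always has $\Omega^2_{\inv,\G}\subseteq\Omega^2_{\inv}$, with equality precisely when $G$-invariant forms coincide with $A$-invariant forms — which holds when $\G$ has connected $s$-fibres. Transversal criticality is vanishing of $\innerr{F+\mu\delta^\nabla G}{\cdot}_{\frak k}$ on the respective invariant subspace, so the smaller subspace gives the weaker condition: $(\ve(\omega),F)$ transversally critical $\Rightarrow$ $(\omega,F)$ transversally critical, with the converse under $s$-connectedness. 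Adaptedness is the equation $\delta^0(\delta^\nabla G)=-\tfrac1\mu\Omega^\omega$; applying $\ve$ and using facts (a),(b) gives the global $\Rightarrow$ infinitesimal implication immediately, while the converse needs injectivity of $\ve$ on cohomologically trivial (indeed on IM) forms, i.e. $s$-connectedness via Corollary~\ref{corollary:van_est_multiplicative}. Then (iv) is the conjunction of the transversal-critical and adapted cases, where I would invoke Theorem~\ref{thm:ym_multiplicative}(ii): both conjunctions are equivalent to the single equation $\d{}^\nabla\!\star G=\tfrac1\mu\star F$ on $M$, which does not see $\G$ versus $A$ at all — so no connectedness hypothesis is needed here. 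Finally, for the last sentence, if $\G$ has simply connected $s$-fibres then Corollary~\ref{corollary:van_est_multiplicative} makes $\ve\colon\Omega^1_m(\G;\frak k)\to\Omega^1_{im}(A;\frak k)$ a bijection restricting to $\A(\G;\frak k)\xrightarrow{\cong}\A(A;\frak k)$ (it preserves the symbol, hence the condition $v|_{\frak k}=\id$), and combined with fact (b) this gives the bijection $\DD(\G;\frak k)\xrightarrow{\cong}\DD(A;\frak k)$; the restriction to the three distinguished subsets follows from parts (i)–(iv) together with the fact that under $s$-simple-connectedness the converses in (ii),(iii) also hold.

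\textbf{Main obstacle.} The only genuinely non-formal point is the interplay in parts (ii) and (iii) between the two notions of invariant $2$-form and the role of $s$-connectedness; everything else is a matter of carefully unwinding definitions and citing that $\ve$ is a cochain map which is the identity at level $p=0$, preserves symbols, and sends $\Omega^\omega\mapsto\Omega^{\ve(\omega)}$. I would make sure to state explicitly the inclusion $\Omega^2_{\inv,\G}(M;\frak k)\subseteq\Omega^2_{\inv}(M;\frak k)$ and its reverse under connected source fibres (this is exactly the statement used in the proof of Corollary~\ref{corollary:van_est_multiplicative} that $\ker\delta^0_\G=\ker\delta^0_A$), since the direction of each implication in (ii)–(iii) is dictated by which of the two orthogonality conditions is the stronger one. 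No long computation is required; the proof is essentially a diagram-chase through the van Est map.
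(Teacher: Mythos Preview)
Your proposal is correct and follows essentially the same approach as the paper's proof, which is extremely terse (three lines): the paper simply declares (i) and (iv) clear, derives (ii) and (iii) from the inclusion $\Omega^\bullet_{\inv,\G}(M;\frak k)\subset\Omega^\bullet_{\inv,A}(M;\frak k)$ implied by $\ve\circ\delta^0_\G=\delta^0_A$, and invokes Corollary~\ref{corollary:van_est_multiplicative} for the last part. Your expanded version makes explicit the rephrasing of adaptedness as ``$F+\mu\delta^\nabla G$ is invariant'' (which is how the inclusion argument applies to (iii)), and spells out why (iv) reduces to an equation on $M$; this is exactly the content the paper leaves implicit. One minor point: for the converse of (iii) under source-connectedness you cite Corollary~\ref{corollary:van_est_multiplicative}, but that corollary concerns simply connected $s$-fibres; the injectivity of $\ve$ on multiplicative forms under mere source-connectedness is the fact used (and stated) in the proof of Proposition~\ref{prop:g_inv_a_inv}.
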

\end{samepage}
\begin{proof}
  The points (i) and (iv) are clear; (ii) and (iii) follow simply from the fact that $\ve\circ \delta^0_\G=\delta^0_A$, implying $\Omega_{\inv,\G}^\bullet(M;\frak k)\subset \Omega_{\inv,A}^\bullet(M;\frak k)$. The last part is a consequence of Corollary \ref{corollary:van_est_multiplicative}.
\end{proof}

\begin{subappendices}
\section{Related generalizations}
This section is devoted to a relaxed discussion about the relationship of the obtained framework of multiplicative Yang--Mills theory with other generalizations of Yang--Mills theory. At the very end of the section, we mention a potential umbrella framework where all the different generalizations meet as particular cases.

\subsubsection{Yang--Mills theory for bundle gerbes}
  Consider once again the example of central $S^1$-extensions, presented in  \sec\ref{sec:central_extensions_ym}. The notion of a central $S^1$-extension is very closely related to the notion of an $S^1$-bundle gerbe \cites{gerbes, stacks_gerbes, bundle_gerbes, bundle_gerbes_original}---the latter is a Morita equivalence class of the former. An attempt to obtain a Yang--Mills theory on $S^1$-bundle gerbes was already  made by Mathai and Roberts in \cite{ym_gerbes}, however, the paper was recently retracted due to the discovery of an unfixable flaw by the authors themselves. To our understanding, the flaw was in their definition of gauge transformations for bundle gerbes; it erroneously assumed the existence of a certain linear connection, in turn rendering their result on gauge invariance and several results of the article uncertain. The notion of gauge invariance we have provided in \sec\ref{sec:multiplicative_gauge_invariance} is certainly independent of any such external data, and the existence of gauge transformations is also clear by Remark \ref{rem:exp_bisections}. This rectifies the issue of \cite{ym_gerbes}, albeit with a different approach to the subject, using Lie groupoids and algebroids. Moreover, we would like to point out that the action functional  \cite{ym_gerbes}*{equation (3)} only contains the term with the 3-curvature, $\innerr {G}{G}$, hence it misses the phenomena accounted for by the curving $F$. In turn, the Yang--Mills equation reads $\d{}\star G=0$. Since $G$ is obtained from $F$, and since $F$ links the whole theory back to principal bundles, we consider the curving to be a fundamental part of a (primitive) multiplicative connection, and hence believe that the term $\innerr{F}{F}$ should also be considered for the study the moduli space of solutions $(F,G)$ of the pair of equations \eqref{eq:ym_1} and \eqref{eq:ym_2}, as attempted in \cite{ym_gerbes}.

\subsubsection{Higher Yang--Mills theory}
  In \cite{higher_ym}, Baez develops a Yang--Mills theory for (trivial) principal 2-bundles. Roughly, a principal 2-bundle is a higher-geometric analogue of a principal bundle, where the structure Lie group is replaced with a structure Lie 2-group \cite{higher_gauge}. This produces a setting for higher-geometric connections, where parallel transport is feasible over surfaces instead of only paths. The development of Yang--Mills theory for such connections results in a strikingly similar pair of Yang--Mills equations (\cite{higher_ym}, cf.\ Theorem 21) compared to the equations \eqref{eq:ym_1} and  \eqref{eq:ym_2} we have obtained in Theorem \ref{thm:ym_multiplicative}. The similarity is already apparent at the level of action functionals---compare the definition of \cite{higher_ym}*{equation (9)} with our action functional \eqref{eq:multiplicative_ym_action}. The difference between the two pairs of Yang--Mills equations is only in that the first Yang--Mills equation in \cite{higher_ym} has an additional term, which appears as a consequence of the very definition of connections on principal 2-bundles. Despite these similarities, the frameworks are conceptually very different; as mentioned below, we believe the similarities come from the fact that the frameworks are particular cases within a more general theory.

\subsubsection{Yang--Mills theory on principal LGB-bundles}
In \cite{fischer}, Fisher develops a Yang--Mills theory for principal LGB-bundles. Roughly, a principal LGB-bundle is a generalization of a principal bundle, where the structure Lie group is replaced with a Lie group bundle. The theory of connections on such principal bundles is thoroughly developed there, cf.\ \sec 6. The action functional the author considers in \cite{fischer}*{Definition 7.2} now appears to only contain the term $\innerr FF$. Albeit the corresponding Yang--Mills PDE is not obtained there, we can imagine it has the well-known form $\d{}^\nabla\star F=0$. In any case, we would like to point out that there is a similarity of the formulas in \cite{fischer}*{Theorem 7.5}  with the ones we have obtained when discussing primitive connections (see Remark \ref{rem:deformation_curving}). The so-called \textit{field redefinitions} capture the idea that the curving of a connection is determined only up to an invariant 2-form.

\vspace{1em}

We view the similarities (and differences) between these various settings as a manifestation of the fact that they are in fact just particular cases of a more general framework. Our speculation is that such a general framework has to do with \textit{$\mathcal{PB}$-groupoids}, which simultaneously generalize principal 2-bundles, LGB-bundles, and comprise the natural setting for the frame bundle of a $\vb$-groupoid---a generalization of the frame bundle construction for the usual vector bundles. This has recently been researched in \cites{pb_groupoids, pb_vb_groupoids}. We suspect this more general framework for Yang--Mills theories should be feasible by researching the theory of connections on $\mathcal{PB}$-groupoids, and employing similar techniques that have appeared in this thesis.

\end{subappendices}

\clearpage \pagestyle{plain}

\clearpage 

\pagestyle{plain}

\printindex






\end{document}